\newcommand{\D}{\ensuremath{\textbf{D}}}
\renewcommand{\H}{\ensuremath{\text{H}}}
\newcommand{\HoH}{\ensuremath{\textbf{HH}}}
\newcommand{\Ho}{\ensuremath{\textbf{Ho}}}
\newcommand{\Acyc}{\ensuremath{\textbf{Acyc}}}
\newcommand{\incl}{\ensuremath{\text{incl}}}
\newcommand{\LR}     {\ensuremath{\text{LR}}}
\newcommand{\SBM}[1] {\ensuremath{\text{B}}_{#1}}
\newcommand{\KL} [1] {\ensuremath{\ul{\text{H}}_{#1}}}
\newcommand  {\CC}{\ensuremath{{\mathbb C}}}
\newcommand  {\FF}{\ensuremath{{\mathbb F}}}
\newcommand  {\GG}{\ensuremath{{\mathbb G}}}
\newcommand  {\LL}{\ensuremath{{\mathbb L}}}
\newcommand  {\NN}{\ensuremath{{\mathbb N}}}
\newcommand  {\PP}{\ensuremath{{\mathbb P}}}
\newcommand  {\VV}{\ensuremath{{\mathbb V}}}
\newcommand  {\WW}{\ensuremath{{\mathbb W}}}
\newcommand  {\XX}{\ensuremath{{\mathbb X}}}
\newcommand  {\YY}{\ensuremath{{\mathbb Y}}}
\newcommand  {\ZZ}{\ensuremath{{\mathbb Z}}}
\newcommand{\calA}{\ensuremath{{\cal A}}}
\newcommand{\calB}{\ensuremath{{\cal B}}}
\newcommand{\calC}{\ensuremath{{\cal C}}}
\newcommand{\calD}{\ensuremath{{\cal D}}}
\newcommand{\calE}{\ensuremath{{\cal E}}}
\newcommand{\calI}{\ensuremath{{\cal I}}}
\newcommand{\calJ}{\ensuremath{{\cal J}}}
\newcommand{\calO}{\ensuremath{{\cal O}}}
\newcommand{\calP}{\ensuremath{{\cal P}}}
\newcommand{\calU}{\ensuremath{{\cal U}}}
\newcommand  {\frl}{\ensuremath{{\mathfrak l}}}
\newcommand  {\frm}{\ensuremath{{\mathfrak m}}}
\newcommand  {\frp}{\ensuremath{{\mathfrak p}}}
\renewcommand{\frq}{\ensuremath{{\mathfrak q}}}
\newcommand  {\frs}{\ensuremath{{\mathfrak s}}}
\newcommand  {\frS}{\ensuremath{\mathfrak S}}
\newcommand  {\scE}{\ensuremath{{\mathscr E}}}
\newcommand  {\scS}{\ensuremath{{\mathscr S}}}
\newcommand{\C}        {\ensuremath{\text{C}}}
\newcommand{\PoincPoly}{\ensuremath{\calP}}
\newcommand{\differential}      {\ensuremath{\text{d}}}
\newcommand  {\us}    {\ensuremath{^{\star}}}
\newcommand  {\uss}   {\ensuremath{^{\star\star}}}
\newcommand  {\pr}    {\ensuremath{\text{pr}}}
\newcommand  {\p}     {\ensuremath{^{\prime}}}
\newcommand  {\pp}    {\ensuremath{^{\prime\prime}}}
\newcommand  {\uc}    {\ensuremath{^{\circ}}}
\newcommand  {\ua}    {\ensuremath{^{\ast}}}
\newcommand  {\la}    {\ensuremath{_{\ast}}}
\newcommand  {\ud}    {\ensuremath{^{\bullet}}}
\newcommand  {\ld}    {\ensuremath{_{\centerdot}}}
\newcommand{\wt}   [1]{\ensuremath{\widetilde{#1}}}
\newcommand  {\ol} [1]{\ensuremath{\overline{#1}}}
\newcommand  {\ul} [1]{\ensuremath{\underline{#1}}}
\newcommand  {\ui}    {\ensuremath{^{-1}}}
\newcommand  {\op}    {\ensuremath{^{\text{op}}}}
\newcommand  {\id}    {\ensuremath{\text{id}}}
\newcommand{\bfD}     {\ensuremath{\textbf{D}}}
\newcommand{\bfH}     {\ensuremath{\textbf{H}}}
\newcommand{\bfK}     {\ensuremath{\textbf{K}}}
\newcommand{\bfL}     {\ensuremath{\textbf{L}}}
\newcommand{\bfM}     {\ensuremath{\textbf{M}}}
\newcommand{\bfR}     {\ensuremath{\textbf{R}}}
\newcommand{\bfx}     {\ensuremath{\textbf{x}}}
\newcommand{\bfy}     {\ensuremath{\textbf{y}}}
\renewcommand{\sl}     {\ensuremath{\frs\frl}}
\renewcommand{\dim}      {\ensuremath{\text{dim}}}     
\newcommand  {\gdim}     {\ensuremath{\text{g.dim}}}   
\newcommand  {\projdim}  {\ensuremath{\text{proj.dim}}}
\newcommand  {\injdim}   {\ensuremath{\text{inj.dim}}} 
\newcommand  {\gldim}    {\ensuremath{\text{gl.dim}}}  
\newcommand  {\perf}     {\ensuremath{\text{Perf}}}
\newcommand  {\Perf}     {\ensuremath{\text{Perf}}}
\newcommand  {\ass}      {\ensuremath{\text{Ass}}}     
\renewcommand{\mod}   {\ensuremath{\text{-mod}}}  
\newcommand{\cell}    {\ensuremath{\text{-cell}}}
\newcommand{\Mod}     {\ensuremath{\text{-Mod}}}  
\newcommand{\bimod}   {\ensuremath{\text{-Mod-}}}
\newcommand {\Heck}   {\ensuremath{\textbf{H}}}
\newcommand  {\stab}[1]{\ensuremath{^{\left\{#1\right\}}}}
\newcommand  {\Id}     {\ensuremath{\text{Id}}}
\newcommand  {\fpd}    {\ensuremath{\textbf{fpd}}}    
\newcommand  {\ufpd}   {\ensuremath{\underline{\textbf{fpd}}}}
\newcommand  {\nfpd}   {\ensuremath{\text{fpd}}}    
\newcommand  {\nmcm}   {\ensuremath{\text{MCM}}}    
\newcommand  {\fold}   {\ensuremath{\text{fold}}}    
\newcommand  {\mor}[1]{\ensuremath{\stackrel{#1}{\longrightarrow}}}
\newcommand  {\rk}    {\ensuremath{\text{rk}}}
\newcommand  {\spec}  {\ensuremath{\text{Spec}}}
\renewcommand{\hom}   {\ensuremath{\text{Hom}}}      % Hom
\newcommand  {\End}   {\ensuremath{\text{End}}}
\newcommand  {\KR}  {\ensuremath{\textbf{KR}}}
\newcommand  {\B}   {\ensuremath{\textbf{B}}}
\newcommand  {\MCM} {\ensuremath{\textbf{MCM}}}
\newcommand  {\uMCM}{\ensuremath{\underline{\MCM}}}
\newcommand  {\Sym} {\ensuremath{\scS}}
\newcommand  {\HMF}    {\ensuremath{\textbf{HMF}}}
\newcommand  {\grsplit}{\ensuremath{\textbf{K}_0^{\oplus}}}
\newcommand  {\length} {\ensuremath{\text{len}}}
\newcommand  {\Ass}    {\ensuremath{\text{Ass}}}
\newcommand  {\Ann}    {\ensuremath{\text{Ann}}}
\newcommand  {\Ch}     {\ensuremath{\textbf{Ch}}}
\newcommand  {\can}    {\ensuremath{\text{can}}}
\newcommand  {\Cof}    {\ensuremath{\text{Cof}}}
\newcommand  {\ev}     {\ensuremath{\text{ev}}}
\newcommand  {\Span}   {\ensuremath{\text{span}}}
\newcommand  {\fr}   {\ensuremath{\text{fr}}}
\newcommand  {\fg}   {\ensuremath{\text{fg}}}
\newcommand  {\sqm}   {{\scriptsize\text{?}}}
\newcommand  {\MF}   {\ensuremath{\textbf{MF}}}
\newcommand  {\ldg}   {\ensuremath{_\text{dg}}}
\newcommand  {\mult}   {\ensuremath{\text{mult}}}
\newcommand  {\Iso}    {\ensuremath{\text{Iso}}}
\newcommand  {\Indec}  {\ensuremath{\text{Indec}}}
\newcommand  {\cone}   {\ensuremath{\text{Cone}}}    
\newcommand  {\depth} {\ensuremath{\text{depth}}}
\newcommand  {\rg}    {\ensuremath{\text{rank}}}    
\newcommand  {\rank}    {\ensuremath{\text{rank}}}  
\newcommand  {\coker} {\ensuremath{\text{coker}}}   
\newcommand  {\image} {\ensuremath{\text{im}}}      
\renewcommand{\ker}   {\ensuremath{\text{ker}}}     
\newcommand  {\ueven}{\ensuremath{^\text{even}}}
\newcommand  {\uodd} {\ensuremath{^\text{odd}}}
\newcommand  {\pro}  {\ensuremath{\text{Pro}}}     
\newcommand  {\supp}  {\ensuremath{\text{Supp}}}
\newcommand  {\tor}   {\ensuremath{\text{Tor}}}      
\newcommand  {\ext}   {\ensuremath{\text{Ext}}}      
\newcommand  {\Fl}    {\ensuremath{\text{Fl}}}
\renewcommand{\projlim}[1]{\ensuremath{\varinjlim\limits_{#1}}}
\newcommand  {\veps}  {\ensuremath{\varepsilon}}
\theoremstyle      {plain}
\newtheorem{prop}       {Proposition}[subsection]
\newtheorem{lem}        [prop]{Lemma}      
\newtheorem{ex}         [prop]{Example}    
\newtheorem{definition} [prop]{Definition} 
\newtheorem{cor}        [prop]{Corollary}  
\newtheorem{theorem}    [prop]{Theorem}    
\newtheorem{fact}       [prop]{Fact}       
\newtheorem{rem}   [prop]{Remark}    
\theoremstyle      {nonumberplain}
\newtheorem{proof} {Proof}
\numberwithin{equation}{subsection}
\renewcommand{\theequation}{\thesubsection-\arabic{equation}}
\tikzstyle{fat}     = [circle, draw, fill=black!0, inner sep=0pt, minimum width=4pt]
\tikzstyle{big}     = [circle, fill=black, inner sep = 0pt, minimum width=0pt]
\tikzstyle{txt}     = [pos=0.75, rectangle, scale=0.6, fill=white, inner sep = 1pt]
\tikzstyle{txtp}    = [pos=0.25, rectangle, scale=0.6, fill=white, inner sep = 1pt]
\tikzstyle{txtq}    = [pos=0.25, rectangle, scale=0.6, fill=white, inner sep = 3pt]
\tikzstyle{oredged} = [-,postaction={decorate}, decoration={markings,mark=at position .4 with {\arrow{>}}}]
\tikzstyle{oredgeu} = [-,postaction={decorate}, decoration={markings,mark=at position .6 with {\arrow{>}}}]
\tikzstyle{oredgefe} = [-,postaction={decorate}, decoration={markings,mark=at position .7 with {\arrow{>}}}]
\tikzstyle{oredgefa} = [-,postaction={decorate}, decoration={markings,mark=at position .3 with {\arrow{>}}}]
\tikzstyle{oredgem} = [-,postaction={decorate}, decoration={markings,mark=at position .5 with {\arrow{>}}}]
\tikzstyle{oredgee} = [-,postaction={decorate}, decoration={markings,mark=at position 1.0 with {\arrow{>}}}]
\begin{document}

 \title{Khovanov-Rozansky homology via Cohen-Macaulay approximations and Soergel bimodules}

 \author{Hanno Becker}
 \date{}
 \maketitle
 \abstract{
We describe a simplification in the construction of Khovanov-Rozansky's categorification of quantum
$\sl(n)$ link homology using the theory of maximal Cohen-Macaulay modules over hypersurface singularities and the
combinatorics of Soergel bimodules. More precisely, we show that the matrix factorizations associated to basic
MOY-graphs equal Cohen-Macaulay approximations of certain Soergel bimodules, and prove that
taking Cohen-Macaulay approximation commutes with tensor products as long as the MOY-graph under consideration does
not possess oriented cycles. It follows that the matrix factorization associated to a MOY-braid equals the
Cohen-Macaulay approximation of the Soergel bimodule corresponding to the endofunctor on BGG-category $\calO$ associated
to the braid by Mazorchuk and Stroppel. This reduces certain computations in the category of matrix factorizations to
 known combinatorics of the Hecke-algebra. Finally, we describe braid closure as some kind of Hochschild
 cohomology and prove that the indecomposable Soergel bimodules corresponding to Young tableaux with more than 
 $n$ rows have trivial Cohen-Macaulay approximation, in analogy to the fact that the corresponding projective functors
 on category $\calO$ vanish on restriction to parabolics with at most $n$ parts.
}

\tableofcontents
\newpage

%%%%%%%%%%%%%%%%% EINLEITUNG %%%%%%%%%%%%%%%

\renewcommand{\theequation}{\arabic{equation}}

\section*{Introduction}
\selectlanguage{english}
\addcontentsline{toc}{section}{Introduction}

\renewcommand\theprop{\arabic{prop}}

In \cite{KR1}, Khovanov and Rozansky constructed a categorification of quantum $\sl(n)$ polynomial knot invariants for
all $n>0$. More precisely, their construction categorifies the Reshetikhin-Turaev link invariant (see \cite{RTInvariant} and
\cite[Part III]{KasselQuantumGroups}) for links whose components are labeled by the vector representation of
$\calU_q(\sl(n))$. This construction was recently extended by Wu and Yonezawa in their articles 
\cite{WuLinkHomology} and \cite{Yonezawa}, where they provided a categorification for links with components labeled
with arbitrary exterior powers of the vector representation. For now it is not important how these knot invariants are
constructed in detail; we only care about what objects they associate to a tangle, namely matrix factorizations. Next we
recall what this means.    

Let $S$ be a commutative ring and $w\in S$ be arbitrary. A matrix factorization (originally due to Eisenbud, see
\cite{EisenbudMatrixFactorizations}) of type $(S,w)$ is a pair of maps
$M\mor{\alpha} N$, $N\mor{\beta} M$ between free $S$-modules $M$, $N$ such that
$\alpha\beta=\beta\alpha=w\cdot\id$. Therefore, one might think of a matrix factorization of type $(S,w)$ as some
$2$-periodic complex of free $S$-modules where the usual condition $\delta^2=0$ for the differential is weakened to
$\delta^2=w\cdot\id$. In this description, morphisms of matrix factorizations are morphisms of $2$-periodic complexes,
and they can be written as the $0$-cocycles in some $2$-periodic complex (in the usual sense) of morphisms, defined as
in the case of ordinary complexes over some additive category, yielding a differential-graded category
$\MF\ldg(S,w)$. It turns out that this dg-category is pretriangulated, 
i.e. there is a reasonable notion of shift and cones, so that we have a canonical triangulated structure on its homotopy
category. It is this homotopy category of matrix factorizations $\HMF(S,w)$ where Khovanov-Rozansky's link homology
theory takes its values.  

In case $S$ is a regular local ring and $w\in\frm\setminus\{0\}$, it is known that the homotopy category of matrix
factorizations is triangle equivalent to what is called the singularity category of the ring $S/(w)$. The singularity
category can be defined for any local Noetherian ring $R$ and has several equivalent definitions (see \cite{OrlovGrad}),
the usual one being the Verdier quotient $\bfD^b(R\mod)/\perf$ of the bounded derived category of finitely generated
$R$-modules by the subcategory of perfect complexes, i.e. those which are quasi-isomorphic to bounded complexes of
projectives. In view of Serre's Theorem stating that a Noetherian local ring is regular if and only if every module has
a finite projective resolution, this is a quite intuitive measure for the failure of $R$ to be regular. However, for us
the description as the stable category $\uMCM(R)$ of the category $\MCM(R)$ of maximal Cohen-Macaulay modules over
$S/(w)$ (originally due to \cite{Buchweitz} and \cite{Happel}) is of interest, which we now recall. A finitely generated
module over a local ring $R$ is called Cohen-Macaulay if its depth (i.e. the maximal length of a regular sequence in
$M$) equals its dimension (the dimension of the topological space $\supp(M)\subset\spec(R)$); it is called maximal
Cohen-Macaulay if $\depth(M)=\dim(M)=\dim(R)$. A ring is called Cohen-Macaulay if it is Cohen-Macaulay considered as a
module over itself. Denote the category of maximal Cohen-Macaulay modules over $R$ by $\MCM(R)$. Being maximal
Cohen-Macaulay is a stable property in the following sense: given a finitely-generated module $M$ 
over a Cohen-Macaulay ring $R$, its depth increases as one takes syzygies of $M$, as long as the depth does not get
bigger than $\depth(R)=\dim(R)$. In particular, the $k$-th syzygy of $M$ is maximal Cohen-Macaulay for
$k\geq\depth(R)-\depth(M)$, and this is why one can think of maximal Cohen-Macaulayness as a stable property. Now, in
case where $R$ is not only Cohen-Macaulay but even Gorenstein, a small miracle occurs: once a module belongs, after
sufficiently many projective resolving steps to the left, to the ``stable'' range of maximal Cohen-Macaulay modules, it
can even be projectively resolved to the \textit{right} with all syzygies again maximal Cohen-Macaulay. In precise terms,
this is known as the fact that the category of maximal Cohen-Macaulay modules is a Frobenius category, i.e. an exact
category with enough projectives and injectives where in addition the classes of projective and injective objects
coincide. Annihilating all morphisms which factor through a projective object in such a
category yields a canonically triangulated category (see \cite[Section 3.3]{KellerDGKats} and references therein), and
so in particular we get the stable category of maximal Cohen-Macaulay modules endowed with a canonical triangulated
structure. If we work over some hypersurface $R = S/w$, i.e. $S$ is  
regular and $w\in\frm\setminus\{0\}$, then this stable category of maximal Cohen-Macaulay modules is both triangle
equivalent to the  singularity category of $R$ and the homotopy category of matrix factorizations. 
\begin{equation*}\begin{tikzpicture}[description/.style={fill=white,inner sep=2pt}]
    \matrix (m) [matrix of math nodes, row sep=3em,
                 column sep=2.5em, text height=1.5ex, text depth=0.25ex,
                 inner sep=0pt, nodes={inner xsep=0.3333em, inner ysep=0.3333em}]
    {
       & \uMCM(R) & \\ \D^b(R\mod)/\perf && \HMF(S,w)\\
    };
    \draw[->] (barycentric cs:m-1-2=0.8,m-2-1=0.2) -- node[above]{$\cong$} (barycentric cs:m-1-2=0.2,m-2-1=0.8);
    \draw[->] (barycentric cs:m-1-2=0.8,m-2-3=0.2) -- node[above]{$\cong$} (barycentric cs:m-1-2=0.2,m-2-3=0.8);
\end{tikzpicture}\end{equation*}
To sum up, we have the following situation: the Khovanov-Rozansky link invariant takes values in homotopy categories of
matrix factorizations, and those are equivalent to stable categories of maximal Cohen-Macaulay modules over the
corresponding quotient singularities. This naturally leads to the following question, which we want to study in this
article: 
\vskip 3mm
\noindent\textbf{Main Question:} \textit{How can we construct (and simplify?) KR-homology using the stable category of
  maximal  Cohen-Macaulay modules instead of the homotopy category of matrix factorizations?} 
\vskip 3mm

To be able to describe our attempt to answer this question, we first sketch Khovanov and Rozansky's original
construction. 

Given a link $L$ we first replace any crossing of $L$ either by the uncrossing or the wide edge, as
depicted in Figure \ref{fig:resolve}. 
\begin{figure}[h]\begin{center}
\begin{tikzpicture}[scale=0.75]
\draw[->] (+1,-1) -- (-1,+1);
\draw[draw=white, line width=4mm] (-1,-1) -- (1,1);
\draw[->] (-1,-1) -- (1,1);
\begin{scope}[xshift=6cm,yshift=2cm]
\draw[oredgem] (-1,-1) -- (+0,-0.5);
\draw[oredgem] (+1,-1) -- (+0,-0.5);
\draw (+0,-0.5) -- node[right,scale=0.75]{$2$} (+0,0.5);
\draw[oredgem] (+0,0.5) -- (-1,1);
\draw[oredgem] (+0,0.5) -- (+1,1);
\end{scope}
\begin{scope}[xshift=6cm,yshift=-2cm]
\draw[->] (-0.75,-1) -- (-0.75,+1);
\draw[->] (+0.75,-1) -- (+0.75,+1);
\end{scope}
\draw[->,decorate,decoration=snake] (2cm,1cm)  -- (4cm,2cm);
\draw[->,decorate,decoration=snake] (2cm,-1cm) -- (4cm,-2cm);
\end{tikzpicture}
\end{center}
\caption{\small{Resolving a crossing}}
\label{fig:resolve}
\end{figure}
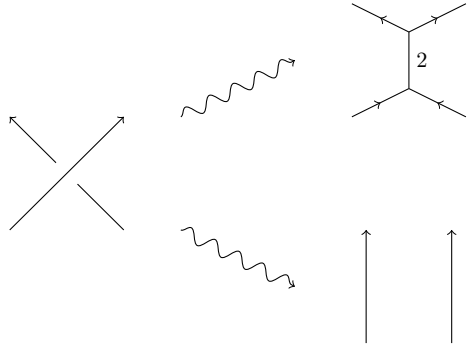
A graph with is composed of subgraphs as in Figure \ref{fig:resolve} is called a MOY-graph (see \cite{MOY}). The
MOY-graph obtained from $L$ by some choice of replacement for each crossing is called a smoothing of $L$. The main part  
in the construction of the Khovanov-Rozansky link homology of $L$ is to associate to each smoothing of all $n$ crossings
of $L$ a matrix factorization. Having done this, all these $2^n$ matrix factorizations are finally patched together to a
complex of matrix factorizations, whose homotopy type is the value of $L$ under Khovanov-Rozansky homology. We 
will not consider this patching construction, for which we refer the reader to the original article \cite{KR1} for
details. Instead, let us look at the steps through which Khovanov-Rozansky construct the matrix factorization $\KR(\Gamma)$
associated to a smoothing $\Gamma$ of the link: 
\begin{enumerate}
\item First decompose $\Gamma$ into basic MOY-graphs $\Gamma_1^1$ and $\Gamma_m^m$ as depicted in Figure
  \ref{fig:buildingblocksintro}, and to each of these building blocks associate certain explicit matrix factorizations. 
\item Glue them together by tensoring. The result is $\KR(\Gamma)$.
\end{enumerate}
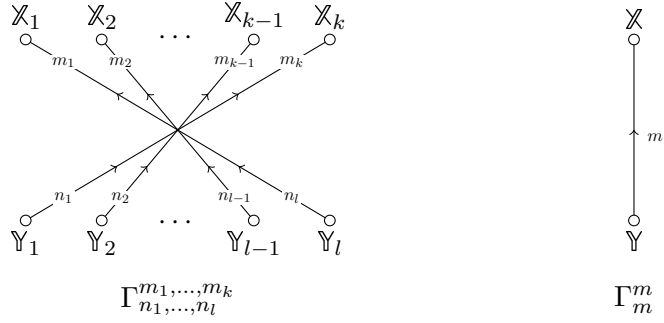
\begin{figure}[h]\begin{center}
\begin{tikzpicture}
\draw[oredged] (0,0) -- node[txt]{${m_1}$}   (-2,1.2) node[fat]{} node[above]{$\XX_1$};
\draw[oredged] (0,0) -- node[txt]{$m_2$}     (-1,1.2) node[fat]{} node[above]{$\XX_2$};
\draw[oredged] (0,0) -- node[txt]{$m_{k-1}$} (+1,1.2) node[fat]{} node[above]{$\XX_{k-1}$};
\draw[oredged] (0,0) -- node[txt]{$m_k$}     (+2,1.2) node[fat]{} node[above]{$\XX_k$};
\draw[oredgeu] (-2,-1.2) node[fat]{} node[below]{$\YY_1$} -- node[txtp]{$n_1$} (0,0);
\draw[oredgeu] (-1,-1.2) node[fat]{} node[below]{$\YY_2$} -- node[txtp]{$n_2$}     (0,0) ;
\draw[oredgeu] (+1,-1.2) node[fat]{} node[below]{$\YY_{l-1}$} -- node[txtp]{$n_{l-1}$} (0,0);
\draw[oredgeu] (+2,-1.2) node[fat]{} node[below]{$\YY_l$} -- node[txtp]{$n_l$}  (0,0)node[big]{} ;
\node at (0,+1)[above] {$\cdots$};
\node at (0,-1)[below] {$\cdots$};
\node at (0,-2.25) {$\Gamma_{n_1,...,n_l}^{m_1,...,m_k}$};
\node at (6cm,-2.25) {$\Gamma_m^m$};
\begin{scope}[xshift=6cm]
\draw[oredgem] (0,-1.2) node[fat]{} node[below]{$\YY$} -- node[right=8pt,txt,pos=0.5,below=1pt]{$m$}(0,1.2) node[fat]{}
node[above]{$\XX$}; 
\end{scope}
\end{tikzpicture}
\end{center}
\caption{\small{Building blocks}}
\label{fig:buildingblocksintro}
\end{figure}
According to our main question, this yields the following two steps in our desired construction of Khovanov-Rozansky
homology through matrix factorizations:
\begin{enumerate}
\item[(1)] Describe the maximal Cohen-Macaulay modules corresponding to the matrix factorizations associated to building
  blocks $\Gamma_{n_1,...,n_l}^{m_1,...,m_k}$ and $\Gamma_m^m$.
\item[(2)] Try to understand what the tensor product on the homotopy category of matrix factorizations looks like in the
  stable category of maximal Cohen-Macaulay modules.
\end{enumerate}
The answer to (1) is as follows. Consider again a hypersurface $R = S/(w)$, so $S$ is regular and
$w\in\frm\setminus\{0\}$. Further, denote $\ul{R\mod}$ and $\uMCM(R)$ the stable categories of all resp. maximal
Cohen-Macaulay modules over $R$, i.e. the categories obtained from $R\mod$ and  $\MCM(R)$ by annihilating morphisms
factoring through a projective; this  
annihilation is necessary for the syzygy $\Omega M$ of an $R$-module $M$ to be well-defined up to isomorphism and to be
functorial in $M$. Now, the embedding $\uMCM(R)\hookrightarrow\ul{R\mod}$ has a right adjoint 
$\bfM:\ul{R\mod}\to\uMCM(R)$, given by $\bfM(M) := \lim\limits_{n\gg 0} \Omega^{2n} M$. Here the right hand side means
that one has to choose $n\gg 0$ such that $\Omega^{2n} M$ is maximal Cohen-Macaulay and set $\bfM(M) := \Omega^{2n}
M$; the particular choice of $n$ does not matter, because $\Omega^2\cong\id$ on $\uMCM(R)$. This yields the following
stabilization functor (see \cite{Krause}), which is fundamental in the present paper: 
$$(-)\stab{w}\quad:=\quad R\mod\longrightarrow\ul{R\mod}\xrightarrow{\ \ \lambda\ \ }\uMCM(R)\xrightarrow{\ \ \cong\
  \ }\HMF(S,w).$$ 
Now we can formulate our first theorem:
\begin{theorem}[see \ref{thm_KRviaStab}]\label{thm_1}
There is a homotopy equivalence
$$\KR\left(\Gamma_{1,1}^{1,1}\right)\ =\ C\left(\ \begin{tikzpicture}[baseline=3mm,scale=0.25]
\draw[oredgem] (0,0) -- (1,1);
\draw[oredgem] (2,0) -- (1,1);
\draw[oredgem] (1,1) -- node[scale=0.4, right=3pt]{$2$} (1,2); % wide edge
\draw[oredgem] (1,2) -- (0,3);
\draw[oredgem] (1,2) -- (2,3);
\end{tikzpicture}\ \right)\ \ \simeq\ \
\left(\CC[x_1,x_2]\stackrel[\text{Sym}]{}{\otimes}\CC[y_1,y_2]\langle
  1\rangle\right)^{\{x_1^{n+1}+x_2^{n+1}-y_1^{n+1}-y_2^{n+2}\}}$$  
Here $\otimes_\text{Sym}$ means that the symmetric polynomials in $x_1,x_2$ and $y_1,y_2$ are identified, and the base
ring for the stabilization is the (regular local graded) polynomial ring $\CC[x_1,x_2,y_1,y_2]$.

More generally, for the basic building block $\Gamma_{n_1,...,n_l}^{m_1,...,m_k}$ in Figure
\ref{fig:buildingblocksintro}, we have the following homotopy equivalence (for the notation, see Section
\ref{notation}), where $r := \sum\limits_{1\leq i<j\leq k} m_i m_j$:
\begin{align*}
\KR\left(\Gamma_{n_1,...,n_l}^{m_1,...,m_k}\right)\ & =\ \KR\left(\ \begin{tikzpicture}[scale=0.5,baseline=-0.6mm]
\draw[oredged] (0,0) -- node[txt,scale=0.8]{$m_1$}   (-2,1.2) node[fat]{} node[above,scale=0.75]{$\XX_1$}; 
\draw[oredged] (0,0) -- node[txt,scale=0.8]{$m_2$}     (-1,1.2) node[fat]{} node[above,scale=0.75]{$\XX_2$};
\draw[oredged] (0,0) -- node[txt,scale=0.8]{$m_{k-1}$} (+1,1.2) node[fat]{} node[above,scale=0.75]{$\XX_{k-1}$};
\draw[oredged] (0,0) -- node[txt,scale=0.8]{$m_k$}     (+2,1.2) node[fat]{} node[above,scale=0.75]{$\XX_k$};
\draw[oredgeu] (-2,-1.2) node[fat]{} node[below,scale=0.75]{$\YY_1$} -- node[txtp,scale=0.8]{$n_1$} (0,0);
\draw[oredgeu] (-1,-1.2) node[fat]{} node[below,scale=0.75]{$\YY_2$} -- node[txtp,scale=0.8]{$n_2$}     (0,0) ;
\draw[oredgeu] (+1,-1.2) node[fat]{} node[below,scale=0.75]{$\YY_{l-1}$} -- node[txtp,scale=0.8]{$n_{l-1}$} (0,0);
\draw[oredgeu] (+2,-1.2) node[fat]{} node[below,scale=0.75]{$\YY_l$} -- node[txtp,scale=0.8]{$n_l$}  (0,0)node[big]{} ;
\node at (0,+1)[above,scale=0.75] {$\cdots$};
\node at (0,-1)[below,scale=0.75] {$\cdots$};\end{tikzpicture}\ \right)\\ & \simeq\
\left(\text{Sym}(\XX_1|...|\XX_n)\stackrel[\text{Sym}]{}{\otimes}\text{Sym}(\YY_1|...|\YY_m)  
\langle r\rangle\right)^{\left\{\Sigma\XX^{n+1}-\Sigma\YY^{n+1}\right\}}.\end{align*}
\end{theorem}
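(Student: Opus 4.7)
The plan is to exhibit $\KR(\Gamma_{n_1,\ldots,n_l}^{m_1,\ldots,m_k})$ as the Koszul matrix factorization resolution of the Soergel-type bimodule
$$B \;:=\; \text{Sym}(\XX_1|\ldots|\XX_k) \otimes_{\text{Sym}} \text{Sym}(\YY_1|\ldots|\YY_l),$$
viewed as a module over the hypersurface ring $T/(w)$, where $T := \CC[\XX_1,\ldots,\XX_k,\YY_1,\ldots,\YY_l]$ is the ambient regular polynomial ring and $w := \Sigma\XX^{n+1} - \Sigma\YY^{n+1}$; the grading shift $\langle r \rangle$ with $r = \sum_{1\leq i<j\leq k} m_i m_j$ is tracked along the way. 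Once this identification is in place, Eisenbud's theorem together with the equivalence $\HMF(T,w) \cong \uMCM(T/(w))$ yields the homotopy equivalence $\KR(\Gamma) \simeq B\stab{w}\langle r\rangle$ immediately.

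First I would recall, from \cite{KR1} and its higher-label extension in \cite{WuLinkHomology,Yonezawa}, that $\KR(\Gamma_{n_1,\ldots,n_l}^{m_1,\ldots,m_k})$ is by construction the Koszul matrix factorization over $T$ attached to a pair of sequences $(a_j,b_j)_{j=1}^{N}$ with $\sum_j a_j b_j = w$. The key point is that the $a_j$ can be chosen to be precisely the differences $e_s(\XX_1|\ldots|\XX_k) - e_s(\YY_1|\ldots|\YY_l)$ of parabolic elementary symmetric polynomials, i.e. a generating set for the kernel of the canonical surjection $T \twoheadrightarrow B$; the $b_j$ are then the corresponding divided-difference coefficients arising from the tautology $\Sigma\XX^{n+1}-\Sigma\YY^{n+1} = \sum_j a_j b_j$.

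Second I would verify that $(a_j)$ is a regular sequence in $T$ with quotient precisely $B$. By Chevalley-Shephard-Todd, each of $T$, $\text{Sym}(\XX_1|\ldots|\XX_k)$ and $\text{Sym}(\YY_1|\ldots|\YY_l)$ is a free module over the ring $\text{Sym}$ of totally symmetric polynomials; combining these freeness statements shows that $B$ is Cohen-Macaulay of codimension exactly $N$ in $T$, which forces regularity of $(a_j)$ and identifies $T/(a_1,\ldots,a_N) \cong B$ as graded rings.

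Third, once $(a_j)$ is known to be regular with quotient $B$ and $\sum a_j b_j = w$, Eisenbud's theorem tells us that the Koszul matrix factorization of $(a,b)$ is the $2$-periodic tail of the minimal free $T$-resolution of $B$, which under $\HMF(T,w) \cong \uMCM(T/w)$ represents exactly the stabilization $B\stab{w}$. The main obstacle is the grading bookkeeping in step three and, more substantively, the regular-sequence claim in step two for arbitrary partitions $(m_1,\ldots,m_k)$, $(n_1,\ldots,n_l)$, which reduces to a Hilbert-series / Cohen-Macaulay computation for the relative coinvariant algebra underlying $B$. The special cases $\Gamma_m^m$ (where $B$ collapses to $\CC[\XX]$ on the diagonal with $r = 0$, and the Koszul MF reduces to the standard resolution along $x_i^{n+1} - y_i^{n+1}$) and $\Gamma_{1,1}^{1,1}$ in the first displayed formula serve as useful sanity checks.
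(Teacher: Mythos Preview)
Your approach is correct and matches the paper's, which treats the theorem as an immediate consequence of the definition of $\KR(\Gamma_{n_1,\ldots,n_l}^{m_1,\ldots,m_k})$ as the Koszul factorization $\bigotimes_i\{\ast_i,X_i-Y_i\}\langle r\rangle$ together with Proposition~\ref{prop_koszulstabilization2}: since $(X_i-Y_i)_{1\le i\le m}$ is regular in $\Sym(\XX|\YY)$, the Koszul factorization is the stabilization of the complete intersection $\Sym(\XX|\YY)/(X_i-Y_i)$, which is exactly your bimodule $B$. Your Chevalley--Shephard--Todd codimension argument for regularity supplies detail that the paper simply asserts.

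One correction: your base ring $T$ must be $\Sym(\XX|\YY)$, the polynomial ring in the \emph{block-wise} elementary symmetric generators, not the full polynomial ring $\CC[\XX,\YY]$ in the individual variables as your notation $T=\CC[\XX_1,\ldots,\XX_k,\YY_1,\ldots,\YY_l]$ suggests. The bimodule $B$ is not a quotient of the latter (indeed $\Sym(\XX|\YY)$ sits inside $\CC[\XX,\YY]$ as a subring, not a quotient), and the Khovanov--Rozansky factorization for this building block is by construction a factorization over $\Sym(\XX|\YY)$. In the special case $\Gamma_{1,1}^{1,1}$ all blocks have size one so the two rings coincide, which is why the first displayed formula in the statement can name $\CC[x_1,x_2,y_1,y_2]$ as the base ring; for general labels they differ and your steps two and three only go through over $\Sym(\XX|\YY)$. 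Your sanity check for $\Gamma_m^m$ is also slightly off: there the base ring is $\Sym(\XX|\YY)=\CC[X_1,\ldots,X_m,Y_1,\ldots,Y_m]$ in the elementary symmetric generators, and the Koszul factorization is $\{X_i-Y_i,\ast_i\}$, not anything built directly from the individual $x_i^{n+1}-y_i^{n+1}$.
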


Given the statement of Theorem \ref{thm_1} it is natural to ask to what extend the stabilization functor commutes with tensor
products. Informally, our result can be stated as follows:

\begin{theorem}[see \ref{thm_glue} and \ref{thm_acyclicgraphs}]\label{thm_2}
As long as there are no oriented cycles in the graph, the stabilization functor commutes with tensor products.
\end{theorem}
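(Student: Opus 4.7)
The plan is to split the proof into two logically independent pieces, matching the labels \ref{thm_glue} and \ref{thm_acyclicgraphs}: first an abstract gluing principle in commutative algebra, then the verification that acyclic MOY-graphs satisfy its hypotheses.

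First I would formulate the gluing principle. Suppose two MOY-graphs $\Gamma_1,\Gamma_2$ are glued along a shared set of edges with polynomial ring $T$, and write $M_i$ for the Soergel bimodule attached to $\Gamma_i$ via Theorem \ref{thm_1}. On matrix factorizations, gluing translates to the tensor product $\KR(\Gamma_1)\otimes_T\KR(\Gamma_2) = M_1\stab{w_1}\otimes_T M_2\stab{w_2}$, while the candidate description of the glued graph is $(M_1\otimes_T M_2)\stab{w_1+w_2}$. Since stabilization factors through $\ul{R\mod}$ and $\Omega^{2n}$ is compatible with derived operations, the two expressions agree provided $M_1\otimes_T M_2 = M_1\otimes_T^{\bfL}M_2$, i.e.\ provided $\tor^T_{>0}(M_1,M_2)=0$. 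The content of the gluing theorem is this algebraic implication, which I would prove by taking a free $T$-resolution $P_\bullet\to M_2$, applying $M_1\stab{w_1}\otimes_T(-)$ to obtain a complex of matrix factorizations which is levelwise stably free, and observing that its augmentation is a homotopy equivalence exactly under the stated $\tor$-vanishing.

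Second I would verify the $\tor$-vanishing for acyclic MOY-graphs. By Theorem \ref{thm_1} the bimodule attached to a basic building block $\Gamma_{n_1,\ldots,n_l}^{m_1,\ldots,m_k}$ is $\text{Sym}(\XX_1|\ldots|\XX_k)\otimes_{\text{Sym}(\XX)}\text{Sym}(\YY_1|\ldots|\YY_l)$, which is free as a module over the polynomial ring of its outgoing edges $\XX$ (and symmetrically over its incoming edges $\YY$), since $\text{Sym}(\YY_1|\ldots|\YY_l)$ is a finite free module over $\text{Sym}(\YY)=\text{Sym}(\XX)$. Absence of oriented cycles lets us topologically order the vertices of $\Gamma$; one may then cut the graph into two acyclic subgraphs along a set of edges which is entirely outgoing for one side and entirely incoming for the other, so that the bimodule of the first side is free over the ring $T$ of the cut variables, forcing $\tor^T_{>0}(M_1,M_2)=0$. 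Induction on the number of vertices, peeling off one building block at a time along the topological order, reduces an arbitrary acyclic graph to iterated gluings of this form.

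The main obstacle will be propagating the freeness through the induction. After the first gluing the resulting module is no longer a basic building block, and one has to re-check that it remains free over the polynomial ring of whichever set of edges is cut next. Here acyclicity is used crucially: a topological order induces a consistent notion of ``incoming'' and ``outgoing'' for every intermediate subgraph, and freeness over the outgoing ring is preserved under tensor products of modules each free over their outgoing rings. In the presence of an oriented cycle no coherent orientation exists, and the tensor product then acquires non-trivial Koszul-type higher $\tor$ — precisely the obstruction which the acyclicity hypothesis is designed to eliminate.
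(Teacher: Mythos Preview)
Your overall architecture matches the paper's: Theorem~\ref{thm_glue} is exactly your gluing principle (Tor-vanishing over the shared ring $\Sym(\YY)$, guaranteed by freeness of one factor), and Theorem~\ref{thm_acyclicgraphs} is deduced by showing that the Soergel-type bimodules attached to basic MOY pieces are free over their incoming and outgoing variable rings, and that this freeness propagates under gluing (this is precisely parts (2) and (3) of Theorem~\ref{thm_glue}). Your inductive scheme via a topological order of the vertices is what the paper has in mind when it says ``these examples together with Theorem~\ref{thm_glue} show the following theorem.''

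Where your sketch diverges is in the proof of the gluing step itself. Your proposed argument --- resolve $M_2$ freely over $T$, apply $M_1\stab{w_1}\otimes_T(-)$, and study the augmentation --- does not straightforwardly produce $(M_1\otimes_T M_2)\stab{w_1+w_2}$: the terms $M_1\stab{w_1}\otimes_T T^{n_i}$ are just copies of $M_1\stab{w_1}$ over the wrong base ring, and the phrase ``levelwise stably free'' has no clear meaning here. The paper instead works with $S$-free (not $T$-free) resolutions equipped with Eisenbud's higher homotopies (Lemma~\ref{lem_preparationlemma}): under Tor-vanishing the tensor product of the two resolutions is again a resolution of $M_1\otimes M_2$, and one checks by hand that the higher homotopies combine via a Leibniz rule to give higher homotopies for $w_1+w_2$ on the product (Proposition~\ref{prop_tensorstabgeneral}). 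Folding then yields the desired isomorphism. Your slogan ``$\Omega^{2n}$ is compatible with derived operations'' is the right moral, and is made precise in the appendix (Theorem~\ref{thm_naturalmorphism}) via dg-modules over the Koszul algebra $K^\bullet_w$, but the concrete route in the body of the paper is the explicit homotopy computation.
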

As a special case, we obtain the description of the matrix factorizations associated to MOY-braids through Soergel
bimodules. Here by a MOY-braid we mean a concatenation of MOY-graphs as in Figure \ref{fig:moybraid}. For
$i_1,...,i_l\in\{1,2,...,m-1\}$ we write $s_{i_1} s_{i_2}\cdots s_{i_l}$ for the concatenation from top to bottom of
  $s_{i_1}$, $s_{i_2}$, ..., $s_{i_l}$. For example, in this notation we have $\Gamma_0 = s_1 s_2 s_1$, where $\Gamma_0$
  is the MOY-graph depicted in Figure \ref{fig:examplerelation}. 
\begin{figure}[h]\begin{center}
\begin{tikzpicture}
\draw[oredgem] (0cm,0cm) node[scale=0.75,below]{$1$} -- (0cm,3cm);
\draw[oredgem] (1cm,0cm) node[scale=0.75,below]{$2$} -- (1cm,3cm);
\draw[oredgem] (2cm,0cm) node[scale=0.75,below]{$i-1$} -- (2cm,3cm);
\begin{scope}[xshift=3cm]
\draw[oredgem] (0cm,0cm) node[scale=0.75,below]{$i$} -- (1cm,1cm);
\draw[oredgem] (2cm,0cm) node[scale=0.75,below]{$i+1$} -- (1cm,1cm);
\draw[oredgem] (1cm,1cm) -- node[scale=0.4, right=3pt]{$2$} (1cm,2cm); % wide edge
\draw[oredgem] (1cm,2cm) -- (0cm,3cm);
\draw[oredgem] (1cm,2cm) -- (2cm,3cm);
\end{scope}
\draw[oredgem] (6cm,0cm) node[scale=0.75,below]{$i+2$} -- (6cm,3cm);
\draw[oredgem] (7cm,0cm) node[scale=0.75,below]{$m-1$} -- (7cm,3cm);
\draw[oredgem] (8cm,0cm) node[scale=0.75,below]{$m$} -- (8cm,3cm);
\node[scale=0.5] at (1.5cm,1.5cm) {$\cdots$};
\node[scale=0.5] at (6.5cm,1.5cm) {$\cdots$};
\end{tikzpicture}
\end{center}
\caption{\small{Basic MOY-braid $\sigma_i$}}
\label{fig:moybraid}
\end{figure}
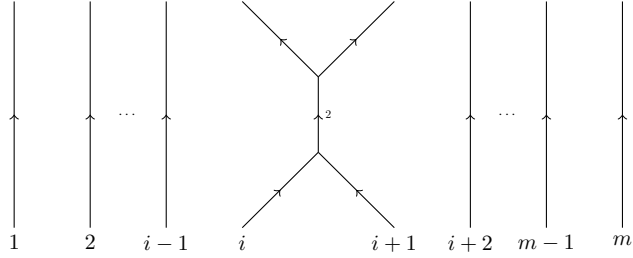

\begin{cor*}[see Corollary \ref{cor:krofamoybraid}]
The matrix factorization associated to a MOY-braid $s_{i_1} s_{i_2} \cdots s_{i_l}$ is canonically homotopy equivalent to the
stabilization of the Soergel bimodule $B_{i_1}\otimes B_{i_2}\otimes...\otimes B_{i_l}$ associated to the braid. 
\end{cor*}

Later in Section \ref{sec_description} we will give the definition of the $B_i$ and the category of Soergel bimodules;
for now it is not necessary to know them. The important thing to realize is that the corollary implies a
bunch of relations up to homotopy between the matrix factorizations associated to MOY-braids, namely those which are
already true on the level of the corresponding Soergel bimodules. The combinatorics of these modules is quite well
understood in terms of the Hecke algebra $\bfH_m(q)$ of the symmetric group: sending the Kazhdan-Lusztig basis element
$\ul{\H}_i$ to the class of the Soergel bimodule $B_i$ induces an isomorphism of rings between the (generic)
Hecke-algebra and the split Grothendieck ring of the category of Soergel bimodules. Relations in the Hecke algebra
therefore correspond to relations between Soergel bimodules, and when applying the stabilization functor these yield
relations between matrix factorizations appearing in the construction of Khovanov and Rozansky.

For example, it follows directly from Theorem \ref{thm_2} and the known equality 
\begin{align}\label{eq:heckealgs3}
\KL{s}\KL{t}\KL{s} - \KL{s}\ & =\ \KL{t}\KL{s}\KL{t} - \KL{t}\ =\ \KL{sts}
\end{align}
for the Hecke-algebra of $\frS_3 = \langle s,t\ |\ s^2=t^2=e,\ sts=tst\rangle$ 
that there is a homotopy equivalence $$\KR(\Gamma_0)\oplus \KR(\Gamma_1)\ \simeq\ \KR(\Gamma_2)\oplus \KR(\Gamma_3),$$
where $\Gamma_i$, $i=0,1,2,3$ are depicted in Figure \ref{fig:examplerelation}. Note that though the relation is
elementary in the Hecke algebra, it requires a substantial amount of direct calculations to verify it in $\HMF$.

\begin{figure}[h]\begin{center}
\begin{tikzpicture}[scale=0.5]
\draw[oredgem] (0,0) -- (1,1);
\draw[oredgem] (2,0) -- (1,1);
\draw[oredgem] (1,1) -- node[scale=0.6, right=3pt]{$2$} (1,2); % wide edge
\draw[oredgem] (1,2) -- (0,3);
\draw[oredgee] (1,2) -- (2,3);
\draw[oredgem] (4,0) -- (4,3);
\begin{scope}[xshift=2cm, yshift=3cm]
\draw[oredgem] (-2,0) -- (-2,3);
\draw (0,0) -- (1,1);
\draw[oredgem] (2,0) -- (1,1);
\draw[oredgem] (1,1) -- node[scale=0.6, right=3pt]{$2$} (1,2); % wide edge
\draw[oredgee] (1,2) -- (0,3);
\draw[oredgem] (1,2) -- (2,3);
\end{scope}
\begin{scope}[yshift=6cm]
\draw[oredgem] (0,0) -- (1,1);
\draw (2,0) -- (1,1);
\draw[oredgem] (1,1) -- node[scale=0.6, right=3pt]{$2$} (1,2); % wide edge
\draw[oredgem] (1,2) -- (0,3);
\draw[oredgem] (1,2) -- (2,3);
\draw[oredgem] (4,0) -- (4,3); 
\end{scope}
\node at (2,-1){$\Gamma_0$};
\node at (5.5,4.5){$\oplus$};
\begin{scope}[xshift=7cm]
\draw[oredgem] (1,0) -- (2,1);
\draw[oredgem] (3,0) -- (2,1);
\draw[oredgem] (2,1) -- node[scale=0.6, right=3pt]{$2$} (2,8);
\draw[oredgem] (2,8) -- (1,9);
\draw[oredgem] (2,8) -- (3,9);
\draw[oredgem] (0,0) -- (0,9);
\end{scope}
\node at (8.5,-1){$\Gamma_1$};
\node at (12,4.5) {$=$};
\begin{scope}[xshift=14cm]
\draw[oredgem] (2,0) -- (3,1);
\draw[oredgem] (4,0) -- (3,1);
\draw[oredgem] (3,1) -- node[scale=0.6, right=3pt]{$2$} (3,2); % wide edge
\draw[oredgee] (3,2) -- (2,3);
\draw[oredgem] (3,2) -- (4,3);
\draw[oredgem] (0,0) -- (0,3);
\begin{scope}[yshift=3cm]
\draw[oredgem] (0,0) -- (1,1);
\draw (2,0) -- (1,1);
\draw[oredgem] (1,1) -- node[scale=0.6, right=3pt]{$2$} (1,2); % wide edge
\draw[oredgem] (1,2) -- (0,3);
\draw[oredgee] (1,2) -- (2,3);
\draw[oredgem] (4,0) -- (4,3);
\end{scope}
\begin{scope}[yshift=6cm]
\draw (2,0) -- (3,1);
\draw[oredgem] (4,0) -- (3,1);
\draw[oredgem] (3,1) -- node[scale=0.6, right=3pt]{$2$} (3,2); % wide edge
\draw[oredgem] (3,2) -- (2,3);
\draw[oredgem] (3,2) -- (4,3);
\draw[oredgem] (0,0) -- (0,3);
\end{scope}
\node at (2,-1){$\Gamma_2$};
\node at (5.5,4.5){$\oplus$};
\begin{scope}[xshift=7cm]
\draw[oredgem] (0,0) -- (1,1);
\draw[oredgem] (2,0) -- (1,1);
\draw[oredgem] (1,1) -- node[scale=0.6, right=3pt]{$2$} (1,8);
\draw[oredgem] (1,8) -- (0,9);
\draw[oredgem] (1,8) -- (2,9);
\draw[oredgem] (3,0) -- (3,9);
\end{scope}
\node at (8.5,-1){$\Gamma_3$};
\end{scope}
\end{tikzpicture}
\end{center}
\caption{Basic MOY-relation}
\label{fig:examplerelation}
\end{figure}
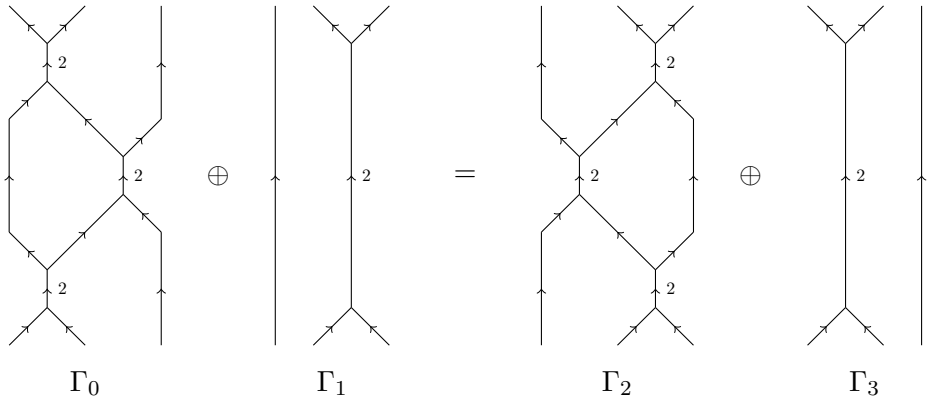

Theorem \ref{thm_2} reveals the following parallel between the constructions of Khovanov-Rozansky homology and the knot
invariant of Mazorchuk-Stroppel (see \cite{StroppelMazorchuk}): In the construction of Mazorchuk and Stroppel they
associate to a MOY-braid a projective functor on some graded version of the Bernstein-Gelfand-Gelfand category $\calO$,
and this projective functor corresponds to the associated Soergel bimodule under Soergels combinatorial functor
$\VV$ (see \cite{SoergelHarishChandra}). Hence, both constructions begin by associating to a MOY-braid something which is equivalent to the Soergel bimodule
associated to the braid. Next, the construction of Khovanov-Rozansky proceeds by stabilizing the given Soergel bimodule,
while Mazorchuk-Stroppel restrict the projective functor under consideration to some parabolic subcategories 
$\calO^\frp$. We will come back to the meaning of these second steps in the construction of Khovanov-Rozansky and
Mazorchuk-Stroppel soon. 

The Hecke algebra relations we get from Theorem \ref{thm_2} are not enough to category the Reshetikhin-Turaev link
invariant for $\calU_q(\sl(n))$. To understand this, recall the description 
of the endomorphism algebra of the $m$-th tensor power of the vector representation $V$ of the quantum group
$\calU_q(\sl(n))$ for generic $q$. We have a surjective map
\begin{equation*}\begin{tikzpicture}[description/.style={fill=white,inner sep=2pt}]
    \matrix (m) [matrix of math nodes, row sep=3em,
                 column sep=2.5em, text height=1.5ex, text depth=0.25ex,
                 inner sep=0pt, nodes={inner xsep=0.3333em, inner ysep=0.3333em}]
    {
      \bfH_m(q) &&  \End_{\calU_q(\sl(n))}(V^{\otimes m})\\
    };
    \draw[->]  (m-1-1) -- node[above,scale=0.75]{$\tau_k$} (m-1-3);
\end{tikzpicture}\end{equation*}
sending the Kazhdan-Lusztig element $\ul{\H}_i$ to the intertwiner
\begin{equation*}
 V^{\otimes m}\xrightarrow{\id\otimes
  \left(\begin{tikzpicture}[baseline=1.8mm,scale=0.25] 
\draw[oredgem] (0,0) -- (1,1);
\draw[oredgem] (2,0) -- (1,1);
\draw[oredgem] (1,1) -- node[scale=0.4, right=3pt]{$2$} (1,2); % wide edge
\end{tikzpicture}\right)\otimes\id} V^{\otimes (i-1)}\otimes (V\wedge V)\otimes V^{\otimes
(m-i-1)}
\xrightarrow{\id\otimes\left(
\begin{tikzpicture}[baseline=1.8mm,scale=0.25]
\draw[oredgem] (1,0) -- node[scale=0.4, right=3pt]{$2$} (1,1);
\draw[oredgem] (1,1) -- (2,2);
\draw[oredgem] (1,1) -- (0,2);
\end{tikzpicture}\right)\otimes\id} V^{\otimes m}
\end{equation*}
Thus, the special intertwiners
$$\begin{tikzpicture}[baseline=2.4mm,scale=0.25]
\draw[oredgem] (0cm,0cm) -- (0cm,3cm);
\draw[oredgem] (1cm,0cm) -- (1cm,3cm);
\draw[oredgem] (3cm,0cm) -- (3cm,3cm);
\begin{scope}[xshift=4cm]
\draw[oredgem] (0cm,0cm) -- (1cm,1cm);
\draw[oredgem] (2cm,0cm) -- (1cm,1cm);
\draw[oredgem] (1cm,1cm) -- node[scale=0.4, right=3pt]{$2$} (1cm,2cm); % wide edge
\draw[oredgem] (1cm,2cm) -- (0cm,3cm);
\draw[oredgem] (1cm,2cm) -- (2cm,3cm);
\end{scope}
\draw[oredgem] (7cm,0cm) -- (7cm,3cm);
\draw[oredgem] (9cm,0cm) -- (9cm,3cm);
\draw[oredgem] (10cm,0cm) -- (10cm,3cm);
\node[scale=0.5] at (2cm,1.4cm) {$\cdots$};
\node[scale=0.5] at (8cm,1.4cm) {$\cdots$};
\end{tikzpicture}\ =\ \tau_m(\H_i)$$
satisfy the Hecke algebra relations, and so should their categorifications. Theorem
\ref{thm_2} shows that this is indeed true for Khovanov-Rozansky's construction. However, since $\tau_m$ is not
injective in general, there are more relations in $\End_{\calU_q(\sl(n))}(V^{\otimes m})$, namely those coming from
elements of $\ker(\tau_m)$; these should be fulfilled in the categorification, too. We call the relations coming from
$\bfH_m(q)/\ker(\tau_m)$ MOY-relations for short. On the level of Soergel bimodules, the MOY-relations are not
satisfied, because the combinatorics of Soergel bimodules, i.e. the Grothendieck ring of the category of Soergel
bimodules, is given by $\bfH_m(q)$. If we want all MOY-relations to be fulfilled in Khovanov-Rozansky homology, we
therefore have to show that by stabilizing Soergel bimodules we obtain the missing relations from
$\ker(\tau_m)$. Concretely, the kernel of $\tau_m$ is generated by those Kazhdan-Lusztig basis elements $\ul{\H}_w$ for
permutations $w\in\frm_m$ whose Robinson-Schensted tableau has more than $n$ rows; we therefore have to show that the
stabilizations of the Soergel bimodules corresponding to these elements vanish, and this is the content of the following 
theorem: 
\begin{theorem}[see Theorem \ref{thm_killbadbimodule}]
Fix $n\geq 2$ and let $w\in\frS_m$ be such that the Robinson-Schensted tableau of $w$ has more than $n$ rows. Then the
indecomposable Soergel bimodule $B_w$ is of finite projective dimension considered as a module over the ring
$$\CC[x_1,...,x_m,y_1,...,y_m] / \left(\sum\limits_{i=1}^{m} x_i^{n+1} - y_i^{n+1}\right),$$ and hence
$$B_w\stab{\sum\limits_{i=1}^{m} x_i^{n+1} - y_i^{n+1}}\ \simeq\ 0.$$
In particular, the stabilizations of Soergel bimodules satisfy the MOY-relations.
\end{theorem}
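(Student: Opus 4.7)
The plan is to reduce to an explicit computation for the Soergel bimodule $B_{w_0^{(n+1)}}$ attached to the longest element of a Young subgroup $\frS_{n+1}\subset\frS_m$, and then propagate using the tensor-product compatibility of the stabilization functor from Theorem \ref{thm_2}.

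\textbf{Reduction to a single case.} By Theorem \ref{thm_2}, the stabilization functor $(-)\stab{w'}$ with $w':=\sum_{i=1}^m(x_i^{n+1}-y_i^{n+1})$ commutes with the bimodule tensor product $\otimes_C$ whenever the latter corresponds to a MOY-braid, so the class of Soergel bimodules with trivial stabilization forms a tensor ideal closed under direct summands. On the Hecke-algebra side, $\KL{w_0^{(n+1)}}$ lies in $\ker(\tau_m)$ since $\wedge^{n+1}V=0$ for $\dim V=n$, and by standard Kazhdan-Lusztig cell theory for $\frS_m$ the two-sided ideal generated by $\KL{w_0^{(n+1)}}$ coincides with the $\mathbb{Z}[q^{\pm 1}]$-span of those $\KL{w}$ whose Robinson-Schensted tableau has more than $n$ rows. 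Positivity of KL structure constants lifts this categorically: every indecomposable $B_w$ with $\text{RS}(w)$ having more than $n$ rows appears as a direct summand of some $B'\otimes_C B_{w_0^{(n+1)}}\otimes_C B''$. Since trivial stabilization is equivalent to finite projective dimension over $R=\CC[x,y]/(w')$, this reduces the theorem to showing $\projdim_R B_{w_0^{(n+1)}}<\infty$.

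\textbf{Description of the bimodule and containment $w'\in I$.} By Soergel's description, $B_{w_0^{(n+1)}}$ is the quotient $S/I$ where $S=\CC[x_1,\dots,x_m,y_1,\dots,y_m]$ and $I$ is generated by $e_k(x_1,\dots,x_{n+1})-e_k(y_1,\dots,y_{n+1})$ for $k=1,\dots,n+1$ together with $x_j-y_j$ for $j>n+1$. Using Newton's identity $p_{n+1}=\pm(n+1)\,e_{n+1}+(\text{polynomial in }e_1,\dots,e_n)$ and the elementary telescoping identity
\[P(e(x))-P(e(y))\ =\ \sum_{k}Q_k(e(x),e(y))\cdot\bigl(e_k(x)-e_k(y)\bigr),\]
one sees that $p_{n+1}(x_1,\dots,x_{n+1})-p_{n+1}(y_1,\dots,y_{n+1})\in I$; the remaining contributions $x_j^{n+1}-y_j^{n+1}$ for $j>n+1$ visibly lie in $(x_j-y_j)\subset I$. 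Hence $w'\in I$, so $B_{w_0^{(n+1)}}$ is naturally an $R$-module.

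\textbf{Finite resolution via Koszul.} Inside $R$ the relation $w'=0$ combined with Newton's identity allows one to solve for $e_{n+1}(x_1,\dots,x_{n+1})-e_{n+1}(y_1,\dots,y_{n+1})$, whose coefficient $\pm(n+1)$ in $w'$ is a unit, as an $R$-linear combination of the remaining generators of $I$. Consequently the image $\bar{I}=I\cdot R$ is already generated by the $m-1$ elements $e_k(x_1,\dots,x_{n+1})-e_k(y_1,\dots,y_{n+1})$ for $k=1,\dots,n$ and $x_j-y_j$ for $j>n+1$. Since $\dim R=2m-1$ and $\dim(R/\bar{I})=\dim(S/I)=m$, the ideal $\bar{I}$ has codimension exactly $m-1$. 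The ring $R$ is Cohen-Macaulay (being a hypersurface in a polynomial ring), so an ideal generated by as many elements as its codimension is automatically a complete intersection; the associated Koszul complex provides a finite projective $R$-resolution of $B_{w_0^{(n+1)}}$ of length $m-1$, establishing $\projdim_R B_{w_0^{(n+1)}}\leq m-1$.

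\textbf{Main obstacle.} The Koszul computation in the final step is routine once the preceding ingredients are in place; the substantive point is the reduction step, which requires translating the Hecke-algebra two-sided ideal identification (elements with RS of more than $n$ rows $=$ ideal generated by $\KL{w_0^{(n+1)}}$) into a genuine direct-summand assertion for the indecomposable Soergel bimodules, relying on the positivity of Kazhdan-Lusztig structure constants available for type $A$.
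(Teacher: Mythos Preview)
Your proof is correct, and the reduction step via Kazhdan--Lusztig two-sided cells is exactly the paper's Step~1 (Corollary~\ref{cor_crucial}). For the specific case, however, your route diverges from the paper's. The paper does not compute a resolution of $B_{w_0^{(n+1)}}$ over $R$ directly; instead it further decomposes the corresponding MOY-graph using Theorem~\ref{thm_glue}, peeling off the unentangled strands and reducing to the matrix factorization $\KR(\Gamma_k^k)$ over the \emph{smaller} base ring $\Sym(\XX^\cup|\YY^\cup)$ with $|\XX|=|\YY|=k=n+1$. There the key observation (Proposition~\ref{prop_trivialcat}) is that $\Sigma\XX^{n+1}-\Sigma\YY^{n+1}\notin\frm^2$ precisely when $k>n$, so the quotient ring is regular by Proposition~\ref{prop_stayregular} and the entire category $\HMF$ vanishes. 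Your argument uses the same Newton's-identity fact --- the unit coefficient $\pm(n+1)$ of $e_{n+1}$ in $p_{n+1}$ --- but packages it differently: in $R$ one generator of $I$ becomes redundant, so $B_{w_0^{(n+1)}}$ is a complete intersection of codimension $m-1$ in the Cohen--Macaulay ring $R$ and hence admits a finite Koszul resolution. Your version is more hands-on and avoids the MOY-graph bookkeeping; the paper's version is more categorical and yields the stronger statement that an entire $\HMF$ category is trivial. One small tightening for your final step: the assertion ``codimension equals number of generators implies regular sequence'' is cleanest argued by observing that $(w',g_1,\dots,g_{m-1})=(g_1,\dots,g_m)$ as ideals of $S$, so $w',g_1,\dots,g_{m-1}$ is a regular sequence in the regular ring $S$, whence $g_1,\dots,g_{m-1}$ is regular in $R=S/(w')$.
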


As an example, take $n=2$ and $m=3$. In this case, \eqref{eq:heckealgs3} yields $\ul{\H}_{s}\ul{\H}_{t}\ul{\H}_{s} =
\ul{\H}_{sts} + \ul{\H}_{s}$ in the Hecke algebra of $\frS_3$. The Robinson-Schensted tableau of $sts$ has $3$ rows,
so we get $C(\Gamma_0)\simeq C(\Gamma_3)$ and $C(\Gamma_1)\simeq C(\Gamma_2)$ (see Figure \ref{fig:examplerelation}).

For the invariant of Mazorchuk and Stroppel the situation is similar: the projective functors associated to MOY-braids
 satisfy the Hecke-algebra relations, but not the extra relations coming from $\ker(\tau_m)$. To obtain the missing
 relations, the functors have to be restricted to certain parabolic subcategories $\calO^\frp$ of $\calO$.

\vskip3mm Theorems 1-3 provide the first steps for a connection between Khovanov-Rozansky- and Stroppel-Mazorchuk
homology. However, we cannot state a precise comparison theorem. On the Hecke algebra level of Soergel
bimodules/projective functors on $\calO$, the connection is clear. However, it is not clear to the author in which way
restriction from $\calO_0$ to parabolic subcategories $\calO\frp$ corresponds to the stabilization of the
corresponding Soergel bimodule with respect to $\Sigma x_i^{n+1} - \Sigma y_i^{n+1}$, even though the \textit{effect} of
both operations is the same.

We can informally summarize the results of this work in the commutative diagram \ref{fig:bigpicture}.
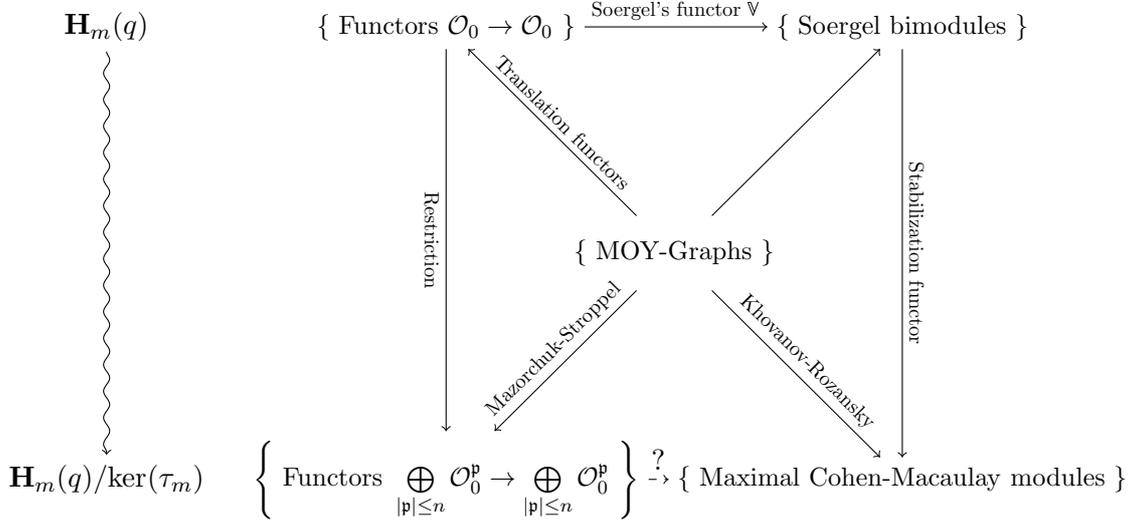
\begin{figure}[h]\begin{center}
\begin{tikzpicture}[scale=0.3]
\begin{scope}[xshift=6cm]
\node (funcprincipalblock) [fill=white, scale=0.9] at (-10,+10)   {$\left\{\ \text{Functors }\calO_0\to\calO_0\ \right\}$};
\node (funcparabolic)      [fill=white,scale=0.9] at (-10,-10)   {$\left\{\ \text{Functors }\bigoplus\limits_{|\frp|\leq
    n}\calO_0^\frp\to\bigoplus\limits_{|\frp|\leq n} \calO_0^\frp\ \right\}$}; 
\node (soergelbimodules)   [fill=white,scale=0.9] at (+10,+10)   {\{\ Soergel bimodules\ \}};
\node (moygraph)           [fill=white,scale=0.9, inner sep=10pt] at (0,0)       {\{\ MOY-Graphs\ \}};
\node (mcmmodules)         [fill=white,scale=0.9] at (+10,-10)   {\{\ Maximal Cohen-Macaulay modules\ \}};
\draw[oredgee] (moygraph) -- node[sloped,scale=0.7, above] {Translation functors} (funcprincipalblock);
\draw[oredgee] (moygraph) -- node[sloped,scale=0.7, above] {Khovanov-Rozansky}    (mcmmodules);
\draw[oredgee] (moygraph) -- node[sloped,scale=0.7, above] {Mazorchuk-Stroppel}   (funcparabolic);
\draw[oredgee] (moygraph) -- (soergelbimodules);
\draw[oredgee] (funcprincipalblock) -- node[sloped,scale=0.7,below] {Restriction} (funcparabolic);
\draw[dashed,oredgee] (funcparabolic) -- node[sloped,above] {?} (mcmmodules);
\draw[oredgee] (soergelbimodules) -- node[sloped,scale=0.7,above] {Stabilization functor} (mcmmodules);
\draw[oredgee] (funcprincipalblock) -- node[sloped,scale=0.7,above] {Soergel's functor $\VV$}
(soergelbimodules);

\node (parab) at ($(funcparabolic.west) + (-6cm,0)$) {$\bfH_m(q)/\ker(\tau_m)$};
\node (hecke) at (parab |- funcprincipalblock) {$\bfH_m(q)$};

\draw[->,decorate,decoration={snake,amplitude=0.4mm}] (hecke)  -- (parab);
\end{scope}
\end{tikzpicture}
\end{center}
\caption{Overview over the results of this work.}
\label{fig:bigpicture}
\end{figure}

\textbf{Structure: } The paper is organized as follows. In Section \ref{sec_commalg} we recall some basics about local
graded commutative rings, focusing on how to relate it to the better known case of ungraded local commutative rings. In
Section \ref{sec_stabilization} we introduce the notion of graded maximal Cohen-Macaulay modules over Gorenstein rings
and recall the well-known connection between graded maximal Cohen-Macaulay modules over a hypersurface and graded matrix
factorizations. We then introduce the stabilization functor and study the compatibility of stabilization with tensor
products of matrix factorizations. In Section \ref{sec_description} we use the techniques developed 
so far to simplify the construction of Khovanov-Rozansky using the stabilization functor, proving Theorems
\ref{thm_1} and \ref{thm_2}. In Section
\ref{sec_duality} we study the compatibility of the stabilization functor with the duality for matrix factorizations,
and apply the results we get in Section \ref{sec_closing} to describe braid closure as some kind of stabilized
Hochschild-cohomology (see \cite{WebsterCanopolis}). In all these sections we focus on motivation, examples and explicit calculations, while not
trying to give all results in the greatest possible generality. In contrast to that, there is an appendix where we
reprove almost all statements in a much more general situation using the language of derived categories. This appendix
can be read almost independently of the rest of the paper; however, its bigger generality and abstraction might prevent
the reader from getting the motivation for what is done, and this is why we didn't work in this more abstract setting
right from the beginning.

\vskip3mm\textbf{Acknowledgements:}\ I want to thank all people who helped and supported me during the process of
writing this thesis. My special thanks go to my advisor Prof.~Dr.~Catharina~Stroppel for the countless helpful and
interesting discussions about the subject. 

%%%%%%%%%%%%%%%%%% KAPITEL %%%%%%%%%%%%%%%%%

\newpage

\renewcommand{\theequation}{\thesubsection-\arabic{equation}}

\section{Basics on local graded commutative algebra}\label{sec_commalg} 

\setcounter{prop}{0}
\renewcommand\theprop{\arabic{section}.\arabic{subsection}.\arabic{prop}}
In this section, we will give a short introduction to local graded commutative algebra. All of the results we recall
here are well-known at least in the ungraded case, so we concentrate on explaining how they can rigorously be upgraded
to the graded case.  
\subsection{Notation}
In the following, we always denote by $R\ld = \bigoplus_{n\in\ZZ} R_n$ a Noetherian graded commutative ring which is
\textit{local} in the sense that there is precisely one graded maximal ideal $\frm$, and we let $k\ld := R\ld /
\frm$ be its residue class ring. Note that any ungraded local ring can be considered as a local graded ring concentrated
in degree zero, so the ungraded situation is a special case of the graded situation. Next, let $R$ be the ungraded
ring underlying $R\ld$, and let $R\ld\Mod$ denote the abelian category of all graded $R\ld$-modules with grading
preserving morphisms of $R$-modules. The set of morphisms between graded $R\ld$-modules $M\ld$ and $N\ld$ is denoted by
$\hom_{R\ld}(M\ld,N\ld)$. The subcategory of finitely generated graded $R\ld$-modules is denoted by $R\ld\mod$. Next, let
$\langle d\rangle: R\ld\Mod\to R\ld\Mod$ be the automorphism given by grading shift, i.e. $M\langle d\rangle_k :=
M_{k+d}$. If $M\ld$ is a graded $R\ld$-module, we denote by $M$ the underlying $R$-module. An $R\ld$-module $M\ld$ is
called \textit{free} (\textit{of finite rank}) if it is isomorphic to a (finite) direct sum of modules of the form 
$R\langle d\rangle\ld$ for some $d\in\ZZ$.

For $R\ld$-modules $M\ld$, $N\ld$ there is a \textit{graded homomorphism space} $\hom_{R}(M\ld,N\ld)\ld$ defined by
$\hom_{R}(M\ld,N\ld)_k := \hom_{R\ld}(M\ld,N\ld\langle k\rangle)$. An element $f\in\hom_{R}(M\ld,N\ld)_k$ is
called a \textit{homomorphism of degree $k$}; this is just a homomorphism of the underlying $R$-modules raising
the degree of each element precisely by $k$. There is a natural action of $R\ld$ on $\hom_{R}(M\ld,N\ld)\ld$ making
it into a graded $R\ld$-module. Note that by our convention we have
$\hom_{R\ld}(M\ld,N\ld)=\hom_{R}(M\ld,N\ld)_0$, but $\hom_{R}(M\ld,N\ld) = \bigoplus_{k\in\ZZ}
\hom_{R\ld}(M\ld,N\ld\langle k\rangle)$.  Note also that there is a natural homomorphism
$\hom_{R}(M\ld,N\ld)\hookrightarrow \hom_{R}(M,N)$ whose image consists of all homomorphisms of $R$-modules $M\to N$
that can be written as a finite sum of homomorphisms of graded $R\ld$-modules $M\ld\to N\ld\langle k\rangle$. In
general, there might be homomorphisms of $R$-modules $M\to N$ which cannot be written in this way, but if $M\ld$ is
finitely generated over $R\ld$, the above map is an isomorphism. Homomorphisms of graded modules $M\p\ld\to M\ld$ and
$N\ld\to N\p\ld$ induce a homomorphism of graded modules $\hom_{R}(M\ld,N\ld)\ld\to\hom_{R}(M\p\ld,N\p\ld)\ld$, and in
this way $\hom_{R}(-,-)\ld$ becomes a biadditive functor $R\ld\Mod\op\times R\ld\Mod\to R\ld\Mod$. Let
$\ext\ua_{R}(-,-)\ld$ denote the family of derived functors. Since the functor $M\ld\to M_0$ from
$R\ld\Mod$ to $\ZZ\Mod$ is exact, we have that $\ext\ua_{R}(-,-)_0$ is the family of derived functors of $\hom_{R}(-,-)_0 = 
\hom_{R\ld}(-,-)$, so for any two $R\ld$-modules $M\ld$, $N\ld$ there is a natural identification
$\ext\ua_{R}(M\ld,N\ld)_0 = \ext\ua_{R\ld}(M\ld,N\ld)$. As above, for finitely generated $M\ld$ we have
$\ext\ua_{R}(M\ld,N\ld)\cong\ext\ua_{R}(M,N)$, but in general these two $R\ld$-modules may differ.

For graded $R\ld$-modules $M\ld$, $N\ld$ define the \textit{tensor product} $M\ld\otimes_{R\ld} N\ld$ by
$(M\ld\otimes_{R\ld} N\ld)_k := \left(\bigoplus\limits_{p+q=k} M_p\otimes_\ZZ N_q\right)/_\sim$, where $\sim$ is generated
by $x.m\otimes n\sim (-1)^{r(p-r)}m\otimes x.m$ for $x\in R_r$ and $m\in M_{p-r}$. This gives rise to an additive
bifunctor $R\ld\Mod\times R\ld\Mod\to R\ld\Mod$, and we denote by $\tor\ua_{R}(-,-)\ld$ the family of derived functors
of this functor. 

\subsection{Graded vs. Ungraded}

Most of the theorems on ungraded local Noetherian rings are true for local graded Noetherian rings. One reason for this
is that for a finitely generated graded $M\ld$ over a local Noetherian graded ring $(R\ld,\frm\ld)$ the map $M\ld\mapsto
M_\frm$ takes many numerical invariants like the Betti-numbers, the dimension, the depth or the projective dimension of
the graded $R\ld$-module $M\ld$ into the ones for the ungraded $R_\frm$-module $M_\frm$. This makes it 
 possible to carry over results from the ungraded case stating relations between these numerical invariants (the
 Auslander-Buchsbaum formula, for example) to the graded case without having to copy the proof verbatim. The material of
 this section is completely contained in \cite{BrunsHerzog}, but for the reader's 
 convenience we will reproduce it here and provide some details not contained in loc.cit.

To get a feeling why the essential information carried by $M\ld$ is already encoded in $M_\frm$, we think about why the
vanishing of $M_\frm$ implies the vanishing of $M\ld$.

\begin{fact}\label{fact_locdoesnotkillanything}
Let $M\ld$ be a graded module over the graded ring $R\ld$, and let $\frp$ be a (not necessarily homogeneous) prime ideal
in $R$. Then $M_\frp=0$ if and only if $M_{(\frp)}=0$. Here $M_\frp$ denotes the localization of $M$ with
respect to $R\setminus\frp$ (an ungraded module over the ungraded ring $R_{\frp}$), and $M_{(\frp)}$ denotes the
localization of $M$ with respect to $\bigcup_{k\in\ZZ} R_k\setminus\frp_k$. 

In particular, if $(R\ld,\frm\ld)$ is a local graded ring, then $M\ld=0$ if and only if $M_\frm=0$. 
\end{fact}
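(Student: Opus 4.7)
The plan is to introduce the homogenization $\frp^\ast\subseteq\frp$, defined as the ideal of $R\ld$ generated by the homogeneous elements of $\frp$ (equivalently, the set of all homogeneous elements of $\frp$, which is already an ideal). A short argument shows that $\frp^\ast$ is a homogeneous prime ideal: if $x,y$ are homogeneous with $xy\in\frp^\ast\subseteq\frp$, then $x\in\frp$ or $y\in\frp$, and homogeneity forces $x\in\frp^\ast$ or $y\in\frp^\ast$. Since a homogeneous element lies in $\frp$ if and only if it lies in $\frp^\ast$, the multiplicative set of homogeneous elements outside $\frp$ coincides with that outside $\frp^\ast$, and $M_{(\frp)}$ is well-defined.

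For the direction $M_{(\frp)}=0\Rightarrow M_\frp=0$, I would take $m\in M$, decompose $m=m_{d_1}+\cdots+m_{d_r}$ into homogeneous components, and pick by hypothesis homogeneous annihilators $t_i\notin\frp$ of $m_{d_i}$. The product $t:=t_1\cdots t_r$ is homogeneous, annihilates $m$, and lies outside $\frp^\ast$ by primality, hence outside $\frp$ since every homogeneous element of $\frp$ lies already in $\frp^\ast$. For the reverse direction the crucial step is upgrading an arbitrary annihilator to a homogeneous one: given a homogeneous $m$ of degree $d$ and $s\notin\frp$ with $sm=0$, decompose $s=\sum_j s_{e_j}$; then $sm=\sum_j s_{e_j}m$ is a sum of elements of pairwise distinct degrees $d+e_j$, so each $s_{e_j}m=0$. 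Not all $s_{e_j}$ can lie in $\frp$, for otherwise they would all lie in $\frp^\ast$ and hence so would $s$, contradicting $s\notin\frp$. Thus some $s_{e_j}$ is homogeneous, outside $\frp$, and annihilates $m$; for a non-homogeneous $m$ one applies this to every homogeneous component and multiplies the resulting annihilators as in the first direction.

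For the ``in particular'' statement I would specialize to $\frp=\frm$ and additionally observe that in a graded local ring every homogeneous element $s\notin\frm$ is a unit: indeed the graded ideal $(s)$ is not contained in $\frm$ and therefore must equal $R\ld$ by uniqueness of the graded maximal ideal. Consequently $M_{(\frm)}$ arises from $M$ by inverting elements that are already invertible, so $M_{(\frm)}=M$, and the chain of implications $M_\frm=0\Rightarrow M_{(\frm)}=0\Rightarrow M=0\Rightarrow M\ld=0$ closes the argument; the converse is trivial. There is no real obstacle here, the only conceptual step being the passage from $\frp$ to $\frp^\ast$, which is precisely the mechanism that allows ungraded-localization information to control the graded situation.
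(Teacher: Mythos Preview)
Your argument is correct and, for the nontrivial direction $M_\frp=0\Rightarrow M_{(\frp)}=0$, it is exactly the paper's proof: pick a homogeneous $m$, take an annihilator $s\notin\frp$, and observe that each homogeneous component $s_{e_j}$ already kills $m$, while at least one of them must lie outside $\frp$. Your introduction of $\frp^\ast$ is a convenient way to phrase ``a homogeneous element lies in $\frp$ iff it lies in the homogeneous part of $\frp$'', which the paper uses implicitly.

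The only genuine difference is in the easy direction $M_{(\frp)}=0\Rightarrow M_\frp=0$. You argue element-by-element, producing for each $m\in M$ a homogeneous annihilator outside $\frp$; the paper instead notes in one line that $S=\bigcup_k R_k\setminus\frp_k\subset R\setminus\frp$, so $M_\frp$ is a further localization of $M_S=M_{(\frp)}$ and hence vanishes. Your approach works but is more labor than needed. Conversely, your treatment of the ``in particular'' clause is more explicit than the paper's: you observe that homogeneous elements outside $\frm$ are units, whence $M_{(\frm)}=M$, whereas the paper leaves this step to the reader.
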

\begin{proof}
Let $S := \bigcup_{k\in\ZZ} R_k\setminus\frp_k$. We have $S\subset R\setminus\frp$, so $M_S=0$ implies $M_\frp =
M_{R\setminus\frp}=0$. Now assume $M_{R\setminus\frp}=0$. To show that $M_S=0$ it is sufficient to prove that any
homogeneous $m\in M_k$ vanishes in $M_S$. As it vanishes in $M_{R\setminus\frp}$, there is some $x\in R\setminus\frp$
such that $x.m=0$. By the homogeneity of $m$ it follows that $x_r.m=0$ for any homogeneous component $x_r$ of
$x$. However, since $x\notin\frp$, some homogeneous component $x_r\in R_r$ of $x$ has to lie in $R\setminus\frp$, and
thus in $R_r\setminus\frp_r\subset S$. Hence, $m$ is killed by an element in $S$, and therefore vanishes in $M_S$. 
\end{proof}

\begin{definition}\label{def_dim}
Let $(R\ld,\frm)$ be a local graded ring and let $M\ld$ be a finitely generated graded $R\ld$-module. The
\textit{dimension} of $M\ld$, denoted $\dim_{R\ld} M\ld$, is defined as the maximal $k$ such that there exists a chain
of homogeneous prime ideals $\frp_0\subsetneq\frp_1\subsetneq ...\subsetneq \frp_k$ such that $M_{(\frp_i)}\neq 0$ for
all $i=0,...,k$.  
\end{definition}
\begin{prop}\label{prop_dim}
In the situation of Definition \ref{def_dim}, we have $\dim_{R\ld} M\ld\ =\ \dim_{R_\frm} M_\frm$. 
\end{prop}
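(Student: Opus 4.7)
The plan is to rewrite both sides in terms of the annihilator and then reduce to a purely ring-theoretic statement about the graded local ring. Since $M\ld$ is finitely generated and graded, its annihilator $\Ann_{R\ld}(M\ld)$ is a homogeneous ideal, and by Fact~\ref{fact_locdoesnotkillanything} the condition $M_{(\frp)}\neq 0$ (for $\frp$ a homogeneous prime) is equivalent to $M_\frp\neq 0$, i.e.\ to $\Ann(M)\subseteq\frp$. Hence $\dim_{R\ld}M\ld$ is the supremum of lengths of chains of homogeneous primes in $R\ld$ containing $\Ann(M\ld)$, i.e.\ $\dim_{R\ld}(R\ld/\Ann M\ld)$ in the graded sense. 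On the other hand, using that annihilators commute with localization for finitely generated modules, $\dim_{R_\frm}M_\frm=\dim(R/\Ann M)_\frm$ in the usual sense. So it suffices to prove the graded-vs-ungraded dimension equality for the graded local ring $A\ld:=R\ld/\Ann(M\ld)$ itself, with maximal ideal $\frn:=\frm/\Ann M$.

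For the inequality $\dim_{A\ld}A\ld\leq \dim A_\frn$, any chain of homogeneous primes in $A\ld$ automatically lies inside $\frn$ (since $\frn$ is the unique graded maximal ideal), so it lifts to a chain of the same length in $A_\frn$. This direction is routine.

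The harder inequality $\dim_{A\ld}A\ld\geq \dim A_\frn$ needs a "homogenization" procedure. Given a chain of (not necessarily homogeneous) primes $\frq_0\subsetneq\frq_1\subsetneq\ldots\subsetneq\frq_k\subseteq\frn$ in $A$, assign to each $\frq_i$ the ideal $\frq_i^\ast$ generated by the homogeneous components of all elements of $\frq_i$; this is the largest homogeneous ideal contained in $\frq_i$, and a standard computation shows it is again a prime. Chainwise inclusion is preserved, producing a chain $\frq_0^\ast\subseteq\frq_1^\ast\subseteq\ldots\subseteq\frq_k^\ast$ of homogeneous primes in $A\ld$. Moreover each $\frq_i^\ast$ lies in $\supp(M\ld)$ (via the reduction above) because $\Ann(M\ld)$ is homogeneous and sits inside $\frq_i$, hence inside $\frq_i^\ast$.

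The main obstacle is that the homogenized chain need not be strict: some of the inclusions $\frq_i^\ast\subseteq\frq_{i+1}^\ast$ may become equalities. To fix this one invokes the classical fact (Bruns--Herzog, Thm.~1.5.8) that for any prime $\frq$ of a graded ring one has $\dim(A_\frq/\frq^\ast A_\frq)\leq 1$, which implies that a jump $\frq_i^\ast=\frq_{i+1}^\ast$ forces $\frq_i=\frq_i^\ast$ and $\height(\frq_{i+1}/\frq_i)=1$. Hence at most one step of the original chain can collapse over each homogeneous prime, and refining using new homogeneous primes between $\frq_i^\ast$ and $\frq_{i+1}^\ast$ (which exist by standard graded going-down arguments applied to $A\ld/\frq_i^\ast$) restores strictness and yields a homogeneous chain of length exactly $k$. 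This gives the bound $\dim_{A\ld}A\ld\geq k$, completing the proof.
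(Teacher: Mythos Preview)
Your approach matches the paper's: establish the easy inequality directly and defer the hard one to \cite[Theorem~1.5.8]{BrunsHerzog}. The reduction to $A\ld=R\ld/\Ann(M\ld)$ is a clean extra step the paper omits but which is perfectly valid.

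However, your attempt to unpack the Bruns--Herzog argument has a gap in the final ``refining'' step. You correctly deduce from $\dim(A_\frq/\frq^\ast A_\frq)\leq 1$ that a collapse $\frq_i^\ast=\frq_{i+1}^\ast$ forces $\frq_i=\frq_i^\ast$, so no two \emph{consecutive} collapses can occur. But non-consecutive collapses are not excluded: a chain of length $2m$ with collapses at positions $(0,1),(2,3),\ldots$ homogenizes to one of length only $m$. When $\frq_i^\ast=\frq_{i+1}^\ast$ there is literally nothing between them to insert, and invoking ``graded going-down'' does not manufacture the missing homogeneous primes elsewhere in the chain without a further argument. The actual proof in Bruns--Herzog is an induction on $\height\frp$ for graded $\frp$: if the penultimate prime $\frq_{h-1}$ in a chain below $\frp$ is not graded, one replaces it by a \emph{different} graded prime $\frq'\subsetneq\frp$ of the same height, obtained as a minimal prime over $(\frq_{h-1}^\ast,a)$ for a suitable homogeneous $a\in\frp\setminus\frq_{h-1}^\ast$, using Krull's principal ideal theorem together with the height bound $\height\frq_{h-1}\leq\height\frq_{h-1}^\ast+1$. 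This one-prime-at-a-time replacement from the top, rather than a global refinement of the starred chain, is what makes the induction go through. If you simply cite the theorem as the paper does, your proof is complete; if you want to sketch it, the inductive replacement is the mechanism to describe.
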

\begin{proof}
For a homogeneous prime ideal $\frp\subset R\ld$ we have $M_{(\frp)}=0$ if and only if $M_\frp=0$
(Fact \ref{fact_locdoesnotkillanything}), hence $\dim_{R\ld} M\ld\leq\dim_{R_\frm} M_\frm$. It is therefore sufficient
to show that for $d := \dim_{R_\frm} M_\frm$ there is a sequence $\frp_0\subsetneq ...\subsetneq\frp_d$ of
\textit{homogeneous} prime ideals such that $M_{\frp_i}\neq 0$ for all $i=0,...,d$, which is done in \cite[Theorem
1.5.8]{BrunsHerzog}. 
\end{proof}

\begin{definition}\label{def_depth}
Let $(R\ld,\frm)$ be a local graded ring and let $M\ld$ be a finitely generated graded $R\ld$-module. The
\textit{depth} of $M\ld$, denoted $\depth_{R\ld} M\ld$, is defined as the maximal length of a $M$-regular sequence of
homogeneous elements in $\frm$. If there is no chance of confusion, we will shortly write $\depth(M\ld)$ for
$\depth_{R\ld}(M\ld)$.  
\end{definition}

We have the following description of the depth in terms of the vanishing of $\ext$-groups:
\begin{prop}\label{prop_depth}
In the situation of Definition \ref{def_depth} we have
$$\depth_{R\ld} M\ld\ =\ \inf\{i\in\ZZ_{\geq 0}\ |\ \ext^i_{R}(k\ld,M\ld)\ld\neq\{0\}\},$$ and any maximal $M\ld$-regular 
sequence in $\frm$ has length $\depth_{R\ld} M\ld$. In particular, we have $\depth_{R\ld} M\ld<\infty$
and $$\depth_{R\ld} M\ld = \depth_{R_\frm} M_\frm.$$
\end{prop}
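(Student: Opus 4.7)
The plan is to prove the Ext-formula by induction on the depth, which will simultaneously show that every maximal homogeneous $M\ld$-regular sequence in $\frm$ has the same length, and then to deduce the identity with the ungraded local depth by localising. Two classical graded facts underlie the argument: associated primes of a finitely generated graded module are homogeneous, and graded prime avoidance holds (a homogeneous ideal contained in a finite union of homogeneous primes lies in one of them); see \cite{BrunsHerzog}.

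For the base case I would show $\depth_{R\ld}(M\ld)=0$ is equivalent to $\hom_R(k\ld,M\ld)\ld\neq 0$. If every homogeneous element of $\frm$ is a zero-divisor on $M\ld$, then $\frm$ is contained in the union of the (homogeneous) associated primes of $M\ld$; by graded prime avoidance and maximality of $\frm$, one gets $\frm\in\Ass_{R\ld}(M\ld)$, hence a graded injection $k\ld\langle d\rangle\hookrightarrow M\ld$ for some $d$, i.e.\ a nonzero element of $\hom_R(k\ld,M\ld)_{-d}$. The converse is immediate: a nonzero graded map out of $k\ld$ exhibits $\frm$ as an associated prime of $M\ld$, so every homogeneous element of $\frm$ is a zero-divisor.

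For the inductive step, choose a homogeneous $M\ld$-regular element $x\in\frm$ of degree $r$ and consider the short exact sequence $0\to M\ld\langle -r\rangle\xrightarrow{\,x\,}M\ld\to M\ld/xM\ld\to 0$. Apply $\ext\ua_R(k\ld,-)\ld$; since $x\in\frm$ annihilates $k\ld$, it acts as zero on each $\ext^i_R(k\ld,-)\ld$, and the long exact sequence splits into short exact sequences
$$0\to \ext^i_R(k\ld,M\ld)\ld \to \ext^i_R(k\ld,M\ld/xM\ld)\ld \to \ext^{i+1}_R(k\ld,M\ld)\ld\langle -r\rangle \to 0.$$
This shows $\inf\{i:\ext^i_R(k\ld,M\ld)\ld\neq 0\}$ is exactly one larger than the corresponding infimum for $M\ld/xM\ld$. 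Since a homogeneous $M\ld$-regular sequence extending $x$ corresponds bijectively to a homogeneous $(M\ld/xM\ld)$-regular sequence in $\frm$, the inductive hypothesis applied to $M\ld/xM\ld$ closes the argument. Taking $x$ as the first term of any maximal homogeneous regular sequence of length $n$, the same recursion yields $n=\inf\{i:\ext^i_R(k\ld,M\ld)\ld\neq 0\}$, a quantity independent of all choices; this simultaneously proves the Ext-formula and the invariance of length.

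Finally, to compare with $\depth_{R_\frm}M_\frm$, observe that flat base change along $R\ld\to R_\frm$ yields natural isomorphisms $\ext^i_R(k\ld,M\ld)_\frm\cong\ext^i_{R_\frm}(k_\frm,M_\frm)$, and by Fact \ref{fact_locdoesnotkillanything} a finitely generated graded $R\ld$-module vanishes iff its localisation at $\frm$ does. Hence the two infima agree and the desired equality follows, with finiteness inherited from the classical ungraded statement. The only genuine subtlety is the use of graded prime avoidance and homogeneity of associated primes in the base case; once those are accepted, the remainder is a direct transcription of the ungraded argument and I do not expect any real obstacle.
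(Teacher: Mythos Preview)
Your proposal is correct and follows essentially the same route as the paper: both arguments use the short exact sequence $0\to M\ld\langle -r\rangle\xrightarrow{x}M\ld\to M\ld/xM\ld\to 0$, observe that the long exact $\ext$-sequence for $\hom_R(k\ld,-)$ splits into short exact pieces because $x$ annihilates $k\ld$, and handle the case of no regular element via homogeneous associated primes plus graded prime avoidance; the comparison with $\depth_{R_\frm}M_\frm$ is likewise identical. The only cosmetic difference is that the paper first proves the inequality $\depth\leq\inf$ for an arbitrary regular sequence and then treats the maximal case, whereas you phrase it as an induction with an explicit base case --- and the paper derives finiteness directly from Noetherianity rather than deferring to the ungraded statement.
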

\begin{proof}
Since $R\ld$ is Noetherian, any $M\ld$-regular sequence must be finite. Thus, the second statement indeed implies that
$\depth(M\ld)<\infty$.  

Let $x_1,...,x_n\in\frm$ be an arbitrary $M\ld$-regular sequence of homogeneous elements, and let $d_1,...,d_n$ denote
the degrees of the $x_i$. By definition, $x_1$ is not a zero divisor in $M\ld$, so we have a short exact sequence $0\to 
M\langle -d_1\rangle\ld\xrightarrow{x_1\cdot} M\ld\to M\ld/x_1 M\ld\to 0$. Applying $\ext\ua_{R}(k\ld,-)\ld$ gives
$$\cdots\mor{x_1\cdot}\ext^i_{R}(k\ld,M\ld)\ld\to\ext^i_{R}(k\ld,M\ld/x_1
M\ld)\ld\to\ext^{i+1}_{R}(k\ld,M\ld)\ld\langle-d_1\rangle\mor{\cdot x_1}\cdots,$$ 
and since $k\ld$ is annihilated by $x_1$, this sequence decomposes into short exact sequences
$$0\to\ext^i_{R}(k\ld,M\ld)\ld\to\ext^i_{R}(k\ld,M\ld/x_1
M\ld)\ld\to\ext^{i+1}_{R}(k\ld,M\ld)\ld\langle-d_1\rangle\to 0$$ for all $i\in\ZZ$. For $i=-1$ and then for $i=0$, we get
$\ext^0_{R}(k\ld,M\ld)\ld=0$ and $\ext^1_{R}(k\ld,M\ld)\ld\cong\ext^0_{R}(k\ld,M\ld/x_1 M\ld)\langle
d_1\rangle\ld.$ Continuing in this way, we obtain $\ext^i_{R}(k\ld,M\ld)\ld=0$ for all $0\leq i<n$ and
\begin{align}\label{eq:extshift}\ext^n_{R}(k\ld,M\ld)\ld\cong\ext^0_{R}(k\ld,M\ld/(x_1,...,x_n)M\ld)\ld\langle
  d_1+...+d_n\rangle,\end{align} which shows that $$\depth(M\ld)\ \leq\ \min\{k\in\ZZ_{\geq 0}\ |\
\ext^k_{R}(k\ld,M\ld)\ld\neq\{0\}\}.$$ Next, take $x_1,...,x_n$ maximal (as noted at the beginning of the
proof, such a sequence must exist). We will show that $\ext^0_{R}(k\ld,M\ld/(x_1,...,x_n)M\ld)\ld\neq 0$, and this
will finish the proof because of \eqref{eq:extshift}. As $x_1,...,x_n$ is maximal, any element of $\frm$ is a
zero-divisor of 
$M\ld/(x_1,...,x_n)M\ld$. Hence $\frm$ is contained in the union of the associated primes of $M\ld/(x_1,...,x_n)M\ld$,
so $\frm\in\ass_{R\ld}(M\ld/(x_1,...,x_n)M\ld)$ by prime avoidance. Thus, there exists an embedding $$k\langle
d\rangle\ld=R\ld/\frm\langle d\rangle\hookrightarrow M\ld/(x_1,...,x_n)M\ld$$ for suitable $d\in\ZZ$, and therefore
$\hom_{R}(k\ld,M\ld/(x_1,...,x_n)M\ld)\ld\neq 0$ as claimed. 

The second statement can be seen as follows: First note that since $k\ld$ is finitely generated, we have a canonical
isomorphism of $R$-modules $\ext\ua_{R}(k\ld,M\ld) = \ext\ua_{R}(k,M)$. Hence, using Fact
\ref{fact_locdoesnotkillanything} and the compatibility of $\ext$ with localization, we
get
\begin{align*}
\depth_{R\ld} M\ld
   & = \inf\{i\in\ZZ_{\geq 0}\ |\ \ext^i_{R}(k\ld,M\ld)\ld\neq 0\}
\\ & = \inf\{i\in\ZZ_{\geq 0}\ |\ \ext^i_{R}(k,M)_\frm\neq 0\}
\\ & = \inf\{i\in\ZZ_{\geq 0}\ |\ \ext^i_{R_\frm}(k_\frm,M_\frm)\neq 0\}
\\ & = \inf\{i\in\ZZ_{\geq 0}\ |\ \ext^i_{R_\frm}(R_\frm/\frm R_\frm,M_\frm)\neq 0\}
\\ & = \depth_{R_\frm} M_\frm
\end{align*}
as claimed.
\end{proof}

Next we turn to injective dimensions.

\begin{definition}\label{def_injdim}
Let $(R\ld,\frm)$ be a local graded ring and $M\ld$ a finitely generated $R\ld$-module. Then the \textit{injective
  dimension} of $M\ld$, denoted $\injdim_{R\ld} M\ld$, is defined as the injective dimension of $M\ld$ in the abelian
category $R\ld\mod$. 
\end{definition}
\begin{rem}
For a finitely generated $R\ld$-module $M\ld$ we have
  $$\injdim_{R\ld\mod}(M\ld)=\injdim_{R\ld\Mod}(M\ld)$$ by Baer's criterion (which works for every
  generator in a Grothendieck category). 
\end{rem}
\begin{fact}\label{fact_onlyprimes}
In the situation of Definition \ref{def_injdim}, we have $$\injdim_{R\ld} M\ld\ =\ \sup\{n\in\ZZ_{\geq 0}\ |\ \text{ex. }
\frp\text{ homogeneous prime s.t.\ }\ext_R^n(R\ld/\frp,M\ld)\ld\neq 0\}.$$
\end{fact}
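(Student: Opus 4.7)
The plan is to prove the two inequalities separately. Set $d := \injdim_{R\ld} M\ld$ and let $s$ denote the supremum on the right. The easy direction $s \leq d$ is immediate: if $n > d$ then $M\ld$ admits a graded injective resolution of length at most $d$, so $\ext^n_{R}(N\ld, M\ld)\ld = 0$ for every graded $N\ld$ and every shift; in particular this applies to $N\ld = R\ld/\frp$ for every homogeneous prime $\frp$.

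The substance is the reverse inequality $d \leq s$. Assuming $s < \infty$, I will invoke the graded Baer criterion of the preceding Remark: to conclude $\injdim_{R\ld}(M\ld) \leq s$ it suffices to show that $\ext^{s+1}_{R}(R\ld/I\ld, M\ld)\ld = 0$ for every homogeneous ideal $I\ld \subset R\ld$ (the cyclic quotients of the standard generator $\bigoplus_{k} R\langle k\rangle\ld$). More generally, I plan to prove that $\ext^n_{R}(N\ld, M\ld)\ld = 0$ for all $n > s$ and every finitely generated graded $N\ld$.

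The key input is a graded prime filtration: every f.g.\ graded $R\ld$-module $N\ld$ admits a finite filtration $0 = N_0 \subset N_1 \subset \cdots \subset N_t = N\ld$ whose successive quotients are of the form $R\ld/\frp_i\langle d_i\rangle$ for homogeneous primes $\frp_i \subset R\ld$ and integers $d_i$. This is the graded analogue of the standard Noetherian prime filtration and rests on the fact that associated primes of a graded module over a graded Noetherian ring are themselves homogeneous, so one can iteratively split off a shifted copy of $R\ld/\frp$ from the top.

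Given such a filtration, I conclude by induction on $t$. For $t = 1$ one has $\ext^n_{R}(R\ld/\frp\langle d\rangle, M\ld)\ld \cong \ext^n_{R}(R\ld/\frp, M\ld)\langle -d\rangle\ld$, which vanishes for $n > s$ by definition of $s$. The inductive step uses the long exact sequence of graded Ext attached to $0 \to N_{t-1} \to N\ld \to R\ld/\frp_t\langle d_t\rangle \to 0$, in which the outer terms vanish. The only nonroutine ingredient is the graded prime filtration itself; apart from the homogeneity of associated primes (which is the one point where grading plays a genuine role), everything is a direct translation of the ungraded argument, and this is where I expect the main, though still mild, obstacle to lie.
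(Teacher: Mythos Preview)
Your proof is correct and follows essentially the same approach as the paper: the paper's one-line argument simply invokes the existence of a finite filtration of any finitely generated graded $R\ld$-module with subquotients of the form $R\ld/\frp\langle d\rangle$ for homogeneous primes $\frp$, which is exactly the graded prime filtration you spell out and then exploit via the long exact sequence. Your write-up is more detailed but there is no real difference in strategy.
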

\begin{proof}
This follows immediately from the fact the any finitely generated $R\ld$-module has a finite filtration with filtration
quotients of the form $R\ld/\frp\langle d\rangle$ for homogeneous prime ideals $\frp$ and $d\in\ZZ$.
\end{proof}
\begin{prop}[see {\cite[Proposition 3.1.13]{BrunsHerzog}}]\label{prop_smallerprimes}
Let $(R\ld,\frm)$ be a local graded ring, $\frp\subsetneq\frm$ a homogeneous prime and $M\ld$ a finitely generated
graded $R\ld$-module. Assuming $\ext^{n+1}_R(R\ld/\frq,M\ld)\ld=0$ for all $\frq\supsetneq\frp$, then
$\ext^n_R(R\ld/\frp,M\ld)\ld = 0$.
\end{prop}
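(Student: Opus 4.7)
The plan is to produce a short exact sequence relating $R\ld/\frp$ to quotients by homogeneous primes strictly larger than $\frp$, and then to exploit the hypothesis via a long exact sequence together with graded Nakayama. Since $\frp\subsetneq\frm$, I pick a homogeneous element $x\in\frm\setminus\frp$ of some degree $d$. As $\frp$ is a homogeneous prime and $x\notin\frp$, multiplication by $x$ is injective on $R\ld/\frp$, yielding the short exact sequence of graded $R\ld$-modules
$$0\longrightarrow (R\ld/\frp)\langle -d\rangle\ \xrightarrow{\ \cdot x\ }\ R\ld/\frp\ \longrightarrow\ R\ld/(\frp,x)\ \longrightarrow\ 0.$$

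The first key step is to verify that $\ext^{n+1}_{R}(R\ld/(\frp,x),M\ld)\ld=0$. For this I invoke the graded version of the standard filtration result from commutative algebra: the finitely generated graded $R\ld$-module $R\ld/(\frp,x)$ admits a finite filtration by graded submodules whose successive quotients are of the form $R\ld/\frq_i\langle d_i\rangle$ for homogeneous primes $\frq_i$. Each such $\frq_i$ contains $\ann_{R\ld}(R\ld/(\frp,x))=(\frp,x)$, and hence strictly contains $\frp$ (since $x\in\frq_i$ but $x\notin\frp$). By hypothesis $\ext^{n+1}_{R}(R\ld/\frq_i,M\ld)\ld=0$ for every $i$, and iterating the long exact sequence of $\ext$ along the filtration yields the desired vanishing.

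With this vanishing in hand, I apply $\ext^{\ast}_{R}(-,M\ld)\ld$ to the short exact sequence above. Using the shift identity $\ext^n_{R}(R\ld/\frp\langle -d\rangle,M\ld)\ld=\ext^n_{R}(R\ld/\frp,M\ld)\langle d\rangle\ld$, the relevant piece of the long exact sequence becomes
$$\ext^n_{R}(R\ld/\frp,M\ld)\ld\ \xrightarrow{\ \cdot x\ }\ \ext^n_{R}(R\ld/\frp,M\ld)\langle d\rangle\ld\ \longrightarrow\ 0,$$
so multiplication by $x$ is surjective on $E := \ext^n_{R}(R\ld/\frp,M\ld)\ld$, i.e.\ $xE=E$. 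Because $R\ld$ is Noetherian, $R\ld/\frp$ admits a resolution by finitely generated graded free $R\ld$-modules, and $M\ld$ is finitely generated, so $E$ is itself a finitely generated graded $R\ld$-module. Since $x\in\frm$, graded Nakayama then forces $E=0$, as desired.

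The step I expect to require the most care is the first reduction: one must establish the graded version of the prime-filtration fact and verify that the primes $\frq_i$ appearing can be chosen homogeneous with $\frq_i\supsetneq\frp$. Once this is in place, the remainder is a routine long-exact-sequence chase followed by an application of graded Nakayama.
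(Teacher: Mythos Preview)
Your proof is correct and follows essentially the same approach as the paper: both pick a homogeneous $x\in\frm\setminus\frp$, use the short exact sequence $0\to (R\ld/\frp)\langle -d\rangle\xrightarrow{x} R\ld/\frp\to R\ld/(\frp,x)\to 0$, invoke the prime filtration to kill $\ext^{n+1}_R(R\ld/(\frp,x),M\ld)\ld$, and conclude via graded Nakayama. The paper phrases the filtration step more tersely (``any homogeneous prime in the support of $R\ld/(\frp,x)$ strictly contains $\frp$''), while you spell it out and also make explicit why $E$ is finitely generated, but the arguments are the same.
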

\begin{proof}
Pick a homogeneous $x\in\frm\setminus\frp$ of degree $d$. The exact sequence $$0\to R\ld/\frp\langle
-d\rangle\xrightarrow{\ x\ }R\ld/\frp\xrightarrow{\ \ \ }R\ld/(\frp,x)\to 0$$ induces an exact sequence
$$\ext_R^n(R\ld/\frp,M\ld)\ld\xrightarrow{\ x\ }\ext_R^n(R\ld/\frp,M\ld)\ld\langle d\rangle\xrightarrow{\ \ \
}\ext^{n+1}_R(R\ld/(\frp,x),M\ld)\ld$$ 
Any homogeneous prime $\frq$ in the support of $R\ld/(\frp,x)$ satisfies $\frp\subsetneq\frq$, and by assumption
$\ext^{n+1}_R(R\ld/\frq,M\ld)\ld=0$ for any such $\frq$. Hence $\ext^{n+1}_R(R\ld/(\frp,x),M\ld)\ld=0$, which in turn
implies $\ext_R^n(R\ld/\frp,M\ld)\ld=0$ by Nakayama (Lemma \ref{lem_gradednakayama}).
\end{proof}
\begin{prop}[see {\cite[Proposition 3.1.14]{BrunsHerzog}}]\label{prop_injdim}
Let $(R\ld,\frm)$ be a local graded ring and $M\ld$ a finitely generated graded $R\ld$-module. Then
$$\injdim_{R\ld} M\ld\ =\ \sup\{k\in\ZZ_{\geq 0}\ |\ \ext^k_R(k\ld,M\ld)\ld\neq 0\}.$$ In particular, we have
$\injdim_{R\ld} M\ld = \injdim_{R_\frm} M_\frm$. 
\end{prop}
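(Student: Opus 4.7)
My plan is to establish both inequalities separately, reducing the computation of the graded injective dimension to vanishing of $\ext$-groups with the single module $k\ld$ by an iterated use of Proposition \ref{prop_smallerprimes}.

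Write $s$ for the supremum on the right-hand side. The inequality $s \leq \injdim_{R\ld} M\ld$ is immediate from the very definition of injective dimension in $R\ld\mod$. For the reverse inequality, by Fact \ref{fact_onlyprimes} it suffices to bound the integers $n \geq 0$ for which $\ext^n_R(R\ld/\frp,M\ld)\ld \neq 0$ for some homogeneous prime $\frp \subseteq \frm$. I would prove by Noetherian induction on the (finite) set of homogeneous primes strictly containing $\frp$ that for any such $(\frp,n)$ there is some $k \geq n$ with $\ext^k_R(k\ld,M\ld)\ld \neq 0$. The base case $\frp = \frm$ is trivial. Otherwise, the contrapositive of Proposition \ref{prop_smallerprimes} delivers a strictly larger homogeneous prime $\frq \supsetneq \frp$ such that $\ext^{n+1}_R(R\ld/\frq,M\ld)\ld \neq 0$, and the inductive hypothesis applied to the pair $(\frq,n+1)$ produces the required $k \geq n+1 > n$. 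The procedure terminates because $R\ld$ is Noetherian, so the ascending chain of primes must stabilise (necessarily at $\frm$).

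For the second assertion I would follow the pattern already used in the proof of Proposition \ref{prop_depth}: since $k\ld$ is finitely generated over $R\ld$, the underlying module of $\ext^k_R(k\ld,M\ld)\ld$ is $\ext^k_R(k,M)$, and by Fact \ref{fact_locdoesnotkillanything} together with the compatibility of $\ext$ with localisation, its non-vanishing is equivalent to that of $\ext^k_{R_\frm}(R_\frm/\frm R_\frm,M_\frm)$. Combining the characterisation just established with its classical ungraded counterpart \cite[Proposition 3.1.14]{BrunsHerzog} then yields $\injdim_{R\ld} M\ld = \injdim_{R_\frm} M_\frm$.

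I do not expect any substantial obstacle; the real technical work is already done in Fact \ref{fact_onlyprimes} and Proposition \ref{prop_smallerprimes}. The only thing one has to set up carefully is the induction, ensuring that each step increases both the prime and the $\ext$-index in lockstep so that a chain ending at $\frm$ automatically supplies a witness $k \geq n$ with $\ext^k_R(k\ld,M\ld)\ld \neq 0$.
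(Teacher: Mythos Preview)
Your proposal is correct and follows essentially the same route as the paper: the paper simply states that the first claim ``follows from Fact \ref{fact_onlyprimes} and Proposition \ref{prop_smallerprimes}'' and then carries out the identical localisation argument for the second claim. You have merely unpacked the first sentence into the natural ascending-chain argument, which is exactly the intended content. One small wording quibble: the set of homogeneous primes strictly containing $\frp$ need not be finite, so the induction is better phrased (as you in fact do at the end) via the Noetherian ascending chain condition rather than via finiteness of that set.
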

\begin{proof}
The first statement follows from Fact \ref{fact_onlyprimes} and Proposition \ref{prop_smallerprimes}. For the second
statement, Fact \ref{fact_locdoesnotkillanything} and the compatibility of $\ext$ with localization yields
\begin{align*}
\injdim_{R\ld} M\ld & = \sup\{n\in\ZZ_{\geq 0}\ |\ \ext_R^n(k\ld, R\ld)\ld\neq 0\}\\
& = \sup\{n\in\ZZ_{\geq 0}\ |\ \ext_R^n(k\ld,M\ld)_\frm\neq 0\}\\
& = \sup\{n\in\ZZ_{\geq 0}\ |\ \ext_{R_\frm}^n(R_\frm/\frm R_\frm,M_\frm)_\frm\neq 0\}\\
& = \injdim_{R_\frm} M_\frm.
\end{align*}
\end{proof}

Next we show that the functor $M\ld\mapsto M_\frm$ preserves minimal free resolutions. We first recall the definition of
a minimal free resolution in the graded case. 

\begin{fact}\label{fact_freecover}
Let $f: F\ld\to M\ld$ be an epimorphism of finitely generated graded $R\ld$-modules, and let $F\ld$ be free. Then the
following are equivalent:
\begin{enumerate}
\item\label{item:fc1} Any homogeneous $R\ld$-basis of $F\ld$ is mapped by $f$ to a minimal generating system of $M\ld$.
\item\label{item:fc2} There exists a homogeneous $R\ld$-basis of $F\ld$ which is mapped by $f$ to a minimal generating
system of $M\ld$. 
\item\label{item:fc3} $\ker(f)\subset\frm F\ld$.
\end{enumerate}
\end{fact}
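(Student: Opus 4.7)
The plan is to reduce everything to the graded version of Nakayama's lemma (Lemma~\ref{lem_gradednakayama}), which in particular implies that a collection of homogeneous elements $m_1,\ldots,m_r \in M\ld$ is a minimal generating system if and only if its image in $M\ld/\frm M\ld$ is a $k\ld$-basis. The implication (\ref{item:fc1})$\Rightarrow$(\ref{item:fc2}) is immediate since any graded free module of finite rank admits a homogeneous basis.

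For (\ref{item:fc2})$\Rightarrow$(\ref{item:fc3}), fix a homogeneous basis $(e_i)$ of $F\ld$ such that $(f(e_i))$ is a minimal generating system of $M\ld$, and let $x \in \ker(f)$. Decomposing $x$ into homogeneous parts, we may assume $x$ is homogeneous, and write $x = \sum_i r_i e_i$ with $r_i \in R\ld$ homogeneous of appropriate degrees. Suppose towards a contradiction that some $r_{i_0} \notin \frm$. Since $(R\ld,\frm)$ is local graded, every homogeneous element outside $\frm$ is a unit, so $r_{i_0}$ is invertible. Applying $f$ to $x=0$ and solving for $f(e_{i_0})$ expresses $f(e_{i_0})$ as an $R\ld$-linear combination of the remaining $f(e_i)$, contradicting minimality. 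Hence every $r_i \in \frm$, and therefore $x \in \frm F\ld$.

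For (\ref{item:fc3})$\Rightarrow$(\ref{item:fc1}), let $(e_i)$ be an arbitrary homogeneous $R\ld$-basis of $F\ld$. Reducing modulo $\frm$, we obtain a surjection
$$\bar f\ :\ F\ld/\frm F\ld\ \longrightarrow\ M\ld/\frm M\ld$$
of graded $k\ld$-vector spaces whose kernel is $(\ker(f)+\frm F\ld)/\frm F\ld$. The hypothesis $\ker(f)\subset\frm F\ld$ forces this kernel to be trivial, so $\bar f$ is an isomorphism. Since the classes $\bar e_i$ form a homogeneous $k\ld$-basis of $F\ld/\frm F\ld$, their images $\overline{f(e_i)}$ form a homogeneous $k\ld$-basis of $M\ld/\frm M\ld$, and graded Nakayama then yields that $(f(e_i))$ is a minimal generating system of $M\ld$.

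The only subtle point is making sure that the usual ungraded arguments go through without changes in the graded setting; this is handled by insisting on \emph{homogeneous} bases and coefficients throughout and invoking the graded version of Nakayama, so no genuinely new ideas are required beyond the formalism already set up in this section.
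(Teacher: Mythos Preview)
Your proof is correct and, for (\ref{item:fc1})$\Rightarrow$(\ref{item:fc2}) and (\ref{item:fc2})$\Rightarrow$(\ref{item:fc3}), follows the same argument as the paper. For (\ref{item:fc3})$\Rightarrow$(\ref{item:fc1}) you pass to the induced isomorphism $F\ld/\frm F\ld\to M\ld/\frm M\ld$ and invoke graded Nakayama to conclude that the images of a homogeneous basis form a minimal generating system; the paper instead argues directly by contradiction, showing that if some $f(m_i)$ were redundant, the element $m_i - \sum_{j\neq i} x_j m_j$ would lie in $\ker(f)\setminus\frm F\ld$. Both arguments are standard and essentially equivalent; yours is slightly more conceptual, but note that it makes a forward reference to Lemma~\ref{lem_gradednakayama}, which in the paper's ordering is stated and proved only after this fact (there is no circularity, however, since Nakayama is established independently).
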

\begin{proof}
\eqref{item:fc1}$\Rightarrow$\eqref{item:fc2} is trivial. Next assume that $m_1,...,m_n$ is a homogeneous basis of
$F\ld$ mapping to a minimal generating system of $M\ld$ under $f$, and let $d_1,...,d_n$ be the degrees of the
$m_i$. Then, if $x_1 m_1 + ... + x_n 
m_n\in\ker(f)$ for homogeneous $x_i\in R\ld$, we must have $x_i\in\frm$ for all $i$, since otherwise we had $f(m_i) =
x_i\ui\sum_{j\neq i} x_j f(m_j)$, contradicting the minimality of $\{f(m_j)\}$. This shows
\eqref{item:fc2}$\Rightarrow$\eqref{item:fc3}. It remains to prove \eqref{item:fc3}$\Rightarrow$\eqref{item:fc1}, so
assume $\ker(f)\subset\frm F\ld$ and $m_1,...,m_n$ is a homogeneous 
$R\ld$-basis of $F\ld$. If $\{f(m_i)\}$ was not minimal, there would be some $i$ and homogeneous
$x_1,...,\hat{x_i},...,x_n\in R\ld$ such that $f(m_i) = \sum_{j\neq i} x_j f(m_j)$, so $m_i - \sum_{j\neq i} x_j
m_j\in\ker(f)\setminus\frm F\ld$, contrary to our assumption. 
\end{proof}
\begin{definition}\label{def_minimalres}
If $f$ fulfills the equivalent conditions of Fact \ref{fact_freecover}, then we call it a \textit{free cover} of
$M\ld$. A free resolution $$(F\ua,\delta):\ ...\to F^{-2}\ld\to F^{-1}\ld\to F^0\ld\to M\ld\to 0$$ of $M\ld$ is called 
\textit{minimal} if  $F^n\to\image(\delta_n)$ is a free cover for all $n\in\ZZ$. 
\end{definition}
\begin{fact}\label{fact_freecoveressential}
Let $(R\ld,\frm)$ be a local graded ring and let $f: F\ld\to M\ld$ be a free cover. Then the only submodule $U\ld\subset
F\ld$ such that $f|_{U\ld}: U\ld\to M\ld$ is still surjective is $F\ld$ itself.
\end{fact}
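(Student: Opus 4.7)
The plan is to reduce the statement to an application of the graded version of Nakayama's lemma (the paper refers to this as Lemma \ref{lem_gradednakayama}). The key observation is that characterization \eqref{item:fc3} of a free cover in Fact \ref{fact_freecover} supplies exactly the hypothesis one needs for Nakayama to apply to the quotient $F\ld/U\ld$.

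Concretely, I would argue as follows. Suppose $U\ld\subset F\ld$ is a graded submodule such that $f|_{U\ld}: U\ld\to M\ld$ is surjective. Given any homogeneous $x\in F\ld$, the element $f(x)\in M\ld$ lies in the image of $f|_{U\ld}$, so there exists a homogeneous $u\in U\ld$ (of the same degree as $x$) with $f(u)=f(x)$. Then $x-u\in\ker(f)$, and by \eqref{item:fc3} of Fact \ref{fact_freecover}, $\ker(f)\subset\frm F\ld$. Hence $x\in U\ld+\frm F\ld$. Since this holds for all $x$, we obtain the equality
\[
F\ld\ =\ U\ld + \frm F\ld.
\]

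Passing to the quotient $Q\ld := F\ld/U\ld$, which is a finitely generated graded $R\ld$-module because $F\ld$ is, this equality becomes $Q\ld=\frm Q\ld$. The graded Nakayama lemma then forces $Q\ld=0$, i.e.\ $U\ld=F\ld$, as desired.

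The only subtle point is making sure the lift $u$ of $f(x)$ can be chosen homogeneous of the appropriate degree; but this is automatic since the submodule $U\ld$ is graded and $f$ is a grading-preserving epimorphism, so each graded piece $U_k$ surjects onto $M_k$. Beyond this, the argument is essentially routine — no real obstacle arises, because the content of Fact \ref{fact_freecover} has already packaged the essential information ($\ker(f)\subset\frm F\ld$) that makes Nakayama applicable.
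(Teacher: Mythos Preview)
Your proof is correct and follows essentially the same route as the paper: both use $\ker(f)\subset\frm F\ld$ from Fact \ref{fact_freecover}\eqref{item:fc3} to deduce $F\ld = U\ld + \frm F\ld$, then apply the graded Nakayama lemma to $F\ld/U\ld$. The paper's version is just more terse, writing $F\ld = U\ld+\ker(f)\subset U\ld+\frm F\ld$ in one line without discussing the homogeneous lift.
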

\begin{proof}
If $f|_{U\ld}:U\ld\to M\ld$ is surjective, then $F\ld = U\ld+\ker(f)\subset U\ld+\frm F\ld$. Hence $F\ld/U\ld = \frm
(F\ld / U\ld)$, and by the graded version of Nakayama's Lemma \ref{lem_gradednakayama} we get $U\ld = F\ld$ as required.
\end{proof}
\begin{lem}[Nakayama]\label{lem_gradednakayama}
Let $(R\ld,\frm)$ be a local graded ring. If $M\ld$ is a finitely generated graded $R\ld$-module such that $M\ld = \frm
M\ld$, then $M\ld = 0$.
\end{lem}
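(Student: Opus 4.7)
The plan is to reduce to the classical ungraded Nakayama Lemma by localization at $\frm$. By Fact \ref{fact_locdoesnotkillanything}, $M\ld$ vanishes if and only if its localization $M_\frm$ does. Applying localization to the hypothesis $M\ld = \frm M\ld$ yields $M_\frm = (\frm R_\frm)\, M_\frm$, and $M_\frm$ is finitely generated over the (ungraded) local ring $(R_\frm, \frm R_\frm)$ because any homogeneous generating system of $M\ld$ over $R\ld$ induces a generating system of $M_\frm$ over $R_\frm$. The usual Nakayama Lemma then forces $M_\frm = 0$, hence $M\ld = 0$.

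Alternatively, a more self-contained argument proceeds by induction on the minimal number $n$ of homogeneous generators of $M\ld$. The case $n=0$ is immediate. For the inductive step, choose homogeneous generators $m_1,\ldots,m_n$ of degrees $d_1,\ldots,d_n$ and use $m_n \in \frm M\ld$ to write $m_n = \sum_i r_i m_i$ with $r_i \in \frm$ homogeneous of degree $d_n - d_i$. Rewriting as $(1 - r_n)\, m_n = \sum_{i<n} r_i m_i$, the key point is that $r_n$ is a homogeneous degree-zero element of $\frm$, so $1-r_n$ is a homogeneous degree-zero element lying outside $\frm$.

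The only real obstacle is then to see that $1 - r_n$ is a unit in $R\ld$. This rests on the general observation that in any graded ring the set of homogeneous non-units is exactly the union of the graded maximal ideals, because a proper homogeneous ideal is, via Zorn's lemma, contained in some graded maximal ideal. Since by assumption $\frm$ is the unique graded maximal ideal of $R\ld$, every homogeneous element of $R\ld \setminus \frm$ — in particular $1 - r_n$ — is invertible. Inverting it expresses $m_n$ as an $R\ld$-combination of $m_1,\ldots,m_{n-1}$, so $M\ld$ admits a homogeneous generating system of size $n-1$, and the induction hypothesis applies to conclude $M\ld = 0$.
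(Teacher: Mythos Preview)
Your second approach is essentially the paper's own argument: the paper chooses a minimal homogeneous generating set, writes one generator as an $\frm$-combination of all of them, and uses invertibility of $1-x_1$ (a degree-zero element outside $\frm$) to contradict minimality. Your inductive phrasing is equivalent, and your explicit justification via Zorn that homogeneous elements of $R\ld\setminus\frm$ are units fills in a point the paper leaves implicit.

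Your first approach, reducing to the ungraded Nakayama Lemma via localization at $\frm$ and invoking Fact~\ref{fact_locdoesnotkillanything}, is a genuinely different and perfectly valid route. It is arguably cleaner in that it reuses the localization machinery already set up in the paper and avoids re-running the standard Nakayama argument; on the other hand, the paper's direct argument is self-contained and does not appeal to the ungraded theory, which fits the paper's stated aim of carrying results over to the graded setting explicitly rather than by black-box reduction.
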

\begin{proof}
Suppose on the contrary that $M\ld\neq 0$ and choose a minimal system of homogeneous generators $m_1,...,m_n$ of
$M\ld$ of degrees $d_i\in\ZZ$. As $M\ld = \frm M\ld$ by assumption, we can find homogeneous $x_i\in\frm_{d_1-d_i}$ such
that $m_1 = x_1 m_1 + ... + x_n m_n$. However, $1-x_1\in R_0\setminus\frm_0$ is invertible, and so we get $m_1 =
-(1-x_1)\ui (x_2 m_2 + ... + x_n m_n)$, contradicting the minimality of $\{m_i\}$. 
\end{proof}
Without proof we recall the following standard result.
\begin{fact}
Let $R\ld$ be a local graded ring and let $M\ld$ be a finitely generated graded $R\ld$-module. Then a free cover/a
minimal resolution of $M\ld$ exists and is unique up to non-canonical isomorphism.
\end{fact}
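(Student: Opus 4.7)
The plan is to prove existence first, then uniqueness, leveraging the graded Nakayama lemma (Lemma \ref{lem_gradednakayama}) and the characterisation of free covers in Fact \ref{fact_freecover}.

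For \textbf{existence of a free cover}, I would start from a minimal homogeneous generating system $m_1,\ldots,m_n$ of $M\ld$ of degrees $d_1,\ldots,d_n$; such a system exists because $M\ld$ is finitely generated and can be pruned down to a minimal one. Setting $F\ld := \bigoplus_{i=1}^n R\ld\langle -d_i\rangle$ with standard homogeneous basis $(e_i)$ and defining $f(e_i) := m_i$ yields a surjective morphism of graded $R\ld$-modules from a finite-rank graded free module, and condition \eqref{item:fc2} of Fact \ref{fact_freecover} holds by construction. Existence of a minimal resolution then follows by induction: at each step the kernel of the preceding free cover is again finitely generated by Noetherianity, so one iterates the construction to obtain $\ldots\to F^{-2}\ld\to F^{-1}\ld\to F^0\ld\to M\ld\to 0$ in which every $F^n\ld\twoheadrightarrow \image(\delta_n)$ is a free cover, as required by Definition \ref{def_minimalres}.

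For \textbf{uniqueness of the free cover}, suppose $f: F\ld\to M\ld$ and $f': F'\ld\to M\ld$ are two free covers. Since $F\ld$ is projective in $R\ld\Mod$ and $f'$ is surjective, we can lift $f$ through $f'$ to obtain a grading-preserving morphism $\varphi: F\ld\to F'\ld$ with $f'\circ\varphi = f$; symmetrically pick $\psi: F'\ld\to F\ld$ with $f\circ\psi = f'$. Surjectivity of $\varphi$ is immediate from Fact \ref{fact_freecoveressential}: since $f'|_{\varphi(F\ld)} = f$ is surjective, the submodule $\varphi(F\ld)\subseteq F'\ld$ must coincide with $F'\ld$. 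The non-canonicity is intrinsic to the lifting step, where only the composition with $f'$ is pinned down.

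The \textbf{main obstacle} is upgrading surjectivity of $\varphi$ to bijectivity. My approach is to reduce modulo $\frm$: applying $-\otimes_{R\ld} k\ld$ makes $F\ld/\frm F\ld$ and $F'\ld/\frm F'\ld$ into finite-dimensional graded $k\ld$-vector spaces, and condition \eqref{item:fc3} of Fact \ref{fact_freecover} identifies each with $M\ld/\frm M\ld$ via $\bar f, \bar f'$, so $\bar\varphi$ is forced to be an isomorphism of graded $k\ld$-vector spaces. In particular the graded ranks of $F\ld$ and $F'\ld$ agree degree by degree, and a surjection $\varphi:F\ld\twoheadrightarrow F'\ld$ of finitely generated graded free modules of the same graded rank between Noetherian rings is automatically injective (e.g.\ by considering $\psi\circ\varphi:F\ld\to F\ld$ and applying Nakayama to its cokernel on each homogeneous component). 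Uniqueness of the minimal resolution then follows by a standard induction: apply uniqueness of free covers to $F^0\ld\to M\ld$, lift to an isomorphism, pass to syzygies, and iterate to assemble a (non-canonical) isomorphism of complexes.
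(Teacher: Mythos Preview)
The paper states this fact explicitly without proof (``Without proof we recall the following standard result''), so there is nothing to compare against; your outline is the standard argument and is essentially correct.

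One small imprecision: in the injectivity step for $\varphi$, your parenthetical ``applying Nakayama to its cokernel'' does not quite close the argument. Nakayama applied to $\coker(\psi\circ\varphi)$ only yields surjectivity of $\psi\circ\varphi$, which you already have since both $\varphi$ and $\psi$ are surjective by Fact~\ref{fact_freecoveressential}. What you actually need is that a surjective endomorphism of a finitely generated module over a Noetherian ring is injective: the ascending chain $\ker(\psi\circ\varphi)\subset\ker((\psi\circ\varphi)^2)\subset\cdots$ stabilises, and combining this with surjectivity forces $\ker(\psi\circ\varphi)=0$, whence $\varphi$ is injective. Alternatively, after choosing homogeneous bases the matrix of $\psi\circ\varphi$ reduces modulo $\frm$ to an invertible matrix over $k\ld$, so its determinant lies in $R_0\setminus\frm_0$ and is therefore a unit. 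Either route completes your proof.
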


\begin{definition}\label{def_betti}
Let $R\ld$ be a local graded ring an $M\ld$ be a finitely generated $R\ld$-module with minimal free resolution
$F\ua\ld\to M\ld$. The \textit{Betti-numbers} of $M\ld$, denoted $\beta^i(M\ld)$, are defined as the ranks of the
free $R\ld$-modules $F^{-i}\ld$.
\end{definition}

Exactness of localization implies the following:
\begin{prop}\label{prop_freeresolutionlocalizes}
Let $(R\ld,\frm)$ be a local graded ring and $M\ld$ be a finitely generated graded $R\ld$-module with minimal free
resolution $F\ua\ld\to M\ld$. Then $F\ua_\frm\to M_\frm$ is a minimal free resolution of the $R_\frm$-module $M_\frm$. 
\end{prop}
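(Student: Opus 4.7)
The plan is to verify the two conditions in sequence: first that $F\ua_\frm \to M_\frm$ is a free resolution, and second that it is minimal as an ungraded free resolution over the local ring $R_\frm$.

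For the first step, I would invoke exactness of localization. Localization at $\frm$ (i.e.\ inverting $R\setminus\frm$) is an exact functor, so applying it to the acyclic complex $\cdots\to F^{-2}\ld\to F^{-1}\ld\to F^0\ld\to M\ld\to 0$ preserves exactness. It remains to check that each $F^{-i}_\frm$ is a finite free $R_\frm$-module. Since $F^{-i}\ld$ is a finite direct sum of shifts $R\langle d\rangle\ld$, and shifting only changes the grading while leaving the underlying ungraded module $R$ unchanged, one has $(R\langle d\rangle\ld)_\frm \cong R_\frm$. Consequently $F^{-i}_\frm$ is free over $R_\frm$ of the same rank as $F^{-i}\ld$.

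For the second step, I would use the characterization of freeness/minimality given by Fact \ref{fact_freecover}\eqref{item:fc3}. Minimality of $F\ua\ld\to M\ld$ means that for every $n$ the map $F^n\ld\to\image(\delta_n)$ is a free cover, i.e.\ $\ker(\delta_n)\subset \frm\ld\cdot F^n\ld$. Applying localization to this inclusion yields $\ker(\delta_n)_\frm \subset (\frm\ld\cdot F^n\ld)_\frm = \frm R_\frm\cdot F^n_\frm$. Since localization is exact, $\ker(\delta_n)_\frm = \ker((\delta_n)_\frm)$, so the image of each differential in $F\ua_\frm$ lands inside $\frm R_\frm$ times the next free module. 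This is exactly the ungraded analogue of the minimality condition, so $F\ua_\frm\to M_\frm$ is a minimal free resolution of $M_\frm$.

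There is no real obstacle here beyond bookkeeping; the only thing that needs care is making sure the characterization \eqref{item:fc3} transfers cleanly between the graded and ungraded settings, but in both cases it is just the statement that the kernel lies in the maximal ideal times the free module, and this containment is preserved (indeed, both sides localize termwise) under the exact functor $(-)_\frm$. Hence both conclusions are immediate consequences of exactness of localization combined with Fact \ref{fact_freecover}.
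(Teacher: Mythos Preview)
Your proof is correct and follows the same approach as the paper, which simply states that the result follows from exactness of localization. You have merely made explicit what the paper leaves implicit: that exactness of $(-)_\frm$ preserves both the acyclicity of the resolution and the containment $\ker(\delta_n)\subset\frm F^n\ld$ characterizing minimality via Fact~\ref{fact_freecover}\eqref{item:fc3}.
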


Summarizing, we get the following theorem which will allow us to carry over
results from the ungraded setting to the graded one.
\begin{samepage}\begin{prop}\label{prop_gradedvsungraded}
Let $(R\ld,\frm)$ be a local graded ring and $M\ld$ be a finitely generated graded $R\ld$-module. Then the following
hold:
\begin{enumerate}
\item\label{item:betti} $\beta_{R\ld}^i(M\ld) = \beta^i_{R_\frm}(M_\frm)$ for all $i\in\ZZ_{\geq 0}$. 
\item\label{item:vanish} $M\ld=0$ if and only if $M_\frm=0$.
\item\label{item:projective} $M\ld$ is projective in $R\ld\mod$ if and only if $M\ld$ is free.
\item\label{item:projectivedim} $\projdim_{R\ld}M\ld =\projdim_{R_\frm} M_\frm$.
\item\label{item:dim} $\dim_{R\ld} M\ld = \dim_{R_\frm} M_\frm$.
\item\label{item:injdim} $\injdim_{R\ld} M\ld = \injdim_{R_\frm} M_\frm$.
\item\label{item:depth} $\depth_{R\ld} M\ld = \depth_{R_\frm} M_\frm$. 
\end{enumerate}
\end{prop}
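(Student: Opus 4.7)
The plan is to observe that four of the seven items have already been essentially proved in the preceding pages: \eqref{item:vanish} is Fact \ref{fact_locdoesnotkillanything}, \eqref{item:dim} is Proposition \ref{prop_dim}, \eqref{item:injdim} is Proposition \ref{prop_injdim}, and \eqref{item:depth} is Proposition \ref{prop_depth}. So the real content is \eqref{item:betti}, \eqref{item:projective}, \eqref{item:projectivedim}, and I would treat these in the order \eqref{item:projective} $\to$ \eqref{item:betti} $\to$ \eqref{item:projectivedim}, since the other two rely on the fact that projective $=$ free.

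For \eqref{item:projective}, the ``free $\Rightarrow$ projective'' direction is standard. For the converse, let $M\ld$ be projective and pick a minimal free cover $f\colon F\ld\to M\ld$, which exists by the fact recalled just before Definition \ref{def_betti}. Since $M\ld$ is projective, $f$ admits a section $s\colon M\ld\to F\ld$, and $f|_{s(M\ld)}$ is an isomorphism, hence surjective. The essentiality property of the free cover (Fact \ref{fact_freecoveressential}) forces $s(M\ld)=F\ld$, which yields $M\ld\cong F\ld$.

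For \eqref{item:betti}, I would apply Proposition \ref{prop_freeresolutionlocalizes}: if $F\ua\ld\to M\ld$ is a minimal graded free resolution, then its localization $F\ua_\frm\to M_\frm$ is a minimal free resolution over the ungraded local ring $R_\frm$. Because each $F^{-i}\ld$ is a finite direct sum of shifts $R\langle d_j\rangle\ld$, its localization is a free $R_\frm$-module of the same rank, and so $\beta^i_{R\ld}(M\ld)=\beta^i_{R_\frm}(M_\frm)$. Then \eqref{item:projectivedim} follows immediately, since by \eqref{item:projective} the projective dimension on either side equals the length of the minimal free resolution, and this length is preserved by localization (equivalently, it equals the largest $i$ with nonzero Betti number, which matches by \eqref{item:betti}).

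There is no serious obstacle: the proposition is essentially a consolidation step. The only point that could trip one up is making sure \eqref{item:projective} is in place before invoking it to identify projective dimension with the length of a minimal free resolution in both the graded and ungraded settings; the essentiality of the free cover (Fact \ref{fact_freecoveressential}), which is the graded analogue of the nonexistence of nontrivial direct summands of $\frm F\ld$ inside $F\ld$, is what makes this work uniformly in both contexts.
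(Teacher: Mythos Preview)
Your proposal is correct and follows essentially the same consolidation strategy as the paper: items \eqref{item:dim}, \eqref{item:injdim}, \eqref{item:depth} are cited from the earlier propositions, and \eqref{item:betti} and \eqref{item:projectivedim} are handled via Proposition \ref{prop_freeresolutionlocalizes} and the characterization of projective dimension through Betti numbers. The only noteworthy differences are in \eqref{item:vanish} and \eqref{item:projective}. For \eqref{item:vanish} you cite Fact \ref{fact_locdoesnotkillanything} directly, while the paper deduces it from \eqref{item:betti} via $\beta^0=0$; your route is simpler. For \eqref{item:projective} you argue via a section of the free cover together with essentiality (Fact \ref{fact_freecoveressential}), whereas the paper instead observes that projectivity gives $\beta^1_{R\ld}(M\ld)=\rk_{k\ld}\tor^1_R(k\ld,M\ld)\ld=0$, so the minimal resolution stops at $F^0$. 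Both arguments are one-liners; the paper's version has the mild advantage of not needing \eqref{item:projective} as a prerequisite for \eqref{item:betti} or \eqref{item:projectivedim}, so the items can be proved in their listed order.
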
\end{samepage}
\begin{proof}
\eqref{item:betti} follows from Proposition \ref{prop_freeresolutionlocalizes}. \eqref{item:vanish} follows from
\eqref{item:betti} and the fact that $M\ld=0$ resp. $M_\frm=0$ if and only if $\beta^0_{R\ld}(M\ld)=0$
resp. $\beta^0_{R_\frm}(M_\frm)=0$. \eqref{item:projective} If $M\ld$ is projective, then $\beta^1_{R\ld}(M\ld) =
\rk_{k\ld} \tor^1_{R}(k\ld,M\ld)\ld = 0$, hence $M\ld$ is free. \eqref{item:projectivedim} follows 
from $\projdim_{R\ld} M\ld = \max\{k\in\ZZ_{\geq 0}\ |\ \beta^k_{R\ld} M\ld\neq 0\}$ and the analogous equation for
$M_\frm$. \eqref{item:dim}, \eqref{item:injdim} and \eqref{item:depth} were already shown in Propositions
\ref{prop_dim}, \ref{prop_injdim} and \ref{prop_depth}, respectively. 
\end{proof}

As an example of how to apply Proposition \ref{prop_gradedvsungraded}, we note the graded version of the well-known
formula of Auslander and Buchsbaum.
\begin{theorem}\label{thm_auslanderbuchsbaum}[Auslander-Buchsbaum formula]
Let $R\ld$ be a local graded ring and $M\ld$ be a finitely generated $R\ld$-module of finite projective dimension. Then
we have $$\projdim_{R\ld} M\ld = \depth_{R\ld} R\ld - \depth_{R\ld} M\ld.$$
\end{theorem}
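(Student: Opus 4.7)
The plan is to reduce directly to the classical, ungraded Auslander--Buchsbaum formula via Proposition \ref{prop_gradedvsungraded}, which is precisely the tool the section has been building up toward. The key observation is that all three quantities appearing in the desired identity are preserved under the functor $(-)_\frm$ sending a graded $R\ld$-module to its localization at the unique graded maximal ideal.

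Concretely, I would first note that the finiteness hypothesis $\projdim_{R\ld} M\ld<\infty$ transfers to $\projdim_{R_\frm} M_\frm<\infty$ by Proposition \ref{prop_gradedvsungraded}\eqref{item:projectivedim}. Since $R_\frm$ is a Noetherian local (ungraded) ring and $M_\frm$ is a finitely generated $R_\frm$-module of finite projective dimension, the classical Auslander--Buchsbaum formula applies and yields
\begin{equation*}
\projdim_{R_\frm} M_\frm \;=\; \depth_{R_\frm} R_\frm \;-\; \depth_{R_\frm} M_\frm.
\end{equation*}
Then I would invoke Proposition \ref{prop_gradedvsungraded}\eqref{item:projectivedim} once more (to rewrite the left-hand side) together with \eqref{item:depth} applied to both $R\ld$ (as a module over itself) and $M\ld$ (to rewrite each term on the right-hand side). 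Substituting these three identities into the ungraded formula gives the graded statement verbatim.

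There is no substantial obstacle here; the entire content of the proof has already been isolated in the transfer principles compiled in Proposition \ref{prop_gradedvsungraded}. The only mild point worth being careful about is that one applies \eqref{item:depth} of that proposition not only to $M\ld$ but also to the ring $R\ld$ itself, viewed as a module over $R\ld$, so that the term $\depth_{R\ld} R\ld$ on the right truly matches $\depth_{R_\frm} R_\frm$ in the classical formula. Everything else is a direct citation plus one line of substitution.
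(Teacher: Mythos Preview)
Your proposal is correct and matches the paper's intended approach exactly: the theorem is stated immediately after Proposition~\ref{prop_gradedvsungraded} precisely as an illustration of how that transfer principle reduces graded statements to their classical ungraded counterparts, and the paper gives no further argument beyond this. Your explicit tracking of which parts of Proposition~\ref{prop_gradedvsungraded} are invoked (items \eqref{item:projectivedim} and \eqref{item:depth}, the latter applied both to $M\ld$ and to $R\ld$ itself) is exactly the content the paper leaves implicit.
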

Finally we recall the definition of a (maximal) Cohen-Macaulay module.
\begin{definition}\label{def_cm}
Let $R\ld$ be a local graded ring and $M\ld$ be a finitely generated $R\ld$-module. Then $M\ld$ is called
\textit{Cohen-Macaulay} if $\depth_{R\ld} M\ld = \dim_{R\ld} M\ld$. It is called \textit{maximal Cohen-Macaulay} if 
$\depth_{R\ld} M\ld = \dim_{R\ld} M\ld = \depth_{R\ld} R\ld$. The ring $R\ld$ is called Cohen-Macaulay if it is
Cohen-Macaulay as a graded module over itself, i.e. if $\depth_{R\ld} R\ld = \dim_{R\ld} R\ld$. 
\end{definition}

From Proposition \ref{prop_gradedvsungraded} we immediately get:
\begin{fact}
Let $R\ld$ be a local graded ring and $M\ld$ be a finitely generated graded $R\ld$-module. Then $M\ld$ is a (maximal)
Cohen-Macaulay module over $R\ld$ if and only if $M_\frm$ is a (maximal) Cohen-Macaulay module over $R_\frm$. 
\end{fact}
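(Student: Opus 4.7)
The plan is to unwrap the definitions and simply quote the corresponding items of Proposition \ref{prop_gradedvsungraded}. By Definition \ref{def_cm}, the module $M\ld$ is Cohen-Macaulay over $R\ld$ precisely when $\depth_{R\ld} M\ld = \dim_{R\ld} M\ld$, while $M_\frm$ is Cohen-Macaulay over $R_\frm$ precisely when $\depth_{R_\frm} M_\frm = \dim_{R_\frm} M_\frm$. The equivalence of these two conditions is therefore an immediate consequence of the two equalities
\[
\dim_{R\ld} M\ld = \dim_{R_\frm} M_\frm \qquad\text{and}\qquad \depth_{R\ld} M\ld = \depth_{R_\frm} M_\frm,
\]
which are exactly items \eqref{item:dim} and \eqref{item:depth} of Proposition \ref{prop_gradedvsungraded}.

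For the maximal Cohen-Macaulay statement one additionally needs $\depth_{R\ld} R\ld = \depth_{R_\frm} R_\frm$, but this is just the same depth equality applied to the special case $M\ld = R\ld$, which is a finitely generated graded $R\ld$-module. Hence the extra condition $\depth_{R\ld} M\ld = \depth_{R\ld} R\ld$ in the definition of maximal Cohen-Macaulayness over $R\ld$ translates verbatim into $\depth_{R_\frm} M_\frm = \depth_{R_\frm} R_\frm$ over $R_\frm$, giving the equivalence in the maximal case as well.

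There is really no obstacle here; the substance of the fact has already been absorbed into Proposition \ref{prop_gradedvsungraded}, and the only thing left to do is match up definitions. The proof is a one-liner citing items \eqref{item:dim} and \eqref{item:depth}.
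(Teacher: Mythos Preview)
Your proposal is correct and matches the paper's approach exactly: the paper simply states that the fact follows immediately from Proposition \ref{prop_gradedvsungraded}, and your argument spells out precisely which items (\eqref{item:dim} and \eqref{item:depth}, the latter also applied to $R\ld$ itself) are being invoked.
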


\section{The stabilization functor and matrix factorizations}\label{sec_stabilization}

\subsection{Semiorthogonal decomposition of the stable category of a Gorenstein ring}\label{subsec_stabilization}

In this section, we introduce a special class of Cohen-Macaulay rings, called Gorenstein rings, and recall a
semi-orthogonal decomposition of the category $R\ld\mod$ of finitely generated graded $R\ld$-modules into the subcategory
$\MCM(R\ld)$ of maximal Cohen-Macaulay modules and the subcategory $\fpd(R\ld)$ of modules of finite
projective dimension. 

\begin{definition}\label{def_gorenstein}
A local graded ring $R\ld$ is called \textit{Gorenstein} if $\injdim_{R\ld} R\ld<\infty$.
\end{definition}
It's not obvious from this definition that any Gorenstein ring is Cohen-Macaulay, however note the following
proposition. 
\begin{prop}\label{prop_fininjdim}
Let $R\ld$ be a local graded ring and $M\ld$ be a finitely generated graded $R\ld$-module such that
$\injdim_{R\ld} M\ld<\infty$. Then we have $$\dim_{R\ld} M\ld\leq \injdim_{R\ld} M\ld = \depth_{R\ld} R\ld.$$
In particular, Gorenstein local graded rings are Cohen-Macaulay.
\end{prop}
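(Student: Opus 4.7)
The plan is to reduce the graded statement to the classical ungraded commutative algebra via Proposition \ref{prop_gradedvsungraded} and then invoke Bass's theorems on modules of finite injective dimension.

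First I would observe that by Proposition \ref{prop_gradedvsungraded}, each of the three invariants appearing in the statement is preserved under passage to the ungraded localization at $\frm$: we have $\dim_{R\ld} M\ld = \dim_{R_\frm} M_\frm$, $\injdim_{R\ld} M\ld = \injdim_{R_\frm} M_\frm$, and $\depth_{R\ld} R\ld = \depth_{R_\frm} R_\frm$. In particular, the hypothesis $\injdim_{R\ld} M\ld < \infty$ is equivalent to $\injdim_{R_\frm} M_\frm < \infty$, and the two assertions to prove reduce to the corresponding assertions over the Noetherian local ring $(R_\frm, \frm R_\frm)$.

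Then I would invoke two classical results from commutative algebra, both essentially due to Bass: (i) for any finitely generated module $N$ of finite injective dimension over a Noetherian local ring $(A,\frn)$ one has $\dim_A N \leq \injdim_A N$, and (ii) the Bass formula $\injdim_A N = \depth A$ holds under the same hypotheses. Both are standard (see Bruns-Herzog, Theorem 3.1.17 and Corollary 3.1.18); the main technical obstacle lies in the ungraded Bass formula itself, which is proved by induction on $\depth A$ after picking an element of $\frn$ that is $A$-regular and adjusting it, via prime avoidance, so that it behaves well with respect to the associated primes of $N$. The paper's only genuine task is therefore the reduction from the graded to the ungraded setting, which is immediate from the previous subsection.

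For the final clause, I would specialize the inequality to $M\ld := R\ld$. If $R\ld$ is Gorenstein, the defining hypothesis $\injdim_{R\ld} R\ld < \infty$ allows us to apply the first part and obtain
\[
\dim_{R\ld} R\ld \;\leq\; \injdim_{R\ld} R\ld \;=\; \depth_{R\ld} R\ld.
\]
Since the reverse inequality $\depth_{R\ld} R\ld \leq \dim_{R\ld} R\ld$ is automatic for any finitely generated module over a Noetherian local (graded) ring, equality holds throughout, so $R\ld$ is Cohen-Macaulay in the sense of Definition \ref{def_cm}.
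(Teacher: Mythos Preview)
Your proposal is correct and matches the paper's approach exactly: the paper's proof is the one-liner ``This follows from Proposition \ref{prop_gradedvsungraded} and the corresponding ungraded version, see \cite[Theorem 3.1.17]{BrunsHerzog},'' and you have simply unpacked this reduction and the final specialization to $M\ld = R\ld$ in more detail.
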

\begin{proof} This follows from Proposition \ref{prop_gradedvsungraded} and the corresponding ungraded version, see
  \cite[Theorem 3.1.17]{BrunsHerzog}.\end{proof} 

In a Gorenstein ring, we have the following very useful characterization of maximal Cohen-Macaulay modules (note that
part (b) is actually taken as the \textit{definition} of maximal Cohen-Macaulayness in \cite{Buchweitz}). For the
convenience of the reader we will sketch its proof.

\begin{prop}\label{prop_charmcm}
Let $R\ld$ be a Gorenstein local graded ring and $M\ld$ be a finitely generated graded
  $R\ld$-module. Then the following are equivalent:
\begin{enumerate}
\item\label{item:charmcm1} $M\ld$ is maximal Cohen-Macaulay.
\item\label{item:charmcm2} $\ext\ua_{R}(M\ld,R\ld)\ld = 0$ for all $k>0$.
\item\label{item:charmcm3} $M\ld$ is an arbitrarily high syzygy, i.e. for all $n>0$ there exists a finitely generated
  graded $R\ld$-module $N\ld$ such that $M\ld$ is an $n$-th syzygy of $N\ld$.
\item\label{item:charmcm4} $M\ld$ admits a projective coresolution to the right.
\end{enumerate}
\end{prop}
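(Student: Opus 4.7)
The plan is to reduce to the ungraded case via Proposition~\ref{prop_gradedvsungraded} and then prove the equivalences for an ungraded Gorenstein local ring $(R,\frm)$ of Krull dimension $d$. That proposition shows that being MCM, being free, and having finite projective dimension transfer faithfully between $R\ld$ and $R_\frm$; since $\ext^i_R(M\ld,R\ld)\ld$ is finitely generated over $R\ld$, the same holds for the vanishing statement in (2), and likewise for the existence of the exact sequences of finitely generated graded modules appearing in (3) and (4). So it suffices to work with ungraded modules over the localization $R_\frm$.

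The only non-formal implication is (1) $\Leftrightarrow$ (2), for which I would invoke local duality: because $R$ is Gorenstein, it is its own dualizing module, and Matlis duality identifies $\H^{d-i}_\frm(M)$ with the Matlis dual of $\ext^i_R(M,R)$. Being MCM means $\depth M = d$, which by the local-cohomological characterization of depth is equivalent to $\H^j_\frm(M) = 0$ for $j < d$, and hence to $\ext^i_R(M,R) = 0$ for all $i > 0$. If one prefers a more elementary argument, induction on $d$ also works: the base $d = 0$ is immediate (the ring is self-injective and every finitely generated module is MCM), and the step chooses a regular element $x \in \frm$ --- automatically regular on $M$ under (1) because $\depth M \geq 1$, and under (2) after first ruling out $\frm \in \Ass M$ via the long exact sequence of a hypothetical embedding $k \hookrightarrow M$ combined with the Gorenstein identity $\ext^d_R(k,R) = k$ --- and transfers the vanishing between $R$ and $R/xR$ using Rees's isomorphism $\ext^i_R(M/xM, R) \cong \ext^{i-1}_{R/xR}(M/xM, R/xR)$ combined with Nakayama.

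The remaining cycle is essentially formal. For (2) $\Rightarrow$ (4), Gorensteinness and the vanishing (2) together imply that $M^{\ast} := \hom_R(M,R)$ again satisfies (2) (a diagram chase against a finite injective resolution of $R$) and that the evaluation map $M \to M^{\ast\ast}$ is an isomorphism; dualising a free resolution of $M^{\ast}$ then yields an exact sequence $0 \to M \to F_0^{\ast} \to F_1^{\ast} \to \cdots$ of free modules, which is the sought projective coresolution. Next (4) $\Rightarrow$ (3) is immediate by splicing: truncating a projective coresolution at its $n$-th cokernel $N_n$ exhibits $M$ as an $n$-th syzygy of $N_n$ for every $n \geq 1$. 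Finally (3) $\Rightarrow$ (2): from $0 \to M \to F_{n-1} \to \cdots \to F_0 \to N \to 0$ dimension shifting gives $\ext^k_R(M,R) \cong \ext^{k+n}_R(N,R)$ for $k \geq 1$, and for any fixed $k > 0$ one chooses $n$ with $k + n > \injdim_R R = d$ to make the right-hand side vanish.

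The main obstacle is the non-formal equivalence (1) $\Leftrightarrow$ (2), where both strategies invoke substantial commutative algebra (local duality on the one hand, Rees's lemma plus long-exact-sequence bookkeeping on the other). In keeping with the approach of Section~\ref{sec_commalg} I would cite this equivalence from \cite{BrunsHerzog} and import it to the graded setting through Proposition~\ref{prop_gradedvsungraded}, reserving actual proof space for the reflexivity argument (2) $\Rightarrow$ (4), which is the one that directly motivates the stabilization functor introduced in the next subsection.
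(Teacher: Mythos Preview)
Your proposal is correct and follows essentially the same route as the paper: the equivalence \eqref{item:charmcm1}$\Leftrightarrow$\eqref{item:charmcm2} is cited from \cite{BrunsHerzog}, \eqref{item:charmcm3}$\Rightarrow$\eqref{item:charmcm2} is dimension shifting against $\injdim_{R\ld} R\ld<\infty$, \eqref{item:charmcm4}$\Rightarrow$\eqref{item:charmcm3} is trivial, and \eqref{item:charmcm2}$\Rightarrow$\eqref{item:charmcm4} is the reflexivity argument you describe. The only cosmetic difference is that the paper obtains condition \eqref{item:charmcm2} for $M\ld\us$ by first observing that dualizing a projective resolution of $M\ld$ already yields a projective coresolution of $M\ld\us$ (so $M\ld\us$ satisfies \eqref{item:charmcm4}, hence \eqref{item:charmcm2}), rather than via a separate diagram chase; and the paper does not spell out the reduction to the ungraded case, relying implicitly on Proposition~\ref{prop_gradedvsungraded} as you do.
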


\begin{proof}
For \eqref{item:charmcm1}$\Leftrightarrow$\eqref{item:charmcm2} see \cite[Theorem
  3.3.10]{BrunsHerzog}. Further, we have \eqref{item:charmcm3}$\implies$\eqref{item:charmcm2} because of
  $\injdim_{R\ld} R\ld<\infty$. As \eqref{item:charmcm4}$\implies$\eqref{item:charmcm3} is trivial, it remains to prove
  \eqref{item:charmcm1}$\implies$\eqref{item:charmcm4}. For this, choose a finitely generated projective resolution
  $P\ua\ld\to M\ld$. The assumption that $\ext^\ast_R(M\ld,R\ld)\ld=0$ for $\ast>0$ implies that $M\ld\us\to
  \left(P\ua\ld\right)\us$ is a projective coresolution of $M\ld\us$, where $(-)\us :=
  \hom_{R}(-,R\ld)\ld$. This implies that $M\ld\us$ satisfies \eqref{item:charmcm4}, hence \eqref{item:charmcm2},
  and so dualizing a finitely generated projective resolution of $M\ld\us$ yields a projective coresolution for
  $M\ld\uss$. Condition \eqref{item:charmcm2} for $M\ld$ and $M\us\ld$ implies that
  $M\uss\ld\cong M\ld$ (the duality $(-)\us$ is exact on modules satisfying \eqref{item:charmcm2}, and the
  transformation $\id\to (-)\uss$ is an isomorphism on free, finitely generated modules), hence $M\ld$ satisfies
  \eqref{item:charmcm4} as claimed. 
\end{proof}

\begin{definition}\label{def_gorensteinprojective}
Let $R\ld$ be a Gorenstein local graded ring. A (not necessarily finitely generated) graded
  $R\ld$-module $M\ld$ is called \textit{Gorenstein projective} if it admits a projective coresolution. In view of
  Proposition \ref{prop_charmcm}, we denote the category of Gorenstein projectives by $\MCM^\infty(R\ld)$.
\end{definition}

\begin{rem}\label{rem_gorensteinproj}
For a detailed treatment of Gorenstein projective modules, see \cite[Chapter 3]{Christensen}. There it
  is proved that over a Gorenstein ring every module becomes Gorenstein projective when taking high enough syzygies; in
  fact, this 
  property characterizes Gorenstein rings. In other words, a local Noetherian ring $R$ is Gorenstein if and only if every
  module $M$ has finite Gorenstein projective dimension $\gdim_R M$, in beautiful analogy to Serre's criterion for
  regularity. In this case we have $$\gdim_R M\ =\ \depth_R R -
  \depth_R M$$ for every $R$-module $M$, generalizing the Auslander-Buchsbaum formula \ref{thm_auslanderbuchsbaum}. 
\end{rem}

\begin{prop}\label{prop_mcmfrobenius}
Let $R\ld$ be a Gorenstein local graded ring. Then $\MCM^\infty(R\ld)$ (respectively $\MCM(R\ld)$), equipped with the class
of short exact sequences in the usual sense, is a Frobenius category (see \cite{KellerDGKats}), and the
projective-injectives are precisely the (finitely generated) projective $R\ld$-modules.
\end{prop}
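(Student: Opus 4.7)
The plan is to verify, one by one, the four structural requirements of a Frobenius category for $\MCM(R\ld)$ and $\MCM^\infty(R\ld)$: that the conflations inherited from $R\ld\mod$ form an exact structure, that there are enough projectives and enough injectives, and that these two classes coincide with the free $R\ld$-modules (finitely generated for $\MCM$). The entire argument will rest on the characterizations of MCM given in Proposition \ref{prop_charmcm}, especially the Ext-vanishing criterion \eqref{item:charmcm2} and the existence of a projective coresolution \eqref{item:charmcm4}.

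First I would verify closure under extensions: for any short exact sequence $0\to M\p\ld\to M\ld\to M\pp\ld\to 0$ in $R\ld\mod$ with outer terms in $\MCM(R\ld)$, the long exact Ext sequence forces $\ext^k_R(M\ld,R\ld)\ld = 0$ for $k>0$, so $M\ld\in\MCM(R\ld)$ by Proposition \ref{prop_charmcm}\eqref{item:charmcm2}; the same argument handles $\MCM^\infty(R\ld)$. A full additive extension-closed subcategory of an abelian category inherits a Quillen exact structure, which gives the first axiom. Free $R\ld$-modules are MCM (since $R\ld$ itself is Cohen-Macaulay by Proposition \ref{prop_fininjdim}) and manifestly projective in the inherited exact structure; they are also injective there, because a conflation $0\to F\ld\to M\ld\to N\ld\to 0$ inside $\MCM$ satisfies $\ext^1_{R\ld}(N\ld,F\ld) = 0$ (which reduces to $\ext^1_R(N\ld,R\ld)\ld=0$ and thence to Proposition \ref{prop_charmcm}\eqref{item:charmcm2}), so it splits.

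Next, for enough projectives I would take a free cover $F\ld\twoheadrightarrow M\ld$ of $M\ld\in\MCM$; the kernel $K\ld$ sits in a long exact Ext sequence between $\ext^k_R(F\ld,R\ld)\ld=0$ and $\ext^{k+1}_R(M\ld,R\ld)\ld=0$, hence $\ext^{\geq 1}_R(K\ld,R\ld)\ld=0$ and $K\ld$ is MCM by Proposition \ref{prop_charmcm}\eqref{item:charmcm2}, producing the desired conflation. For enough injectives I would directly invoke Proposition \ref{prop_charmcm}\eqref{item:charmcm4}: truncating a projective coresolution $0\to M\ld\to F^0\ld\to F^1\ld\to\cdots$ after one step yields $0\to M\ld\to F^0\ld\to C\ld\to 0$, and the shifted tail is a projective coresolution of $C\ld$, so $C\ld$ is MCM again by \eqref{item:charmcm4}. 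For the finitely generated category one arranges $F^0\ld$ and $C\ld$ to be finitely generated by producing the coresolution as the $(-)\us$-dual of a finitely generated free resolution of $M\ld\us$, which is itself in $\MCM(R\ld)$ by Proposition \ref{prop_charmcm}.

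The main obstacle, and the concluding step, is the converse identification of projective-injectives with free modules. Suppose $P\ld\in\MCM(R\ld)$ is projective in the inherited exact structure. The previous paragraph provides a conflation $0\to K\ld\to F\ld\to P\ld\to 0$ in $\MCM(R\ld)$ with $F\ld$ free, and projectivity of $P\ld$ forces this conflation to split, so $P\ld$ is a direct summand of a free $R\ld$-module; by Proposition \ref{prop_gradedvsungraded}\eqref{item:projective} (or Kaplansky's theorem in the non-finitely generated case of $\MCM^\infty$), $P\ld$ is then itself free. The case of an injective object of $\MCM$ is dual, starting from the conflation $0\to P\ld\to F^0\ld\to C\ld\to 0$ produced in the enough-injectives step. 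Together with the first paragraph this shows projectives $=$ injectives $=$ free modules, completing the Frobenius verification.
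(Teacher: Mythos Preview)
Your proposal is correct and follows essentially the same approach as the paper: both arguments hinge on the $\ext$-vanishing characterization from Proposition~\ref{prop_charmcm}\eqref{item:charmcm2} to get injectivity of projectives, and on the projective coresolution from Proposition~\ref{prop_charmcm}\eqref{item:charmcm4} to get enough injectives. You are simply more systematic than the paper's proof, which treats only the injective side explicitly (enough injectives, and that injectives are summands of projectives) and leaves the exact-structure axiom and the projective side as implicitly obvious; your extra care in handling closure under extensions, enough projectives, and the converse identification via Kaplansky's theorem is all sound but not strictly needed to match the paper's level of detail.
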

\begin{proof}
We only do the infinite case. The finitely generated case is treated analogously.

Denote by $\scE^{\infty}$ the class of short exact sequences in $\MCM^{\infty}(R\ld)$. We only check that
$(\MCM^{\infty}(R\ld),\scE^{\infty})$ has enough projectives and injectives, that projectives and
injectives coincide and that the class of projective-injectives equals the class of projective
$R\ld$-modules.

As $\injdim_{R\ld} R\ld<\infty$ we have $\ext^{>0}_R(M\ld,R\ld)=0$ for all $M\la\in\MCM^{\infty}(R\ld)$. Thus every 
projective $R\ld$-module is projective-injective in
$(\MCM^{\infty}(R\ld),\scE^{\infty})$. Moreover, for $M\ld\in\MCM^{\infty}(R\ld)$ the existence of a projective
coresolutions of $M\ld$ shows that $M\ld$ admits an embedding into a projective 
$R\ld$-module. It follows that $(\MCM^{\infty}(R\ld),\scE^{\infty})$ has enough injectives, and
that any injective object is a summand of a projective $R\ld$-module. Thus, the injectives in
$(\MCM^{\infty}(R\ld),\scE^{\infty})$ are precisely the projective $R\ld$-modules, and hence coincide with the
projective-injectives in $(\MCM^{\infty}(R\ld),\scE^{\infty})$.  
\end{proof}

\begin{definition}
We denote $\uMCM^{\infty}(R\ld)$ the stable category of the Frobenius category\break$(\MCM^\infty(R\ld),\scE^\infty)$ from
Proposition \ref{prop_mcmfrobenius}. In plain terms, the objects of $\uMCM^\infty(R\ld)$ are the objects of
$\MCM^\infty(R\ld)$, and for $M\ld,N\ld\in\MCM^\infty(R\ld)$ we have $$\uMCM^\infty(M\ld,N\ld)=\hom_{R\ld}(M\ld,N\ld) /
P(M\ld,N\ld),$$ where $P(M\ld,N\ld)$ consists of the morphisms factoring through a projective $R\ld$-module. This is a
full subcategory of the stable category $\ul{R\ld\Mod}$, defined in the same way.

Similarly, we denote $\uMCM(R\ld)$ the stable category of $(\MCM(R\ld),\scE)$, which is a full subcategory of the stable
category $\ul{R\ld\mod}$.
\end{definition}

The following proposition is the main statement of this section. For convenience we give a detailed proof, although
everything apart from the explicit construction in part (e) is contained in \cite{Buchweitz}.

\begin{prop}\label{prop_semiorthogonaldecomposition}
Let $R\ld$ be a Gorenstein local graded ring. 
\begin{enumerate}
\item\label{item_sod2} For $M\ld\in R\ld\mod$ we have $M\ld\in\MCM(R\ld)$ if and only if $\ext^k_{R}(M\ld,N\ld)\ld=0$ 
  for all $k>0$ and all $N\ld\in\fpd(R\ld)$.
\item\label{item_sod3} For $N\ld\in R\ld\mod$ we have $N\ld\in\fpd(R\ld)$ if and only if $\ext^k_{R}(M\ld,N\ld)\ld=0$
  for all $k>0$ and all $M\ld\in\MCM(R\ld)$.
\item\label{item_sod4} If $M\ld\in\MCM(R\ld)$ and $N\ld\in\fpd(R\ld)$, then $\hom_{\ul{R\ld\mod}}(M\ld,N\ld)=0$.
\item\label{item_sod5} For any finitely generated graded $R\ld$-module $M\ld$ there is an exact sequence
$$0\to P\ld\to N\ld\to M\ld\to 0,$$ where $N\ld\in\MCM(R\ld)$ and $P\ld\in\fpd(R\ld)$. 
\item\label{item_sod6} The inclusion $\ul{\MCM}(R\ld)\to\ul{R\ld\mod}$ has a right adjoint
  $\bfM:\ul{R\ld\mod}\to\ul{\MCM}(R\ld)$.
\end{enumerate}
\end{prop}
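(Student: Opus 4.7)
My plan is to prove the items in the order (1), (4), (2), (3), (5), since (1) and (4) carry the technical substance and the remainder follows formally. For (1), the ($\Rightarrow$) direction is dimension-shifting: Proposition \ref{prop_charmcm}\eqref{item:charmcm2} gives $\ext^{>0}_R(M\ld,R\ld)\ld=0$ for $M\ld\in\MCM(R\ld)$, hence also against any finitely generated free module, and induction on the length of a finite free resolution of $N\ld\in\fpd(R\ld)$ transports the vanishing to all of $\fpd(R\ld)$. The ($\Leftarrow$) direction is immediate: specialise the hypothesis to $N\ld=R\ld\in\fpd(R\ld)$ and apply Proposition \ref{prop_charmcm}\eqref{item:charmcm2} in reverse.

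For (4), take a finitely generated free resolution of $M\ld$ and, by the depth argument (Proposition \ref{prop_depth} applied along the syzygies), fix $k$ large enough that the $k$-th syzygy $K\ld=\Omega^k M\ld$ is MCM. Proposition \ref{prop_charmcm}\eqref{item:charmcm4} then supplies a projective coresolution of $K\ld$, which truncated at length $k$ reads $0\to K\ld\to Q^0\to\cdots\to Q^{k-1}\to L\ld\to 0$ with $L\ld$ MCM. Since the projective resolution of $M\ld$ and this coresolution share $K\ld$, gluing them along $K\ld$ by iterated pushouts produces a short exact sequence $0\to P\ld\to N\ld\to M\ld\to 0$ in which $N\ld$ is an extension of MCM modules arising from the coresolution (hence MCM by Proposition \ref{prop_charmcm}\eqref{item:charmcm2}), while $P\ld$ sits in a bounded exact sequence of projectives and therefore lies in $\fpd(R\ld)$.

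For (2), the ($\Rightarrow$) direction is contained in (1). For ($\Leftarrow$), compute $\injdim_{R\ld}N\ld$ via Proposition \ref{prop_injdim}: for $j\geq\depth_{R\ld}R\ld$ the syzygy $\Omega^j k\ld$ is MCM (depth argument), and the hypothesis together with dimension-shifting gives $\ext^{>j}_R(k\ld,N\ld)\ld=0$, so $\injdim_{R\ld}N\ld<\infty$; in a Gorenstein ring, finite injective dimension of a finitely generated module forces finite projective dimension (a classical consequence of the Bass and Auslander-Buchsbaum formulas), so $N\ld\in\fpd(R\ld)$. For (3), given $f\colon M\ld\to N\ld$ with $M\ld\in\MCM(R\ld)$ and $N\ld\in\fpd(R\ld)$, pick a surjection $F^0\twoheadrightarrow N\ld$ from a finitely generated free $F^0$; the kernel $\Omega N\ld$ is still in $\fpd(R\ld)$, so $\ext^1_R(M\ld,\Omega N\ld)\ld=0$ by (1), hence $f$ lifts through the projective $F^0$ and so factors through a projective. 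For (5), set $\bfM(M\ld):=N\ld$ from (4); applying $\hom_{R\ld}(X\ld,-)$ to the approximation $0\to P\ld\to N\ld\to M\ld\to 0$ and invoking $\ext^1_R(X\ld,P\ld)\ld=0$ from (1) yields surjectivity onto $\hom_{R\ld}(X\ld,M\ld)$, while (3) ensures that any two lifts of the same stable morphism differ by a morphism factoring through a projective; functoriality of $\bfM$ then follows by the standard adjoint-functor argument.

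The chief obstacle is making the gluing construction in (4) rigorous: one must carefully carry out the iterated pushouts in the double complex formed by the resolution of $M\ld$ and the coresolution of $K\ld$, and verify both that the resulting $N\ld$ is MCM and that the kernel $P\ld$ assembles into a bounded exact sequence of projectives. Once (4) is in place, (5) is a formal adjunction check exploiting the orthogonality relations (1) and (3), while (2) and (3) amount to short Ext computations.
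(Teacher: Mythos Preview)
Your proof is correct and follows essentially the paper's approach, with two points of difference worth noting.

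For part~\eqref{item_sod3} you route through injective dimension: $\Omega^j k\ld$ is MCM for $j\gg 0$, so the hypothesis plus dimension-shifting gives $\injdim_{R\ld}N\ld<\infty$, and you then invoke the equivalence of finite injective and projective dimension over Gorenstein rings. This equivalence is correct, but it is itself one of the characterisations of Gorenstein rings rather than an immediate consequence of the Bass and Auslander--Buchsbaum formulas as you suggest. The paper's argument is self-contained and more direct: for $n\gg 0$ the syzygy $\Omega^n N\ld$ is MCM, and dimension-shifting along the resolution of $N\ld$ gives
\[
\hom_{\ul{R\ld\mod}}(\Omega^n N\ld,\Omega^n N\ld)\ \cong\ \ext^1_{R\ld}(\Omega^n N\ld,\Omega^{n-1}N\ld)\ \cong\ \cdots\ \cong\ \ext^n_{R\ld}(\Omega^n N\ld,N\ld)\ =\ 0
\]
by the hypothesis applied to $M\ld=\Omega^n N\ld$, so $\Omega^n N\ld$ is stably trivial and hence free.

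For part~\eqref{item_sod5}, your iterated-pushout description is equivalent to the paper's construction, but the paper packages it as a mapping cone: one lifts the identity on $\Omega^n M\ld$ to a chain map $f\colon P^\bullet\to F^\bullet$ from a projective coresolution of $\Omega^n M\ld$ to the free resolution of $M\ld$ (using injectivity of $R\ld$ in $\MCM(R\ld)$), and then $\cone(f)$ immediately exhibits $N\ld=\Sigma^n\Omega^n M\ld\oplus F^0\ld$ as the MCM approximation, with $P\ld=\ker(N\ld\twoheadrightarrow M\ld)$ finitely resolved by $\cone(f)_{<0}$. This makes both the MCM-ness of $N\ld$ and the finite projective dimension of $P\ld$ transparent, without needing to track these properties through a chain of extensions.
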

\begin{proof}
Let $M\ld\in\MCM(R\ld)$. Since $\ext^k_{R}(M\ld,R\ld)\ld=0$ for all $k>0$, we have
$\ext^k_{R}(M\ld,N\ld)\cong\ext^{k+n}_{R}(M\ld,\Omega^{n}N\ld)\ld$ for all $n\geq 0$. Now, if $N\ld\in\fpd(R\ld)$,
we have $\Omega^n N\ld=0$ for $n\gg 0$. This shows \eqref{item_sod2}.

Next we do \eqref{item_sod3}. By \eqref{item_sod2} we only have to show that any $N\ld\in R\ld\mod$ with
$\ext^k_{R}(M\ld,N\ld)\ld=0$ for all $k>0$ and $M\ld\in\MCM(R\ld)$ has finite projective dimension. For this, take a
graded free resolution $F\ua\ld\to N\ld$ of $N\ld$, and note $\Omega^n_{F\ua\ld} N\ld\in\MCM(R\ld)$ for all $n\gg 0$. Hence
\begin{align*}
\hom_{\ul{R\ld\mod}}(\Omega^nN\ld,\Omega^nN\ld) & =
\coker\big(\hom_{R\ld\mod}(\Omega^nN\ld,F^{-n}\ld)\to\hom_{R\ld\mod}(\Omega^nN\ld,\Omega^nN\ld)\big)\\
& \cong\ext^1_{R\ld}(\Omega^nN\ld,\Omega^{n-1}N\ld)\cong
...\cong\ext^n_{R\ld}(\Omega^nN\ld,N\ld)\\
& = 0.
\end{align*}
Here the first isomorphism follows from the fact that, since $F^{-n}\ld$ maps surjectively to $\Omega^n N\ld$, a
homomorphism $\Omega^n M\ld\to\Omega^nN\ld$ factors through some projective if and only if it factors through $F^{-n}\ld$. 

Therefore $\Omega^n_{F\ua} N\ld=0$ in $\ul{R\ld\mod}$ and $\Omega^n N\ld$ is projective, hence free. Point
\eqref{item_sod4} is similar: as 
$M\ld\in\MCM(R\ld)$ there exists $\Sigma M\in\MCM(R\ld)$ such that $M\ld\cong\Omega\Sigma M\ld$, and therefore
$\hom_{\ul{R\ld\mod}}(M\ld,N\ld)\cong\ext^1_{R\ld}(\Sigma M\ld,N\ld)=0$ as claimed. 

We now show \eqref{item_sod5}. Let $F\ua\ld\to M\ld$ be a free resolution of $M\ld$ and take $n\gg 0$ such that
$\Omega_{F\ua}^n 
M\ld\in\MCM(R\ld)$. Further, let $\Omega_{F\ua}^n M\ld\hookrightarrow P^{-n+1}\ld\to P^{-n+2}\ld\to ...\to P^0\ld$ be
the beginning of a free coresolution of $\Omega_{F\ua}^n M\ld$ in $\MCM(R\ld)$. This exists because $\MCM(R\ld)$ is a
Frobenius category, see Proposition \ref{prop_mcmfrobenius}. A small diagram chase using the injectivity of $R\ld$ in
$\MCM(R\ld)$ gives the following commutative 
diagram: 
\begin{equation}\label{eq:chase}\begin{tikzpicture}[description/.style={fill=white,inner sep=2pt},baseline=(current
    bounding box.center)]
    \matrix (m) [matrix of math nodes, row sep=2.5em,
                 column sep=2.5em, text height=1.5ex, text depth=0.25ex,
                 inner sep=0pt, nodes={inner xsep=0.3333em, inner ysep=0.3333em}]
    {
       \Omega^n_{F\ua\ld}M\ld & P^{-n+1}\ld & P^{-n+2}\ld & \cdots & P^0\ld & \Sigma^n_{P\ua\ld}\Omega^n_{F\ua\ld} M\ld
       & 0 \\
       \Omega^n_{F\ua\ld}M\ld & F^{-n+1}\ld & F^{-n+2}\ld & \cdots & F^0\ld & M\ld & 0\\
    };
    \draw[double distance=0.7mm] (barycentric cs:m-1-1=0.25,m-2-1=0.75) -- (barycentric cs:m-1-1=0.75,m-2-1=0.25);
    \draw[->] (m-1-1) -- (m-1-2);
    \draw[->] (m-1-2) -- (m-1-3);
    \draw[->] (m-1-3) -- (m-1-4);
    \draw[->] (m-1-4) -- (m-1-5);
    \draw[->] (m-1-5) -- (m-1-6);
    \draw[->] (m-1-6) -- (m-1-7);
    \draw[->] (m-2-1) -- (m-2-2);
    \draw[->] (m-2-2) -- (m-2-3);
    \draw[->] (m-2-3) -- (m-2-4);
    \draw[->] (m-2-4) -- (m-2-5);
    \draw[->] (m-2-5) -- (m-2-6);
    \draw[->] (m-2-6) -- (m-2-7);

    \draw[dashed,->] (barycentric cs:m-1-2=0.75,m-2-2=0.25) -- (barycentric cs:m-1-2=0.25,m-2-2=0.75);
    \draw[dashed,->] (barycentric cs:m-1-3=0.75,m-2-3=0.25) -- (barycentric cs:m-1-3=0.25,m-2-3=0.75);
    \draw[dashed,->] (barycentric cs:m-1-5=0.75,m-2-5=0.25) -- (barycentric cs:m-1-5=0.25,m-2-5=0.75);
    \draw[dashed,->] (barycentric cs:m-1-6=0.75,m-2-6=0.25) -- (barycentric cs:m-1-6=0.25,m-2-6=0.75);
\end{tikzpicture}\end{equation}
With the leftmost terms $\Omega^n_{F\ua}M\ld$ removed, the rows become complexes, where we put the entries $M\ld$ and 
$\Sigma^n_{P\ua}\Omega^n_{F\ua} M\ld$ in cohomological degree $1$. Then, the vertical maps constitute a morphism of
complexes $f: P\ua\to F\ua$, inducing isomorphisms on cohomology in every degree. Looking at 
the long exact cohomology sequence of the triangle $P\ua\to F\ua\to \cone(f)\to F\ua[1]$ we deduce that $\cone(f)_{<0}$ is
a finite free resolution of $$\ker(\cone(f)^0\ld\to\cone(f)^{1}\ld) =
\ker(\Sigma_{P\ua}^n\Omega_{F\ua}^nM\ld\oplus F^0\ld\twoheadrightarrow M\ld).$$ Since
$\Sigma_{P\ua}^n\Omega_{F\ua}^nM\ld\oplus F^0\ld$ is maximal Cohen-Macaulay, the claim follows.

Finally, part \eqref{item_sod6} is a formal consequence \eqref{item_sod2}-\eqref{item_sod5}: For each finitely generated
graded $R\ld$-module $M\ld$ choose an 
exact sequence $$0\to M^{\nfpd}\ld\to M^{\nmcm}\ld\to M\ld\to 0$$ as in \eqref{item_sod5}, i.e. $M^{\nfpd}\ld$ is of
finite projective dimension and $M^{\nmcm}\ld$ is maximal Cohen-Macaulay. Further, given a homomorphism $f: M\ld\to
N\ld$ it is easy to check that there is an extension to a commutative diagram 
\begin{equation*}\begin{tikzpicture}[description/.style={fill=white,inner sep=2pt}]
    \matrix (m) [matrix of math nodes, row sep=3em,
                 column sep=2.5em, text height=1.5ex, text depth=0.25ex,
                 inner sep=0pt, nodes={inner xsep=0.3333em, inner ysep=0.3333em}]
    {
0 & M^{\nfpd}\ld & M^{\nmcm}\ld & M\ld  & 0 \\
0 & N^{\nfpd}\ld & N^{\nmcm}\ld & N\ld  & 0 \\
    };
    \draw[->] (m-1-1) -- (m-1-2);
    \draw[->] (m-1-2) -- (m-1-3);
    \draw[->] (m-1-3) -- (m-1-4);
    \draw[->] (m-1-4) -- (m-1-5);

    \draw[->] (m-2-1) -- (m-2-2);
    \draw[->] (m-2-2) -- (m-2-3);
    \draw[->] (m-2-3) -- (m-2-4);
    \draw[->] (m-2-4) -- (m-2-5);

    \draw[->] (m-1-4) -- node[right,scale=0.75]{$f$} ($(m-2-4) + (0,3mm)$);
    \draw[->] (m-1-3) -- node[right,scale=0.75]{$\wt{f}$} ($(m-2-3) + (0,3mm)$);
    \draw[->] (m-1-2) -- ($(m-2-2) + (0,3mm)$);
\end{tikzpicture}\end{equation*}
Here, the class of the extension $\tilde{f}$ in the stable category is uniquely determined by $f$, as the difference of any
two extensions factors through $N^{\nfpd}\ld$, and any homomorphism from a maximal Cohen-Macaulay module to a
module of finite projective dimension is stably trivial by part \eqref{item_sod4}. This defines a functor
$\ul{R\mod}\to\uMCM(R\ld)$ which we claim to be the right adjoint to the inclusion functor 
$\uMCM(R\ld)\to\ul{R\mod}$. Indeed, let $M\ld$ be an arbitrary finitely generated graded $R\ld$-module, $N\ld$ a
maximal Cohen-Macaulay module and $f: N\ld\to M\ld$ be a homomorphism of graded modules. Then we have to see that, up to
stable equivalence, there is precisely one lifting $\tilde{f}: N\ld\to M^{\nmcm}\ld$ such that
\begin{equation*}\begin{tikzpicture}[description/.style={fill=white,inner sep=2pt}]
    \matrix (m) [matrix of math nodes, row sep=3em,
                 column sep=2.5em, text height=1.5ex, text depth=0.25ex,
                 inner sep=0pt, nodes={inner xsep=0.3333em, inner ysep=0.3333em}]
    {
0 & M^{\nfpd}\ld & M^{\nmcm}\ld & M\ld  & 0 \\
&&& N\ld \\
    };
    \draw[->] (m-1-1) -- (m-1-2);
    \draw[->] (m-1-2) -- (m-1-3);
    \draw[->] (m-1-3) -- (m-1-4);
    \draw[->] (m-1-4) -- (m-1-5);
    \draw[->] (m-2-4) -- node[right,scale=0.75]{$f$} (m-1-4);
    \draw[dashed,->] (m-2-4) -- node[below,scale=0.75]{$\wt{f}$} (m-1-3);
\end{tikzpicture}\end{equation*}
commutes. The uniqueness is clear, since the difference of any two such liftings factors through $M^{\nfpd}\ld$, and
$\hom_{\ul{R\mod}}(N\ld,M^{\nfpd}\ld)=0$ by \eqref{item_sod4}. For the existence, note that the only obstruction against
the existence of $\tilde{f}$ lies in $\ext^1_{R\ld\mod}(N\ld,M^{\nfpd}\ld)$, and this group is trivial by \eqref{item_sod2}.
\end{proof}

The proof of Proposition \ref{prop_semiorthogonaldecomposition} actually shows the following:

\begin{cor}
Let $R\ld$ be a Gorenstein local graded ring, and let $n\gg 0$ such that $\Omega^n$ maps $\ul{R\mod}$ to
$\uMCM(R\mod)$. Then the functor $$\Sigma^n\circ\Omega^n:\quad\ul{R\mod}\xrightarrow{\ \ \Omega^n\ \
}\uMCM(R\mod)\xrightarrow{\ \ \Sigma^n\ \ }\ul{R\mod}$$ together with the canonical map $\Sigma^n\Omega^n M\ld\to M\ld$
constructed in the proof of Proposition \ref{prop_semiorthogonaldecomposition}.\eqref{item_sod6} is right adjoint to the
inclusion functor $\uMCM(R\ld)\to\ul{R\ld\mod}$. 
\end{cor}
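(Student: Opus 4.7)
The plan is to read the corollary off the explicit construction given in the proof of Proposition~\ref{prop_semiorthogonaldecomposition}. The right adjoint $\bfM$ was already shown to exist in part (6) of that proposition; what needs to be checked is only that the functor $\Sigma^n\circ\Omega^n$, together with the map constructed in part (6), realises this right adjoint up to natural isomorphism in $\uMCM(R\ld)$. So my strategy is to identify $\Sigma^n\Omega^n M\ld$ with $M^{\nmcm}\ld$ in the stable category and then invoke (6).

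First I would verify that $\Sigma^n\Omega^n M\ld$ genuinely lies in $\uMCM(R\ld)$: by the choice of $n$, the module $\Omega^n M\ld$ is maximal Cohen--Macaulay, and since $(\MCM(R\ld),\scE)$ is Frobenius (Proposition~\ref{prop_mcmfrobenius}) the cosyzygy $\Sigma^n$ is well-defined on $\uMCM(R\ld)$ and inverse to $\Omega^n$ there. Hence $\Sigma^n\Omega^n M\ld \in \MCM(R\ld)$, and $\Sigma^n\Omega^n$ is a well-defined functor $\ul{R\ld\mod}\to\uMCM(R\ld)$. Next, I would return to the diagram \eqref{eq:chase}: the analysis below that diagram shows that the middle term $M^{\nmcm}\ld$ in the exact sequence $0\to M^{\nfpd}\ld\to M^{\nmcm}\ld\to M\ld\to 0$ produced in part (5) is precisely $\Sigma^n_{P\ua\ld}\Omega^n_{F\ua\ld} M\ld \oplus F^0\ld$. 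Since $F^0\ld$ is free, it vanishes in $\uMCM(R\ld)$, giving a canonical isomorphism $M^{\nmcm}\ld \cong \Sigma^n\Omega^n M\ld$ in the stable category. The surjection $M^{\nmcm}\ld\twoheadrightarrow M\ld$ from the short exact sequence, restricted to the $\Sigma^n\Omega^n M\ld$-summand, is exactly the canonical map referred to in the corollary statement.

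With this identification in hand, the adjunction asserted in part (6) transports directly to the claim, since right adjoints are unique up to natural isomorphism once they are exhibited on objects together with a natural counit. The subtle point, and the one I expect to be the main obstacle, is checking naturality: the identification $M^{\nmcm}\ld\cong\Sigma^n\Omega^n M\ld$ is built from choices of free resolution $F\ua\ld$ and projective coresolution $P\ua\ld$ inside $\MCM(R\ld)$, so one has to verify that these choices do not affect the resulting functor or counit after passing to $\uMCM(R\ld)$. This is a Schanuel-type argument: alternative choices differ by projective summands and by homotopies which become trivial modulo the projectives, so the induced natural transformation $\Sigma^n\Omega^n\Rightarrow\incl$ is uniquely determined. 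Once this naturality is established, the adjunction statement follows formally from part~(6) of the preceding proposition.
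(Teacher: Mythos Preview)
Your proposal is correct and matches the paper's approach exactly: the paper gives no separate proof but simply asserts that ``the proof of Proposition~\ref{prop_semiorthogonaldecomposition} actually shows'' the corollary, and your argument unpacks precisely this --- identifying $M^{\nmcm}\ld = \Sigma^n_{P\ua\ld}\Omega^n_{F\ua\ld}M\ld\oplus F^0\ld$ from the cone analysis after diagram~\eqref{eq:chase}, killing the free summand $F^0\ld$ in the stable category, and carrying over the adjunction from part~(6). Your additional remarks on naturality (that the choices of $F\ua\ld$ and $P\ua\ld$ are harmless modulo projectives) are correct and make explicit something the paper leaves implicit.
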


\begin{rem}
As every $R\ld$-module has finite Gorenstein-projective dimension (see Remark \ref{rem_gorensteinproj}),
  the proof of 
Proposition \ref{prop_semiorthogonaldecomposition} applies to show that $R\ld\Mod$ admits a semi-orthogonal
decomposition into the full subcategory $\uMCM^\infty(R\ld)$ of Gorenstein projective modules and the full subcategory
$\ufpd^\infty(R\ld)$ of modules of finite projective dimension.
\end{rem}

\subsection{Maximal Cohen-Macaulay modules on a graded hypersurface}

Now we specialize the results of the preceding section to the case where $R\ld = S\ld/(w)$ for a regular local graded
ring $S\ld$ and some $w\in S\ld\setminus\{0\}$. In this case it will turn out that $\Omega^2\cong\langle -d\rangle$ for
$d := \deg(w)$, which we then use to simplify the construction of the Cohen-Macaulay approximation functor
$\ul{R\mod}\to\MCM(R\ld)$ in the case of hypersurfaces.
\begin{definition}
A local graded ring $S\ld$ is called \textit{regular} if $\gldim(S\ld\Mod)<\infty$.
\end{definition}
\begin{prop}\label{prop_regcrit}
Let $(S\ld,\frm)$ be a local graded ring. Then the following are equivalent:
\begin{enumerate}
\item\label{item:regbig} $S\ld$ is regular, i.e. $\gldim(S\ld\Mod)<\infty$.
\item\label{item:regsmall} $\gldim(S\ld\mod)<\infty$.
\item\label{item:ressmall} $\projdim_{S\ld}(k\ld)<\infty$. 
\end{enumerate}
In particular, if $S\ld$ is regular and $\frp$ is a homogeneous prime in $S\ld$, then $S_{(\frp)}$ is regular.
\end{prop}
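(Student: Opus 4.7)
The overall strategy follows the pattern already established in this section: reduce each graded statement to its ungraded counterpart via Proposition \ref{prop_gradedvsungraded} (localization at $\frm$), then invoke the classical theory. I would establish the cycle $(1)\Rightarrow (2)\Rightarrow (3)\Rightarrow (1)$. The first implication is immediate since $S\ld\mod$ is a full subcategory of $S\ld\Mod$, and the second is immediate because $k\ld\in S\ld\mod$. The content is in $(3)\Rightarrow (1)$.

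For $(3)\Rightarrow (1)$, I would first apply Proposition \ref{prop_gradedvsungraded}.\eqref{item:projectivedim} to rephrase the hypothesis as $\projdim_{S_\frm}(k_\frm)<\infty$, then invoke the classical ungraded Serre theorem to obtain $\gldim(S_\frm\Mod)=d<\infty$. To upgrade this to a bound on $\gldim(S\ld\Mod)$, I would argue as follows: for any $M\ld\in S\ld\Mod$, pick a graded free resolution $F\ua\ld\to M\ld$ and consider the $d$-th syzygy $\Omega^d M\ld$. Localization at $\frm$ commutes with syzygies, so $(\Omega^d M\ld)_\frm\cong\Omega^d(M_\frm)$ is projective over $S_\frm$ by the finiteness of $\gldim(S_\frm\Mod)$. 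For finitely generated $M\ld$ this lets us conclude directly via Proposition \ref{prop_gradedvsungraded}.\eqref{item:projective}; for general $M\ld$ one passes through the standard theorem that $\gldim$ of a Noetherian ring is computed on finitely generated modules, which holds equally well in the graded-Noetherian setting.

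For the final assertion, let $\frp\subset S\ld$ be a homogeneous prime. By the equivalence just established, it suffices to show that the residue field $S_{(\frp)}/\frp S_{(\frp)}$ of the local graded ring $S_{(\frp)}$ has finite projective dimension. Applying $(1)$ to $S\ld$ yields a finite graded free resolution of $S\ld/\frp$ over $S\ld$; graded localization at the multiplicative set $\bigcup_k S_k\setminus\frp_k$ is exact and sends free modules to free modules, hence delivers a finite free resolution of $S_{(\frp)}/\frp S_{(\frp)}$ over $S_{(\frp)}$, as required.

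The main obstacle is the passage from finitely generated to arbitrary graded modules in $(3)\Rightarrow (1)$, since Proposition \ref{prop_gradedvsungraded} is formulated only for finitely generated modules. This is resolved either by the Auslander-type fact that $\gldim$ over a Noetherian ring is attained on $\mod$, or by the direct syzygy argument sketched above. Apart from this point, everything is a formal consequence of Proposition \ref{prop_gradedvsungraded} together with the classical ungraded Serre theorem.
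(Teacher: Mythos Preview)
Your argument is correct, but the paper takes a rather different route for $(3)\Rightarrow(1)$. Instead of localizing at $\frm$ and invoking the ungraded Serre theorem, the paper stays entirely in the graded world: it uses Proposition~\ref{prop_injdim}, which gives $\injdim_{S\ld}(M\ld)=\sup\{k:\ext^k_R(k\ld,M\ld)\ld\neq 0\}$, to conclude directly that $\injdim_{S\ld}(M\ld)\leq\projdim_{S\ld}(k\ld)$ for every finitely generated $M\ld$. This already establishes $(2)$ without any appeal to $S_\frm$. For the passage from $S\ld\mod$ to $S\ld\Mod$, the paper argues via injective dimension: Baer's criterion shows that $\injdim$ does not increase under directed colimits, and every graded module is such a colimit of finitely generated ones, so the same bound holds for all of $S\ld\Mod$.

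The trade-off: the paper's argument is more self-contained, using only results already proved in Section~\ref{sec_commalg} and avoiding the ungraded Serre theorem as a black box; it also gives the explicit bound $\gldim(S\ld\Mod)=\projdim_{S\ld}(k\ld)$. Your approach is perfectly valid and perhaps more in the spirit of the section's stated philosophy (reduce to $S_\frm$), but it outsources the two key steps---Serre's theorem and the Auslander-type fact that $\gldim$ is detected on finitely generated modules---whereas the paper handles both internally. Your treatment of the localization statement at the end is essentially identical to the paper's.

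One minor point: for the finitely generated case it is cleaner to cite item~\eqref{item:projectivedim} of Proposition~\ref{prop_gradedvsungraded} directly (giving $\projdim_{S\ld}M\ld=\projdim_{S_\frm}M_\frm\leq d$) rather than going through the syzygy and item~\eqref{item:projective}.
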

\begin{proof} 
The implications \eqref{item:regbig}$\Rightarrow$\eqref{item:regsmall}$\Rightarrow$\eqref{item:ressmall} are clear, so
we have to show $\gldim(S\ld\Mod)<\infty$ if $\projdim_{S\ld}(k\ld)<\infty$. By Proposition \ref{prop_injdim}, we have
$\injdim_{S\ld}(M\ld)\leq\projdim_{S\ld}(k\ld)$ for each finitely generated graded $S\ld$-module $M\ld$, hence
$\gldim(S\ld\mod)=\projdim_{S\ld}(k\ld)<\infty$. Finally, Baer's criterion implies that $\injdim_{S\ld}\projlim{i\in I}
M_i\leq\sup\limits_{i\in I} \injdim_{S\ld} M_i$ for any directed system $\{M_i\}_{i\in I}$, and as any graded
$S\ld$-module is a direct limit of finitely generated graded $S\ld$-modules, it follows that
$\injdim_{S\ld}(M\ld)<\infty$ for every graded $S\ld$-module $M\ld$, hence $\gldim(S\ld\Mod)<\infty$.

The second statement follows from the first applied to $S_{(\frp)}$, noting that $$\projdim_{S_{(\frp)}}(S_{(\frp)}/\frp
S_{(\frp)}) = \projdim_{S_{(\frp)}} ((S\ld /  \frp)_{(\frp)}) \leq \projdim_{S\ld}(S\ld/\frp) < \infty.$$This concludes
the proof.\end{proof}
 
\begin{prop}\label{prop_regulargradedungraded}
Let $(R\ld,\frm)$ be a local graded ring. Then $R\ld$ regular $\Leftrightarrow$ $R_\frm$ regular.
\end{prop}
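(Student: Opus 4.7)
The plan is to reduce both sides of the equivalence to a statement about the projective dimension of the residue field, and then to apply the graded-vs-ungraded comparison already established in Proposition \ref{prop_gradedvsungraded}.

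More precisely, I would first invoke Proposition \ref{prop_regcrit}, which tells us that in the graded setting $R\ld$ is regular if and only if $\projdim_{R\ld}(k\ld)<\infty$. On the ungraded side the corresponding Auslander-Buchsbaum-Serre criterion (for which I would cite the standard ungraded reference, e.g.\ \cite{BrunsHerzog}) says that $R_\frm$ is regular if and only if the residue field $R_\frm/\frm R_\frm$ has finite projective dimension over $R_\frm$.

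The link between these two statements is Proposition \ref{prop_gradedvsungraded}.\eqref{item:projectivedim}, applied to the finitely generated graded module $M\ld := k\ld = R\ld/\frm$. Since localization at $\frm$ sends $k\ld$ to $R_\frm/\frm R_\frm$, this gives
\begin{equation*}
\projdim_{R\ld}(k\ld)\ =\ \projdim_{R_\frm}(R_\frm/\frm R_\frm).
\end{equation*}
Chaining the three equivalences then yields $R\ld$ regular $\Leftrightarrow$ $R_\frm$ regular.

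There is no serious obstacle here; the content is entirely bookkeeping, and everything needed has been set up in the preceding propositions. The only minor point of care is to make sure that Proposition \ref{prop_regcrit} and the ungraded Serre criterion are stated in exactly the form just invoked (finite projective dimension of the residue field), so that the comparison of Betti numbers from Proposition \ref{prop_gradedvsungraded}.\eqref{item:projectivedim} can be applied cleanly to $k\ld$ without any additional grading-shift ambiguity.
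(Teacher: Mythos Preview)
Your proposal is correct and matches the paper's proof essentially verbatim: the paper's one-line argument is exactly ``this follows from Proposition \ref{prop_regcrit} together with $\projdim_{R_\frm} k_\frm = \projdim_{R\ld} k\ld$ (Proposition \ref{prop_gradedvsungraded}).'' The only cosmetic difference is that you cite the ungraded Serre criterion separately, whereas the paper implicitly applies Proposition \ref{prop_regcrit} to $R_\frm$ viewed as a graded ring concentrated in degree $0$.
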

\begin{proof}
This follows from Proposition \ref{prop_regcrit} together with $\projdim_{R_\frm} k_\frm = \projdim_{R\ld} k\ld$
(Proposition \ref{prop_gradedvsungraded}). 
\end{proof}

Usually, a local ring $(R,\frm)$ with residue class field $k := R/\frm$ is called regular if $\dim(R) = \dim_{k}
(\frm/\frm^2)$. By a famous Theorem of Serre, this is equivalent to $R\Mod$ being of finite global dimension. Note,
however, that the following theorem becomes more difficult to prove with our definition.
\begin{prop}\label{prop_stayregular}
Let $(S\ld,\frm)$ be a regular local graded ring and $f\in\frm\setminus\frm^2$ homogeneous. Then $S\ld/(f)$ is regular. 
\end{prop}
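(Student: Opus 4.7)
The plan is to reduce the statement to the classical ungraded version via the graded/ungraded dictionary of Section \ref{sec_commalg}. By Proposition \ref{prop_regulargradedungraded}, the ring $R\ld := S\ld/(f)$ is regular if and only if its (ungraded) localization $R_\frm = S_\frm/(f)$ is regular, and the same proposition applied to $S\ld$ turns the hypothesis into $S_\frm$ being a regular local ring in the ungraded sense. It then suffices to invoke the classical fact (see e.g.\ \cite{BrunsHerzog}) that for a regular local ring $(A,\frn)$ and any $g\in\frn\setminus\frn^2$, the quotient $A/(g)$ is again regular, applied to $(S_\frm,\frm S_\frm)$ and the image of $f$. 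Thus the single thing to verify is that $\bar f\in\frm S_\frm$ does \textit{not} lie in $(\frm S_\frm)^2=\frm^2 S_\frm$.

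For this, I would argue by contradiction: suppose $sf\in\frm^2$ for some $s\in S\setminus\frm$. Since $\frm$ is homogeneous, so is $\frm^2$, and since $f$ is homogeneous of some degree $d$, extracting the degree-$(d+i)$ component of the equation yields $s_i f\in\frm^2$ for every homogeneous component $s_i$ of $s$. Homogeneity of $\frm$ together with $s\notin\frm$ forces at least one component $s_{i_0}$ to lie outside $\frm$.

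The crux is now the following small observation, which is really the only genuinely new ingredient beyond the ungraded theorem: in a local graded Noetherian ring $(S\ld,\frm)$ every homogeneous element $s\notin\frm$ is a unit. Indeed, the principal ideal $(s)$ generated by a homogeneous element is itself homogeneous, so if it were proper it would be contained in some graded maximal ideal (by the graded version of Zorn's lemma, since a union of a chain of proper graded ideals is again graded and proper). By the local graded assumption the only graded maximal ideal is $\frm$, contradicting $s\notin\frm$; hence $(s)=S\ld$. Applying this to $s_{i_0}$ gives $s_{i_0}\in S\ld^\times$, and multiplying $s_{i_0}f\in\frm^2$ by $s_{i_0}\ui$ yields $f\in\frm^2$, contradicting the hypothesis $f\in\frm\setminus\frm^2$.

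I expect the main (and essentially only) obstacle to be this homogeneous-non-unit observation and the accompanying degree-by-degree decomposition; once these are in place, the result is a direct quotation of the classical ungraded theorem through the dictionary of Propositions \ref{prop_gradedvsungraded} and \ref{prop_regulargradedungraded}.
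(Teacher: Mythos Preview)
Your proposal is correct and follows the paper's main-text argument: reduce to the ungraded localization $S_\frm$ via Proposition~\ref{prop_regulargradedungraded}, then cite the classical result. You supply the one detail the paper glosses over, namely that $f\notin\frm^2 S_\frm$, and your verification (homogeneous elements outside $\frm$ are units, hence $s_{i_0}f\in\frm^2$ forces $f\in\frm^2$) is sound. The paper also records an independent proof in the appendix (Proposition~\ref{prop_stayregularappendix}) via the Koszul dg-algebra $K\ua_f$: one shows the minimal $S\ld$-free resolution of any finitely generated $S\ld/(f)$-module upgrades to a semi-free $K\ua_f$-module, and tensoring down yields a bounded free $S\ld/(f)$-resolution, so $\projdim_{S\ld/(f)}M\ld<\infty$.
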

\begin{proof}
By Proposition \ref{prop_regulargradedungraded} it suffices to prove the proposition in the ungraded case, and this is
done in \cite[Proposition 2.2.2]{Avramov}. For convenience of the reader, we recall the proof in the Appendix, see
Proposition \ref{prop_stayregularappendix}.
\end{proof}

In case $w\in\frm^2$, the quotient ring $S\ld/(w)$ is still a Gorenstein ring.
\begin{prop}\label{prop_hypersurfacegorenstein}
Let $(S\ld,\frm)$ be a regular local graded ring and $w\in\frm$ be homogeneous. Then $S\ld/(w)$ is Gorenstein.
\end{prop}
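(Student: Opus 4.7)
The plan is to reduce the statement to the classical ungraded fact, using the graded-to-ungraded dictionary assembled in Section \ref{sec_commalg}. By Definition \ref{def_gorenstein} together with Proposition \ref{prop_injdim}, we have $\injdim_{R\ld}R\ld = \injdim_{R_\frm}R_\frm$, where $R_\frm = (S\ld/(w))_\frm = S_\frm/(w)$ by exactness of localization. So it suffices to show that the ungraded local ring $R_\frm = S_\frm/(w)$ has finite self-injective dimension.

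By Proposition \ref{prop_regulargradedungraded}, $S_\frm$ is a regular local (ungraded) ring, hence an integral domain, and in particular Gorenstein: the finite projective resolution of the residue field $k_\frm$ granted by $\gldim(S_\frm)<\infty$ forces $\ext^n_{S_\frm}(k_\frm,S_\frm) = 0$ for $n\gg 0$, so $\injdim_{S_\frm}S_\frm < \infty$ by (the ungraded case of) Proposition \ref{prop_injdim}. If the image of $w$ in $S_\frm$ is zero, then $R_\frm = S_\frm$ is already regular and there is nothing to prove. Otherwise $w$ is a non-zero-divisor in $S_\frm$, and we obtain the length-one free resolution $0\to S_\frm\xrightarrow{\cdot w}S_\frm\to R_\frm\to 0$.

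The remaining step is the classical fact that a Gorenstein local ring modulo a non-zero-divisor is again Gorenstein. The standard route is Rees's change-of-rings isomorphism $\ext^n_{R_\frm}(k_\frm,R_\frm)\cong \ext^{n+1}_{S_\frm}(k_\frm,S_\frm)$, which drops out of the Cartan--Eilenberg change-of-rings spectral sequence: it collapses because the resolution displayed above has length one and because $w$ annihilates $k_\frm$. Since $\projdim_{S_\frm}k_\frm<\infty$ by regularity, the right-hand side vanishes for $n\gg 0$, and Proposition \ref{prop_injdim} applied once more yields $\injdim_{R_\frm}R_\frm<\infty$, so $R\ld$ is Gorenstein.

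The only step that is not purely formal bookkeeping on top of what has been built in Section \ref{sec_commalg} is the Rees change-of-rings isomorphism; this is the one place where a genuine (if standard) piece of homological algebra enters. Everything else — the passage from graded to ungraded, the finiteness of $\injdim$ for regular rings, and the identification of $R_\frm$ as $S_\frm/(w)$ — is already in our toolbox.
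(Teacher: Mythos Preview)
Your proof is correct. Both your argument and the paper's hinge on the same Rees change-of-rings isomorphism $\ext^{\ast}_{R/(w)}(-,R/(w))\cong\ext^{\ast+1}_{R}(-,R)$, but you organize the reduction differently: you first pass from $R\ld$ to $R_\frm$ via Proposition~\ref{prop_injdim} and then invoke the ungraded Rees isomorphism as a black box, whereas the paper stays in the graded setting throughout and proves the graded version of Rees directly, by applying $\ext^\ast_{R\ld}(M\ld,-)\ld$ to the short exact sequence $0\to R\ld\langle -d\rangle\xrightarrow{w}R\ld\to R\ld/(w)\to 0$ and identifying both sides as effaceable $\delta$-functors. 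Your route is arguably more in the spirit of the graded-to-ungraded dictionary built in Section~\ref{sec_commalg}; the paper's route is more self-contained and records the internal shift $\langle -d\rangle$ explicitly. The paper also phrases the argument for an arbitrary Gorenstein $R\ld$ rather than a regular one, and then separately shows (Fact~\ref{fact_regularisadomain}) that regular local graded rings are domains to guarantee that $w$ is a non-zero-divisor --- a point you handle by localizing and using the standard ungraded fact that regular local rings are domains.
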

\begin{proof}
More generally, if $(R\ld,\frm)$ is Gorenstein and $w\in\frm$ is homogeneous and not a zero
divisor, then $R\ld/\frm$ is Gorenstein. This follows
from \begin{align}\label{eq:exthypersurface}\ext_{R\ld/(w)}^{\ast}(M\ld,R\ld/(w))\ld\ \cong\
  \ext_{R\ld}^{\ast+1}(M\ld,R\ld)\ld\langle -d\rangle\end{align} 
for each finitely generated graded $R\ld/(w)$-module $M\ld$, where $d$ is the degree of $w$. To prove
\eqref{eq:exthypersurface} it suffices to do the case $\ast=0$, i.e.
\begin{align}\label{eq:exthypersurface2}\hom_{R\ld/(w)}(M\ld,R\ld/(w))\ =\ \hom_{R\ld}(M\ld,R\ld/(w))\ \cong\
  \ext^1_{R\ld}(M\ld,R\ld)\langle -d\rangle,\end{align}
because both sides of \eqref{eq:exthypersurface} are effaceable $\delta$-functors on $R\ld\mod$. The isomorphism
\eqref{eq:exthypersurface2} follows from applying $\ext^\ast_{R\ld}(M\ld,-)\ld$ to the exact sequence $$0\to
R\ld\langle -d\rangle\xrightarrow{\ w\ } R\ld\xrightarrow{\ \ \ }R\ld/(w)\to 0.$$ It remains to show that any regular
graded ring is a domain, which is done in Fact \ref{fact_regularisadomain}. 
\end{proof}
\begin{fact}\label{fact_regularisadomain}
Let $(S\ld,\frm)$ be a regular local graded ring. Then $S\ld$ is a domain.
\end{fact}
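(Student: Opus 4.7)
The plan is to reduce the statement to the classical ungraded theorem that a regular local ring is a domain. By Proposition \ref{prop_regulargradedungraded}, the localization $R_\frm$ (viewed as an ungraded local ring) is regular, hence a domain by the well-known result, e.g.\ \cite[Theorem 2.2.3]{BrunsHerzog}. What then remains is to transfer the domain property from $R_\frm$ back to $R\ld$, which is purely formal once two elementary graded-local observations are in hand.

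The two observations are: first, $R\ld$ is a domain if and only if the product of two nonzero \textit{homogeneous} elements is always nonzero. Indeed, given nonzero $a,b \in R$ with $ab = 0$, extracting the top-degree homogeneous components $a_m$ and $b_n$ yields $a_m b_n = 0$ (this is the degree-$(m+n)$ component of $ab$), which produces a homogeneous pair of zero divisors. Second, any homogeneous $s \in R\ld$ outside $\frm\ld$ is a unit: the ideal $(s)$ is homogeneous and not contained in $\frm\ld$, so by uniqueness of the graded maximal ideal it cannot be proper, hence equals $R\ld$.

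Granted these, suppose that homogeneous $a,b \in R\ld$ satisfy $ab = 0$ with $a,b \neq 0$. In the domain $R_\frm$, we have $a = 0$ or $b = 0$; say $a = 0$ in $R_\frm$, so $sa = 0$ for some $s \in R \setminus \frm$. Since $\frm\ld$ is homogeneous, some homogeneous component $s_i$ of $s$ lies outside $\frm\ld$, and homogeneity of $a$ gives $s_i a = 0$. By the second observation above $s_i$ is a unit, forcing $a = 0$, a contradiction. Thus no genuine difficulty appears in this proof beyond the passage between graded and ungraded localization; the only real mathematical content is the invoked ungraded theorem.
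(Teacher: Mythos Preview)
Your proof is correct and takes a genuinely different route from the paper's. You reduce to the ungraded case via localization at $\frm$, invoke the classical ungraded theorem (regular local implies domain), and pull the conclusion back using the injectivity of $R\ld \to R_\frm$ on homogeneous elements --- essentially the content of Fact~\ref{fact_locdoesnotkillanything}. This is short and entirely in line with the paper's stated philosophy of deducing graded statements from ungraded ones via Proposition~\ref{prop_gradedvsungraded}.

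The paper instead gives a self-contained argument that does not cite the ungraded theorem at all. It works in three steps: (i) any associated prime of $S\ld$ is minimal --- reducing to the case $\frm \in \Ass(S\ld)$ and arguing that this forces every finitely generated module to be free, so $\frm$ itself is free, a contradiction; (ii) there is a unique minimal prime --- a $K_0$-argument: regularity forces $\hom_{\ZZ}(K_0(S\ld\mod),\ZZ)=\ZZ\langle\chi\rangle$ via the Euler characteristic, while distinct minimal primes $\frp$ would yield linearly independent multiplicity functions $\mult_\frp$; (iii) the unique associated prime is then zero since the localization map $S\ld\to S_{(\frp)}$ is injective and $\frp S_{(\frp)}=0$. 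Your argument is more economical; the paper's buys independence from the ungraded result, deriving everything directly from the definition $\gldim(S\ld\Mod)<\infty$.
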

\begin{proof}
We divide the proof into three steps:
\begin{enumerate}
\item\label{item:assmin} Show that any associated prime of $S\ld$ is minimal.
\item\label{item:uniqueminimalprime} Show that there is precisely one minimal prime in $S\ld$.
\item\label{item:conclude} Conclude the proof.
\end{enumerate}
\eqref{item:assmin}: If $\frp\in\Ass(S\ld)$, then $\frp S_{(\frp)}\in\Ass(S_{(\frp)})$. Replacing $S\ld$ by $S_{(\frp)}$
(which is again regular by Proposition \ref{prop_regcrit}), it is sufficient to show that for $(S\ld,\frm)$
regular and $\frm\in\Ass(S\ld)$ we have $\frm=0$. Pick $x\in S\ld\setminus\{0\}$ homogeneous with 
$\frm = \Ann_{S\ld}(x)$. Then $x M\ld = 0$ for each $M\ld$ which can be embedded into $\frm^{\oplus k}$ for some $k$, and in
particular no such $M\ld\neq 0$ can be projective. Any syzygy in a minimal free resolution embeds into some
$\frm^{\oplus k}$, so it follows that any non-free finitely generated module has infinite projective dimension. As
$\gldim(S\ld\mod)<\infty$, we conclude that any finitely generated module is free; in particular $\frm$ is 
free, contradicting the fact that $x$ acts trivially on $\frm$.

\eqref{item:uniqueminimalprime}: For any additive function on $\mu: K_0(S\ld\mod)\to\ZZ$ on $S\ld\mod$ and any $M\ld$ we
have $\mu([M\ld]) = \mu([S\ld])\cdot\chi([M\ld])$, where $\chi: K_0(S\ld\mod)\to\ZZ$ is the Euler characteristic. Hence
$\hom_{\ZZ}(K_0(S\ld\mod),\ZZ)=\ZZ\langle\chi\rangle$. On the other hand, let $\frp$ be a minimal homogeneous prime in
$S\ld$. Then the assignment $M\ld\mapsto \length_{S_{(\frp)}}(M_{(\frp)})$ defines an additive function $\mult_\frp:
K_0(S\ld\mod)\to\ZZ$ satisfying $\mult_\frp([S\ld/\frp])=\length_{S_{(\frp)}}(S_{(\frp)}/\frp S_{(\frp)})=1$ but
$\mult_\frp([S\ld/\frq])=0$ for any prime $\frq$ not containing $\frp$. This implies that there is precisely one minimal
prime $\frp$ in $S\ld$, as claimed. 

\eqref{item:conclude}: If $\frp$ denotes the unique associated prime in $S\ld$, then $\frp$ coincides with the ideal
of zero divisors, and hence the localization map $S\ld\to S_{(\frp)}$ is injective. As the proof of
\eqref{item:assmin} shows $\frp S_{(\frp)} = 0$, it follows that $\frp = 0$, and hence $S\ld$ is a domain.
\end{proof}

\begin{prop}\label{prop_trivsingcat}
Let $R\ld$ be a regular local graded ring. Then $\uMCM^{(\infty)}(R\ld)=0$. 
\end{prop}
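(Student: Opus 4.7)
The plan is to show that any maximal Cohen-Macaulay (respectively Gorenstein projective) module over a regular local graded ring must actually be projective, which gives vanishing in the stable category. The key input will be the semi-orthogonal decomposition established in Proposition~\ref{prop_semiorthogonaldecomposition} combined with the fact that regularity forces every module into $\fpd(R\ld)$.

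First, I would observe that a regular ring is automatically Gorenstein: by Proposition~\ref{prop_injdim} we have $\injdim_{R\ld}R\ld \leq \projdim_{R\ld} k\ld \leq \gldim(R\ld\Mod) < \infty$, so Proposition~\ref{prop_charmcm} and the semi-orthogonal decomposition of Proposition~\ref{prop_semiorthogonaldecomposition} apply. Second, in a regular ring every finitely generated graded module $N\ld$ satisfies $\projdim_{R\ld}N\ld<\infty$, so $\fpd(R\ld)=R\ld\mod$ (and analogously $\fpd^{\infty}(R\ld)=R\ld\Mod$ using Baer's criterion as in the proof of Proposition~\ref{prop_regcrit}).

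Now let $M\ld \in \MCM(R\ld)$. By Proposition~\ref{prop_semiorthogonaldecomposition}.\eqref{item_sod2}, we have $\ext^{k}_{R}(M\ld,N\ld)\ld = 0$ for all $k>0$ and all $N\ld\in\fpd(R\ld)$. Since $\fpd(R\ld)$ is all of $R\ld\mod$, this says $\ext^{1}_{R}(M\ld,-)\ld=0$ identically. Applying this to the syzygy short exact sequence $0\to \Omega M\ld \to P\ld \to M\ld \to 0$ (with $P\ld$ a free cover), we obtain that the sequence splits, so $M\ld$ is a direct summand of $P\ld$ and hence projective by Proposition~\ref{prop_gradedvsungraded}.\eqref{item:projective}. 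Consequently $M\ld \cong 0$ in $\uMCM(R\ld)$, proving the statement. The infinite case $\uMCM^\infty(R\ld)=0$ follows by the identical argument using the analogous characterization for Gorenstein projectives from Remark~\ref{rem_gorensteinproj}.

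There is no real obstacle here; the whole statement is essentially a formal consequence of the semi-orthogonal decomposition once one notes that regularity collapses one of the two pieces to the entire module category. The only point requiring a small amount of care is making sure that the characterization in Proposition~\ref{prop_semiorthogonaldecomposition}.\eqref{item_sod2} is available, which is why verifying regular $\Rightarrow$ Gorenstein comes first.
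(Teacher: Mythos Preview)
Your proof is correct, but the paper takes a more direct route. Instead of invoking the full semi-orthogonal decomposition of Proposition~\ref{prop_semiorthogonaldecomposition} and then splitting the syzygy sequence via $\ext^1$-vanishing, the paper simply appeals to characterization~\eqref{item:charmcm3} of Proposition~\ref{prop_charmcm} (resp.\ to Definition~\ref{def_gorensteinprojective} in the infinite case): any $M\ld\in\MCM^{(\infty)}(R\ld)$ can be written as an $n$-th syzygy for \emph{every} $n$. Since $\gldim(R\ld)<\infty$, choosing $n>\gldim(R\ld)$ forces $M\ld$ to be projective immediately, so $M\ld=0$ in the stable category. This avoids any detour through $\fpd(R\ld)$ or the $\ext$-orthogonality statements. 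Your argument buys nothing extra here and in fact relies on more machinery (the semi-orthogonal decomposition) than is needed; the paper's proof is essentially a one-liner once the ``arbitrarily high syzygy'' characterization is in hand. Both arguments implicitly use that regular implies Gorenstein, which you made explicit and the paper leaves tacit.
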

\begin{proof}
By Proposition \ref{prop_charmcm} resp. Definition \ref{def_gorensteinprojective} any $M\ld\in\MCM^{(\infty)}(R\ld)$ 
can be written as an arbitrarily high syzygy, i.e. for all $n\in\NN$ there exists some $N\ld\in R\ld\Mod$ such that
$M\ld\cong\Omega^n M\ld$. By assumption, $\gldim(R\ld)<\infty$, and so taking $n>\gldim(R\ld)$ shows that $M\ld$ is
projective and hence vanishes in $\uMCM^{(\infty)}(R\ld)$. 
\end{proof}

Now, we fix $w\in\frm\setminus\{0\}$ (possibly in $\frm^2$) and consider maximal Cohen-Macaulay modules over the
quotient singularity $R\ld := S\ld / (w)$ which is Gorenstein by Proposition \ref{prop_hypersurfacegorenstein}. If
$M\ld\in\MCM(R\ld)$, then considering $M\ld$ as a module over 
$S\ld$ the Auslander-Buchsbaum formula \ref{thm_auslanderbuchsbaum} yields 
$$\projdim_{S\ld}(M\ld) = \depth_{S\ld}(S\ld) - \depth_{S\ld} (M\ld) = \dim(S\ld) - \depth_{R\ld}(M\ld)= 1.$$ Hence,
there is an exact sequence of $S\ld$-modules $0\to P\ld\mor{\alpha} Q\ld\to M\ld\to 0$, where 
$P\ld$ and $Q\ld$ are projective, hence free. As $w\cdot M\ld = \{0\}$, we get $w\cdot Q\ld\subset\image(\alpha)$, and
therefore we can choose $\beta\in\hom_{R\ld}(Q\ld,P\ld)_{-\deg(w)}$ such that $\alpha\beta = w\cdot
\id_{P\ld}$. Applying $\beta$ from the left yields $\beta\alpha\beta=w\beta$, so the injectivity of $\beta$ yields
$\beta\alpha = w\cdot \id_{P\ld}$. Hence, we end up with what is called a \textit{matrix factorization} of type
$(S\ld,w)$:  

\begin{definition}\label{def_gradedmf}
Let $S\ld$ be a regular local graded ring and $w\in\frm$ be homogeneous of degree $d>0$. 
\begin{enumerate}
\item A \textit{graded matrix factorization of type $(S\ld,w)$} is a sequence $M^0\ld\mor{f}M^{-1}\ld\mor{g} M^0\ld$ of the
  following form:
\begin{enumerate}
\item $M^0\ld$ and $M^{-1}\ld$ are free (not necessarily finitely generated) graded $S\ld$-modules.
\item $f$ is a homomorphism of graded $S\ld$-modules of degree $d$.
\item $g$ is a homomorphism of graded $S\ld$-modules of degree $0$.
\item $gf = w\cdot\id_{M^0\ld}$ and $fg = w\cdot\id_{M^{-1}\ld}$. 
\end{enumerate}
The element $w$ is called the \textit{potential} of the matrix factorization.
\item A \textit{morphism of graded matrix
    factorizations} $$\left(M^0\ld\mor{f}M^{-1}\ld\mor{g}M^0\ld\right)\quad\longrightarrow\quad 
\left(N^0\ld\mor{f\p}N^{-1}\ld\mor{g\p}N^0\ld\right)$$ is a pair $(\alpha,\beta)$ of morphisms of graded
$R\ld$-modules $\alpha: M^0\ld\to N^0\ld$ and $\beta: M^{-1}\ld\to N^{-1}\ld$ such that
\begin{equation*}\begin{tikzpicture}[description/.style={fill=white,inner sep=2pt}]
    \matrix (m) [matrix of math nodes, row sep=3em,
                 column sep=2.5em, text height=1.5ex, text depth=0.25ex,
                 inner sep=0pt, nodes={inner xsep=0.3333em, inner ysep=0.3333em}]
    {
      M^0\ld & M^{-1}\ld & M^0\ld \\
      N^0\ld & N^{-1}\ld & N^0\ld \\
    };
    \draw[->] (m-1-1) -- node[above,scale=0.75] {$f$} (m-1-2);
    \draw[->] (m-1-2) -- node[above,scale=0.75] {$g$} (m-1-3);
    \draw[->] (m-2-1) -- node[above,scale=0.75] {$f\p$} (m-2-2);
    \draw[->] (m-2-2) -- node[above,scale=0.75] {$g\p$} (m-2-3);

    \draw[->] (m-1-1) -- node[description,scale=0.75]{$\alpha$} (m-2-1);
    \draw[->] (m-1-2) -- node[description,scale=0.75]{$\beta$} (m-2-2);
    \draw[->] (m-1-3) -- node[description,scale=0.75]{$\alpha$} (m-2-3);
\end{tikzpicture}\end{equation*}
commutes.

The category of graded matrix factorizations of type $(S\ld,w)$ with morphisms of graded matrix factorizations is
denoted $\MF^\infty(S\ld,w)$. The full subcategory of graded matrix factorizations $M^0\ld\to M^{-1}\ld\to M^0\ld$ with
$M^0\ld, M^{-1}\ld$ finitely generated is denoted by $\MF(S\ld,w)$. 
\item A morphism $(\alpha,\beta)$ as above is called \textit{nullhomotopic}, if there are morphisms of graded
  $R\ld$-modules $D^0: M^0\ld\to N^{-1}\ld$ and $D^{-1}: M^{-1}\ld\to N^0\ld$ of degree $0$ and $-d$, respectively, such
  that $g\p D^0 + D^{-1} f = \alpha$ and $f\p D^{-1} + D^0 g = \beta$. Two morphisms of graded matrix factorizations are
  called \textit{homotopic} if their difference is nullhomotopic.

  The quotient of $\MF^\infty(S\ld,w)$ and $\MF(S\ld,w)$ with respect to the homotopy relation is called
  \textit{homotopy category of matrix factorizations of type $(S\ld,w)$} and is denoted $\HMF^\infty(S\ld,w)$ and
  $\HMF(S\ld,w)$, respectively.
\item If $S\ld\mor{\iota} T\ld$ is a local homomorphism of regular local graded rings such that $T\ld$ is free over
  $S\ld$, any matrix factorization of type $(T\ld,\iota(w))$ can be considered as a matrix factorization of type
  $(S\ld,w)$. This gives \textit{restriction functors} $\MF(T\ld,\iota(w))\to\MF(S\ld,w)$ and
  $\HMF^\infty(T\ld,\iota(w))\to\HMF^\infty(S\ld,w)$ which we will denote by $(-)\downarrow^{T\ld}_{S\ld}$ Note that in
  our application $T\ld$ will usually be of infinite 
  rank over $S\ld$, so that $\HMF(T\ld,\iota(w))$ is \textit{not} mapped to $\HMF(S\ld,w)$ under the restriction functor.
\end{enumerate}
\end{definition}

\begin{rem}\label{rem_pretriag}
The category of graded matrix factorizations has a natural pretriangulated dg-enrichment, giving rise to the homotopy
category just defined. We will describe this now; the reader may skip this on first reading. 

Given a matrix factorizations $M^0\ld\mor{f}M^{-1}\ld\mor{g}M^0\ld$, let us agree on writing $M\ua\ld$ for the
sequence $$...\to M^0\ld\langle -(k+1)d\rangle\xrightarrow{f} M^{-1}\ld\langle
-kd\rangle\xrightarrow{g} M^0\ld\langle -kd\rangle\xrightarrow{f} M^{-1}\ld\langle
-(k-1)d\rangle\to ...,$$ where the maps increase the cohomological grading and $M^0\ld = M^0\ld\langle 0\rangle$ is
placed in cohomological degree $0$. Note that this is compatible with the previous meaning of $M^0\ld$ and
$M^{-1}\ld$. Further, let us call the ``differential'' on $M\ua\ld$ simply by $\delta$, so that $\delta^2 =
w\cdot\id_{M\ua\ld}$. Now, given another graded matrix factorization $N^0\ld\mor{f\p}N^{-1}\ld\mor{g\p}N^0\ld$ with
corresponding complex $N\ua\ld$, a  
\textit{morphism of matrix factorizations of degree $k$} between $M$ and $N$ is a family $\{\alpha_n\}_{n\in\ZZ}$ of
homomorphisms of graded modules $\alpha_n: M^n\ld\to N^{n+k}\ld$ such that $\alpha_{n+2} = \alpha_n\langle d\rangle$
under the equalities $M^{n+2}\ld = M^n\langle d\rangle\ld$ and $N^{n+k+2}\ld = N^{n+k}\langle d\rangle\ld$. Given such a
morphism $\alpha$ of degree $k$, we can define its differential $\differential\alpha$ by $(\differential\alpha)_n := \delta_N\alpha_n +
(-1)^{k+1}\alpha_{n+1}\delta_M$. This is a homomorphism of degree $k+1$. Note that this construction is completely
analogous to the construction of the complex of graded homomorphisms between two complexes. What is remarkable is that
even though we only have $\delta^2=w\cdot\id$ instead of $\delta^2=0$, taking twice the differential of a graded
morphism between matrix factorizations still gives the zero map: 
\begin{align*}
(\differential^2\alpha)_n & = \delta_N(\differential\alpha)_n + (-1)^{k+2}(\differential\alpha)_{n+1}\delta_M\\
& = \delta_N(\delta_N\alpha_n + (-1)^{k+1} \alpha_{n+1}\delta_M) + (-1)^k(\delta_N\alpha_{n+1} +
(-1)^{k+1}\alpha_{n+2}\delta_M)\delta_M\\
& = (-1)^k\left(\delta_N^2\alpha_n - \alpha_{n+2}\delta_M^2\right)\\
& = 0.
\end{align*}
Thus, the essential thing is that there is a degree $d$ element of the center of $R\ld\mod$, namely the multiplication
by $w$, such that $\delta^2$ is equal to the action of this element. 

Summing up, we have constructed for each pair of graded matrix factorizations a complex of graded morphisms between
them. The $0$-cocycles in this complex are precisely the morphisms of graded matrix factorizations as defined in
Definition \ref{def_gradedmf}, and a morphism is a $0$-boundary if and only if it is nullhomotopic. Therefore, we obtain
a natural dg-enhancement $\MF^{\infty}\ldg(S\ld,w)$ of $\MF^\infty(S\ld,w)$, such that the associated homotopy category
$\Ho(\MF^{\infty}\ldg(S\ld,w))$ equals $\HMF^\infty(S\ld,w)$.  

The dg-category $\MF^\infty\ldg(S\ld,w)$ is particularly nice in the sense that for each object
$X\in\MF^\infty\ldg(S\ld,w)$ and each morphism $f\in\MF^\infty\ldg(S\ld,w)(X,Y)_0 = \MF^\infty(S\ld,w)(X,Y)$ the
functors $$\MF^\infty\ldg(S\ld,w)(-,X)[k]\quad\text{\ and\ }\quad\cone\left[\MF^\infty\ldg(S\ld,w)(-,f)\right]$$ are
representable by objects in $\MF^\infty\ldg(S\ld,w)$. This means that there are objects $X[k]\in\MF^\infty\ldg(S\ld,w)$
and $\cone(f)\in\MF^\infty\ldg(S\ld,w)$ such that for each $Z\in\MF^\infty\ldg(S\ld,w)$ there are natural isomorphisms of
complexes 
\begin{align}\label{eq:shift}\MF^\infty\ldg(S\ld,w)(Z,X[k])\cong\MF^\infty\ldg(S\ld,w)(Z,X)[k]\end{align} 
and \begin{align}\label{eq:cone}\MF^\infty\ldg(S\ld,w)(Z,\cone(f))\cong\cone\left[\MF^\infty\ldg(S\ld,w)
    (Z,X)\xrightarrow{f\circ -} \MF^\infty \ldg(S\ld,w)(Z,Y)\right]\end{align}
A dg-category satisfying these two representability conditions is called \textit{pretriangulated}, and the homotopy
category of a pretriangulated dg-category is canonically triangulated (see \cite[Section 2]{SchwedeTorsion}).

It remains to check that $\MF^\infty\ldg(S\ld,w)$ indeed satisfies the above representability conditions. Both the shift
and the cone can be constructed as for usual complexes, and the verification of \eqref{eq:shift} and \eqref{eq:cone} is
just a long and tedious computation. As we do not want to dig too deep into these things, we content ourself by giving
the definitions of shift and cone. For the shift, we put
\begin{align*}
\left(M^0\ld\mor{f} M^{-1}\ld\mor{g} M^0\ld\right)[1]\quad:=\quad M^{-1}\ld\langle d\rangle\xrightarrow{-g}
M^0\ld\xrightarrow{-f} M^{-1}\ld\langle d\rangle,
\end{align*}
and given a morphism
\begin{equation*}\begin{tikzpicture}[description/.style={fill=white,inner sep=2pt}]
    \matrix (m) [matrix of math nodes, row sep=3em,
                 column sep=2.5em, text height=1.5ex, text depth=0.25ex,
                 inner sep=0pt, nodes={inner xsep=0.3333em, inner ysep=0.3333em}]
    {
       M\ld && M^0\ld & M^{-1}\ld & M^0\ld\\ N\ld && N^0\ld & N^{-1}\ld & N^0\ld\\
    };
    \draw[->] (m-1-1) -- node[scale=0.75,description]{$(\alpha,\beta)$} (m-2-1);

    \draw[->] (m-1-3) -- node[scale=0.75,above]{$f$}   (m-1-4);
    \draw[->] (m-2-3) -- node[scale=0.75,below]{$f\p$} (m-2-4);
    \draw[->] (m-1-4) -- node[scale=0.75,above]{$g$}   (m-1-5);
    \draw[->] (m-2-4) -- node[scale=0.75,below]{$g\p$} (m-2-5);
    
    \draw[->] (m-1-3) -- node[scale=0.75,description]{$\alpha$} (m-2-3);
    \draw[->] (m-1-4) -- node[scale=0.75,description]{$\beta$}  (m-2-4);
    \draw[->] (m-1-5) -- node[scale=0.75,description]{$\alpha$} (m-2-5);
\end{tikzpicture}\end{equation*}
of graded matrix factorizations, we define $\cone(\alpha,\beta)$ as the factorization
\begin{align}\label{eq:defcone}
\begin{tikzpicture}[baseline=-1mm,description/.style={fill=white,inner sep=2pt}]
    \matrix (m) [matrix of math nodes, row sep=3em,
                 column sep=5em, text height=1.5ex, text depth=0.25ex,
                 inner sep=0pt, nodes={inner xsep=0.3333em, inner ysep=0.3333em}]
    {
      N^0\ld\oplus M^{-1}\ld\langle d\rangle \pgfmatrixnextcell N^{-1}\ld\oplus M^0\ld \pgfmatrixnextcell N^0\ld\oplus
      M^{-1}\ld\langle d\rangle\\ 
    };
    \draw[->] (m-1-1) -- node[above,scale=0.75]{$\begin{pmatrix} f\p & \beta \\ 0 & -g\end{pmatrix}$} (m-1-2);
    \draw[->] (m-1-2) -- node[above,scale=0.75]{$\begin{pmatrix} g\p & \alpha \\ 0 & -f\end{pmatrix}$} (m-1-3);
\end{tikzpicture}
\end{align}

Note that, in contrast to the situation in the ungraded case, neither the dg-category $\MF^\infty\ldg(S\ld,w)$ nor the
triangulated category $\HMF^\infty(S\ld,w)$ are $2$-periodic! Instead, we have $[2]\cong\langle d\rangle $ on 
$\MF^\infty\ldg(S\ld,w)$ and $\HMF^\infty(S\ld,w)$.
\end{rem}

Our motivation for studying matrix factorizations was that for any maximal Cohen-Macaulay module $M\ld$ over
$R\ld := S\ld/(w)$ we constructed a graded matrix factorization $M^0\ld\mor{f} M^{-1}\ld\mor{g} M^0\ld$ of type
$(S\ld,w)$ such that $M\ld\cong\coker(M^{-1}\ld\mor{g} M^0\ld)$. Indeed, this construction yields a very close relationship
between matrix factorizations of type $(S\ld,w)$ and maximal Cohen-Macaulay over $S\ld/(w)$, as we shall see now:

\begin{theorem}\label{thm_hmfmcm}
Let $S\ld$ be a regular local graded ring, $w\in\frm\setminus\{0\}$ homogeneous of degree $d$ and $R\ld :=
  S\ld / (w)$. Then the functor
\begin{equation*}\begin{tikzpicture}[description/.style={fill=white,inner sep=2pt}]
    \matrix (m) [matrix of math nodes, row sep=1em,
                 column sep=3.5em, text height=1.5ex, text depth=0.25ex,
                 inner sep=0pt, nodes={inner xsep=0.3333em, inner ysep=0.3333em}]
    {
       \MF^\infty(S\ld,w) && R\ld\Mod\\
       \left(M^0\ld\mor{f} M^{-1}\ld\mor{g} M^0\ld\right) && \coker(g)\\
    };
    \node (lau) at ($(m-2-1.east) +  (4mm,0)$) {};
    \node (rau) at ($(m-2-3.west) +  (-4mm,0)$) {};

    \node (la) at (lau |- m-1-1) {};
    \node (ra) at (rau |- m-1-1) {};

    \draw[->] (la) -- node[above,scale=0.75]{$\coker$} (ra);
    \draw[|->] (lau) -- (rau);
\end{tikzpicture}\end{equation*}
induces a fully faithful functor $\coker: \HMF^\infty(S\ld,w)\to\ul{R\ld\Mod}.$ The essential image of
$\HMF^{(\infty)}(S\ld,w)$ under $\coker$ equals $\uMCM^{(\infty)}(R\ld)$, and we get an equivalence of triangulated
categories $$\coker:\ \HMF^{(\infty)}(S\ld,w)\ \ \cong\ \ \uMCM^{(\infty)}(R\ld).$$
\end{theorem}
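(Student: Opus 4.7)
The plan is to establish in turn: (a) $\coker$ descends to a well-defined functor $\HMF^\infty(S\ld,w)\to\ul{R\ld\Mod}$; (b) its essential image on $\HMF^{(\infty)}$ equals $\uMCM^{(\infty)}(R\ld)$; (c) it is fully faithful.

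For (a), $\coker(g)$ lies in $R\ld\Mod$ because the relation $gf=w\cdot\id_{M^0\ld}$ forces $wM^0\ld\subseteq\image(g)$, and a morphism $(\alpha,\beta)$ descends to the cokernel thanks to $\alpha g=g'\beta$. For homotopy invariance, given a nullhomotopy $\alpha=g'D^0+D^{-1}f$, the summand $g'D^0$ vanishes in $\coker(g')$, while $D^{-1}f$ restricted to $\image(g)$ takes values in $wN^0\ld=g'f'(N^0\ld)\subseteq\image(g')$ (since $D^{-1}fg=wD^{-1}$). Hence $\overline{\alpha}:\coker(g)\to\coker(g')$ factors through the free $R\ld$-module $M^{-1}\ld/wM^{-1}\ld$ and is zero in $\ul{R\ld\Mod}$.

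For (b), use that $S\ld$ is a domain (Fact \ref{fact_regularisadomain}), so $w$ is a non-zero-divisor and $gf=fg=w\cdot\id$ forces both $f$ and $g$ to be $S\ld$-injective. Reducing modulo $w$ therefore yields an exact $2$-periodic sequence $\cdots\xrightarrow{\bar f}\overline{M^0\ld}\xrightarrow{\bar g}\overline{M^{-1}\ld}\xrightarrow{\bar f}\overline{M^0\ld}\to\coker(g)\to 0$ of free $R\ld$-modules, which both resolves $\coker(g)$ to the left and, via the induced embedding $\coker(g)\hookrightarrow\overline{M^{-1}\ld}$ coming from $\bar f$, coresolves it to the right; by Proposition \ref{prop_charmcm} and Definition \ref{def_gorensteinprojective}, $\coker(g)\in\MCM^{(\infty)}(R\ld)$. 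Conversely, for $N\in\MCM^{(\infty)}(R\ld)$, the Auslander-Buchsbaum formula (Theorem \ref{thm_auslanderbuchsbaum}) applied over $S\ld$ gives $\projdim_{S\ld}N\leq 1$, producing an $S\ld$-presentation $0\to P\ld\xrightarrow{g}Q\ld\to N\to 0$; multiplication $w\cdot\id_{Q\ld}$ lifts along $g$ to $f:Q\ld\to P\ld$ with $gf=w\id$, and injectivity of $g$ promotes this to $fg=w\id$ as well.

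For (c), fullness follows by lifting $\phi:\coker(g)\to\coker(g')$ along the surjection $N^0\ld\twoheadrightarrow\coker(g')$ via projectivity of $M^0\ld$ over $S\ld$ to some $\alpha:M^0\ld\to N^0\ld$; then $\alpha g$ factors uniquely as $g'\beta$ by injectivity of $g'$, and $f'\alpha=\beta f$ results from $g'(f'\alpha-\beta f)=w\alpha-w\alpha=0$. Faithfulness is the main technical point. I first observe that the trivial matrix factorization $F_S\xrightarrow{1}F_S\xrightarrow{w}F_S$ attached to any free $R\ld$-module $F_S/wF_S$ has nullhomotopic identity (take $D^0=0$, $D^{-1}=\id$). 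Given $(\alpha,\beta)$ with $\overline{\alpha}$ factoring as $\coker(g)\to F\to\coker(g')$ through a free $R\ld$-module $F$, I apply fullness to lift this factorization to a composition of matrix factorization morphisms through the trivial matrix factorization of $F$; this composition is nullhomotopic because the intermediate object is zero in $\HMF^\infty$. The difference $(\gamma,\delta)$ between $(\alpha,\beta)$ and this composition then induces the \emph{literal} zero map on cokernels, so $\gamma$ has image in $\image(g')$; injectivity of $g'$ gives $\gamma=g'D^0$ uniquely, and setting $D^{-1}=0$ produces a bona fide nullhomotopy.
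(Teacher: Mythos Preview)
Your argument is essentially correct and in several places more detailed than the paper's own proof (the paper skips the full--faithfulness entirely as ``diagram chasing''). However, there is one genuine gap.

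In part (b), for essential surjectivity you invoke the Auslander--Buchsbaum formula (Theorem~\ref{thm_auslanderbuchsbaum}) to conclude $\projdim_{S\ld} N \leq 1$ for $N\in\MCM^{(\infty)}(R\ld)$. This formula, as stated in the paper, applies only to \emph{finitely generated} modules, so your argument covers $\uMCM(R\ld)$ but not $\uMCM^\infty(R\ld)$. Depth in the sense of Definition~\ref{def_depth} is not even defined for arbitrary Gorenstein-projective modules, so the appeal to Auslander--Buchsbaum cannot be salvaged directly. The paper is explicit about this point: it first recalls the Auslander--Buchsbaum argument from the beginning of the section (valid for $\MCM(R\ld)$) and then gives a separate argument that handles both cases uniformly. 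Namely, for $M\ld\in\MCM^{(\infty)}(R\ld)$ one picks a projective \emph{co}resolution $M\ld\to P^0\ld\to P^1\ld\to\cdots$ (available by definition of Gorenstein-projectivity), with cosyzygies $Q^n\ld$. Since each $P^i\ld$ is $R\ld$-projective it has $\projdim_{S\ld}\leq 1$, so dimension shifting gives $\ext^k_S(M\ld,N\ld)\ld\cong\ext^{k+n}_S(Q^n\ld,N\ld)\ld$ for $k>1$, $n>0$; taking $k+n>\gldim(S\ld\Mod)$ (finite by regularity) forces $\ext^k_S(M\ld,N\ld)\ld=0$ for all $k>1$ and all $N\ld$, whence $\projdim_{S\ld}M\ld\leq 1$.

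Apart from this, your parts (a) and (c) are fine, and your treatment of (b) for the finitely generated case coincides with the argument given just before the statement of the theorem.
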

\begin{proof}
The proof that $\coker: \HMF^\infty(S\ld, w)\to\ul{R\Mod}$ is fully faithful is just some diagram chasing,
  so we skip it. See for example \cite{OrlovGrad}. It remains to show that the essential image of
  $\HMF^{(\infty)}(S\ld,w)$ 
is $\uMCM^\infty(R\ld)$. We will do the finitely generated case only, but the proof applies verbatim to the
Gorenstein-projective case as well. 

Let $M^0\ld\mor{f} M^{-1}\ld\mor{g} M^0\ld$ be a graded matrix factorization of type $(S\ld,w)$. Since $w\cdot M^0 =
\image(gf)\subset\image(g)$ the module $K\ld := \coker(g)$ is annihilated by $w$, and therefore can be considered as a
graded module over $R\ld := S\ld/(w)$. Furthermore, the sequence
\begin{align*}...\to M^0\ld/wM^0\ld\langle
  -d\rangle\xrightarrow{\ f\ } M^{-1}\ld/wM^{-1}\ld\xrightarrow{\ g\ } M^0\ld/wM^0\ld\xrightarrow{\ f\ }
  M^{-1}\ld/wM^{-1}\ld\langle d\rangle\to...\end{align*} is exact, and every second syzygy is isomorphic to $K\ld$, hence
$K\ld$ is maximal Cohen-Macaulay (see Proposition \ref{prop_charmcm}). This shows that $\coker$ restricts to a fully
faithful functor $\HMF(S\ld,w)\to\uMCM(R\ld)$. The proof of the essential surjectivity of this functor was already shown
in the beginning of this section; however, we will now describe a proof which doesn't use the Auslander-Buchsbaum
formula \ref{thm_auslanderbuchsbaum} and applies to the Gorenstein-projective case as well.

We already know from the beginning of the section that we only have to show that any $M\ld\in\MCM(R\ld)$
  satisfies $\projdim_{S\ld} M\ld \leq 1$. Choose a  projective coresolution $M\ld\to P^0\ld\to
P^1\ld\to ...$ of $M\ld$ and let $Q^n\ld$ be the $n$-th syzygy of $P\ua\ld$. Then, since  
$\projdim_{S\ld} R\ld = 1$, we have $\ext^k_{S}(M\ld,N\ld)\ld\cong\ext^{k+n}_{S} (Q^n\ld,N\ld)\ld$ for all
$S\ld$-modules $N\ld$, $k>1$ and $n>0$ by dimension shifting. Choosing $n\gg 0$ such that $k+n>\injdim_{S\ld} N\ld$ we
conclude that $\ext^k_{S}(M\ld,N\ld)\ld=0$ for all $k>1$ and all $N\ld$, hence $\projdim_{S\ld} M\ld\leq 1$ as claimed. 
\end{proof}

Now we can define the stabilization functor, which will be our main tool for studying Khovanov-Rozansky homology.

\begin{definition}
Let $S\ld$ be a regular local graded ring, $w\in\frm\setminus\{0\}$ homogeneous and $R\ld := S\ld / (w)$. The
\textit{stabilization functor} $(-)\stab{w}: R\ld\mod\to\HMF(S\ld,w)$ is defined as the composition
$$R\ld\mod\xrightarrow{\ \ \text{can}\ \ }\ul{R\ld\mod}\xrightarrow{\ \ \bfM\ \ }\ul{\MCM}(R\ld)\xrightarrow{\ \
  \coker\ui\ \ }\HMF(S\ld,w)\hookrightarrow\HMF^\infty(S\ld,w).$$ 
\end{definition}

\begin{rem}
One can extend the stabilization functor to a functor $$(-)\stab{w}:\ R\ld\Mod\ \xrightarrow{\ \ }\
\HMF^\infty(S\ld,w)$$ using the adjoint $\ul{R\ld\Mod}\to\uMCM^\infty(R\ld)$ constructed through the non finitely
generated analogue of Proposition \ref{prop_semiorthogonaldecomposition}.\end{rem}

Next we want to make the stabilization functor explicit. First, note the following fact which follows immediately from
Proposition \ref{prop_semiorthogonaldecomposition} and $\Omega^2\cong\langle -d\rangle$ on $\uMCM(R\ld)$.

\begin{fact}
For any finitely generated $R\ld$-module $M\ld$ we have 
$$\bfM(M\ld)\ \cong\ \Sigma^{2n}\Omega^{2n} M\ld\ \cong\ \Omega^{2n}M\ld\langle nd\rangle,$$ where $n\gg 0$ is
chosen in such a way that $\Omega^{2n}M\ld$ is maximal Cohen-Macaulay.
\end{fact}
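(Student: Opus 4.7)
The plan is to combine the corollary following Proposition~\ref{prop_semiorthogonaldecomposition} with the $2$-periodicity of $\uMCM(R\ld)$ that comes from the matrix factorization description in Theorem~\ref{thm_hmfmcm}. Concretely, the corollary already asserts that for any $m$ large enough that $\Omega^m$ sends $\ul{R\ld\mod}$ into $\uMCM(R\ld)$, the composition $\Sigma^m\Omega^m$ is right adjoint to the inclusion, and hence $\bfM(M\ld)\cong\Sigma^m\Omega^m M\ld$. Applying this with $m=2n$ immediately gives the first isomorphism $\bfM(M\ld)\cong\Sigma^{2n}\Omega^{2n}M\ld$.

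For the second isomorphism it then suffices to establish a natural isomorphism $\Sigma^{2n}\cong\langle nd\rangle$ on $\uMCM(R\ld)$, and by induction this reduces to the case $n=1$, i.e.\ $\Sigma^{2}\cong\langle d\rangle$. The cleanest way to obtain this is via the triangulated equivalence $\coker:\HMF^{\infty}(S\ld,w)\xrightarrow{\cong}\uMCM^{\infty}(R\ld)$ of Theorem~\ref{thm_hmfmcm}: since it intertwines the two shift functors, it is enough to know that on the matrix factorization side one has $[2]\cong\langle d\rangle$, which was observed in Remark~\ref{rem_pretriag} and follows immediately from the explicit construction of the shift (shifting a matrix factorization twice literally reindexes it back to itself after twisting by $\langle d\rangle$, because $f$ lives in degree $d$ while $g$ lives in degree $0$).

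As a sanity check one can verify this by hand without going through the matrix factorization equivalence. Given $M\ld\in\MCM(R\ld)$, pick a matrix factorization $M^{0}\ld\xrightarrow{f}M^{-1}\ld\xrightarrow{g}M^{0}\ld$ with $M\ld\cong\coker(g)$ (such a factorization exists by the construction just before Definition~\ref{def_gradedmf}). Reducing modulo $w$ yields a $2$-periodic exact sequence
\begin{equation*}
\cdots\to M^{0}\ld/wM^{0}\ld\langle -d\rangle\xrightarrow{f} M^{-1}\ld/wM^{-1}\ld\xrightarrow{g} M^{0}\ld/wM^{0}\ld\to M\ld\to 0
\end{equation*}
of free $R\ld$-modules, which is a projective resolution of $M\ld$. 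Reading off the second syzygy along this resolution gives $\Omega^{2}M\ld\cong M\ld\langle -d\rangle$ in $\ul{R\ld\mod}$, hence $\Sigma^{2}M\ld\cong M\ld\langle d\rangle$ in $\uMCM(R\ld)$.

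There is no real obstacle here; the only thing to be careful about is the placement of the degree shift $\langle d\rangle$, which comes from the fact that in the matrix factorization $(f,g)$ only one of the two maps is homogeneous of nonzero degree. Chaining the two isomorphisms yields $\bfM(M\ld)\cong\Sigma^{2n}\Omega^{2n}M\ld\cong\Omega^{2n}M\ld\langle nd\rangle$, as claimed.
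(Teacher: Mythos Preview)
Your proposal is correct and follows the same approach as the paper. The paper does not give a formal proof of this fact; it simply remarks in the sentence preceding the statement that it ``follows immediately from Proposition~\ref{prop_semiorthogonaldecomposition} and $\Omega^2\cong\langle -d\rangle$ on $\uMCM(R\ld)$.'' Your argument is precisely an unpacking of this: you invoke the corollary to Proposition~\ref{prop_semiorthogonaldecomposition} for the first isomorphism and then justify $\Sigma^2\cong\langle d\rangle$ both via the equivalence with $\HMF(S\ld,w)$ (where $[2]\cong\langle d\rangle$ was noted in Remark~\ref{rem_pretriag}) and via the explicit $2$-periodic resolution, which is exactly the content the paper is alluding to.
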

\begin{rem}\label{rem_praecounit}
It is not clear (at least to the author) what the counit map $$\Omega^{2n}M\ld\langle
nd\rangle\cong\bfM(M\ld)\to M\ld$$ should look like. Later we will construct for each $M\ld$ a special $R\ld$-free
resolution with respect to which the map $\Omega^{2n}M\ld\langle nd\rangle\to M\ld$ can be made explicit. See Remark
\ref{rem_explicitcounit}.  
\end{rem}

The following proposition explains the name 'stabilization functor':

\begin{prop}\label{prop_getstab}
Let $M\ld$ be a finitely generated graded $R\ld$-module and $F\ua\ld\to M\ld$ a free resolution of $M\ld$ (not
necessarily of finite rank) with the following properties:
\begin{enumerate}
\item $F\ua\ld$ is eventually $2$-periodic: there exists $n\gg 0$ such that for each $k\leq -2n$ there is a commutative
  diagram
\begin{equation*}\begin{tikzpicture}[description/.style={fill=white,inner sep=2pt}]
    \matrix (m) [matrix of math nodes, row sep=3em,
                 column sep=6em, text height=1.5ex, text depth=0.25ex]
    {
       F^k\ld\langle-d\rangle & F^{k+1}\ld\langle -d\rangle & F^{k+2}\ld\langle -d\rangle\\
       F^{k-2}\ld & F^{k-1}\ld & F^k\ld\\
    };
    \draw[->] (m-1-1) -- node[scale=0.75,above]{$\partial_k$} (m-1-2);
    \draw[->] (m-1-2) -- node[scale=0.75,above]{$\partial_{k+1}$} (m-1-3);
    \draw[->] (m-2-1) -- node[scale=0.75,above]{$\partial_{k-2}$} (m-2-2);
    \draw[->] (m-2-2) -- node[scale=0.75,above]{$\partial_{k-1}$} (m-2-3);
    \draw[->] (m-1-1) -- node[scale=0.75,left]{$\cong$} (m-2-1);
    \draw[->] (m-1-2) -- node[scale=0.75,left]{$\cong$} (m-2-2);
    \draw[->] (m-1-3) -- node[scale=0.75,left]{$\cong$} (m-2-3);
\end{tikzpicture}\end{equation*}
where the vertical maps are isomorphisms.
\item The $2$-periodic part $F^{-2n}\ld\langle -d\rangle\to F^{-2n-1}\ld\to F^{-2n}\ld$ of $F\ua\ld$ can be lifted to a
  matrix factorization $M^0\ld\mor{f} M^{-1}\ld\mor{g} M^0\ld$, i.e. there is a commutative diagram
\begin{equation*}\begin{tikzpicture}[baseline=(current bounding  box.center), description/.style={fill=white,inner sep=2pt}]
    \matrix (m) [matrix of math nodes, row sep=3em,
                 column sep=6em, text height=1.5ex, text depth=0.25ex]
    {
       F^{-2n}\ld\langle -d\rangle & F^{-2n-1}\ld & F^{-2n}\ld \\
       M^0\ld\langle -d\rangle\otimes_{S\ld} R\ld & M^{-1}\ld\otimes_{S\ld} R\ld & M^0\ld\otimes_{S\ld} R\ld\\
    };
    \draw[->] (m-1-1) -- node[scale=0.75,left]{$\cong$} (m-2-1);
    \draw[->] (m-1-2) -- node[scale=0.75,left]{$\cong$} (m-2-2);
    \draw[->] (m-1-3) -- node[scale=0.75,left]{$\cong$} (m-2-3);
    \draw[->] (m-1-1) -- (m-1-2);
    \draw[->] (m-1-2) -- (m-1-3);
    \draw[->] (m-2-1) -- node[scale=0.75,above]{$f\otimes\id$} (m-2-2);
    \draw[->] (m-2-2) -- node[scale=0.75,above]{$g\otimes\id$} (m-2-3);
\end{tikzpicture}\end{equation*}
where the vertical maps are isomorphisms.
\end{enumerate}
Then there is an isomorphism in $\HMF$ $$M\ld\stab{w}\ \ \cong\ \ \left(M^0\ld\mor{f} M^{-1}\ld\mor{g}
  M^0\ld\right)\langle nd\rangle.$$ 
\end{prop}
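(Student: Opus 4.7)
The plan is to reduce everything to the formula $\bfM(M\ld)\cong\Omega^{2n}M\ld\langle nd\rangle$ recorded in the fact immediately preceding the proposition, and then to compute the required syzygy explicitly from the hypothesized resolution $F\ua\ld$. Since $(-)\stab{w}$ is by definition the composite $\coker\ui\circ\bfM\circ\text{can}$, once I identify $\Omega^{2n}M\ld$ as the cokernel of the given matrix factorization, the result will follow from the equivalence $\coker\colon\HMF(S\ld,w)\xrightarrow{\cong}\uMCM(R\ld)$ of Theorem \ref{thm_hmfmcm}.

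First I would enlarge $n$ if necessary so that simultaneously (a) $\Omega^{2n}M\ld$ is maximal Cohen-Macaulay and (b) the resolution $F\ua\ld$ is already $2$-periodic from cohomological degree $-2n$ onwards. Since syzygies are well-defined in $\ul{R\ld\mod}$ independently of the chosen resolution (any two differ by projective summands, which vanish stably), I may use the given $F\ua\ld$ to compute
\[ \Omega^{2n}M\ld\ =\ \image\bigl(F^{-2n}\ld\to F^{-2n+1}\ld\bigr)\ =\ \coker\bigl(F^{-2n-1}\ld\to F^{-2n}\ld\bigr), \]
where the second equality uses exactness of the resolution at $F^{-2n}\ld$. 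By the second hypothesis of the proposition this last cokernel is canonically isomorphic to the cokernel of $g\otimes\id\colon M^{-1}\ld\otimes_{S\ld}R\ld\to M^0\ld\otimes_{S\ld}R\ld$, which in turn equals $M^0\ld/g(M^{-1}\ld)=\coker(g)$ as an $R\ld$-module, because $w\cdot M^0\ld=gf(M^0\ld)\subset g(M^{-1}\ld)$.

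Now I would invoke the equivalence $\coker\colon\HMF(S\ld,w)\xrightarrow{\cong}\uMCM(R\ld)$ of Theorem \ref{thm_hmfmcm}, under which the image of the matrix factorization $\bigl(M^0\ld\mor{f}M^{-1}\ld\mor{g}M^0\ld\bigr)$ is precisely $\coker(g)$. Combining with $\bfM(M\ld)\cong\Omega^{2n}M\ld\langle nd\rangle$, this yields
\[ M\ld\stab{w}\ \cong\ \coker\ui\bigl(\coker(g)\langle nd\rangle\bigr)\ \cong\ \bigl(M^0\ld\to M^{-1}\ld\to M^0\ld\bigr)\langle nd\rangle \]
in $\HMF$, as required. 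The main subtlety I expect is bookkeeping the grading shift: one must check that the shift $\langle nd\rangle$ produced by $\bfM$ matches the shift on the matrix factorization side, which ultimately reflects the identity $[2]\cong\langle d\rangle$ on $\HMF^\infty$ from Remark \ref{rem_pretriag} and the way the isomorphism $F^{-2n-2}\ld\cong F^{-2n}\ld\langle -d\rangle$ is used to align the periodic part with the lift to a matrix factorization.
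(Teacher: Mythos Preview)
Your argument is essentially the same as the paper's: identify $\bfM(M\ld)\cong\Omega^{2n}M\ld\langle nd\rangle$, compute the $2n$-th syzygy from $F\ua\ld$ as $\coker(g)$, and then invoke the equivalence $\coker$ from Theorem~\ref{thm_hmfmcm}. The paper packages this into a commutative diagram but the content is identical.

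One point deserves care. You write that syzygies are well-defined in $\ul{R\ld\mod}$ independently of the chosen resolution, and then use the possibly infinite-rank $F\ua\ld$ to compute $\Omega^{2n}M\ld$. But if $F\ua\ld$ has infinite-rank terms, the resulting syzygy need not be finitely generated at all, so the comparison cannot take place inside $\ul{R\ld\mod}$. The paper handles this by routing the argument through $\uMCM^\infty(R\ld)$ and $\HMF^\infty(S\ld,w)$: one computes $\Omega^{2n}$ via $F\ua\ld$ in the big stable category, where Schanuel-type comparison with a finitely generated resolution is valid, and then uses that $\uMCM(R\ld)\hookrightarrow\uMCM^\infty(R\ld)$ is fully faithful. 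Your argument goes through verbatim once you make this passage explicit.
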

\begin{proof}
We have the following diagram which commutes up to canonical natural isomorphisms
\begin{equation*}\begin{tikzpicture}[description/.style={fill=white,inner sep=2pt}]
    \matrix (m) [matrix of math nodes, row sep=3em,
                 column sep=2.5em, text height=1.5ex, text depth=0.25ex,
                 inner sep=0pt, nodes={inner xsep=0.3333em, inner ysep=0.3333em}]
    {
       R\ld\mod & \ul{R\ld\mod} & \uMCM(R\ld) & \HMF(S\ld,w)\\
       && \uMCM^\infty(R\ld) & \HMF^\infty(S\ld,w)\\
    };
    \draw[->] (m-1-1) -- (m-1-2);
    \draw[->] (m-1-2) -- node[scale=0.75,above]{$\Omega^{2n}$} (m-1-3);
    \draw[->] (m-1-4) -- node[scale=0.75,above]{$\coker$} node[scale=0.75,below]{$\cong$}(m-1-3);
    \draw[->] (m-2-4) -- node[scale=0.75,above]{$\coker$} node[scale=0.75,below]{$\cong$}(m-2-3);

    \draw[->] (m-2-4) -- node[scale=0.75,above]{$\coker$} node[scale=0.75,below]{$\cong$}(m-2-3);
    \draw[->] (m-2-4) -- node[scale=0.75,above]{$\coker$} node[scale=0.75,below]{$\cong$}(m-2-3);

    \draw[right hook->] (m-1-3) -- (m-2-3);
    \draw[right hook->] (m-1-4) -- (m-2-4);
    \draw[->] (m-1-2) -- node[scale=0.75,description]{$\Omega^{2n}$} ($(m-2-3.north) + (-1cm,0)$);
\end{tikzpicture}\end{equation*}
where the composition $R\ld\mod\to\HMF^\infty(S\ld,w)$ is isomorphic to $(-)\stab{w}\langle nd\rangle$. 

Now, the image of $M^0\ld\mor{f} M^{-1}\ld\mor{g} M^0\ld$ under $\HMF^\infty(S\ld,w)\to\uMCM^\infty(R\ld)$ is by
definition the $2n$-th syzygy of $M\ld$, computed using the resolution $F\ua\ld$, and therefore it is isomorphic to the
image of $M\ld$ under the composition $R\ld\mod\to\ul{R\mod}\to\uMCM(R\ld)\to\uMCM^\infty(R\ld)$. The claim follows.
\end{proof}
\begin{rem}
Using the stabilization functor $R\ld\Mod\to\HMF^\infty(S\ld,w)$ one can generalize Proposition \ref{prop_getstab} to
non finitely generated modules $M\ld$. The somewhat unnatural version of Proposition \ref{prop_getstab} (involving a
mixture of both finitely generated and non finitely generated modules) then follows from the commutative diagram
\begin{equation*}\begin{tikzpicture}[description/.style={fill=white,inner sep=2pt}]
    \matrix (m) [matrix of math nodes, row sep=3em,
                 column sep=2.5em, text height=1.5ex, text depth=0.25ex,
                 inner sep=0pt, nodes={inner xsep=0.3333em, inner ysep=0.3333em}]
    {
       R\ld\mod & \ul{R\ld\mod} & \uMCM(R\ld) & \HMF(S\ld,w)\\
       R\ld\Mod & \ul{R\ld\Mod} & \uMCM^\infty(R\ld) & \HMF^\infty(S\ld,w)\\
    };
    \draw[->] (m-1-1) -- (m-1-2);
    \draw[->] (m-1-2) -- node[scale=0.75,above]{$\Omega^{2n}$} (m-1-3);
    \draw[->] (m-1-4) -- node[scale=0.75,above]{$\coker$} node[scale=0.75,below]{$\cong$}(m-1-3);
    \draw[->] (m-2-4) -- node[scale=0.75,above]{$\coker$} node[scale=0.75,below]{$\cong$}(m-2-3);

    \draw[->] (m-2-1) -- (m-2-2);
    \draw[->] (m-2-2) -- node[scale=0.75,above]{$\Omega^{2n}$} (m-2-3);
    \draw[->] (m-2-4) -- node[scale=0.75,above]{$\coker$} node[scale=0.75,below]{$\cong$}(m-2-3);
    \draw[->] (m-2-4) -- node[scale=0.75,above]{$\coker$} node[scale=0.75,below]{$\cong$}(m-2-3);

    \draw[right hook->] (m-1-3) -- (m-2-3);
    \draw[right hook->] (m-1-4) -- (m-2-4);
    \draw[right hook->] (m-1-1) -- (m-2-1);
    \draw[right hook->] (m-1-2) -- (m-2-2);
\end{tikzpicture}\end{equation*}
\end{rem}

We will see later that any finitely generated $R\ld$-module $M\ld$ possesses a free resolution satisfying the assumptions
of Proposition \ref{prop_getstab}. 

Stabilization commutes with restriction in case of free ring extensions in the following sense:
\begin{cor}\label{cor_forget}
Let $S\ld\mor{\ \iota\ } T\ld$ be a local homomorphism of regular local graded rings, such that $T\ld$ is free over
$S\ld$ with respect to $\iota$. Further, let $w\in \frm\setminus\{0\}$ and $M\ld$ be a finitely generated
$T\ld/(\iota(w))$-module, which is also finitely generated over $S\ld$. Then there is a natural isomorphism in
$\HMF^\infty(T\ld,\iota(w))$ $$M\ld\stab{\iota(w)}\downarrow^{T\ld}_{S\ld}\quad\cong\quad
(M\ld\downarrow^{T\ld}_{S\ld})\stab{w}.$$ 
\end{cor}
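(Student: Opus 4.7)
The plan is to show that both sides of the stated isomorphism are represented, on the nose (i.e.\ before passing to the homotopy category), by the \emph{same} graded matrix factorization of type $(S\ld, w)$, namely the restriction of any matrix factorization representing $M\ld\stab{\iota(w)}$.

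First I would pick, using Theorem \ref{thm_hmfmcm} and the existence half of Proposition \ref{prop_getstab}, a graded matrix factorization $X = (M^0\ld \mor{f} M^{-1}\ld \mor{g} M^0\ld)$ of type $(T\ld,\iota(w))$ whose class in $\HMF^\infty(T\ld,\iota(w))$ equals $M\ld\stab{\iota(w)}$. Since $T\ld$ is free over $S\ld$, the underlying $S\ld$-modules of $M^0\ld$ and $M^{-1}\ld$ are free as well, and the identities $gf = \iota(w)\cdot\id$, $fg = \iota(w)\cdot\id$ become, under restriction along $\iota$, the identities $gf = w\cdot\id$, $fg = w\cdot\id$ of homomorphisms of free graded $S\ld$-modules. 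Hence $X\downarrow^{T\ld}_{S\ld}$ is a genuine graded matrix factorization of type $(S\ld,w)$, and $X\mapsto X\downarrow^{T\ld}_{S\ld}$ visibly descends to a functor between the homotopy categories.

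Next I would verify that $X\downarrow^{T\ld}_{S\ld}$ represents $(M\ld\downarrow^{T\ld}_{S\ld})\stab{w}$ by means of (the non finitely generated analogue of) Proposition \ref{prop_getstab}. The matrix factorization $X$ produces, as explained in the proof of Theorem \ref{thm_hmfmcm}, an eventually $2$-periodic $T\ld/(\iota(w))$-free resolution of $M\ld$. Because $T\ld$ is free over $S\ld$, the quotient $T\ld/(\iota(w)) = T\ld\otimes_{S\ld} S\ld/(w)$ is free over $S\ld/(w)$, so restricting this resolution to $S\ld$ still yields a free (generally infinitely generated) resolution of $M\ld\downarrow^{T\ld}_{S\ld}$ as a graded $S\ld/(w)$-module. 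By construction its eventual $2$-periodic part lifts to the restricted matrix factorization $X\downarrow^{T\ld}_{S\ld}$, so Proposition \ref{prop_getstab} (applied in the $\MF^\infty$-setting described in the remark following its statement) identifies the class of $X\downarrow^{T\ld}_{S\ld}$ in $\HMF^\infty(S\ld,w)$ with $(M\ld\downarrow^{T\ld}_{S\ld})\stab{w}$. Naturality in $M\ld$ is inherited from the naturality of the equivalence in Theorem \ref{thm_hmfmcm} and the obvious functoriality of the restriction functor.

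The main obstacle I foresee is bookkeeping rather than conceptual: if $T\ld$ has infinite rank over $S\ld$ (which is the relevant case for applications later in the paper), then $X\downarrow^{T\ld}_{S\ld}$ is no longer a finite-rank matrix factorization, and one has to invoke the extension of the stabilization functor to $R\ld\Mod$ with values in $\HMF^\infty(S\ld,w)$ mentioned after the definition of $(-)\stab{w}$. Once this setup is in place, the proof reduces to the observation that freeness of $T\ld$ over $S\ld$ is exactly what is needed to ensure that restriction preserves both freeness of the modules appearing in a matrix factorization and exactness of the induced $2$-periodic resolution.
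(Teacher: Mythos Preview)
Your proposal is correct and is precisely the argument the paper has in mind: the corollary is stated without proof immediately after Proposition~\ref{prop_getstab}, and your use of that proposition together with the observation that freeness of $T\ld$ over $S\ld$ makes restriction preserve both freeness and exactness is exactly the intended one-line deduction. One small imprecision: the matrix factorization $X$ you choose does not, via Theorem~\ref{thm_hmfmcm}, produce an eventually $2$-periodic resolution of $M\ld$ itself but rather a fully $2$-periodic resolution of $\coker(X)\cong\bfM(M\ld)$; the clean fix is to start instead with an eventually $2$-periodic $T\ld/(\iota(w))$-free resolution of $M\ld$ (which exists and whose $2$-periodic tail lifts to $X$), restrict that resolution to $S\ld$, and then invoke Proposition~\ref{prop_getstab} directly---this is what your argument really does anyway.
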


\subsection{A method for computing the stabilization of a graded module}

Proposition \ref{prop_getstab} gives us a way to compute the stabilization of a graded module $M\ld$, provided we can
find an eventually $2$-periodic $R\ld$-free resolution of $M\ld$ together with a lifting of its $2$-periodic part to a
matrix factorization of type $(S\ld,w)$. In \cite{EisenbudMatrixFactorizations}, Eisenbud showed how such an
$R\ld$-free resolution can be constructed starting from a finite $S\ld$-free resolution of $M\ld$. We will recall his
results now. See also \cite{Avramov}.
\begin{lem}\label{lem_preparationlemma}
Let $M\ld$ be a finitely generated graded module over $R\ld = S\ld/(w)$ and $F\ua\ld\to M\ld$ a free resolution of
$M\ld$ as a module over $S\ld$. Then there exists a family of endomorphisms $s_n: F\ua\ld\to F^{\ast-(2n-1)}\ld$ of
respective internal degrees $nd$, such that the following holds:
\begin{enumerate}
\item\label{item:s0} $s_0: F\ua\ld\to F^{\ast+1}\ld$ equals the differential of $F\ua\ld$.
\item\label{item:s1} $s_1: F\ua\ld\to F^{\ast-1}\ld$ is a nullhomotopy for the multiplication by $w$.
\item\label{item:s2} For all $n\geq 2$ we have $\sum\limits_{p+q=n} s_p s_q = 0.$
\end{enumerate}
\end{lem}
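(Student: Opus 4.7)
The plan is to interpret the three conditions uniformly by introducing the formal sum $S := \sum_{n\geq 0} s_n$ and asking that $S\circ S = w\cdot\id_{F\ua\ld}$. Since each $s_n$ has cohomological degree $1-2n$, every product $s_p s_q$ with $p+q=n$ lives in cohomological degree $2-2n$, so the equation $S\circ S = w\cdot\id$ splits by cohomological degree into precisely the relations (i), (ii), (iii). This viewpoint makes the construction obstruction-theoretic: one builds the $s_n$ inductively, solving at each step a single linear equation in the endomorphism complex $\End\ua(F\ua\ld)$ equipped with the bracket differential $\partial := [s_0, -]$.

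The two base cases are immediate. I would set $s_0$ equal to the differential of $F\ua\ld$, so $s_0^2=0$. For $s_1$, note that $w\cdot M\ld = 0$ and $F\ua\ld\to M\ld$ is a quasi-isomorphism of complexes of graded $S\ld$-modules, so multiplication by $w$ is null-homotopic on $F\ua\ld$; any graded null-homotopy of internal degree $d$ and cohomological degree $-1$ serves as $s_1$ and yields $s_0 s_1 + s_1 s_0 = w\cdot\id$.

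For the inductive step, suppose $s_0,\ldots,s_{n-1}$ have been chosen for some $n\geq 2$ satisfying all relations through degree $n-1$. Define
$$T_n\ :=\ \sum_{\substack{p+q=n\\ 0<p,q<n}} s_p s_q,$$
which is homogeneous of cohomological degree $2-2n$ and internal degree $nd$. We must produce $s_n$ of cohomological degree $1-2n$ and internal degree $nd$ with $s_0 s_n + s_n s_0 = -T_n$; equivalently, $T_n$ must be a coboundary for $\partial$. The heart of the argument is to show that $T_n$ is a $\partial$-cocycle. Expanding $\partial T_n = s_0 T_n - T_n s_0$ and substituting $s_0 s_p = -s_p s_0 - \sum_{0<r<p} s_r s_{p-r}$ for $p\geq 2$ (and $s_0 s_1 = w\cdot\id - s_1 s_0$ at the boundary), the triple-product contributions reorganize as a signed double copy of the sum $\sum_{a+b+c=n,\,a,b,c\geq 1} s_a s_b s_c$, which cancels with itself, while the two $w$-contributions coming from $s_0 s_1\cdot s_{n-1}$ and $s_{n-1}\cdot s_0 s_1$ cancel pairwise.

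Once $\partial T_n = 0$ is known, the conclusion is immediate: since $F\ua\ld\to M\ld$ is a free $S\ld$-resolution, one has $H^k(\End\ua(F\ua\ld))\cong\ext^k_{S\ld}(M\ld,M\ld)$, and these Ext groups vanish for $k<0$. Because $T_n$ sits in cohomological degree $2-2n\leq -2$, its class is zero, so a preimage $s_n$ exists; and since $\partial$ preserves internal grading while $T_n$ has internal degree $nd$, we may choose $s_n$ homogeneous of internal degree $nd$. The only real obstacle is the cocycle verification; the cohomological vanishing that finishes the argument is formal from standard properties of free resolutions.
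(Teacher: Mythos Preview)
Your proof is correct and follows essentially the same inductive strategy as the paper: set $s_0$ to be the differential and $s_1$ any nullhomotopy for $w$, then for $n\geq 2$ show that $T_n=\sum_{0<p<n}s_ps_{n-p}$ is a $\partial$-cocycle in $\End\ua(F\ua\ld)$ and hence a coboundary. The only cosmetic difference is in how the coboundary step is justified---you invoke the vanishing $H^{<0}(\End\ua(F\ua\ld))\cong\ext^{<0}_{S\ld}(M\ld,M\ld)=0$, whereas the paper argues concretely that the map $M\ld\to\coker(\partial^{-(2n-1)})\hookrightarrow F^{-(2n-3)}\ld$ induced by $t$ must vanish because $M\ld$ is $w$-torsion and the free module $F^{-(2n-3)}\ld$ is $w$-torsion-free.
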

\begin{proof}
The following proof is the same as the one in \cite{EisenbudMatrixFactorizations}, with the obvious modifications for
the graded case. We construct the $s_n$ inductively. First, we define $s_0$ as the differential of $F\ua\ld$ and $s_1$
as an arbitrary nullhomotopy for the multiplication by $w$. Such a map exists, as $F\ua\ld\mor{\cdot w} F\ua\ld$ lifts
the multiplication map $M\ld\mor{\cdot w} M\langle d\rangle\ld$ which is zero since $M\ld$ is an $S\ld/(w)$-module.

Next, let $n\geq 2$ and assume we already constructed maps $s_1,...,s_{n-1}$ satisfying
\eqref{item:s0}-\eqref{item:s2}. We then consider 
 $$t\ := \ \sum\limits_{\substack{p+q=n\\ p,q>0}} s_p s_q:\ F\ua\ld\to F^{\ast-(2n-2)}\langle nd\rangle\ld.$$ 
A computation shows that $s_0 t = t s_0$. Further, the map $$M\ld=\coker(F^{-1}\ld\to 
F^0\ld)\to\coker(F^{-(2n-1)}\ld\to F^{-(2n-2)}\ld)\subset F^{-(2n-3)}\ld$$ induced by $t$ is the zero map, because
$M\ld$ is annihilated by $w$ and $w$ is not a zero divisor in $F^{-(2n-3)}\ld$ (see Fact
\ref{fact_regularisadomain}). Consequently, $t: F\ua\ld\to F^{\ast-(2n-2)}\langle nd\rangle\ld$ is nullhomotopic,
i.e. there is some $s_n: F\ua\ld\to F^{\ast-(2n-1)}\langle nd\rangle$ such that $s_n s_0 + s_0 s_n = -t$. Then
$s_0,...,s_n$ satisfy (a)-(c) as well and the induction step is complete.
\end{proof}

We fix a family of morphism $s_n: F\ua\ld\to F^{\ast-(2n-1)}\langle nd\rangle$ with the properties (a)-(c) from
Lemma \ref{lem_preparationlemma}. Further, we define a $\ZZ$-graded family $D\ua\ld$ of graded $S\ld$-modules as
follows. Put $D\ld^{-2n} := S\ld\langle -nd\rangle$ for $n\geq 0$ and $D\ld^k := 0$ otherwise. Further, for $n\geq 0$ let
$t^n\in D^{-2n}_{nd}$ denote the unit element in $D^{-2n} = S\ld\langle -nd\rangle$, and denote by $t^n\la:
D\ld\ua\to D^{\ast+2n}\langle -nd\rangle\ld$ the canonical map. In other words, $D\ua\ld$ is a polynomial ring over
$S\ld$, where the indeterminate $t$ lives in cohomological degree $-2$ and internal degree $d$, and the
map $t^n\la$ is just the division by $t^n$, where we set $t^k/t^n := 0$ for $k<n$.
\begin{prop}\label{prop_resolution}
Assume the setup of Lemma \ref{lem_preparationlemma}. Then the reduction of $$\left(D\ua\ld\otimes_{S\ld} F\ua\ld,\
  \sum\limits_{n\geq 0} t^n\la\otimes s_n\right)$$ modulo $w$ is an $R\ld$-free resolution of $M\ld$. 
\end{prop}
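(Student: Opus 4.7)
My plan is to verify that, after reducing modulo $w$, the expression $(D\ua\ld\otimes_{S\ld} F\ua\ld,\,\sum_{n\geq 0} t^n\la\otimes s_n)$ defines a complex of free $R\ld$-modules, and then to prove exactness via the spectral sequence of the $t$-degree filtration. First I would compute
\[
\delta^2 \;=\; \Bigl(\sum_{n \geq 0} t^n\la \otimes s_n\Bigr)^2 \;=\; \sum_{k \geq 0} t^k\la \otimes \Bigl(\sum_{p+q=k} s_p s_q\Bigr),
\]
using $t^n\la t^m\la = t^{n+m}\la$. The three identities of Lemma \ref{lem_preparationlemma} collapse this to $\delta^2 = t^1\la \otimes (w\cdot\id)$, so $\bar\delta^2 = 0$ after reduction modulo $w$. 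Freeness of each term is immediate from the decomposition $(D\ua \otimes F\ua)^k = \bigoplus_{n \geq 0} F^{k+2n}\ld\langle -nd\rangle$, and the augmentation $G^0 := F^0\ld\otimes_{S\ld} R\ld \twoheadrightarrow M\ld$ is the canonical surjection, which descends modulo $w$ because $wM\ld = 0$.

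The main work is proving that $G\ua := (D\ua\ld\otimes_{S\ld} F\ua\ld)\otimes_{S\ld} R\ld$ is exact in negative cohomological degrees. I would filter by $t$-degree, $F_p G\ua := \bigoplus_{n \leq p} D^{-2n}\ld\otimes_{S\ld} F\ua\ld\otimes_{S\ld} R\ld$; each associated graded piece $F_p/F_{p-1}$ is a cohomological and internal shift of $F\ua\ld\otimes_{S\ld} R\ld$. Since $F\ua\ld$ is an $S\ld$-free resolution of $M\ld$ and $0 \to S\ld\langle -d\rangle \xrightarrow{w} S\ld \to R\ld \to 0$ is an $S\ld$-free resolution of $R\ld$, the cohomology of $F\ua\ld\otimes_{S\ld} R\ld$ equals $\tor^{S\ld}_*(M\ld,R\ld)$, which (using $wM\ld = 0$) equals $M\ld$ in degree $0$ and $M\ld\langle -d\rangle$ in degree $-1$, all higher Tors vanishing. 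Consequently the $E_1$ page consists of exactly two nonzero entries per filtration degree $p \geq 0$: a copy of $M\ld\langle -pd\rangle$ in cohomological degree $-2p$ and a copy of $M\ld\langle -(p+1)d\rangle$ in cohomological degree $-2p-1$.

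The $d_1$ differential is induced by $t^1\la \otimes \bar s_1$, the component of $\bar\delta$ that lowers $t$-degree by exactly one. The nullhomotopy relation $\delta_F s_1 + s_1\delta_F = w\cdot\id$ reduces modulo $w$ to $\delta_F\bar s_1 + \bar s_1\delta_F = 0$, so $\bar s_1$ is a chain map $F\ua\ld/w \to F\ua\ld/w\,[-1]\langle d\rangle$. The key claim is that the induced map on cohomology $H^0(F\ua\ld/w) \to H^{-1}(F\ua\ld/w)\langle d\rangle$ is an isomorphism: for $x\in F^0\ld$ lifting $[x]\in M\ld$, the identity $\delta_F(s_1 x) = wx$ exhibits $[\bar s_1 x]\in H^{-1}(F\ua\ld/w)$ as the class whose image under the snake-lemma connecting isomorphism for $0\to F\ua\ld\langle -d\rangle\xrightarrow{w} F\ua\ld\to F\ua\ld/w\to 0$ equals $[x]\in M\ld$; hence $\bar s_1$ is the identity under this identification.

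With this in hand, $d_1$ pairs the $H^0$-entry at filtration $p$ isomorphically with the $H^{-1}$-entry at filtration $p-1$ for every $p \geq 1$. The only surviving entry on $E_2$ is thus $M\ld$ in total cohomological degree $0$, coming from filtration $p=0$; the spectral sequence degenerates and we conclude $H^0(G\ua) = M\ld$ with $H^k(G\ua) = 0$ for $k < 0$. Convergence poses no issue since the filtration is bounded below and each total cohomological degree receives contributions from only finitely many filtration levels. The technically delicate step is identifying $d_1$ with the Tor connecting map; once this is established, all the cancellations are automatic.
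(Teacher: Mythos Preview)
Your proof is correct and self-contained. The paper, by contrast, does not prove this proposition directly: it cites Eisenbud's original argument (Theorem~7.2 in \cite{EisenbudMatrixFactorizations}) for the general case, and in the appendix (Remark~\ref{rem_resolution}) only treats the special case $s_n=0$ for $n\geq 2$ by recognising the complex as $M\ld\stackrel{\LL}{\otimes}_{K\ua_w} S\ld/(w)$ computed via the Bar resolution, then invoking the derived equivalence $\D(K\ua_w)\simeq\D(S\ld/(w))$.

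Your spectral sequence argument on the $t$-degree filtration is a genuinely different route. It handles the general case with arbitrary higher homotopies $s_n$ in one pass, because the $s_n$ for $n\geq 2$ only affect differentials $d_r$ with $r\geq 2$, and you have already shown $E_2$ is concentrated in a single spot. The identification of $d_1$ with the inverse of the Tor connecting isomorphism is exactly the right key step, and your snake-lemma computation $\partial_F(s_1 x)=wx$ nails it. What the paper's appendix approach buys instead is conceptual: it explains \emph{why} the resolution exists (it is the derived base change of $M\ld$ along $K\ua_w\to S\ld/(w)$), but at the cost of the dg-algebra machinery and the restriction $s_1^2=0$. Your argument is more elementary and more general within the scope of the proposition as stated.
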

\begin{proof}
See \cite{EisenbudMatrixFactorizations}, Theorem 7.2. There the statement is formulated and proved in the ungraded case,
but that's ok, as the grading does not matter if we want to show acyclicity of a given complex of graded
$R\ld$-modules. In case $s_n = 0$ for all $n\geq 2$ we will give a proof based on the Bar resolution in the
Appendix. See Remark \ref{rem_resolution}.  
\end{proof}

Now let us assume in addition that $F\ua\ld$ is bounded. Then the complex $$\left(D\ua\ld\otimes_{S\ld}
  F\ua\ld,\ \sum\limits_{n\geq 0} t^n\la\otimes s_n\right)$$ is, up to internal grading, eventually $2$-periodic, because  
\begin{align}
\notag(D\ua\ld\otimes_{S\ld} F\ua\ld)^{-2N} &\ \ \ \cong\ \  \bigoplus\limits_{n\geq 0} F^{-2n}\ld\langle
-d(N-n)\rangle, \\\label{eq:explicitiso1} 
t^{\frac{2N+|x|}{2}}\otimes x &\ \longmapsfrom\ \  x\end{align} and \begin{align}
(D\ua\ld\otimes_{S\ld} F\ua\ld)^{-(2N+1)} & \ \ \ \cong\ \ \bigoplus\limits_{n\geq 0} F^{-(2n+1)}\ld\langle
-d(N-n)\rangle\notag\\\label{eq:explicitiso2}  t^{\frac{2N+1+|x|}{2}}\otimes x & \ \longmapsfrom\ \  x
\end{align}
for all $N\gg 0$ such that $F\ua\ld$ vanishes in degrees below $-2N$. In particular, we get induced maps
\begin{equation*}\begin{tikzpicture}[description/.style={fill=white,inner sep=2pt}]
    \matrix (m) [matrix of math nodes, row sep=3em,
                 column sep=6em, text height=1.5ex, text depth=0.25ex,
                 inner sep=0pt, nodes={inner xsep=0.3333em, inner ysep=0.3333em}]
    {
      \left(\bigoplus\limits_{n\geq 0} F^{-2n}\ld\langle -d(N-n)\rangle\right)\langle -d\rangle & (D\ua\ld\otimes_{S\ld}
      F\ua\ld)^{-(2N+2)} \\
      \bigoplus\limits_{n\geq 0} F^{-(2n+1)}\ld\langle -d(N-n)\rangle & (D\ua\ld\otimes_{S\ld} F\ua\ld)^{-(2N+1)}\\
    };
    \draw[dashed,->] (barycentric cs:m-1-1=0.75,m-2-1=0.25) -- (barycentric cs:m-1-1=0.25,m-2-1=0.75);
    \draw[->]        (m-1-2) -- node[scale=0.76,right]{$\partial$} (m-2-2);
    \draw[->]        (m-1-1) -- node[scale=0.75,above]{$\cong$}    (m-1-2);
    \draw[->]        (m-2-1) -- node[scale=0.75,above]{$\cong$}    (m-2-2);
\end{tikzpicture}\end{equation*}
and
\begin{equation*}\begin{tikzpicture}[description/.style={fill=white,inner sep=2pt}]
    \matrix (m) [matrix of math nodes, row sep=3em,
                 column sep=6em, text height=1.5ex, text depth=0.25ex,
                 inner sep=0pt, nodes={inner xsep=0.3333em, inner ysep=0.3333em}]
    {
       \bigoplus\limits_{n\geq 0} F^{-(2n+1)}\ld\langle-d(N-n)\rangle & (D\ua\otimes_{S\ld} F\ua\ld)^{-(2N+1)}\\
        \bigoplus\limits_{n\geq 0} F^{-2n}\ld\langle -d(N-n)\rangle & (D\ua\otimes_{S\ld} F\ua\ld)^{-2N}\\
    };
    \draw[dashed,->] (barycentric cs:m-1-1=0.75,m-2-1=0.25) -- (barycentric cs:m-1-1=0.25,m-2-1=0.75);
    \draw[->]        (m-1-2) -- node[scale=0.76,right]{$\partial$} (m-2-2);
    \draw[->]        (m-1-1) -- node[scale=0.75,above]{$\cong$}    (m-1-2);
    \draw[->]        (m-2-1) -- node[scale=0.75,above]{$\cong$}    (m-2-2);
\end{tikzpicture}\end{equation*}
which, by the explicit definition of the isomorphisms \eqref{eq:explicitiso1}, \eqref{eq:explicitiso2} and the
differentials involved, are equal to 
$\sum_{n\geq 0} s_n$. Thus, applying Propositions \ref{prop_resolution} and \ref{prop_getstab}, we get the following
useful method to calculate the stabilization of a module:
\begin{prop}\label{prop_stabalgo}
Let $M\ld$ be a finitely generated graded $R\ld$-module and $F\ua\ld\to M\ld$ a bounded, free resolution of
$M\ld$ as a module over $S\ld$. Further, let $s_n$ be as in Lemma \ref{lem_preparationlemma}. Then there is an
isomorphism in $\HMF$
\begin{align*}
M\ld\stab{w}\quad\cong\quad \left(\bigoplus\limits_{n\geq 0} F^{-2n}\ld\langle dn\rangle,\ \bigoplus\limits_{n\geq 0}
  F^{-(2n+1)}\ld\langle dn\rangle,\ \sum\limits_{n\geq 0} s_n\right).
\end{align*}
\end{prop}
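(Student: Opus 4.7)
The plan is to combine Propositions \ref{prop_resolution} and \ref{prop_getstab}. By Proposition \ref{prop_resolution}, the reduction modulo $w$ of the total complex $\left(D\ua\ld\otimes_{S\ld} F\ua\ld,\ \sum_{n\geq 0} t^n\la\otimes s_n\right)$ is an $R\ld$-free resolution of $M\ld$. Since $F\ua\ld$ is bounded, say $F^k\ld = 0$ for $k < -2N+1$, the explicit isomorphisms \eqref{eq:explicitiso1}-\eqref{eq:explicitiso2} displayed just before the statement identify, in cohomological degrees $\leq -2N$, the even and odd terms of this resolution with the families
\[
\bigoplus_{n\geq 0} F^{-2n}\ld\langle -d(M-n)\rangle \quad\text{and}\quad \bigoplus_{n\geq 0} F^{-(2n+1)}\ld\langle -d(M-n)\rangle
\]
for $M\geq N$, and the differentials become (under these identifications) $\sum_{n\geq 0} s_n$, independently of $M$ up to the overall twist $\langle -d\rangle$ introduced when $M$ increases by one. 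Hence the resolution is eventually $2$-periodic in the sense of Proposition \ref{prop_getstab}.

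The next step is to exhibit a lift of the $2$-periodic part to a matrix factorization of type $(S\ld,w)$. I would set $M^0\ld := \bigoplus_{n\geq 0} F^{-2n}\ld\langle dn\rangle$ and $M^{-1}\ld := \bigoplus_{n\geq 0} F^{-(2n+1)}\ld\langle dn\rangle$, and take $f:M^0\ld\to M^{-1}\ld$ (of internal degree $d$) and $g:M^{-1}\ld\to M^0\ld$ (of internal degree $0$) to be the components of $\sum_{n\geq 0} s_n$, whose reduction modulo $w$, after the shift $\langle -dN\rangle$, recovers the $2$-periodic part of the $R\ld$-free resolution. To verify the matrix factorization identity $gf = fg = w\cdot\id$, I would compute
\[
\left(\sum_{n\geq 0} s_n\right)^{\!2} \;=\; \sum_{k\geq 0}\sum_{p+q=k} s_p s_q.
\]
The $k=0$ term vanishes because $s_0$ is a differential (Lemma \ref{lem_preparationlemma}.\eqref{item:s0}), the $k=1$ term equals $w\cdot\id$ because $s_1$ is a nullhomotopy for multiplication by $w$ (Lemma \ref{lem_preparationlemma}.\eqref{item:s1}), and all higher terms vanish by Lemma \ref{lem_preparationlemma}.\eqref{item:s2}.

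Having verified both assumptions of Proposition \ref{prop_getstab}, I would finally apply it to obtain
\[
M\ld\stab{w}\ \cong\ \left(M^0\ld\langle -dN\rangle,\ M^{-1}\ld\langle -dN\rangle,\ \textstyle\sum s_n\right)\!\langle Nd\rangle\ =\ \left(M^0\ld,\ M^{-1}\ld,\ \textstyle\sum s_n\right),
\]
which is exactly the claimed formula. There is no deep obstacle; the only thing requiring care is the bookkeeping of internal degree shifts. The fact that the normalization $\bigoplus_n F^{-2n}\ld\langle dn\rangle$ is independent of the chosen $N$ is essentially the well-definedness of $(-)\stab{w}$, and it falls out cleanly once the conventions of Propositions \ref{prop_resolution} and \ref{prop_getstab} are aligned.
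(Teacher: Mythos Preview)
Your proposal is correct and follows essentially the same route as the paper: the discussion preceding the proposition already establishes the eventual $2$-periodicity via the isomorphisms \eqref{eq:explicitiso1}--\eqref{eq:explicitiso2} and identifies the stable differential as $\sum_{n\geq 0} s_n$, after which Propositions \ref{prop_resolution} and \ref{prop_getstab} are invoked exactly as you do. Your explicit verification of $(\sum s_n)^2 = w\cdot\id$ via Lemma \ref{lem_preparationlemma} is a useful addition the paper leaves implicit; the only cosmetic issue is that you reuse the letter $M$ as an index variable while $M\ld$ already denotes the module being stabilized.
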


As an example, we use Proposition \ref{prop_getstab} to calculate the stabilization of $M\ld := S\ld / (x_1,...,x_l)$,
where $x_1,...,x_l$ is a regular sequence of homogeneous elements, and $w\in (x_1,...,x_l)$. According to
\ref{prop_getstab}, we have to go through the following steps:
\begin{enumerate}
\item[(1)] Construct a bounded $S\ld$-free resolution of $S\ld / (x_1,...,x_l)$.
\item[(2)] Explicitly construct homotopies $s_n$ as in Lemma \ref{lem_preparationlemma}.
\item[(3)] Put together (1) and (2) to get the stabilization as described in \ref{prop_stabalgo}.
\end{enumerate}
\noindent\textit{Step 1:} As $x_1,...,x_l$ is regular, its Koszul-complex $$K(x_1,...,x_l)\ua\ld\ \ :=
\ \ {\bigwedge}\ua \bigoplus\limits_{i=1}^{l} S\ld\langle-\deg(x_i)\rangle e_i\quad\text{ with differential }\quad\d e_i
:= x_i,$$ is an $S\ld$-free resolution of $S\ld/(x_1,...,x_l)$. Note that the Koszul complex carries a natural structure
of a dg-algebra, which we will use in the next step.

\noindent\textit{Step 2:} As $w\in (x_1,...,x_l)$ we can choose homogeneous $y_1,...,y_l$ such that $w = x_1 y_1 + ... +
x_l y_l$. Define $s_1$ as the multiplication $\mult(y_1 e_1+ ... + y_l e_l)$ (in the Koszul-complex) by $y_1 e_1 +
... + y_l e_l$. The Leibniz rule for differentiation shows that $s_1$ is indeed a nullhomotopy for the multiplication by
$w$. Further, we have $s_1^2=0$, so we can put $s_n := 0$ for $n\geq 2$ and (a)-(c) from Lemma \ref{lem_preparationlemma}
are satisfied. 

\noindent\textit{Step 3:} As $s_n=0$ for all $n\geq 2$, we get the following concrete description of
$S\ld/(x_1,...,x_l)\stab{w}$: 

\begin{cor}\label{cor_koszulstabilization}
Let $x_1,...,x_l$ be an $S\ld$-regular sequence of homogeneous elements and $w\in (x_1,...,x_l)$. Choose elements
$y_1,...,y_l$ satisfying $w = x_1 y_1 + ... + x_l y_l$. Then there is a canonical isomorphism in $\HMF(S\ld,w)$
$$\left(S\ld/(x_1,...,x_l)\right)\stab{w}\ \cong\ \left(\left.K(x_1,...,x_l)^{\text{even}}\ld,\
  K(x_1,...,x_l)^{\text{odd}}\ld,\ \d+\mult(e_1
  y_1 + ... + e_l y_l)\right.\right),$$
where $$K(x_1,...,x_l)^{\text{even}}\ = \ \bigoplus\limits_{n\geq 0} \left[{\bigwedge}^{2n} \bigoplus\limits_{i=1}^{l}
  S\ld\langle-\deg(x_i)\rangle e_i\right]\langle dn\rangle$$ and
$$K(x_1,...,x_l)^{\text{odd}}\ = \ \bigoplus\limits_{n\geq 0} \left[{\bigwedge}^{2n+1} \bigoplus\limits_{i=1}^{l}
  S\ld\langle-\deg(x_i)\rangle e_i\right]\langle dn\rangle$$ 
\end{cor}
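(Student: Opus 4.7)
The plan is to carry out rigorously the three-step sketch given just before the statement and then invoke Proposition~\ref{prop_stabalgo}.

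First, I would take $F\ua\ld := K(x_1,\ldots,x_l)\ua\ld$ as the bounded $S\ld$-free resolution of $M\ld := S\ld/(x_1,\ldots,x_l)$. Since $x_1,\ldots,x_l$ is a homogeneous $S\ld$-regular sequence, this is classical and supplies the resolution required as input to Proposition~\ref{prop_stabalgo}. The crucial additional feature is that this resolution carries its canonical structure of a dg-algebra (the exterior algebra over $S\ld$ on generators $e_i$ in cohomological degree $-1$ and internal degree $-\deg(x_i)$), which I will exploit to write down the homotopies explicitly.

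Second, I would construct the $s_n$ of Lemma~\ref{lem_preparationlemma}. Set $\eta := y_1 e_1 + \cdots + y_l e_l$; this is a homogeneous element of $F^{-1}\ld$ of internal degree $d = \deg(w)$, and by construction $\differential(\eta) = \sum_i x_i y_i = w$. Define $s_1 : F\ua\ld \to F^{\ast-1}\ld$ to be left multiplication by $\eta$ in the Koszul dg-algebra, so that $s_1$ has the bidegree demanded by the lemma. Since $s_0 = \differential$ is a graded derivation of cohomological degree $+1$ and $\eta$ sits in cohomological degree $-1$, the graded Leibniz rule gives
$$s_0 s_1 + s_1 s_0 \;=\; \differential(\eta)\cdot\id \;=\; w\cdot\id,$$
which is condition~(2) of Lemma~\ref{lem_preparationlemma}. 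Moreover $\eta^2 = 0$ in the exterior algebra (because $e_i^2 = 0$ and $e_i e_j = -e_j e_i$), so $s_1^2 = \mult(\eta)^2 = \mult(\eta^2) = 0$. Setting $s_n := 0$ for all $n\geq 2$, the only potentially nonvanishing summand of $\sum_{p+q=n} s_p s_q$ for $n\geq 2$ is the $n=2$ contribution $s_1^2$, which I just saw is zero; hence condition~(3) of Lemma~\ref{lem_preparationlemma} is also satisfied.

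Third, I would feed this data into Proposition~\ref{prop_stabalgo}. Because $s_n$ vanishes for $n\geq 2$, the sum $\sum_{n\geq 0} s_n$ collapses to $\differential + \mult(\eta)$, and the prescribed re-grouping of even, respectively odd, cohomological pieces of $F\ua\ld$ together with the internal grading shifts $\langle dn\rangle$ reproduces exactly $K(x_1,\ldots,x_l)^{\text{even}}\ld$ and $K(x_1,\ldots,x_l)^{\text{odd}}\ld$ as stated. The main obstacle is nothing conceptually deep: it is the careful bookkeeping of internal degree shifts (to confirm that $s_1$ actually lands in the correct bidegree required by Lemma~\ref{lem_preparationlemma}) and of the signs from the graded Leibniz rule (to ensure that the anticommutator $s_0 s_1 + s_1 s_0$ lands on $+w\cdot\id$ with the correct sign). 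Once this is accounted for, the identification with Proposition~\ref{prop_stabalgo} is immediate.
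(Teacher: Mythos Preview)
Your proposal is correct and follows exactly the three-step argument the paper gives just before the statement: take the Koszul resolution, use the Leibniz rule to show that left multiplication by $\eta = \sum y_i e_i$ is a square-zero nullhomotopy for $w$, set $s_n = 0$ for $n\geq 2$, and apply Proposition~\ref{prop_stabalgo}. One harmless slip: with the paper's convention $M\langle d\rangle_k = M_{k+d}$, the generator $e_i$ sits in internal degree $+\deg(x_i)$ (so that $\differential(e_i)=x_i$ preserves internal degree), not $-\deg(x_i)$; this does not affect the argument.
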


\begin{rem}
In the next section we identify the matrix factorization from Corollary \ref{cor_koszulstabilization} as the tensor
product of the elementary Koszul factorizations $$S\ld\xrightarrow{y_i} S\ld\langle
-\deg(x_i)\rangle\xrightarrow{x_i} S\ld.$$
\end{rem}

\begin{rem}
Note that the the example of a complete intersection $S\ld / (x_1,...,x_l)$ was so easy to compute because
we could choose a nullhomotopy $s_1$ for the multiplication by $w$ which satisfied $s_1^2=0$. In general, such a
homotopy need not exist. More precisely, one has the following: there are modules $M\ld$ whose \textit{minimal} free
resolutions do not possess a nullhomotopy $s_1$ satisfying $s_1^2=0$, but one can always choose \textit{some}
(non-minimal) resolution where it does exist. For details, see \cite{Avramov}.  
\end{rem}  

For later use in Section \ref{sec_description} (see Example \ref{ex_chimorphisms}) we will now study how stabilizations
of morphisms between complete intersections can be computed explicitly in terms of Koszul factorizations. To keep things
simple, we restrict to the case of two variables.

\begin{ex}\label{ex_stabmorphism2els}
Let $x_1,x_2$ and $\tilde{x}_1, \tilde{x}_2$ be homogeneous regular sequences in $S\ld$ such that $w\in (x_1,x_2)\cap
(\tilde{x}_1,\tilde{x}_2)$. Fix homogeneous $y_1, y_2$ and $\tilde{y}_1, \tilde{y}_2$ such that $w = x_1 y_1 + x_2 y_2$ and $w = \tilde{x}_1
\tilde{y}_1 + \tilde{x}_2 \tilde{y}_2$. Finally, let $\varphi: S\ld/(x_1,x_2)\to S\ld/(\tilde{x}_1,\tilde{x}_2)$ be some nonzero morphism of
$S\ld$-modules. We want to describe explicitly a map $\{\bfx,\bfy\}\to \{\tilde{\bfx},\tilde{\bfy}\}$ making the following
square commutative in $\HMF(S\ld,w)$:
\begin{align*}\label{eq:stabilizationofmorphisms}
\begin{tikzpicture}
  \matrix (m) [matrix of math nodes, row sep=3em,
  column sep=6em, text height=1.5ex, text depth=0.25ex,
  inner sep=0pt, nodes={inner xsep=0.3333em, inner ysep=0.3333em}]
  {
    \{\bfx,\bfy\} \pgfmatrixnextcell \pgfmatrixnextcell \left(S\ld/(x_1,x_2)\right)\stab{w}\\
    \{\tilde{\bfx},\tilde{\bfy}\} \pgfmatrixnextcell \pgfmatrixnextcell \left(S\ld/(\tilde{x}_1,\tilde{x}_2)\right)\stab{w}\\
  };
  \draw[->] (m-1-1) -- node[above,scale=0.75]{$\cong$} (m-1-3);
  \draw[->] (m-2-1) -- node[below,scale=0.75]{$\cong$} (m-2-3);
  \draw[->] (m-1-1) -- (m-2-1);
  \draw[->] (m-1-3) -- node[right,scale=0.75]{$\varphi\stab{w}$} (m-2-3);
\end{tikzpicture}
\end{align*}
First, note that $\varphi$ is given by some element $\alpha\in S\ld\setminus\{0\}$ such that $\alpha (x_1,x_2)\subset
(\tilde{x}_1,\tilde{x}_2)$. Fix elements $\lambda_{ij}\in S\ld$ such that $x_i = \sum_j \lambda_{ij} \tilde{x}_j$. Then we have
$$\sum_j (\alpha \tilde{y}_j) \tilde{x}_j = \alpha w = \sum_i y_i (\alpha x_i) = \sum_j \left(\sum_i \lambda_{ij} y_i\right)
\tilde{x}_j,$$ which means that $\left(\alpha \tilde{y}_j - \sum_i \lambda_{ij} y_i\right)_j$ is a $1$-cycle in
$K(\tilde{x}_1,\tilde{x}_2)\ua\ld$. As $\tilde{\bfx}$ is regular, it follows that there exists some $\mu$ such that
$\sum_i \lambda_{i,1} y_i = \alpha \tilde{y}_1 - \mu \tilde{x}_2$ and $\sum_i \lambda_{i,2} y_i = \alpha \tilde{y}_2 + \mu
\tilde{x}_1$.

By definition of $(-)\stab{w}$, in order to compute $\varphi\stab{w}$ we have to extend $\varphi$ to an eventually
$2$-periodic morphism between eventually $2$-periodic $S\ld/(w)$-free resolutions of $S\ld/(x_1,x_2)$ and
$S\ld/(\tilde{x}_1,\tilde{x}_2)$. In our situation, we use the resolutions constructed in Proposition \ref{prop_getstab} from
the Koszul resolutions $K(x_1,x_2)\ua\ld\to S\ld/(x_1,x_2)$ and $K(\tilde{x}_1,\tilde{x}_2)\ua\ld\to S\ld/(\tilde{x}_1,\tilde{x}_2)$
together with their square zero nullhomotopies $\mult(y_1 e_1 + y_2 e_2)$ and $\mult(\tilde{y}_1 \tilde{e}_1 + \tilde{y}_2 \tilde{e}_2)$
for the multiplication by $w$. Patience and some calculation shows that such an extension is explicitly given by
\begin{align*}
\begin{tikzpicture}
  \matrix (m) [matrix of math nodes, row sep=3em,
  column sep=2em, text height=1.5ex, text depth=0.25ex,
  inner sep=0pt, nodes={inner xsep=0.3333em, inner ysep=0.3333em}]
  {
    ... \pgfmatrixnextcell S\ld (e_1)\oplus S\ld(e_2) \pgfmatrixnextcell\pgfmatrixnextcell S\ld\oplus S\ld(e_1 e_2)
    \pgfmatrixnextcell\pgfmatrixnextcell
    S\ld(e_1)\oplus S\ld(e_2) \pgfmatrixnextcell\pgfmatrixnextcell S\ld\\
    ... \pgfmatrixnextcell S\ld (\tilde{e}_1)\oplus S\ld(\tilde{e}_2) \pgfmatrixnextcell\pgfmatrixnextcell S\ld\oplus S\ld(\tilde{e}_1
    \tilde{e}_2) \pgfmatrixnextcell\pgfmatrixnextcell S\ld(\tilde{e}_1)\oplus S\ld(\tilde{e}_2)
    \pgfmatrixnextcell\pgfmatrixnextcell S\ld\\
  };
  \draw[->] (m-1-1) -- (m-1-2);
  \draw[->] (m-1-2) -- node[above,scale=0.75]{$\begin{pmatrix} x_1 & x_2\\  -y_2 & y_1\end{pmatrix}$} (m-1-4);
  \draw[->] (m-1-4) -- node[above,scale=0.75]{$\begin{pmatrix} y_1& -x_2 \\ y_2 & x_1\end{pmatrix}$} (m-1-6);
  \draw[->] (m-1-6) -- node[above,scale=0.75]{$\begin{pmatrix} x_1 & x_2\end{pmatrix}$} (m-1-8);
  \draw[->] (m-2-1) -- (m-2-2);
  \draw[->] (m-2-2) -- node[below,scale=0.75]{$\begin{pmatrix} \tilde{x}_1 & \tilde{x}_2\\  -\tilde{y}_2 & \tilde{y}_1\end{pmatrix}$}
  (m-2-4); 
  \draw[->] (m-2-4) -- node[below,scale=0.75]{$\begin{pmatrix} \tilde{y}_1& -\tilde{x}_2 \\ \tilde{y}_2 & \tilde{x}_1\end{pmatrix}$}
  (m-2-6); 
  \draw[->] (m-2-6) -- node[below,scale=0.75]{$\begin{pmatrix} \tilde{x}_1 & \tilde{x}_2\end{pmatrix}$} (m-2-8);
  \draw[->] (m-1-2) -- node[right,scale=0.75]{$\begin{pmatrix} \lambda_{11} & \lambda_{21} \\ \lambda_{12} &
      \lambda_{22}\end{pmatrix}$} (m-2-2);
  \draw[->] (m-1-4) -- node[right,scale=0.75]{$\begin{pmatrix} \alpha & 0 \\ \mu & \frac{\lambda_{11}\lambda_{22} -
      \lambda_{12}\lambda_{21}}{\alpha}\end{pmatrix}$} (m-2-4);
  \draw[->] (m-1-6) -- node[right,scale=0.75]{$\begin{pmatrix} \lambda_{11} & \lambda_{21} \\ \lambda_{12} &
      \lambda_{22}\end{pmatrix}$} (m-2-6);
  \draw[->] (m-1-8) -- node[right,scale=0.75]{$\alpha$} (m-2-8);
\end{tikzpicture}
\end{align*}
provided $\frac{\lambda_{11}\lambda_{22}-\lambda_{12}\lambda_{21}}{\alpha}$ exists, which will be clear in our
applications (see Example \ref{ex_chimorphisms}). Thus, a concrete realization of a map
$\{\bfx,\bfy\}\to\{\tilde{\bfx},\tilde{\bfy}\}$ making \eqref{eq:stabilizationofmorphisms} commute is given by
\begin{align*}
\begin{tikzpicture}
  \matrix (m) [matrix of math nodes, row sep=4em,
  column sep=4em, text height=1.5ex, text depth=0.25ex,
  inner sep=0pt, nodes={inner xsep=0.3333em, inner ysep=0.3333em}]
  {
    \{\bfx,\bfy\} \pgfmatrixnextcell S\ld (e_1)\oplus S\ld(e_2) \pgfmatrixnextcell\pgfmatrixnextcell
    S\ld\oplus S\ld(e_1 e_2)\\
    \{\tilde{\bfx},\tilde{\bfy}\} \pgfmatrixnextcell S\ld (\tilde{e}_1)\oplus S\ld(\tilde{e}_2)
    \pgfmatrixnextcell\pgfmatrixnextcell S\ld\oplus S\ld(\tilde{e}_1\tilde{e}_2)\\
  };
  \draw[->] ($(m-1-2.east) + (0,+0.8mm)$) -- node[above,scale=0.6]{$\begin{pmatrix} x_1 & x_2\\  -y_2 &
      y_1\end{pmatrix}$} ($(m-1-4.west) + (0,+0.8mm)$); 
  \draw[->] ($(m-1-4.west) + (0,-0.8mm)$) -- node[below,scale=0.6]{$\begin{pmatrix} y_1& -x_2 \\ y_2 &
      x_1\end{pmatrix}$} ($(m-1-2.east) + (0,-0.8mm)$); 
  \draw[->] ($(m-2-2.east) + (0,+0.8mm)$) -- node[above,scale=0.6]{$\begin{pmatrix} \tilde{x}_1 & \tilde{x}_2\\  -\tilde{y}_2 &
      \tilde{y}_1\end{pmatrix}$} ($(m-2-4.west) + (0,+0.8mm)$);
  \draw[->] ($(m-2-4.west) + (0,-0.8mm)$) -- node[below,scale=0.6]{$\begin{pmatrix} \tilde{y}_1& -\tilde{x}_2 \\ \tilde{y}_2 &
      \tilde{x}_1\end{pmatrix}$ } ($(m-2-2.east) + (0,-0.8mm)$); 
  \draw[->] (m-1-1) -- (m-2-1);
  \draw[->] (m-1-2) -- node[left,scale=0.75]{$\begin{pmatrix} \lambda_{11} & \lambda_{21} \\ \lambda_{12} &
      \lambda_{22}\end{pmatrix}$} (m-2-2);
  \draw[->] (m-1-4) -- node[right,scale=0.75]{$\begin{pmatrix} \alpha & 0 \\ \mu & \frac{\lambda_{11}\lambda_{22} -
      \lambda_{12}\lambda_{21}}{\alpha}\end{pmatrix}$} (m-2-4); 
\end{tikzpicture}
\end{align*}
\end{ex}

\begin{rem}\label{rem_explicitcounit}
Let us return to Remark \ref{rem_praecounit} where we asked how the counit map $$\Omega^{2n}M\ld\langle
nd\rangle\longrightarrow M\ld$$ looks like explicitly. In this remark, we answer this question in the case where
$\Omega$ is computed using a resolution constructed through \ref{prop_resolution}.  

Thus, fix an $S\ld$-free resolution $F\ua\ld$ of $M\ld$ together with a family of homotopies $s_n$ as in lemma
\ref{lem_preparationlemma}, and choose $N\gg 0$ such that $F^{n}\ld=0$ for all $n<-2N$. Then the diagram \eqref{eq:chase}
can be realized concretely as: 
\begin{gather*}\begin{tikzpicture}[description/.style={fill=white,inner sep=2pt}]
    \matrix (m) [matrix of math nodes, row sep=3em,
                 column sep=1em, text height=1.5ex, text depth=0.25ex,
                 inner sep=0pt, nodes={inner xsep=0.3333em, inner ysep=0.3333em}]
    {
       \bigoplus\limits_{n=0}^{N} F^{-(2n+1)}\ld\langle d(n-N)\rangle \pgfmatrixnextcell \bigoplus\limits_{n=0}^{N}
       F^{-2n}\ld\langle d(n-N)\rangle \pgfmatrixnextcell \bigoplus\limits_{n=0}^{N} F^{-(2n+1)}\ld\langle
       d(n-N+1)\rangle \pgfmatrixnextcell \cdots\\      
       \bigoplus\limits_{n=0}^{N} F^{-(2n+1)}\ld\langle d(n-N)\rangle \pgfmatrixnextcell \bigoplus\limits_{n=0}^{N}
       F^{-2n}\ld\langle d(n-N)\rangle \pgfmatrixnextcell \bigoplus\limits_{n=0}^{N-1} F^{-(2n+1)}\ld\langle
       d(n-N+1)\rangle \pgfmatrixnextcell \cdots\\ 
    };
    \draw[->] (m-1-1) -- (m-1-2);
    \draw[->] (m-1-2) -- (m-1-3);
    \draw[->] (m-1-3) -- (m-1-4);
    \draw[->] (m-2-1) -- (m-2-2);
    \draw[->] (m-2-2) -- (m-2-3);
    \draw[->] (m-2-3) -- (m-2-4);
    \draw[double distance=0.7mm] ($(m-1-1.south) + (0,-1mm)$) -- ($(m-2-1.north) + (0,+1mm)$);
    \draw[double distance=0.7mm] ($(m-1-2.south) + (0,-1mm)$) -- ($(m-2-2.north) + (0,+1mm)$);
    \draw[->] (m-1-3) -- (m-2-3);
\end{tikzpicture}\\[3ex]
\begin{tikzpicture}[description/.style={fill=white,inner sep=2pt},scale=0.5]
    \matrix (m) [matrix of math nodes, row sep=3em,
                 column sep=1em, text height=1.5ex, text depth=0.25ex,
                 inner sep=0pt, nodes={inner xsep=0.3333em, inner ysep=0.3333em}]
    {
       \cdots \pgfmatrixnextcell \bigoplus\limits_{n=0}^{N} F^{-(2n+1)}\ld\langle d(n-1)\rangle \pgfmatrixnextcell
       \bigoplus\limits_{n=0}^{N} F^{-2n}\ld\langle d(n-1)\rangle \pgfmatrixnextcell \bigoplus\limits_{n=0}^{N}
       F^{-(2n+1)}\ld\langle dn\rangle \pgfmatrixnextcell\bigoplus\limits_{n=0}^{N} F^{-2n}\ld\langle dn\rangle
       \pgfmatrixnextcell  \coker(\d)\\ 
       \cdots \pgfmatrixnextcell F^{-3}\ld\oplus F^{-1}\ld\langle -d\rangle\pgfmatrixnextcell F^{-2}\ld\oplus
       F^0\ld\langle -d\rangle\pgfmatrixnextcell F^{-1}\ld\pgfmatrixnextcell F^0\ld\pgfmatrixnextcell M\ld \\
    };
    \draw[->] (m-1-1) -- (m-1-2);
    \draw[->] (m-1-2) -- (m-1-3);
    \draw[->] (m-1-3) -- (m-1-4);
    \draw[->] (m-1-4) -- (m-1-5);
    \draw[->] (m-1-5) -- (m-1-6);
    \draw[->] (m-2-1) -- (m-2-2);
    \draw[->] (m-2-2) -- (m-2-3);
    \draw[->] (m-2-3) -- (m-2-4);
    \draw[->] (m-2-4) -- (m-2-5);
    \draw[->] (m-2-5) -- (m-2-6);
    \draw[->] (m-1-2) -- (m-2-2);
    \draw[->] (m-1-3) -- (m-2-3);
    \draw[->] (m-1-4) -- (m-2-4);
    \draw[->] (m-1-5) -- (m-2-5);
    \draw[dashed] (m-1-6) -- (m-2-6);
\end{tikzpicture}
\end{gather*}
where the vertical maps are the projection maps.
\end{rem}

\subsection{Tensor products of graded matrix factorizations}

In this section we define internal and external tensor products of graded matrix factorizations and study the crucial
question in which situations taking tensor products commutes with stabilization.

\begin{definition}\label{def_internaltensor}
Let $S\ld$ be a regular local graded ring, $w_0,w_1\in S\ld$ be homogeneous and $M := M^0\ld\mor{f} M^{-1}\ld\mor{g}
M^0\ld$, $N := N^0\ld\mor{f\p} N^{-1}\ld\mor{g\p} N^0\ld$ be graded matrix factorizations of type $(S\ld,w_0)$
and $(S\ld,w_1)$, respectively. The \textit{(internal) tensor product} $M\otimes_{S\ld} N$ is defined as the graded
matrix factorization of type $(S\ld,w_0+w_1)$
\begin{equation*}\begin{tikzpicture}[description/.style={fill=white,inner sep=2pt}]
    \matrix (m) [matrix of math nodes, row sep=3em,
                 column sep=4em, text height=1.5ex, text depth=0.25ex,
                 inner sep=0pt, nodes={inner xsep=0.3333em, inner ysep=0.3333em}]
    {
       M^0\ld\otimes_{S\ld}N^{-1}\ld\ \oplus\ M^{-1}\ld\otimes_{S\ld}N^0\ld && M^0\ld\otimes_{S\ld} N^0\ld\ \oplus\
       M^{-1}\ld\otimes_{S\ld}N^{-1}\ld\langle d\rangle \\
    };
    \draw[->] ($(m-1-1.east) + (0,+0.8mm)$) -- node[above,scale=0.75] {$\begin{pmatrix} \id\otimes g\p & g\otimes\id\\
        f\otimes\id & -\id\otimes f\p\end{pmatrix}$} ($(m-1-3.west) + (0,+0.8mm)$);
    \draw[->] ($(m-1-3.west) + (0,-0.8mm)$) -- node[below,scale=0.75]{$\begin{pmatrix} 
        \id\otimes f\p & g\otimes\id\\ f\otimes\id & -\id\otimes g\p\end{pmatrix}$} ($(m-1-1.east) + (0,-0.8mm)$);
\end{tikzpicture}\end{equation*}
\end{definition}
In order to be able to compute tensor products with more than two factors,
we note the following equivalent definition: Given $M$, consider it as a $\ZZ$-graded family $M\ua\la$ of 
$S\ld$-modules, concentrated in degrees $1$ and $0$, and similar for $N$. Then take the tensor product of $M\ua\ld$ and
$N\ua\ld$ as $\ZZ$-graded families of graded $S\ld$-modules, i.e. $(M\ua\ld\otimes_{S\ld} N\ua\ld)_n :=
\bigoplus_{p+q=n} M^p\ld\otimes_{S\ld} N^q\ld$, and equip $M\ua\ld\otimes_{S\ld} N\ua\ld$ with the two
differentials, one raising and the other lowering the cohomological degree by $1$, induced by the structure maps of $M$
and $N$; obey the Koszul sign rule. Then collapse the cohomological $\ZZ$-grading on $M\ua\ld\otimes_{S\ld} N\ua\ld$ to a
$\ZZ/2\ZZ$-grading, but whenever a cohomological degree shift by $-2$ occurs, we shift up the internal degree by $d$. The
resulting $\ZZ/2\ZZ$-graded family of $S\ld$-modules is now equipped with a degree $0$ differential from cohomological
degree $1$ to $0$ and a degree $d$ differential from homological degree $0$ to $1$. This description is valid also for
more than two tensor factors, and we will make use of it shortly. 

Next we discuss the most basic matrix factorizations, the Koszul factorizations.
\begin{definition}
Let $S\ld$ be a regular local graded ring and let $x,y\in S\ld$ be homogeneous. The \textit{Koszul factorization of
  $x,y$}, denoted by $\{x,y\}$, is defined as the graded matrix factorization 
$$\{x,y\}\ \ := \ \ \left(S\ld\xrightarrow{\ y\ } S\ld\langle -\deg(x)\rangle\xrightarrow{\ x\ }S\ld\right)$$
of type $(S\ld,xy)$. More generally, if $\bfx := (x_1,...,x_l)$ and $\bfy := (y_1,..,y_l)$ are sequences of homogeneous
elements in $S\ld$, we define the Koszul factorization $\{\bfx,\bfy\}$ of $\bfx$ and $\bfy$ as the matrix factorization
of type $\left(S\ld,\sum\limits_{i=1}^{l} x_i y_i\right)$
$$\{\bfx, \bfy\}\ \ := \ \ \bigotimes\limits_{i=1}^{l} \{x_i,y_i\}\ =\  \bigotimes_{i=1}^{l}
\left(S\ld\xrightarrow{\ y_i\ } S\ld\langle -\deg(x_i)\rangle\xrightarrow{\ x_i\ }S\ld\right).$$
\end{definition}
Koszul-factorizations play a very prominent role, because the matrix factorization occurring in Corollary
\ref{cor_koszulstabilization} is just the Koszul-factorization of $(x_1,...,x_l)$ and $(y_1,...,y_l)$:
\begin{prop}\label{prop_koszulstabilization2} 
Let $\bfx = (x_1,...,x_n)$ and $\bfy = (y_1,...,y_n)$ be sequences of homogeneous elements in $S\ld$, and assume
that $\ol{x}$ is regular. Further, set $w := x_1 y_1 + ... + x_n y_n$. Then there is an isomorphism in $\HMF(S\ld,w)$
 $$\left(S\ld/(x_1,...,x_n)\right)\stab{w}\cong\{\bfx,\bfy\}.$$  
\end{prop}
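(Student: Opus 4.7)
The plan is to combine Corollary \ref{cor_koszulstabilization}, which already describes $\left(S\ld/(x_1,\dots,x_n)\right)\stab{w}$ as a concrete matrix factorization built out of the Koszul complex $K(\bfx)\ua\ld$, with a direct identification of that matrix factorization as the iterated tensor product $\bigotimes_i \{x_i, y_i\}$. Since the statement is essentially a translation between two pieces of notation (the ``global'' Koszul description and the ``local'' iterated tensor product), nothing genuinely new needs to be done besides bookkeeping.

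First I would unfold the tensor product $\bigotimes_{i=1}^{n}\{x_i,y_i\}$ using the equivalent description of the internal tensor product given after Definition \ref{def_internaltensor}: view each $\{x_i,y_i\}$ as a $\ZZ$-graded family concentrated in cohomological degrees $0$ and $1$, carrying a degree-raising differential (multiplication by $y_i$) and a degree-lowering differential (multiplication by $x_i$). Taking the $\ZZ$-graded tensor product over $S\ld$ of these $n$ families, the term in cohomological degree $k$ is naturally identified with
\[
\bigwedge^k \bigoplus_{i=1}^n S\ld\langle -\deg(x_i)\rangle e_i,
\]
i.e.\ precisely the degree $-k$ part of the Koszul complex $K(x_1,\dots,x_n)\ua\ld$. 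Under this identification, the degree-lowering differential inherited from the $x_i$'s is exactly the Koszul differential $\d$ (with the Koszul sign rule), while the degree-raising differential inherited from the $y_i$'s is $\mult(e_1 y_1 + \dots + e_n y_n)$.

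Next I would collapse the cohomological $\ZZ$-grading to a $\ZZ/2\ZZ$-grading, keeping track of the rule that whenever the cohomological degree drops by $2$ one shifts up the internal grading by $d=\deg(w)$. This yields on the even part $\bigoplus_{n\geq 0}\bigl[\bigwedge^{2n}\bigoplus_i S\ld\langle-\deg(x_i)\rangle e_i\bigr]\langle dn\rangle$ and analogously on the odd part, with total differential $\d + \mult(e_1 y_1 + \dots + e_n y_n)$. This is literally the matrix factorization appearing in Corollary \ref{cor_koszulstabilization}, which was shown there to be canonically isomorphic to $\left(S\ld/(x_1,\dots,x_n)\right)\stab{w}$. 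Chaining the two isomorphisms finishes the proof.

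The only real obstacle is that sign/grading check: one must verify that the Koszul sign rule built into Definition \ref{def_internaltensor} matches the sign convention of the Koszul differential $\d e_i := x_i$ (extended as a graded derivation) used in Corollary \ref{cor_koszulstabilization}, and that the internal grading shifts $\langle dn\rangle$ introduced by the $\ZZ\to\ZZ/2\ZZ$ collapse are consistent on both sides. These are routine but tedious; otherwise the identification is essentially tautological.
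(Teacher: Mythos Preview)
Your proposal is correct and follows exactly the approach indicated in the paper: the text preceding the proposition states that ``the matrix factorization occurring in Corollary \ref{cor_koszulstabilization} is just the Koszul-factorization of $(x_1,\dots,x_l)$ and $(y_1,\dots,y_l)$,'' and the remark after Corollary \ref{cor_koszulstabilization} announces precisely the identification you carry out. The paper leaves the actual sign and grading bookkeeping implicit, so your write-up is in fact more detailed than what appears there.
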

\begin{rem}Proposition \ref{prop_koszulstabilization2} has an interesting consequence. The left hand side in \ref{prop_koszulstabilization2} does only depend on
$x_1,...,x_n$ and $w$, but not on the particular choice of the $y_i$. Thus, any two choices of $y_1,..,y_n$ satisfying
$w = x_1 y_1 + ... + x_n y_n$ give homotopy equivalent Koszul factorizations. Our proof is rather indirect; for a direct
proof, see \cite[Lemma 2]{WuLinkHomology}.\end{rem}

\subsection{Compatibility of taking tensor product and stabilization}\label{subsec:compatibility}

In the application to Khovanov-Rozansky homology we will identify the matrix factorizations associated to
basic MOY-graphs as stabilizations of certain Soergel bimodules. As these matrix factorizations are glued together
by tensoring afterwards, we are naturally led to study the question whether taking tensor products commutes with taking
stabilizations. The following gives a first criterion:

\begin{cor}\label{cor_concatenateregularsequences}
Let $I\ld,J\ld\subset S\ld$ be homogeneous ideals in $S\ld$ such that there is a regular sequence $(x_1,...,x_n)$ of
homogeneous elements $S\ld$ and some $k$, $1\leq k\leq n$, such that $I\ld = (x_1,...,x_k)$ and $J\ld =
(x_{k+1},...,x_n)$. Further, let $w= w_0 + w_1$ for some $w_0\in I\ld$ and $w_1\in J\ld$. Then there is an isomorphism
in $\HMF(S\ld,w)$
$$(S\ld/I\ld)\stab{w_0}\otimes_{S\ld} (S\ld/J\ld)\stab{w_1}\ \cong\ (S\ld/(I\ld+J\ld))\stab{w}.$$  
\end{cor}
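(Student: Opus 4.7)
The strategy is to reduce everything to Koszul factorizations via Proposition \ref{prop_koszulstabilization2} and then observe that the internal tensor product of matrix factorizations is essentially baked into the definition of the Koszul factorization of a long sequence. The key point is that, on both sides of the claimed isomorphism, every module of the form $S\ld / (x_{i_1},\ldots, x_{i_r})$ that appears is a quotient of $S\ld$ by a subsequence of the regular sequence $(x_1,\ldots,x_n)$, so Proposition \ref{prop_koszulstabilization2} is directly applicable.

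Concretely, I would begin by fixing homogeneous elements $y_1,\ldots,y_k$ and $y_{k+1},\ldots,y_n$ such that
$$w_0\ =\ \sum_{i=1}^{k} x_i y_i\quad\text{and}\quad w_1\ =\ \sum_{i=k+1}^{n} x_i y_i,$$
which is possible because $w_0\in I\ld$, $w_1\in J\ld$, and we may pick the coefficients homogeneously. Concatenating gives $w=w_0+w_1=\sum_{i=1}^{n} x_i y_i$. Any subsequence of a regular sequence is again regular, so $(x_1,\ldots,x_k)$, $(x_{k+1},\ldots,x_n)$, and $(x_1,\ldots,x_n)$ are all regular, and Proposition \ref{prop_koszulstabilization2} yields canonical isomorphisms in the appropriate homotopy categories
\begin{align*}
(S\ld/I\ld)\stab{w_0}\ &\cong\ \{(x_1,\ldots,x_k),\,(y_1,\ldots,y_k)\},\\
(S\ld/J\ld)\stab{w_1}\ &\cong\ \{(x_{k+1},\ldots,x_n),\,(y_{k+1},\ldots,y_n)\},\\
(S\ld/(I\ld+J\ld))\stab{w}\ &\cong\ \{(x_1,\ldots,x_n),\,(y_1,\ldots,y_n)\}.
\end{align*}

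The final step is to apply the definition of the Koszul factorization of a pair of sequences as an iterated tensor product of the elementary Koszul factorizations $\{x_i,y_i\}$. Splitting the product at the index $k$ gives
$$\{(x_1,\ldots,x_n),(y_1,\ldots,y_n)\}\ =\ \bigotimes_{i=1}^{n}\{x_i,y_i\}\ \cong\ \Bigl(\bigotimes_{i=1}^{k}\{x_i,y_i\}\Bigr)\otimes_{S\ld}\Bigl(\bigotimes_{i=k+1}^{n}\{x_i,y_i\}\Bigr),$$
and the two parenthesized factors are exactly $\{(x_1,\ldots,x_k),(y_1,\ldots,y_k)\}$ and $\{(x_{k+1},\ldots,x_n),(y_{k+1},\ldots,y_n)\}$, respectively. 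Stringing the three displayed isomorphisms together with this factorization delivers the desired isomorphism in $\HMF(S\ld,w)$.

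There is no real obstacle here beyond bookkeeping: once the $y_i$ have been chosen homogeneously (which uses the homogeneity of $w_0$, $w_1$ together with $w_i\in (x_\bullet)$), the statement is a direct amalgamation of Proposition \ref{prop_koszulstabilization2} with the associativity of the tensor product in $\MF^\infty\ldg(S\ld,-)$. The mild subtlety worth double-checking is that the potentials add correctly under the tensor product (Definition \ref{def_internaltensor}), so that the factorization of $\{(x_1,\ldots,x_n),(y_1,\ldots,y_n)\}$ really lives over the potential $w=w_0+w_1$ rather than over two separate potentials; this is immediate from $w=\sum_{i=1}^n x_iy_i$.
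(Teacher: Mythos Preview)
Your proof is correct and essentially identical to the paper's own argument: choose homogeneous $y_i$ with $w_0=\sum_{i\le k}x_iy_i$ and $w_1=\sum_{i>k}x_iy_i$, apply Proposition~\ref{prop_koszulstabilization2} three times, and use that $\{\bfx,\bfy\}=\bigotimes_i\{x_i,y_i\}$ splits at the index $k$.
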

\begin{proof}
Choose homogeneous elements $y_1,...,y_n$ in $S\ld$ such that $w_0 = x_1 y_1 + ... + x_k y_k$ and $w_1 = x_{k+1} y_{k+1}
+ ... + y_n 
x_n$. Applying Proposition \ref{prop_koszulstabilization2} three times then gives
\begin{align*}
(S\ld/I\ld)\stab{w_0}\otimes_{S\ld} (S\ld/J\ld)\stab{w_1} & \cong
\{(x_1,...,x_k),(y_1,...,y_k)\}\otimes_{S\ld} \{(x_{k+1},...,x_n),(y_{k+1},...,y_n)\}\\
& = \bigotimes\limits_{i=1}^{n} \{x_i,y_i\} = \{(x_1,...,x_n),(y_1,...,y_n)\}\\
& \cong (S\ld/(I\ld+J\ld))\stab{w}.\end{align*}
\end{proof}

\noindent Corollary \ref{cor_concatenateregularsequences} looks somewhat unnatural as it leaves the following questions open:

\begin{enumerate}
\item[(1)] Given two finitely generated graded modules $M\ld$ and $N\ld$ over $R\ld := S\ld/(w)$ and $R\p\ld :=
  S\ld/(w\p)$, respectively, is there always a canonical morphism between $M\ld\stab{w}\otimes_{S\ld}
  N\ld\stab{w\p}$ and $(M\ld\otimes_{S\ld} N\ld)\stab{w+w\p}$? 
\item[(2)] Are there criteria like \ref{cor_concatenateregularsequences} which can be applied to \textit{noncyclic}
  $S\ld$-modules $M\ld$ and $N\ld$ to check if there is an isomorphism $M\ld\stab{w}\otimes_{S\ld}
  N\ld\stab{w\p}\cong (M\ld\otimes_{S\ld} N\ld)\stab{w+w\p}?$
\end{enumerate}
Question (1) will be answered in Theorem \ref{thm_naturalmorphism}: there \textit{is} a canonical morphism
$$M\ld\stab{w}\otimes_{S\ld}N\ld\stab{w\p}\to \left(M\ld\otimes_{S\ld} N\ld\right)\stab{w+w\p}$$ which is 
even natural in $M\ld$ and $N\ld$. Concerning question (2), again the full answer is contained in Theorem
\ref{thm_naturalmorphism}, but for now, the following generalization of Corollary \ref{cor_concatenateregularsequences}
is sufficient: 

\begin{prop}\label{prop_tensorstabgeneral}
Let $M\ld$ and $N\ld$ be finitely generated modules over $R\ld := S\ld/(w)$ and $R\p\ld := S\ld/(w\p)$, respectively,
such that $\tor^{S\ld}_k(M\ld,N\ld)\ld=0$ for all $k>0$. Then there is an isomorphism in $\HMF(S\ld,w+w\p)$
$$M\ld\stab{w}\otimes_{S\ld} N\ld\stab{w\p}\cong (M\ld\otimes_{S\ld} N\ld)\stab{w+w\p}$$
\end{prop}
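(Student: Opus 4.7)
The plan is to prove the isomorphism by producing concrete matrix-factorization representatives on both sides via Proposition \ref{prop_stabalgo} and checking they coincide. The $\tor$-vanishing hypothesis is exactly what is needed to make the tensor product of free resolutions into a free resolution of $M\ld\otimes_{S\ld}N\ld$, which is the combinatorial input.

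Concretely, I would first pick bounded $S\ld$-free resolutions $F\ua\ld\to M\ld$ and $G\ua\ld\to N\ld$ together with families of endomorphisms $(s_n)$ and $(t_n)$ of respective internal degrees $n\deg(w)$ and $n\deg(w\p)$ as in Lemma~\ref{lem_preparationlemma}. (Since the two potentials may have different degrees, one should really work in $\HMF^\infty(S\ld,w+w\p)$ where the hypersurface has a fixed homogeneous potential; in the situation relevant to us the degrees match, which we may assume.) By hypothesis $\tor^{S\ld}_k(M\ld,N\ld)\ld=0$ for $k>0$, so the K\"unneth spectral sequence collapses and the total complex $T\ua\ld := F\ua\ld\otimes_{S\ld}G\ua\ld$, equipped with the differential $s_0\otimes\id + (-1)^{\ast}\id\otimes t_0$, is a bounded $S\ld$-free resolution of $M\ld\otimes_{S\ld}N\ld$.

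Next, I would lift the homotopies: set
\[
u_n\ :=\ \sum\limits_{p+q=n} (-1)^{?(p,q,\ast)}\, s_p\otimes t_q
\]
on $T\ua\ld$, with the Koszul sign rule chosen so that $u_0$ is the total differential and the $u_n$ satisfy $\sum_{p+q=n} u_p u_q = 0$ for $n\geq 2$ and $u_0 u_1 + u_1 u_0 = (w+w\p)\cdot\id_{T\ua\ld}$. The last identity drops out of
\[
(s_0\otimes\id\pm\id\otimes t_0)(s_1\otimes\id+\id\otimes t_1) + (s_1\otimes\id+\id\otimes t_1)(s_0\otimes\id\pm\id\otimes t_0)\ =\ w\cdot\id\otimes\id + \id\otimes w\p\cdot\id,
\]
since the cross terms cancel by the sign choice, and the higher identities are purely formal consequences of the analogous identities for $(s_n)$ and $(t_n)$. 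Thus $(T\ua\ld,(u_n))$ is a datum of the kind required by Proposition~\ref{prop_stabalgo} for the $R\ld$-module $M\ld\otimes_{S\ld}N\ld$.

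Finally, I would apply Proposition~\ref{prop_stabalgo} to the three resolutions $F\ua\ld$, $G\ua\ld$ and $T\ua\ld$ and compare the resulting matrix factorizations. The explicit formula given there yields $M\ld\stab{w}=(\bigoplus F^{-2n}\ld\langle dn\rangle,\bigoplus F^{-(2n+1)}\ld\langle dn\rangle,\sum s_n)$ and similarly for $N\ld\stab{w\p}$. Unwinding Definition~\ref{def_internaltensor} (in its $\ZZ$-graded reformulation immediately following the definition) shows that the internal tensor product $M\ld\stab{w}\otimes_{S\ld}N\ld\stab{w\p}$ has underlying $\ZZ/2\ZZ$-graded $S\ld$-module equal to $\bigoplus_{n\geq 0}T^{-2n}\ld\langle dn\rangle\,\oplus\,\bigoplus_{n\geq 0}T^{-(2n+1)}\ld\langle dn\rangle$ and differential $\sum u_n$, matching exactly $(M\ld\otimes_{S\ld}N\ld)\stab{w+w\p}$ as computed from Proposition~\ref{prop_stabalgo} applied to $T\ua\ld$.

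The main obstacle I expect is bookkeeping: verifying that the Koszul signs used in Definition~\ref{def_internaltensor} for internal tensor products of matrix factorizations agree with the Koszul signs needed to make the $u_n$ satisfy the conditions of Lemma~\ref{lem_preparationlemma}, and that collapsing the cohomological $\ZZ$-grading of $T\ua\ld$ to a $\ZZ/2\ZZ$-grading (with the prescribed internal degree shifts by $d$) really produces, term by term, the same bigraded module as the tensor product of the two stabilizations. Once the signs and shifts are pinned down, the isomorphism is a direct read-off.
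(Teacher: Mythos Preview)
Your strategy is exactly the paper's: tensor the two $S\ld$-free resolutions equipped with their higher homotopies, verify the conditions of Lemma~\ref{lem_preparationlemma} for the resulting data, apply Proposition~\ref{prop_stabalgo}, and recognize the answer as the internal tensor product of the two stabilizations. The final comparison step is also the same, and your worry about the sign and shift bookkeeping is well placed --- the paper does it explicitly and it goes through.

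There is, however, a genuine error in your formula for the combined homotopies. The expression $u_n=\sum_{p+q=n}\pm\, s_p\otimes t_q$ cannot be correct: each $s_p$ has cohomological degree $-(2p-1)$ and each $t_q$ has degree $-(2q-1)$, so $s_p\otimes t_q$ has degree $-2n+2$, which is even and hence never equal to the required $-(2n-1)$; for $n=0$ it would give $s_0\otimes t_0$, which is not the total differential. What you actually use in your verification for $u_1$ is the right formula, and it is the one the paper uses, namely
\[
u_n(x\otimes y)\ =\ s_n(x)\otimes y\ +\ (-1)^{|x|}\, x\otimes t_n(y),
\]
i.e.\ $u_n=s_n\otimes\id+(-1)^{|\cdot|}\id\otimes t_n$. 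With this definition $u_0$ is the total differential, $u_0u_1+u_1u_0=(w+w\p)\id$ follows as in your displayed computation, and for $n\geq 2$ one expands $\sum_{p+q=n}u_pu_q$ into four types of terms: the diagonal ones $\sum s_ps_q\otimes\id$ and $\sum \id\otimes t_pt_q$ vanish by the hypotheses on $(s_\bullet)$ and $(t_\bullet)$ separately, while the cross terms $(-1)^{|x|}s_p\otimes t_q$ and $(-1)^{|x|+1}s_q\otimes t_p$ cancel in pairs after reindexing (the extra sign comes from commuting $\id\otimes t_p$, which changes the parity of $|x|$, past $s_q\otimes\id$). With this correction in place, your proof coincides with the paper's.
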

\begin{proof}
We want to apply Proposition \ref{prop_stabalgo} to $M\ld\otimes_{S\ld} N\ld$. Let $P\ua\ld\to M\ld$ and $Q\ua\ld\to N\ld$
be free resolutions of $M\ld$ and $N\ld$ over $S\ld$, and let $s\p_n: P\ua\ld\to P^{\ast-(2n-1)}\ld$ and $s\pp_n:
Q\ua\ld\to Q^{\ast-(2n-1)}\ld$ be as in Lemma \ref{lem_preparationlemma}. From this data we will now construct explicitly
a free resolution $F\ua\ld\to M\ld\otimes_{S\ld} N\ld$ together with a family of higher homotopies for $F\ua\ld$ needed
for the application of Lemma \ref{lem_preparationlemma} to $M\ld\otimes_{S\ld} N\ld$.

As $\tor^{S\ld}_k(M\ld,N\ld)\ld=0$ for $k>0$, the complex $F\ua\ld := P\ua\ld\otimes_{S\ld} Q\ua\ld$ is an $S\ld$-free
resolution of $M\ld\otimes_{S\ld} N\ld$. We now define $s_n: F\ua\ld\to F^{\ast-(2n-1)}\ld$ as 
$$s_n(x\otimes y)\ \ :=\ \ s\p_n(x)\otimes y + (-1)^{\deg(x)} x\otimes s\pp_n(x).$$
It's clear that $s_0$ is just the differential of $F\ua\ld$, and since
\begin{align*}
(\differential_{F\ua\ld}s_1+s_1\differential_{F\ua\ld})(x\otimes y) &\makebox[5mm][c]{=}\differential_{F\ua\ld}(s\p_1(x)\otimes y + (-1)^{|x|}
x\otimes 
s\pp_1(y)) \\ & \makebox[5mm][c]{}+ s_1(\differential_{P\ua\ld}(x)\otimes y + (-1)^{|x|} x\otimes\differential_{Q\ua\ld}(y))\\
& \makebox[5mm][c]{=}(\differential_{P\ua\ld}s\p_1+s\p_1\differential_{P\ua\ld})(x)\otimes y + x\otimes
(\differential_{Q\ua\ld}s\pp_1+s\pp_1\differential_{Q\ua\ld})(y)\\ 
& \makebox[5mm][c]{}+(-1)^{|x|+1}s\p_1(x)\otimes\differential_{Q\ua\ld}(y) + (-1)^{|x|}\differential_{P\ua\ld}x\otimes s\pp_1(y) \\ 
&\makebox[5mm][c]{}+(-1)^{|x|+1}\differential_{P\ua\ld}(x)\otimes s\pp_1(y) + (-1)^{|x|} s\p_1(x)\otimes\differential_{Q\ua\ld}(y)\\
&\makebox[5mm][c]{=} wx\otimes y + x\otimes w\p y = (w+w\p) x\otimes y
\end{align*}
we see that $s_1$ is a nullhomotopy for the multiplication by $w+w\p$. Finally, we have to check that
$\sum\limits_{p+q=n} s_p s_q = 0$ für $n\geq 2$, which follows by direct calculation: 
\begin{align*}
\sum\limits_{p+q=n} s_p s_q & = \sum\limits_{p+q=n} (s\p_p\otimes\id + (-1)^{|x|} \id\otimes s\pp_p)(s\p_q\otimes\id +
(-1)^{|x|}\id\otimes s\pp_q)\\
& = \sum\limits_{p+q=n} s\p_p s\p_q\otimes\id + \id\otimes s\pp_p s\pp_q + (-1)^{|x|}s\p_p\otimes s\pp_q + (-1)^{|x|+1}
s\p_q\otimes s\pp_p\\
& = 0;
\end{align*}
where we used that $s\pp_q$ changes the parity of the degree, and therefore $$((-1)^{|x|}\id\otimes s\pp_p)\circ
(s\p_q\otimes\id) = (-1)^{|x|+1} s\p_q\otimes s\pp_p.$$ Thus the $s_n$ satisfy the conditions of
Lemma \ref{lem_preparationlemma} and therefore can be used to calculate\break$(M\ld\otimes_{S\ld} 
N\ld)\stab{w+w\p}$. By Proposition \ref{prop_stabalgo}, we get
\begin{align*}
\makebox[12mm][c]{}&(M\ld\otimes_{S\ld} N\ld)\stab{w+w\p}\\
\makebox[12mm][c]{$\cong$} & \left(\bigoplus\limits_{n\geq 0} F^{2n}\ld\langle
  nd\rangle,\bigoplus\limits_{n\geq 0} 
  F^{2n+1}\ld\langle nd\rangle,\sum\limits_{n\geq 0} s\p_n\otimes\id + (-1)^{|x|} \id\otimes s\pp_n\right)\\
\makebox[12mm][c]{$=$} & \left(\bigoplus\limits_{n\geq 0} P^{2n}\ld\langle nd\rangle,\bigoplus\limits_{n\geq
    0}P^{2n+1}\ld\langle nd\rangle,\sum\limits_{n\geq 0}  
  s\p_n\right)\makebox[2mm][c]{$\stackrel[\mathclap{S\ld}]{}{\otimes}$}\left(\bigoplus\limits_{n\geq 0} Q^{2n}\ld\langle
  nd\rangle,\bigoplus\limits_{n\geq 
    0}Q^{2n+1}\ld\langle nd\rangle,\sum\limits_{n\geq 0} s\pp_n\right)\\ 
\makebox[12mm][c]{$=$} & M\ld\stab{w}\otimes_{S\ld} N\ld\stab{w\p}.
\end{align*}
\end{proof}

\subsection{Scalar extension and external tensor products}

Next we define scalar extensions and external tensor products of graded matrix factorizations and study their
compatibility with the stabilization functor.

\begin{definition}\label{def_scalarextension}
Let $\varphi: T\ld\to S\ld$ be a local homomorphism of regular local graded rings. We consider $S\ld$ as a graded
$T\ld$-module via $\varphi$. If $M\ua\ld:=(M^0\ld\mor{f} M^{-1}\ld\mor{g} M^0\ld)$ is a graded matrix factorization
of type $(T\ld,w)$, we denote by $M\ua\ld\otimes_{T\ld} S\ld$ or $M\ua\ld\uparrow_{T\ld}^{S\ld}$ the \textit{scalar
  extension of $M\ua\ld$ along $T\ld\to S\ld$}, defined by $$M^0\ld\otimes_{T\ld} S\ld\xrightarrow{f\otimes\id}
M^{-1}\ld\otimes_{T\ld} S\ld\xrightarrow{g\otimes\id} M^0\ld\otimes_{T\ld} S\ld.$$ This is a graded matrix factorization of
type $(S\ld,\varphi(w))$.
\end{definition}

\begin{fact}\label{fact_stabscalar}
Let $T\ld\mor{\varphi}S\ld$ and $w\in \frm_{T\ld}\setminus\{0\}$ be as in definition \ref{def_scalarextension}, and
assume furthermore that $S\ld$ is free as a graded $T\ld$-module. Then, given a finitely generated graded
$T\ld/(w)$-module $M\ld$, we have an isomorphism in $\HMF(S\ld,\varphi(w))$ $$M\ld\stab{w}\uparrow_{T\ld}^{S\ld}\ \ \cong\ \
\left(M\ld\otimes_{T\ld} S\ld\right)\stab{\varphi(w)}.$$ 
\end{fact}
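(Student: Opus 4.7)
The plan is to apply the explicit construction of the stabilization furnished by Proposition \ref{prop_stabalgo} on both sides and observe that the whole construction is compatible with scalar extension along a flat ring map.

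First I would pick a bounded $T\ld$-free resolution $F\ua\ld\to M\ld$ (which exists since $T\ld$ is regular) together with a family of homotopies $s_n\colon F\ua\ld\to F^{\ast-(2n-1)}\ld$ of internal degree $nd$ satisfying the three conditions of Lemma \ref{lem_preparationlemma}, where $d=\deg(w)$. By Proposition \ref{prop_stabalgo}, these data represent $M\ld\stab{w}$ as the matrix factorization
\[
\left(\bigoplus_{n\geq 0} F^{-2n}\ld\langle dn\rangle,\ \bigoplus_{n\geq 0} F^{-(2n+1)}\ld\langle dn\rangle,\ \sum_{n\geq 0} s_n\right).
\]

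Next I would transport this data along $\varphi$ by applying $-\otimes_{T\ld} S\ld$. Since $S\ld$ is free, hence flat, over $T\ld$, the complex $F\ua\ld\otimes_{T\ld} S\ld$ is a bounded $S\ld$-free resolution of $M\ld\otimes_{T\ld} S\ld$. The maps $\tilde{s}_n := s_n\otimes\id_{S\ld}$ inherit the three properties of Lemma \ref{lem_preparationlemma} relative to $\varphi(w)$: $\tilde{s}_0$ is the differential of $F\ua\ld\otimes_{T\ld} S\ld$; multiplication by $w$ on $F\ua\ld$ pushes forward to multiplication by $\varphi(w)$ on $F\ua\ld\otimes_{T\ld} S\ld$, so $\tilde{s}_1$ is a nullhomotopy for it; and the quadratic relations $\sum_{p+q=n}\tilde{s}_p\tilde{s}_q = 0$ follow by tensoring the corresponding relations for the $s_n$ with $\id_{S\ld}$.

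Applying Proposition \ref{prop_stabalgo} a second time, now to $M\ld\otimes_{T\ld} S\ld$ over $S\ld$ with this choice of resolution and higher homotopies, yields
\[
(M\ld\otimes_{T\ld} S\ld)\stab{\varphi(w)} \cong \left(\bigoplus_{n\geq 0} (F^{-2n}\ld\otimes_{T\ld} S\ld)\langle dn\rangle,\ \bigoplus_{n\geq 0} (F^{-(2n+1)}\ld\otimes_{T\ld} S\ld)\langle dn\rangle,\ \sum_{n\geq 0} \tilde{s}_n\right),
\]
which is manifestly $M\ld\stab{w}\otimes_{T\ld} S\ld = M\ld\stab{w}\uparrow_{T\ld}^{S\ld}$. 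The only non-trivial ingredient is the flatness of $S\ld$ over $T\ld$, needed both to preserve acyclicity of $F\ua\ld$ and to commute the infinite direct sums appearing in Proposition \ref{prop_stabalgo} with $\otimes_{T\ld} S\ld$; everything else is formal, so no serious obstacle is expected.
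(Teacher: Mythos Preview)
Your argument is correct and follows essentially the same idea as the paper's proof: the data used to compute the stabilization is preserved under base change along the free (hence flat) map $T\ld\to S\ld$. The only difference is that the paper works directly with an eventually $2$-periodic $T\ld/(w)$-free resolution and applies Proposition~\ref{prop_getstab}, whereas you go one step back and use a bounded $T\ld$-free resolution with higher homotopies and Proposition~\ref{prop_stabalgo}; since the latter is derived from the former via Proposition~\ref{prop_resolution}, the two arguments are interchangeable. (Incidentally, the direct sums in Proposition~\ref{prop_stabalgo} are finite because $F\ua\ld$ is bounded, so your closing remark about commuting infinite sums with $\otimes_{T\ld} S\ld$ is not actually needed.)
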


\begin{proof}
As $S\ld/(w)$ is free and in particular flat as a module over $T\ld/(\varphi(w))$, the scalar extension of a graded free
resolution of some graded $T\ld/(w)$-module $M\ld$ along $\varphi$ is a graded free resolution of $M\ld\otimes_{T\ld}
S\ld$ over $S\ld/(w)$. The same holds for a lifting of the $2$-periodic part of such a resolution to $T\ld$. The claim
follows from Proposition \ref{prop_getstab}.
\end{proof}

\begin{definition}\label{def_glueing}
Let $T\ld$, $S\ld$ and $S\p\ld$ be regular local graded rings, and let $T\ld\hookrightarrow S\ld$ and
$T\ld\hookrightarrow S\p\ld$ be homomorphisms of local graded rings. If $M := (M^0\ld\to M^{-1}\ld\to M^0\ld)$ and $N := 
(N^0\ld\to N^{-1}\ld\to N^0\ld)$ are graded matrix factorizations of type $(S\ld,w)$ and $(S\p\ld,w\p)$, respectively,
we define 
\begin{align*}
M\otimes_{T\ld} N\ :=\ M\uparrow_{S\ld}^{S\ld\otimes_{T\ld} S\p\ld}\otimes_{S\ld\otimes_{T\ld} S\ld\p}
N\uparrow_{S\p\ld}^{S\ld\otimes_{T\ld} S\p\ld}.
\end{align*}
This is a graded matrix factorization of type $(S\ld\otimes_{T\ld} S\p\ld, w\otimes 1+ 1\otimes w\p)$.  
\end{definition}

\begin{prop}\label{prop_externaltensorstab}
Let $S\ld$, $S\p\ld$, $T\ld$ and $w\in\frm_{S\ld}\setminus\{0\}$, $w\p\in \frm_{S\p\ld}\setminus\{0\}$ be as in definition
\ref{def_glueing}. Assume $S\ld\otimes_{T\ld} S\p\ld$ is again regular local, and $w\otimes 1 + 1\otimes
w\p\neq 0$ in $S\ld\otimes_{T\ld} S\p\ld$. Further, let $M\ld$ and $N\ld$ be finitely
generated graded modules over $S\ld/(w)$ and $S\p\ld/(w\p)$, respectively, such that $\tor^{T\ld}_k(M\ld,N\ld)=0$ for
all $k>0$. Then there is an  isomorphism in $\HMF(S\ld\otimes_{T\ld} S\p\ld, w\otimes 1+1\otimes
w\p)$ $$M\ld\stab{w}\otimes_{T\ld}N\ld\stab{w\p}\ \ \cong\ \ (N\ld\otimes_{T\ld} M\p\ld)\stab{w+w\p}$$  
\end{prop}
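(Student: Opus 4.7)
The plan is to reduce the statement to the two compatibilities already at our disposal, namely Fact~\ref{fact_stabscalar} (stabilization commutes with free scalar extensions) and Proposition~\ref{prop_tensorstabgeneral} (stabilization commutes with the internal tensor product when the underlying modules are $\tor$-independent). Set $U\ld := S\ld\otimes_{T\ld} S\p\ld$, which by assumption is a regular local graded ring, and note that $w\otimes 1, 1\otimes w\p\in\frm_{U\ld}$ with $w\otimes 1+1\otimes w\p\neq 0$. I will assume the implicit flatness of $S\ld$ and $S\p\ld$ over $T\ld$ needed to make the scalar extensions $S\ld\hookrightarrow U\ld$ and $S\p\ld\hookrightarrow U\ld$ free (this is the standard setting in which Definition~\ref{def_glueing} is used).

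First, by the definition of the external tensor product (Definition~\ref{def_glueing}),
\[
M\ld\stab{w}\otimes_{T\ld}N\ld\stab{w\p}\ =\ \left(M\ld\stab{w}\uparrow_{S\ld}^{U\ld}\right)\otimes_{U\ld}\left(N\ld\stab{w\p}\uparrow_{S\p\ld}^{U\ld}\right).
\]
Applying Fact~\ref{fact_stabscalar} to each factor (using freeness of $U\ld$ over $S\ld$ and $S\p\ld$) gives isomorphisms in $\HMF(U\ld,w\otimes 1)$ resp.~$\HMF(U\ld,1\otimes w\p)$:
\[
M\ld\stab{w}\uparrow_{S\ld}^{U\ld}\ \cong\ (M\ld\otimes_{T\ld} S\p\ld)\stab{w\otimes 1},\qquad N\ld\stab{w\p}\uparrow_{S\p\ld}^{U\ld}\ \cong\ (S\ld\otimes_{T\ld} N\ld)\stab{1\otimes w\p}.
\]

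Next I want to apply Proposition~\ref{prop_tensorstabgeneral} over the regular ring $U\ld$ to the pair of modules $M\ld\otimes_{T\ld} S\p\ld$ (killed by $w\otimes 1$) and $S\ld\otimes_{T\ld} N\ld$ (killed by $1\otimes w\p$). The hypothesis to verify is $\tor^{U\ld}_k(M\ld\otimes_{T\ld} S\p\ld,\ S\ld\otimes_{T\ld} N\ld)\ld=0$ for all $k>0$. This is the step on which everything hinges, and I expect it to be the main technical obstacle. The argument I have in mind is to take a free resolution $F\ua\ld\to M\ld$ over $S\ld$; since $S\p\ld$ is flat over $T\ld$, the complex $F\ua\ld\otimes_{T\ld} S\p\ld = F\ua\ld\otimes_{S\ld} U\ld$ is a free resolution of $M\ld\otimes_{T\ld} S\p\ld$ over $U\ld$, so
\[
\tor^{U\ld}\la(M\ld\otimes_{T\ld} S\p\ld,\ S\ld\otimes_{T\ld} N\ld)\ld\ \cong\ H\la\!\left(F\ua\ld\otimes_{T\ld} N\ld\right),
\]
and the last complex computes $\tor^{T\ld}\la(M\ld,N\ld)\ld$ (again via flatness of $S\ld/S\p\ld$ over $T\ld$ to turn a $T\ld$-free resolution of $M\ld$ into an $S\ld$-free resolution after base change, or by a straightforward spectral sequence argument). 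By hypothesis these Tor groups vanish in positive degrees, giving the required vanishing.

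With this in hand, Proposition~\ref{prop_tensorstabgeneral} yields
\[
(M\ld\otimes_{T\ld} S\p\ld)\stab{w\otimes 1}\otimes_{U\ld}(S\ld\otimes_{T\ld} N\ld)\stab{1\otimes w\p}\ \cong\ \bigl[(M\ld\otimes_{T\ld} S\p\ld)\otimes_{U\ld}(S\ld\otimes_{T\ld} N\ld)\bigr]\stab{w\otimes 1+1\otimes w\p}.
\]
Finally, the canonical balancing isomorphism $(M\ld\otimes_{T\ld} S\p\ld)\otimes_{U\ld}(S\ld\otimes_{T\ld} N\ld)\cong M\ld\otimes_{T\ld} N\ld$ identifies the right-hand side with $(M\ld\otimes_{T\ld} N\ld)\stab{w+w\p}$, completing the proof. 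In summary the whole argument is a three-move sandwich: push both stabilizations up to $U\ld$ via Fact~\ref{fact_stabscalar}, merge them via Proposition~\ref{prop_tensorstabgeneral}, and collapse the iterated tensor product; the only nonformal ingredient is translating the $\tor^{T\ld}$-vanishing assumption into $\tor^{U\ld}$-vanishing.
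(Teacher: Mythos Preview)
Your proof is correct and follows essentially the same approach as the paper's: reduce to the internal case over $U\ld = S\ld\otimes_{T\ld}S\p\ld$ by applying Fact~\ref{fact_stabscalar} to each factor, translate the $\tor^{T\ld}$-vanishing hypothesis into the required $\tor^{U\ld}$-vanishing, and then invoke Proposition~\ref{prop_tensorstabgeneral}. The paper's argument is terser---it simply records the isomorphism $\tor^{U\ld}_k(M\ld\otimes_{T\ld}S\p\ld,\,S\ld\otimes_{T\ld}N\ld)\cong\tor^{T\ld}_k(M\ld,N\ld)$ and cites the two results---but the underlying logic and the freeness assumption on $S\ld$, $S\p\ld$ over $T\ld$ are identical to yours.
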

\begin{proof}
As $S\ld$ and $S\p\ld$ are free and in particular flat over $T\ld$, we have canonical
isomorphisms $$\tor^{S\ld\otimes_{T\ld}
  S\p\ld}_k(M\ld\uparrow_{S\ld}^{S\ld\otimes_{T\ld}S\p\ld},N\ld\uparrow_{S\p\ld}^{S\ld\otimes_{T\ld}S\p\ld})= 
\tor^{S\ld\otimes_{T\ld} S\p\ld}_k(M\ld\otimes_{T\ld} S\p\ld,N\ld\otimes_{T\ld} 
S\ld)\ \cong\ \tor^{T\ld}_k(M\ld,N\ld)$$ for all $n\geq 0$. Now the claim follows from Fact \ref{fact_stabscalar} and
Proposition \ref{prop_tensorstabgeneral}.
\end{proof}

In particular, we get the following:

\begin{cor}\label{prop_exttensorproductstab}
Let $S\ld$, $S\p\ld$, $T\ld$ and $w\in\frm_{S\ld}\setminus\{0\}$, $w\p\in \frm_{S\p\ld}\setminus\{0\}$ be as in
Definition \ref{def_glueing}. Assume $S\ld\otimes_{T\ld} S\p\ld$ is again regular local, and $w\otimes 1 + 1\otimes
w\p\neq 0$ in $S\ld\otimes_{T\ld} S\p\ld$. Further, let  
$I\ld\subset S\ld$ and $J\ld\subset S\p\ld$ be homogeneous ideals and assume that there exist regular sequences
$x_1,...,x_n\in S\ld$ and $y_1,...,y_m$ of homogeneous elements in $S\ld$ and $S\p\ld$, respectively, such that $I\ld =
(x_1,...,x_n)$, $J\ld = (y_1,...,y_m)$ and $x_1\otimes 1,...,x_n\otimes 1$,$1\otimes y_1,...,1\otimes y_m$ is regular in
$S\ld\otimes_{T\ld} S\p\ld$. Then there is an isomorphism in $\HMF(S\ld\otimes_{T\ld} S\p\ld, w\otimes 1 + 1\otimes w\p)$
$$(S\ld/I\ld)\stab{w}\otimes_{T\ld} (S\p\ld/J\ld)\stab{w\p}\ \ \cong\ \ (S\ld\otimes_{T\ld} S\p\ld/I\ld\otimes 1+1\otimes
J\ld)\stab{w\otimes 1 + 1\otimes w\p}.$$ 
\end{cor}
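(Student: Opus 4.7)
The plan is to apply Proposition \ref{prop_externaltensorstab} to the cyclic modules $M\ld := S\ld/I\ld$ and $N\ld := S\p\ld/J\ld$. Combined with the standard identification $S\ld/I\ld\otimes_{T\ld}S\p\ld/J\ld \cong (S\ld\otimes_{T\ld}S\p\ld)/(I\ld\otimes 1 + 1\otimes J\ld)$ coming from right-exactness of the tensor product, this immediately yields the claimed isomorphism, provided the Tor-vanishing hypothesis
$$\tor^{T\ld}_k(S\ld/I\ld,\ S\p\ld/J\ld)\ \ =\ \ 0\qquad (k>0)$$
is verified.

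For the Tor-vanishing I would first observe that, since $(x_1,\ldots,x_n)$ is a homogeneous $S\ld$-regular sequence, the Koszul complex $K(x_1,\ldots,x_n)$ is a bounded resolution of $S\ld/I\ld$ by free $S\ld$-modules; as $S\ld$ is free over $T\ld$, these are also free $T\ld$-modules, so $K(x_1,\ldots,x_n)\to S\ld/I\ld$ is simultaneously a $T\ld$-free resolution. Consequently
$$\tor^{T\ld}_*(S\ld/I\ld,\ S\p\ld/J\ld)\ \ \cong\ \ H_*\bigl(K(x_1,\ldots,x_n)\otimes_{T\ld} S\p\ld/J\ld\bigr),$$
and the right hand side is precisely the Koszul complex on $(x_1\otimes 1,\ldots,x_n\otimes 1)$ viewed as a sequence in the ring $(S\ld\otimes_{T\ld}S\p\ld)/(1\otimes J\ld)$. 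Its higher homology vanishes if and only if $(x_1\otimes 1,\ldots,x_n\otimes 1)$ is a regular sequence on that quotient ring.

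This last regularity is exactly what the hypothesis of the corollary is designed to yield. The assumption provides regularity of the full concatenated sequence $(x_1\otimes 1,\ldots,x_n\otimes 1, 1\otimes y_1,\ldots,1\otimes y_m)$ on $S\ld\otimes_{T\ld}S\p\ld$; invoking the permutability of homogeneous regular sequences in a local Noetherian graded ring---which, via Proposition \ref{prop_gradedvsungraded}, reduces to the well-known ungraded statement---I rearrange to $(1\otimes y_1,\ldots,1\otimes y_m, x_1\otimes 1,\ldots,x_n\otimes 1)$, whence by definition $(x_1\otimes 1,\ldots,x_n\otimes 1)$ is regular on $(S\ld\otimes_{T\ld}S\p\ld)/(1\otimes J\ld)$ as required.

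The only conceptually nontrivial ingredient is this permutation of regular sequences; everything else is a bookkeeping combination of Proposition \ref{prop_externaltensorstab} with classical Koszul homology and the elementary description of the tensor product of two cyclic modules.
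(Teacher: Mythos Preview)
Your argument is correct and follows the paper's primary route: the paper simply says this is a special case of Proposition \ref{prop_externaltensorstab}, and you have spelled out the Tor-vanishing verification (via the Koszul resolution and permutation of regular sequences) that the paper leaves implicit. The paper also notes an alternative derivation from Fact \ref{fact_stabscalar} and Corollary \ref{cor_concatenateregularsequences}, which bypasses the Tor computation by first extending scalars and then directly applying the internal concatenation result to the combined regular sequence in $S\ld\otimes_{T\ld}S\p\ld$; your approach and this alternative are essentially two sides of the same coin.
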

\begin{proof}
This is a special case of Proposition \ref{prop_externaltensorstab}. Alternatively, it can be deduced from Fact
\ref{fact_stabscalar} and Corollary \ref{cor_concatenateregularsequences}.   
\end{proof}

\section{Khovanov-Rozansky homology via maximal Cohen-Macaulay modules}\label{sec_description}

In this section we apply the algebraic methods established in the previous section to give an alternative description of
Khovanov-Rozansky homology. For the sake of completeness, we first recall the construction of the generalized
Khovanov-Rozansky homology described in \cite{WuLinkHomology} and \cite{Yonezawa}. We observe that the matrix
factorizations associated to basic MOY-graphs can be written as stabilizations of certain Soergel bimodules and prove
that the tensor products occurring when glueing these matrix factorizations together commute with the stabilization
functor as long as the MOY-graph under consideration is acyclic, i.e. does not possess any oriented cycles, in accordance
with Webster's description of the Khovanov-Rozansky complex of some acyclic MOY-graph, see \cite[Section
2.4]{WebsterCanopolis}. 

\subsection{The construction of generalized KR-homology}\label{sec_KRconstruction}

First, we recall the definition of a (marked) MOY-graph.

\begin{definition}
A \textit{MOY-graph} is a directed graph $\Gamma := (V,E,s,t)$ (with vertices $V$, edges $D$ and source-target functions
$s,t: E\to V$) together with a weight function on its edges $\nu: E\to\{0,1,2,...,n\}$, such that the following
properties hold:  
\begin{enumerate}
\item For all $v\in V$ we have $\eta(v) := |\{\alpha\in E\ |\ v\in\{s(\alpha),t(\alpha)\}\}|\geq 1$. 
\item For all $v\in V$ such that $\eta(v)\geq 2$, we have $$\sum\limits_{\substack{\alpha\in E\\ t(\alpha)=v}}
  \nu(\alpha) = \sum\limits_{\substack{\alpha\in E\\ s(\alpha)=v}} \nu(\alpha).$$
\end{enumerate}
If $(\Gamma,\nu)$ is a MOY-graph, a vertex $v\in V$ satisfying $\eta(v) = 1$ is called an \textit{outer point} of
$\Gamma$. Otherwise $v$ is called an \textit{inner point}.
\end{definition}

\begin{definition}
A \textit{marking} on a MOY-graph $(\Gamma,\nu)$ consists of the following data:
\begin{enumerate}
\item A subset $P\subset|\Gamma|$, whose elements we will call \textit{marked points}, with the following properties:
\begin{enumerate}
\item Every edge of $\Gamma$ contains at least one marked point.
\item Every outer point of $\Gamma$ is marked.
\item No inner point of $\Gamma$ is marked.
\end{enumerate}
\item For every marked point $p\in P$ a set of variables $\XX_p$ with the following properties:
\begin{enumerate}
\item For $p\neq q\in P$ the sets $\XX_p$ and $\XX_q$ are disjoint.
\item If $p$ lies in the interior of the edge $e$ of $\Gamma$, we have $|\XX_p|=\nu(e)$.
\item If $p$ is an outer point of $\Gamma$ and $e\in E$ is the unique edge of $\Gamma$ such that $p\in \{s(e),t(e)\}$,
  we have $|\XX_p|=\nu(e)$. 
\end{enumerate}
\noindent If $p\in P$ is a marked point, we call $|\XX_p|$ the \textit{value} of $p$. 
\end{enumerate}
\end{definition}

If one cuts the edges of a marked MOY-graph $\Gamma$ along their marked points, the graph decomposes into MOY-graphs of
the form $\Gamma^{m_1,...,m_k}_{n_1,...,n_l}$ or $\Gamma_n$ depicted in Figure \ref{fig:buildingblocks}. We will call
these elementary MOY-graphs \textit{building blocks}.  

\begin{figure}[h]\begin{center}
\begin{tikzpicture}
\draw[oredged] (0,0) -- node[txt]{${m_1}$}   (-2,1.2) node[fat]{} node[above]{$\XX_1$};
\draw[oredged] (0,0) -- node[txt]{$m_2$}     (-1,1.2) node[fat]{} node[above]{$\XX_2$};
\draw[oredged] (0,0) -- node[txt]{$m_{k-1}$} (+1,1.2) node[fat]{} node[above]{$\XX_{k-1}$};
\draw[oredged] (0,0) -- node[txt]{$m_k$}     (+2,1.2) node[fat]{} node[above]{$\XX_k$};
\draw[oredgeu] (-2,-1.2) node[fat]{} node[below]{$\YY_1$} -- node[txtp]{$n_1$} (0,0);
\draw[oredgeu] (-1,-1.2) node[fat]{} node[below]{$\YY_2$} -- node[txtp]{$n_2$}     (0,0) ;
\draw[oredgeu] (+1,-1.2) node[fat]{} node[below]{$\YY_{l-1}$} -- node[txtp]{$n_{l-1}$} (0,0);
\draw[oredgeu] (+2,-1.2) node[fat]{} node[below]{$\YY_l$} -- node[txtp]{$n_l$}  (0,0)node[big]{} ;
\node at (0,+1)[above] {$\cdots$};
\node at (0,-1)[below] {$\cdots$};
\node at (0,-2.25) {$\Gamma_{n_1,...,n_l}^{m_1,...,m_k}$};
\node at (6cm,-2.25) {$\Gamma_m^m$};
\begin{scope}[xshift=6cm]
\draw[oredgem] (0,-1.2) node[fat]{} node[below]{$\YY$} -- node[right=8pt,txt,pos=0.5,below=1pt]{$m$}(0,1.2) node[fat]{}
node[above]{$\XX$}; 
\end{scope}
\end{tikzpicture}
\end{center}
\caption{Building blocks}
\label{fig:buildingblocks}
\end{figure}
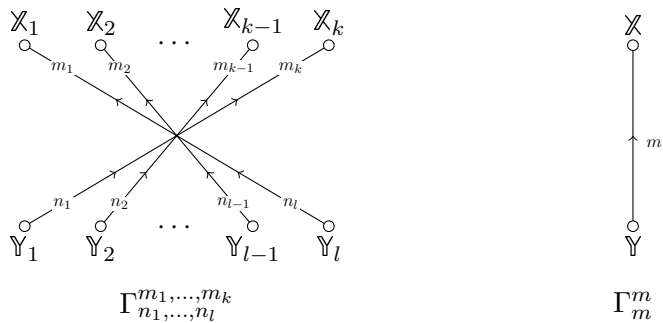

\noindent\textbf{Notation: }\label{notation} Given sets of variables $\XX_i = \{x_{i,1},...,x_{i,m_i}\}$ we denote by
$\Sym(\XX_1|...|\XX_n)$ the 
subring of $\CC[x_{i,j}|1\leq i\leq n,\ 1\leq j\leq m_i]$ consisting of those polynomials which are symmetric in the
variables $x_{i,1},...,x_{i,m_i}$ from $\XX_i$ for each $i=1,...,n$. We have $\Sym(\XX_1|...|\XX_n)\cong\Sym(\XX_1)
\otimes_\CC...\otimes_\CC\Sym(\XX_n)$, so that $\Sym(\XX_1|...|\XX_n)$ is a polynomial ring in the
elementary symmetric polynomials of the $\XX_i$. In particular, it is a regular local graded ring.  Given an arbitrary
set of variables $\XX$ we denote by $X_l\in\Sym(\XX)$ 
the $l$-th elementary symmetric polynomial in the variables contained in $\XX$. Given a variable set like $\XX_i$ which
itself carries in index, we denote the $l$-th elementary symmetric polynomial of the variables contained in $\XX_i$ by
$X_{i,l}$. If the set of variables under consideration is not denoted by a single letter, but for example is of the form
$\XX\cup\YY$, then we denote the $l$-th elementary symmetric polynomial of $\XX\cup\YY$ by $(\XX\cup\YY)_l$ etc. For sets
of variables $\XX_1,...,\XX_n$ we denote by $\XX$ the union $\bigcup\XX_i$, \textit{but} we set $\Sym(\XX) :=
\Sym(\XX_1|...|\XX_n)$, overloading the previous definition of $\Sym(\XX)$. If we want to talk about the ring of
completely symmetric polynomials in the variables of $\XX = \bigcup\XX_i$, we will write $\Sym(\XX^\cup)$ for this
instead. Finally we set $\Sigma\XX^{n+1} := \sum\limits_{x\in\XX} x^{n+1}\in\Sym(\XX)$ for an arbitrary set of variables
$\XX$.  

Given a marked MOY-graph $\Gamma$ we associate to it a matrix factorization $\KR(\Gamma)$
along the following steps, which we will look at more closely below:
\begin{enumerate}
\item Cut $\Gamma$ along its marked points to get the basic MOY-subgraphs $\Gamma_1,...,\Gamma_r$ of $\Gamma$.
\item To each of the $\Gamma_i$ with ingoing variables $\YY_1,...,\YY_l$ and outgoing variables $\XX_1,...,\XX_k$
  associate a graded matrix factorization $\KR(\Gamma_i)$ of type $(\Sym(\XX|\YY), \Sigma\XX^{n+1}-\Sigma\YY^{n+1})$.
\item Glue together the matrix factorizations associated to the basic MOY-subgraphs of $\Gamma$ along their common
  variables: $\KR(\Gamma) := \bigotimes\limits_{i=1}^{r} \KR(\Gamma_r)$.  
\end{enumerate}
We first explain step (2) in detail, beginning with the matrix factorization associated to the basic MOY-graph
$\Gamma_{n_1,...,n_l}^{m_1,...,m_k}$ (see Figure \ref{fig:buildingblocks}). Put $m := \sum\limits_{i=1}^{k}
|\XX_i|=\sum\limits_{j=1}^{l}|\YY_j|$. We define $$\KR(\Gamma_{n_1,...,n_l}^{m_1,...,m_k})\ \ := \ \
\bigotimes\limits_{i=1}^{m} \{\ast_i, X_i-Y_i\}\left\langle\sum\limits_{1\leq i<j\leq k} m_i m_j\right\rangle,$$
considered as a matrix factorization of type $(\Sym(\XX|\YY),\Sigma\XX^{n+1}-\Sigma\YY^{n+1})$. Here the $\ast_i$ are
homogeneous elements chosen in such a way that  $\sum\limits_{i=1}^{m} \ast_i (\XX_i -\YY_i) =
\Sigma\XX^{n+1}-\Sigma\YY^{n+1}$. As the sequence  $(X_i-Y_i)_{1\leq i\leq m}$ is regular in $\Sym(\XX|\YY)$, the
particular choice of the $\ast_i$ is irrelevant by Proposition \ref{prop_koszulstabilization2}. The case of the basic
MOY-graph $\Gamma_m^m$ (see \ref{fig:buildingblocks}) is similar; we set $$\KR(\Gamma_m^m)\ \ :=\ \ 
\bigotimes\limits_{i=1}^{m} \{\ast_i,X_i-Y_i\},$$ considered as a graded matrix factorization of type
$(\Sym(\XX|\YY),\Sigma\XX^{n+1}-\Sigma\YY^{n+1})$. Again, the $\ast_i$ are homogeneous elements chosen in such a way
that $\sum\limits_{i=1}^{m} \ast_i(X_i-Y_i)=\Sigma\XX^{n+1}-\Sigma\YY^{n+1}$.

Summing up, we have shown the following:
\begin{theorem}\label{thm_KRviaStab}
There is a canonical isomorphism in $\HMF(\Sym(\XX|\YY),\Sigma\XX^{n+1}-\Sigma\YY^{n+1})$:
$$\KR\left(\Gamma_{n_1,...,n_l}^{m_1,...,m_k}\right)=\KR\left(\ \begin{tikzpicture}[scale=0.5,baseline=-0.6mm]
\draw[oredged] (0,0) -- node[txt,scale=0.8]{$m_1$}   (-2,1.2) node[fat]{} node[above,scale=0.75]{$\XX_1$}; 
\draw[oredged] (0,0) -- node[txt,scale=0.8]{$m_2$}     (-1,1.2) node[fat]{} node[above,scale=0.75]{$\XX_2$};
\draw[oredged] (0,0) -- node[txt,scale=0.8]{$m_{k-1}$} (+1,1.2) node[fat]{} node[above,scale=0.75]{$\XX_{k-1}$};
\draw[oredged] (0,0) -- node[txt,scale=0.8]{$m_k$}     (+2,1.2) node[fat]{} node[above,scale=0.75]{$\XX_k$};
\draw[oredgeu] (-2,-1.2) node[fat]{} node[below,scale=0.75]{$\YY_1$} -- node[txtp,scale=0.8]{$n_1$} (0,0);
\draw[oredgeu] (-1,-1.2) node[fat]{} node[below,scale=0.75]{$\YY_2$} -- node[txtp,scale=0.8]{$n_2$}     (0,0) ;
\draw[oredgeu] (+1,-1.2) node[fat]{} node[below,scale=0.75]{$\YY_{l-1}$} -- node[txtp,scale=0.8]{$n_{l-1}$} (0,0);
\draw[oredgeu] (+2,-1.2) node[fat]{} node[below,scale=0.75]{$\YY_l$} -- node[txtp,scale=0.8]{$n_l$}  (0,0)node[big]{} ;
\node at (0,+1)[above,scale=0.75] {$\cdots$};
\node at (0,-1)[below,scale=0.75] {$\cdots$};\end{tikzpicture}\ \right)\simeq \left(\Sym(\XX|\YY)/(X_i-Y_i)
\langle r\rangle\right)^{\left\{\Sigma\XX^{n+1}-\Sigma\YY^{n+1}\right\}},$$ where $r := \sum\limits_{1\leq i<j\leq k}
m_i m_j$. 
\end{theorem}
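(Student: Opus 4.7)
The plan is to reduce the theorem directly to Proposition \ref{prop_koszulstabilization2}, which identifies a Koszul matrix factorization $\{\bfx,\bfy\}$ with the stabilization $(S\ld/(\bfx))\stab{w}$ whenever $\bfx$ is regular and $w = \sum_i x_i y_i$. The sequences to feed in are $\bfx := (X_i - Y_i)_{i=1}^m$ and $\bfy := (\ast_i)_{i=1}^m$ in $\Sym(\XX|\YY)$. Since the definition of $\KR(\Gamma^{m_1,\ldots,m_k}_{n_1,\ldots,n_l})$ is already of this Koszul form (up to the shift $\langle r\rangle$), the theorem will follow once the two hypotheses of Proposition \ref{prop_koszulstabilization2} are confirmed.

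First I would verify that $\Sigma\XX^{n+1} - \Sigma\YY^{n+1} \in (X_1 - Y_1,\ldots,X_m - Y_m)$, so that the coefficients $\ast_i$ as described in the KR construction exist. By Newton's identities there is a universal polynomial $P$ with $\Sigma\XX^{n+1} = P(X_1,\ldots,X_m)$ and $\Sigma\YY^{n+1} = P(Y_1,\ldots,Y_m)$; the telescoping identity $P(X) - P(Y) = \sum_i (X_i - Y_i)\,Q_i(X,Y)$ then places $\Sigma\XX^{n+1}-\Sigma\YY^{n+1}$ inside the ideal, with the $Q_i$ furnishing a concrete choice of the $\ast_i$.

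The main technical point is the regularity of $(X_i - Y_i)_{i=1}^m$ in $\Sym(\XX|\YY)$. My preferred route is flat base change from the completely symmetric subring $A := \CC[X_1,\ldots,X_m,Y_1,\ldots,Y_m] \subset \Sym(\XX|\YY)$, in which $(X_i - Y_i)$ is manifestly regular (it is part of a system of $2m$ polynomial generators). The extension $\Sym(\XX|\YY) = \Sym(\XX)\otimes_\CC\Sym(\YY)$ is free of finite rank over $A$, because $\Sym(\XX) = \Sym(\XX_1)\otimes_\CC\cdots\otimes_\CC\Sym(\XX_k)$ is free over $\CC[X_1,\ldots,X_m]$ -- for instance via the Schubert basis, or equivalently via the fact that $\Sym(\XX)/(X_1,\ldots,X_m)$ is the finite-dimensional cohomology ring of the partial flag variety $\Fl(m_1,\ldots,m_k;\CC^m)$ -- and analogously for $\Sym(\YY)$. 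Flat extensions preserve regularity of sequences, so the claim follows.

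With both hypotheses in hand, Proposition \ref{prop_koszulstabilization2} identifies $\bigotimes_{i=1}^m\{\ast_i, X_i-Y_i\}$ with $(\Sym(\XX|\YY)/(X_i - Y_i))\stab{w}$, and applying the internal shift $\langle r\rangle$ produces the stated homotopy equivalence. The main obstacle is really the flatness/freeness input needed for the regularity step; once that is granted, the remainder is a formal unpacking of the Koszul construction and of the defining tensor product of $\KR$.
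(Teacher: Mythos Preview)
Your proposal is correct and follows essentially the same approach as the paper: the theorem is obtained directly from the definition of $\KR(\Gamma_{n_1,\ldots,n_l}^{m_1,\ldots,m_k})$ as the Koszul factorization $\bigotimes_i\{\ast_i,X_i-Y_i\}\langle r\rangle$ together with Proposition \ref{prop_koszulstabilization2}, the key input being the regularity of $(X_i-Y_i)$ in $\Sym(\XX|\YY)$. The paper simply asserts this regularity, whereas you justify it via freeness of $\Sym(\XX|\YY)$ over the fully symmetric subring $\Sym(\XX^\cup|\YY^\cup)=\CC[X_1,\ldots,X_m,Y_1,\ldots,Y_m]$; that is a welcome extra detail but not a different argument.
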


\begin{rem} Note that according to our definitions $\Gamma^m_m$ denotes two different basic MOY-graphs. However, this
  causes no trouble, as the two factorizations associated to them are the same.\end{rem}
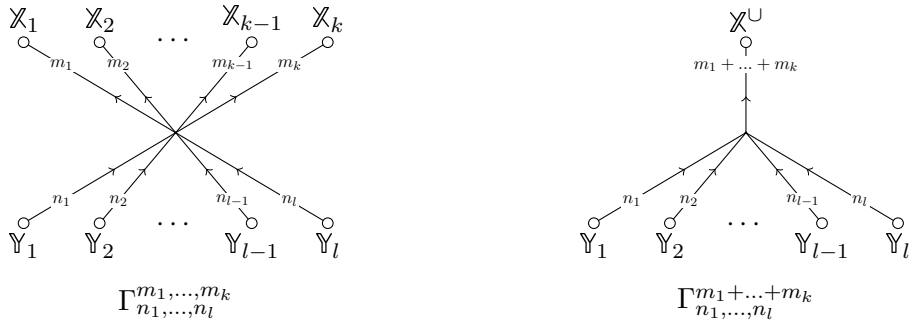
\begin{figure}[h]\begin{center}
\begin{tikzpicture}
\draw[oredged] (0,0) -- node[txt]{${m_1}$}   (-2,1.2) node[fat]{} node[above]{$\XX_1$};
\draw[oredged] (0,0) -- node[txt]{$m_2$}     (-1,1.2) node[fat]{} node[above]{$\XX_2$};
\draw[oredged] (0,0) -- node[txt]{$m_{k-1}$} (+1,1.2) node[fat]{} node[above]{$\XX_{k-1}$};
\draw[oredged] (0,0) -- node[txt]{$m_k$}     (+2,1.2) node[fat]{} node[above]{$\XX_k$};
\draw[oredgeu] (-2,-1.2) node[fat]{} node[below]{$\YY_1$} -- node[txtp]{$n_1$} (0,0);
\draw[oredgeu] (-1,-1.2) node[fat]{} node[below]{$\YY_2$} -- node[txtp]{$n_2$}     (0,0) ;
\draw[oredgeu] (+1,-1.2) node[fat]{} node[below]{$\YY_{l-1}$} -- node[txtp]{$n_{l-1}$} (0,0);
\draw[oredgeu] (+2,-1.2) node[fat]{} node[below]{$\YY_l$} -- node[txtp]{$n_l$}  (0,0)node[big]{} ;
\node at (0,+1)[above] {$\cdots$};
\node at (0,-1)[below] {$\cdots$};
\node at (0,-2.25) {$\Gamma_{n_1,...,n_l}^{m_1,...,m_k}$};
\node at (7.5cm,-2.25) {$\Gamma_{n_1,...,n_l}^{m_1+...+m_k}$};
\begin{scope}[xshift=7.5cm]
\draw[oredged] (0,0) -- node[txt]{${m_1+...+m_k}$}   (0,1.2) node[fat]{} node[above]{$\XX^\cup$};
\draw[oredgeu] (-2,-1.2) node[fat]{} node[below]{$\YY_1$} -- node[txtp]{$n_1$} (0,0);
\draw[oredgeu] (-1,-1.2) node[fat]{} node[below]{$\YY_2$} -- node[txtp]{$n_2$}     (0,0) ;
\draw[oredgeu] (+1,-1.2) node[fat]{} node[below]{$\YY_{l-1}$} -- node[txtp]{$n_{l-1}$} (0,0);
\draw[oredgeu] (+2,-1.2) node[fat]{} node[below]{$\YY_l$} -- node[txtp]{$n_l$}  (0,0)node[big]{} ;
\node at (0,-1)[below] {$\cdots$};
\end{scope}
\end{tikzpicture}
\end{center}
\caption{Comparison of two basic MOY-graphs}
\label{fig:notethering}
\end{figure}
\begin{rem} The matrix factorizations associated to $\Gamma_{n_1,...,n_l}^{m_1,...,m_k}$ and
  $\Gamma_{n_1,...,n_l}^{m_1+...+m_k}$ in Figure \ref{fig:notethering} only differ with respect to the choice of the
  base ring, but not in the choice of potential or the module which is stabilized: in the first case, the base ring is
  $\Sym(\XX|\YY)$, while in the second case it's the subring $\Sym(\XX^\cup|\YY)\subset\Sym(\XX|\YY)$.

In general, the base ring associated to a family of edges with a high value is a subring of the base ring associated to
the configuration of edges where some edges have been split into several 
  edges with smaller value. In this way we are naturally led to consider the graded ranks of rings of symmetric
  polynomials considered as graded modules over smaller rings of symmetric polynomials, and these admit
  interpretations as Poincar\'{e}-polynomials of certain flag-varieties: for example, the graded rank of $\Sym(\XX)$ over
  $\Sym(\XX^\cup)$ equals the Poincar\'{e} polynomial of the algebra $\Sym(\XX)/\langle\Sym(\XX^\cup)_+\rangle$, which is
  isomorphic to the complex cohomology ring of the flag variety $\Fl(|\XX_1|,...,|\XX_n|)$ of flags in $\CC^{|\XX|}$
  with dimension differences $|\XX_i|$. Thus, these graded ranks carry interesting information, and it is therefore very
  important to be aware of which ring we are working with. See also Example \ref{ex_splitmerge}\end{rem} 

Finally, we consider step (3), the glueing of all matrix factorizations associated to basic MOY-graphs according to
their common endpoints in more detail. Let $\Gamma_0,...,\Gamma_n$ be the basic MOY-graphs of $\Gamma$ and let
$\KR(\Gamma_i)$ the matrix factorizations associated to them. Then, we first consider the exterior tensor product
$\tilde{\KR}(\Gamma) := \KR(\Gamma_0)\otimes_{\CC} ...\otimes_{\CC} \KR(\Gamma_n)$ of these factorizations, graphically
corresponding to the disjoint union of the $\Gamma_i$. Now, $\KR(\Gamma)$ is a certain quotient of $\tilde{C}(\Gamma)$,
intuitively identifying common endpoints of the $\Gamma_i$. That is, given $i\neq j$ such that $\Gamma_i$ and $\Gamma_j$
share the sets of variables $\ZZ$, we let action of elements of $\Sym(\ZZ)$ pass from $\KR(\Gamma_i)$ to $\KR(\Gamma_j)$
and vice versa; thus, if we were only looking at $\Gamma_i$ and $\Gamma_j$, the resulting quotient of
$\KR(\Gamma_i)\otimes_\CC \KR(\Gamma_j)$ would just be $\KR(\Gamma_i)\otimes_{\Sym(\ZZ)} \KR(\Gamma_j)$. 

For a detailed example of how to calculate the value of the unknot, see Section \ref{subsec_ex}.

\subsection{The matrix factorization associated to an acyclic MOY-graph}

In the previous section we saw that the matrix factorization associated to a basic MOY-graph can be written as the
stabilization of a 'singular' Soergel bimodule (see \cite{Williamson} and \cite{StroppelGolod}), and we also know that
these matrix factorizations are tensored together in order to get the matrix factorization associated to more
complicated MOY-graphs. Further, in Proposition \ref{prop_externaltensorstab} we gave a sufficient condition for tensor
products of matrix factorizations and stabilizations to commute. In this section, we will see that the conditions for
Proposition \ref{prop_externaltensorstab} are satisfied as long as the MOY-graph under consideration is acyclic,
i.e. does not possess any oriented cycles. In particular, we see that the matrix factorization associated to a MOY-braid 
is isomorphic to the stabilization of the corresponding Soergel bimodule. 

\begin{ex}
We begin by discussing in full detail a very simple example, namely we determine the matrix factorization associated to
the MOY-graph $\Gamma_0$ in Figure \ref{fig:exampleglueing}. We have the following isomorphisms in
$\HMF(\CC[X,Z],X^{n+1}-Z^{n+1})$ (explanations are given below):   
\begin{align*}
\KR(\Gamma_0) &
\makebox[8mm][c]{$\stackrel{\text{def}}{=}$}\CC[X,Y]/(X-Y)\stab{X^{n+1}-Y^{n+1}}\uparrow_{\CC[X,Y]}^{\CC[X,Y,Z]}
    \\ & \makebox[8mm][c]{}\ \ \
    \left.\left.\makebox[8mm][c]{$\stackrel[\mathclap{\CC[X,Y,Z]}]{}{\otimes}$}\CC[Y,Z]/(Y-Z)\stab{Y^{n+1}-Z^{n+1}} 
    \uparrow^{\CC[X,Y,Z]}_{\CC[Y,Z]}\right.\right\downarrow^{\CC[X,Y,Z]}_{\CC[X,Z]} \\  
& \makebox[8mm][c]{$\stackrel{\ref{fact_stabscalar}}{\cong}$}
\CC[X,Y,Z]/(X-Y)\stab{X^{n+1}-Y^{n+1}}\\
& \left.\makebox[8mm][c]{}\ \ \ \makebox[8mm][c]{$\stackrel[\mathclap{\CC[X,Y,Z]}]{}{\otimes}$}
  \CC[X,Y,Z]/(Y-Z)\stab{Y^{n+1}-Z^{n+1}}\right\downarrow^{\CC[X,Y,Z]}_{\CC[X,Z]}\\ 
& \makebox[8mm][c]{$\stackrel{\ref{prop_exttensorproductstab}}{\cong}$}
\left.\CC[X,Y,Z]/(X-Y,Y-Z)\stab{X^{n+1}-Z^{n+1}}\right\downarrow^{\CC[X,Y,Z]}_{\CC[X,Z]}\\ 
& \makebox[8mm][c]{$\cong$} \left.\CC[X,Z]/(X-Z)\stab{X^{n+1}-Z^{n+1}}\right\downarrow^{\CC[X,Y,Z]}_{\CC[X,Z]}\\
& \makebox[8mm][c]{$\stackrel{\ref{cor_forget}}{\cong}$} \CC[X,Z]/(X-Z)\stab{X^{n+1}-Z^{n+1}}\end{align*}
Here, Fact \ref{fact_stabscalar} is applicable because $\CC[X,Y,Z]$ is free over $\CC[X,Y]$ and $\CC[Y,Z]$, and
Proposition \ref{prop_exttensorproductstab} can be applied because $X-Y$, $Y-Z$ is regular in $\CC[X,Y,Z]$. In the last
step, we may apply Corollary \ref{cor_forget} because $\CC[X,Y,Z]$ is a free $\CC[X,Z]$-module.  \end{ex}

The last example shows quite quell how the machinery established to far can be used to make graphically intuitive
relations between matrix factorizations rigorous, without forcing us to actually write down explicit homotopy
equivalences between them. In the above example, we had to 'compute' only one thing,
namely
\begin{align*}
\CC[X,Y,Z]/(X-Y)\stackrel[{\CC[X,Y,Z]}]{}{\otimes}\CC[X,Y,Z]/(Y-Z) & \makebox[8mm][c]{$\cong$} \CC[X,Y,Z]/(X-Y,Y-Z) \\
& \makebox[8mm][c]{$\cong$} \CC[X,Z]/(X-Z).\end{align*}

In much the same way we can handle more complicated glueings of MOY-graphs; only the application of
Proposition \ref{prop_exttensorproductstab} becomes more difficult. We consider another example.

\begin{figure}[h]
\begin{center}
\begin{tikzpicture}
\draw[oredgem] (0,+0.6) --node[txt,pos=0.5,left=8pt]{$1$} (0,+2.4) node[fat]{};
\draw[oredgem] (0,-1.2) node[fat]{} --node[txt,pos=0.5,left=8pt]{$1$} (0,0.6)node[fat]{};
\draw[decorate,decoration={brace,amplitude=5pt}] (-0.75,-1.2) -- node[left=6pt]{$\Gamma_0$}(-0.75,2.4);
\draw[decorate,decoration={brace,amplitude=5pt}] (+0.75,+0.5) -- node[right=6pt]{$\Gamma_1^1$}(+0.75,-1.2);
\draw[decorate,decoration={brace,amplitude=5pt}] (+0.75,+2.4) -- node[right=6pt]{$\Gamma_1^1$}(+0.75,0.7);
\begin{scope}[xshift=7cm]
\begin{scope}[xshift=1.3cm,yshift=1.8cm]
\draw[oredged] (0,0) -- node[left,below=5pt,txt]{$s_1$}      (-0.7,0.6) node[fat]{} node[above]{$\WW_1$};
\draw[oredged] (0,0) -- node[txt,right=3pt,below=9pt]{$s_q$} (+0.7,0.6) node[fat]{} node[above]{$\WW_q$};
\draw[oredgeu] (-0.7,-0.6) -- (0,0);
\draw[oredgeu] (+0.7,-0.6) -- (0,0) node[big]{};
\node at (0, +0.6) {$\cdots$};
\node at (0, -0.6) {$\cdots$};
\end{scope}
\draw[oredged] (0,0) -- node[txt]{$m_1$} (-2,1.2) node[fat]{} node[above]{$\XX_1$};
\draw[oredged] (0,0) -- node[txt]{$m_k$} (-0.6,1.2) node[fat]{} node[above]{$\XX_k$};
\draw[oredged] (0,0) -- node[txt]{$n_1$} (+0.6,1.2) node[fat]{} node[left=2pt,above]{$\YY_{1}$};
\draw[oredged] (0,0) -- node[txt]{$n_l$} (+2,1.2) node[fat]{} node[right=2pt,above]{$\YY_l$};
\draw[oredgeu] (-2,-1.2) node[fat]{} node[below]{$\ZZ_1$} -- node[txtp]{$r_1$} (0,0);
\draw[oredgeu] (-1,-1.2) node[fat]{} node[below]{$\ZZ_2$} -- node[txtp]{$r_2$}     (0,0) ;
\draw[oredgeu] (+1,-1.2) node[fat]{} node[below]{$\ZZ_{p-1}$} -- node[txtp]{$r_{p-1}$} (0,0);
\draw[oredgeu] (+2,-1.2) node[fat]{} node[below]{$\ZZ_p$} -- node[txtp]{$r_p$}  (0,0)node[big]{} ;
\node at (0, -1.2) {$\cdots$};
\node at (-1.3, +1.2) {$\cdots$};

\draw[decorate,decoration={brace,amplitude=5pt}] (-2.6,-1.2) -- node[left=6pt]{$\Gamma_1$}(-2.6,2.4);
\draw[decorate,decoration={brace,amplitude=5pt}] (2.6,+1) --
node[right=6pt]{$\Gamma_{r_1,...,r_p}^{m_1,...,m_k,n_1,...,n_l}$}(2.6,-1); 
\draw[decorate,decoration={brace,amplitude=5pt}] (2.6,+2.6) --
node[right=6pt]{$\Gamma_{n_1,...,n_l}^{s_1,...,s_q}$}(2.6,1.4); 
\end{scope}
\end{tikzpicture}
\end{center}
\caption{Gluing two basic MOY graphs}
\label{fig:exampleglueing}
\end{figure}
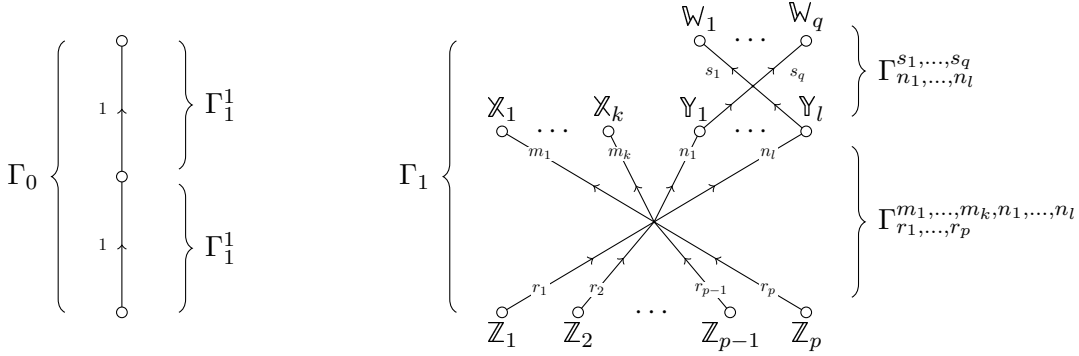
\begin{ex}
We compute the matrix factorization associated to $\Gamma_1$ in Figure \ref{fig:exampleglueing}. Put $u := r_1+...+r_p$
and $v := s_1+...+s_q$. To ease the notation, we will omit the internal degree shifts in our calculation. As above we
first get 
\begin{align*}
\KR(\Gamma_1) & \makebox[8mm][c]{$\stackrel{\text{def}}{=}$}\Sym(\XX|\YY|\ZZ)/((\XX\cup\YY)_i -  
Z_i)\stab{\Sigma\XX^{n+1}+\Sigma\YY^{n+1}-\Sigma\ZZ^{n+1}}\uparrow_{\Sym(\XX|\YY|\ZZ)}^{\Sym(\XX|\YY|\ZZ|\WW)}\\  
&\quad\quad\left.\makebox[8mm][c]{$\stackrel[\mathclap{\Sym[\XX|\YY|\ZZ|\WW]}]{}{\otimes}$}    
\Sym(\WW|\YY)/(W_i-Y_i)\stab{\Sigma\WW^{n+1}-\Sigma\YY^{n+1}}\uparrow_{\Sym(\WW|\YY)}^{\Sym(\XX|\YY|\ZZ|\WW)}\right
\downarrow^{\Sym(\XX|\YY|\ZZ|\WW)}_{\Sym(\XX|\ZZ|\WW)}\\
& \makebox[8mm][c]{$\stackrel{\ref{fact_stabscalar}}{\cong}$}\Sym(\XX|\YY|\ZZ|\WW)/
  ((\XX\cup\YY)_i-Z_i)\stab{\Sigma\XX^{n+1}+\Sigma\YY^{n+1}-\Sigma\ZZ^{n+1}}\\
&\quad\quad\left. \makebox[10mm][c]{$\stackrel[\mathclap{\Sym(\XX|\YY|\ZZ|\WW)}]{}{\otimes}$}
\Sym(\XX|\YY|\ZZ|\WW)/(W_i-Y_i)\stab{\Sigma\WW^{n+1}-\Sigma\YY^{n+1}} 
\right\downarrow^{\Sym(\XX|\YY|\ZZ|\WW)}_{\Sym(\XX|\ZZ|\WW)}
\end{align*}
In the next step we want to apply Proposition \ref{prop_exttensorproductstab} to exchange tensoring and the
stabilization functor. For this, we have to see that the concatenation of the sequences 
$(\XX\cup\YY)_i-Z_i, 1\leq i\leq u$ and $W_j-Y_j, 1\leq j\leq v$ is regular in $\Sym(\XX|\YY|\ZZ|\WW)$. Intuitively this 
is plausible, as attaching $\Gamma_{n_1,...,n_l}^{s_1,...,s_q}$ to $\Gamma_{r_1,...,r_p}^{m_1,...,m_k,n_1,...,n_k}$
introduces new variables from $\WW$, and the attached sequence $W_j-Y_j, 1\leq j\leq n$ becomes regular under the map
$\Sym(\WW|\YY)\to\Sym(\WW)$.  

We can make this intuition rigorous as follows: The sequences $(X_{i,j})$ and $(Y_{i,j})$ of elementary symmetric
polynomials in the variables of $\XX_i$ and $\YY_i$ are regular in $\Sym(\XX)$ and $\Sym(\YY)$, respectively, and hence
their concatenation is regular in $\Sym(\XX|\YY)\cong\Sym(\XX)\otimes_{\CC} \Sym(\YY)$ spanning the ideal
$\Sym(\XX|\YY)_+$. Further $Z_1,...,Z_m$ is regular in $\Sym(\ZZ)$, and we have $\langle Z_1,...,Z_m\rangle =
\langle\Sym(\ZZ^\cup)_+\rangle\subset\Sym(\ZZ).$ 
Since $\Sym(\XX|\YY|\ZZ)/\langle\Sym(\XX|\YY)_+\rangle\cong\Sym(\ZZ)$ we therefore see that
$X_{i,j},Y_{i,j},Z_k-(\XX\cup\YY)_k$ is regular in $\Sym(\XX|\YY|\ZZ)$ spanning the ideal $\Sym(\XX|\YY)_+\otimes_\CC
\Sym(\ZZ) + 
\Sym(\XX|\YY)\otimes_\CC\langle\Sym(\ZZ^\cup)_+\rangle\subset\Sym(\XX|\YY|\ZZ)$. Hence $$\Sym(\XX|\YY|\ZZ|\WW)
/(X_{i,j},Y_{i,j},Z_k-(\XX\cup\YY)_k)\cong\Sym(\WW)\otimes_\CC\Sym(\ZZ)/\langle\Sym(\ZZ^\cup)_+\rangle$$     
and so $X_{i,j},Y_{i,j},Z_k-(\XX|\YY)_k,W_k-Y_k$ is regular in $\Sym(\XX|\YY|\ZZ|\WW)$. As the regularity of a sequence
in a local graded ring is independent of its ordering, we deduce that $Z_k-(\XX|\YY)_k,W_k-Y_k$ is regular as claimed. 
We are therefore allowed to apply Proposition \ref{prop_exttensorproductstab} and get 
\begin{align*} 
  \KR(\Gamma_1) &
  \cong\left.\Sym(\XX|\YY|\ZZ|\WW)/((\XX\cup\YY)_i-Z_i,W_i-Y_i)\stab{\Sigma\WW^{n+1}+\Sigma\XX^{n+1}-\Sigma\ZZ^{n+1}}
  \right\downarrow^{\Sym(\XX|\YY|\WW|\ZZ)}_{\Sym(\XX|\ZZ|\WW)} 
\end{align*}
which is what we expected. Note that, in contrast to the first example, the identification of $Y_i$ and $W_i$ does
\textit{not} imply that $\Sym(\XX|\YY|\ZZ|\WW)/((\XX\cup\YY)_i-Z_i,W_i-Y_i)$ equals $\Sym(\XX|\ZZ|\WW)/ 
((\XX\cup\WW)_i-Z_i)$, just because the ideal spanned by the $Y_i$ in $\Sym(\YY)$ equals
$\langle\Sym(\YY^\cup)_+\rangle$, and this is strictly contained in $\Sym(\YY)_+$ if there was more than one
$\YY_i$. Note however that since $\Sym(\YY)$ is free of finite rank over $\Sym(\YY^\cup)$,
$\Sym(\XX|\YY|\ZZ|\WW)/((\XX\cup\YY)_i-Z_i,W_i-Y_i)$ is finitely generated over $\Sym(\XX|\ZZ|\WW)$. 
\end{ex}
In this example it was already a bit tricky to apply Proposition \ref{prop_exttensorproductstab}, because we to
convince ourself first that the sequence occurring was indeed regular. However, the real problem arises if we now want
to glue our matrix factorization with another one; namely, if we consider
$\Sym(\XX|\YY|\ZZ|\WW)/((\XX\cup\YY)_i-Z_i,W_i-Y_i)$ only as a module over the 'free' variables in $\XX,\ZZ,\WW$ we
loose the nice presentation of the module as a complete intersection, and Proposition \ref{prop_exttensorproductstab} is
no longer available. One could solve this problem by not forgetting the internal variables, but this seems
unnatural. Instead it is more natural to replace Proposition \ref{prop_exttensorproductstab} by Proposition
\ref{prop_externaltensorstab} and to argue through the freeness of the modules at the glueing points. 
\begin{definition}
Let $\XX_1,...,\XX_n$ be sets of variables and $M\ld$ be a graded module over $\Sym(\XX)$. We say that $M\ld$ is
\textit{free over $\XX_i$} if $M\ld$ is free as a graded module over $\Sym(\XX_i)\subset\Sym(\XX)$.
\end{definition}
\begin{theorem}\label{thm_glue}
Let $\XX_1,...,\XX_n,\YY,\ZZ_1,...,\ZZ_n$ be sets of variables (the case $\YY=\emptyset$ is explicitly allowed),
$w\in\Sym(\XX)$, $w\p\in\Sym(\ZZ)$ and $w_0\in\Sym(\YY)$. Further, let $M\ld$, $N\ld$ be graded modules over
$\Sym(\XX|\YY)$ and $\Sym(\YY|\ZZ)$, respectively, such that the following hold: 
\begin{enumerate}
\item $M\ld$ is finitely generated over $\Sym(\XX)$, and $N\ld$ is finitely generated over $\Sym(\ZZ)$
\item $(w+w_0)M\ld = \{0\}$ and $(w\p-w_0)N\ld=\{0\}$.
\item At least one of the modules $M\ld$ and $N\ld$ is free over $\YY$.
\end{enumerate}
Then the following hold:
\begin{enumerate}
\item[(1)]
$M\ld\otimes_{\Sym(\YY)}N\ld$ is finitely generated over $\Sym(\XX|\ZZ)/(w+w\p)$ and there is an isomorphism 
 $$M\ld\stab{w+w_0}\otimes_{\Sym(\YY)} N\ld\stab{w\p-w_0}\ \cong\ (M\ld\otimes_{\Sym(\YY)} N\ld)\stab{ w+w\p}.$$
\item[(2)] If $M\ld$ is free over $\YY$, then the graded $\Sym(\XX|\ZZ)$-module $M\ld\otimes_{\Sym(\YY)}N\ld$
is free over any subset of $\ZZ$ over which $M\ld$ is free.
\item[(3)] If $N\ld$ is free over $\YY$, then $M\ld\otimes_{\Sym(\YY)}N\ld$
is free over any subset of $\XX$ over which $M\ld$ is free.
\end{enumerate}
\end{theorem}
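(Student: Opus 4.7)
The plan is to deduce (1) directly from Proposition \ref{prop_externaltensorstab}, with the freeness hypothesis (c) supplying the required Tor-vanishing. Set $S := \Sym(\XX|\YY)$, $S' := \Sym(\YY|\ZZ)$ and $T := \Sym(\YY)$. Then $S \otimes_T S' \cong \Sym(\XX|\YY|\ZZ)$ is again a polynomial ring in the elementary symmetric polynomials of the $\XX_i$, $\YY_j$ and $\ZZ_k$, hence is regular local graded. Because $w_0 \in T$, the combined potential $(w+w_0)\otimes 1 + 1 \otimes (w'-w_0)$ on $S \otimes_T S'$ simplifies to $w+w'$, so both sides of the asserted isomorphism live in $\HMF(\Sym(\XX|\YY|\ZZ), w+w')$.

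First I would verify the hypotheses of Proposition \ref{prop_externaltensorstab}. The isomorphisms $\Sym(\XX|\YY) \cong \Sym(\XX) \otimes_\CC \Sym(\YY)$ and $\Sym(\YY|\ZZ) \cong \Sym(\YY) \otimes_\CC \Sym(\ZZ)$ exhibit $S$ and $S'$ as free $T$-modules. Freeness of one of $M$, $N$ over $T$ implies $\tor_k^T(M, N) = 0$ for all $k > 0$. Annihilation of $M \otimes_T N$ by $w+w'$ follows directly from (b), since $(w+w_0)$ and $(w'-w_0)$ act by zero on the respective tensor factors and $w_0$ cancels. Finite generation of $M \otimes_T N$ over $\Sym(\XX|\ZZ)/(w+w')$ is a straightforward generating-set argument: if $\{m_i\}$ generates $M$ over $\Sym(\XX)$ and $\{n_j\}$ generates $N$ over $\Sym(\ZZ)$, then the pure tensors $m_i \otimes n_j$ generate $M \otimes_T N$ over $\Sym(\XX) \otimes_\CC \Sym(\ZZ) = \Sym(\XX|\ZZ)$, obtained by pushing coefficients from each side across the tensor. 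Proposition \ref{prop_externaltensorstab} then yields the isomorphism of (1).

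For (2) and (3), assume without loss of generality that $M$ is free over $\YY$, fix a homogeneous $\Sym(\YY)$-basis $\{e_i\}_{i \in I}$ of $M$, and put $d_i := \deg(e_i)$. There is an isomorphism of $\Sym(\YY|\ZZ)$-modules
$$M \otimes_{\Sym(\YY)} N \;\cong\; \bigoplus_{i \in I} N\langle d_i\rangle,$$
and a fortiori the same identification as $\Sym(\ZZ')$-modules for any subset $\ZZ' \subseteq \ZZ$. If $N$ is free over $\Sym(\ZZ')$, then so is any direct sum of shifts of $N$, yielding the freeness statement for subsets of $\ZZ$ (here one has to read ``$M$'' as ``$N$'' in (2), since $M$ carries no $\ZZ$-structure of its own). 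Case (3) is entirely symmetric, with the roles of $M$, $N$ and of $\XX$, $\ZZ$ interchanged.

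The main obstacle is really just the verification of the Tor-vanishing hypothesis of Proposition \ref{prop_externaltensorstab}; once the freeness condition (c) supplies this, the remainder of the argument reduces to the bookkeeping above. The slight subtlety worth flagging is that the tensor product of matrix factorizations is taken in the sense of Definition \ref{def_glueing} (i.e.\ after first extending scalars to $\Sym(\XX|\YY|\ZZ)$), so the isomorphism in (1) really is being asserted over the big ring $\Sym(\XX|\YY|\ZZ)$, not merely over $\Sym(\XX|\ZZ)$.
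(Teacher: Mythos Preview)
Your approach is the paper's: freeness over $\YY$ gives the Tor-vanishing, and then Proposition \ref{prop_externaltensorstab} does the work. But you stop one step short and have the final interpretation inverted. Proposition \ref{prop_externaltensorstab} indeed yields the isomorphism in $\HMF(\Sym(\XX|\YY|\ZZ), w+w\p)$, as you say; however, the right-hand side $(M\ld\otimes_{\Sym(\YY)}N\ld)\stab{w+w\p}$ in the theorem is meant as the stabilization over $\Sym(\XX|\ZZ)$ --- this is precisely why finite generation over $\Sym(\XX|\ZZ)$ is recorded first in (1). The paper closes this gap by restricting both sides along $\Sym(\XX|\ZZ)\hookrightarrow\Sym(\XX|\YY|\ZZ)$ and invoking Corollary \ref{cor_forget}: since $\Sym(\XX|\YY|\ZZ)$ is free over $\Sym(\XX|\ZZ)$ and $M\ld\otimes_{\Sym(\YY)}N\ld$ is finitely generated over $\Sym(\XX|\ZZ)$, stabilization commutes with restriction, and one lands in $\HMF^\infty(\Sym(\XX|\ZZ), w+w\p)$. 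So your final paragraph has the subtlety backwards --- the point is to descend to $\Sym(\XX|\ZZ)$, not to remain over the big ring.

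Your direct-sum arguments for (2) and (3) are fine (the paper simply calls them obvious), and you correctly notice that the second occurrence of ``$M\ld$'' in (2) should read ``$N\ld$'', since $M\ld$ carries no $\Sym(\ZZ)$-structure.
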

\begin{proof}
Since $M\ld$ is finitely generated over $\Sym(\XX)$ and $N\ld$ is finitely generated over $\Sym(\ZZ)$,\break
$M\ld\otimes_{\Sym(\YY)}N\ld$ is finitely generated over $\Sym(\XX|\ZZ)$. Further, $M\ld\otimes_{\Sym(\YY)} N\ld$ is
annihilated by the ideal $(w+w_0,w\p-w_0)$ an in particular by the element $w+w\p = (w+w_0)+(w\p-w_0)$. Therefore, the
expression $(M\ld\otimes_{\Sym(\YY)}N\ld)\stab{w+w\p}$ makes sense. Further, $\tor^{\Sym(\YY)}_k(M\ld,N\ld)=0$ for
all $k>0$, since by assumption either $M\ld$ or $N\ld$ is free $\Sym(\YY)$. Thus we can apply Proposition
\ref{prop_externaltensorstab} and Corollary \ref{cor_forget} to get
\begin{align*}
\left.M\ld\stab{w+w_0}\otimes_{\Sym(\YY)} N\ld\stab{w\p-w_0}\ 
\right\downarrow^{\Sym(\XX|\YY|\ZZ)}_{\Sym(\XX|\ZZ)} & \makebox[8mm][c]{$\stackrel{\ref{prop_externaltensorstab}}{\cong}$}
\left.(M\ld\otimes_{\Sym(\YY)} N\ld)\stab{w+w\p}\right\downarrow^{\Sym(\XX|\YY|\ZZ)}_{\Sym(\XX|\ZZ)}\\
& \makebox[8mm][c]{$\stackrel{\ref{cor_forget}}{\cong}$} (M\ld\otimes_{\Sym(\YY)} N\ld)\stab{ w+w\p}
\end{align*} as claimed. This shows statement (1), and statements (2) and (3) are obvious.
\end{proof}

\begin{ex} In the following examples, we again omit internal degree shifts.

(1) First we consider the basic MOY-graph $\Gamma_{n_1,...,n_l}^{m_1+...+m_k}$ in Figure
\ref{fig:notethering}. The matrix factorization associated to it is the stabilization if the $\Sym(\XX^\cup|\YY)$-module
$\Sym(\XX^\cup|\YY)/(X_i-Y_i)$ with respect to $\Sigma\XX^{n+1}-\Sigma\YY^{n+1}$. Since $\Sym(\XX^\cup)_+ = \langle
X_i\rangle$ we have $\Sym(\XX^\cup|\YY)/(X_i-Y_i)\cong\Sym(\YY)$ as $\Sym(\XX|\YY)$-modules, where $X_i\in \Sym(\XX^\cup)$
acts on $\Sym(\YY)$ through multiplication by $X_i$. Therefore $\Sym(\XX^\cup|\YY)/(X_i-Y_i)$ is free of rank $1$ over
$\YY$ and free of rank $\rg_{\Sym(\YY^\cup)}\Sym(\YY)$ over $\XX^\cup$. 

(2) Next we look at the matrix factorization associated to an arbitrary basic MOY-graph
$\Gamma_{n_1,...,n_l}^{m_1,...,m_k}$. We claim that this is just the glueing/the tensor product of the matrix
factorizations associated to the graphs $\Gamma_{n_1,...,n_l}^{n_1+...+n_k}$ and $\Gamma_{n_1+...+n_k}^{m_1,...,m_k}$
along their common end $\ZZ$; see Figure \ref{fig:mergesplit}. Indeed: example (1) shows that the assumptions of
Proposition \ref{thm_glue} are satisfied, and so we get
\begin{align*} &\ \ \ \Sym(\XX|\ZZ)/(X_i-Z_i)\stab{\Sigma\XX^{n+1}-\Sigma\ZZ^{n+1}}
\otimes_{\Sym(\ZZ)}\Sym(\YY|\ZZ)/(Z_i-Y_i)\stab{\Sigma\ZZ^{n+1}-\Sigma\YY^{n+1}}\\ \cong &\ \ \ 
\Sym(\XX|\YY)/(X_i-Y_i)\stab{\Sigma\XX^{n+1}-\Sigma\YY^{n+1}}.\end{align*}
$\Sym(\XX|\YY)/(X_i-Y_i)$ is free of rank $\rg_{\Sym(\YY^\cup)}\Sym(\YY)$ over $\XX$ and free of rank 
$\rg_{\Sym(\XX^\cup)}\Sym(\XX)$ over $\YY$. 

(3) Examples (1) and (2) show that we can apply Theorem \ref{thm_glue} to the examples which led us to Theorem
\ref{thm_glue}. Thus we also get the isomorphisms established there without the somewhat cumbersome use of regular
sequences.
\end{ex} 

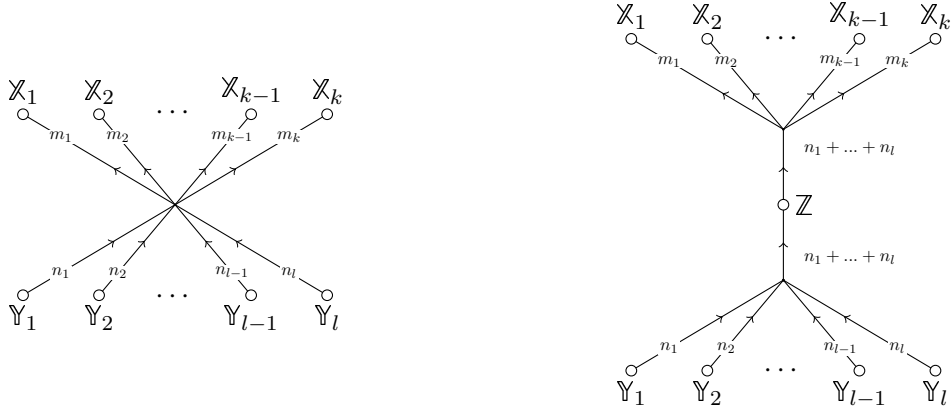
\begin{figure}[h]\begin{center}
\begin{tikzpicture}
\begin{scope}[yshift=-1cm]
\draw[oredged] (0,0) -- node[txt]{${m_1}$}   (-2,1.2) node[fat]{} node[above]{$\XX_1$};
\draw[oredged] (0,0) -- node[txt]{$m_2$}     (-1,1.2) node[fat]{} node[above]{$\XX_2$};
\draw[oredged] (0,0) -- node[txt]{$m_{k-1}$} (+1,1.2) node[fat]{} node[above]{$\XX_{k-1}$};
\draw[oredged] (0,0) -- node[txt]{$m_k$}     (+2,1.2) node[fat]{} node[above]{$\XX_k$};
\draw[oredgeu] (-2,-1.2) node[fat]{} node[below]{$\YY_1$} -- node[txtp]{$n_1$} (0,0);
\draw[oredgeu] (-1,-1.2) node[fat]{} node[below]{$\YY_2$} -- node[txtp]{$n_2$}     (0,0) ;
\draw[oredgeu] (+1,-1.2) node[fat]{} node[below]{$\YY_{l-1}$} -- node[txtp]{$n_{l-1}$} (0,0);
\draw[oredgeu] (+2,-1.2) node[fat]{} node[below]{$\YY_l$} -- node[txtp]{$n_l$}  (0,0)node[big]{} ;
\node at (0,+1)[above] {$\cdots$};
\node at (0,-1)[below] {$\cdots$};
\end{scope}
\begin{scope}[xshift=8cm]
\draw[oredged] (0,0) -- node[txt]{${m_1}$}   (-2,1.2) node[fat]{} node[above]{$\XX_1$};
\draw[oredged] (0,0) -- node[txt]{$m_2$}     (-1,1.2) node[fat]{} node[above]{$\XX_2$};
\draw[oredged] (0,0) -- node[txt]{$m_{k-1}$} (+1,1.2) node[fat]{} node[above]{$\XX_{k-1}$};
\draw[oredged] (0,0)node[big]{} -- node[txt]{$m_k$}     (+2,1.2) node[fat]{} node[above]{$\XX_k$};
\draw[oredgem] (0,-2)node[big]{} --node[right=7pt,txt,pos=0.3]{$n_1+...+n_l$} (0,-1);
\draw[oredgem] (0,-1)node[fat]{} node[right]{$\ZZ$} --node[right=7pt,txt]{$n_1+...+n_l$} (0,0);
\node at (0,1.2){$\cdots$};
\begin{scope}[yshift=-2cm]
\draw[oredgeu] (-2,-1.2) node[fat]{} node[below]{$\YY_1$} -- node[txtp]{$n_1$} (0,0);
\draw[oredgeu] (-1,-1.2) node[fat]{} node[below]{$\YY_2$} -- node[txtp]{$n_2$}     (0,0) ;
\draw[oredgeu] (+1,-1.2) node[fat]{} node[below]{$\YY_{l-1}$} -- node[txtp]{$n_{l-1}$} (0,0);
\draw[oredgeu] (+2,-1.2) node[fat]{} node[below]{$\YY_l$} -- node[txtp]{$n_l$}  (0,0)node[big]{} ;
\node at (0,-1.2) {$\cdots$};
\end{scope}
\end{scope}
\end{tikzpicture}
\end{center}
\caption{Decomposition of $\Gamma_{n_1,...,n_l}^{m_1,...,m_k}$ into $\Gamma_{n_1,...,n_l}^{n_1+...+n_l}$ and
  $\Gamma_{n_1+...+n_l}^{m_1,...,m_k}$.} 
\label{fig:mergesplit}
\end{figure}

These examples together with Theorem \ref{thm_glue} show the following theorem:

\begin{theorem}\label{thm_acyclicgraphs}
In the construction of the matrix factorization associated to an acyclic MOY-graph the stabilization functor commutes 
with tensor products. 
\end{theorem}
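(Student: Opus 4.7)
The plan is to proceed by induction on the number of basic MOY-subgraphs appearing in the decomposition of $\Gamma$ at its marked points, using Theorem \ref{thm_glue} as the main inductive tool. The precise statement I want to prove is that if $\Gamma = \Gamma_1 \cup \ldots \cup \Gamma_r$ is the decomposition into basic MOY-subgraphs and $M_i$ denotes the module whose stabilization is $\KR(\Gamma_i)$ by Theorem \ref{thm_KRviaStab}, then $\KR(\Gamma)$ is homotopy equivalent to the stabilization of the iterated tensor product of the $M_i$'s over the rings of symmetric polynomials at the shared marked points.

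For the base case I rely on Theorem \ref{thm_KRviaStab}: the module associated to a basic MOY-subgraph is of the form $\Sym(\XX|\YY)/(X_i-Y_i)$, and the straightforward identification of this quotient with $\Sym(\YY)$, with $X_i$ acting as $Y_i$, shows that it is free of rank one over the symmetric polynomial ring in any subfamily of the input variable sets, and symmetrically for the output variable sets.

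Acyclicity enters through a topological ordering step: since the basic MOY-subgraphs of an acyclic $\Gamma$ form a DAG under the relation ``output of $\Gamma_i$ is identified with an input of $\Gamma_j$'', I may arrange $\Gamma_1,\ldots,\Gamma_r$ so that, setting $G_k := \Gamma_1 \cup \ldots \cup \Gamma_k$, the marked points shared between $\Gamma_k$ and $G_{k-1}$ are all input marked points of $\Gamma_k$. With this ordering fixed, the inductive invariant I would propagate has two parts: (a) $\KR(G_k) \simeq M_{G_k}\stab{w_{G_k}}$ for a suitable module $M_{G_k}$, and (b) $M_{G_k}$ is free over $\Sym(\WW)$ for every collection $\WW$ of variable sets at boundary marked points of $G_k$. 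In the induction step from $G_{k-1}$ to $G_k$, letting $\YY$ be the union of variable sets at the shared marked points, invariant (b) ensures $M_{G_{k-1}}$ is free over $\Sym(\YY)$; part (1) of Theorem \ref{thm_glue} then provides the desired homotopy equivalence $\KR(G_k)\simeq (M_{G_{k-1}} \otimes_{\Sym(\YY)} M_k)\stab{w_{G_k}}$, and parts (2) and (3) of the same theorem propagate invariant (b) across the tensor product, using that freeness over $\Sym(\YY) = \bigotimes_i \Sym(\YY_i)$ for disjoint $\YY_i$ reduces to freeness over each $\Sym(\YY_i)$ individually.

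The main obstacle I anticipate lies in the bookkeeping of invariant (b): when a single gluing step identifies several marked points at once, one must verify that the resulting module is free not just over each individual boundary variable set but over every subfamily, in order for the next step of the induction to remain applicable. The role of acyclicity in the argument is essential precisely at the topological ordering step: without it, one would be forced into a gluing configuration where neither side of the required tensor product is free over the glue ring, and Theorem \ref{thm_glue} could not be applied directly.
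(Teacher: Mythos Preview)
Your approach matches the paper's (which leaves the induction implicit, citing the preceding examples and Theorem~\ref{thm_glue}), and your explicit topological ordering is exactly what acyclicity buys. You rightly flag invariant (b) as the delicate point, but the issue is more basic than bookkeeping: as stated, (b) already fails in the base case. For $\Gamma_m^m$ the module $\Sym(\XX|\YY)/(X_i-Y_i)$ is free over $\Sym(\XX)$ and over $\Sym(\YY)$ separately but not over $\Sym(\XX|\YY)$; in general a basic module is not free over a mix of input and output variable sets. (Your ``rank one'' claim and the identification of the quotient with $\Sym(\YY)$ are also only valid when there is a single output set, but this is harmless.)

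The clean repair avoids (b) entirely. Since $\YY$ is always a subfamily of the \emph{inputs} of $\Gamma_k$, the basic module $M_k$ itself is free over $\Sym(\YY)$ by the base case, so hypothesis~(3) of Theorem~\ref{thm_glue} is met at every step without any freeness assumption on $M_{G_{k-1}}$. You then only need to propagate the finite-generation hypothesis~(1) of Theorem~\ref{thm_glue}, and for this the invariant ``$M_{G_k}$ is finitely generated over the symmetric polynomials in its input boundary variables'' suffices and transfers through the tensor product directly. If you prefer to keep a freeness invariant, restrict (b) to subfamilies of \emph{output} boundary points: then $\YY$ consists of outputs of $G_{k-1}$, so for any output subfamily $\WW'_\XX$ of $G_{k-1}$ the union $\WW'_\XX\cup\YY$ is again an output subfamily, and the propagation you sketch goes through.
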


In the next section we will see how Theorem \ref{thm_acyclicgraphs} connects KR-homology to Soergel bimodules and how
we can get lots of relations on KR-homology from this, but first we look at a simpler example of how to apply Theorem
\ref{thm_acyclicgraphs}.

\begin{definition}
For a finite-dimensional, graded $\CC$-algebra $A\ld$ the \textit{Poincar\'{e} polynomial} of $A\ld$, denoted
$\PoincPoly(A\ld)$, is defined as $$\PoincPoly(A\ld)\ :=\ \sum\limits_{i\in\ZZ_{\geq 0}} \dim_{\CC}(A_i) q^i\ \in
\ZZ[q].$$ If $X$ is a complex manifold, we put $$\PoincPoly(X) := \PoincPoly(\H\ua(X;\CC)).$$
\end{definition}

\begin{definition}
Let $(\calA,\langle-\rangle)$ be an additive $\ZZ$-graded category. Then, given some object
$X\in\calA$ and a Laurent polynomial $p = \sum\limits_{i\in\ZZ} p_i q^{i}\in\ZZ[q^{\pm 1}]$ we put $$X\langle
p\rangle\ :=\ \sum\limits_{i\in\ZZ} X\langle i\rangle^{\oplus p_i}.$$ 
\end{definition}

\begin{fact}\label{fact_rankwelldefined}
Let $R\ld$ be a positively graded ring and $M\ld$ a finitely generated free $R\ld$-module. Then there exists a unique
sequence of natural numbers $\{n_i\}_{i\in\ZZ}$, almost all of which are zero, such that
$M\ld\cong\bigoplus\limits_{i\in\ZZ} R\ld\langle i\rangle^{n_i}$. 
\end{fact}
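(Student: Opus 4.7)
Existence is given by hypothesis, so the content of the claim is uniqueness of the sequence $\{n_i\}$. The idea is to reduce modulo the homogeneous irrelevant ideal $R_{>0} := \bigoplus_{i>0} R_i$ and extract each $n_i$ as a genuine rank of a free module over the degree-zero part $R_0$. Since $R\ld$ is positively graded, $R_{>0}$ is a homogeneous (two-sided) ideal and $R\ld/R_{>0} \cong R_0$; moreover, $R_0$ is commutative (as a subring of the ambient commutative ring of the paper), so it has the invariant-basis-number property and the rank of a finitely generated free $R_0$-module is a well-defined invariant.

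The first step is to apply the right-exact functor $-\otimes_{R\ld} R_0$ to an assumed decomposition $M\ld \cong \bigoplus_{i\in\ZZ} R\ld\langle i\rangle^{n_i}$. Because tensor product commutes with arbitrary direct sums and because $R\ld\langle i\rangle \otimes_{R\ld} R_0 \cong R_0\langle i\rangle$, and under our grading convention $R_0\langle i\rangle$ is just the module $R_0$ placed in internal degree $-i$, we obtain an isomorphism of graded $R_0$-modules
$$\overline{M}\ld \;:=\; M\ld \otimes_{R\ld} R_0 \;\cong\; \bigoplus_{i\in\ZZ} R_0\langle i\rangle^{n_i}.$$
Reading off the graded piece in internal degree $-i$ on both sides then yields $(\overline{M})_{-i} \cong R_0^{n_i}$ as an $R_0$-module.

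The second step is the observation that the left-hand side $\overline{M}\ld$ depends only on the isomorphism class of $M\ld$ in $R\ld\Mod$, not on the particular decomposition used to define it. Hence each $n_i$ equals the $R_0$-rank of the graded piece $(\overline{M})_{-i}$, and this rank is well-defined by the invariant-basis-number property of the commutative ring $R_0$. Uniqueness of the $n_i$ follows, and that $n_i = 0$ for almost all $i$ is immediate since $M\ld$ is finitely generated, so only finitely many graded pieces of $\overline{M}\ld$ can be nonzero.

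There is no substantial obstacle: the only technical points are the identification $R\ld\langle i\rangle \otimes_{R\ld} R_0 \cong R_0\langle i\rangle$ as graded $R_0$-modules (which is immediate from $R\ld\langle i\rangle_k = R_{k+i}$ and the definition of the graded tensor product) and the use of commutativity of $R_0$ to guarantee well-definedness of rank. An alternative, equivalent route when $R_0$ is a field would be to compute the Hilbert series of $M\ld$ and invert the resulting identity $\sum_k \dim_{R_0}(M_k)\,q^k = \big(\sum_i n_i q^{-i}\big)\cdot P_{R\ld}(q)$, but the reduction modulo $R_{>0}$ is cleaner and requires no finiteness assumption on the graded pieces of $R\ld$.
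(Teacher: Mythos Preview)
Your proof is correct and essentially identical to the paper's argument: the paper also reduces modulo $R_+$ to obtain $M\ld / R_+ M\ld \cong \bigoplus_{i\in\ZZ} R_0\langle i\rangle^{n_i}$ and then reads off $n_i$ as the (ungraded) rank of $(M\ld / R_+ M\ld)_{-i}$ over the commutative ring $R_0$. Your presentation via $-\otimes_{R\ld} R_0$ is just another name for this quotient, and your explicit mention of the invariant-basis-number property makes the final step slightly more transparent than the paper's one-line version.
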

\begin{proof}
By definition of freeness of a module, we only have to show the uniqueness of the $n_i$. We have $$M\ld / R_+ M\ld\
\cong\ \bigoplus\limits_{i\in\ZZ} R_0\langle i\rangle^{n_i},$$ and so $n_i$ is uniquely determined as the
(ungraded) rank of $\left(M\ld/R_+ M\ld\right)_{-i}$ over the commutative base ring $R_0$. 
\end{proof}

\begin{definition}\label{def_rank}
In the situation of Definition \ref{fact_rankwelldefined}, define the \textit{rank} of $M\ld$ over $R\ld$, denoted
$\rank_{R\ld} M\ld$, as $$\rank_{R\ld} M\ld\ :=\ \sum\limits_{i\in\ZZ} n_i q^i\ \in\ZZ[q^{\pm 1}].$$
\end{definition}

\begin{fact}\label{fact_gotosmallerring}
Let $R\ld\subset S\ld$ be positively graded rings, and assume that $S\ld$ is free of finite rank as a module over
$R\ld$. Then there is an isomorphism of $R\ld$-$R\ld$-bimodules
$$S\ld\ \cong\ R\ld\langle\rank_{R\ld} S\ld\rangle.$$
\end{fact}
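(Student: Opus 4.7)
The plan is to reduce the claim to Fact \ref{fact_rankwelldefined} by exploiting the hypothesis that the rings $R\ld\subset S\ld$ are graded \emph{commutative}. First, I would apply Fact \ref{fact_rankwelldefined} directly to produce an isomorphism of graded left $R\ld$-modules
$$\varphi:\ S\ld\ \xrightarrow{\ \cong\ }\ R\ld\langle\rank_{R\ld}S\ld\rangle,$$
which already gives the numerical content of the statement; what remains is only to promote $\varphi$ from a left module isomorphism to a bimodule isomorphism.

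The key observation is that both sides carry \emph{symmetric} $R\ld$-bimodule structures, i.e.\ the left and right actions of $R\ld$ coincide. For $S\ld$ this follows from commutativity of $S\ld$ together with $R\ld\subset S\ld$: for $r\in R\ld$ and $s\in S\ld$ one has $r\cdot s = rs = sr = s\cdot r$. For $R\ld\langle\rank_{R\ld}S\ld\rangle$ it is built into the construction, as each summand $R\ld\langle i\rangle$ is a free $R\ld$-module on which $R\ld$ acts on both sides by left multiplication, and these actions agree by commutativity of $R\ld$.

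Given these two observations, I would conclude as follows: for any $s\in S\ld$ and $r\in R\ld$, the left $R\ld$-linearity of $\varphi$ gives
$$\varphi(s\cdot r)\ =\ \varphi(r\cdot s)\ =\ r\cdot\varphi(s)\ =\ \varphi(s)\cdot r,$$
where the first equality uses that $S\ld$ is a symmetric bimodule, and the last equality uses that the target is a symmetric bimodule. Hence $\varphi$ is automatically right $R\ld$-linear as well, and therefore an isomorphism of graded $R\ld$-$R\ld$-bimodules. There is no real obstacle here; the whole statement is essentially a formal consequence of commutativity plus the well-definedness of the graded rank already established in Fact \ref{fact_rankwelldefined}.
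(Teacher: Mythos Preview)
The paper states this fact without proof, so there is no argument to compare against. Your proof is correct and is exactly the natural one: invoke Fact~\ref{fact_rankwelldefined} for the left-module isomorphism, then observe that commutativity forces both bimodule structures to be symmetric, so left-linearity automatically implies right-linearity.
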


\begin{figure}[h]\begin{center}
\begin{tikzpicture}
\node at (0,-1)[below] {$\cdots$};
\draw[oredged] (0,0) -- node[txt]{${m_1+...+m_k}$}   (0,1.2) node[fat]{} node[above]{$\XX$};
\draw[oredged] (0,-3.6) node[fat]{} node[below]{$\ZZ$} -- node[txt]{${m_1+...+m_k}$} (0,-2.4);
\draw[oredgeu] (0,-2.4) -- node[txtp,pos=0.7]{$m_1$} (-2.4,-1.2);
\draw[oredgeu] (0,-2.4) -- node[txtp,pos=0.7]{$m_2$}  (-1.2,-1.2);
\draw[oredgeu] (0,-2.4) -- node[txtp,pos=0.7]{$m_{k-1}$} (+1.2,-1.2);
\draw[oredgeu] (0,-2.4) -- node[txtp,pos=0.7]{$m_k$} (+2.4,-1.2);
\draw[oredgeu] (-2.4,-1.2) node[fat]{} node[left,scale=0.9]{$\YY_1$} -- node[txtp,pos=0.3]{$m_1$} (0,0);
\draw[oredgeu] (-1.2,-1.2) node[fat]{} node[left,scale=0.9]{$\YY_2$} -- node[txtp,pos=0.3]{$m_2$}     (0,0) ;
\draw[oredgeu] (+1.2,-1.2) node[fat]{} node[right,scale=0.9]{$\YY_{k-1}$} -- node[txtp,pos=0.3]{$m_{k-1}$} (0,0);
\draw[oredgeu] (+2.4,-1.2) node[fat]{} node[right,scale=0.9]{$\YY_k$} -- node[txtp,pos=0.3]{$m_k$}  (0,0)node[big]{} ;

\draw[decorate,decoration={brace,amplitude=5pt}] (+3,1.3) -- node[right=10pt]{$\Gamma_0$}(+3,-1.1);
\draw[decorate,decoration={brace,amplitude=5pt}] (+3,-1.3) -- node[right=10pt]{$\Gamma_1$}(+3,-3.8);
\end{tikzpicture}
\end{center}
\caption{$\Gamma$}
\label{fig:fancyrelation}
\end{figure}

\begin{ex}\label{ex_splitmerge}
We want to prove the fancy looking relation
\begin{align}
\label{eq:splitmergerank}\KR(\Gamma)\left\langle -\sum\limits_{1\leq i<j\leq k} m_i m_j\right\rangle &
\makebox[8mm][c]{$\cong$} \KR(\Gamma_m^m)\left\langle\rank_{\Sym(m)} \Sym(m_1|...|m_k)\right\rangle\\
\label{eq:splitmergepoly} &  \makebox[8mm][c]{$=$} \KR(\Gamma_m^m)\left\langle
  \PoincPoly(\Fl(m_1|...|m_k))\right\rangle. 
\end{align}
where $\Gamma$ is as in Figure \ref{fig:fancyrelation}, $m := m_1+...+m_k$ and where $\Fl(m_1|...|m_k)$ is the complex
manifold of flags $\{0\} = V_0\subset V_1\subset ...\subset V_k=\CC^m$ in $\CC^m$ with $\dim_\CC(V_{i+1}) -
\dim_\CC(V_i) = m_i$ for all $i=0,1,...,k-1$. By definition, we have 
\begin{align*}
\KR(\Gamma)\langle -\sum\limits_{1\leq i<j\leq k} m_i m_j\rangle &
\makebox[8mm][c]{=} \KR(\Gamma_0)\otimes_{\Sym(\YY)}\KR(\Gamma_1) 
\\ & \makebox[8mm][c]{=}\Sym(\XX|\YY)/(X_i-Y_i)\stab{\Sigma\XX^{n+1}-\Sigma\YY^{n+1}}
\\ & \makebox[8mm][c]{}\ \ \ \makebox[5mm][c]{$\stackrel[\mathclap{\Sym(\YY)}]{}{\otimes}$}  
\Sym(\YY|\ZZ)/(Y_i-Z_i)\stab{\Sigma\YY^{n+1} - \Sigma\ZZ^{n+1}}.
\end{align*}
Since $\Sym(\XX)=\CC[X_1,...,X_m]$ we have $\Sym(\XX|\YY)/(X_i-Y_i)\cong\Sym(\YY)$
as $\Sym(\XX)$-$\Sym(\YY)$-bimodules, where $X_i$ acts by multiplication with $Y_i$. Similarly,
$\Sym(\YY|\ZZ)/(Y_i-Z_i)\cong\Sym(\YY)$ as $\Sym(\YY)$-$\Sym(\ZZ)$-bimodules, with $Z_i$ acting on $\Sym(\YY)$ by
multiplication with $Y_i$. Hence 
$$\Sym(\XX|\YY)/(X_i-Y_i)\makebox[5mm][c]{$\stackrel[\mathclap{\Sym(\YY)}]{}{\otimes}$}
\Sym(\YY|\ZZ)/(Y_i-Z_i)\cong\Sym(\YY)\cong\Sym(\XX|\ZZ)/(X_i-Z_i)\left\langle \rank_{\Sym(\YY^\cup)}
  \Sym(\YY)\right\rangle$$ as 
$\Sym(\XX|\ZZ)$-modules, where we applied Fact \ref{fact_gotosmallerring} in the last step. Applying Theorem
\ref{thm_acyclicgraphs}, we get
\begin{align*}
\KR(\Gamma)\langle -\sum\limits_{1\leq i<j\leq k} m_i m_j\rangle & \cong
\left(\Sym(\YY|\ZZ)/(X_i-Z_i)\langle\rank_{\Sym(\YY^\cup)}\Sym(\YY)\rangle\right)
\stab{\Sigma\XX^{n+1}-\Sigma\ZZ^{n+1}}\\
& = \KR(\Gamma_m^m)\langle\rank_{\Sym(\YY^\cup)}\Sym(\YY)\rangle, 
\end{align*}
yielding \eqref{eq:splitmergerank}. For \eqref{eq:splitmergepoly}, note that since $\Sym(\YY)$ is free as a module
over $\Sym(\YY^\cup)$, we have $\rank_{\Sym(\YY^\cup)}\Sym(\YY) = \PoincPoly(\Sym(\YY)/\langle\Sym(\YY^\cup)_+\rangle)$,
so the claim follows from the algebra isomorphism (see
\cite{Fulton}) $$\H\ua(\Fl(m_1|...|m_k);\CC)\cong\Sym(m_1|...|m_n)/\langle\Sym(m)_+\rangle.$$ 
To get a feeling of what \eqref{eq:splitmergerank} looks like explicitly, take $k=2$ and $m_1=m_2=1$. We have $\Fl(1|1)
= \PP_\CC^1$, hence $\calP(\Fl(1|1)) = 1+q^2$, and so \eqref{eq:splitmergerank} yields 
$$\KR\left(\begin{tikzpicture}[baseline=1.1cm,scale=0.6]
\draw[oredgefe] (0,0) node[fat]{}  -- node[txtp,pos=0.4,scale=0.9]{$2$} (0,1);
\draw[oredgefe] (0,1)  -- node[txtp,pos=0.4,scale=0.9]{$1$} (-1,2);
\draw[oredgefe] (0,1)  node[fat]{} -- node[txtp,pos=0.4,scale=0.9]{$1$} (+1,2);
\draw[oredgefa] (-1,2) node[fat]{} -- node[txt,pos=0.6,scale=0.9]{$1$} (0,3);
\draw[oredgefa] (+1,2) node[fat]{} -- node[txt,pos=0.6,scale=0.9]{$1$} (0,3);
\draw[oredgefe] (0,3)  node[fat]{} -- node[txt,pos=0.4,scale=0.9]{$2$} (0,4) node[fat]{};
\end{tikzpicture}\right)\ \cong\ \KR\left(\ \begin{tikzpicture}[baseline=0.55cm,scale=0.6]
\draw[oredgefe] (0,0) node[fat]{}  -- node[txtp,pos=0.4,scale=0.9]{$2$} (0,2) node[fat]{};
\end{tikzpicture}\ \right)\langle -1\rangle\oplus \KR\left(\ \begin{tikzpicture}[baseline=0.55cm,scale=0.6]
\draw[oredgefe] (0,0) node[fat]{}  -- node[txtp,pos=0.4,scale=0.9]{$2$} (0,2) node[fat]{};
\end{tikzpicture}\ \right)\langle 1\rangle$$
\end{ex}

\subsection{Connecting KR-homology to Soergel bimodules}\label{subsec_connectingkrtosoergel}

In this section we describe how Theorem \ref{thm_acyclicgraphs} connects Khovanov-Rozansky homology to Soergel
bimodules and look at a few examples in the construction of KR-homology where working on the level of Soergel bimodules
makes life a bit easier.

Suppose $\Gamma$ is an acyclic MOY-graph, and we aim to calculate its value $\KR(\Gamma)$ under
KR-homology. By definition (see Section \ref{sec_KRconstruction}), we have to go through the following
steps: \begin{enumerate}\item Decompose $\Gamma$ into basic MOY-graphs $\Gamma_1, ...,
  \Gamma_r$, \item\label{item_step1} take $\KR(\Gamma_1), ..., \KR(\Gamma_r)$ and finally \item\label{item_step2}
  calculate $\KR(\Gamma) := \bigotimes\limits_{i=1}^{r}\KR(\Gamma_i)$, 
  where the tensor product is over the common endpoints of the $\Gamma_i$.\end{enumerate} For step \eqref{item_step1}, we
know by Theorem \ref{thm_KRviaStab} that each $\KR(\Gamma_i)$ is given as
$$\left(\Sym(\XX|\YY)/(X_i-Y_i)\langle r\rangle\right)\stab{\Sigma\XX^{n+1}-\Sigma\YY^{n+1}}$$ for sets of variables
$\XX_1,...,\XX_k$, $\YY_1,...,\YY_l$ and some internal degree shift $r\in\ZZ$, and for step \eqref{item_step2} we know
from Theorem \ref{thm_acyclicgraphs} that we can exchange 
tensor products and stabilization when calculating $\bigotimes\limits_{i=1}^{r}\KR(\Gamma_i)$. We are therefore
naturally led to consider modules which can be ``built'' from the modules $\Sym(\XX|\YY)/(X_i-Y_i)$ by
tensoring, and these are examples of \textit{singular Soergel bimodules}. For simplicity, we will focus on the case
where all MOY graphs have labels $1$ and $2$ only; the modules occurring in this case where first studied by Soergel in
\cite{SoergelBimoduln}, and we recall some of his results now. The general case of singular Soergel bimodules was
recently studied by \cite{Williamson}.

\begin{definition}
Fix $m\in\NN$ and let $\XX_i := \{x_i\}$ be some variable for $i=1,...,m$. Denote $\CC[\XX] = \Sym(\XX)$ the polynomial
ring over $\{x_1,...,x_m\}$. The symmetric group $\frS_m$ acts on $\CC[\XX]$ by permutation of variables, and for
$I\subset\frS_m$ we denote $\CC[\XX]^I$ the subring of $\CC[\XX]$ consisting of those polynomials which are invariant
under the actions of all $w\in I$. For $w = (i,\ i+1)$ we abbreviate $\CC[\XX]^{\{e,w\}}$ by $\CC[\XX]^i$.  
\end{definition}

\begin{definition}\label{def_soergelbimodules}
The \textit{category of Soergel bimodules (for $\frS_m$)}, denoted $\calB^m$, is defined as the smallest full,
additive and idempotent-complete subcategory of the category $\CC[\XX]\bimod\CC[\XX]$ of graded $\CC[\XX]$-bimodules
containing all modules of the form $$\CC[\XX]\otimes_{\CC[\XX]^{i_1}} \CC[\XX]\otimes_{\CC[\XX]^{i_2}}
...\otimes_{\CC[\XX]^{i_r}} \CC[\XX]$$ for  $i_1,...,i_r\in\{1,2,...,m-1\}$.
\end{definition}
\begin{rem}
Definition \ref{def_soergelbimodules} seems to be weaker than the one given in \cite[Definition 5.11, Lemma
5.13]{SoergelBimoduln}. However, by \cite[Theorem 6.14(4)]{SoergelBimoduln} both definitions agree.
\end{rem}

One of the main results of \cite{SoergelBimoduln} is that the combinatorics of the category $\calB^m$ is captured by
the Hecke algebra $\Heck_m(q)$ of $\frS_m$, which we now recall.

\begin{definition}\label{def_heckealgebra}
Fix $m\in\NN$. The (generic) \textit{Hecke-algebra} $\Heck_m(q)$ (of $\frS_m$) is the associative $\ZZ[q^{\pm
  1}]$-algebra generated by elements $T_1,...,T_{m-1}$ and unit $T_e$ subject to the relations
\begin{alignat*}{2}
T_i T_j & =  T_j T_i &\qquad \text{ for all } & i,j=1,2, ...,m-1 \text{ s.t. } |i-j|>1\\
T_i T_{i+1} T_i & = T_{i+1} T_i T_{i+1} &\qquad \text{ for all } & i=1,2,...,m-2\\
T_i^2 & = (q^2-1) T_i + q^2 T_e &&
\end{alignat*}
We denote $\{\KL{w}\}_{w\in\frS_m}$ the Kazhdan-Lusztig basis of $\Heck_m(q)$ (see \cite[Section 7.9]{HumphreysCoxeter},
where these elements are denoted $C\p_w$). In particular, we have $\KL{i} := \KL{(i, i+1)} = q\ui(T_e + T_i)$. 
\end{definition}

\begin{definition}\label{def_splitgrothendieck}
Let $\calA$ be an essentially small additive category, and let $\Iso(\calA)$ be the set of isomorphism classes of
objects in $\calA$. Further, for $X\in\calA$ we denote $[X]$ the isomorphism class of $X$. The \textit{split
  Grothendieck group} of $\calA$, denoted $\grsplit(\calA)$, is defined as the free abelian group $\ZZ^{(\Iso(\calA))}$
subject to the relations $[X] + [Y] - [X\oplus Y] = 0$ for all $X,Y\in\calA$. 

If $\calA$ carries a $\ZZ$-grading (i.e. a strict, additive action of $\ZZ$), then $\grsplit(\calA)$ carries a natural
structure of a $\ZZ[q^{\pm 1}]$-module given by $q^n.[X] := [n.X]$, where $n.X$ is the action of $n$ on $X$. If $\calA$
carries an additive monoidal structure, then $\grsplit(\calA)$ admits a natural ring structure given by $[X]\cdot [Y] :=
[X\otimes Y]$.  

In particular, endowing $\calB^m$ with the monoidal structure given by the tensor product of graded
$\CC[X]\bimod\CC[\XX]$ bimodules and the $\ZZ$-grading given by $n.X := X\langle -n\rangle$, the split Grothendieck
group $\grsplit(\calB^m)$ becomes a $\ZZ[q^{\pm 1}]$-algebra.
\end{definition}

\begin{rem}
Note the reversed $\ZZ$-grading on $\calB^m$.
\end{rem}

\begin{fact}\label{fact_ggbyindecs}
Let $\calA$ be an essentially small Krull-Remak-Schmidt category, i.e. an additive category such that every object has a
finite decomposition into indecomposable objects which is unique up to permutation, and denote
$\Indec(\calA)\subset\Iso(\calA)$ the set of isomorphism classes of indecomposable objects in $\calA$. Then the natural map 
\begin{align*}
\ZZ^{(\Indec(\calA))}\ & \longrightarrow\ \grsplit(\calA)
\end{align*}
is an isomorphism. In particular, for all $X,Y\in\calA$ we have 
\begin{align}\label{eq:reflectiso}
[X] = [Y]\text{ in }\grsplit(\calA)\qquad\Longleftrightarrow\qquad X\cong Y.
\end{align}
\end{fact}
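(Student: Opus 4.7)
The plan is to construct an explicit inverse to the natural map and verify it is well-defined using the Krull-Remak-Schmidt property. Concretely, I would define a map $\mu: \grsplit(\calA) \to \ZZ^{(\Indec(\calA))}$ as follows: for any $X \in \calA$, fix a decomposition $X \cong \bigoplus_{i=1}^{n} X_i$ into indecomposables and set $\mu([X]) := \sum_{i=1}^n [X_i]$. The KRS hypothesis guarantees that any two such decompositions differ only by a permutation of isomorphic summands, so $\mu$ descends to a well-defined function on $\Iso(\calA)$. The relation $[X] + [Y] - [X\oplus Y] = 0$ in $\grsplit(\calA)$ is respected because concatenating decompositions of $X$ and $Y$ yields a decomposition of $X \oplus Y$; hence $\mu$ factors through $\grsplit(\calA)$.

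Next I would check that $\mu$ is a two-sided inverse to the natural map $\iota: \ZZ^{(\Indec(\calA))} \to \grsplit(\calA)$. The composition $\mu \circ \iota$ is the identity on generators, since an indecomposable $I$ decomposes trivially as itself. Conversely, for any $X \in \calA$ with decomposition $X \cong \bigoplus X_i$, the image $\iota(\mu([X])) = \sum [X_i] = [\bigoplus X_i] = [X]$ in $\grsplit(\calA)$, using the defining relation repeatedly. This establishes the claimed isomorphism.

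Finally, the equivalence \eqref{eq:reflectiso} is a direct corollary: the implication $X \cong Y \Rightarrow [X] = [Y]$ is immediate from the definition of $\grsplit(\calA)$, and conversely, if $[X] = [Y]$ then $\mu([X]) = \mu([Y])$, meaning $X$ and $Y$ have (up to permutation) the same indecomposable summands with the same multiplicities, so $X \cong Y$ by the KRS property.

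The only subtle point is the well-definedness of $\mu$, i.e. verifying that $\mu$ does not depend on the chosen decomposition and is compatible with the defining relation of $\grsplit(\calA)$. This is precisely where the Krull-Remak-Schmidt hypothesis is indispensable: without uniqueness of decomposition into indecomposables, the multiset of summands would not be an isomorphism invariant, and the map $\mu$ could not be defined. No further obstacles arise, since everything else is formal manipulation with finite direct sums.
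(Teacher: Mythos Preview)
Your argument is correct and is the standard proof of this well-known fact. The paper does not actually give a proof of this statement; it is recorded as a \textbf{Fact} and left without justification. Your construction of the inverse map via the multiset of indecomposable summands, together with the observation that the Krull--Remak--Schmidt hypothesis is precisely what makes this well-defined, is exactly the expected argument.
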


\begin{rem}
The equivalence \eqref{eq:reflectiso} allows to check relations in $\calA$ inside $\grsplit(\calA)$. Hence, to understand 
the combinatorics of $\calA$ it is therefore sufficient to calculate $\grsplit(\calA)$. 
\end{rem}

\begin{theorem}\label{thm_soergelbimodulesviahecke}
In the notation of Definitions \ref{def_soergelbimodules} and \ref{def_heckealgebra}, the following hold:
\begin{enumerate}
\item\label{item_thmsbmrels} The assignment $\KL{i}\mapsto \left[\CC[\XX]\otimes_{\CC[\XX]^i} \CC[\XX]\langle
    1\rangle\right]$ extends to an isomorphism of $\ZZ[q^{\pm 1}]$-algebras $$\calE:\
  \Heck_m(q)\quad\xrightarrow{\quad\cong\quad}\quad\grsplit(\calB^m).$$  
\item\label{item_thmsbmindecs} The set of isomorphism classes of indecomposable objects in $\calB^m$ (up to shift) is
  canonically parametrized by 
  $\frS_m$: $$\Indec(\calB^m)\ =\ \{[^m\SBM{w}]\langle r\rangle\}_{w\in\frS_m,\ r\in\ZZ}\ \cong\ \frS_m\times\ZZ$$
\item\label{item_thmsbmkl} For each $w\in\frS_m$ we have $\calE(\KL{w}) = \left[^m\SBM{w}\right]$. 
\item\label{item_thmsbmlongest} For a subset $I = \{(1,2), (2,3), ..., (m-1,m)\}$ of the simple transpositions in
  $\frS_m$ and $w^I$ the longest element in $W_I := \langle I\rangle$, we have $$^m\SBM{w^I}\ \cong\
  \CC[\XX]\otimes_{\CC[\XX]^I}\CC[\XX]\langle l(w^I)\rangle.$$ In particular, for $i=1,2,...,m-1$ we have 
\begin{align*}
^m\SBM{(i, i+1)} & \cong \CC[\XX]\otimes_{\CC[\XX]^i}\CC[\XX]\langle 1\rangle
\\ & \cong \CC[x_1,...,x_m,y_1,...,y_m]/(x_i + x_{i+1} - y_i - y_{i+1}, x_i x_{i+1} - y_i y_{i+1}).
\end{align*}
\end{enumerate}
\end{theorem}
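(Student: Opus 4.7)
The statement is Soergel's categorification theorem, and my plan follows the strategy of \cite{SoergelBimoduln}. I would begin with \eqref{item_thmsbmrels}. To show that $\KL{i}\mapsto[B_i]:=[\CC[\XX]\otimes_{\CC[\XX]^i}\CC[\XX]\langle 1\rangle]$ extends to a ring homomorphism, I would verify the three defining relations of the Kazhdan-Lusztig generators: first, $\KL{i}\KL{j}=\KL{j}\KL{i}$ for $|i-j|>1$, which holds since the corresponding transpositions act on disjoint pairs of variables, so $B_i\otimes B_j\cong B_j\otimes B_i$ by a direct identification; second, the quadratic relation $\KL{i}^2=(q+q\ui)\KL{i}$, which categorifies to $B_i\otimes_{\CC[\XX]}B_i\cong B_i\langle 1\rangle\oplus B_i\langle -1\rangle$ and reduces to the observation that $\CC[\XX]$ is free of rank $2$ as a graded module over $\CC[\XX]^i$ with basis $\{1,x_i\}$; third, the braid-type relation $\KL{i}\KL{i+1}\KL{i}+\KL{i}=\KL{i+1}\KL{i}\KL{i+1}+\KL{i+1}$, which corresponds to an explicit decomposition of the threefold tensor products that can be verified on the level of invariant rings for the parabolic $\frS_3\subset\frS_m$ acting on $\{x_i,x_{i+1},x_{i+2}\}$.

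For \eqref{item_thmsbmindecs} and \eqref{item_thmsbmkl}, I would proceed by induction on the Coxeter length $\ell(w)$, using the Krull--Remak--Schmidt property of $\calB^m$ (which follows from the fact that endomorphism rings of Soergel bimodules are finite-dimensional over $\CC$ in each graded degree, hence semiperfect). For $w\in\frS_m$ with reduced expression $w=s_{i_1}\cdots s_{i_r}$, consider the bimodule $B_{i_1}\otimes\cdots\otimes B_{i_r}$. By applying $\calE$ and expanding the product $\KL{i_1}\cdots\KL{i_r}$ in the Kazhdan--Lusztig basis, triangularity with respect to Bruhat order gives a decomposition $\calE(\KL{i_1}\cdots\KL{i_r})=\KL{w}+\sum_{v<w}a_{v,w}(q)\KL{v}$. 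Combined with the inductive hypothesis that $\calE(\KL{v})=[^m\SBM{v}]$ for $v<w$, this forces $B_{i_1}\otimes\cdots\otimes B_{i_r}$ to decompose, and I would define $^m\SBM{w}$ as the unique indecomposable summand not appearing among shifted copies of the $^m\SBM{v}$ for $v<w$. Its isomorphism class is independent of the chosen reduced expression. The injectivity of $\calE$ is then automatic because $\{[^m\SBM{w}]\}_{w\in\frS_m}$ is sent to the Kazhdan--Lusztig basis, which is $\ZZ[q^{\pm 1}]$-linearly independent.

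For \eqref{item_thmsbmlongest}, I would observe that $\CC[\XX]$ is free over $\CC[\XX]^I$ (by the Chevalley--Shephard--Todd theorem applied to the parabolic $W_I$), so $M:=\CC[\XX]\otimes_{\CC[\XX]^I}\CC[\XX]\langle\ell(w^I)\rangle$ is a well-defined graded bimodule lying in $\calB^m$ (since $M$ is a summand of $B_{i_1}\otimes\cdots\otimes B_{i_r}$ for any reduced expression of $w^I$). To identify $M\cong{}^m\SBM{w^I}$, I would show $M$ is indecomposable by computing that $\End(M)_0=\CC$: using the bimodule structure and the factorization through $\CC[\XX]^I$, endomorphisms are controlled by the invariants of the diagonal action, whose degree-zero part is one-dimensional. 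Then $[M]$ must be a Kazhdan--Lusztig basis element, and the top of its expansion in the standard basis identifies it as $\KL{w^I}$. The special case for simple transpositions follows by taking $I=\{(i,i+1)\}$, whereupon $\CC[\XX]^I$ is a polynomial ring in the elementary symmetric functions of $x_i,x_{i+1}$ together with the other $x_j$, yielding the presentation as a complete intersection.

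The main obstacle is the Krull--Remak--Schmidt / indecomposability step, which requires controlling endomorphism rings of tensor products. In the reference \cite{SoergelBimoduln} this rests on the delicate fact that Soergel bimodules admit filtrations by standard modules ($\Delta$- and $\nabla$-filtrations), so I would cite this machinery rather than reproduce it; everything else is a direct computation organized by Bruhat induction.
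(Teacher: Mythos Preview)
The paper does not prove this theorem; it cites \cite{SoergelBimoduln} for \eqref{item_thmsbmrels}--\eqref{item_thmsbmkl} and \cite{Williamson} (specifically Theorem~7.4.3) for \eqref{item_thmsbmlongest}. Your outline for \eqref{item_thmsbmrels}, \eqref{item_thmsbmindecs} and \eqref{item_thmsbmlongest} follows Soergel's strategy and is broadly sound, though for \eqref{item_thmsbmlongest} the paper's route via Williamson's singular Soergel bimodule formalism is more direct than the endomorphism-ring computation you propose.

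There is, however, a real gap in your treatment of \eqref{item_thmsbmkl}. Your induction yields $[B_{i_1}\otimes\cdots\otimes B_{i_r}] = \calE(\KL{w}) + \sum_{v<w}a_{v,w}\,[{^m\SBM{v}}]$, while Krull--Remak--Schmidt gives $B_{i_1}\otimes\cdots\otimes B_{i_r}\cong {^m\SBM{w}}\oplus\bigoplus_{v<w}{^m\SBM{v}}^{\oplus b_{v,w}}$ for some $b_{v,w}\in\NN[q^{\pm 1}]$. Subtracting gives only $\calE(\KL{w}) = [{^m\SBM{w}}] + \sum_{v<w}(b_{v,w}-a_{v,w})[{^m\SBM{v}}]$, and concluding $b_{v,w}=a_{v,w}$ is precisely Soergel's conjecture --- it does \emph{not} follow from the $\Delta/\nabla$-filtration machinery you invoke. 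That machinery (via Soergel's hom formula) proves that $\calE$ is an isomorphism and that the $[{^m\SBM{w}}]$ form a $\ZZ[q^{\pm 1}]$-basis, but matching this basis with the Kazhdan--Lusztig basis requires either the Decomposition Theorem for perverse sheaves on the flag variety, which is how \cite{SoergelBimoduln} handles Weyl groups including $\frS_m$, or the later Hodge-theoretic argument of Elias--Williamson. Your claim that ``everything else is a direct computation organized by Bruhat induction'' overlooks this.
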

\begin{rem}
Part \eqref{item_thmsbmindecs} of Theorem \ref{thm_soergelbimodulesviahecke} is intentionally kept a bit vague as we
will only need the explicit description of $^m\SBM{w^I}$ given in part \eqref{item_thmsbmlongest}. In
\cite{SoergelBimoduln}, explicit conditions characterizing all the indecomposable bimodules $^m\SBM{w}$ are given.  
\end{rem}
\begin{proof}
For \eqref{item_thmsbmrels}, \eqref{item_thmsbmindecs} and \eqref{item_thmsbmkl}, see Soergel's original article
\cite{SoergelBimoduln} or the recent work \cite{Williamson} on generalized ``singular'' Soergel bimodules by
Williamson. For \eqref{item_thmsbmlongest}, apply \cite[Theorem 7.4.3]{Williamson}: in the notation of loc.cit., one has
${^I B^I}= {^I\nabla^I}$ (apply \cite[Theorem 7.4.2]{Williamson} for $p = W_I e W_I \in W_I\setminus W/W_I$) and ${^I\nabla
^I}={^I R^I}\langle l(w^I)\rangle$ (see \cite[Section 6.1]{Williamson}), while ${^I R^I}$ is, in our notation,
given as $\CC[\XX]^I$ (see \cite[Definition 4.2.1]{Williamson}).
\end{proof}

\begin{figure}[h]\begin{center}
\begin{tikzpicture}
\begin{scope}[xshift=5cm]
\draw[oredged] (0,0) -- node[txt]{$1$} (-2,1.2) node[above=1mm,scale=0.75]{$i$} node[fat]{};
\draw[oredged] (0,0) -- node[txt]{$1$} (-1,1.2) node[above=1mm,scale=0.75]{$i+1$} node[fat]{};
\draw[oredged] (0,0) -- node[txt]{$1$} (+1,1.2) node[fat]{};
\node[above=1mm,scale=0.75] at (+0.7,+1.2){$i+k-2$};
\draw[oredged] (0,0) -- node[txt]{$1$} (+2,1.2) node[above=1mm,scale=0.75]{$i+k-1$} node[fat]{};
\draw[oredgeu] (-2,-1.2) node[fat]{} node[below=1mm,scale=0.75]{$i$}     -- node[txtp]{$1$} (0,0);
\draw[oredgeu] (-1,-1.2) node[fat]{} node[below=1mm,scale=0.75]{$i+1$}     -- node[txtp]{$1$} (0,0) ;
\draw[oredgeu] (+1,-1.2) node[fat]{} -- node[txtp]{$1$} (0,0);
\node[below=1mm,scale=0.75] at (+0.7,-1.2){$i+k-2$};
\draw[oredgeu] (+2,-1.2) node[fat]{} node[below=1mm,scale=0.75]{$i+k-1$}     -- node[txtp]{$1$} (0,0)    node[big]{} ;
\node at (0,1.2){$\cdots$};
\node at (0,-1.2){$\cdots$};
\end{scope} 
\draw[oredgeu] (0,-1.2) node[fat]{} node[below=1mm,scale=0.75]{$1$} -- node[txtq]{$1$} (0,1.2)
node[above=1mm,scale=0.75]{$1$} node[fat]{};  
\draw[oredgeu] (1,-1.2) node[fat]{} node[below=1mm,scale=0.75]{$2$} -- node[txtq]{$1$} (1,1.2)
node[above=1mm,scale=0.75]{$2$} node[fat]{};  
\draw[oredgeu] (2,-1.2) node[fat]{} node[below=1mm,scale=0.75]{$i-1$} -- node[txtq]{$1$} (2,1.2)
node[above=1mm,scale=0.75]{$i-1$} node[fat]{};   
\draw[oredgeu] (8,-1.2) node[fat]{} node[below=1mm,scale=0.75]{$i+k$} -- node[txtq]{$1$} (8,1.2)
node[above=1mm,scale=0.75]{$i+k$} node[fat]{};  
\draw[oredgeu] (9,-1.2) node[fat]{} node[below=1mm,scale=0.75]{$m-1$} -- node[txtq]{$1$} (9,1.2)
node[above=1mm,scale=0.75]{$m-1$} node[fat]{};  
\draw[oredgeu] (10,-1.2) node[fat]{} node[below=1.3mm,scale=0.75]{$m$} -- node[txtq]{$1$} (10,1.2)
node[above=1.3mm,scale=0.75]{$m$} node[fat]{};   

\node at (1.5,0){$\cdots$};
\node at (8.5,0){$\cdots$};

\draw[decorate,decoration={brace,amplitude=5pt}] (11,+1.8) -- node[right=10pt]{$\Gamma$}(11,-1.8);
\end{tikzpicture}
\end{center}
\caption{Another basic MOY-graph}
\label{fig:mfasstabsbm}
\end{figure}
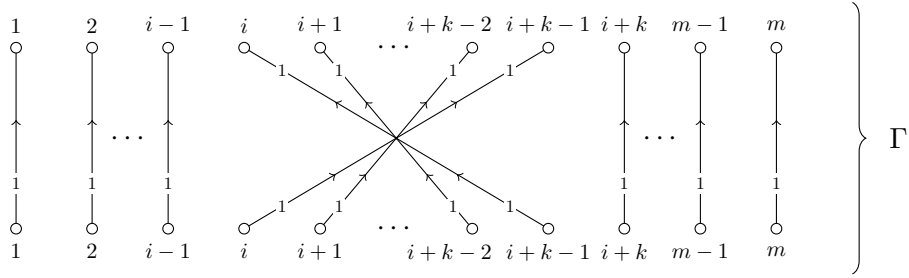

With the notation of Theorem \ref{thm_soergelbimodulesviahecke}, Theorem \ref{thm_KRviaStab} becomes:
\begin{theorem}\label{thm_mfasstabsbm}
Fix $n\geq 2$ and let $\Gamma$ be the MOY-graph depicted in Figure \ref{fig:mfasstabsbm}. Define
$$w := \begin{pmatrix} 1 & 2 & \cdots & i-1 & i & i+1 & \cdots & i+k-1 & i+k & \cdots & m\\
1 & 2 & \cdots & i-1 & i+k-1 & i+k-2 & \cdots & i & i+k & \cdots & m\end{pmatrix}.$$
Then, considering $^m\SBM{w}$ as a module over $\CC[\XX,\YY]$ with $|\XX|=|\YY|=m$, there is a canonical homotopy
equivalence 
$$\KR(\Gamma)\ \simeq\ {^m\SBM{w}}\stab{\Sigma\XX^{n+1}-\Sigma\YY^{n+1}}.$$
\end{theorem}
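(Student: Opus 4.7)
The plan is to decompose $\Gamma$ into its basic MOY subgraphs, apply Theorems \ref{thm_KRviaStab} and \ref{thm_acyclicgraphs} to identify $\KR(\Gamma)$ with the stabilization of a single bimodule obtained by tensoring, and then recognize this bimodule as $^m\SBM{w}$ via the explicit description in Theorem \ref{thm_soergelbimodulesviahecke}(4).

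First I would decompose $\Gamma$ into the central basic MOY-graph $\Gamma_{1,\dots,1}^{1,\dots,1}$ on vertices $i,\dots,i+k-1$ together with $m-k$ trivial strands $\Gamma_1^1$ for the vertices outside the block. By Theorem \ref{thm_KRviaStab}, the central block contributes
\[
  \bigl(\Sym(\XX'|\YY')/(X'_j - Y'_j)_{j=1}^{k}\bigr)\stab{\Sigma(\XX')^{n+1}-\Sigma(\YY')^{n+1}}\langle\tbinom{k}{2}\rangle,
\]
where $\XX' = \{x_i,\dots,x_{i+k-1}\}$, $\YY' = \{y_i,\dots,y_{i+k-1}\}$, and the $X'_j,Y'_j$ are elementary symmetric polynomials in $k$ variables. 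Each identity strand contributes the stabilization of $\CC[x_j,y_j]/(x_j-y_j)$.

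Next, since $\Gamma$ is acyclic, Theorem \ref{thm_acyclicgraphs} lets me commute the stabilization with the tensor products needed to glue these pieces together (freeness conditions of Theorem \ref{thm_glue} are satisfied at each glueing, as the modules $\CC[x_j,y_j]/(x_j - y_j)$ are free of rank one over both $\CC[x_j]$ and $\CC[y_j]$, and the central bimodule is free over $\Sym(\XX')$ and $\Sym(\YY')$ by the fundamental theorem of symmetric functions). Thus
\[
  \KR(\Gamma)\ \simeq\ M\stab{\Sigma\XX^{n+1}-\Sigma\YY^{n+1}}\langle\tbinom{k}{2}\rangle,
\]
where $M$ is the $\CC[\XX,\YY]$-module obtained by glueing, namely
\[
  M\ \cong\ \CC[\XX,\YY]\big/\bigl(\{x_j-y_j : j\notin[i,i+k-1]\}\cup\{X'_j-Y'_j\}_{j=1}^{k}\bigr).
\]

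Then I would identify $M$ with $\CC[\XX]\otimes_{\CC[\XX]^I}\CC[\XX]$, where $I=\{s_i,s_{i+1},\dots,s_{i+k-2}\}$ is the set of simple transpositions inside the block. Indeed, $\CC[\XX]^I$ is generated by $x_j$ for $j\notin[i,i+k-1]$ together with the elementary symmetric polynomials in $x_i,\dots,x_{i+k-1}$, so the relations defining the tensor product match precisely those defining $M$. By Theorem \ref{thm_soergelbimodulesviahecke}(4), this identifies
\[
  M\langle\tbinom{k}{2}\rangle\ \cong\ \CC[\XX]\otimes_{\CC[\XX]^I}\CC[\XX]\langle l(w^I)\rangle\ \cong\ {^m\SBM{w^I}},
\]
since $l(w^I)=\binom{k}{2}$ for the longest element $w^I$ of $W_I$. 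Finally, the permutation $w$ in the statement of the theorem is exactly $w^I$ (it reverses the block $[i,i+k-1]$ and fixes everything else), which concludes the identification.

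The only substantive obstacle I anticipate is bookkeeping: one needs to carefully verify that each glueing step actually fits the hypotheses of Theorem \ref{thm_glue} (the requisite freeness over the overlap variable sets), and that the cumulative internal degree shift coming from the central block matches $l(w^I)$. Both reduce to standard facts about symmetric polynomials—$\Sym(\XX)$ is free over $\Sym(\XX)^I$ with Poincaré polynomial equal to that of the corresponding flag variety, as illustrated in Example \ref{ex_splitmerge}—so the verification is routine.
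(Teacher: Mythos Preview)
Your proposal is correct and follows essentially the same approach as the paper. The paper's proof is terser: it invokes Theorem~\ref{thm_KRviaStab} directly to obtain $\KR(\Gamma)\simeq\bigl(\CC[\XX]\otimes_{\frS_k}\CC[\YY]\langle\tfrac{k(k-1)}{2}\rangle\bigr)\stab{\Sigma\XX^{n+1}-\Sigma\YY^{n+1}}$ and then applies Theorem~\ref{thm_soergelbimodulesviahecke}\eqref{item_thmsbmlongest}, whereas you spell out the decomposition into the central block plus identity strands and appeal explicitly to Theorem~\ref{thm_acyclicgraphs} for the glueing---but the underlying argument is the same.
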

\begin{proof}
Considering $\frS_k$ a subgroup of $\frS_m$ via 
$$\sigma\mapsto\begin{pmatrix} 1 & 2 & \cdots & i-1 & i & i+1 & \cdots & i+k-1 & i+k & \cdots & m\\
1 & 2 & \cdots & i-1 & i-1+\sigma(1) & i-1+\sigma(2) & \cdots & i-1+\sigma(k) & i+k & \cdots & m\end{pmatrix},$$
Theorem \ref{thm_KRviaStab} implies that
$$\KR(\Gamma)\ \simeq\ \left(\CC[\XX]\otimes_{\frS_k}\CC[\YY]\left\langle
  \frac{k(k-1)}{2}\right\rangle\right)\stab{\Sigma\XX^{n+1}-\Sigma\YY^{n+1}}.$$ On the other hand, Theorem
\ref{thm_soergelbimodulesviahecke} shows that $$\CC[\XX]\otimes_{\frS_k}\CC[\YY]\left\langle
  \frac{k(k-1)}{2}\right\rangle\ \cong\ {^m\SBM{w}},$$ where we use the fact that the length of the longest element $w$ in
  $\frS_k$ is $\frac{k(k-1)}{2}$. 
\end{proof}
\begin{figure}[h]\begin{center}
\begin{tikzpicture}
\draw[oredgem] (0cm,0cm) node[scale=0.75,below]{$1$} -- (0cm,3cm);
\draw[oredgem] (1cm,0cm) node[scale=0.75,below]{$2$} -- (1cm,3cm);
\draw[oredgem] (2cm,0cm) node[scale=0.75,below]{$i-1$} -- (2cm,3cm);
\begin{scope}[xshift=3cm]
\draw[oredgem] (0cm,0cm) node[scale=0.75,below]{$i$} -- (1cm,1cm);
\draw[oredgem] (2cm,0cm) node[scale=0.75,below]{$i+1$} -- (1cm,1cm);
\draw[oredgem] (1cm,1cm) -- node[scale=0.4, right=3pt]{$2$} (1cm,2cm); % wide edge
\draw[oredgem] (1cm,2cm) -- (0cm,3cm);
\draw[oredgem] (1cm,2cm) -- (2cm,3cm);
\end{scope}
\draw[oredgem] (6cm,0cm) node[scale=0.75,below]{$i+2$} -- (6cm,3cm);
\draw[oredgem] (7cm,0cm) node[scale=0.75,below]{$m-1$} -- (7cm,3cm);
\draw[oredgem] (8cm,0cm) node[scale=0.75,below]{$m$} -- (8cm,3cm);
\node[scale=0.5] at (1.5cm,1.5cm) {$\cdots$};
\node[scale=0.5] at (6.5cm,1.5cm) {$\cdots$};
\end{tikzpicture}
\end{center}
\caption{\small{Basic MOY-braid $\sigma_i$}}
\label{fig:basicmoybraid}
\end{figure}
\begin{cor}\label{cor:krofamoybraid}
Let $\gamma = s_{i_1} s_{i_2} ... s_{i_k}$ be a MOY-braid on $m$ strands, with $i_1,...,i_k\in\{1,2,...,m-1\}$ and
$s_i$ as depicted in Figure \ref{fig:basicmoybraid}. There is canonical homotopy equivalence
$$\KR(\gamma)\ \simeq\ \left({^m\SBM{i_1}}\otimes_{\CC[\XX]}{^m\SBM{i_2}}\otimes_{\CC[\XX]}
  ...\otimes_{\CC[\XX]}{^m\SBM{i_k}}\right)\stab{\Sigma\XX^{n+1}-\Sigma\YY^{n+1}}.$$
\end{cor}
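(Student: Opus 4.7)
The plan is to combine Theorem~\ref{thm_mfasstabsbm}, which identifies each basic component, with Theorem~\ref{thm_acyclicgraphs} (realised concretely via Theorem~\ref{thm_glue}), which commutes stabilization with the tensor product on acyclic MOY-graphs. By the construction of $\KR$ recalled in Section~\ref{sec_KRconstruction}, the MOY-braid $\gamma$ decomposes as the vertical concatenation of the basic braids $s_{i_1},\dots,s_{i_k}$, and its associated matrix factorization is the iterated tensor product
\[
\KR(\gamma) \;=\; \KR(s_{i_1}) \otimes_{\CC[\XX_1]} \KR(s_{i_2}) \otimes_{\CC[\XX_2]} \cdots \otimes_{\CC[\XX_{k-1}]} \KR(s_{i_k}),
\]
where $\XX_0 := \XX$, $\XX_k := \YY$, and $\XX_j$ denotes the set of $m$ variables labelling the horizontal line between $s_{i_j}$ and $s_{i_{j+1}}$. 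Applying Theorem~\ref{thm_mfasstabsbm} to the single wide edge of width~$2$ in each $s_{i_j}$ (with all other strands being trivial identity segments) identifies each factor with the stabilization of the elementary Soergel bimodule:
\[
\KR(s_{i_j}) \;\simeq\; \bigl({}^m\SBM{i_j}\bigr)\stab{\Sigma\XX_{j-1}^{n+1}-\Sigma\XX_j^{n+1}}.
\]

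Since a braid has no oriented cycles, Theorem~\ref{thm_acyclicgraphs} applies, and we can commute stabilization past the tensor products. To make this explicit, we induct on $k$ using Theorem~\ref{thm_glue}, which at each step requires one of the two factors being tensored over $\CC[\XX_j]$ to be free as a $\CC[\XX_j]$-module. Here the key structural input is Theorem~\ref{thm_soergelbimodulesviahecke}\eqref{item_thmsbmlongest}, which gives
\[
{}^m\SBM{i_j} \;\cong\; \CC[\XX_{j-1}]\otimes_{\CC[\XX_{j-1}]^{i_j}}\CC[\XX_j]\langle 1\rangle,
\]
so ${}^m\SBM{i_j}$ is free of rank $2$ both as a left $\CC[\XX_{j-1}]$-module and as a right $\CC[\XX_j]$-module, because $\CC[\XX]$ is free of rank $2$ over the subring $\CC[\XX]^i$ of polynomials invariant under a simple transposition. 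At the inductive step, parts~(2) and~(3) of Theorem~\ref{thm_glue} ensure that freeness over the outer variable sets is preserved under the tensor product, so the hypothesis of Theorem~\ref{thm_glue} is available for the next step, closing the induction.

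Commuting stabilization past the tensor product at each stage and identifying all of the intermediate polynomial rings $\CC[\XX_j]$ for $0<j<k$ with a single copy of $\CC[\XX]$ (legitimate since the resulting potential only involves $\XX_0=\XX$ and $\XX_k=\YY$), we conclude
\[
\KR(\gamma) \;\simeq\; \bigl({}^m\SBM{i_1}\otimes_{\CC[\XX]}\cdots\otimes_{\CC[\XX]}{}^m\SBM{i_k}\bigr)\stab{\Sigma\XX^{n+1}-\Sigma\YY^{n+1}}.
\]
The only substantive point is the freeness verification, but as noted this is immediate from the rank-$2$ freeness of $\CC[\XX]$ over $\CC[\XX]^i$; everything else is bookkeeping and an appeal to the theorems already established.
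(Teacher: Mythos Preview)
Your proposal is correct and matches the paper's intended argument: the corollary is stated without proof in the paper precisely because it follows immediately from Theorem~\ref{thm_mfasstabsbm} (identifying each $\KR(s_{i_j})$ with the stabilization of ${}^m\SBM{i_j}$) together with Theorem~\ref{thm_acyclicgraphs}/Theorem~\ref{thm_glue} (commuting stabilization past the tensor products, using the freeness of ${}^m\SBM{i}$ over $\CC[\XX]$ on both sides). Your inductive bookkeeping via parts~(2) and~(3) of Theorem~\ref{thm_glue} is exactly the mechanism underlying Theorem~\ref{thm_acyclicgraphs} and is the appropriate way to make the argument explicit.
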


Theorem \ref{thm_soergelbimodulesviahecke}.\eqref{item_thmsbmrels} completely explains the relations that hold between
Soergel bimodules in terms of the Hecke algebra. Applying the stabilization functor, we see that the same relations hold
on the level of matrix factorizations. Let us pause to see an example for that.

\begin{ex}
We want to prove the relation
\begin{align}\label{eq:musthaveiso}
\KR(\Gamma_0)\oplus \KR(\Gamma_1)\ \simeq\ \KR(\Gamma_2)\oplus \KR(\Gamma_3),
\end{align}
 where $\Gamma_0,...,\Gamma_3$ are as in Figure \ref{fig:examplerelation2}. 
\begin{figure}[h]\begin{center}
\begin{tikzpicture}[scale=0.5]
\draw[oredgem] (0,0) -- (1,1);
\draw[oredgem] (2,0) -- (1,1);
\draw[oredgem] (1,1) -- node[scale=0.6, right=3pt]{$2$} (1,2); % wide edge
\draw[oredgem] (1,2) -- (0,3);
\draw[oredgee] (1,2) -- (2,3);
\draw[oredgem] (4,0) -- (4,3);
\begin{scope}[xshift=2cm, yshift=3cm]
\draw[oredgem] (-2,0) -- (-2,3);
\draw (0,0) -- (1,1);
\draw[oredgem] (2,0) -- (1,1);
\draw[oredgem] (1,1) -- node[scale=0.6, right=3pt]{$2$} (1,2); % wide edge
\draw[oredgee] (1,2) -- (0,3);
\draw[oredgem] (1,2) -- (2,3);
\end{scope}
\begin{scope}[yshift=6cm]
\draw[oredgem] (0,0) -- (1,1);
\draw (2,0) -- (1,1);
\draw[oredgem] (1,1) -- node[scale=0.6, right=3pt]{$2$} (1,2); % wide edge
\draw[oredgem] (1,2) -- (0,3);
\draw[oredgem] (1,2) -- (2,3);
\draw[oredgem] (4,0) -- (4,3); 
\end{scope}
\node at (2,-1){$\Gamma_0$};
\node at (5.5,4.5){$\oplus$};
\begin{scope}[xshift=7cm]
\draw[oredgem] (1,0) -- (2,1);
\draw[oredgem] (3,0) -- (2,1);
\draw[oredgem] (2,1) -- node[scale=0.6, right=3pt]{$2$} (2,8);
\draw[oredgem] (2,8) -- (1,9);
\draw[oredgem] (2,8) -- (3,9);
\draw[oredgem] (0,0) -- (0,9);
\end{scope}
\node at (8.5,-1){$\Gamma_1$};
\node at (12,4.5) {$=$};
\begin{scope}[xshift=14cm]
\draw[oredgem] (2,0) -- (3,1);
\draw[oredgem] (4,0) -- (3,1);
\draw[oredgem] (3,1) -- node[scale=0.6, right=3pt]{$2$} (3,2); % wide edge
\draw[oredgee] (3,2) -- (2,3);
\draw[oredgem] (3,2) -- (4,3);
\draw[oredgem] (0,0) -- (0,3);
\begin{scope}[yshift=3cm]
\draw[oredgem] (0,0) -- (1,1);
\draw (2,0) -- (1,1);
\draw[oredgem] (1,1) -- node[scale=0.6, right=3pt]{$2$} (1,2); % wide edge
\draw[oredgem] (1,2) -- (0,3);
\draw[oredgee] (1,2) -- (2,3);
\draw[oredgem] (4,0) -- (4,3);
\end{scope}
\begin{scope}[yshift=6cm]
\draw (2,0) -- (3,1);
\draw[oredgem] (4,0) -- (3,1);
\draw[oredgem] (3,1) -- node[scale=0.6, right=3pt]{$2$} (3,2); % wide edge
\draw[oredgem] (3,2) -- (2,3);
\draw[oredgem] (3,2) -- (4,3);
\draw[oredgem] (0,0) -- (0,3);
\end{scope}
\node at (2,-1){$\Gamma_2$};
\node at (5.5,4.5){$\oplus$};
\begin{scope}[xshift=7cm]
\draw[oredgem] (0,0) -- (1,1);
\draw[oredgem] (2,0) -- (1,1);
\draw[oredgem] (1,1) -- node[scale=0.6, right=3pt]{$2$} (1,8);
\draw[oredgem] (1,8) -- (0,9);
\draw[oredgem] (1,8) -- (2,9);
\draw[oredgem] (3,0) -- (3,9);
\end{scope}
\node at (8.5,-1){$\Gamma_3$};
\end{scope}
\end{tikzpicture}
\end{center}
\caption{Basic MOY-relation}
\label{fig:examplerelation2}
\end{figure}
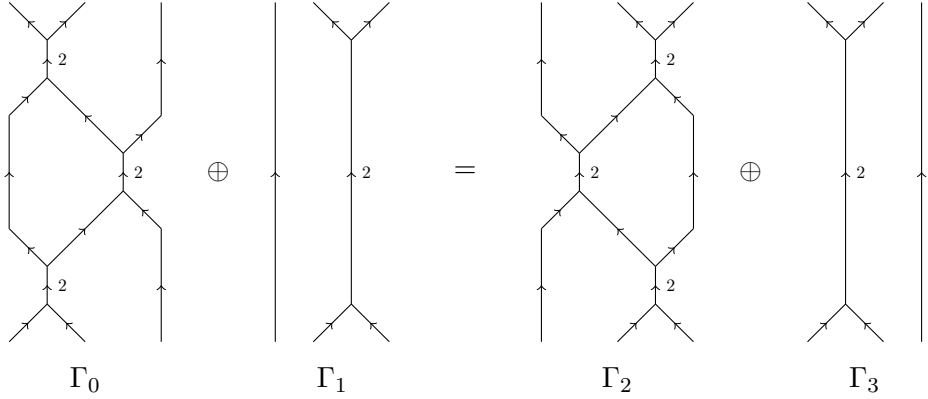
A short calculation in the Hecke algebra shows that for $i=1,2,...,m-1$ we have $$\KL{i}\KL{i+1}\KL{i} + \KL{i}\ =\
\KL{i+1}\KL{i}\KL{i+1} + \KL{i+1}.$$ Applying Theorem \ref{thm_soergelbimodulesviahecke} implies that the
corresponding relation also holds on the level of Soergel bimodules, i.e. that we have
an isomorphism of $\CC[\XX]$ bimodules (abbreviating $\CC[\XX]$ by $S$)
\begin{align}\label{eq:simplerelationinheckealgebra}
S\makebox[5mm][c]{$\stackrel[\mathclap{S^i}]{}{\otimes}$}S
\makebox[5mm][c]{$\stackrel[\mathclap{S^{i+1}}]{}{\otimes}$}S
\makebox[5mm][c]{$\stackrel[\mathclap{S^i}]{}{\otimes}$}S\langle
3\rangle\ \oplus\ S\makebox[5mm][c]{$\stackrel[\mathclap{S^i}]{}{\otimes}$}S\langle 1\rangle\ \ \cong\ \ 
S\makebox[5mm][c]{$\stackrel[\mathclap{S^{i+1}}]{}{\otimes}$}S
\makebox[5mm][c]{$\stackrel[\mathclap{S^{i}}]{}{\otimes}$}S
\makebox[5mm][c]{$\stackrel[\mathclap{S^{i+1}}]{}{\otimes}$}S\langle
3\rangle\ \oplus\ S\makebox[5mm][c]{$\stackrel[\mathclap{S^{i+1}}]{}{\otimes}$}S\langle 1\rangle.
\end{align}
Considering $\CC[\XX]$-bimodules as modules over $\Sym(\XX|\YY)$, where $\YY_i := \{y_i\}$ for variables $y_1,...,y_m$,
we can now apply the stabilization functor 
$$\Sym(\XX|\YY)/(\Sigma\XX^{n+1}-\Sigma\YY^{n+1})\mod\xrightarrow{(-)\stab{\Sigma\XX^{n+1}-\Sigma\YY^{n+1}}}
\HMF(\Sym(\XX|\YY),\Sigma\XX^{n+1}-\Sigma\YY^{n+1})$$ 
to \eqref{eq:simplerelationinheckealgebra}; using Theorem \ref{thm_acyclicgraphs} to exchange stabilization and tensor 
products, we get the desired isomorphism \eqref{eq:musthaveiso}. Note that it is nontrivial to prove
\eqref{eq:musthaveiso} directly from the definitions.
\end{ex}

\begin{figure}[h]\begin{center}
\begin{tikzpicture}[description/.style={fill=white,inner sep=2pt}]
    \matrix (m) [matrix of math nodes, row sep=2.5em,
                 column sep=2.5em, text height=1.5ex, text depth=0.25ex,
                 inner sep=0pt, nodes={inner xsep=0.3333em, inner ysep=0.3333em}]
    {
      \KR
\left(\ \begin{tikzpicture}[baseline=0.65cm,scale=0.5]
\draw[oredgefe] (0,0) node[scale=0.75,below]{$\YY_1$}-- (1,1);
\draw[oredgefe] (2,0) node[scale=0.75,below]{$\YY_2$} -- (1,1);
\draw[oredgem]  (1,1) -- node[pos=0.5,right,scale=0.6]{$2$} (1,2); % wide edge
\draw[oredgefe] (1,2) -- (0,3) node[scale=0.75,above]{$\XX_1$};
\draw[oredgefe] (1,2) -- (2,3) node[scale=0.75,above]{$\XX_2$};
\end{tikzpicture}\ \right) &&
      \KR
\left(\ \begin{tikzpicture}[baseline=0.65cm,scale=0.5]
\draw[oredgefe] (0,0) node[scale=0.75,below]{$\YY_1$} -- node[txtp]{$1$} (0,3) node[scale=0.75,above]{$\XX_1$};
\draw[oredgefe] (2,0) node[scale=0.75,below]{$\YY_2$} -- node[txtp]{$1$} (2,3) node[scale=0.75,above]{$\XX_2$};
\end{tikzpicture}\ \right)\\
    };
    \draw[->] ($(m-1-1.east) + (0,+2mm)$) -- node[above]{$\chi_1$} ($(m-1-3.west) + (0,+2mm)$);
    \draw[->] ($(m-1-3.west) + (0,-1mm)$) -- node[below]{$\chi_0$} ($(m-1-1.east) + (0,-1mm)$);
    \node[scale=0.9] at ($(m-1-1.south) + (4mm,-12mm)$) {$\Gamma_1$};
    \node[scale=0.9] at ($(m-1-3.south) + (4mm,-12mm)$) {$\Gamma_0$};
\end{tikzpicture}
\end{center}
\caption{$\chi$-morphisms}
\label{fig:chimorphisms}
\end{figure}

\begin{ex}\label{ex_chimorphisms}
In this example, we describe the $\chi$-morphisms (see Figure \ref{fig:chimorphisms}) of \cite[Section
6]{KR1} using the stabilization functor and Soergel bimodules. It would be very interesting to do the same
thing for the more general $\chi$-morphisms in \cite[Section 7.6]{WuLinkHomology}.

First we recall the definition of $\chi_0$ and $\chi_1$ given in \cite{KR1}, beginning with $\chi_1$. Abbreviating
$\Sym(\XX|\YY) = \CC[x_1,x_2,y_1,y_2]$ by $S\ld$, by the very definition we have
\begin{align*}
\KR(\Gamma_0)\langle -1\rangle & = \{x_1+x_2-y_1-y_2, u_1\}\otimes_{S\ld} \{x_1 x_2-y_1 y_2, u_2\}\\
& = \left(S\ld\xrightarrow{u_1} S\ld(e_1)\langle -2\rangle\xrightarrow{x_1+x_2-y_1-y_2}
  S\ld\right)\makebox[3mm][c]{$\stackrel[\mathclap{S\ld}]{}{\otimes}$}
\left(S\ld\xrightarrow{u_2} S\ld(e_2)\langle -4\rangle\xrightarrow{x_1 x_2-y_1 y_2} S\ld\right)\\
& = \begin{tikzpicture}[baseline=-1mm]
    \matrix (m) [matrix of math nodes, row sep=2.5em,
                 column sep=5em, text height=1.5ex, text depth=0.25ex,
                 inner sep=0pt, nodes={inner xsep=0.3333em, inner ysep=0.3333em}]
    {
      S\ld(e_1)\oplus S\ld(e_2) \pgfmatrixnextcell\pgfmatrixnextcell S\ld(\emptyset)\oplus S\ld(e_1 e_2)\\
    };
    \draw[->] ($(m-1-1.east) + (0mm,+1mm)$) -- node[above,scale=0.75]{$\begin{pmatrix} x_1+x_2-y_1-y_2 & x_1 x_2 - y_1
        y_2\\ -u_2 & u_1\end{pmatrix}$} ($(m-1-3.west) + (0mm,+1mm)$);
    \draw[->] ($(m-1-3.west) + (0mm,-1mm)$) -- node[below,scale=0.75]{$\begin{pmatrix} u_1 & y_1 y_2 - x_1 x_2\\ u_2 &
        x_1 + x_2 - y_1 - y_2\end{pmatrix}$} ($(m-1-1.east) + (0mm,-1mm)$);
\end{tikzpicture}
\end{align*}
Here $\emptyset$, $e_1$, $e_2$ and $e_1 e_2$ are names for the generators of the several copies of $S\ld$, and 
$u_1$ and $u_2$ are as usual chosen in such a way that the potential is $x_1^{n+1} + x_2^{n+1} - y_1^{n+1} -
y_2^{n+1}$. Similarly,
\begin{align*}
\KR(\Gamma_1)\langle -1\rangle & = \{x_1-y_1,\pi_1\}\otimes_{S\ld} \{x_2-y_2,\pi_2\}\\
& = \left(S\ld\xrightarrow{\pi_1} S\ld(\tilde{e}_1)\langle -2\rangle\xrightarrow{x_1-y_1} S\ld\right)\otimes_{S\ld}
\left(S\ld\xrightarrow{\pi_2} S\ld(\tilde{e}_2)\langle -2\rangle\xrightarrow{x_2-y_2} S\ld\right)\\
& = \begin{tikzpicture}[baseline=-1mm]
    \matrix (m) [matrix of math nodes, row sep=2.5em,
                 column sep=5em, text height=1.5ex, text depth=0.25ex,
                 inner sep=0pt, nodes={inner xsep=0.3333em, inner ysep=0.3333em}]
    {
      S\ld(\tilde{e}_1)\oplus S\ld(\tilde{e}_2) \pgfmatrixnextcell\pgfmatrixnextcell S\ld(\emptyset)\oplus S\ld(\tilde{e}_1 \tilde{e}_2)\\
    };
    \draw[->] ($(m-1-1.east) + (0mm,+1mm)$) -- node[above,scale=0.75]{$\begin{pmatrix} x_1-y_1 & x_2-y_2\\ -\pi_2 &
        \pi_1\end{pmatrix}$} ($(m-1-3.west) + (0mm,+1mm)$); 
    \draw[->] ($(m-1-3.west) + (0mm,-1mm)$) -- node[below,scale=0.75]{$\begin{pmatrix} \pi_1 & y_2-x_2\\ \pi_2 &
        x_1 -y_1\end{pmatrix}$} ($(m-1-1.east) + (0mm,-1mm)$);
\end{tikzpicture}
\end{align*}
where $\pi_1 := \frac{x_1^{n+1}-y_1^{n+1}}{x_1-y_1}$ and $\pi_2 := \frac{x_2^{n+1}-y_2^{n+1}}{x_2-y_2}$. In this
explicit form of $\KR(\Gamma_0)$ and $\KR(\Gamma_1)$, the map $\chi_1$ is given as
\begin{align}\label{eq:morphismfromKR}
\begin{tikzpicture}
\matrix (m) [matrix of math nodes, row sep=5em,
                 column sep=7em, text height=1.5ex, text depth=0.25ex,
                 inner sep=0pt, nodes={inner xsep=0.3333em, inner ysep=0.3333em}]
    {
      S\ld(e_1)\oplus S\ld(e_2) \pgfmatrixnextcell\pgfmatrixnextcell S\ld(\emptyset)\oplus S\ld(e_1 e_2)\\
      S\ld(\tilde{e}_1)\oplus S\ld(\tilde{e}_2) \pgfmatrixnextcell\pgfmatrixnextcell S\ld(\emptyset)\oplus S\ld(\tilde{e}_1
      \tilde{e}_2)\\
    };
    \draw[->] ($(m-2-1.east) + (0mm,+1mm)$) -- node[above,scale=0.75]{$\begin{pmatrix} x_1-y_1 & x_2 - y_2\\ -\pi_2 &
        \pi_1\end{pmatrix}$} ($(m-2-3.west) + (0mm,+1mm)$);
    \draw[->] ($(m-2-3.west) + (0mm,-1mm)$) -- node[below,scale=0.75]{$\begin{pmatrix} \pi_1 & y_2-x_2\\ \pi_2 &
        x_1-y_1\end{pmatrix}$} ($(m-2-1.east) + (0mm,-1mm)$); 
    \draw[->] ($(m-1-1.east) + (0mm,+1mm)$) -- node[above,scale=0.75]{$\begin{pmatrix} x_1+x_2-y_1-y_2 & x_1 x_2 - y_1
        y_2\\ -u_2 & u_1\end{pmatrix}$} ($(m-1-3.west) + (0mm,+1mm)$);
    \draw[->] ($(m-1-3.west) + (0mm,-1mm)$) -- node[below,scale=0.75]{$\begin{pmatrix} u_1 & y_1 y_2 - x_1 x_2\\ u_2 &
        x_1 + x_2 - y_1 - y_2\end{pmatrix}$} ($(m-1-1.east) + (0mm,-1mm)$); 
    \draw[->] (m-1-1) -- node[scale=0.75,left]{$\begin{pmatrix} 1 & y_2 + \lambda(x_2-y_2)\\ 1 & x_1 +
        \lambda(y_1-x_1) \end{pmatrix}$} (m-2-1); 
    \draw[->] (m-1-3) -- node[scale=0.75,right]{$\begin{pmatrix} 1 & 0 \\ a & b\end{pmatrix}$} (m-2-3);
\end{tikzpicture}
\end{align}
for some $\lambda\in\ZZ$ and $$a = -\lambda u_2 + \frac{u_1 + x_1 u_2 - \pi_2}{x_1-y_1}\quad\text{and}\quad b = x_1-y_2 +
\lambda(y_1+y_2 - x_1-x_2).$$ Note that in \cite{KR1} the sign of $a$ differs, but this is just because of a different
convention in the differential of the Koszul complex. We will now use Example \ref{ex_stabmorphism2els} to show that
under the canonical isomorphisms 
\begin{align*}
\KR(\Gamma_0) & \cong\left(S\ld/(x_1 + x_2 - y_1 - y_2, x_1 x_2 - y_1
  y_2)\langle 1\rangle\right)\stab{x_1^{n+1}+x_2^{n+1}-y_1^{n+1}-y_2^{n+1}}\\
\KR(\Gamma_1) & \cong\left(S\ld/(x_1-y_1, x_2-y_2)\right)\stab{x_1^{n+1}+x_2^{n+1}-y_1^{n+1}-y_2^{n+1}}
\end{align*}
the morphism $\chi_1$ corresponds to the stabilization of the canonical quotient map
\begin{align}\label{eq:canprojmap}
S\ld/(x_1 + x_2 - y_1 - y_2, x_1 x_2 - y_1 y_2)\xrightarrow{\ \ \can\ \ } S\ld/(x_1-y_1, x_2-y_2).
\end{align}
in the sense that the following diagram commutes ($w := x_1^{n+1}+x_2^{n+1}-y_1^{n+1}-y_2^{n+1}$):
\begin{align}\label{eq:stabilizationofmorphisms}
\begin{tikzpicture}
  \matrix (m) [matrix of math nodes, row sep=3em,
  column sep=1.2em, text height=1.5ex, text depth=0.25ex,
  inner sep=0pt, nodes={inner xsep=0.3333em, inner ysep=0.3333em}]
  {
    \{(x_1+x_2-y_1-y_2, x_1 x_2 - y_1 y_2),(u_1,u_2)\} \pgfmatrixnextcell \pgfmatrixnextcell
    S\ld/(x_1+x_2-y_1-y_2, x_1 x_2 - y_1 y_2)\stab{w}\\
   \{(x_1 - y_1, x_2 - y_2),(\pi_1,\pi_2)\} \pgfmatrixnextcell \pgfmatrixnextcell
   S\ld/(x_1-y_1,x_2-y_2)\stab{w}\\
  };
  \draw[->] (m-1-1) -- node[above,scale=0.75]{$\cong$} (m-1-3);
  \draw[->] (m-2-1) -- node[below,scale=0.75]{$\cong$} (m-2-3);
  \draw[->] (m-1-1) -- (m-2-1);
  \draw[->] (m-1-3) -- node[right,scale=0.75]{$\can\stab{w}$} (m-2-3);
\end{tikzpicture}
\end{align}
In particular, we see that the homotopy class of $\chi_1$ does not depend on the choice of $\lambda$.

For the proof, we use the method of Example \ref{ex_stabmorphism2els} with $\alpha=1$. To avoid confusion with the
notation, let us denote the variables $x,y,\tilde{x},\tilde{y}$ from there by $a,b,\tilde{a},\tilde{b}$. Hence, $(a_1,a_2)$ and
$(\tilde{a}_1,\tilde{a}_2)$ are given by the regular sequences $(x_1+x_2-y_1-y_2, x_1 x_2 - y_1 y_2)$ and $(x_1-y_1,x_2-y_2)$,
respectively, and $(b_1,b_2)$ and $(\tilde{b}_1,\tilde{b}_2)$ are given by $(u_1,u_2)$ and $(\pi_1,\pi_2)$, respectively. We
have 
\begin{align*}
x_1 + x_2 - y_1 - y_2 & = (x_1 + y_1) - (x_2 - y_2)\\
 x_1 x_2 - y_1 y_2 &  = (x_1 - y_1)(y_2 + \lambda(x_2 - y_2)) + (x_2 - y_2)(x_1 + \lambda(y_1 - x_1)),
\end{align*}
hence
$$\begin{pmatrix}\lambda_{11} & \lambda_{21} \\ \lambda_{12} & \lambda_{22}\end{pmatrix}\ =\ \begin{pmatrix} 1 & y_2 +
  \lambda(x_2-y_2)\\ 1 & x_1 + \lambda(y_1-x_1)\end{pmatrix},$$ and in particular 
$$\lambda_{11}\lambda_{22} - \lambda_{12} \lambda_{21} = x_1-y_2 + \lambda(y_1 + y_2 - x_1 - x_2)$$ Finally, we compute
\begin{align*}
\mu = \frac{\lambda_{12} b_1 + \lambda_{22} b_2 - \tilde{b}_2}{\tilde{a}_1}= \frac{u_1 + (x_1 + \lambda(y_1-x_1))u_2 -
  \pi_2}{x_1-y_1} = -\lambda u_2 + \frac{u_1 + x_1 u_2 - \pi_2}{x_1 - y_1}.
\end{align*}
Putting everything together, we see that the morphism constructed in \ref{ex_stabmorphism2els} coincides with
\eqref{eq:morphismfromKR}, as claimed.

Similarly, we can handle the map $\chi_0$. Originally, it is defined as 
\begin{align}\label{eq:morphismfromKR2}
\begin{tikzpicture}
\matrix (m) [matrix of math nodes, row sep=7em,
                 column sep=7em, text height=1.5ex, text depth=0.25ex,
                 inner sep=0pt, nodes={inner xsep=0.3333em, inner ysep=0.3333em}]
    {
      S\ld(\tilde{e}_1)\oplus S\ld(\tilde{e}_2) \pgfmatrixnextcell\pgfmatrixnextcell S\ld(\emptyset)\oplus S\ld(\tilde{e}_1
      \tilde{e}_2)\\
      S\ld(e_1)\oplus S\ld(e_2) \pgfmatrixnextcell\pgfmatrixnextcell S\ld(\emptyset)\oplus S\ld(e_1 e_2)\\
    };
    \draw[->] ($(m-2-1.east) + (0mm,+1mm)$) -- node[above,scale=0.75]{$\begin{pmatrix} x_1+x_2-y_1-y_2 & x_1 x_2 - y_1
        y_2\\ -u_2 & u_1\end{pmatrix}$} ($(m-2-3.west) + (0mm,+1mm)$);
    \draw[->] ($(m-2-3.west) + (0mm,-1mm)$) -- node[below,scale=0.75]{$\begin{pmatrix} u_1 & y_1 y_2 - x_1 x_2\\ u_2 &
        x_1 + x_2 - y_1 - y_2\end{pmatrix}$} ($(m-2-1.east) + (0mm,-1mm)$); 
    \draw[->] ($(m-1-1.east) + (0mm,+1mm)$) -- node[above,scale=0.75]{$\begin{pmatrix} x_1-y_1 & x_2 - y_2\\ -\pi_2 &
        \pi_1\end{pmatrix}$} ($(m-1-3.west) + (0mm,+1mm)$);
    \draw[->] ($(m-1-3.west) + (0mm,-1mm)$) -- node[below,scale=0.75]{$\begin{pmatrix} \pi_1 & y_2-x_2\\ \pi_2 &
        x_1-y_1\end{pmatrix}$} ($(m-1-1.east) + (0mm,-1mm)$); 
    \draw[->] (m-1-1) -- node[fill=white,scale=0.75]{$\begin{pmatrix} y_1 + \lambda(x_1-y_1) & \lambda(x_2-y_2) - x_2 \\ -1 &
        1\end{pmatrix}$} (m-2-1);  
    \draw[->] (m-1-3) -- node[fill=white,scale=0.75]{$\begin{pmatrix} y_1 - x_2 + \mu(x_1+x_2-y_1-y_2) & 0 \\ a &
        1\end{pmatrix}$} (m-2-3); 
\end{tikzpicture}
\end{align}
for some $\lambda\in\ZZ$ and
\begin{align}\label{eq:longexpr}
a = (1-\lambda)u_2 + \frac{u_1 + x_1 u_2 - \pi_2}{y_1-x_1}.
\end{align}
We claim that $\chi_0$ is the stabilization of 
\begin{align}
\label{eq:canmaptostabilize}S\ld/(x_1-y_1, x_2-y_2) \xrightarrow{\mult(x_1 - y_2) = \mult(x_2 - y_1)} S\ld/(x_1 + x_2 -
y_1 - y_2, x_1 x_2 - y_1 y_2).
\end{align}
For the proof, we again use the method of Example \ref{ex_stabmorphism2els} with $\alpha = y_1 - x_2 +
\lambda(x_1+x_2-y_1-y_2)$. First, we compute the $\lambda_{ij}$; we have
\begin{align*}
(x_1-y_1)\alpha & = (x_1-y_1)(y_1 - x_2 + \lambda(x_1+x_2-y_1-y_2))\\
& = (x_1 + x_2 - y_1 - y_2)(y_1 + \lambda(x_1-y_1)) - (x_1 x_2 - y_1 y_2)\\
(x_2-y_2)\alpha & = (x_2-y_2)(y_1 - x_2 + \lambda(x_1+x_2-y_1-y_2))\\
& = (x_1 + x_2 - y_1 - y_2)(\lambda(x_2-y_2)-x_2) + x_1 x_2 - y_1 y_2,
\end{align*}
and hence
$$\begin{pmatrix}\lambda_{11} & \lambda_{21} \\ \lambda_{12} & \lambda_{22}\end{pmatrix}\ =\ \begin{pmatrix} y_1 +
  \lambda(x_1-y_1) & \lambda(x_2-y_2) - x_2 \\ -1 & 1\end{pmatrix}.$$
In particular, we get $\frac{\lambda_{11}\lambda_{22}-\lambda_{12}\lambda_{21}}{\alpha} = 1$. Finally, 
\begin{align*}
\mu & = \frac{\lambda_{12} b_1 + \lambda_{22} b_2 - (y_1 - x_2 +
\lambda(x_1+x_2-y_1-y_2))\tilde{b}_2}{\tilde{a}_1}
\\ &  = \frac{-\pi_1 + \pi_2 - (y_1 - x_2 + \lambda(x_1+x_2-y_1-y_2)) u_2}{x_1+x_2-y_1-y_2},
\end{align*}
and it is a tedious but straightforward computation to show that this equals \eqref{eq:longexpr}. Applying the result of
example \ref{ex_stabmorphism2els}, we see that indeed $\chi_0$ is the stabilization of \eqref{eq:canmaptostabilize}. 

Summing up, we have seen in this example that the morphisms $\chi_0$ and $\chi_1$ from \cite{KR1} are stabilizations of
canonical morphisms between Soergel bimodules. In \cite{EliasKhovanov}, these morphisms are depicted by
$\begin{tikzpicture}[scale=0.25]\draw (0,0) -- (0,1) node[inner sep=1pt, fill=black]{}; \end{tikzpicture}$ and
$\begin{tikzpicture}[scale=0.25]\draw (0,0) node[inner sep=1pt, fill=black]{} -- (0,1); \end{tikzpicture}$\ . It would
be interesting to see if the stabilizations of other canonical morphisms from \cite{EliasKhovanov} play a role in the
construction of Khovanov-Rozansky homology, too.  
\end{ex}

\subsection{The effect of stabilization on Soergel bimodules}\label{sec_badbimodule}

Until now, we showed how the image $\KR(\gamma)$ of a MOY-braid $\gamma$ under the Khovanov-Rozansky construction can be
expressed as the stabilization of the Soergel bimodule corresponding to $\gamma$. Though this gives us a
bunch of relations between the $\KR(\gamma)$ for free -- those which are already true on the level of Soergel bimodules
-- we didn't investigate the effect and use of stabilization yet. 

By the big picture \ref{fig:bigpicture} from the introduction, the following theorem meets our expectations:
\begin{theorem}\label{thm_killbadbimodule}
Let $w\in\frS_m$ be such that the Robinson-Schensted shape of $w$ has more than $n$ rows. Then we have
$$\projdim_{\Sym(\XX|\YY)/(\Sigma\XX^{n+1}-\Sigma\YY^{n+1})}\ \SBM{w}\ <\ \infty,\quad\text{ i.e. }
^m\SBM{w}\stab{\Sigma\XX^{n+1}-\Sigma\YY^{n+1}}\ =\ 0.$$
\end{theorem}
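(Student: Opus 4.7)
The plan is to reduce the general claim to a single explicit Koszul computation for the longest element of a copy of $\frS_{n+1}$ inside $\frS_m$, and then to propagate that computation through the two-sided cell structure of the Hecke algebra. For the base case I would fix an $(n+1)$-element subset $I = \{i, i+1, \ldots, i+n\} \subset \{1, \ldots, m\}$ and consider the longest element $w_0^I$ of the parabolic subgroup $\frS_I \subset \frS_m$. By the parabolic analogue of Theorem~\ref{thm_soergelbimodulesviahecke}\eqref{item_thmsbmlongest}, ${}^m\SBM{w_0^I}$ is, up to grading shift, the quotient $S/J$, where $S = \CC[\XX,\YY]$ and $J$ is generated by the $n+1$ elements $e_k(\XX_I)-e_k(\YY_I)$ (for $k=1,\ldots,n+1$) together with the $m-n-1$ elements $x_j - y_j$ (for $j \notin I$). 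These $m$ generators form a regular sequence in $S$, so Proposition~\ref{prop_koszulstabilization2} identifies ${}^m\SBM{w_0^I}\stab{\Sigma\XX^{n+1}-\Sigma\YY^{n+1}}$ with a Koszul matrix factorization $\{\mathbf{f},\mathbf{g}\}$, where $\mathbf{g}$ is any sequence with $\Sigma\XX^{n+1}-\Sigma\YY^{n+1} = \sum f_k g_k$. The crucial observation is Newton's identity
$$p_{n+1}\; =\; \sum_{i=1}^{n}(-1)^{i-1} e_i\, p_{n+1-i} \;+\; (-1)^n (n+1)\, e_{n+1},$$
applied separately in $\CC[\XX_I]$ and $\CC[\YY_I]$: it forces any such decomposition to produce the nonzero scalar $(-1)^n(n+1)$ as the coefficient of $e_{n+1}(\XX_I)-e_{n+1}(\YY_I)$. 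The corresponding elementary Koszul factor $\{e_{n+1}(\XX_I)-e_{n+1}(\YY_I),\,(-1)^n(n+1)\}$ therefore contains a unit and is contractible, so the full tensor-product Koszul factorization is contractible and ${}^m\SBM{w_0^I}\stab{\Sigma\XX^{n+1}-\Sigma\YY^{n+1}} \simeq 0$.

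Next I would invoke the Kazhdan--Lusztig cell theory for $\frS_m$ in type $A$: two-sided cells are indexed by partitions of $m$ via the Robinson--Schensted shape, and the associated ideal filtration on $\Heck_m(q)$ is governed by dominance. Since passing to a dominance-smaller partition can only increase the number of parts, the condition ``more than $n$ rows'' defines a down-set in dominance; the two-sided ideal in $\Heck_m(q)$ spanned by those $\KL{w}$ whose RS shape has more than $n$ rows therefore has a unique maximal cell, namely that of shape $(m-n,1^n)$, and this cell contains $w_0^I$ for every $(n+1)$-element $I$. Under Soergel's categorification isomorphism $\calE\colon\Heck_m(q)\xrightarrow{\sim}\grsplit(\calB^m)$ from Theorem~\ref{thm_soergelbimodulesviahecke}\eqref{item_thmsbmrels}, this cell-ideal containment lifts to the categorical statement that for every $w$ with RS shape having more than $n$ rows, the indecomposable bimodule ${}^m\SBM{w}$ occurs, up to grading shifts and taking a direct summand, inside a triple tensor product ${}^m\SBM{u} \otimes_{\CC[\XX]} {}^m\SBM{w_0^I} \otimes_{\CC[\XX]} {}^m\SBM{v}$ for suitable $u,v \in \frS_m$.

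To conclude, I would apply Theorem~\ref{thm_glue}: Soergel bimodules are free over the middle $\CC[\XX]$-action from either side, so stabilization commutes with $\otimes_{\CC[\XX]}$. Hence the stabilization of the above triple tensor product factors as the tensor product of the three stabilizations, and this is contractible because its middle factor vanishes by the base case. Being a direct summand of a contractible matrix factorization, ${}^m\SBM{w}\stab{\Sigma\XX^{n+1}-\Sigma\YY^{n+1}}$ is itself zero in $\HMF$, and via the equivalence $\HMF \cong \uMCM$ of Theorem~\ref{thm_hmfmcm} together with the semiorthogonal decomposition of Proposition~\ref{prop_semiorthogonaldecomposition}, vanishing of the stabilization is precisely the assertion that ${}^m\SBM{w}$ has finite projective dimension over $\CC[\XX,\YY]/(\Sigma\XX^{n+1}-\Sigma\YY^{n+1})$. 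The main obstacle will be the cell-theoretic step: it rests on the classification of Kazhdan--Lusztig two-sided cells of $\frS_m$ by RS shape and, more delicately, on lifting ``$\KL{w}$ lies in the two-sided ideal generated by $\KL{w_0^I}$'' from the split Grothendieck ring to an actual summand inclusion at the level of bimodules. By contrast, the base case reduces to an elementary Newton-identity calculation once ${}^m\SBM{w_0^I}$ is presented explicitly, and the concluding step is a routine application of the tensor-stabilization compatibility already established in Theorem~\ref{thm_glue}.
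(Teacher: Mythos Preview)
Your proposal is correct and follows the same two-step strategy as the paper: reduce via Kazhdan--Lusztig two-sided cells to a parabolic longest element, then verify the base case. The cell reduction you outline matches the paper's Corollary~\ref{cor_crucial} almost verbatim (the paper takes the longest element of $\frS_k$ for $k$ the number of rows of the RS shape of $w$, while you take $k=n+1$ uniformly, but any $k>n$ works and the dominance argument you give is the right one).

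The only genuine difference is in how the base case is packaged. You compute the Koszul factorization of ${}^m\SBM{w_0^I}$ directly and use Newton's identity to exhibit the contractible tensor factor $\{e_{n+1}(\XX_I)-e_{n+1}(\YY_I),\,(-1)^n(n+1)\}$. The paper instead decomposes the associated MOY graph via Theorem~\ref{thm_glue} so as to factor out $\KR(\Gamma_k^k)$, which lives in $\HMF(\Sym(k|k),\,\Sigma\XX^{n+1}-\Sigma\YY^{n+1})$, and then invokes Proposition~\ref{prop_trivialcat}: for $k>n$ the potential lies in $\frm\setminus\frm^2$ (by the same determinant/Newton computation), so the quotient ring is regular and the whole category vanishes. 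Your argument is more direct and avoids the MOY decomposition; the paper's has the small conceptual bonus of showing an entire $\HMF$ category is trivial rather than a single object. Both rest on the observation that the coefficient of $e_{n+1}$ in $p_{n+1}$ is the unit $\pm(n+1)$.
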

The proof will be divided in two steps:
\begin{enumerate}
\item\label{item:klstep1} Using the theory of (two-sided) Kazhdan-Lusztig cells we reduce to $w= (k,\ k-1,\
  ...,\ 1)$ for $k>n$, in which case $^m\SBM{w}$ is the Soergel bimodule associated to the MOY graph in Figure
  \ref{fig:badbimodule} (Theorem \ref{thm_soergelbimodulesviahecke}).
\item\label{item:klstep2} In this case, we prove that $^m\SBM{(k,\ k-1,\ ...,\ 1)}$ somehow involves the trivial category
$$\HMF(\Sym(x_1,...,x_k),\Sigma x_i^{n+1})=0$$ and deduce the triviality of its stabilization.
\end{enumerate}
We begin with step \ref{item:klstep1}. Recall the following definition of Kazhdan-Lusztig cells of the Hecke algebra (see
\cite[Exercise 6.11]{BjornerBrenti}). Note that
thinking of the Kazhdan-Lusztig elements as functors and products of them as compositions of these functors, it
essentially formalizes what should be meant by saying that one such functor factors through another. 
\begin{definition}
For elements $w,w\p\in\frS_m$ we write $w\leq_{\LR} w\p$ if there exist $s,t\in\frS_m$ such that the coefficient of
$\KL{w}$ in the product $\KL{s}\KL{w\p}\KL{t}$ is nonzero. This defines a preorder on $\frS_m$, and we say that $w$ and
$w\p$ are $\leq_{\LR}$-equivalent, written as $w\sim_{\LR} w\p$, if both $w\leq_{\LR} w\p$ and $w\p\leq_{\LR} w$.  
\end{definition}
The following proposition completely characterizes $\leq_{\LR}$-equivalence in terms of the Robinson-Schensted
correspondence (see \cite{Fulton}):
\begin{prop}[see {\cite[Exercise 6.11(b)]{BjornerBrenti}}]\label{prop_rscharacoflreq}
For $w,w\p\in\frS_m$ the following are equivalent:
\begin{enumerate}
\item $w\sim_{\LR} w\p$, i.e. $w$ and $w\p$ are $\leq_{\LR}$-equivalent.
\item The Robinson-Schensted shapes of $w$ and $w\p$ are the same.
\end{enumerate}
In particular, any $w\in\frS_m$ whose Robinson-Schensted shape has columns of length $$d_1\ \geq\ d_2\ \geq\ ...\ \geq\
d_k$$ is $\leq_{\LR}$-equivalent to the permutation  
$$
(d_1, d_1-1,\cdots, 2, 1)\ (d_1 + d_2, d_1 + d_2 - 1, ..., d_1+1)\cdots (d_1+...+d_k, ..., d_1+...+d_{k-1}+1)
$$
\end{prop}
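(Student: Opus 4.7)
The equivalence (1)$\Leftrightarrow$(2) is the classical theorem on two-sided Kazhdan--Lusztig cells in type $A$. I would not reprove it but rather invoke \cite{BjornerBrenti}, as already referenced in the statement. Conceptually the argument proceeds by translating the definition of $\leq_{\LR}$ into a combinatorial condition on reduced words: the action of $\KL{s}$ from the left (resp.\ right) on $\KL{w}$ is governed by Kazhdan--Lusztig's $\mu$-function combinatorics, which in type $A$ is controlled by the elementary Knuth (resp.\ dual Knuth) relations. A fundamental theorem of Knuth asserts that two permutations in $\frS_m$ have the same Robinson--Schensted insertion tableau if and only if they are Knuth-equivalent, and dually for the recording tableau; combining both sides yields the coincidence of RS shape with the two-sided cell invariant.

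For the ``in particular'' statement, by (1)$\Leftrightarrow$(2) it suffices to verify that the concrete representative
$$w_0 \;:=\; (d_1, d_1-1, \ldots, 1)\,(d_1+d_2, \ldots, d_1+1)\cdots (d_1+\cdots+d_k, \ldots, d_1+\cdots+d_{k-1}+1)$$
has Robinson--Schensted shape whose columns have lengths $d_1, d_2, \ldots, d_k$. Read as a word in one-line notation, $w_0$ is the concatenation of $k$ strictly decreasing blocks of lengths $d_1, d_2, \ldots, d_k$, where the values occurring in the $i$-th block form the interval $\{d_1+\cdots+d_{i-1}+1,\ldots,d_1+\cdots+d_i\}$. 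In particular, every value in block $i$ is strictly smaller than every value in block $i+1$, so no decreasing subsequence of $w_0$ can meet more than one block. By Greene's theorem, the sum of the first $j$ column lengths of the RS shape of $w_0$ equals the maximum total length of a union of $j$ pairwise disjoint decreasing subsequences, which by the above observation is exactly $d_1 + d_2 + \cdots + d_{\min(j,k)}$. Taking successive differences gives the column lengths $d_1, d_2, \ldots, d_k$, as claimed.

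The principal obstacle is of course the equivalence (1)$\Leftrightarrow$(2) itself, which is a celebrated result in Kazhdan--Lusztig combinatorics and would require a substantial detour through Knuth moves, the RSK bijection, and the structure of cells in the Hecke algebra to reprove; I would side-step this by appealing to \cite{BjornerBrenti}. The Greene-theorem step for the representative $w_0$ is then a routine check, once one notices the crucial monotonicity between the value ranges of consecutive blocks.
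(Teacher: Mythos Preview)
Your proposal is correct and matches the paper's approach for the equivalence (1)$\Leftrightarrow$(2): the paper does not prove this proposition at all but simply cites it from \cite{BjornerBrenti}, exactly as you do. For the ``in particular'' clause, the paper gives no argument whatsoever, whereas you supply a clean verification via Greene's theorem. Your observation that the blocks have disjoint, increasing value ranges --- so that any decreasing subsequence lies in a single block --- together with the standard column-version of Greene's theorem is correct and makes the RS shape of the representative $w_0$ explicit. This is a genuine addition over what the paper provides.
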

\begin{cor}\label{cor_crucial}
Let $w\in\frS_m$ have a Robinson-Schensted shape with $k$ rows. Consider
\begin{align*}
^m\SBM{w}\stab{\Sigma\WW^{n+1}-\Sigma\YY^{n+1}} &\ \in\ \HMF(\Sym(\WW|\ZZ),\Sigma\WW^{n+1}-\Sigma\ZZ^{n+1})\\ 
^m\SBM{(k,\ k-1,\ ...,\ 1)}\stab{\Sigma\XX^{n+1}-\Sigma\YY^{n+1}}&\ \in\
\HMF(\Sym(\XX|\YY),\Sigma\XX^{n+1}-\Sigma\YY^{n+1}) 
\end{align*}
for $|\WW|=|\XX|=|\YY|=|\ZZ|=m$. Then there exist matrix 
factorizations $$A\in\HMF(\Sym(\WW|\XX),\Sigma\WW^{n+1}-\Sigma\XX^{n+1}),\quad
B\in\HMF(\Sym(\YY|\ZZ),\Sigma\YY^{n+1}-\Sigma\ZZ^{n+1})$$ such that  
$^m\SBM{w}\stab{\Sigma\WW^{n+1}-\Sigma\ZZ^{n+1}}$ is a summand of $$A\otimes_{\XX} {^m\SBM{(k,\ k-1,\ ...,\
  1)}\stab{\Sigma\XX^{n+1}-\Sigma\YY^{n+1}}}\otimes_{\YY} B.$$ 
In particular, we have 
$$^m\SBM{(k,\ k-1,\ ...,\ 1)}\stab{\Sigma\XX^{n+1}-\Sigma\YY^{n+1}}\ =\ 0\quad\Longrightarrow\quad
^m\SBM{w}\stab{\Sigma\WW^{n+1}-\Sigma\ZZ^{n+1}}\ =\ 0.$$ 
\end{cor}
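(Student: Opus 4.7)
The plan is to reduce everything to a Hecke-algebraic factorization of $w$ through $(k, k-1, \ldots, 1)$ and then to categorify through the dictionary of Theorem \ref{thm_soergelbimodulesviahecke}, applying the stabilization functor at the end via Theorem \ref{thm_glue}. First I would invoke Proposition \ref{prop_rscharacoflreq}: letting $d_1 \geq d_2 \geq \cdots$ denote the column lengths of the Robinson--Schensted shape of $w$, the hypothesis that this shape has $k$ rows forces $d_1 = k$, and the proposition provides $w \sim_{\LR} w^*$ with
$$w^*\ =\ (k, k-1, \ldots, 1) \cdot v,$$
where $v$ is a permutation supported on $\{k+1, \ldots, m\}$. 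Since $(k, k-1, \ldots, 1)$ and $v$ commute and have disjoint support, Theorem \ref{thm_soergelbimodulesviahecke}.\eqref{item_thmsbmrels} gives $\KL{w^*} = \KL{(k, k-1, \ldots, 1)} \cdot \KL{v}$, and correspondingly ${^m\SBM{w^*}} \cong {^m\SBM{(k, k-1, \ldots, 1)}} \otimes_{\CC[\XX]} {^m\SBM{v}}$.

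Next I would unpack $w \leq_{\LR} w^*$ categorically: by definition there exist $s_0, t_0 \in \frS_m$ such that the coefficient of $\KL{w}$ in $\KL{s_0} \KL{w^*} \KL{t_0}$ is nonzero, which through Theorem \ref{thm_soergelbimodulesviahecke} means that ${^m\SBM{w}}$ is a direct summand (up to grading shift) of
$${^m\SBM{s_0}} \otimes_{\CC[\XX]} {^m\SBM{(k, k-1, \ldots, 1)}} \otimes_{\CC[\XX]} {^m\SBM{v}} \otimes_{\CC[\XX]} {^m\SBM{t_0}}.$$
The category $\calB^m$ is Krull--Remak--Schmidt, so ${^m\SBM{v}} \otimes_{\CC[\XX]} {^m\SBM{t_0}}$ decomposes as a finite direct sum of shifted indecomposable Soergel bimodules ${^m\SBM{u}}$, and a summand of a direct sum must sit inside one of the summands. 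Hence there exists $u_0 \in \frS_m$ such that ${^m\SBM{w}}$ is, up to shift, a direct summand of ${^m\SBM{s_0}} \otimes_{\CC[\XX]} {^m\SBM{(k, k-1, \ldots, 1)}} \otimes_{\CC[\XX]} {^m\SBM{u_0}}$; set $s := s_0$ and $t := u_0$.

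Finally I would transfer to matrix factorizations: view ${^m\SBM{s}}$ as a $\Sym(\WW|\XX)$-module, ${^m\SBM{(k, k-1, \ldots, 1)}}$ as a $\Sym(\XX|\YY)$-module, and ${^m\SBM{t}}$ as a $\Sym(\YY|\ZZ)$-module, so that the above $\otimes_{\CC[\XX]}$ becomes the external glueing product $\otimes_{\XX}$ (respectively $\otimes_{\YY}$) of the statement. Every Soergel bimodule is a direct summand of some Bott--Samelson bimodule $\CC[\XX] \otimes_{\CC[\XX]^{i_1}} \cdots \otimes_{\CC[\XX]^{i_r}} \CC[\XX]$, and each factor $\CC[\XX] \otimes_{\CC[\XX]^i} \CC[\XX]$ is free both as a left and as a right $\CC[\XX]$-module; consequently all three Soergel bimodules are free over the intermediate variable set on either side, which is exactly the freeness hypothesis of Theorem \ref{thm_glue}. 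Applying that theorem twice and using additivity of the stabilization functor (which preserves direct summands) yields that ${^m\SBM{w}}\stab{\Sigma\WW^{n+1}-\Sigma\ZZ^{n+1}}$ is a direct summand of
$$A \otimes_{\XX} {^m\SBM{(k, k-1, \ldots, 1)}}\stab{\Sigma\XX^{n+1}-\Sigma\YY^{n+1}} \otimes_{\YY} B$$
with $A := {^m\SBM{s}}\stab{\Sigma\WW^{n+1}-\Sigma\XX^{n+1}}$ and $B := {^m\SBM{t}}\stab{\Sigma\YY^{n+1}-\Sigma\ZZ^{n+1}}$. The ``in particular'' assertion is then immediate, since a direct summand of the zero object is zero.

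The hard part will be mainly the variable-renaming bookkeeping: one needs to identify the internal tensor products over $\CC[\XX]$ of Soergel bimodules with the external glueing products over the set of atomic variables from Theorem \ref{thm_glue} so that the freeness hypothesis truly delivers commutativity of stabilization with each tensor factor; after that, the Krull--Schmidt decomposition of $\calB^m$ and the additivity of stabilization handle the remainder routinely.
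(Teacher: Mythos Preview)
Your argument is correct and follows essentially the same route as the paper's (very terse) proof: establish the summand relation at the level of Soergel bimodules via the Kazhdan--Lusztig cell structure (Proposition~\ref{prop_rscharacoflreq} and Fact~\ref{fact_ggbyindecs}), then apply stabilization and use Theorem~\ref{thm_glue}/\ref{thm_acyclicgraphs} to commute it with the tensor products. Two minor remarks: your step~5 (passing to a single indecomposable $u_0$) is superfluous, since the statement does not ask $B$ to be indecomposable---you may simply take $B$ to be the stabilization of ${^m\SBM{v}}\otimes_{\CC[\XX]}{^m\SBM{t_0}}$; and your phrase ``a summand of a direct sum must sit inside one of the summands'' is only valid here because ${^m\SBM{w}}$ is \emph{indecomposable} and $\calB^m$ is Krull--Schmidt, which you should make explicit.
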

\begin{proof}
Proposition \ref{prop_rscharacoflreq} and Fact \ref{fact_ggbyindecs} show that the statement is true for Soergel 
bimodules, hence by applying the stabilization functor we get the result from Theorem \ref{thm_acyclicgraphs}.
\end{proof}
This finishes step \ref{item:klstep1}. For step \ref{item:klstep2}, the following proposition is crucial:
\begin{prop}\label{prop_trivialcat}
Let $\XX$ and $\YY$ be sets of variables such that $|\XX|=|\YY|>n$. Then we have
$\HMF(\Sym(\XX|\YY),\Sigma\XX^{n+1}-\Sigma\YY^{n+1})=0$.
\end{prop}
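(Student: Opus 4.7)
The plan is to reduce the statement to Propositions \ref{prop_stayregular} and \ref{prop_trivsingcat} by showing that, in the polynomial presentation of $\Sym(\XX|\YY)$, the potential $w := \Sigma\XX^{n+1} - \Sigma\YY^{n+1}$ lies in $\frm\setminus\frm^2$, so that $R := \Sym(\XX|\YY)/(w)$ is actually regular and thus has trivial stable category of MCM modules.

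The key observation is that when $\XX = \{x_1,\ldots,x_m\}$ is viewed as a single tuple, $\Sym(\XX)$ is the ring of symmetric polynomials in $m$ variables, i.e. a polynomial ring freely generated by the elementary symmetric polynomials $e_1(\XX),\ldots,e_m(\XX)$. Hence $\Sym(\XX|\YY)$ is a polynomial ring in the $2m$ generators $e_j(\XX), e_j(\YY)$ for $1\leq j\leq m$, and the graded maximal ideal $\frm$ is generated by these. Consequently $\frm/\frm^2$ is spanned by the classes of the $e_j(\XX),e_j(\YY)$, and an element of $\Sym(\XX|\YY)$ lies outside $\frm^2$ precisely when its expression as a polynomial in these generators has a nonzero linear part.

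Next I would invoke Newton's identity. Because the hypothesis $|\XX|=m>n$ gives $m\geq n+1$, the power sum $p_{n+1}(\XX) = \Sigma\XX^{n+1}$ satisfies
\[
 p_{n+1}(\XX) \ =\ (-1)^n (n+1)\, e_{n+1}(\XX)\ +\ \sum c_\lambda\, e_{\lambda_1}(\XX)\cdots e_{\lambda_k}(\XX),
\]
where the sum is over partitions $\lambda$ of $n+1$ with at least two parts. In particular the linear part of $p_{n+1}(\XX)$ in the generators $e_j(\XX)$ is the nonzero element $(-1)^n(n+1)\,e_{n+1}(\XX)$ (we are in characteristic $0$, and $e_{n+1}(\XX)$ really is a generator because $n+1\leq m$). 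The same holds for $p_{n+1}(\YY)$, and therefore the linear part of $w$ is $(-1)^n(n+1)(e_{n+1}(\XX)-e_{n+1}(\YY))\neq 0$, showing $w\in\frm\setminus\frm^2$.

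Since $w$ is homogeneous and lies in $\frm\setminus\frm^2$, Proposition \ref{prop_stayregular} implies that $R = \Sym(\XX|\YY)/(w)$ is a regular local graded ring. Proposition \ref{prop_trivsingcat} then yields $\uMCM(R) = 0$, and Theorem \ref{thm_hmfmcm} translates this to $\HMF(\Sym(\XX|\YY),w) = 0$. There is no real obstacle in this argument; the only subtle point is to keep in mind that $\Sym(\XX|\YY)$ is a polynomial ring in the $e_j$ and not in the original $x_i,y_i$, so that the criterion for regularity via linear parts applies, and that the hypothesis $m>n$ is precisely what ensures $e_{n+1}(\XX)$ is a free generator rather than vanishing.
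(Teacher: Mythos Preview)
Your proof is correct and follows essentially the same approach as the paper: both arguments show that $w\notin\frm^2$ by expressing the power sum $\Sigma\XX^{n+1}$ as a polynomial in the elementary symmetric generators and observing that its linear part $(-1)^n(n+1)X_{n+1}$ is nonzero precisely when $|\XX|>n$, then invoke Propositions~\ref{prop_stayregular} and~\ref{prop_trivsingcat}. The only cosmetic difference is that the paper cites a determinant formula from \cite{WuLinkHomology} for this expansion, whereas you use Newton's identity directly.
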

\begin{proof}
Let $k := |\XX|= |\YY|$ and recall that we denoted $X_1,...,X_k$ the elementary symmetric polynomials in $\XX$, while
$X_l := 0$ for $l>k$.  By \cite[Formula 4.4]{WuLinkHomology}) we have $\Sigma\XX^{n+1} = P(X_1,...,X_d)$, where 
$$P := \begin{vmatrix} X_1 & X_2 & X_3 & \cdots & X_{n}   & (n+1)X_{n+1}\\ 
                       1   & X_1 & X_2 & \cdots & X_{n-1} &    n X_n\\
                       0   &   1 & X_1 & \cdots & X_{n-2} & (n-1)X_{n-1}\\
                       \cdots & \cdots & \cdots & \cdots & \cdots & \cdots\\
                       0   &   0 &   0 & \cdots & X_1     & 2 X_2\\
                       0   &   0 &   0 & \cdots &   1     & X_1\end{vmatrix}$$
and similar for $\Sigma\YY^{n+1}$. In particular, we conclude that $\Sigma\XX^{n+1}-\Sigma\YY^{n+1}\in\frm^2$, for\break 
$\frm := (X_1,...,X_k,Y_1,...,Y_k)$ the maximal ideal in $\Sym(\XX|\YY)$, if and only if $k\leq n$. Thus, in case
$k>n$ we see that $\Sym(\XX|\YY)/(\Sigma\XX^{n+1}-\Sigma\YY^{n+1})$ is regular (Proposition \ref{prop_stayregular})
and hence its singularity category $\HMF(\Sym(\XX|\YY),\Sigma\XX^{n+1}-\Sigma\YY^{n+1})$ is trivial (Proposition
\ref{prop_trivsingcat}).  
\end{proof}

\begin{figure}[h]\begin{center}
\begin{tikzpicture}
\draw[oredged] (0,0) -- node[txt]{$1$} (-2,1.2) node[above]{$\YY_1$} node[fat]{};
\draw[oredged] (0,0) -- node[txt]{$1$} (-1,1.2) node[above]{$\YY_2$} node[fat]{};
\draw[oredged] (0,0) -- node[txt]{$1$} (+1,1.2) node[above]{$\YY_{k-1}$} node[fat]{};
\draw[oredged] (0,0) -- node[txt]{$1$} (+2,1.2) node[above]{$\YY_k$} node[fat]{};
\draw[oredgeu] (-2,-1.2) node[fat]{} node[below]{$\XX_1$}     -- node[txtp]{$1$} (0,0);
\draw[oredgeu] (-1,-1.2) node[fat]{} node[below]{$\XX_2$}     -- node[txtp]{$1$} (0,0) ;
\draw[oredgeu] (+1,-1.2) node[fat]{} node[below]{$\XX_{k-1}$} -- node[txtp]{$1$} (0,0);
\draw[oredgeu] (+2,-1.2) node[fat]{} node[below]{$\XX_k$}     -- node[txtp]{$1$} (0,0)    node[big]{} ;
\draw[oredgeu] (+3,-1.2) node[fat]{} node[below]{$\ol{\XX}_{1}$} -- node[txtq]{$1$} (+3,1.2) node[above]{$\ol{\YY}_{1}$}
node[fat]{}; 
\draw[oredgeu] (+4,-1.2) node[fat]{} node[below]{$\ol{\XX}_{m-k-1}$} -- node[txtq]{$1$} (+4,1.2)
node[above]{$\ol{\YY}_{m-l-1}$} 
node[fat]{}; 
\node at (5,0){$\cdots$};
\draw[oredgeu] (+6,-1.2) node[fat]{} node[below]{$\ol{\XX}_{m-k}$}-- node[txtq]{$1$} (+6,1.2) node[above]{$\ol{\YY}_{m-k}$}
node[fat]{}; 
\node at (0,+1)[above] {$\cdots$};
\node at (0,-1)[below] {$\cdots$};
\draw[decorate,decoration={brace,amplitude=5pt}] (2,-1.8) -- node[txt,below=10pt,pos=0.5]{$k>n$\text{
    strands}}(-2,-1.8);
\draw[decorate,decoration={brace,amplitude=5pt}] (-2,1.8) -- node[txt,above=10pt,pos=0.5]{$m$\text{ strands}}(6,1.8);
\end{tikzpicture}
\end{center}
\caption{MOY-graph with trivial matrix factorization}
\label{fig:badbimodule}
\end{figure}
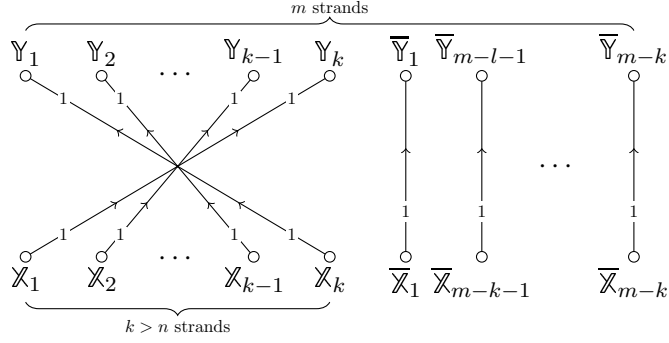

\begin{proof}[of Theorem \ref{thm_killbadbimodule}] We know from Corollary \ref{cor_crucial} that it suffices to
 show that $$^m\SBM{(k,\ k-1,\ ...,\ 1)}\stab{\Sigma\XX^{n+1}+\Sigma\ol{\XX}^{n+1}-\Sigma\YY^{n+1}-\Sigma\ol{\YY}^{n+1}}\
 =\ 0.$$ for $k>n$, which is the matrix factorization associated to the MOY-graph in Figure
 \ref{fig:badbimodule}. This graph can be decomposed into $\Gamma_{1^k}^{1^k}$ involving the variables from $\XX$ and $\YY$
 and into $m-k$ copies of $\Gamma_1^1$ involving the variables from $\ol{\XX}$ and $\ol{\YY}$, and so Theorem 
 \ref{thm_glue} yields $$\KR(\Gamma_{1^k}^{1^k}\sqcup\Gamma_{1}^1\sqcup\cdots\sqcup\Gamma_1^1)\ \simeq\
 \KR(\Gamma_{1^k}^{1^k})\otimes_{\CC} \KR(\Gamma_1^1)\otimes_\CC ...\otimes_\CC \KR(\Gamma_1^1),$$ whence we may assume
 $m=k$. Applying Theorem \ref{thm_glue} again, this time to the presentation of $\Gamma_{1^k}^{1^k}$ as the concatenation of
 $\Gamma_{1^k}^{k}$, $\Gamma_k^k$ and $\Gamma_k^{1^k}$ (similar to Figure \ref{fig:mergesplit}), shows that it suffices
 to prove $\KR(\Gamma_k^k)\simeq 0$. This follows from Proposition \ref{prop_trivialcat}.
\end{proof}

\begin{rem}
The proof of Theorem \ref{thm_killbadbimodule} suggests that one should rather think of a matrix factorization
$X\in\HMF(\Sym(\XX|\YY),\Sigma\XX^{n+1}-\Sigma\YY^{n+1})$ as a
functor $$\left.X\makebox[8mm][c]{$\stackrel[\mathclap{\Sym(\XX|\YY)}]{}{\otimes}$}
  -\uparrow_{\Sym(\YY)}^{\Sym(\XX|\YY)}\right\downarrow^{\Sym(\XX|\YY)}_{\Sym(\XX)}:\quad 
\HMF^\infty(\Sym(\YY),\Sigma\YY^{n+1})\longrightarrow\HMF^\infty(\Sym(\XX),\Sigma\XX^{n+1})$$ from the homotopy category 
of graded matrix factorizations of type $(\Sym(\YY),\Sigma\YY^{n+1})$ to those of type $(\Sym(\XX),\Sigma\XX^{n+1})$
. In some sense, this can be thought of as some kind of Fourier Mukai transform with kernel $X$, noting the striking
similarity to the usual formula $$\bfR \pr^X\la\circ \left(X\stackrel{\LL}{\otimes} -\right)\circ
\pr_Y\ua:\quad\bfD^b(Y)\longrightarrow\bfD^b(X)$$ for the Fourier-Mukai transform associated to some $X\in\bfD^b(X\times
Y)$.  However, though this viewpoint seems to be the most natural in our context, the author does not know to what
extend $X$ is determined by the Fourier Mukai transform attached to it, which is why we sticked working with $X$ instead
of its Fourier Mukai transform the proof of Theorem \ref{thm_killbadbimodule}.
\end{rem}

\section{Duality on graded matrix factorizations}\label{sec_duality}

In this section, we define for graded matrix factorizations $M$ and $N$ of type $(S\ld,w_0)$ and $(S\ld,w_1)$,
respectively, a \textit{homomorphism factorization} $\hom_{S\ld}(M,N)$, which is a graded matrix factorization of type
$(S\ld,w_1-w_0)$. In particular, in case $w_0 = w_1$ we get a homomorphism complex which in fact coincides with the
homomorphism complex in the canonical differential-graded enrichment of $\MF^\infty(S\ld,w_0)$. As a special case, we will
define for each graded matrix factorization $M$ its \textit{dual} $M\us$ to be $\hom_{S\ld}(M,S)$, where $S$ is the
trivial matrix factorization $S\ld\to 0\to S\ld$ of type $(S\ld,0)$, and we will check that the usual isomorphism
$M\us\otimes_{S\ld} N\cong\hom_{S\ld}(M,N)$ holds for finitely generated $M$. We will compare this duality with the usual
duality on $\MCM(S\ld/(w))$. 

\subsection{Homomorphism factorizations and Duality}

For easier reference, recall the definition of the shift functor:
\begin{align*}
[1]:\ \MF^{(\infty)}(S\ld,w) & \longrightarrow \MF^{(\infty)}(S\ld,w)\\
\left(M^0\ld\mor{f}M^{-1}\ld\mor{g}M^0\ld\right) & \longmapsto \left(M^{-1}\ld\langle
  d\rangle\mor{-g}M^0\ld\mor{-f}M^{-1}\ld\langle d\rangle\right) 
\end{align*}

\begin{definition}
Let $M := M^0\ld\mor{f} M^{-1}\ld\mor{g} M^0\ld$, $N := N^0\ld\mor{f\p} N^{-1}\ld\mor{g\p} N^0\ld$ be graded matrix
factorizations of type $(S\ld,w_0)$ and $(S\ld,w_1)$, respectively. The \textit{homomorphism factorization}
$\hom_{S\ld}(M,N)$ is defined as  
\begin{equation*}\begin{tikzpicture}[description/.style={fill=white,inner sep=2pt}]
    \matrix (m) [matrix of math nodes, row sep=3em,
                 column sep=2.5em, text height=1.5ex, text depth=0.25ex,
                 inner sep=0pt, nodes={inner xsep=0.3333em, inner ysep=0.3333em}]
    {
\hom_{S\ld}(M,N)^0 & := & \hom_S(M\ld^0,N\ld^0)\ld\ \oplus\ \hom_S(M^{-1}\ld,N^{-1}\ld)\ld \\
\hom_{S\ld}(M,N)^{-1} & := & \hom_S(M\ld^0,N^{-1}\ld)\ld\ \oplus\ \hom_S(M^{-1}\ld,N^0\ld)\ld\langle -d\rangle\\
\hom_{S\ld}(M,N)^0 & := & \hom_S(M\ld^0,N\ld^0)\ld\ \oplus\ \hom_S(M^{-1}\ld,N^{-1}\ld)\ld\\
    };
    \draw[->] (m-1-1) -- (m-2-1);
    \draw[->] (m-2-1) -- (m-3-1);

    \draw[->] (m-1-3) -- node[right,scale=0.75]{$\begin{pmatrix} f\p\circ\sqm & -(\sqm\circ f)\\ -(\sqm\circ g) &
        g\p\circ\sqm\end{pmatrix}$} (m-2-3); 
    \draw[->] (m-2-3) -- node[right,scale=0.75]{$\begin{pmatrix} g\p\circ\sqm & \sqm\circ f\\ \sqm\circ g & f\p\circ
        \sqm\end{pmatrix}$} (m-3-3);
\end{tikzpicture}\end{equation*}
This is a graded matrix factorization of type $(S\ld,w_1-w_0)$.
\end{definition}

\begin{definition}
Let $M := M^0\ld\mor{f} M^{-1}\ld\mor{g} M^0\ld$ be a graded matrix factorization of type $(S\ld,w)$ and denote by $S$ the
trivial matrix factorization $S\ld\to 0\to S\ld$ of type $(S\ld,0)$. The \textit{dual} of $M$, denoted $M\us$, is
defined as $$M\us\ :=\ \hom_{S\ld}(M,S)\ =\
\left(M^0\ld\right)\us\xrightarrow{-g\ua}\left(M^{-1}\ld\right)\us\langle -d\rangle\xrightarrow{f\ua}
\left(M^0\ld\right)\us.$$ $M\us$ is a graded matrix factorization of type $(S\ld,-w)$. 
\end{definition}

\begin{fact}\label{fact_doubledual}
For a finitely generated graded matrix factorization $M := M^0\ld\mor{f} M^{-1}\ld\mor{g} M^0\ld$, the
double dual $M\uss$ is canonically isomorphic to $M$ via
\begin{equation*}\begin{tikzpicture}[description/.style={fill=white,inner sep=2pt}]
    \matrix (m) [matrix of math nodes, row sep=3em,
                 column sep=2.5em, text height=1.5ex, text depth=0.25ex,
                 inner sep=0pt, nodes={inner xsep=0.3333em, inner ysep=0.3333em}]
    {
M\uss && \left(M^0\ld\right)\uss & \left(M^{-1}\ld\right)\uss & \left(M^0\ld\right)\uss \\ 
M && M^0\ld & M^{-1}\ld & M^0\ld\\       
    };
    \draw[->] (m-2-1) -- node[left,scale=0.75]{$\cong$} (m-1-1);
    \draw[->] (m-2-3) -- node[left,scale=0.75]{$-\ev$} node[right,scale=0.75]{$\cong$} (m-1-3);
    \draw[->] (m-2-4) -- node[left,scale=0.75]{$-\ev$} node[right,scale=0.75]{$\cong$} (m-1-4);
    \draw[->] (m-2-5) -- node[left,scale=0.75]{$-\ev$} node[right,scale=0.75]{$\cong$} (m-1-5);

    \draw[->] (m-1-3) -- node[above,scale=0.75]{$-f^{\ast\ast}$} (m-1-4);
    \draw[->] (m-1-4) -- node[above,scale=0.75]{$-g^{\ast\ast}$} (m-1-5);

    \draw[->] (m-2-3) -- node[above,scale=0.75]{$f$} (m-2-4);
    \draw[->] (m-2-4) -- node[above,scale=0.75]{$g$} (m-2-5);
\end{tikzpicture}\end{equation*}
\end{fact}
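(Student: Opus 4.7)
The plan is to reduce the statement to the standard fact that for a finitely generated free graded $S\ld$-module $N\ld$ the evaluation map $\ev_{N\ld}: N\ld \to N\ld\uss$, defined by $\ev_{N\ld}(n)(\varphi) := \varphi(n)$, is an isomorphism of graded $S\ld$-modules, and it is natural in the sense that $\alpha\uss \circ \ev_{N\ld} = \ev_{N'\ld} \circ \alpha$ for every homogeneous homomorphism $\alpha: N\ld \to N'\ld$. Since by Definition \ref{def_gradedmf} the modules $M^0\ld$ and $M^{-1}\ld$ of any (finitely generated) graded matrix factorization are free of finite rank, the individual evaluation maps $\ev_{M^0\ld}$ and $\ev_{M^{-1}\ld}$ are automatically isomorphisms. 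The whole content of the fact is then to check that a suitably signed version of these gives a morphism of the $\ZZ/2\ZZ$-graded ``complexes'' in question.

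First I would compute $M\uss$ by applying the dual construction twice. From $M\us = \big( (M^0\ld)\us \xrightarrow{-g\ua} (M^{-1}\ld)\us\langle -d\rangle \xrightarrow{f\ua} (M^0\ld)\us\big)$ a second application produces the structure maps $-(f\ua)\ua = -f\uss$ and $(-g\ua)\ua = -g\uss$; on the level of underlying modules the two internal shifts $\langle -d\rangle$ collapse since $\hom_S(N\langle -d\rangle,S\ld)\langle -d\rangle \cong N\us\langle d\rangle\langle -d\rangle = N\us$, which gives $M\uss = \big( (M^0\ld)\uss \xrightarrow{-f\uss} (M^{-1}\ld)\uss \xrightarrow{-g\uss} (M^0\ld)\uss\big)$ as displayed in the statement.

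Next I would define $\Phi = (\phi_0,\phi_{-1}): M \to M\uss$ with $\phi_0$ and $\phi_{-1}$ each equal to $\pm\ev$, where the signs are chosen so that the pair of squares
\[
\phi_{-1}\circ f \;=\; -f\uss\circ\phi_0, \qquad \phi_0\circ g \;=\; -g\uss\circ\phi_{-1}
\]
commutes. Each of these equalities is an immediate consequence of the naturality identities $f\uss\circ\ev_{M^0\ld} = \ev_{M^{-1}\ld}\circ f$ and $g\uss\circ\ev_{M^{-1}\ld} = \ev_{M^0\ld}\circ g$, combined with the two minus signs that $M\uss$ picks up in its differentials. Finally, $\Phi$ is an isomorphism of matrix factorizations because both components are isomorphisms of graded $S\ld$-modules.

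The only real obstacle is the bookkeeping of signs and internal degree shifts. One must track how the $\langle -d\rangle$ shifts introduced at each dualization cancel on the double dual, and how the sign $-g\ua$ in the first dualization interacts with the sign $-(\,\cdot\,)\ua$ introduced at the second dualization to produce the signs $-f\uss$, $-g\uss$ in $M\uss$. Once those signs are in hand, the naturality of $\ev$ makes the verification entirely formal.
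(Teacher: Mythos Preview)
Your approach is correct and is essentially the only natural one: the paper states this result as a Fact without proof, so there is nothing further to compare against beyond the displayed diagram. Unwinding the definition of $M\uss$, cancelling the two $\langle -d\rangle$ shifts on the middle term, and then invoking naturality of the evaluation map on finitely generated free modules is exactly what is implicitly intended.

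One caution on the sign bookkeeping you flag at the end: if you take both components to be $-\ev$ as in the paper's diagram, the squares do \emph{not} commute on the nose, since naturality gives $f\uss\circ\ev_{M^0\ld}=\ev_{M^{-1}\ld}\circ f$ while the differential in $M\uss$ is $-f\uss$. The components must carry opposite signs (for instance $\phi_0=\ev$, $\phi_{-1}=-\ev$) for both squares to close. Your formulation ``$\pm\ev$, where the signs are chosen so that the squares commute'' is therefore the safe and correct one; the uniform $-\ev$ in the displayed diagram appears to be a minor slip.
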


\begin{fact}\label{fact_tensorhomiso}
Let $M := M^0\ld\mor{f} M^{-1}\ld\mor{g} M^0\ld$, $N := N^0\ld\mor{f\p} N^{-1}\ld\mor{g\p} N^0\ld$ be graded matrix
factorizations of type $(S\ld,w_0)$ and $(S\ld,w_1)$, respectively, and assume that $M$ is finitely generated. Then
there is a canonical isomorphism of matrix factorizations of type $(S\ld,w_1-w_0)$
$$M\us\otimes_{S\ld} N\ \ \stackrel{\cong}{\longrightarrow}\ \ \hom_{S\ld}(M,N).$$
\end{fact}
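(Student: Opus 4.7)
The plan is to construct the canonical evaluation map summand by summand and verify that it is a morphism of matrix factorizations; finite generation of $M$ will make each component an isomorphism, so the whole map will automatically be an isomorphism once we know it respects the two differentials.

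First, I would write down the standard evaluation map $\ev:(M^i)\us\otimes_{S\ld} N^j\to\hom_{S\ld}(M^i, N^j)$, $\varphi\otimes n\mapsto(m\mapsto\varphi(m)\cdot n)$, for each combination $(i,j)\in\{0,-1\}\times\{0,-1\}$. Under the identifications $(M\us)^0=(M^0)\us$ and $(M\us)^{-1}=(M^{-1})\us\langle -d\rangle$ coming from the very definition of $M\us$, the four summands making up the even and odd parts of $M\us\otimes_{S\ld} N$ as prescribed by Definition \ref{def_internaltensor} correspond bijectively to the four summands of $\hom_{S\ld}(M,N)^0$ and $\hom_{S\ld}(M,N)^{-1}$, with the internal shift $\langle d\rangle$ in the Koszul tensor convention matching the shift $\langle -d\rangle$ in the odd part of $\hom_{S\ld}(M,N)$. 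Because $M$ is finitely generated, both $M^0$ and $M^{-1}$ are finitely generated free graded $S\ld$-modules, so each componentwise $\ev$ is already an isomorphism of graded $S\ld$-modules, and hence so is the direct sum.

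Next, I would verify that this collection of componentwise isomorphisms intertwines the two differentials. On the source, the differentials are obtained by feeding the structure maps $-g\ua$, $f\ua$ of $M\us$ and $f\p$, $g\p$ of $N$ into the $2\times 2$ block matrices of Definition \ref{def_internaltensor}; on the target, they are the explicit matrices with entries $f\p\circ\sqm$, $-\sqm\circ f$, etc.\ displayed in the definition of $\hom_{S\ld}(M,N)$. The verification is then a direct calculation: one traces each entry through $\ev$ and compares. Once this is done, the map $M\us\otimes_{S\ld}N\to\hom_{S\ld}(M,N)$ is a morphism of matrix factorizations of type $(S\ld,w_1-w_0)$, and since it is componentwise bijective it is an isomorphism.

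The main obstacle is sign bookkeeping. The dual carries an internal minus sign in one of its structure maps, and the Koszul sign rule built into Definition \ref{def_internaltensor} contributes further signs; these have to cancel against the minus signs appearing in the two block matrices defining the differential on $\hom_{S\ld}(M,N)$. My strategy is the pedestrian one: after applying $\ev$ componentwise, write the two $4\times 4$ block matrices representing the two differentials side by side and compare entries, also checking the matching of the internal shifts $\langle\pm d\rangle$. There is no conceptual difficulty, only care. Naturality of the resulting isomorphism in both $M$ and $N$ is then inherited for free from the naturality of the componentwise evaluation maps at the level of graded $S\ld$-modules.
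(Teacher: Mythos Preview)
Your approach is correct and is exactly the standard argument one would expect. Note, however, that the paper itself does not supply a proof for this fact: it is stated as a \emph{Fact} without proof, presumably because the author regards the componentwise evaluation map together with the sign verification as routine. So there is nothing to compare against; your proposal simply fills in what the paper leaves implicit.
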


There is another kind of duality on graded matrix factorizations which preserves the type, namely the one corresponding
to the usual duality $\hom_{R}(-,R\ld)\ld$ on $\uMCM(R\ld)$ for a Gorenstein graded ring $R\ld$. It can be described
explicitly as follows:

\begin{definition}\label{def_wdual}
Let $M := M^0\ld\mor{f} M^{-1}\ld\mor{g} M^0\ld$ be a graded matrix factorization of type $(S\ld,w)$. We define its
\textit{$w$-dual} $M\uc$ as
$$M\uc\ :=\ \left(M^{-1}\ld\right)\us\langle -d\rangle\xrightarrow{f\us\langle -d\rangle} \left(M^0\ld\right)\us\langle
-d\rangle\xrightarrow{g\us\langle -d\rangle} \left(M^{-1}\ld\right)\us\langle -d\rangle.$$
\end{definition}

The shift in the internal degree is explained in the proof of the following fact.

\begin{fact}\label{fact_wdualcompatible}
The following diagram is commutative up to canonical isomorphism
\begin{equation*}\begin{tikzpicture}[description/.style={fill=white,inner sep=2pt}]
    \matrix (m) [matrix of math nodes, row sep=3em,
                 column sep=2.5em, text height=1.5ex, text depth=0.25ex,
                 inner sep=0pt, nodes={inner xsep=0.3333em, inner ysep=0.3333em}]
    {
      \MF(S\ld,w) && \MCM(R\ld)\\
      \MF(S\ld,w) && \MCM(R\ld)\\
    };
    \draw[->] (m-1-1) -- node[scale=0.75,above]{$\coker$} (m-1-3);
    \draw[->] (m-1-1) -- node[scale=0.75,left]{$(-)\uc$} (m-2-1);
    \draw[->] (m-2-1) -- node[scale=0.75,below]{$\coker$} (m-2-3);
    \draw[->] (m-1-3) -- node[scale=0.75,right]{$\hom_R(-,R\ld)\ld$} (m-2-3);
\end{tikzpicture}\end{equation*}
\end{fact}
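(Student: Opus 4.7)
The plan is to compute $\coker(M\uc)$ directly and identify it with $\hom_{R\ld}(\coker(M),R\ld)\ld$ in a natural way, using the key isomorphism \eqref{eq:exthypersurface2} already established in the proof of Proposition \ref{prop_hypersurfacegorenstein}.

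First, starting from a graded matrix factorization $M = (M^0\ld \xrightarrow{f} M^{-1}\ld \xrightarrow{g} M^0\ld)$ of type $(S\ld, w)$, recall that $K\ld := \coker(g)$ is naturally an $R\ld := S\ld/(w)$-module, and the sequence
$$0 \longrightarrow M^{-1}\ld \xrightarrow{\ g\ } M^0\ld \longrightarrow K\ld \longrightarrow 0$$
is a free $S\ld$-resolution of $K\ld$ of length one (indeed, $g$ is injective, as $fg = w\cdot\id_{M^0\ld}$ shows that any element in $\ker(g)$ is annihilated by $w$, and $w$ is a nonzerodivisor on the free module $M^{-1}\ld$ by Fact \ref{fact_regularisadomain}). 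Dualizing this resolution via $(-)\us = \hom_{S\ld}(-,S\ld)\ld$ gives
$$0 \to (M^0\ld)\us \xrightarrow{g\us} (M^{-1}\ld)\us \to \ext^1_{S\ld}(K\ld, S\ld)\ld \to 0,$$
since $\hom_{S\ld}(K\ld, S\ld)\ld = 0$ (an element of $\hom_{S\ld}(K\ld,S\ld)$ must be annihilated by $w$, which is a nonzerodivisor on $S\ld$). Hence $\ext^1_{S\ld}(K\ld,S\ld)\ld \cong \coker(g\us)$.

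Next, invoke the isomorphism \eqref{eq:exthypersurface2} from the proof of Proposition \ref{prop_hypersurfacegorenstein}, which gives a natural identification
$$\hom_{R\ld}(K\ld, R\ld)\ld \ \cong\ \ext^1_{S\ld}(K\ld, S\ld)\ld\langle -d\rangle.$$
Combining this with the computation above yields
$$\hom_{R\ld}(\coker(M), R\ld)\ld\ \cong\ \coker(g\us)\langle -d\rangle\ =\ \coker\!\left(\,(M^0\ld)\us\langle -d\rangle \xrightarrow{g\us\langle -d\rangle} (M^{-1}\ld)\us\langle -d\rangle\,\right),$$
and the right hand side is by definition precisely $\coker(M\uc)$. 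This is exactly the internal degree shift appearing in Definition \ref{def_wdual}; its appearance here is the whole reason for including it in the definition.

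It remains to check that this chain of isomorphisms is natural in $M$, so that it promotes to an isomorphism of functors $\coker\circ(-)\uc \cong \hom_{R\ld}(-,R\ld)\ld\circ\coker$. This is routine: a morphism $(\alpha,\beta)$ of matrix factorizations induces a morphism between the corresponding length-one free resolutions, hence by functoriality of $\ext^1_{S\ld}(-,S\ld)\ld$ a morphism between the relevant cokernels, and the naturality of \eqref{eq:exthypersurface2} (both sides being effaceable $\delta$-functors) carries this through. The main (very mild) obstacle is simply bookkeeping the degree shift $\langle -d\rangle$ carefully, since it arises from the connecting homomorphism in the long exact sequence obtained from $0\to S\ld\langle -d\rangle\xrightarrow{w}S\ld\to R\ld\to 0$ used to prove \eqref{eq:exthypersurface2}; with this shift absorbed into the definition of $M\uc$ the two functors match on the nose.
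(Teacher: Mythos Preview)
Your proof is correct and takes a genuinely different route from the paper's. The paper works over $R\ld$ throughout: it writes down the $2$-periodic $R\ld$-free resolution of $\coker(M)$, applies $\hom_R(-,R\ld)\ld$, and then reads off $\hom_R(\coker(M),R\ld)\ld\cong\ker(g\us)\cong\coker(g\us)\langle -d\rangle$ directly from the periodicity (the shift $\langle -d\rangle$ arising from one full period of the resolution). You instead work over $S\ld$: you use the length-one $S\ld$-free resolution $0\to M^{-1}\ld\xrightarrow{g} M^0\ld\to K\ld\to 0$, dualize over $S\ld$ to compute $\ext^1_{S\ld}(K\ld,S\ld)\ld\cong\coker(g\us)$, and then invoke the change-of-rings isomorphism \eqref{eq:exthypersurface2} to pass to $\hom_{R\ld}(K\ld,R\ld)\ld$.

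Your approach has the virtue of explaining conceptually where the shift $\langle -d\rangle$ comes from --- it is exactly the shift in \eqref{eq:exthypersurface2}, arising from the connecting map for $0\to S\ld\langle -d\rangle\xrightarrow{w}S\ld\to R\ld\to 0$ --- and it reuses an already-proved lemma rather than computing afresh. The paper's argument is more self-contained (it stays within the category of $R\ld$-modules and the periodic resolution, which is the natural habitat of matrix factorizations) and makes the periodicity visibly responsible for the identification $\ker(g\us)\cong\coker(g\us)\langle -d\rangle$. Both are short and valid; they are really two faces of the same computation, related by the fact that the periodic $R\ld$-resolution is obtained from the $S\ld$-resolution by reducing mod $w$ and unrolling.
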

\begin{proof}
For a graded matrix factorization $M := M^0\ld\mor{f} M^{-1}\ld\mor{g} M^0\ld$ of type $(S\ld,w)$ we have the usual
exact sequence of graded $R\ld$-modules $$...\to M^{-1}\ld/wM^{-1}\ld\langle -d\rangle\xrightarrow{g} M^0\ld/wM^0\ld\langle 
-d\rangle\xrightarrow{\ f\ } M^{-1}\ld/wM^{-1}\ld\xrightarrow{\ g\ }  M^0\ld/wM^0\ld\longrightarrow \coker(M).$$ Applying
$\hom_{R}(-,R\ld)\ld$ to this sequence yields the exact sequence
$$...\leftarrow\hom_R(M^{-1}\ld/wM^{-1}\ld,R\ld)\ld \leftarrow\hom_R(M^0\ld/wM^0\ld,R\ld)\ld\leftarrow
\hom_R(\coker(M),R\ld)\ld\leftarrow 0$$ 
which is canonically isomorphic to
$$...\xleftarrow{g\us} \left(M^0\ld\right)\mathclap{\us}/w\left(M^0\ld\right)\mathclap{\us}\langle d\rangle\leftarrow  
\left(M^{-1}\ld\right)\mathclap{\us}/w\left(M^{-1}\ld\right)\mathclap{\us}\stackrel{\ g\us}{\leftarrow}
\left(M^0\ld\right)\mathclap{\us}/w\left(M^0\ld\right)\mathclap{\us}\leftarrow\hom_R(\coker(M),R\ld)\ld.$$
We conclude that $$\hom_R(\coker(M),R\ld)\ld\cong\ker(g\us)\cong\coker(g\us)\langle -d\rangle\cong\coker(M\uc)$$ as
claimed. 
\end{proof}

\begin{fact}\label{fact_wdualtensorcompatibility}
Let $M := M^0\ld\mor{f} M^{-1}\ld\mor{g} M^0\ld$, $N := N^0\ld\mor{f\p} N^{-1}\ld\mor{g\p} N^0\ld$ be graded matrix
factorizations of type $(S\ld,w_0)$ and $(S\ld,w_1)$, respectively. Then there is a canonical isomorphism 
$$\left(M\otimes_{S\ld} N\right)\uc\quad\cong\quad M\uc\otimes_{S\ld} N\uc [1].$$
\end{fact}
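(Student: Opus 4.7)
The plan is to unpack both sides of the claimed isomorphism into explicit $\ZZ/2\ZZ$-graded free $S\ld$-modules with their $(S\ld,w_0+w_1)$-type differentials, and then match them directly. By Definition \ref{def_internaltensor}, $M\otimes_{S\ld} N$ has even part $M^0\ld\otimes N^0\ld \oplus (M^{-1}\ld\otimes N^{-1}\ld)\langle d\rangle$ and odd part $M^0\ld\otimes N^{-1}\ld \oplus M^{-1}\ld\otimes N^0\ld$, with prescribed block-matrix differentials built from $f$, $g$, $f'$, $g'$. Applying $(-)\uc$ dualizes each summand, swaps the even and odd parts, and shifts everything by $\langle -d\rangle$. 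On the other side, $M\uc\otimes_{S\ld} N\uc$ is the tensor product of factorizations whose even and odd parts are $(M^{-1}\ld)\us\langle -d\rangle,(M^0\ld)\us\langle -d\rangle$ and $(N^{-1}\ld)\us\langle -d\rangle,(N^0\ld)\us\langle -d\rangle$, and $[1]$ finally swaps the resulting parts with an additional $\langle d\rangle$-shift on the new even part together with sign flips on the differentials.

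First I would identify the underlying $\ZZ/2\ZZ$-graded $S\ld$-modules of the two sides, using the canonical isomorphisms $(P\oplus Q)\us\cong P\us\oplus Q\us$ and $(P\otimes_{S\ld} Q)\us\cong P\us\otimes_{S\ld} Q\us$ for the free pieces involved, together with the compatibility of $(-)\us$ with internal grading shifts. Collecting the $\langle \pm d\rangle$ shifts correctly, a direct bookkeeping check shows that both sides decompose as the same direct sum of four shifted tensor products built from $(M^0\ld)\us, (M^{-1}\ld)\us, (N^0\ld)\us, (N^{-1}\ld)\us$. In a second step, I would write out the block-matrix differentials on both sides explicitly from Definitions \ref{def_internaltensor} and \ref{def_wdual} together with the sign conventions of Remark \ref{rem_pretriag} for $[1]$, and verify that they agree under the chosen identification.

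The hard part will be the sign chase, since three independent sources of signs must be reconciled: the Koszul signs in the block-matrix form of Definition \ref{def_internaltensor}, the signs arising from $(-)\us$ (both in the formula for $M\us$ and from swapping tensor factors under dualization), and the overall sign flip introduced by $[1]$. My expectation is that these can be absorbed consistently by decorating the individual summand-isomorphisms by appropriate $\pm 1$ factors. A cleaner alternative that avoids the full matrix computation is to exploit naturality in $M$ and $N$: since both constructions are functorial, it suffices to verify the isomorphism on a generating class of factorizations such as the elementary Koszul factorizations $\{x,y\}$, where both sides reduce to very small explicit factorizations whose comparison can be done once and for all and pins down the sign conventions.
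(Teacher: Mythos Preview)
Your main approach is exactly what the paper does: its entire proof is the single sentence ``This follows from direct calculation,'' and your unpacking of both sides together with the sign chase is precisely that calculation spelled out in more detail. One caution about your proposed shortcut via Koszul factorizations: arbitrary matrix factorizations are not in general built from the $\{x,y\}$ under operations with which both sides are manifestly compatible, so reducing to that ``generating class'' would itself require justification; stick with the direct computation.
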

\begin{proof}
This follows from direct calculation.
\end{proof}
The above two dualities are related by a sign change.
\begin{definition}
Let $M := M^0\ld\mor{f} M^{-1}\ld\mor{g} M^0\ld$ be a graded matrix factorization of type $(S\ld,w)$. We define the 
\textit{sign change} $\sigma(M)$ of $M$ as $$\sigma(M)\ :=\ M^0\ld\mor{-f} M^{-1}\ld\mor{g} M^0\ld.$$
This is a matrix factorization of type $(S\ld,-w)$, and $\sigma$ defines isomorphisms of categories
$$\MF^{(\infty)}(S\ld,w)\cong\MF^{(\infty)}(S\ld,-w)\ \ \quad\text{and}\quad\ \ 
\HMF^{(\infty)}(S\ld,w)\cong\HMF^{(\infty)}(S\ld,-w).$$ 
\end{definition}
\begin{fact}\label{fact_dualwdualrelation}
The following diagram is commutative up to canonical isomorphism:
\begin{equation*}\begin{tikzpicture}[description/.style={fill=white,inner sep=2pt}]
    \matrix (m) [matrix of math nodes, row sep=3em,
                 column sep=2.5em, text height=1.5ex, text depth=0.25ex,
                 inner sep=0pt, nodes={inner xsep=0.3333em, inner ysep=0.3333em}]
    {
       \MF(S\ld,w) && \MF(S\ld,-w)\\
       \MF(S\ld,-w) && \MF(S\ld,-w)\\
    };
    \draw[->] (m-1-1) -- node[left,scale=0.75]{$(-)\us$} (m-2-1);
    \draw[->] (m-1-1) -- node[above,scale=0.75]{$\sigma$} (m-1-3);
    \draw[->] (m-1-3) -- node[right,scale=0.75]{$(-)\uc$} (m-2-3);
    \draw[->] (m-2-3) -- node[below,scale=0.75]{$[1]$} (m-2-1);
\end{tikzpicture}\end{equation*}
\end{fact}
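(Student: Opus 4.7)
The plan is to unfold the definitions of the four functors $(-)\us$, $(-)\uc$, $\sigma$, and $[1]$ on a given matrix factorization $M = (M^0\ld\mor{f} M^{-1}\ld\mor{g} M^0\ld)$ of type $(S\ld,w)$, and to verify that the two composites along the diagram agree essentially on the nose. Concretely, the definition of $M\us$ gives
\[
M\us\ =\ \bigl((M^0\ld)\us\xrightarrow{-g\us}(M^{-1}\ld)\us\langle -d\rangle\xrightarrow{f\us}(M^0\ld)\us\bigr),
\]
a matrix factorization of type $(S\ld,-w)$. For the other route, first $\sigma(M) = (M^0\ld\mor{-f}M^{-1}\ld\mor{g}M^0\ld)$ sits in type $(S\ld,-w)$; applying Definition \ref{def_wdual} (with potential $-w$) yields
\[
\sigma(M)\uc\ =\ \bigl((M^{-1}\ld)\us\langle -d\rangle\xrightarrow{-f\us\langle -d\rangle}(M^0\ld)\us\langle -d\rangle\xrightarrow{g\us\langle -d\rangle}(M^{-1}\ld)\us\langle -d\rangle\bigr).
\]
Finally applying the shift functor (which reverses the direction and flips both signs) together with the canonical identification $\langle -d\rangle\langle d\rangle = \id$ gives precisely
\[
\sigma(M)\uc[1]\ =\ \bigl((M^0\ld)\us\xrightarrow{-g\us}(M^{-1}\ld)\us\langle -d\rangle\xrightarrow{f\us}(M^0\ld)\us\bigr),
\]
which agrees with $M\us$ termwise, with matching differentials.

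Hence the sought natural isomorphism $M\us\xrightarrow{\cong}\sigma(M)\uc[1]$ can be taken to be the identity on each graded component, and naturality in $M$ is immediate because all the intermediate functors act by identity (or by applying $(-)\us$) on underlying morphisms between the graded modules. The only real content is book-keeping: checking that the minus signs introduced by $\sigma$ on $f$, and the sign flip built into the shift, combine with $(-)\uc$ so that the signs on the two sides match, and that the internal grading shifts introduced by $(-)\uc$ are exactly cancelled by the shift $\langle d\rangle$ appearing in $[1]$. No step requires a genuine argument beyond this verification, and I do not expect any obstacle; the fact is essentially a sanity check that the three sign/shift conventions used for $(-)\us$, $(-)\uc$, and $[1]$ are mutually compatible with the type change $w\mapsto -w$ induced by $\sigma$.
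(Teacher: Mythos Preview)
Your proposal is correct and is exactly the approach the paper takes: the paper's entire proof reads ``This is clear from the definitions,'' and your argument is precisely the unfolding of those definitions that justifies this claim. Your sign and grading bookkeeping is accurate, and the identification $\sigma(M)\uc[1] = M\us$ holds on the nose as you describe.
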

\begin{proof}
This is clear from the definitions.
\end{proof}

\subsection{Compatibility of Duality and Stabilization}

Next we study the compatibility of the stabilization functor with duality. 

\begin{prop}\label{prop_wdual}
Let $M\ld$ be a Cohen-Macaulay module over $R\ld$, and let $n := \dim(S\ld)-\depth(M\ld)$. Then there is a canonical
isomorphism in $\HMF(S\ld,w)$ $$\left(M\ld\stab{w}\right)\uc[n+1]\ \cong\ \ext_{S}^{n}(M\ld,S\ld)\ld\stab{w}.$$ 
\end{prop}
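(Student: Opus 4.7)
The plan is to realize both sides explicitly as matrix factorizations via Proposition \ref{prop_stabalgo}, starting from a single minimal $S$-free resolution of $M$, and then match them up using the duality formulas for $(-)\uc$ and $[n+1]$.

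First I would check that the basic numerical invariants behave as expected. Since depth and support dimension of $M$ are the same whether computed over $R$ or over $S$, the assumption that $M$ is Cohen--Macaulay over $R$ forces $M$ to be Cohen--Macaulay over $S$ as well, so the Auslander--Buchsbaum formula (Theorem \ref{thm_auslanderbuchsbaum}) gives $\projdim_S M = \dim S - \depth M = n$, and moreover $\ext^i_S(M,S)=0$ for $i\neq n$. Thus I can pick a minimal $S$-free resolution $F\ua\ld\to M\ld$ of length $n$, and dualizing gives a minimal $S$-free resolution $G\ua\ld\to \ext^n_S(M\ld,S\ld)\ld$, with $G^{-k}\ld := (F^{-(n-k)}\ld)\us$, of the same length $n$.

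Next, choose higher homotopies $s_k : F\ua\ld\to F^{\ast-(2k-1)}\ld\langle kd\rangle$ as in Lemma \ref{lem_preparationlemma}. The key observation is that their duals $t_k := s_k\us$, under the identification $G^{-k}\ld=(F^{-(n-k)}\ld)\us$, again satisfy the hypotheses of Lemma \ref{lem_preparationlemma} on $G\ua\ld$: the equation $s_0 s_1 + s_1 s_0 = w\cdot\id$ dualizes (compositions reverse) to $t_1 t_0 + t_0 t_1 = w\cdot\id$, and the higher identities $\sum_{p+q=m} s_p s_q = 0$ dualize verbatim. Applying Proposition \ref{prop_stabalgo} on each side then yields explicit matrix factorizations
\[
M\ld\stab{w}\ \cong\ \Big(\bigoplus_{k\geq 0} F^{-2k}\ld\langle dk\rangle,\ \bigoplus_{k\geq 0} F^{-(2k+1)}\ld\langle dk\rangle,\ \textstyle\sum_k s_k\Big)
\]
and similarly for $\ext^n_S(M\ld,S\ld)\ld\stab{w}$ in terms of the $G^{-j}\ld = (F^{j-n}\ld)\us$ and the $t_k = s_k\us$.

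Finally I would compare this to $(M\ld\stab{w})\uc[n+1]$ by unwinding Definition \ref{def_wdual} and the formula for $[1]$ from Remark \ref{rem_pretriag}. Dualizing term by term produces the summands $(F^{-i}\ld)\us$ with certain internal degree shifts in positions determined by the parity of $i$; applying $[n+1]$ then rearranges and reshifts these summands, and the point is that the resulting matrix factorization has the summand $(F^{-i}\ld)\us$ precisely in the position and internal degree prescribed by the Eisenbud--Shamash presentation of $\ext^n_S(M\ld,S\ld)\ld\stab{w}$ built from $G\ua\ld$ and the $t_k$. A sanity check in the case $n=2$ confirms the bookkeeping: $(M\ld\stab{w})\uc[3]$ has $(F^0\ld)\us\langle d\rangle\oplus(F^{-2}\ld)\us$ in even position and $(F^{-1}\ld)\us$ in odd position, exactly matching $\ext^2_S(M\ld,S\ld)\ld\stab{w}=(G^0\ld\oplus G^{-2}\ld\langle d\rangle,\ G^{-1}\ld,\ \sum t_k)$.

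The main obstacle is the careful sign and grading bookkeeping in the last step: verifying that the isomorphism on underlying modules is compatible with differentials requires tracking the Koszul signs introduced by dualization and the sign conventions in $(-)\uc$ and $[1]$. The odd shift $n+1$ is precisely what is needed to reconcile the $\langle -d\rangle$ appearing in $(-)\uc$ with the finite length $n$ of the resolution and the reversal of direction upon dualizing, so once the conventions are fixed the matching should be essentially forced.
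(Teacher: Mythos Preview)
Your proposal is correct and follows essentially the same route as the paper: both arguments (i) establish that $\ext_S^i(M,S)$ is concentrated in degree $n=\projdim_S M$, (ii) dualize a length-$n$ free resolution of $M$ to obtain a free resolution of $\ext_S^n(M,S)$, (iii) observe that the duals $s_k\us$ of the higher homotopies again satisfy the relations of Lemma \ref{lem_preparationlemma}, and (iv) apply Proposition \ref{prop_stabalgo} to both sides and match terms. The only differences are cosmetic: the paper justifies the $\ext$-concentration via Grothendieck's local cohomology theorem and local duality rather than citing it as a standard fact about Cohen--Macaulay modules, and the paper carries out the final bookkeeping by an explicit case split on the parity of $n$ rather than a single sanity check.
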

\begin{proof}
By a theorem of Grothendieck (\cite[Theorem 3.5.7]{BrunsHerzog}), the local cohomology $\H_{\frm}\ua(M\ld)$ is
concentrated in the interval $[\depth(M),\dim(M)]$ and nonzero on its boundary. Since $M\ld$ is Cohen-Macaulay of
dimension $n$, it follows that $\H_{\frm}\ua(M\ld)$ is concentrated in degree $n$. Next, by local duality
(\cite{BrunsHerzog}, Theorem 3.6.19) we have $$\ext_S^{\dim(S)-i}(M\ld,S\ld)\ld\ \cong\
\H_{\frm}^i(M\ld)^{\vee}\quad\text{ for all }i\in\ZZ,$$ where $(-)^{\vee}$ 
denotes Matlis duality. We conclude that $\ext\ua_{S}(M\ld,S\ld)\ld$ is concentrated in degree $n$ which equals
$\projdim_{S\ld}(M\ld)$ by the Auslander-Buchsbaum formula \ref{thm_auslanderbuchsbaum}. Therefore 
we can choose a finite free resolution $F\ua\ld\to M\ld$ of $M\ld$ such that $F^i\ld=0$ for $i<-n$, and as
$\ext^k_{S}(M\ld,S\ld)\ld=0$ for all $k<n$, applying $\hom_S(-,S\ld)\ld$ gives an $S\ld$-free resolution
\begin{align}\label{eq:dualseq}0\to \left(F^0\ld\right)\us\longrightarrow \left(F^{-1}\ld\right)\us\to ...\to
\left(F^{-(n-1)}\ld\right)\us\longrightarrow\left(F^{-n}\ld\right)\us\longrightarrow\ext^n_S(M\ld,S\ld)\ld\to
0.\end{align} Furthermore, if $s_n$ are higher homotopies for $F\ua\ld\to M\ld$ as in Lemma \ref{lem_preparationlemma},
their duals $s_n\us$ give higher homotopies for \eqref{eq:dualseq}, so we can compute $\ext^n_S(M\ld,S\ld)\ld\stab{w}$
using \eqref{eq:dualseq} and the $s_n\us$. 

We distinguish the cases $n$ even and odd. Carefully going through the constructions, in case $n = 2n\p$ we get
\begin{align*}
\ext^n_S(M\ld,S\ld)\ld\stab{w} & \makebox[8mm][c]{$\stackrel{\ref{prop_stabalgo}}{\cong}$}
\begin{tikzpicture}[baseline=-1mm,description/.style={fill=white,inner sep=2pt}]
    \matrix (m) [matrix of math nodes, row sep=3em,
                 column sep=3.2em, text height=1.5ex, text depth=0.25ex,
                 inner sep=0pt, nodes={inner xsep=0.3333em, inner ysep=0.3333em}]
    {
      \bigoplus\limits_{i\in\ZZ} \left(F^{-2i+1}\ld\right)\us\langle (n\p-i)d\rangle \pgfmatrixnextcell
      \bigoplus\limits_{i\in\ZZ} \left(F^{-2i}\ld\right)\us\langle (n\p-i)d\rangle\\
    };
    \draw[->] ($(m-1-1.east) + (0,0.8mm)$) -- node[above,scale=0.75]{$\sum\limits_{i\geq 0} s_i\us$} ($(m-1-2.west) +
    (0,0.8mm)$); 
    \draw[->] ($(m-1-2.west) - (0,0.8mm)$) -- node[below,scale=0.75]{$\sum\limits_{i\geq 0} s_i\us$} ($(m-1-1.east) -
    (0,0.8mm)$); 
\end{tikzpicture}\\
& \makebox[8mm][c]{$\cong$} 
\begin{tikzpicture}[baseline=-1mm,description/.style={fill=white,inner sep=2pt}]
    \matrix (m) [matrix of math nodes, row sep=3em,
                 column sep=3.2em, text height=1.5ex, text depth=0.25ex,
                 inner sep=0pt, nodes={inner xsep=0.3333em, inner ysep=0.3333em}]
    {
      \left(\bigoplus\limits_{i\in\ZZ}
  F^{-(2i+1)}\ld\langle id\rangle\right)^{\mathclap{\star}}\langle (n\p-1) d\rangle \pgfmatrixnextcell
      \left(\bigoplus\limits_{i\in\ZZ} F^{-2i}\ld\langle
  id\rangle\right)^{\mathclap{\star}}\langle n\p d\rangle\\
    };
    \draw[->] ($(m-1-1.east) + (0,0.8mm)$) -- node[above,scale=0.75]{$\left(\sum\limits_{i\geq 0}
    s_i\right)^{\mathclap{\star}}$} ($(m-1-2.west) + (0,0.8mm)$); 
    \draw[->] ($(m-1-2.west) - (0,0.8mm)$) -- node[below,scale=0.75]{$\left(\sum\limits_{i\geq 0}
        s_i\right)^{\mathclap{\star}}$} ($(m-1-1.east) - (0,0.8mm)$); 
\end{tikzpicture}\\
& \makebox[8mm][c]{$\cong$} \left(M\ld\stab{w}\right)\uc [n+1].
\end{align*}
Similarly, for $n = 2n\p+1$ we have
\begin{align*}
\ext^n_S(M\ld,S\ld)\ld\stab{w} & \makebox[8mm][c]{$\stackrel{\ref{prop_stabalgo}}{\cong}$}
\begin{tikzpicture}[baseline=-1mm,description/.style={fill=white,inner sep=2pt}]
    \matrix (m) [matrix of math nodes, row sep=3em,
                 column sep=3.2em, text height=1.5ex, text depth=0.25ex,
                 inner sep=0pt, nodes={inner xsep=0.3333em, inner ysep=0.3333em}]
    {
      \bigoplus\limits_{i\in\ZZ} \left(F^{-2i}\ld\right)\us\langle (n\p-i)d\rangle \pgfmatrixnextcell
      \bigoplus\limits_{i\in\ZZ} \left(F^{-(2i+1)}\ld\right)\us\langle (n\p-i)d\rangle\\
    };
    \draw[->] ($(m-1-1.east) + (0,0.8mm)$) -- node[above,scale=0.75]{$\sum\limits_{i\geq 0} s_i\us$} ($(m-1-2.west) +
    (0,0.8mm)$); 
    \draw[->] ($(m-1-2.west) - (0,0.8mm)$) -- node[below,scale=0.75]{$\sum\limits_{i\geq 0} s_i\us$} ($(m-1-1.east) -
    (0,0.8mm)$); 
\end{tikzpicture}\\
& \makebox[8mm][c]{$\cong$} 
\begin{tikzpicture}[baseline=-1mm,description/.style={fill=white,inner sep=2pt}]
    \matrix (m) [matrix of math nodes, row sep=3em,
                 column sep=3.2em, text height=1.5ex, text depth=0.25ex,
                 inner sep=0pt, nodes={inner xsep=0.3333em, inner ysep=0.3333em}]
    {
      \left(\bigoplus\limits_{i\in\ZZ}
      F^{-2i}\ld\langle id\rangle\right)^{\mathclap{\star}}\langle n\p d\rangle \pgfmatrixnextcell
      \left(\bigoplus\limits_{i\in\ZZ} F^{-(2i+1)}\ld\langle id\rangle\right)^{\mathclap{\star}}\langle
      n\p d\rangle\\
    };
    \draw[->] ($(m-1-1.east) + (0,0.8mm)$) -- node[above,scale=0.75]{$\left(\sum\limits_{i\geq 0}
    s_i\right)^{\mathclap{\star}}$} ($(m-1-2.west) + (0,0.8mm)$); 
    \draw[->] ($(m-1-2.west) - (0,0.8mm)$) -- node[below,scale=0.75]{$\left(\sum\limits_{i\geq 0}
        s_i\right)^{\mathclap{\star}}$} ($(m-1-1.east) - (0,0.8mm)$);  
\end{tikzpicture}\\
& \makebox[8mm][c]{$\cong$} \left(M\ld\stab{w}\right)\uc [n+1].
\end{align*}
which finishes the proof.
\end{proof}

Proposition \ref{prop_wdual} has the following interesting special case.

\begin{cor}\label{cor_wdual}
Let $M\ld = S\ld / (x_1,...,x_n)$ for a regular sequence $x_1,...,x_n$ of homogeneous elements such that $w\in
(x_1,...,x_n)$. Then there is a canonical isomorphism in $\HMF(S\ld,w)$ $$\left(M\ld\stab{w}\right)\uc\quad\cong\quad
M\ld\stab{w}\langle |x_1|+...+|x_n|\rangle[-(n+1)].$$ 
\end{cor}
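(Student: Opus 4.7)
The plan is to reduce the statement to Proposition \ref{prop_wdual} by computing $\ext^n_S(M\ld,S\ld)\ld$ explicitly for a complete intersection $M\ld = S\ld/(x_1,\dots,x_n)$.

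First I would check that Proposition \ref{prop_wdual} applies: since $x_1,\dots,x_n$ is a regular sequence, the quotient $M\ld = S\ld/(x_1,\dots,x_n)$ is Cohen-Macaulay of projective dimension exactly $n$ over $S\ld$. By the Auslander-Buchsbaum formula \ref{thm_auslanderbuchsbaum} we get $\depth_{S\ld}(M\ld) = \dim(S\ld) - n$, so the integer occurring in the statement of Proposition \ref{prop_wdual} is precisely the $n$ we have here. Consequently Proposition \ref{prop_wdual} yields a canonical isomorphism
\[
\left(M\ld\stab{w}\right)\uc[n+1]\ \cong\ \ext^n_S(M\ld,S\ld)\ld\stab{w}
\]
in $\HMF(S\ld,w)$.

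Next I would compute $\ext^n_S(M\ld,S\ld)\ld$ using the Koszul resolution
\[
K(x_1,\dots,x_n)\ua\ld\ =\ \textstyle\bigwedge\ua\bigoplus_{i=1}^{n} S\ld\langle -|x_i|\rangle e_i\ \longrightarrow\ M\ld
\]
from Step 1 in the proof of Corollary \ref{cor_koszulstabilization}. Applying $\hom_S(-,S\ld)\ld$ produces the Koszul complex of the same regular sequence run in the opposite cohomological direction, up to grading shifts coming from dualizing the top exterior power $\bigwedge^{n}\bigoplus S\ld\langle -|x_i|\rangle e_i \cong S\ld\langle -(|x_1|+\cdots+|x_n|)\rangle$. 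Consequently only the top $\ext$ is nonzero, and we obtain the canonical isomorphism
\[
\ext^n_S(M\ld,S\ld)\ld\ \cong\ M\ld\langle |x_1|+\cdots+|x_n|\rangle.
\]

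Substituting this into the isomorphism from Proposition \ref{prop_wdual} and shifting by $[-(n+1)]$ gives
\[
\left(M\ld\stab{w}\right)\uc\ \cong\ M\ld\stab{w}\langle |x_1|+\cdots+|x_n|\rangle[-(n+1)]
\]
as required. The only nontrivial point is to keep track of the internal and cohomological degree shifts in the self-duality of the Koszul complex; once these are pinned down, the result is immediate from Proposition \ref{prop_wdual}.
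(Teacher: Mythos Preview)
Your proposal is correct and follows essentially the same approach as the paper: verify that $M\ld$ is Cohen-Macaulay with defect $n$, compute $\ext^n_S(M\ld,S\ld)\ld\cong M\ld\langle |x_1|+\cdots+|x_n|\rangle$ via the self-dual Koszul complex, and then apply Proposition~\ref{prop_wdual}. The paper's own proof is more terse but identical in substance.
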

\begin{proof}
As $(x_i)$ is regular, $M\ld$ is Cohen-Macaulay. Further, computing $\ext\ua_{S}(M\ld,S\ld)\ld$ using the self-dual
Koszul-complex of the $x_i$, we get $\ext^n_S(M\ld,S\ld)\ld\cong M\ld\langle |x_1|+...+|x_n|\rangle$. Now the claim
follows from Proposition \ref{prop_wdual}. 
\end{proof}

\begin{ex}
Let us pause for a moment to check the statement of Corollary \ref{cor_wdual} directly using Proposition
\ref{prop_koszulstabilization2}. For two homogeneous elements $x,y\in S\ld$ we have
\begin{align*}
\{x,y\}\uc & = \left(S\ld\langle |x|-d\rangle\mor{y} S\ld\langle -d\rangle\mor{x}S\ld\langle |x|-d\rangle\right) =
\{x,y\}\langle |x|\rangle[-2]\end{align*} 
which agrees with Corollary \ref{cor_wdual}. In general, for a homogeneous regular sequence $x_1,...,x_n$ and
homogeneous elements $y_1,...,y_n$ satisfying $w = x_1 y_1 + ... + x_n y_n$, we get from Fact
\ref{fact_wdualtensorcompatibility} that
$$\{\bfx,\bfy\}\uc = \left(\bigotimes_{i=1}^{n} \{x_i,y_i\}\right)\uc \cong
\left(\bigotimes_{i=1}^{n}\{x_i,y_i\}\uc\right)[n-1] \cong \{\bfx,\bfy\}\langle |x_1|+...+|x_n|\rangle [-(n+1)],$$
which agrees again with Corollary \ref{cor_wdual}.
\end{ex}

\begin{cor}\label{cor_dual}
Let $M\ld$ be a Cohen-Macaulay module over $R\ld$, and put $n := \dim(S)-\depth(M)$. Then there is a canonical
isomorphism in $\HMF(S\ld,-w)$ $$\left(M\ld\stab{w}\right)\ua\ \cong\ \ext_{S}^{n}(M\ld,S\ld)\ld\stab{-w}[-n].$$ 
\end{cor}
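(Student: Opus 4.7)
The plan is to deduce Corollary \ref{cor_dual} from Proposition \ref{prop_wdual} by relating the dual $(-)\us$ to the $w$-dual $(-)\uc$ via the sign-change isomorphism from Fact \ref{fact_dualwdualrelation}.

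First I would invoke Fact \ref{fact_dualwdualrelation}, which gives a canonical isomorphism $M\us \cong \sigma(M)\uc[1]$ for any $M\in\MF(S\ld,w)$. Applied to $M\ld\stab{w}$ this yields
\begin{equation*}
\left(M\ld\stab{w}\right)\us \;\cong\; \sigma\!\left(M\ld\stab{w}\right)\uc[1]
\end{equation*}
in $\HMF(S\ld,-w)$. The next step is to identify $\sigma(M\ld\stab{w})$ with $M\ld\stab{-w}$ (where we regard $M\ld$ as a module over $R\ld = S\ld/(w) = S\ld/(-w)$). This is essentially formal: if one uses Proposition \ref{prop_getstab} to compute $M\ld\stab{w}$ from an eventually $2$-periodic $R\ld$-free resolution of $M\ld$ together with a lift of its periodic tail to a matrix factorization $(f,g)$ of type $(S\ld,w)$, then the same resolution together with the lift $(-f,g)$ (of type $(S\ld,-w)$) computes $M\ld\stab{-w}$; but $(-f,g)$ is precisely $\sigma(f,g)$, so $\sigma(M\ld\stab{w})\cong M\ld\stab{-w}$ canonically.

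Having made this identification, I would then apply Proposition \ref{prop_wdual} to $M\ld$ stabilized with respect to the potential $-w$. Since $M\ld$ is Cohen-Macaulay over $R\ld$ with $n = \dim(S\ld)-\depth(M\ld)$ independent of the sign of the potential, the proposition gives
\begin{equation*}
\left(M\ld\stab{-w}\right)\uc[n+1] \;\cong\; \ext^n_{S}(M\ld,S\ld)\ld\stab{-w},
\end{equation*}
and rearranging yields $\left(M\ld\stab{-w}\right)\uc \cong \ext^n_S(M\ld,S\ld)\ld\stab{-w}[-(n+1)]$. Substituting back into the expression for $(M\ld\stab{w})\us$ gives
\begin{equation*}
\left(M\ld\stab{w}\right)\us \;\cong\; \ext^n_S(M\ld,S\ld)\ld\stab{-w}[-(n+1)][1] \;=\; \ext^n_S(M\ld,S\ld)\ld\stab{-w}[-n],
\end{equation*}
which is exactly the claimed isomorphism.

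The only nontrivial point is the identification $\sigma(M\ld\stab{w})\cong M\ld\stab{-w}$, and this is really just a matter of checking that applying $\sigma$ to a lift of a $2$-periodic resolution produces a lift for the opposite potential; everything else is a formal consequence of Fact \ref{fact_dualwdualrelation} and Proposition \ref{prop_wdual}. In particular, no new computation beyond what went into Proposition \ref{prop_wdual} is required, so I expect no real obstacles beyond bookkeeping the degree shift $[1]$ coming from the sign change against the shift $[n+1]$ coming from the $w$-dual.
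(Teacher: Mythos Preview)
Your proof is correct and follows essentially the same route as the paper: invoke Fact~\ref{fact_dualwdualrelation} to convert $(-)\us$ into $\sigma(-)\uc[1]$, identify $\sigma\!\left(M\ld\stab{w}\right)$ with $M\ld\stab{-w}$, and then apply Proposition~\ref{prop_wdual} with potential $-w$ to conclude. The paper's proof is the one-line chain $\left(M\ld\stab{w}\right)\us\cong \left(M\ld\stab{-w}\right)\uc[1] \cong \ext_{S}^{n}(M\ld,S\ld)\ld\stab{-w}[-n]$; you have simply spelled out the intermediate identification $\sigma(M\ld\stab{w})\cong M\ld\stab{-w}$ more carefully than the paper does.
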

\begin{proof}
By Fact \ref{fact_dualwdualrelation} and Proposition \ref{prop_wdual} we have
$$\left(M\ld\stab{w}\right)\ua\cong \left(M\ld\stab{-w}\right)\uc[1] \cong \ext_{S}^{n}(M\ld,S\ld)\ld\stab{-w}[-n].$$ as
claimed. 
\end{proof}
\begin{cor}\label{cor_dualci}
Let $M\ld = S\ld / (x_1,...,x_n)$ for a regular sequence $x_1,...,x_n$ of homogeneous elements such that $w\in
(x_1,...,x_n)$. Then there is a canonical isomorphism in $\HMF(S\ld,-w)$ $$\left(M\ld\stab{w}\right)\ua\quad\cong\quad
M\ld\stab{-w}\langle |x_1|+...+|x_n|\rangle[-n].$$ 
\end{cor}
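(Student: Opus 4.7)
The plan is to deduce this directly from Corollary \ref{cor_dual}, using the same Koszul-theoretic computation of the top $\ext$-group that was invoked in the proof of Corollary \ref{cor_wdual}. The only substantive input needed beyond already established results is the self-duality of the Koszul complex attached to the regular sequence $x_1,\dots,x_n$.

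First, I would verify that Corollary \ref{cor_dual} applies to $M\ld := S\ld/(x_1,\dots,x_n)$. Since $x_1,\dots,x_n$ is a homogeneous $S\ld$-regular sequence, $M\ld$ is Cohen-Macaulay, and by the Auslander-Buchsbaum formula \ref{thm_auslanderbuchsbaum} we have $\depth_{S\ld} M\ld = \dim(S\ld) - n$, so that the integer appearing in Corollary \ref{cor_dual} is precisely $n$. Thus Corollary \ref{cor_dual} yields
\begin{equation*}
\left(M\ld\stab{w}\right)\ua\ \cong\ \ext^n_{S}(M\ld,S\ld)\ld\stab{-w}[-n].
\end{equation*}

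Second, I would compute $\ext^n_{S}(M\ld,S\ld)\ld$ using the Koszul resolution $K(x_1,\dots,x_n)\ua\ld\to M\ld$ from Step 1 of the discussion preceding Corollary \ref{cor_koszulstabilization}. Dualizing this complex of free modules via $\hom_S(-,S\ld)\ld$ gives, up to a shift of internal degree by $|x_1|+\dots+|x_n|$ (coming from the top exterior power $\bigwedge^n\bigoplus_i S\ld\langle -|x_i|\rangle e_i$), another Koszul complex on the same sequence. Its top cohomology is therefore $M\ld\langle |x_1|+\dots+|x_n|\rangle$, which identifies $\ext^n_{S}(M\ld,S\ld)\ld$ with $M\ld\langle |x_1|+\dots+|x_n|\rangle$.

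Combining these two steps and using that $\langle -\rangle$ commutes with stabilization gives the desired isomorphism
\begin{equation*}
\left(M\ld\stab{w}\right)\ua\ \cong\ M\ld\stab{-w}\langle |x_1|+\dots+|x_n|\rangle[-n]
\end{equation*}
in $\HMF(S\ld,-w)$. There is no real obstacle here: the statement is essentially the Koszul-level shadow of Corollary \ref{cor_dual}, and the only mildly delicate point is tracking the internal degree shift coming from the top of the Koszul complex, which is exactly the same computation (with the same sign conventions) as the one that produced the shift $|x_1|+\dots+|x_n|$ in Corollary \ref{cor_wdual}. As a sanity check, one can verify the statement directly in the factorization model: by Proposition \ref{prop_koszulstabilization2} we have $M\ld\stab{w}\cong\{\bfx,\bfy\}=\bigotimes_i\{x_i,y_i\}$, and iterating Fact \ref{fact_dualwdualrelation} together with the elementary identity $\{x_i,y_i\}\ua = \{y_i,x_i\}\langle |x_i|\rangle[-1]$ recovers the same formula, with $\{y_i,x_i\}\cong M\ld\stab{-w}$ corresponding to the sign-change of the potential.
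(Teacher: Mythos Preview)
Your main argument is correct and is exactly the (implicit) proof the paper intends: apply Corollary \ref{cor_dual} with $n=\dim(S\ld)-\depth(M\ld)$ and compute $\ext^n_S(M\ld,S\ld)\ld\cong M\ld\langle |x_1|+\dots+|x_n|\rangle$ via the self-dual Koszul complex, precisely mirroring the proof of Corollary \ref{cor_wdual}.

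One small slip in your sanity check: the identity $\{x_i,y_i\}\ua=\{y_i,x_i\}\langle|x_i|\rangle[-1]$ is not right, since $\{y_i,x_i\}$ has potential $+x_iy_i$ rather than $-x_iy_i$. The paper's explicit computation in the Example following Corollary~\ref{cor_dualci} gives $\{x,y\}\ua=\{x,-y\}\langle|x|\rangle[-1]$, and it is $\{x_i,-y_i\}$ (a factorization of $-w$, with cokernel still $S\ld/(x_i)$) that matches $M\ld\stab{-w}$. This does not affect your main proof.
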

\begin{ex}
Again let us check explicitly that everything works for Koszul factorizations. If $x,y$ are homogeneous and regular, 
then 
\begin{align*}
\{x,y\}\ua & \makebox[8mm][c]{=} \left(S\ld\mor{-x} S\ld\langle |x|-d\rangle\mor{y} S\ld\right) =
\left(S\ld\mor{-y}S\ld\langle -|x|\rangle\mor{x}S\ld\right)\langle |x|\rangle [-1]\\
& \makebox[8mm][c]{=} \{x,-y\}\langle |x|\rangle [-1].\end{align*}
as claimed.
\end{ex}

\section{Closing a MOY-braid}\label{sec_closing}

\subsection{Braid closure as stabilized Hochschild cohomology}

We now apply the results about duality from the preceding section to Khovanov-Rozansky homology. Suppose we want to
calculate the value of Khovanov-Rozansky homology on the closure $\ol{\gamma}$ of a MOY-braid $\gamma$ with strands of
type $(i_1,...,i_r)$ and sets of variables $\XX$ and $\YY$; see Figure \ref{fig:closure}. Intuitively, we expect that
passing from $\gamma$ to $\ol{\gamma}$ should involve some categorical trace or Hochschild 
(co)homology; see \cite{KhovanovHochschild} or \cite[Section 2.4]{WebsterCanopolis}. This is indeed the case. We will see
that there is some ``identity'' matrix factorization $\id$ such that $\H\ua(\KR(\ol{\gamma}))$ is given by
$\HMF(\id[\ast],\KR(\gamma))$ (for the precise statement, see Theorem \ref{theorem_closethebraid}) which can be
interpreted as some kind of ``stabilized'' Hochschild cohomology or as generalized Tate cohomology; see Remark
\ref{rem_fancywords}. But now let's stop playing with fancy words and instead dig into the somewhat technical details.  
\begin{figure}[h]\begin{center}
\begin{tikzpicture}[rounded corners=2pt, scale=0.75]
\draw[oredgem, thin] (-2,4.5) -- (-2,-0.5);
\draw[oredgem, thin] (-2.3,4.7) -- (-2.3,-0.7);
\draw[oredgem, thin] (-2.6,4.9) -- (-2.6,-0.9);
\draw[rounded corners=20pt] (-2,-1.5) rectangle (0.5,5.5);
\draw[rounded corners=20pt] (-2.3,-1.7) rectangle (1.5,5.7);
\draw[rounded corners=20pt] (-2.6,-1.9) rectangle (3.5,5.9);
\draw (0.5,-0.5) node[fat]{} node[right=10pt,above=-8pt]{$\YY_1$} -- node[scale=0.7,pos=0.86,left]{$i_1$} node[scale=0.7,pos=0.14,left]{$i_1$} (0.5,4.5) node[fat]{} node[right=10pt,below=-8pt]{$\XX_1$};  
\draw (1.5,-0.5) node[fat]{} node[right=10pt,above=-8pt]{$\YY_2$} -- node[scale=0.7,pos=0.86,left]{$i_2$} node[scale=0.7,pos=0.14,left]{$i_2$} (1.5,4.5) node[fat]{} node[right=10pt,below=-8pt]{$\XX_2$};  
\draw (3.5,-0.5) node[fat]{} node[right=10pt,above=-8pt]{$\YY_r$} -- node[scale=0.7,pos=0.86,left]{$i_r$} node[scale=0.7,pos=0.14,left]{$i_r$} (3.5,4.5) node[fat]{} node[right=10pt,below=-8pt]{$\XX_r$};  
\node[scale=0.7] at (2.5,0.2) {$\cdots$};
\node[scale=0.7] at (2.5,3.8) {$\cdots$};
\draw[fill=white, rounded corners=2pt] (0,0.5) rectangle (4,3.5);
\node[scale=1.25] at (2,2) {$\gamma$};
\end{tikzpicture}
\end{center}
\caption{\small{Closure of a braid}}
\label{fig:closure}
\end{figure}
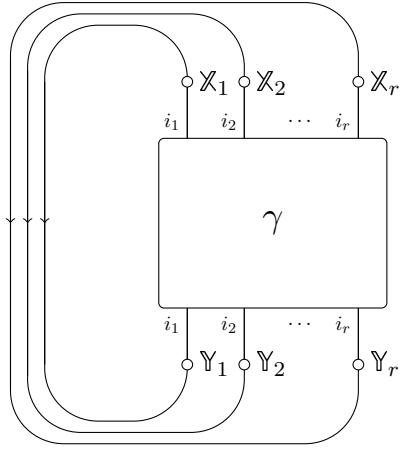

We already know that $\KR(\gamma)\cong \B(\gamma)\ld\stab{\Sigma\XX^{n+1}-\Sigma\YY^{n+1}}$, where $\B(\gamma)\ld$
denotes the singular Soergel bimodule corresponding to $\gamma$. Thus, we have  
$$\KR(\ol{\gamma}) = \Id\ld\stab{\Sigma\YY^{n+1}-\Sigma\XX^{n+1}}
\makebox[8mm][c]{$\stackrel[\mathclap{\Sym(\XX|\YY)}]{}{\otimes}$}  
\B(\gamma)\ld\stab{\Sigma\XX^{n+1}-\Sigma\YY^{n+1}},$$   
where the first factor $\Id\ld$ is defined as
\begin{align*}
\Id\ld\ :=\  \bigotimes_{j=1}^{r} \Sym(\XX_{j}|\YY_{j})/\langle X_{j,l}-Y_{j,l}\ |\
  l=1,...,i_j\rangle.
\end{align*} Clearly, it depends on the configuration of strands and the sets of variables; however, dropping these data
will hopefully not cause any confusion. The module $\Id\ld$ should be thought of as the 'identity' between $\XX$ and $\YY$. 

\begin{prop}\label{prop_dualidentity}
Put $N := i_1+...+i_r$ and $k := \sum\limits_{j=1}^{r}\frac{i_j(i_j+1)}{2}$. Then we have an isomorphism
$$\left(\Id\ld\stab{\Sigma\XX^{n+1}-\Sigma\YY^{n+1}}\right)\us\cong\Id\ld\stab{\Sigma\YY^{n+1}-\Sigma\XX^{n+1}}\langle
k\rangle [-N].$$ 
\end{prop}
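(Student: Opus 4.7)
The plan is to apply Corollary \ref{cor_dualci} directly to $\Id\ld$, viewed as a cyclic module over $S\ld := \Sym(\XX|\YY)$. Because the variable sets $\XX_j\cup\YY_j$ for distinct $j$ are pairwise disjoint, the $\CC$-tensor product defining $\Id\ld$ can be rewritten as
$$\Id\ld \;\cong\; S\ld/(\bfx), \qquad \bfx \;:=\; (X_{j,l}-Y_{j,l})_{1\le j\le r,\ 1\le l\le i_j},$$
a sequence of $N = \sum_{j=1}^r i_j$ homogeneous elements in $S\ld$. Invoking Corollary \ref{cor_dualci} requires two verifications: that $\bfx$ is a regular sequence in $S\ld$, and that the potential $w := \Sigma\XX^{n+1}-\Sigma\YY^{n+1}$ lies in the ideal $(\bfx)$.

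For regularity, I would observe that $S\ld/(\bfx)$ is obtained by identifying each $X_{j,l}$ with the corresponding $Y_{j,l}$, yielding the polynomial ring $\Sym(\YY)$ of Krull dimension $N$. Since $S\ld$ itself is a polynomial ring of dimension $2N$, hence Cohen--Macaulay, and the quotient drops dimension by precisely the length of $\bfx$, the sequence is automatically regular (being a system of parameters in a Cohen--Macaulay ring). Alternatively, one can concatenate the obvious regular sequences $(X_{j,l}-Y_{j,l})_{l=1}^{i_j}$ in each tensor factor $\Sym(\XX_j|\YY_j)$ and use flatness of the $\CC$-tensor product.

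For the potential, Newton's identities express each power sum $\Sigma\XX_j^{n+1}$ as a polynomial in the elementary symmetric functions $X_{j,1},\ldots,X_{j,i_j}$, and the same polynomial evaluated on $Y_{j,1},\ldots,Y_{j,i_j}$ gives $\Sigma\YY_j^{n+1}$. Consequently modulo $(X_{j,l}-Y_{j,l})_l$ the two power sums coincide, so
$$\Sigma\XX_j^{n+1} - \Sigma\YY_j^{n+1} \;\in\; (X_{j,l}-Y_{j,l})_{l=1}^{i_j} \;\subset\; (\bfx),$$
and summing over $j$ yields $w\in(\bfx)$.

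With both hypotheses in hand, Corollary \ref{cor_dualci} produces an isomorphism
$$\bigl(\Id\ld\stab{\Sigma\XX^{n+1}-\Sigma\YY^{n+1}}\bigr)\us \;\cong\; \Id\ld\stab{\Sigma\YY^{n+1}-\Sigma\XX^{n+1}}\Big\langle\sum_{j,l}|X_{j,l}-Y_{j,l}|\Big\rangle [-N],$$
and since $|X_{j,l}|=l$ in the grading convention fixed throughout, the internal shift collapses to
$$\sum_{j=1}^{r}\sum_{l=1}^{i_j}l \;=\; \sum_{j=1}^{r}\frac{i_j(i_j+1)}{2} \;=\; k,$$
proving the proposition. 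There is no serious obstacle here: all of the real content is packed into Corollary \ref{cor_dualci}, and the only insight needed is recognising that $\Id\ld$ has a clean complete-intersection presentation once the disjoint tensor factors are absorbed into a single cyclic quotient of $\Sym(\XX|\YY)$.
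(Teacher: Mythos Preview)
Your proof is correct and follows the same route as the paper: apply Corollary~\ref{cor_dualci} to the complete-intersection presentation of $\Id\ld$ and compute the degree sum $\sum_{j}\sum_{l=1}^{i_j} l = k$. You supply more detail than the paper does (verifying regularity of the sequence and that the potential lies in the ideal), but the argument is the same.
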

\begin{proof} 
This follows from Corollary \ref{cor_dualci} on the dual of the stabilization of a complete intersection. Here, for each
$j=1,...,r$ we have elementary symmetric polynomials $X_{j,1},...,X_{j,i_j}$ of degrees $1,2,...,i_j$; hence, the sum of
their degrees is $k$. 
\end{proof}
Now we can rewrite the cohomology of $\KR(\ol{\gamma})$ purely in terms of maximal Cohen-Macaulay modules.
\begin{theorem}\label{theorem_closethebraid}
Let $\gamma$ be as above, and put $N := i_1+...+i_r$ and $k := \sum\limits_{j=1}^{r}\frac{i_j(i_j+1)}{2}$. Then there is
a homotopy equivalence of matrix factorizations  
\begin{align}
\label{eq:closethebraid}\KR(\ol{\gamma})\cong\hom_{\Sym(\XX|\YY)}\left(\Id\ld\stab{\Sigma\XX^{n+1}-\Sigma\YY^{n+1}}\langle 
  k\rangle [-N],\B(\gamma)\ld\stab{\Sigma\XX^{n+1}-\Sigma\YY^{n+1}}\right).\end{align} 
\end{theorem}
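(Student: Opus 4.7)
The strategy is simply to combine the three ingredients we already have at our disposal: the definition of $\KR(\ol{\gamma})$, the duality formula of Proposition \ref{prop_dualidentity}, and the tensor-hom identity of Fact \ref{fact_tensorhomiso}. Concretely, I would first spell out by definition that
\[
\KR(\ol{\gamma})\ \cong\ \Id\ld\stab{\Sigma\YY^{n+1}-\Sigma\XX^{n+1}}\makebox[5mm][c]{$\stackrel[\mathclap{\Sym(\XX|\YY)}]{}{\otimes}$}\B(\gamma)\ld\stab{\Sigma\XX^{n+1}-\Sigma\YY^{n+1}},
\]
treating the left-hand factor as a matrix factorization of type $(\Sym(\XX|\YY),\Sigma\YY^{n+1}-\Sigma\XX^{n+1})$ and the right-hand factor as one of type $(\Sym(\XX|\YY),\Sigma\XX^{n+1}-\Sigma\YY^{n+1})$, so the tensor product is of type $(\Sym(\XX|\YY),0)$, which is exactly what we expect for the cohomology theory on a closed link.

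Next I would use Proposition \ref{prop_dualidentity} to rewrite the first factor. That proposition gives
\[
\left(\Id\ld\stab{\Sigma\XX^{n+1}-\Sigma\YY^{n+1}}\right)\us\ \cong\ \Id\ld\stab{\Sigma\YY^{n+1}-\Sigma\XX^{n+1}}\langle k\rangle[-N],
\]
and inverting the shift and the cohomological degree gives
\[
\Id\ld\stab{\Sigma\YY^{n+1}-\Sigma\XX^{n+1}}\ \cong\ \left(\Id\ld\stab{\Sigma\XX^{n+1}-\Sigma\YY^{n+1}}\langle k\rangle[-N]\right)\us,
\]
using $M\us\langle r\rangle[s]\cong (M\langle -r\rangle[-s])\us$ for the shift conventions on the dual. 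Substituting this back into the displayed tensor product expresses $\KR(\ol{\gamma})$ as $(M)\us\otimes_{\Sym(\XX|\YY)} N$, where $M := \Id\ld\stab{\Sigma\XX^{n+1}-\Sigma\YY^{n+1}}\langle k\rangle[-N]$ and $N := \B(\gamma)\ld\stab{\Sigma\XX^{n+1}-\Sigma\YY^{n+1}}$.

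Finally, since $\Id\ld$ is a finitely generated $\Sym(\XX|\YY)$-module (it is a complete intersection quotient, hence its stabilization is a finitely generated matrix factorization), Fact \ref{fact_tensorhomiso} gives the canonical isomorphism
\[
(M)\us\otimes_{\Sym(\XX|\YY)} N\ \cong\ \hom_{\Sym(\XX|\YY)}(M,N),
\]
which is exactly the right-hand side of \eqref{eq:closethebraid}. The only point requiring any care is bookkeeping: verifying that the degree/shift conventions on $(-)\us$, $\langle k\rangle$ and $[-N]$ combine correctly when passed through Proposition \ref{prop_dualidentity} and Fact \ref{fact_tensorhomiso}, and that finite generation of $\Id\ld\stab{\cdots}$ really is preserved under the shift so that Fact \ref{fact_tensorhomiso} is applicable. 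Everything else is an immediate rearrangement, so I do not expect a substantive obstacle.
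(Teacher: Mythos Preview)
Your proposal is correct and follows essentially the same route as the paper: write $\KR(\ol{\gamma})$ as the tensor product of the two stabilizations, use Proposition~\ref{prop_dualidentity} to recognize the $\Id$-factor as a dual, and then apply Fact~\ref{fact_tensorhomiso}. The only cosmetic difference is that the paper invokes the double-dual isomorphism (Fact~\ref{fact_doubledual}) to pass from $\Id\ld\stab{\Sigma\YY^{n+1}-\Sigma\XX^{n+1}}$ to $\left(\Id\ld\stab{\Sigma\XX^{n+1}-\Sigma\YY^{n+1}}\langle k\rangle[-N]\right)\us$, whereas you achieve the same by shifting both sides of Proposition~\ref{prop_dualidentity} and using the compatibility $M\us\langle r\rangle[s]\cong (M\langle -r\rangle[-s])\us$; these are equivalent manipulations.
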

\begin{proof}
Now on with our fancy little proof. We have $$\Id\ld\stab{\Sigma\YY^{n+1}-\Sigma\XX^{n+1}} \cong
\left(\Id\ld\stab{\Sigma\YY^{n+1}-\Sigma\XX^{n+1}}\right)\uss\cong 
\left(\Id\ld\stab{\Sigma\XX^{n+1}-\Sigma\YY^{n+1}}\langle k\rangle [-N]\right)\us$$
by Fact \ref{fact_doubledual} and Proposition \ref{prop_dualidentity}. Applying Fact \ref{fact_tensorhomiso} yields
\eqref{eq:closethebraid}.
\end{proof}
\begin{cor}
There are isomorphisms (abbreviating $R\ld := \Sym(\XX|\YY)/(\Sigma\XX^{n+1}-\Sigma\YY^{n+1})$)
\begin{align}
\label{eq:closeisohmf}\H^l(\KR(\ol{\gamma})) & \makebox[8mm][c]{$\cong$}
\HMF(R\ld)\left(\Id\ld\stab{\Sigma\XX^{n+1}-\Sigma\YY^{n+1}}\langle k\rangle 
  [l-N],\B(\gamma)\ld\stab{\Sigma\XX^{n+1}-\Sigma\YY^{n+1}}\right)\\ \notag&\makebox[8mm][c]{$\cong$}
\uMCM(R\ld)\left(\Id\ld\stab{\Sigma\XX^{n+1}-\Sigma\YY^{n+1}}\langle k\rangle 
  [l-N],\B(\gamma)\ld\stab{\Sigma\XX^{n+1}-\Sigma\YY^{n+1}}\right)\\\label{eq:closeisoadj} &
\makebox[8mm][c]{$\cong$}\ul{R\ld\mod}\ \ \  
\left(\Id\ld\stab{\Sigma\XX^{n+1}-\Sigma\YY^{n+1}}\langle k\rangle[l-N],\B(\gamma)\ld\right). 
\end{align}
\end{cor}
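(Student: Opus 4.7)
The plan is to pass from the homotopy equivalence of Theorem \ref{theorem_closethebraid} to the three successive descriptions of the cohomology by, in order, (i) interpreting cohomology of the hom-factorization as morphisms in $\HMF$, (ii) transporting along the triangle equivalence $\coker:\HMF(\Sym(\XX|\YY),\Sigma\XX^{n+1}-\Sigma\YY^{n+1})\cong\uMCM(R\ld)$ of Theorem \ref{thm_hmfmcm}, and (iii) invoking the adjunction $(\text{incl})\dashv\bfM$ of Proposition \ref{prop_semiorthogonaldecomposition}.\eqref{item_sod6}.

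First I would unwrap the left-hand side of \eqref{eq:closeisohmf}. By Theorem \ref{theorem_closethebraid}, $\KR(\ol{\gamma})$ is homotopy equivalent to the hom-factorization
\[
\hom_{\Sym(\XX|\YY)}\!\left(\Id\ld\stab{\Sigma\XX^{n+1}-\Sigma\YY^{n+1}}\langle k\rangle[-N],\ \B(\gamma)\ld\stab{\Sigma\XX^{n+1}-\Sigma\YY^{n+1}}\right),
\]
which is a matrix factorization of type $(\Sym(\XX|\YY),0)$, i.e.\ an honest $2$-periodic complex. The pretriangulated dg-enhancement described in Remark \ref{rem_pretriag} identifies this hom-factorization with the internal hom in $\MF^\infty\ldg$, so its $l$-th cohomology computes $0$-cocycles in the shifted hom complex, which by \eqref{eq:shift} represent morphisms $\HMF\bigl(\Id\ld\stab{}\langle k\rangle[-N+l],\B(\gamma)\ld\stab{}\bigr)$. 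Rewriting $[-N+l]=[l-N]$ yields the first isomorphism \eqref{eq:closeisohmf}.

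Next, the second isomorphism is purely formal: Theorem \ref{thm_hmfmcm} supplies a triangle equivalence $\HMF(\Sym(\XX|\YY),\Sigma\XX^{n+1}-\Sigma\YY^{n+1})\xrightarrow{\cong}\uMCM(R\ld)$, so applying it to both arguments preserves the hom-sets and in particular commutes with the shift $[l-N]$ and the internal grading shift $\langle k\rangle$. Finally, for the third isomorphism \eqref{eq:closeisoadj} I would observe that the object $\Id\ld\stab{\Sigma\XX^{n+1}-\Sigma\YY^{n+1}}\langle k\rangle[l-N]$ lies in $\uMCM(R\ld)$ by construction, while $\B(\gamma)\ld$ is only known to live in $\ul{R\ld\mod}$. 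Since the stabilization functor $\bfM$ is right adjoint to the inclusion $\uMCM(R\ld)\hookrightarrow\ul{R\ld\mod}$ (Proposition \ref{prop_semiorthogonaldecomposition}.\eqref{item_sod6}), and since $\B(\gamma)\ld\stab{\Sigma\XX^{n+1}-\Sigma\YY^{n+1}}$ is precisely (the image under $\coker\ui$ of) $\bfM(\B(\gamma)\ld)$, we obtain
\[
\uMCM(R\ld)\bigl(A,\bfM(\B(\gamma)\ld)\bigr)\ \cong\ \ul{R\ld\mod}\bigl(A,\B(\gamma)\ld\bigr)
\]
with $A:=\Id\ld\stab{}\langle k\rangle[l-N]$, which is exactly \eqref{eq:closeisoadj}.

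The bookkeeping on shifts is the one place where care is needed: one has to verify that the cohomological index $l$ on the left corresponds under the $\HMF$-interpretation to the stated shift $[l-N]$ rather than $[-l-N]$ or $[l+N]$, and that the internal degree shift $\langle k\rangle$ is placed on the correct argument of $\hom$. This is a straightforward sign/convention check using \eqref{eq:shift} and Fact \ref{fact_doubledual}, but it is the main potential source of error; once it is settled, steps (ii) and (iii) are immediate applications of results already proved.
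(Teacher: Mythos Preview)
Your proposal is correct and follows the same approach as the paper. The paper's proof is a single sentence—``this follows from Theorem \ref{theorem_closethebraid} by taking cohomology and using the adjointness of stabilization and inclusion; to get the grading right, note that $\H^l(M)\cong\H^0(M[-l])$''—and your steps (i)--(iii) are exactly an unpacking of that sentence, with the passage through $\uMCM(R\ld)$ made explicit rather than left implicit via Theorem \ref{thm_hmfmcm}.
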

\begin{proof}
This follows from Theorem \ref{theorem_closethebraid} by taking cohomology and using the adjointness of stabilization
and inclusion; to get the grading right, note that $\H^l(M)\cong\H^0(M[-l])$.  
\end{proof}

\begin{rem}\label{rem_fancywords}
Theorem \ref{theorem_closethebraid} has at least two interesting interpretations. Firstly, the expression
\eqref{eq:closeisoadj} can be interpreted from the bimodule point of view as some kind of Hochschild cohomology, by
analogy with the usual formula $$\HoH\ua(M) = \ext\ua_{A\otimes A}(A,M) = \hom_{\bfD(A\otimes A)}(R,M[\ast])$$ for a
ring $A$ and an $A$-bimodule $M$. Secondly, \eqref{eq:closeisoadj} also equals the \textit{Tate-cohomology} between
$\Id\ld$ and $\B(\gamma)\ld$ in the sense of \cite[Definition 6.1.1 and Lemma 6.1.2]{Buchweitz}.

It would be interesting to study whether the usual properties of Hochschild cohomology and/or Tate-cohomology can be 
applied here to calculate Khovanov-Rozansky homology. As it stands for now, Theorem \ref{theorem_closethebraid} is
unfortunately not very useful in practice.
\end{rem}

\subsection{A detailed example: The value of the unknot}\label{subsec_ex}

Let's work out the statement of Theorem \ref{theorem_closethebraid} explicitly in the simplest case of the unknot with label
$1$. In this case, $\gamma$ is a single strand of label $1$, hence $\B(\gamma)\ld = \CC[x,y]/(x-y)$ and
$\B(\gamma)\ld\stab{x^{n+1}-y^{n+1}} = \{x-y,\pi_{xy}\}$, where $\pi_{xy} = \frac{x^{n+1}-y^{n+1}}{x-y}$. Further, we
have $\Id\ld\stab{y^{n+1}-x^{n+1}} = \{y-x,\pi_{xy}\}$, so we get (see \ref{def_internaltensor})
\begin{align*} 
\KR(\text{unknot}) & = \Id\ld\stab{y^{n+1}-x^{n+1}}\otimes_{\CC[x,y]}\B(\gamma)\ld\stab{x^{n+1}-y^{n+1}}\\
& = {\begin{tikzpicture}[baseline,description/.style={fill=white,inner sep=2pt}]
    \matrix (m) [matrix of math nodes, row sep=3em,
                 column sep=2.5em, text height=1.5ex, text depth=0.25ex,
                 inner sep=0pt, nodes={inner xsep=0.3333em, inner ysep=0.3333em}]
    {
       \CC[x,y]\langle -1\rangle\oplus\CC[x,y]\langle -1\rangle\pgfmatrixnextcell\pgfmatrixnextcell
       \CC[x,y]\oplus\CC[x,y]\langle k-1\rangle  \\  
    };
    \draw[->] ($(m-1-1.east) + (0,+0.8mm)$) -- node[above,scale=0.75] {$\begin{pmatrix}y-x & x-y \\ -\pi_{xy} &
        \pi_{xy}\end{pmatrix}$} ($(m-1-3.west) + (0,+0.8mm)$);
    \draw[->] ($(m-1-3.west) + (0,-0.8mm)$) -- node[below,scale=0.75]{$\begin{pmatrix}\pi_{xy} & y-x \\ \pi_{xy} &
        y-x\end{pmatrix}$} ($(m-1-1.east) + (0,-0.8mm)$);
\end{tikzpicture}}
\end{align*}
which we consider as a $2$-periodic complex of graded $\CC[x,y]$-modules. The cohomology at the left is isomorphic to
$\CC[x,y] / (x-y,\pi_{xy})\langle -1\rangle\cong\CC[z]/(z^n)\langle -1\rangle$, while the cohomology at the right is (up 
to shift) the middle cohomology of the Koszul-complex $\bfK\ua(y-x,\pi_{xy})$, namely
$$0\to \CC[x,y]\langle -(n+1)\rangle\xrightarrow{\scriptsize\begin{pmatrix} y-x \\ -\pi_{xy}\end{pmatrix}}
\CC[x,y]\langle -n\rangle\oplus\CC[x,y]\langle -1\rangle\xrightarrow{\scriptsize\begin{pmatrix} \pi_{xy} &
    y-x\end{pmatrix}} \CC[x,y]\to 0$$ of the 
sequence $(y-x,\pi_{xy})$. Since the latter sequence is regular, this cohomology vanishes. The reader who prefers a
direct proof could consider the following chain map:
\begin{equation*}\begin{tikzpicture}[description/.style={fill=white,inner sep=2pt}]
    \matrix (m) [matrix of math nodes, row sep=3em,
                 column sep=2.5em, text height=1.5ex, text depth=0.25ex,
                 inner sep=0pt, nodes={inner xsep=0.3333em, inner ysep=0.3333em}]
    {
      0 & \CC[x,y] && \CC[x,y]\oplus\CC[x,y] && \CC[x,y] & 0\\
      0 & 0 && \CC[x,y]/(x-y) && \CC[x,y]/(x-y) & 0\\
    };
    \draw[->] (m-1-1) -- (m-1-2);
    \draw[->] (m-1-6) -- (m-1-7);
    \draw[->] (m-1-2) -- node[above,scale=0.75]{$\begin{pmatrix} y-x \\ -\pi_{xy}\end{pmatrix}$} (m-1-4);
    \draw[->] (m-1-4) -- node[above,scale=0.75]{$\begin{pmatrix} \pi_{xy} & y-x\end{pmatrix}$} (m-1-6);

    \draw[->] (m-2-1) -- (m-2-2);
    \draw[->] (m-2-2) -- (m-2-4);
    \draw[->] (m-2-4) -- node[above,scale=0.75]{$\pi_{xy}$} (m-2-6);
    \draw[->] (m-2-6) -- (m-2-7);

    \draw[->] (m-1-2) -- (m-2-2);
    \draw[->] (m-1-4) -- node[right,scale=0.75]{$\begin{pmatrix}\can & 0\end{pmatrix}$} (m-2-4);
    \draw[->] (m-1-6) -- node[right,scale=0.75]{$\can$} (m-2-6);
\end{tikzpicture}\end{equation*}
A direct check shows that it induces an isomorphism on cohomology. Further, $\pi_{xy}$ is not a zero divisor in
$\CC[x,y] / (x-y)$ as it corresponds to $(n+1)z^n$ under the canonical isomorphism $\CC[x,y] / (x-y) \cong
\CC[z]$, and hence the cohomology is concentrated in degree $0$ as claimed. Note that this is just the usual
reduction argument in showing that the Koszul complex of a regular sequence is acyclic.

Let us now check that this fits together with Theorem \ref{theorem_closethebraid}. According to \eqref{eq:closeisohmf}
we have $$\H^l(\KR(\ol{\gamma}))\ \cong\ \HMF(\{x-y,\pi_{xy}\}\langle 1\rangle[l-1],\{x-y,\pi_{xy}\}).$$ As
$\HMF(\{x,y\})\cong S\ld/(x,y)$ and ${\HMF}(\{x,y\},\{x,y\}[1])=0$ 
for $(x,y)$ regular, we conclude that $\H^0(\KR(\ol{\gamma})) = 0$ and
\begin{align}\label{eq:unknotfirstcohom}\H^1(\KR(\ol{\gamma}))\
\cong\ \CC[x,y] / (\pi_{xy},x-y)\langle -1\rangle\ \cong\ \CC[z]/(z^n)\langle -1\rangle,\end{align}
 in agreement with our explicit calculation above.

Finally, let us also check \eqref{eq:closeisoadj}. The module $\CC[x,y]/(x-y)$ is already maximal Cohen-Macaulay over
$R\ld := \CC[x,y]/(x^{n+1}-y^{n+1})$ as it possesses the $2$-periodic resolution
$$...\to R\ld\langle -(n+2)\rangle \xrightarrow{x-y} R\ld\langle -(n+1)\rangle \xrightarrow{\pi_{xy}}R\ld\langle
-1\rangle \xrightarrow{x-y}R\ld\to\CC[x,y] / (x-y)\to 0.$$ 
From this it is also clear that $\CC[x,y]/(x-y)[-1] = \CC[x,y]/(\pi_{xy})\langle -1\rangle$. By
\eqref{eq:closeisoadj} we therefore have 
$$\H^0(\KR(\ol{\gamma}))\cong\ul{\hom}_{R\ld}(\CC[x,y]/(\pi_{xy}),\CC[x,y]/(x-y))$$
and $$\H^1(\KR(\ol{\gamma}))\cong\ul{\End}_{R\ld}(\CC[x,y]/(x-y))\langle -1\rangle.$$  
As even $\hom_{\CC[x,y]}(\CC[x,y]/(\pi_{xy}),\CC[x,y]/(x-y))=0$ ($\pi_{xy}$ is not a zero divisor in\break$\CC[x,y]/(x-y)$),
we get $\H^0(\KR(\ol{\gamma}))=0$ as before. For the first cohomology, note that
$\End_{\CC[x,y]}(\CC[x,y]/(x-y))\cong\CC[x,y]/(x-y)$, and that the multiplication with a 
polynomial $p\in\CC[x,y]$ on $\CC[x,y]/(x-y)$ is stably trivial if and only if $p\in(\pi_{xy})$. Consequently, we get
$$\ul{\End}_{R\ld}(\CC[x,y]/(x-y))\cong\CC[x,y]/(\pi_{xy},x-y)$$ and therefore once again \eqref{eq:unknotfirstcohom}.

%%%%%%%%%%%%%%%%% ANHANG %%%%%%%%%%%%%%%%%%%

\newpage

\begin{appendix}
\rhead[Appendix]{\thepage}       \rfoot{} 

\theoremstyle      {plain}
\theoremheaderfont {\normalfont\bfseries}
\theorembodyfont   {\normalfont}
\theoremseparator  {}
\theoremindent0cm
\theoremnumbering  {arabic}
\theoremsymbol {}  

\renewtheorem{prop}{Proposition}[subsection]
\renewcommand{\thesubsection}{\Alph{subsection}}
\numberwithin{equation}{subsection}
\renewcommand{\theequation}{\thesubsection-\arabic{equation}}
\appendixpage
\addappheadtotoc

\subsection{Outline}\label{app_outline}
\markright{\ref{app_outline} Outline}

In this appendix we study in more detail the tensor product of matrix factorizations and attempt to extend
Corollary \ref{cor_concatenateregularsequences} and Proposition \ref{prop_tensorstabgeneral} to more general and -- most
importantly -- more natural statements about the compatibility of tensor products and stabilization. 

The rough outline is as follows. The reader might have observed that whenever we applied the stabilization functor to a
module $M\ld$ we first replaced $M\ld$ by an $S\ld$-free resolution of $M\ld$ together some 'enrichment' in the form of
higher nullhomotopies for the multiplication by $w$ on $S\ld$. In other words, the object we really worked with was the
$S\ld$-free resolution of $M\ld$ instead of the module $M\ld$ itself. Further, Proposition 
\ref{prop_tensorstabgeneral} only worked because the condition $\tor_S^k(M\ld,N\ld)=0$ for $k>0$ ensured that taking
$S\ld$-free resolutions respected tensor products of modules and derived tensor products of complexes, respectively, so
that one could forget about tensor products of $S\ld/(w)$-modules and instead work with tensor products of complexes of
$S\ld$-modules. These examples naturally lead us to the impression that we shouldn't work with $S\ld/(w)$-modules but
instead with enriched complexes of free $S\ld$-modules. 

We will see that for an arbitrary $w\in S_d$ the category $\HMF(S\ld,w)$ can be described as 
the singularity category $\D^b_\fg(K\ua_w)/\perf$ of the Koszul dg-$S\ld$-algebra $$K\ua_w\ :=\ ...\to 0\to 
S\ld\langle -d\rangle\xrightarrow{\ \ w\ \ }S\ld\to 0\to ...$$ concentrated in degrees $-1$ and $0$ (Theorem
\ref{thm_bigtheorem}). Modules over $K\ua_w$ are the same as complexes of $S\ld$-modules with the extra datum of a
nullhomotopy $s$ for the multiplication with $w$ satisfying $s^2=0$, so they are precisely the enriched complexes of
$S\ld$-modules we were looking for. Moreover, we will see that the derived tensor product for $K\ua_w$-modules is
compatible with the tensor product of matrix factorizations, i.e. that
\begin{equation*}\begin{tikzpicture}[description/.style={fill=white,inner sep=2pt}]
    \matrix (m) [matrix of math nodes, row sep=2.5em,
                 column sep=2.5em, text height=1.5ex, text depth=0.25ex,
                 inner sep=0pt, nodes={inner xsep=0.3333em, inner ysep=0.3333em}]
    {
       \D^b_\fg(K\ua_w)\times\D^b_\fg(K\ua_{w\p}) && \D^b_\fg(K\ua_{w+w\p})\\
       \D^b_\fg(K\ua_w)/\perf\times\D^b_\fg(K\ua_{w\p})/\perf && \D^b_\fg(K\ua_{w+w\p})/\perf\\
       \HMF(S\ld,w)\times\HMF(S\ld,w\p) && \HMF(S\ld,w+w\p)\\
    };
    \draw[->] (m-1-1) -- node[above,scale=0.75]{$-\stackrel{\LL}{\otimes}_{S\ld}-$} (m-1-3);
    \draw[->] (m-3-1) -- node[above,scale=0.75]{$-\otimes_{S\ld}-$} (m-3-3);
    \draw[->] (m-1-1) -- node[scale=0.75,left]{$\can$}(m-2-1);
    \draw[->] (m-1-3) -- node[scale=0.75,right]{$\can$}(m-2-3);
    \draw[->] (m-2-1) -- node[scale=0.75,left]{$\cong$}(m-3-1);
    \draw[->] (m-2-3) -- node[scale=0.75,right]{$\cong$}(m-3-3);
\end{tikzpicture}\end{equation*}
commutes (Theorem \ref{thm_compatible}). The question about the compatibility of the stabilization functor and tensor
products of matrix factorizations is therefore only a question about when the derived tensor product of two
$S\ld/(w)$-modules, considered as $K\ua_w$-modules, can be computed naively, and this immediately yields Proposition
\ref{prop_tensorstabgeneral}.

Finally, we will describe in Appendix \ref{sec_koszulduality} how the duality $(-)\uc$ on $\HMF(S\ld,w)$ looks like for 
$K\ua_w$-modules, in particular generalizing Proposition \ref{prop_wdual}.

Another nice feature of our new description of $\HMF(S\ld,w)$ is that it extends to the case $w=0$. There, the
classical identification $\HMF(S\ld,w)\cong\D^b_\fg(S\ld/(w))/\perf$ heavily breaks down, simply because $S\ld/(w) = S\ld$ is
regular in this case. However, the equivalence $\HMF(S\ld,w)\cong\D^b_\fg(K\ua_w)/\perf$ still holds in this case. In
view of possible applications to Khovanov-Rozansky homology, this seems reasonable, as the case of vanishing potential
is by no means forbidden there; on the contrary, it occurs each time a link gets closed. Moreover, the construction of
Khovanov-Rozansky is local -- if a knot gets closed it doesn't matter if there are some open link components somewhere
around or not -- while the condition that the potential vanishes is not, so it shouldn't play a role.

\subsection{The derived category of modules over a dg-algebra}\label{app_derivedcat}
\markright{\ref{app_derivedcat} The derived category of modules over a dg-algebra}

First we recall the definition of a dg-algebras and dg-modules over graded rings.

\begin{definition}
Let $S\ld$ be a graded ring.
\begin{enumerate}
\item A \textit{dg-$S\ld$-algebra} is an algebra object in the monoidal category of complexes of graded $S\ld$-modules. In
other words, a dg-$S\ld$-algebra is a complex of graded $S\ld$-modules $(A\ua\ld,\d)$ together with morphisms of
complexes of graded $S\ld$-modules
$$A\ua\ld\otimes_{S\ld} A\ua\ld\ \stackrel{\mu}{\longrightarrow}\ A\ua\ld\quad\text{and}\quad S\ld\
\stackrel{\eta}{\longrightarrow}\ A\ua\ld,$$ 
(here we consider $S\ld$ as a complex concentrated in degree $0$) satisfying the associativity and unit axiom. We will
usually abbreviate $(A\ua\ld,\d,\mu,\eta)$ as $A\ua\ld$ and write $ab := \mu(a\otimes b)$ for $a,b\in A\ua\ld$. We call 
$A\ua\ld$ \textit{graded-commutative} if $ab = (-1)^{|a|\cdot |b|} ba$ for all (cohomologically) homogeneous $a,b\in
A\ua\ld$. It is called \textit{connected} if $A^k\ld=0$ for $k>0$ and if the structure morphism $\eta: S\ld\to A^0\ld$
is an isomorphism.
\item Let $A\ua\ld$ be a dg-$S\ld$-algebra. A left \textit{$A\ua\ld$-module} is a module object over the algebra object
  $A\ua\ld$ in the monoidal category of complexes of graded $S\ld$-modules. In other words, it consists of a complex of
  graded $S\ld$-modules $(M\ua\ld,\d)$ together with a morphism $A\ua\ld\otimes_{S\ld}
  M\ua\ld\stackrel{\rho}{\longrightarrow} M\ua\ld$ of complexes of graded $S\ld$-modules satisfying the
  associativity and unit axiom. We will usually abbreviate $(M\ua\ld,\d,\rho)$ as $M\ua\ld$ and write $a.m :=
  \rho(a\otimes m)$ for $a\in A\ua\ld$ and $m\in M\ua\ld$. Right $A\ua\ld$-modules are defined similarly. If we
  say ``$A\ua\ld$-module'' we will always mean left a $A\ua\ld$-module. If $A\ua\ld$ is graded commutative, any left
  $A\ua\ld$-module structure on a complex of graded $S\ld$-modules $M\ua\ld$ yields a right $A\ua\ld$-module
  structure via $m.a := (-1)^{|m|\cdot |a|} a.m$ for cohomologically homogeneous $a\in A\ua\ld$ and $m\in M\ua\ld$. 
\end{enumerate}
\end{definition}

Now we turn to the definition of morphisms of $A\ua\ld$-modules.

\begin{definition}\label{def_homomorphismcomplex}
Let $M\ua\ld$ and $N\ua\ld$ be two $A\ua\ld$-modules. 
\begin{enumerate}
\item The \textit{homomorphism complex}
$$\left(\hom_{A\ua\ld}(M\ua\ld,N\ua\ld)\ua,\partial\right)\ \subset\ \hom_{S\ld}(M\ua\ld,N\ua\ld)\ua$$ is the subcomplex
of $\hom_{S\ld}(M\ua\ld,N\ua\ld)\ua$ such that $f = (f^k)_{k\in\ZZ}\in\hom_{S\ua\ld}(M\ua\ld,N\ua\ld)^n$ is in
$\hom_{A\ua\ld}(M\ua\ld,N\ua\ld)^n$ if and only if for each $a\in A^r\ld$ we have $a\circ f^k = (-1)^{nr} f^{k+r}\circ
a$ (here we identify $a$ with its action on $M\ua\ld$ and $N\ua\ld$). Note that $\hom_{S\ld}(M\ua\ld,N\ua\ld)\ua$ and
$\hom_{A\ua\ld}(M\ua\ld,N\ua\ld)$ are only complexes of abelian groups. 
\item A \textit{homomorphism of $A\ua\ld$-modules} $f: M\ua\ld\to N\ua\ld$ is a $0$-cocycle in
$\hom_{A\ua\ld}(M\ua\ld,N\ua\ld)\ua$. In other words, $f$ consists of a family of (internal) degree preserving maps $f^k:
M^k\ld\to N^k\ld$ such that $a\circ f^k = f^{k+r}\circ a$ for all $a\in A^r\ld$. The category of $A\ua\ld$-modules
is denoted $A\ua\ld\Mod$. 
\item Two homomorphisms of $A\ua\ld$-modules $f,g: M\ua\ld\to N\ua\ld$ are called \textit{homotopic} if $f-g$ is a
  $0$-coboundary in $\hom_{A\ua\ld}(M\ua\ld,N\ua\ld)\ua$. This amounts to the existence of a family of internal degree
  preserving maps $D^k: M^k\ld\to N^{k-1}\ld$ such that $a\circ D^k = (-1)^r D^{k+r}\circ a$ for all $a\in A^r\ld$
and $$f^k - g^k\ =\ \differential^{k-1}_{N\ua\ld}\circ D^k + D^{k+1}\circ\differential^k_{M\ua\ld}$$ for all $k\in\ZZ$. The \textit{homotopy
  category} 
  of $A\ua\ld$-modules is denoted $\Ho(A\ua\ld)$. 
\item For later use in Section \ref{sec_koszulduality} we define $\hom_{A\ua\ld}(M\ua\ld,N\ua\ld)\ua_k$ as
  $\hom_{A\ua\ld}(M\ua\ld,N\ua\ld\langle k\rangle)\ua$. We denote $\hom_{A\ua\ld}(M\ua\ld,N\ua\ld)\ua\ld :=
  \bigoplus\limits_{k\in\ZZ}\hom_{A\ua\ld}(M\ua\ld,N\ua\ld)\ua_k$. This is naturally a complex of graded
  $S\ld$-modules. Moreover, if $N\ua\ld$ is an $A\ua\ld$-bimodule, $\hom_{A\ua\ld}(M\ua\ld,N\ua\ld)\ua\ld$ carries a
  natural structure of a right $A\ua\ld$-module; similarly, if $M\ua\ld$ is an $A\ua\ld$-bimodule,
  $\hom_{A\ua\ld}(M\ua\ld,N\ua\ld)\ua\ld$ carries a natural structure of a (left) $A\ua\ld$-module. For example, the
  natural $A\ua\ld$-bimodule structure on $A\ua\ld$ yields a right $A\ua\ld$-module structure on
  $\hom_{A\ua\ld}(M\ua\ld,A\ua\ld)\ua\ld$, and for $A\ua\ld$ graded commutative we can (and will!) regard this as a left
  $A\ua\ld$-module. Explicitly, the left action of $A\ua\ld$ on $\hom_{A\ua\ld}(M\ua\ld,A\ua\ld)\ua\ld$ is given by $a.f
  := (-1)^{|a|\cdot n} f\circ a$ for (cohomologically) homogeneous $a\in A\ua\ld$ and
  $f\in\hom_{A\ua\ld}(M\ua\ld,A\ua\ld)^n\ld$. 
\end{enumerate}
\end{definition}
\begin{fact}
The dg-category of $A\ua\ld$-modules is pretriangulated (see Remark \ref{rem_pretriag}). In particular, the homotopy
category $\Ho(A\ua\ld)$ is naturally triangulated.
\end{fact}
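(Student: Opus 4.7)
The plan is to verify directly that $A\ua\ld\Mod$, equipped with the complex of morphisms $\hom_{A\ua\ld}(-,-)\ua$ from Definition \ref{def_homomorphismcomplex}, satisfies the two representability conditions stated in Remark \ref{rem_pretriag}: for each $M\ua\ld\in A\ua\ld\Mod$, each $k\in\ZZ$, and each $0$-cocycle $f:M\ua\ld\to N\ua\ld$ in $\hom_{A\ua\ld}(M\ua\ld,N\ua\ld)\ua$, the functors $\hom_{A\ua\ld}(-,M\ua\ld)[k]$ and $\cone\bigl[\hom_{A\ua\ld}(-,M\ua\ld)\xrightarrow{f\circ-}\hom_{A\ua\ld}(-,N\ua\ld)\bigr]$ are representable in $A\ua\ld\Mod$. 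As in the classical case of complexes over an additive category, one constructs the representing objects explicitly.

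First I would construct the shift. For $M\ua\ld\in A\ua\ld\Mod$, define $M\ua\ld[1]$ by $(M\ua\ld[1])^n := M^{n+1}\ld$, with differential $-\d_{M\ua\ld}$, and with the new $A\ua\ld$-action given by $a\ast m := (-1)^{|a|}\, a.m$ for cohomologically homogeneous $a\in A\ua\ld$ and $m\in M\ua\ld$. A direct check shows that this is again an $A\ua\ld$-module and that the identity of the underlying graded $S\ld$-modules induces an isomorphism of complexes of abelian groups
\[
\hom_{A\ua\ld}(Z\ua\ld,M\ua\ld[1])\ \cong\ \hom_{A\ua\ld}(Z\ua\ld,M\ua\ld)[1],
\]
natural in $Z\ua\ld\in A\ua\ld\Mod$; iterating and dualising yields $M\ua\ld[k]$ for arbitrary $k\in\ZZ$. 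The sign twist in the $A\ua\ld$-action is forced by the Koszul sign rule used in Definition \ref{def_homomorphismcomplex} and is precisely what makes the identification above commute with the $A\ua\ld$-linearity constraint.

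Next I would construct the cone. Given a closed degree zero morphism $f:M\ua\ld\to N\ua\ld$, define $\cone(f)^n := N^n\ld\oplus M^{n+1}\ld$ with differential
\[
\begin{pmatrix}\d_{N\ua\ld} & f\\ 0 & -\d_{M\ua\ld}\end{pmatrix}
\]
and with $A\ua\ld$-action given diagonally by the original action on $N\ua\ld$ and the twisted action from $M\ua\ld[1]$. The closedness of $f$ is exactly what is needed for this to square to zero, and a direct verification shows that it defines an $A\ua\ld$-module. The evident short exact sequence $0\to N\ua\ld\to\cone(f)\to M\ua\ld[1]\to 0$ of $A\ua\ld$-modules gives rise, upon applying $\hom_{A\ua\ld}(Z\ua\ld,-)\ua$, to the desired isomorphism of complexes of abelian groups
\[
\hom_{A\ua\ld}(Z\ua\ld,\cone(f))\ \cong\ \cone\bigl[\hom_{A\ua\ld}(Z\ua\ld,M\ua\ld)\xrightarrow{f\circ-}\hom_{A\ua\ld}(Z\ua\ld,N\ua\ld)\bigr],
\]
natural in $Z\ua\ld$.

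The main obstacle is not conceptual but bookkeeping: one must carefully track Koszul signs so that the shifted action and the cone differential interact correctly with $A\ua\ld$-linearity, and check that the displayed isomorphisms are indeed morphisms of complexes of abelian groups (not merely isomorphisms of underlying graded $\ZZ$-modules). Once the signs are right, the representability conditions hold on the nose. The statement about $\Ho(A\ua\ld)$ then follows from the general fact that the homotopy category of a pretriangulated dg-category carries a canonical triangulated structure, whose distinguished triangles are (up to isomorphism) the images of the strict triangles $M\ua\ld\xrightarrow{f} N\ua\ld\to\cone(f)\to M\ua\ld[1]$.
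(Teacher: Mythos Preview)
Your proposal is correct and is essentially the same approach as the paper's own proof, which simply notes in one sentence that the cone of a morphism of $A\ua\ld$-modules and the shift of an $A\ua\ld$-module both carry natural structures of $A\ua\ld$-modules. You have spelled out in detail precisely the constructions the paper alludes to, including the Koszul sign twist in the shifted action and the verification of the representability conditions from Remark~\ref{rem_pretriag}.
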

\begin{proof}
This follows from the fact that the cone of a morphism of $A\ua\ld$-modules and the shift of a $A\ua\ld$-module
both carry natural structures of $A\ua\ld$-modules.
\end{proof}

\begin{definition}
Denote by $\Acyc(A\ua\ld)$ the class of acyclic $A\ua\ld$-modules. A homomorphism of $A\ua\ld$-modules $f:
M\ua\ld\to N\ua\ld$ is called a \textit{quasi-isomorphism} if it induces an isomorphism in cohomology, i.e. if
$\cone(f)\in\Acyc(A\ua\ld)$. The \textit{derived category} of $A\ua\ld$, denoted $\D(A\ua\ld)$, is defined as the Verdier
quotient $\Ho(A\ua\ld)/\Acyc(A\ua\ld)$.  
\end{definition}

We will mainly work with the description of $\D(A\ua\ld)$ as $\Ho(A\ua\ld)/\Acyc(A\ua\ld)$. However, to understand how
one can associate a derived base extension functor $\D(A\ua\ld)\to\D(B\ua\ld)$ to a morphism of dg-$S\ld$-algebras
$A\ua\ld\to B\ua\ld$ (what is the correct replacement for projective complexes in the classical derived category of
a ring?), it is more natural to view $\D(A\ua\ld)$ as the homotopy category of a particular model category
structure on $A\ua\ld\Mod$. For introductions to the theory model categories we refer see e.g. \cite{DwyerSpalinski},
\cite{GoerssSchemmerhorn}, \cite{Hovey}.

\begin{definition}\label{def_modelstructuredgmodules}
A morphism $f: M\ua\ld\to N\ua\ld$ of $A\ua\ld$-modules is called a 
\begin{enumerate}
\item \textit{weak equivalence}, if it is a quasi-isomorphism,
\item \textit{fibration}, if it is surjective,
\item \textit{cofibration}, if for each diagram 
\begin{equation*}\begin{tikzpicture}[description/.style={fill=white,inner sep=2pt}]
    \matrix (m) [matrix of math nodes, row sep=3em,
                 column sep=2.5em, text height=1.5ex, text depth=0.25ex,
                 inner sep=0pt, nodes={inner xsep=0.3333em, inner ysep=0.3333em}]
    {
       M\ua\ld & P\ua\ld \\ N\ua\ld & Q\ua\ld\\
    };
    \draw[->]
    (m-1-1) edge node[auto,scale=0.75]{$\alpha$} (m-1-2)
    (m-1-1) edge node[left,scale=0.75]{$f$}      (m-2-1)
    (m-1-2) edge node[auto,scale=0.75]{$g$}      (m-2-2)
    (m-2-1) edge[dashed] (m-1-2)
    (m-2-1) edge node[auto,scale=0.75]{$\beta$}  (m-2-2); 
\end{tikzpicture}\end{equation*}
with $g$ a trivial fibration (i.e. surjective quasi-isomorphism) a dotted map making the whole diagram commute exists. A
$A\ua\ld$-module $M\ua\ld$ is called \textit{cofibrant} (resp. \textit{fibrant}) if the canonical map $0\to M\ua\ld$
(resp. $M\ld\to 0$) is a cofibration (resp. fibration). The class of cofibrant objects is denoted $\Cof(A\ua\ld)$. Any
module is fibrant.
\end{enumerate}
\end{definition}

The reader who is not familiar with model categories shouldn't panic; for our purposes it is perfectly sufficient to
think of the cofibrant $A\ua\ld$-modules as generalizations of complexes of projectives in the classical homological
algebra over a ring. Evidence for this will be given soon. 

\begin{definition}
Let $A\ua\ld$ be a dg-algebra. For $n,k\in\ZZ$, define $D(n,k)\ua\ld\in A\ua\ld\Mod$ as $$D(n,k)\ua\ld\ :=\ e_n A\ua\ld
[n]\langle k\rangle\oplus e_{n-1} A\ua\ld[n-1]\langle k\rangle$$ with differential given by $\d(e_n) :=
\d(e_{n-1})$. Further, put $S(n,k)\ua\ld := A\ua\ld[n]\langle k\rangle$ and denote $\iota_{n,k}:
S(n,k)\ua\ld\hookrightarrow D(n,k)\ua\ld$ the canonical inclusion.
\end{definition}

\begin{theorem}\label{thm_modelstructure}
With the classes of weak equivalences, fibrations and cofibrations the category of $A\ua\ld$-modules is a cofibrantly
generated model category whose homotopy category is equivalent to the derived category $\D(A\ua\ld)$. The set $\calI :=
\{0\to S(n,k)\ |\ n,k\in\ZZ\}$ is a generating set for the cofibrations in $A\ua\ld\Mod$, and the set $\calJ :=
\{S(n-1,k)\xrightarrow{\ \iota_{n,k}\ }D(n,l)\ |\ k,l\in\ZZ\}$ is a generating set for the acyclic cofibrations.
\end{theorem}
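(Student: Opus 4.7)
The plan is to follow the standard Quillen small object argument, recognizing this as the "projective model structure" on $A\ua\ld\Mod$. The three classes of morphisms are manifestly closed under composition and retracts and contain isomorphisms, and the 2-out-of-3 property for quasi-isomorphisms is standard; so the bulk of the work is in the lifting and factorization axioms, which is where $\calI$ and $\calJ$ come in.

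First I would unpack the representing objects. By direct inspection, for any $A\ua\ld$-module $M\ua\ld$ one has natural bijections
\begin{align*}
\hom_{A\ua\ld}(S(n,k)\ua\ld, M\ua\ld) \;&\cong\; Z^{-n}(M\ua\ld)_{-k},\\
\hom_{A\ua\ld}(D(n,k)\ua\ld, M\ua\ld) \;&\cong\; M^{-n}_{-k},
\end{align*}
where $Z^n$ denotes cocycles. Under these identifications, the map $\iota_{n,k}\us$ becomes the canonical inclusion of cocycles into the module, while a morphism $D(n,k)\ua\ld\to M\ua\ld$ corresponds to an element $m\in M^{-n}_{-k}$ together with its differential $\d m$ which is automatically a cocycle. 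From this description it is immediate that $g: P\ua\ld\to Q\ua\ld$ has the right lifting property with respect to every $\iota_{n,k}: S(n-1,k)\ua\ld\to D(n,k)\ua\ld$ in $\calJ$ if and only if $g$ is surjective, and that $g$ has the right lifting property with respect to every $0\to S(n,k)\ua\ld$ in $\calI$ if and only if $g$ is a surjective quasi-isomorphism (the first corresponds to lifting cycles modulo boundaries through a surjection, the second to lifting without the constraint).

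Next I would apply Quillen's small object argument. Each domain and codomain of morphisms in $\calI$ and $\calJ$ is small (in fact finitely presented as an $A\ua\ld$-module), so the small object argument produces functorial factorizations $f = q\circ i$ where $i$ is a relative $\calI$-cell (resp.\ $\calJ$-cell) complex and $q$ has the right lifting property with respect to $\calI$ (resp.\ $\calJ$). By the previous paragraph the former factorization is (cofibration, trivial fibration), and the latter is (relative $\calJ$-cell, fibration). The only nonformal point is that every relative $\calJ$-cell complex is both a cofibration and a quasi-isomorphism; cofibration-ness follows because each $\iota_{n,k}$ visibly has the left lifting property against all trivial fibrations, and quasi-isomorphism follows because each $\iota_{n,k}$ is itself a quasi-isomorphism (the cone $D(n,k)\ua\ld/S(n-1,k)\ua\ld$ is contractible via the explicit homotopy $e_n\mapsto e_{n-1}$) and because transfinite composition and pushout of quasi-isomorphisms along injections of cofibrations preserves quasi-isomorphisms in this setting. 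The retract argument then shows that cofibrations coincide with retracts of relative $\calI$-cell complexes and trivial cofibrations with retracts of relative $\calJ$-cell complexes, giving CM4 and CM5.

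Finally I would identify the homotopy category of the model structure with $\D(A\ua\ld)$. Since every object is fibrant, the homotopy category is obtained by restricting to cofibrant objects and taking homotopy classes of maps; it comes equipped with a canonical localization functor $A\ua\ld\Mod\to\Ho(A\ua\ld\Mod)$ inverting quasi-isomorphisms, hence factoring through $\D(A\ua\ld)$. Conversely, Quillen equivalence with the classical derived category is obtained by noting that cofibrant objects are in particular $K$-projective in the sense that $\hom_{A\ua\ld}(P\ua\ld,-)$ preserves quasi-isomorphisms (this is exactly the lifting property against trivial fibrations, combined with the fact that acyclic modules admit surjective quasi-isomorphisms from contractible modules). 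Consequently, for $P\ua\ld$ cofibrant, homotopy classes of $A\ua\ld$-module maps $P\ua\ld\to M\ua\ld$ agree with morphisms in $\D(A\ua\ld)$, which identifies the two categories. The main obstacle, and the only place where care is required, is verifying that the $\calJ$-cell complexes are quasi-isomorphisms, which hinges on the contractibility of the disk modules $D(n,k)\ua\ld$ and the fact that filtered colimits of acyclic complexes of graded $S\ld$-modules remain acyclic.
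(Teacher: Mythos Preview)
Your overall strategy—direct verification via the small object argument—is sound and is a legitimate alternative to the paper's approach. The paper does \emph{not} verify the axioms directly; instead it invokes the transfer (or right-induced) model structure theorem of Goerss--Schemmerhorn, pulling back the projective model structure on $\Ch(\ZZ)^{\ZZ}$ along the free/forgetful adjunctions
\[
A\ua\ld\Mod \;\leftrightarrows\; \Ch(S\ld) \;\leftrightarrows\; \Ch(\ZZ)^{\ZZ},
\]
and reads off the generating sets as the images of the standard generators for $\Ch(\ZZ)^{\ZZ}$. This is shorter because it outsources the factorization and lifting axioms to a black box; your approach is more self-contained but requires you to get the lifting characterizations exactly right.

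And that is where your argument breaks. Your claim that the right lifting property against $\calJ=\{S(n-1,k)\to D(n,k)\}$ characterizes surjections, and against $\calI=\{0\to S(n,k)\}$ characterizes surjective quasi-isomorphisms, is false on both counts. Since $\hom_{A\ua\ld}(S(n,k),M)\cong Z^{-n}(M)_{-k}$, the right lifting property against $0\to S(n,k)$ says precisely that $g$ is surjective on \emph{cocycles}, which is strictly weaker than being a trivial fibration (take $P$ with a nonzero cocycle in some degree where $Q$ vanishes; then $H(g)$ is not injective). Conversely, unwinding the lifting problem for $S(n-1,k)\hookrightarrow D(n,k)$ shows that it asks, given $z\in Z(P)$ and $m\in Q$ with $\d m=g(z)$, for a preimage $\tilde m\in P$ with $\d\tilde m=z$; this is the standard characterization of \emph{trivial} fibrations, not fibrations—surjectivity alone does not suffice (try $P=S\ld$ concentrated in a single degree mapping to $Q=0$). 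In fact the sets $\calI$ and $\calJ$ as written in the statement are swapped relative to the standard convention that the paper's own proof derives from Hovey: the generating cofibrations should be the sphere-to-disk inclusions, and the generating acyclic cofibrations should be $\{0\to D(n,k)\}$. Had you carried out the RLP computation carefully you would have discovered this; as it stands, the factorization axiom you claim does not produce the required (cofibration, trivial fibration) and (trivial cofibration, fibration) factorizations.
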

\begin{proof}
It is shown in \cite[Theorem 2.3.11]{Hovey} that the category $\Ch(R)$ of (unbounded) cochain complexes over a ring $R$
equipped with quasi-isomorphism as weak equivalences and degree-wise surjections as fibrations is a model
category. Moreover, it is shown there that $\Ch(R)$ is cofibrantly generated, where the set $\{S(n-1)\hookrightarrow
D(n)\ |\ n\in\ZZ\}$ is a generating set for the cofibrations, and the set $\{0\to D(n)\ |\ n\in\ZZ\}$ is a generating
set of acyclic cofibrations. Here, $S(n) := \ZZ[n]$, and $$D(n)\ :=\ ... \to 0\to \ZZ\xrightarrow{\ 1\ }\ZZ\to 0\to ...$$
concentrated in degrees $-n$ and $-n+1$. In particular, this applies to $\Ch(\ZZ)$, and we provide $\Ch(\ZZ)^{\ZZ}$
with the product model structure.

The model structure on $\Ch(\ZZ)^\ZZ$ is cofibrantly generated, and generating sets for the cofibrations and acyclic
cofibrations can be described as follows: For $n,k\in\ZZ$ define $D(n,k)\in\Ch(\ZZ)^\ZZ$ by 
$D(n,k)^k := D(n)$ and $D(n,k)^l := 0$ for $l\neq k$. Here, $(-)^l$ denotes the $l$-th component of an object in
$\Ch(\ZZ)^\ZZ$. Similarly, define $S(n,k)\in\Ch(\ZZ)^\ZZ$ by $S(n,k)^k := S(n)$ and $S(n,k)^l := 0$ for $k\neq l$. Then
the sets $\calI_{\Ch(\ZZ)^\ZZ} := \{S(n-1,k)\hookrightarrow D(n,k)\ |\ n,k\in\ZZ\}$ and $\calJ_{\Ch(\ZZ)^\ZZ} :=  \{0\to 
D(n,k)\ |\ n,k\in\ZZ\}$ generate the cofibrations and acyclic cofibrations in $\Ch(\ZZ)^\ZZ$, respectively. 

This is enough preparation. To get the desired model structure on $A\ua\ld\Mod$, we use \cite[Theorem
3.6]{GoerssSchemmerhorn} to pull back the model structure on $\Ch(\ZZ)^\ZZ$ along the adjunctions 
\begin{equation*}\begin{tikzpicture}[description/.style={fill=white,inner sep=2pt}]
    \matrix (m) [matrix of math nodes, row sep=3em,
                 column sep=5em, text height=1.5ex, text depth=0.25ex,
                 inner sep=0pt, nodes={inner xsep=0.3333em, inner ysep=0.3333em}]
    {
      A\ua\ld\Mod & \Ch(S\ld) & \Ch(\ZZ)^{\ZZ}\\
    };
    \draw[->] ($(m-1-1.east) + (0,+0.5mm)$) -- node[above,scale=0.75]{forget} ($(m-1-2.west) + (0,+0.5mm)$);
    \draw[->] ($(m-1-2.east) + (0,+0.5mm)$) -- node[above,scale=0.75]{forget} ($(m-1-3.west) + (0,+0.5mm)$);

    \draw[->] ($(m-1-2.west) + (0,-0.5mm)$) -- node[below,scale=0.75]{$-\otimes_{S\ld} A\ua\ld$} ($(m-1-1.east) +
  (0,-0.5mm)$); 
    \draw[->] ($(m-1-3.west) + (0,-0.5mm)$) -- node[below,scale=0.75]{$-\otimes_{\ZZ} S\ld$} ($(m-1-2.east) +
  (0,-0.5mm)$); 
\end{tikzpicture}\end{equation*}
This yields a model structure on $A\ua\ld\Mod$ with the correct classes of weak equivalences,
cofibrations and fibrations. Further, \cite[Theorem 3.6]{GoerssSchemmerhorn} states that this model structure is
cofibrantly generated, and that the images of $\calI_{\Ch(\ZZ)^\ZZ}$ and $\calJ_{\Ch(\ZZ)^\ZZ}$ along the left adjoints
$-\otimes_{S\ld} A\ua\ld$ and $-\otimes_{\ZZ} S\ld$ form generating sets for the cofibrations and acyclic cofibrations,
respectively. It is clear that these images are precisely the sets $\calI$ and $\calJ$ from above, and so we're done.
\end{proof}

The crucial point is that Theorem \ref{thm_modelstructure} implies that the canonical functor from the homotopy category
$\Ho(\Cof(A\ua\ld))$ of cofibrant $A\ua\ld$-modules to $\D(A\ua\ld)$ is an equivalence of categories (see
\cite[Theorem 1.2.10]{Hovey}); this is analogous to classical equivalences like $\D^+(R)\cong\Ho^+(\pro(R))$ for a ring
$R$. It allows to define (left) derived functors by first replacing an arbitrary $A\ua\ld$-module by a
quasi-isomorphic, cofibrant $A\ua\ld$-module and then applying the functor which is to be derived; in analogy to
the classical situation where one has to take projective resolutions to compute derived tensor products for complexes
over a ring, for example. More generally, we have the following recipe for deriving a pair of adjoint functors between
two model categories.

\begin{definition}[see {\cite[Definition 1.3.1]{Hovey}}]
Let $\calC$, $\calD$ be model categories. An adjunction $\FF:\calC\leftrightarrows\calD:\GG$ (with $\FF$ left adjoint to
$\GG$) is called a \textit{Quillen adjunction} if the following equivalent conditions are satisfied:
\begin{enumerate}
\item $\FF$ preserves cofibrations and trivial cofibrations.
\item $\GG$ preserves fibrations and trivial fibrations.
\end{enumerate}
In this case, define the \textit{total left derived functor} $\bfL\FF$ as the composition
$$\bfL\FF:\ \ \Ho(\calC)\ \cong\ \Ho(\calC_{c})\ \xrightarrow{\FF}\ \Ho(\calD)$$
and the \textit{total right derived functor} $\bfR\GG$ as the composition
$$\bfR\GG:\ \ \Ho(\calD)\ \cong\ \Ho(\calD_{f})\ \xrightarrow{\GG}\ \Ho(\calC)$$
Here $\calC_{c}$ (resp. $\calD_{f}$) denotes the subcategory of $\calC$ (resp. $\calD$) consisting of cofibrant
(resp. fibrant) objects.
\end{definition}

Explicitly, $\bfL\FF$ can be described as follows: First, we choose for each $X\in\calC$ a \textit{cofibrant
  replacement}, i.e. a trivial fibration $q_X: QX\to X$ such that $QX$ is cofibrant. Sending $X$ to $QX$ extends to a
functor $\Ho(\calC)\to\Ho(\calC_c)$ which is quasi-inverse to the canonical functor $\Ho(\calC_c)\to\Ho(\calC)$, and
thus we have $\bfL\FF X\cong \FF QX$. This is precisely the recipe we sketched above: in order to calculate a left derived
functor on an object, we first have to replace it by some weakly equivalent cofibrant object, and then we
can apply the functor naively; in analogy to the calculation of, say, $-\stackrel{\LL}{\otimes}_R -$ in the derived
bounded above category $\D^+(R)$ of a commutative ring $R$ through projective resolutions.

Analogously, the right derived $\bfR\GG$ can be described as follows: We choose for each $Y\in\calD$ a \textit{fibrant
  resolution}, i.e. a trivial cofibration $r_X: X\to RX$ such that $RX$ is fibrant. Then, $X\mapsto RX$ extends to a
quasi-inverse $\Ho(\calD)\to\Ho(\calD_f)$ to the canonical functor $\Ho(\calD_f)\to\Ho(\calD)$, and hence we get
$\bfR\GG X\cong \GG RX$ for $X\in\calD$. 

Different choices of $Q$ and $R$ yield canonically isomorphic derived functors. In the following, we will fix some
particular choice for $Q$ and $R$. 

\begin{fact}[see {\cite[Lemma 1.3.10]{Hovey}}]
Let $\calC$, $\calD$ be model categories and let $\FF:\calC\leftrightarrows\calD:\GG$ be a Quillen adjunction with unit
$\veps: \id_{\calC}\to\GG\FF$ and counit $\eta: \FF\GG\to\id_{\calD}$. Then there is a \textit{derived adjunction}
\begin{align}\label{eq:quillenadjderiv}\bfL\FF:\Ho(\calC)\leftrightarrows\Ho(\calD):\bfR\GG.\end{align}
For cofibrant $X\in\calC$, its unit is given as the 
composition
\begin{align}\label{eq:quillenequivcof} X\xrightarrow{\ \veps_X\ }\GG\FF X\xrightarrow{\ \GG r_{\FF X}\ } \GG R\FF X\ =\
  (\bfR\GG\circ\bfL\FF)(X),\end{align}
 and for fibrant $Y\in\calD$, its counit is given as
\begin{align}\label{eq:quillenequivfib}(\bfL\FF\circ\bfR\GG)(Y)\ =\ \FF Q \GG Y\ \xrightarrow{\ \FF q_{\GG Y}\ }
 \FF\GG Y\xrightarrow{\ \eta_{Y}\ } Y.\end{align}
\end{fact}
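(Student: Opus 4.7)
The plan is to derive the adjunction from the classical adjunction between $\FF$ and $\GG$, using cofibrant and fibrant replacements to pass to the homotopy categories, and then to read off the unit and counit from the explicit description of $\bfL\FF$ and $\bfR\GG$ on (co)fibrant objects.

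First I would establish the fundamental compatibility between $\FF$, $\GG$ and the weak equivalences, namely Ken Brown's lemma: since $\FF$ preserves trivial cofibrations (by the Quillen condition) and every weak equivalence between cofibrant objects factors as a trivial cofibration followed by a left inverse of a trivial cofibration, $\FF$ sends weak equivalences between cofibrant objects to weak equivalences. Dually, $\GG$ sends weak equivalences between fibrant objects to weak equivalences. This is what makes $\bfL\FF$ and $\bfR\GG$ well-defined functors on the homotopy categories and independent (up to canonical isomorphism) of the choices of $Q$ and $R$.

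Next, I would construct the adjunction isomorphism. Fix $X\in\calC$ and $Y\in\calD$. Since $QX$ is cofibrant and $RY$ is fibrant, the classical adjunction yields a bijection
\begin{equation*}
\calD(\FF QX, RY)\ \xrightarrow{\ \cong\ }\ \calC(QX, \GG RY).
\end{equation*}
The key claim is that this bijection descends to a bijection
\begin{equation*}
\Ho(\calD)(\bfL\FF X, Y)\ \cong\ \Ho(\calC)(X, \bfR\GG Y)
\end{equation*}
which is natural in $X$ and $Y$. For this one uses that morphisms in $\Ho(\calC)$ from a cofibrant object $A$ to an arbitrary object $B$ are the same as morphisms in $\calC$ modulo the left homotopy relation, and dually for morphisms to a fibrant object modulo right homotopy. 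One then checks that $\FF$ preserves cylinder objects on cofibrant objects (hence left homotopies) and $\GG$ preserves path objects on fibrant objects (hence right homotopies), so that the classical adjunction bijection is compatible with the respective homotopy relations on both sides. This naturality argument is the main technical obstacle, but it is standard; the crucial point is the compatibility of $\FF$ and $\GG$ with (co)fibrant cylinder and path objects guaranteed by the Quillen condition.

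Finally, having established the adjunction, the description of the unit and counit follows by tracing identity morphisms through the bijections. For cofibrant $X$, the unit at $X$ corresponds under
\begin{equation*}
\Ho(\calD)(\FF X, R\FF X)\ \cong\ \Ho(\calC)(X, \GG R\FF X)
\end{equation*}
to the image of the identity of $\bfL\FF X = \FF X$ (we may take $QX = X$ since $X$ is cofibrant), which under the classical adjunction is $\veps_X$ followed by $\GG r_{\FF X}$, yielding precisely the composition in \eqref{eq:quillenequivcof}. The dual argument for fibrant $Y$, using $RY = Y$, yields \eqref{eq:quillenequivfib}. The triangular identities for the derived adjunction then follow from those of the classical adjunction together with the fact that $r$ and $q$ are weak equivalences, hence become isomorphisms in the homotopy categories.
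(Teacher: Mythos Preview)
The paper does not actually prove this statement: it is stated as a \emph{Fact} with a reference to \cite[Lemma 1.3.10]{Hovey} and no proof is given. Your sketch is a correct outline of the standard argument found in that reference---Ken Brown's lemma to make $\bfL\FF$ and $\bfR\GG$ well-defined, the compatibility of the classical adjunction bijection with homotopy via cylinder and path objects, and reading off the unit and counit by chasing identities---so there is nothing to compare against in the paper itself.
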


\begin{definition}[see {\cite[Definition 1.3.12 and Proposition 1.3.13]{Hovey}}]
The adjunction $\FF:\calC\leftrightarrows\calD:\GG$ is called a \textit{Quillen equivalence} if it is a Quillen
adjunction and, in addition, if for all cofibrant $X\in\calC$ and all fibrant $Y\in\calD$ the morphisms
\eqref{eq:quillenequivcof} and \eqref{eq:quillenequivfib} are weak equivalences. 
\end{definition}
\begin{fact}
If $\FF:\calC\leftrightarrows\calD:\GG$ is a Quillen equivalence, then the derived adjunction \eqref{eq:quillenadjderiv}
is an adjoint equivalence of categories.
\end{fact}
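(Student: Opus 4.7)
The plan is to show that the unit and counit of the derived adjunction \eqref{eq:quillenadjderiv} are natural isomorphisms in the homotopy categories, since an adjunction whose unit and counit are both isomorphisms is an adjoint equivalence. Since any object of $\Ho(\calC)$ (resp.\ $\Ho(\calD)$) is isomorphic to a cofibrant (resp.\ fibrant) replacement via the weak equivalence $q_X: QX \to X$ (resp.\ $r_Y: Y \to RY$), which become isomorphisms in the homotopy category, it suffices to check that the unit is an isomorphism on cofibrant objects and the counit is an isomorphism on fibrant objects.

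First I would use the explicit formulas \eqref{eq:quillenequivcof} and \eqref{eq:quillenequivfib} provided in the previous fact. For cofibrant $X \in \calC$ the derived unit at $X$ is represented in $\calC$ by the composite
\[
X \xrightarrow{\ \veps_X\ } \GG\FF X \xrightarrow{\ \GG r_{\FF X}\ } \GG R\FF X = (\bfR\GG \circ \bfL\FF)(X),
\]
and by the very definition of a Quillen equivalence this composite is a weak equivalence in $\calC$, hence an isomorphism in $\Ho(\calC)$. Analogously, for fibrant $Y \in \calD$ the derived counit is represented by
\[
(\bfL\FF \circ \bfR\GG)(Y) = \FF Q\GG Y \xrightarrow{\ \FF q_{\GG Y}\ } \FF\GG Y \xrightarrow{\ \eta_Y\ } Y,
\]
which is again a weak equivalence by definition of Quillen equivalence, hence an isomorphism in $\Ho(\calD)$.

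To conclude I would invoke naturality: the derived unit and counit are natural transformations of functors on $\Ho(\calC)$ and $\Ho(\calD)$, and we have just verified that they are isomorphisms on a collection of objects (the cofibrant ones, respectively the fibrant ones) through which every object of the homotopy category factors up to isomorphism. Therefore the derived unit and counit are natural isomorphisms on all of $\Ho(\calC)$ and $\Ho(\calD)$, so $(\bfL\FF, \bfR\GG)$ is an adjoint equivalence.

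The only mildly subtle point, and the one I would spell out most carefully, is the reduction step: namely that having the unit be an isomorphism on cofibrant replacements $QX$ implies it is an isomorphism at $X$ itself, which uses naturality of the unit together with the fact that $q_X: QX \to X$ is an isomorphism in $\Ho(\calC)$. The analogous argument for the counit uses that $r_Y: Y \to RY$ is an isomorphism in $\Ho(\calD)$. All other steps are either direct appeals to the definition of Quillen equivalence or the general fact that an adjunction with invertible unit and counit is an equivalence.
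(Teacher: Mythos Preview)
Your proposal is correct and follows the same approach as the paper: both note that the composites \eqref{eq:quillenequivcof} and \eqref{eq:quillenequivfib} are weak equivalences by definition of Quillen equivalence, hence isomorphisms in the homotopy categories, and that these are precisely the unit and counit of the derived adjunction. Your version is more explicit about the reduction from arbitrary objects to cofibrant/fibrant ones via naturality, which the paper leaves implicit.
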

\begin{proof}
By assumption, the morphisms \eqref{eq:quillenequivcof} and \eqref{eq:quillenequivfib} are isomorphisms in $\Ho(\calC)$
and $\Ho(\calD)$, respectively. On the other hand, they are unit and counit, respectively, of the derived adjunction
\eqref{eq:quillenadjderiv}, and the claim follows.
\end{proof}

\begin{fact}[see {\cite[Corollary 1.3.16]{Hovey}}]\label{fact_simplerdef}
Let $\FF:\calC\leftrightarrows\calD:\GG$ be a Quillen adjunction such that the following hold:
\begin{enumerate}
\item For all cofibrant $X\in\calC$ the morphism \eqref{eq:quillenequivcof} is a weak equivalence.
\item If $Y\mor{f} Y\p$ is a morphism of fibrant objects in $\calD$ such that $\GG f$ is a weak equivalence, then $f$ is
  a weak equivalence.
\end{enumerate}
Then $\FF\dashv \GG$ is a Quillen equivalence.
\end{fact}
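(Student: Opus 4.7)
The plan is to verify the remaining condition in the definition of Quillen equivalence: for every fibrant $Y\in\calD$, the composite $\phi_Y:\FF Q\GG Y\xrightarrow{\FF q_{\GG Y}}\FF\GG Y\xrightarrow{\eta_Y} Y$ from \eqref{eq:quillenequivfib} is a weak equivalence; hypothesis (1) already supplies the cofibrant half.

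The obvious obstacle is that $\FF Q\GG Y$ need not be fibrant, so hypothesis (2) cannot be applied to $\phi_Y$ directly. I would fix this by factoring $\phi_Y$ through a fibrant replacement: choose a trivial cofibration $r:\FF Q\GG Y\hookrightarrow R\FF Q\GG Y$ into a fibrant object and, using that $r$ is an acyclic cofibration and $Y$ is fibrant, lift $\phi_Y$ to a map $\phi':R\FF Q\GG Y\to Y$ with $\phi'\circ r=\phi_Y$. By two-out-of-three, $\phi_Y$ is a weak equivalence iff $\phi'$ is, and now both source and target of $\phi'$ are fibrant, so hypothesis (2) reduces the problem to showing that $\GG\phi'$ is a weak equivalence.

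The key computation is then to identify $\GG\phi'$, after precomposition with something already known to be a weak equivalence, as the cofibrant replacement map $q_{\GG Y}$. Concretely, consider
\begin{equation*}
Q\GG Y\xrightarrow{\veps_{Q\GG Y}}\GG\FF Q\GG Y\xrightarrow{\GG r}\GG R\FF Q\GG Y\xrightarrow{\GG\phi'}\GG Y.
\end{equation*}
The first two arrows compose to the derived unit \eqref{eq:quillenequivcof} at $Q\GG Y$, which is a weak equivalence by hypothesis (1). The entire composite equals $\GG\phi_Y\circ\veps_{Q\GG Y}=\GG\eta_Y\circ\GG\FF q_{\GG Y}\circ\veps_{Q\GG Y}$; by naturality of $\veps$ this rewrites as $\GG\eta_Y\circ\veps_{\GG Y}\circ q_{\GG Y}$, which collapses to $q_{\GG Y}$ by the triangle identity. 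Since both $q_{\GG Y}$ and the two-arrow composite $\GG r\circ\veps_{Q\GG Y}$ are weak equivalences, two-out-of-three forces $\GG\phi'$ to be one as well.

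The hard part is conceptual rather than computational: one must resist applying hypothesis (1) directly to $\GG Y$ and instead apply it at its cofibrant replacement $Q\GG Y$; that shift is what lets the triangle identity collapse the long composite to exactly $q_{\GG Y}$. After that insight, the argument is a short diagram chase using only the definition of a fibrant replacement, two-out-of-three, and one naturality square for $\veps$.
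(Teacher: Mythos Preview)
Your argument is correct: factoring the derived counit through a fibrant replacement so that hypothesis (2) applies, and then using the triangle identity together with naturality of $\veps$ to identify the resulting composite with $q_{\GG Y}$, is exactly the standard proof. The paper does not supply its own argument for this fact but simply cites \cite[Corollary 1.3.16]{Hovey}; your proof is essentially the one given there, so there is nothing to compare.
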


We now return to the the model category of modules over a given dg-algebra described in
\ref{def_modelstructuredgmodules}. There, the cofibrant objects can be described explicitly as follows:

\begin{definition}
A $A\ua\ld$-module $M\ua\ld$ is called \textit{free} if it is isomorphic to a sum of shifted copies of $A\ua\ld$. It
is called \textit{semi-free} if it possesses a filtration $$0\ =\ ^0 M\ua\ld\ \subset\ ^1 M\ua\ld\ \subset\ ^2 M\ua\ld\
\subset\ ...$$ such that each filtration quotient $^{n+1} M\ua\ld / ^n M\ua\ld$ is a free $A\ua\ld$-module.
\end{definition}

\begin{fact}\label{fact_semifreecofibrant}
The following hold:
\begin{enumerate}
\item Any semi-free $A\ua\ld$-module is cofibrant. 
\item Any cofibrant module is a summand of a semi-free module.
\end{enumerate}
\end{fact}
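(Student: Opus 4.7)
Both parts rest on identifying semi-free modules with (relative) $\calI$-cell complexes for the cofibrantly generated model structure of Theorem \ref{thm_modelstructure}. The plan is first to realize each stage of a semi-free filtration as a pushout of coproducts of generating cofibrations, and then to invoke the standard retract argument for cofibrantly generated model categories.

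For (1), I would proceed by transfinite induction along the filtration $0 = {^0M\ua\ld}\subset {^1M\ua\ld}\subset \cdots$, showing that each inclusion $^nM\ua\ld \hookrightarrow {^{n+1}M\ua\ld}$ is a cofibration; closure of cofibrations under transfinite composition then upgrades $0\to M\ua\ld$ to a cofibration. The key input is that $F\ua\ld := {^{n+1}M\ua\ld}/{^nM\ua\ld}$ is a free $A\ua\ld$-module, hence in particular its underlying graded $S\ld$-module is free, so the short exact sequence
$$0\longrightarrow {^nM\ua\ld}\longrightarrow {^{n+1}M\ua\ld}\longrightarrow F\ua\ld \longrightarrow 0$$
splits as a sequence of graded $S\ld$-modules. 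Choosing a splitting together with a homogeneous $A\ua\ld$-basis $\{e_i\}$ of $F\ua\ld$ and lifting to elements $\tilde e_i\in {^{n+1}M\ua\ld}$, each $d\tilde e_i$ lies in $^nM\ua\ld$ (because $e_i$ is closed in $F\ua\ld$) and is automatically a cycle (because $d^2=0$). This data exhibits the inclusion $^nM\ua\ld \hookrightarrow {^{n+1}M\ua\ld}$ as the pushout of a coproduct of generating cofibrations, with attaching map on the boundaries sending generators to the cycles $d\tilde e_i$.

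For (2), I would appeal to the general fact that in any cofibrantly generated model category every cofibration is a retract of a relative $\calI$-cell complex. Applied to the cofibration $0\to M\ua\ld$ for a cofibrant $M\ua\ld$, this presents $M\ua\ld$ as a retract of some $\tilde M\ua\ld$ built from $0$ by transfinite cell attachment along generating cofibrations; the successive stages of this attachment supply a semi-free filtration of $\tilde M\ua\ld$ by the same analysis as in (1) read in reverse. Since $A\ua\ld\Mod$ is additive, retracts coincide with direct summands, giving the claim.

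The main obstacle is the pushout identification in part (1): one must check that the canonical differential on the pushout, assembled from the generating cell differentials and the attaching data $(d\tilde e_i)_i$, really recovers the differential on $^{n+1}M\ua\ld$. This is a direct but slightly fiddly verification relative to the chosen graded splitting, and it is where the delicate interplay between the filtration, the differential, and the freeness of the quotient has to be used.
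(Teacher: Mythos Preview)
Your proposal is correct and follows essentially the same route as the paper. The paper's proof is terse: it simply observes that semi-free modules lie in $\calI\cell$ (citing Hovey) and are therefore cofibrant, and that the small object argument exhibits every cofibrant object as a retract of an $\calI$-cell complex. Your argument spells out explicitly what ``lies in $\calI\cell$'' means: each step $^nM\ua\ld\hookrightarrow{^{n+1}M\ua\ld}$ is a pushout along a coproduct of the generating cofibrations $S(n-1,k)\to D(n,k)$, with the attaching data supplied by the cycles $d\tilde e_i$, and then cofibrations are closed under transfinite composition. For part (2) you and the paper invoke the same retract argument.

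One small remark: in your write-up be sure to note that the semi-free filtration exhausts $M\ua\ld$, i.e.\ $M\ua\ld=\bigcup_n{^nM\ua\ld}$, since this is what lets the transfinite (here countable) composition of the stage inclusions recover $0\to M\ua\ld$. Also, be aware that the statement of Theorem~\ref{thm_modelstructure} in the paper has $\calI$ and $\calJ$ interchanged relative to its own proof; the generating cofibrations are the sphere-to-disk inclusions $S(n-1,k)\hookrightarrow D(n,k)$, exactly as your ``attaching on boundaries'' description presumes, so your argument is consistent with the correct sets.
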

\begin{proof}
This is a formal consequence of Theorem \ref{thm_modelstructure}. Using the notation of \cite[Section 2.1.2]{Hovey} and
the definition of $\calI$ from Theorem \ref{thm_modelstructure}, any semi-free $A\ua\ld$-module lies in $\calI\cell$
(see \cite[Lemma 2.1.13]{Hovey}), hence is cofibrant (\cite[Lemma 2.1.10]{Hovey}). On the other hand, the small object
argument and the finiteness of the objects involved in $\calI$ imply that any cofibrant object is a retract, hence a
summand, of a semi-free $A\ua\ld$-module (see \cite[Theorem 2.1.14]{Hovey} for the general statement, or \cite[Theorem
3.5]{GoerssSchemmerhorn} for the case of a finitely generated model category).
\end{proof}

As an example of a Quillen adjunction, we look at the base change adjunction associated to a morphism of dg-algebras. 

\begin{prop}\label{prop_dgadjointequivalence}
Let $\varphi: A\ua\ld\to B\ua\ld$ be a morphism of graded dg-$S\ld$-algebras. Then $\varphi$ defines a Quillen
adjunction  
\begin{equation}\label{eq:quilladj}\begin{tikzpicture}[description/.style={fill=white,inner sep=2pt},
    baseline=(m-1-1.base)] 
    \matrix (m) [matrix of math nodes, row sep=3em,
                 column sep=10em, text height=1.5ex, text depth=0.25ex,
                 inner sep=0pt, nodes={inner xsep=0.3333em, inner ysep=0.3333em}]
    {
       A\ua\ld\Mod & B\ua\ld\Mod\\
    };
    \draw[->] ($(m-1-1.east) + (0,+0.5mm)$) -- node[above,scale=0.75]{$\varphi\ua := -\otimes_{A\ua\ld} B\ua\ld$}
    ($(m-1-2.west) + (0,+0.5mm)$); 
    \draw[->] ($(m-1-2.west) + (0,-0.5mm)$) -- node[below,scale=0.75]{$\varphi\la$}
    ($(m-1-1.east) + (0,-0.5mm)$); 
\end{tikzpicture}\end{equation}
with induced derived adjunction
\begin{equation}\label{eq:adjderived}\begin{tikzpicture}[description/.style={fill=white,inner
      sep=2pt},baseline=(m-1-1.base)]  
    \matrix (m) [matrix of math nodes, row sep=3em,
                 column sep=10em, text height=1.5ex, text depth=0.25ex,
                 inner sep=0pt, nodes={inner xsep=0.3333em, inner ysep=0.3333em}]
    {
       \D(A\ua\ld) & \D(B\ua\ld)\\
    };
    \draw[->] ($(m-1-1.east) + (0,+0.5mm)$) -- node[above,scale=0.75]{$-\stackrel{\LL}{\otimes}_{A\ua\ld} B\ua\ld$}
    ($(m-1-2.west) + (0,+0.5mm)$); 
    \draw[->] ($(m-1-2.west) + (0,-0.5mm)$) -- node[below,scale=0.75]{$\varphi\la$}
    ($(m-1-1.east) + (0,-0.5mm)$); 
\end{tikzpicture}\end{equation}
on the derived categories. The adjunction \eqref{eq:quilladj} is a Quillen equivalence if and only if $\varphi$ is a
quasi-isomorphism. In this case, \eqref{eq:adjderived} is an adjoint equivalence of categories.\end{prop}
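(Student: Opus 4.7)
The plan is to verify the three claims in order: (i) $(\varphi\ua,\varphi\la)$ is a Quillen adjunction; (ii) a quasi-isomorphic $\varphi$ yields a Quillen equivalence; (iii) conversely, a Quillen equivalence forces $\varphi$ to be a quasi-isomorphism. Existence of the ordinary adjunction is standard: the counit $\varphi\la(N\ua\ld)\otimes_{A\ua\ld}B\ua\ld\to N\ua\ld$ is the $B\ua\ld$-action map, and the unit $M\ua\ld\to\varphi\la(M\ua\ld\otimes_{A\ua\ld}B\ua\ld)$ is $m\mapsto m\otimes 1$. The associated derived adjunction \eqref{eq:adjderived} is then a formal consequence of the Quillen adjunction by general model category theory.

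First I would check that $\varphi\la$ preserves fibrations and trivial fibrations. Since fibrations in both $A\ua\ld\Mod$ and $B\ua\ld\Mod$ are the componentwise surjections (Definition \ref{def_modelstructuredgmodules}) and weak equivalences are quasi-isomorphisms in both cases, and since $\varphi\la$ is the identity on the level of underlying complexes of graded $S\ld$-modules, both properties are preserved on the nose. This proves (i).

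For (ii), assume $\varphi$ is a quasi-isomorphism. I would apply Fact \ref{fact_simplerdef}. The second hypothesis there is immediate because $\varphi\la$ reflects quasi-isomorphisms (it preserves the underlying complexes). For the first hypothesis, I need to show that for cofibrant $M\ua\ld$, the unit $\eta_{M\ua\ld}\colon M\ua\ld\to\varphi\la(M\ua\ld\otimes_{A\ua\ld}B\ua\ld)$ is a quasi-isomorphism. By Fact \ref{fact_semifreecofibrant} every cofibrant module is a summand of a semi-free one, and quasi-isomorphisms are stable under summands, so it suffices to treat semi-free $M\ua\ld$. For a free $A\ua\ld$-module, $\eta$ is a direct sum of shifts of $\varphi$ itself and is therefore a quasi-isomorphism by hypothesis. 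For a general semi-free module with filtration $0={^0M\ua\ld}\subset{^1M\ua\ld}\subset\cdots$, each short exact sequence $0\to{^n M\ua\ld}\to{^{n+1}M\ua\ld}\to{^{n+1}M\ua\ld/^n M\ua\ld}\to 0$ splits as a sequence of graded $S\ld$-modules (since the quotient is free over $A\ua\ld$, hence componentwise $S\ld$-free), so tensoring with $B\ua\ld$ over $A\ua\ld$ preserves exactness in each cohomological degree. Two applications of the five lemma, followed by passage to the colimit (which commutes with tensor products and with cohomology for such a filtration), allow me to conclude inductively that $\eta_{M\ua\ld}$ is a quasi-isomorphism.

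Finally, for (iii), suppose $(\varphi\ua,\varphi\la)$ is a Quillen equivalence. The free module $A\ua\ld$ is semi-free, hence cofibrant. Its image under $\varphi\ua$ is $B\ua\ld$, which is already cofibrant and fibrant, so the derived unit \eqref{eq:quillenequivcof} at $A\ua\ld$ is represented by the plain unit $A\ua\ld\to\varphi\la(B\ua\ld)=B\ua\ld$, which is exactly $\varphi$. By the definition of Quillen equivalence this must be a weak equivalence, i.e. $\varphi$ is a quasi-isomorphism. The hardest step is verifying the unit is a weak equivalence on all semi-free modules; the subtle point there is to control the (possibly transfinite) filtration and make sure the induction and colimit argument is compatible with taking cohomology, which is what makes the semi-free structure (as opposed to arbitrary cofibrancy) so convenient.
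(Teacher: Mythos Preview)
Your proof is correct and follows essentially the same approach as the paper. The only cosmetic difference is in step~(ii): the paper takes the cone of the unit, identifies it with $X\ua\ld\otimes_{A\ua\ld}\cone(\varphi)$, and then uses the semi-free filtration on $X\ua\ld$ to exhibit a bounded-below filtration of this cone by acyclic complexes; you instead filter $M\ua\ld$ first and run the five-lemma/colimit argument stage by stage. Both arguments amount to the same thing.
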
  
\begin{proof}
The existence of the adjunction \eqref{eq:quilladj} is clear. To see that it is a Quillen adjunction, it suffices to see
that the forgetful functor preserves fibrations and trivial fibrations, which is obvious. 

Next, assume that \eqref{eq:quilladj} is a Quillen equivalence. Then, taking $X\ua\ld := A\ua\ld$ in
\eqref{eq:quillenequivcof} yields $\varphi: A\ua\ld\to B\ua\ld$, and hence $\varphi$ is a quasi-isomorphism. Conversely,
assume that $\varphi$ is a quasi-isomorphism. We have to show that \eqref{eq:quillenequivcof} is a weak equivalence for
all cofibrant modules $X\ua\ld$. By Fact \ref{fact_semifreecofibrant} we may assume that $X\ua\ld$ is a semi-free
$A\ua\ld$-module. Then \eqref{eq:quillenequivcof} is given by the canonical morphism of $A\ua\ld$-modules $$X\ua\ld\ \cong\
X\ua\ld\otimes_{A\ua\ld} A\ua\ld\ \xrightarrow{\ \id\otimes\varphi\ }\ X\ua\ld\otimes_{A\ua\ld} B\ua\ld.$$ The cone of
this morphism is isomorphic to $X\ua\ld\otimes_{A\ua\ld} \cone(\varphi)$. As $X\ua\ld$ is semi-free, the complex of
$S\ld$-modules underlying $X\ua\ld\otimes_{A\ua\ld}\cone(\varphi)$ has a bounded below increasing filtration by iterated
cones of sums of shifted copies of $\cone(\varphi)$, hence is acyclic. As $\varphi\ld$ clearly reflects
quasi-isomorphisms, Fact \ref{fact_simplerdef} yields that \eqref{eq:quilladj} indeed is a Quillen equivalence.
\end{proof}

As an application, we present the proof given in \cite[Proposition 2.2.2]{Avramov} of the following fact which is
crucial in section \ref{sec_badbimodule}.
\begin{prop}\label{prop_stayregularappendix}
Let $S\ld$ be a local graded ring with maximal homogeneous ideal $\frm$, and let $w\in\frm\setminus\frm^2$ be
homogeneous. Then the following hold:
\begin{enumerate}
\item\label{item:crucial1} If $M\ld$ is a finitely generated graded $S\ld/(w)$-module with minimal $S\ld$-free resolution
$F\ua\ld\to M\ld$, then $F\ua\ld$ admits the structure of a semi-free $K\ua_w$-module. 
\item\label{item:crucial2} If $S\ld$ is regular, then so is $S\ld/(w)$.
\end{enumerate}
\end{prop}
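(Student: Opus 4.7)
Both parts rest on the observation that a $K\ua_w$-module structure on a complex $F\ua\ld$ of free $S\ld$-modules is equivalent to an $S\ld$-linear endomorphism $s$ of cohomological degree $-1$ and internal degree $d$ satisfying $\partial s + s\partial = w\cdot\id$ and $s^2 = 0$.

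For part (1), the plan is to construct such a square-zero nullhomotopy $s$ on the minimal $S\ld$-free resolution $F\ua\ld \to M\ld$. A nullhomotopy $s^{(0)}$ of multiplication by $w$ exists since $w$ annihilates $M\ld$. A short computation shows that $(s^{(0)})^2$ commutes with $\partial$, so it is a chain endomorphism of $F\ua\ld$ of bidegree $(-2, 2d)$, inducing zero on cohomology. I would then modify $s^{(0)}$ by an endomorphism $\alpha$ of the same bidegree, constructed inductively on the cohomological level, so as to solve $(s^{(0)}+\alpha)^2 = 0$ while preserving the homotopy equation. Minimality of $F\ua\ld$ forces $\partial(F\ua\ld) \subseteq \frm F\ua\ld$, and combining this with the hypothesis $w \notin \frm^2$ ensures that the successive obstructions live in proper submodules that can be killed using Nakayama's lemma (Lemma \ref{lem_gradednakayama}). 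The semi-freeness of $F\ua\ld$ as a $K\ua_w$-module with respect to the resulting structure is then extracted by inductively writing $F\ua\ld$ as an increasing union of sub-$K\ua_w$-modules whose successive quotients are free $K\ua_w$-modules on generators lying in a single cohomological degree.

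For part (2), by Proposition \ref{prop_regcrit} it suffices to show $\projdim_{S\ld/(w)}(k\ld) < \infty$. Since $S\ld$ is regular, the minimal $S\ld$-free resolution $F\ua\ld \to k\ld$ is bounded; applying part (1) equips it with a bounded semi-free $K\ua_w$-module structure. Because $S\ld$ is a domain by Fact \ref{fact_regularisadomain}, the element $w$ is not a zero divisor, so the augmentation $K\ua_w \to S\ld/(w)$ is a quasi-isomorphism of dg-algebras. Proposition \ref{prop_dgadjointequivalence} then provides an equivalence $\D(K\ua_w) \simeq \D(S\ld/(w))$ under which $F\ua\ld$ corresponds to the bounded complex of free $S\ld/(w)$-modules $F\ua\ld \otimes_{K\ua_w} S\ld/(w)$, which is quasi-isomorphic to $k\ld$. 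This yields $\projdim_{S\ld/(w)}(k\ld) < \infty$ as required.

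The main obstacle is the inductive construction of the square-zero homotopy in part (1): the hypothesis $w \notin \frm^2$ is indispensable, for without it one obtains only the infinite family of higher homotopies of Lemma \ref{lem_preparationlemma} and cannot in general collapse them to a single square-zero nullhomotopy.
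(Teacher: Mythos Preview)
Your part (2) is correct and matches the paper's argument; the paper treats an arbitrary finitely generated $M\ld$ rather than just $k\ld$, but the reasoning is identical.

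For part (1) your sketch misses the crucial mechanism. Correcting an arbitrary nullhomotopy $s^{(0)}$ by a chain map to force the square-zero condition can in principle be attempted degree by degree, but at each step one must extend the new component of $s$ from the image of the already-constructed $s$ (where it is forced to vanish) to the whole free module, and this requires that image to be a \emph{direct summand}. Your appeal to Nakayama does not supply this splitting, and the phrase ``successive obstructions live in proper submodules'' does not describe a concrete step. The paper isolates the needed tool as a separate lemma (Lemma \ref{lem_splitinjection}): a map of finitely generated free modules that is injective modulo $\frm$ is a split injection. It then builds $s$ directly, degree by degree, rather than correcting an arbitrary $s^{(0)}$: having constructed $s$ up to some level with a splitting $F\ld=\image(s)\oplus V\ld$, one chooses the next component of $s$ to vanish on $\image(s)$ and observes that for $x\in V\ld\setminus\frm V\ld$ one has $\partial s(x)=wx-s\partial(x)\in\frm F\ld\setminus\frm^2 F\ld$, since the $V$-component $wx$ lies in $\frm V\ld\setminus\frm^2 V\ld$ by $w\notin\frm^2$ while $s\partial(x)$ lies in the complementary summand $\image(s)$. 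Minimality then forces $s(x)\notin\frm F\ld$, so $s|_{V\ld}$ is a split injection by the lemma. This argument yields $s^2=0$ and the direct-sum decompositions giving the semi-free filtration \emph{simultaneously}; in your outline these two goals are artificially separated, and the precise link between $w\notin\frm^2$ and the splitting is absent.
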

\begin{lem}\label{lem_splitinjection}
Let $S\ld$ be a local graded ring with maximal homogeneous ideal $\frm$ and suppose $f: P\ld\to Q\ld$ is a homomorphism
of finitely generated projective $S\ld$-modules. Further, assume that $$f\otimes_{S\ld} S\ld/\frm:\ 
P\ld / \frm P\ld\ \longrightarrow\ Q\ld / \frm Q\ld$$ is injective. Then $f$ is a split injection.
\end{lem}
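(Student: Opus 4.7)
The plan is to reduce to free modules and then construct an explicit retraction by lifting a basis of the target modulo $\frm$.

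First, by Proposition \ref{prop_gradedvsungraded}.\eqref{item:projective}, the finitely generated projective graded modules $P\ld$ and $Q\ld$ are automatically free. So I will choose a homogeneous $S\ld$-basis $(e_i)_{i \in I}$ of $P\ld$, with $e_i$ of degree $d_i$. By the injectivity hypothesis, the family $(\overline{f(e_i)})_{i \in I}$ is linearly independent over $k\ld = S\ld/\frm$ in the graded $k\ld$-vector space $Q\ld / \frm Q\ld$.

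Next I will extend this family to a homogeneous $k\ld$-basis of $Q\ld/\frm Q\ld$ by adjoining homogeneous vectors $(\bar{g}_j)_{j \in J}$, and I will lift each $\bar{g}_j$ to a homogeneous element $g_j \in Q\ld$ of the same internal degree. The combined family $\{f(e_i)\}_{i \in I} \cup \{g_j\}_{j \in J}$ then generates $Q\ld$ modulo $\frm Q\ld$, so by the graded Nakayama lemma \ref{lem_gradednakayama} (or equivalently the free-cover characterization in Fact \ref{fact_freecover}), it generates $Q\ld$ as an $S\ld$-module. Since $Q\ld$ is free of the corresponding graded rank (which equals $|I| + |J|$ by counting dimensions of $Q\ld/\frm Q\ld$ in each internal degree), this generating family is in fact a homogeneous $S\ld$-basis of $Q\ld$.

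Given such a basis, I define the $S\ld$-linear retraction $r: Q\ld \to P\ld$ by $r(f(e_i)) := e_i$ and $r(g_j) := 0$. Since $r$ is defined on a basis by assignments of matching internal degree, it is a well-defined homogeneous homomorphism of $S\ld$-modules, and by construction $r \circ f = \id_{P\ld}$, so $f$ is a split injection. The only slightly delicate point, which is not really an obstacle, is ensuring that the extension of $(\overline{f(e_i)})$ to a basis of $Q\ld/\frm Q\ld$ can be chosen homogeneous; this works because $Q\ld / \frm Q\ld$ inherits a grading from $Q\ld$ and the classes $\overline{f(e_i)}$ are already homogeneous, so one may extend to a basis degree by degree.
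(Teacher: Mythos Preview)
Your proof is correct and takes a genuinely different route from the paper's. The paper argues via the cokernel: assuming first that $f$ is injective, it applies $-\otimes_{S\ld} S\ld/\frm$ to the short exact sequence $0\to P\ld\to Q\ld\to\coker(f)\to 0$ and uses the resulting Tor sequence together with the injectivity hypothesis to deduce $\tor^1_{S}(\coker(f),S\ld/\frm)\ld=0$, hence $\coker(f)$ is projective and the sequence splits. A separate reduction then handles the case where $f$ is not a priori injective. Your argument instead works on the domain side: you lift a basis of $P\ld/\frm P\ld$ through $f$, extend to a basis of $Q\ld/\frm Q\ld$, and use Nakayama to promote this to an $S\ld$-basis of $Q\ld$ in which the retraction is visible by inspection.

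Your approach is more elementary and explicit --- it avoids Tor entirely and produces the splitting map directly --- at the cost of relying more heavily on the freeness of $P\ld$ (you need a basis of $P\ld$ from the start). The paper's Tor argument, by contrast, does not use the projectivity of $P\ld$ in the first half, as it notes; that extra generality is not exploited elsewhere, but it does make the argument slightly more robust. The one step in your proof that deserves a word of justification is that a minimal homogeneous generating set of the free module $Q\ld$ is automatically a basis: this follows since the induced surjection from a free module of the same graded rank onto $Q\ld$ splits, and Nakayama forces the kernel to vanish.
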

\begin{proof}[of Lemma \ref{lem_splitinjection}]
Set $K\ld := \coker(f)$. Assume for a moment that $f$ is injective. Applying $-\otimes_{S\ld} S\ld/\frm$ to the short
exact sequence \begin{align}\label{eq:split}0\to P\ld\mor{f} Q\ld\to K\ld\to 0\end{align} yields the exact sequence
$$0 = \tor^1_S(Q\ld,S\ld/\frm)\ld\to \tor^1_S(K\ld,S\ld/\frm)\ld\to P\ld / \frm P\ld \xrightarrow{f\otimes_{S\ld}
  S\ld/\frm}  Q\ld / \frm Q\ld \to K\ld / \frm K\ld \to 0.$$
As $f\otimes_{S\ld} S\ld/\frm$ is injective by assumption, we conclude that $\tor^1_S(K\ld,S\ld/\frm)\ld=0$. Hence
$\beta^0_{S\ld}(K\ld) = 0$ (see Definition \ref{def_betti}), so $K\ld$ is projective, and \eqref{eq:split}
splits. Note that we did not use the projectivity of $P\ld$.

Now we treat the general case. The assumption that $f\otimes_{S\ld} S\ld/\frm$ is injective implies that
$\ker(f)\subset\frm P\ld$, so the projection $P\ld\to P\ld/\ker(f)$ becomes an isomorphism when applying
$-\otimes_{S\ld} S\ld/\frm$. Hence we can apply the case of injective $f$ to the map $\ol{f}: P\ld/\ker(f)\to
Q\ld$, proving that $P\ld/\ker(f)$ is projective. This implies that $\ker(f)$ is a summand of $P\ld$, which together
with $\ker(f)\subset\frm P\ld$ yields $\ker(f)=0$. Thus, $f$ is injective, hence split injective by the first part.
\end{proof}
\begin{proof}[of Proposition \ref{prop_stayregularappendix}]
\eqref{item:crucial1}: We have to construct a map $s: F\ua\ld\to F^{\ast-1}\ld\langle d\rangle$ with the
following properties: 
\begin{enumerate}
\item For each $n\in\NN_{>0}$, $\image(s^{n-1}) = \ker(s^n)$, and this $S\ld$-module has a complement in $F^n\ld$. 
\item We have $\partial^0 s^0 = w \id_{F^0\ld}$ and $\partial^{n+1} s^n + s^{n-1} \partial^n = w
  \id_{P^n\ld}$ for all $n\in\NN_{>0}$. 
\end{enumerate}
The existence of maps $s^n$ satisfying the second condition follows from the embedding
$S\ld\Mod\hookrightarrow\Ho^{-}_{\fr}(S\ld\Mod)$. We will now go through the usual inductive construction of the $s^n$,
additionally taking care of the first condition, for which we will need the assumption $w\in\frm\setminus\frm^2$. 

First, by projectivity of $F^0\ld$ there is a map $s^0: F^0\ld\to F^1\ld$ of internal degree $d$ 
such that $\partial^0 s^0 = w$. For homogeneous $x\in F^0\ld\setminus\frm F^0\ld$, we have $\partial^1 s^0 = wx\in\frm 
F^0\ld\setminus \frm^2 F^0\ld$, and as $\image(\partial^1)\subset\frm F^0\ld$ by the minimality of $F\ua\ld$ (see
Definition \ref{def_minimalres}), it follows that $s^0 x\in F^1\ld\setminus\frm F^1\ld$, hence $s^0\otimes_{S\ld}
S\ld/\frm$ is injective. By Lemma \ref{lem_splitinjection}, $s^0$ is a split injection (note that a priori it is not
even clear that $s^0$ is injective, as $w$ might be a zero divisor). Let $U^1\ld := \image(s^0)$ and
let $V^1\ld$ be some complement of $U^1\ld$ in $F^1\ld$. Next, a small calculation shows that $w - s^0\partial^1$
vanishes on $U^1\ld$ and has image in $\ker(\partial^1) = \image(\partial^2)$. Hence there exists some $s^1: F^1\ld\to
F^2\ld$ of degree $d$ such that $\partial^2 s^1 + s^0\partial^1 = w$ and $s^1|_{U^1\ld} = 0$. Now if $x\in
V^1\ld\setminus\frm V^1\ld$, we have $\partial^2 s^1 (x) = (w - s^0\partial^1)(x)\in \frm
F^1\ld\setminus\frm^2 F^1\ld$, since both summands on the right hand side live in different summands of $F^1\ld =
V^1\ld\oplus U^1\ld$, and $wx\in \frm V^1\ld\setminus\frm^2 V^1\ld$. As before, the minimality of $F\ua\ld$ implies that
$s^1 x\in F^2\ld\setminus\frm F^2\ld$. Applying Lemma \ref{lem_splitinjection} again shows that $s^1|_{V^1\ld}:
V^1\ld\to F^2\ld$ is a split injection, and we put $U^2\ld := \image(s^1)$. Continuing in this way, one can construct
the maps $s^n$ satisfying the above conditions.  

\eqref{item:crucial2}: It suffices to prove that any finitely generated graded $S\ld/(w)$-module $M\ld$ has finite
projective dimension over $S\ld/(w)$. By assumption we have $\projdim_{S\ld}(M\ld)<\infty$, so the minimal $S\ld$-free
resolution $F\ua\ld\to M\ld$ of $M\ld$ is finite. Now, applying \eqref{item:crucial2}, $F\ua\ld$ admits the structure of
a semi-free $K\ua_w$-module, and so we have $$M\ld\ \cong\ M\ld\stackrel{\LL}{\otimes}_{K\ua_w} S\ld/(w)\ \cong\
F\ua\ld\otimes_{K\ua_w} S\ld/(w)$$ in $\D(S\ld/(w))$. As $F\ua\ld\otimes_{K\ua_w} S\ld/(w)$ is bounded and
$S\ld/(w)$-free, it follows that $M\ld$ is perfect in $\D(S\ld/(w)))$, hence $\projdim_{S\ld/(w)}(M\ld)<\infty$ as claimed.
\end{proof}

\subsection{Boundedness conditions}\label{app_bounded}
\markright{\ref{app_bounded} Boundedness conditions}

Again fix a commutative graded Noetherian ring $S\ld$ and a dg-$S\ld$-algebra $(A\ua\ld,\d)$. In this section, we will
define several subcategories of $\D(A\ua\ld)$ imposing various boundedness conditions on (the cohomology of) the
dg-$A\ua\ld$-modules.

\begin{definition}
For $\ast,\ast\p\in\{+,-,b,\emptyset\}$, let $\D^{\ast,\ast\p}(A\ua\ld)$ denote the full subcategory of
$\D(A\ua\ld)$ consisting of those $A\ua\ld$-modules that are cohomologically bounded according to $\ast$ and bounded
according to $\ast\p$. For example, $\D^{+,\emptyset}(A\ua\ld)$ contains all (potentially unbounded) $A\ua\ld$-modules
with bounded below cohomology. Further, we will abbreviate $\D^{\ast,\emptyset}(A\ua\ld)$ by $\D^{\ast}(A\ua\ld)$.  

The full subcategories $\Ho^{\ast,\ast\p}(A\ua\ld)$ and $\Acyc^{\ast,\ast\p}(A\ua\ld)$ of $\Ho(A\ua\ld)$ are defined
analogously. 
\end{definition}

\begin{fact}\label{fact_truncation}
For $\ast\in\{+,-,b\}$, the subcategories $\D^{\ast,\emptyset}(A\ua\ld)$ are triangulated subcategories of
$\D(A\ua\ld)$, and $\D^{\ast,\ast}(A\ua\ld)\subset\D^{\ast,\emptyset}(A\ua\ld)$. If $A^k\ld=0$ for $k>0$, then this
inclusion is an equivalence, and in particular $\D^{\ast,\ast}(A\ua\ld)$ is a triangulated subcategory of $\D(A\ua\ld)$.  
\end{fact}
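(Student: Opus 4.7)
The first assertion, that $\D^{\ast,\emptyset}(A\ua\ld)$ is a triangulated subcategory of $\D(A\ua\ld)$ for $\ast\in\{+,-,b\}$, is formal: the shift $[1]$ preserves each cohomological boundedness condition, and given a distinguished triangle $X\to Y\to Z\to X[1]$, the long exact cohomology sequence shows that if two of the three terms lie in $\D^{\ast,\emptyset}(A\ua\ld)$, so does the third. The inclusion $\D^{\ast,\ast}(A\ua\ld)\subset\D^{\ast,\emptyset}(A\ua\ld)$ is immediate, since a strictly bounded module has in particular cohomology concentrated in the same range.

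The substantive part is to show, under the hypothesis $A^k\ld=0$ for $k>0$, that every object of $\D^{\ast,\emptyset}(A\ua\ld)$ is quasi-isomorphic to one in $\D^{\ast,\ast}(A\ua\ld)$. The plan is to use the smart truncation functors: for an $A\ua\ld$-module $M\ua\ld$ set
\[ (\tau^{\leq n}M\ua\ld)^k := \begin{cases} M^k\ld & k<n,\\ \ker(\partial^n) & k=n,\\ 0 & k>n,\end{cases}\qquad (\tau^{\geq n}M\ua\ld)^k := \begin{cases} 0 & k<n,\\ \coker(\partial^{n-1}) & k=n,\\ M^k\ld & k>n.\end{cases} \]
The key technical point is that under the hypothesis on $A\ua\ld$, these underlying subcomplex and quotient-complex of $S\ld$-modules carry natural $A\ua\ld$-module structures. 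For $\tau^{\leq n}$, given $a\in A^r\ld$ and $m\in(\tau^{\leq n}M\ua\ld)^k$ with $k\leq n$, the hypothesis forces $r\leq 0$ so $k+r\leq n$; the only non-trivial case is $k=n$, $r=0$, where the Leibniz rule together with $\partial(a)\in A^1\ld=0$ gives $\partial(a.m)=\pm a.\partial(m)=0$, so $a.m\in\ker(\partial^n)$. Dually, the kernel of $M\ua\ld\twoheadrightarrow\tau^{\geq n}M\ua\ld$ is the subcomplex $N\ua\ld$ with $N^k\ld=M^k\ld$ for $k<n$ and $N^n\ld=\image(\partial^{n-1})$; one checks analogously that $N\ua\ld$ is an $A\ua\ld$-submodule, the crucial point being that for $a\in A^0\ld$ and $m\in M^{n-1}\ld$ one has $a.\partial(m)=\pm\partial(a.m)\in\image(\partial^{n-1})$.

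With this in hand the conclusion is immediate. If $M\ua\ld\in\D^{+,\emptyset}(A\ua\ld)$ with $H^k(M\ua\ld)=0$ for $k<a$, the canonical projection $M\ua\ld\twoheadrightarrow\tau^{\geq a}M\ua\ld$ is a quasi-isomorphism onto a module strictly concentrated in degrees $\geq a$; symmetrically, for $M\ua\ld\in\D^{-,\emptyset}(A\ua\ld)$ one uses the inclusion $\tau^{\leq b}M\ua\ld\hookrightarrow M\ua\ld$; for the bounded case one composes both truncations. This shows the inclusion $\D^{\ast,\ast}(A\ua\ld)\hookrightarrow\D^{\ast,\emptyset}(A\ua\ld)$ is essentially surjective, hence an equivalence, and $\D^{\ast,\ast}(A\ua\ld)$ thereby inherits the triangulated subcategory structure from the first assertion. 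The only real obstacle is the $A\ua\ld$-module structure verification for the truncations, which is precisely where the hypothesis $A^k\ld=0$ for $k>0$ is essential: without it, smart truncation fails to respect the action, and the inclusion is in general strict.
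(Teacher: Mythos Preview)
Your proof is correct and follows essentially the same approach as the paper's: the triangulated subcategory claim via the long exact cohomology sequence, and the equivalence via smart truncation functors $\tau^{\leq n}$ and $\tau^{\geq n}$. Your treatment is in fact slightly more careful than the paper's, which simply asserts that $\tau_{\leq n}M\ua\ld$ is an $A\ua\ld$-submodule ``(here we need our assumption that $A^k\ld=0$ for $k>0$)'' without spelling out the Leibniz-rule verification that you provide.
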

\begin{proof}
The subcategories $\D^{\ast,\emptyset}(A\ua\ld)$ are triangulated because an exact triangle induces a long exact
sequence in cohomology. It is clear the we have an inclusion
$\D^{\ast,\ast}(A\ua\ld)\subset\D^{\ast,\emptyset}(A\ua\ld)$. 

Now assume $A^k\ld=0$ for all $k>0$. We have to show that each object of $\D^{\ast,\emptyset}(A\ua\ld)$ is isomorphic to
an object of $\D^{\ast,\ast}(A\ua\ld)$. For this, first assume $M\ua\ld\in\D^{-,\emptyset}(A\ua\ld)$, and choose $n\gg
0$ such that $\H^k(M\ua\ld)=0$ for $k>n$. Then the truncation 
$$\tau_{\leq n}:\quad ... \longrightarrow M^{n-2}\ld\xrightarrow{\differential_{M\ua\ld}^{n-2}}
M^{n-1}\ld\xrightarrow{\differential_{M\ua\ld}^{n-1}} \ker\left(\differential_{M\ua\ld}^n\right)\longrightarrow 0\to
...$$ 
is an $A\ua\ld$-submodule of $M\ua\ld$ (here we need our assumption that $A^k\ld=0$ for $k>0$), and the inclusion
$\tau_{\leq n}M\ua\ld\to M\ua\ld$ is a quasi-isomorphism. Hence, $M\ua\ld\cong \tau_{\leq n} M\ua\ld$ in $\D(A\ua\ld)$,
and so the inclusion $\D^{-,-}(A\ua\ld)\hookrightarrow\D^{-,\emptyset}(A\ua\ld)$ is an equivalence. 

Analogously, for $M\ua\ld\in\D^{+,\emptyset}(A\ua\ld)$ we choose $n\ll 0$ such that $\H^k(M\ua\ld)=0$ for $k\leq n$ and
consider the truncation
\begin{align}\label{eq:truncationgeq}
\tau_{\geq n}:\quad ...\to 0\longrightarrow M^n\ld / \image\left(\differential_{M\ua\ld}^{n-1}\right)\xrightarrow{\differential_{M\ua\ld}^n} 
M^{n+1}\ld\xrightarrow{\differential_{M\ua\ld}^{n+1}} M^{n+2}\ld\to ...\end{align}
As $\tau_{\geq n}M\ua\ld = M\ua\ld / \tau_{\leq n}M\ua\ld$, this is a quotient $A\ua\ld$-module of $M\ua\ld$, and
the choice of $n$ implies that $M\ua\ld\to\tau_{\geq n}M\ua\ld$ is a quasi-isomorphism. Hence, $M\ua\ld\cong\tau_{\geq
  n}M\ua\ld$ in $\D(A\ua\ld)$, and so the inclusion $\D^{+,+}(A\ua\ld)\hookrightarrow\D^{+,\emptyset}(A\ua\ld)$ is an
equivalence. If moreover $M\ua\ld\in\D^{-,-}(A\ua\ld)$, we have $\tau_{\geq n}M\ua\ld\in\D^{-,-}(A\ua\ld)$ as well,
showing that $\D^{b,b}(A\ua\ld)\hookrightarrow\D^{b,\emptyset}(A\ua\ld)$ is an equivalence.
\end{proof}

\begin{definition}
A complex $M\ua\ld$ of graded $S\ld$-modules is called $S\ld$-free (resp. $S\ld$-finite) if each component $M^k\ld$ is a
free (resp. finitely generated) graded $S\ld$-module.
\end{definition}

\begin{definition}
For a category $\C$ of $A\ua\ld$-modules (e.g. $\D(A\ua\ld)$ or $\Ho(A\ua\ld)$) we denote $\C_{\fg}$
the full subcategory of $S\ld$-finite objects in $\C$. By $\C_{\fr}$ we denote the full subcategory of $S\ld$-free
objects in $\C$. If more than one condition is to be applied, the subscripts are separated by commata. For example,
$\C_{\fr,\fg}$ denotes the full subcategory of those objects in $\C$ which are both $S\ld$-free and $S\ld$-finite 
(note the difference with the meaning of, say, $\D^{+,b}(A\ua\ld)$, where the first supscript refers to the
cohomology).  
\end{definition}

\begin{prop}\label{prop_freetruncation}
If $A\ua\ld$ is $S\ld$-free and $A^k\ld=0$ for $k>0$, then the inclusions
$\D^{-,-}_{\fr}(A\ua\ld)\subset\D^{-,-}(A\ua\ld)$ is an equivalence. If in addition $S\ld$ is regular (i.e. $S\ld\Mod$
is of finite global dimension), the inclusion $\D^{b,b}_{\fr}(A\ua\ld)\subset\D^{b,-}_{\fr}(A\ua\ld)$ is an
equivalence. Analogous statements are true for the inclusions
$\D^{-,-}_{\fr,\fg}(A\ua\ld)\subset\D^{-,-}_{\fg}(A\ua\ld)$ and
$\D^{b,b}_{\fr,\fg}(A\ua\ld)\subset\D^{b,-}_{\fr,\fg}(A\ua\ld)$ if $A\ua\ld$ is $S\ld$-finite. 
\end{prop}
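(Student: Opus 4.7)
The plan is to prove both equivalences by constructing $S\ld$-free replacements via the standard cell-attachment procedure for semi-free resolutions over a dg-algebra, and to use regularity of $S\ld$ in the second case to ensure the resulting resolution is bounded from below.

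First I would handle the inclusion $\D^{-,-}_{\fr}(A\ua\ld)\subset\D^{-,-}(A\ua\ld)$. Given $M\ua\ld\in\D^{-,-}(A\ua\ld)$, Fact \ref{fact_truncation} lets me replace $M\ua\ld$ by a cochain complex of $A\ua\ld$-modules bounded above by some $N$. I would then build a semi-free $A\ua\ld$-module $F\ua\ld$ together with a quasi-isomorphism $F\ua\ld\to M\ua\ld$ by inductively attaching cells $A\ua\ld[k]\langle l\rangle$, working downward from cohomological degree $N$: at stage $i\leq N$, having obtained $F_i\ua\ld\to M\ua\ld$ inducing isomorphisms on $\H^k$ for $k>i$ and a surjection at $k=i$, I attach cells $A\ua\ld[i]\langle l\rangle$ generating the remaining cocycles at level $i$ and cells $A\ua\ld[i-1]\langle l\rangle$ killing the spurious ones, then pass to the colimit $F\ua\ld=\varinjlim F_i\ua\ld$. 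Two facts about $A\ua\ld$ do the work: since $A^k\ld=0$ for $k>0$, each cell $A\ua\ld[k]\langle l\rangle$ is supported as a complex in degrees $\leq k$, so $F\ua\ld$ has no components above degree $N$; and since $A\ua\ld$ is $S\ld$-free, each cell is $S\ld$-free and the semi-free filtration exhibits each $F^k\ld$ as a direct sum of shifted copies of $S\ld$. The finitely generated analogue is obtained by taking finitely many cells at each stage, which is possible when $A\ua\ld$ is $S\ld$-finite.

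For the second equivalence, assume $S\ld$ is regular and let $F\ua\ld\in\D^{b,-}_{\fr}(A\ua\ld)$ with $\H^k(F\ua\ld)=0$ for $k<n$. I would first pass to the canonical truncation $\tau_{\geq n}F\ua\ld$ of \eqref{eq:truncationgeq}, obtaining a quasi-isomorphism whose target is supported as a complex in degrees $[n,N]$; the sole obstruction to $S\ld$-freeness of $\tau_{\geq n}F\ua\ld$ is its component $F^n\ld/\image(\differential^{n-1})$ at degree $n$. I would then run the cell-attachment procedure of the first part on $\tau_{\geq n}F\ua\ld$ and observe that the only cells of complex-degree strictly below $n$ are those encoding an $S\ld$-free resolution of the single $S\ld$-module $F^n\ld/\image(\differential^{n-1})$. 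Since $S\ld$ has finite global dimension $d:=\gldim(S\ld)$, this $S\ld$-module admits a free resolution of length at most $d$, so cells in complex degrees only in $[n-d,n]$ are needed below $n$. The resulting semi-free $A\ua\ld$-module is both $S\ld$-free and bounded as a complex, giving an object of $\D^{b,b}_{\fr}(A\ua\ld)$ quasi-isomorphic to $F\ua\ld$. The finitely generated case is analogous under the hypothesis that $A\ua\ld$ is $S\ld$-finite.

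The main obstacle I anticipate is justifying the claim that below degree $n$ the cell-attachment construction really reduces to an $S\ld$-projective resolution of the single module $F^n\ld/\image(\differential^{n-1})$, rather than iteratively introducing fresh obstructions at each subsequent stage that would require further cells. Making this precise requires careful bookkeeping of cycles and boundaries introduced by cell attachments in the dg-setting: once the cohomology of the cone of the partial resolution has been cleared in all degrees $\geq n$ by cells in those degrees, the subsequent discrepancy lives entirely in the single $S\ld$-module at position $n$, and the dg-structure of $A\ua\ld$ faithfully propagates any $S\ld$-free resolution of this discrepancy into a semi-free $A\ua\ld$-module resolution. Once this is set up cleanly, regularity of $S\ld$ terminates the construction after at most $\gldim(S\ld)$ further steps.
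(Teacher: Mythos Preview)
Your approach to the first equivalence is essentially the paper's: build a semi-free resolution by successive cell attachment, use $A^{>0}\ld=0$ to keep the result bounded above, and use $S\ld$-freeness of $A\ua\ld$ to get $S\ld$-freeness of each component. The paper phrases the non-finite case via the small object argument and then redoes it carefully for the finitely generated case, but the content is the same as your inductive construction.

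For the second equivalence your route diverges from the paper's and runs into exactly the obstacle you flag. The paper's argument is much shorter and avoids it entirely: since $F\ua\ld$ is \emph{already} $S\ld$-free and bounded above, one does not truncate at the cohomology boundary $n$ but rather at $k$ with $k<n-\gldim(S\ld)$. In the range $[k,n]$ the complex of free $S\ld$-modules is acyclic, so after $\gldim(S\ld)$ steps of taking syzygies the kernel $\ker(\differential^k)\hookrightarrow F^k\ld$ is projective, hence free (Kaplansky), hence splits. Thus $\tau_{\geq k}F\ua\ld$ has $F^k\ld/\ker(\differential^k)$ in degree $k$, which is a free direct summand of $F^k\ld$, and all other components unchanged and free. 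No further resolution is needed.

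Your proposed fix --- truncate at $n$ and then re-resolve the single non-free component $F^n\ld/\image(\differential^{n-1})$ --- requires the claim that cell attachments below degree $n$ amount only to an $S\ld$-free resolution of that one module. This is not clear for general $A\ua\ld$: a cell $A\ua\ld[n-1]$ contributes in all degrees $\leq n-1$, so attaching it to kill a class at degree $n-1$ introduces new classes at $n-2$ coming from $A^{-1}\ld$, and so on. Whether this cascade terminates is precisely the question, and regularity of $S\ld$ alone does not obviously bound it. The paper's trick of truncating further left exploits the fact that you already have an $S\ld$-free complex in hand, so no rebuilding is necessary.
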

\begin{proof}
The small object argument yields for each $A\ua\ld$-module $M\ua\ld$ a functorial surjective quasi-isomorphism 
$\wt{M}\ua\ld\to M\ua\ld$ with $\wt{M}\ua\ld$ semi-free. Since $A\ua\ld$ is $S\ld$-free, any semi-free $A\ua\ld$-module
is $S\ld$-free, and so the inclusion $\D_{\fr}(A\ua\ld)\subset\D(A\ua\ld)$ is an equivalence. Moreover, a look into the
proofs of the small object argument in \cite[Theorem 2.1.14]{Hovey} and \cite[Theorem 3.5]{GoerssSchemmerhorn} shows
that the construction given there produces a bounded above $\wt{M}\ua\ld$ if $M\ua\ld$ was bounded above, proving that
$\D^{-,-}_{\fr}(A\ua\ld)\subset\D^{-,-}(A\ua\ld)$ is an equivalence. However, the unmodified small object argument
yields very large $A\ua\ld$-modules $\wt{M}\ua\ld$ even if $M\ua\ld$ is $S\ld$-finite, and hence it cannot be used to
establish the equivalence $\D^{-,-}_\fg(A\ua\ld)\cong\D^{-,-}_{\fg,\fr}(A\ua\ld)$. What we will do now is to give a
construction of a quasi-isomorphism $\wt{M}\ua\ld\to M\ua\ld$ based on the one given by the small object argument
such that the output $\wt{M}\ua\ld$ is $S\ld$-finite if $M\ua\ld$ and $A\ua\ld$ are $S\ld$-finite; however, this
construction will no longer be functorial. The topologically minded reader will note that the construction
below very much resembles the usual construction of CW-approximations for topological spaces (see \cite[Proposition
4.13]{Hatcher}).   

To prove that $\D^{-,-}_{\fr,\fg}(A\ua\ld)\subset\D^{-,-}_{\fg}(A\ua\ld)$ is an equivalence, it suffices to
construct for each bounded above, $S\ld$-finitely $A\ua\ld$-module $M\ua\ld$ a quasi-isomorphism
$\wt{M}\ua\ld\to M\ua\ld$ with $\wt{M}\ua\ld$ bounded above $S\ld$-free and $S\ld$-finite. Assume without loss of
generality that $M^k\ld = 0$ for $k>0$. Let $\{m^0_i\}_{i\in I^0}$ be a finite set of homogeneous elements generating
$M^0\ld$ as a graded $S\ld$-module and put $$^0\wt{M}\ua\ld := \bigoplus\limits_{i\in I^0} e^0_i S(0,|m^0_i|) =
\bigoplus\limits_{i\in I^0} e^0_i A\ua\ld\langle -|m^0_i|\rangle,$$ where the $e^0_i$ are just names for the units in
the respective copies of $A\ua\ld$. By construction, we have a morphism $\varphi^0: {^0\wt{M}}\ua\ld\to M\ua\ld$ defined 
by $e_i\mapsto m^0_i$ for $i\in I^0$, inducing an epimorphism in the $0$-th cohomology. This is our first approximation
to the desired quasi-isomorphism $\wt{M}\ua\ld\to M\ua\ld$. 

Next, we try to find a better approximation $^1\wt{M}\ua\ld\to M\ua\ld$ correcting the failure of injectivity of
$\H^0(\varphi)$ and surjectivity of $\H^{-1}(\varphi)$. For this, pick a finite set $\{z^0_j\}_{j\in J^0}\subset
Z^0(^0\wt{M}\ua\ld)$ of homogeneous elements such that $\{\ol{z^0_j}\}_{j\in J^0}$ is a generating set of
$$\ker\left(\H^0(^0\wt{M}\ua\ld)\xrightarrow{\H^0(\varphi^0)}\H^0(M\ua\ld)\right).$$ Further, pick a finite set
$\{m^1_i\}_{i\in I^1}\subset Z^1(M\ua\ld)$ of homogeneous elements such that $\{\ol{m^1_i}\}_{i\in I^1}$ generates
$\H^{-1}(M\ua\ld)$ as a graded $S\ld$-module. Then, we take $^1\wt{M}\ua\ld$ to be the pushout
\begin{equation}\label{eq:smallargpushout}\begin{tikzpicture}[description/.style={fill=white,inner sep=2pt},baseline =
    (current bounding box.center)]
    \matrix (m) [matrix of math nodes, row sep=4em,
                 column sep=3.5em, text height=1.5ex, text depth=0.25ex,
                 inner sep=0pt, nodes={inner xsep=0.3333em, inner ysep=0.3333em}]
    {
       \bigoplus\limits_{j\in J^0} \tilde{e}^0_j S(0,|z^0_j|) &&
       ^0\wt{M}\ua\ld\\
       \bigoplus\limits_{j\in J^0} f^1_j D(1,|z^0_j|)\oplus\bigoplus\limits_{i\in I^1} e^1_i S(1,|e^1_i|) && ^1
       \wt{M}\ua\ld\\ 
    };
    \draw[->] (m-1-1) -- (m-1-3);
    \draw[->] (m-1-1) -- (m-2-1);
    \draw[->,dashed] (m-2-1) -- (m-2-3);
    \draw[->,dashed] (m-1-3) -- ($(m-2-3.north) + (0,2mm)$);
\end{tikzpicture}\end{equation}
\vskip2mm\noindent Here, the left vertical map comes from the inclusions $S(n-1,k)\hookrightarrow D(n,k)$, and the upper
horizontal map is given by $\tilde{e}^0_j\mapsto z^0_j$. Next, by definition of the $z^0_j$ there are elements $b^0_j\in
M^{-1}_{|z^0_j|}$ such that $\d(b^1_j) = \varphi^0(z^0_j)$, and  $f^1_j\mapsto b^1_j$, $e^1_i\mapsto m^1_i$ defines a
morphism $\bigoplus\limits_{j\in J^0} f^1_j D(1,|z^0_j|)\oplus\bigoplus\limits_{i\in I^1} e^1_i S(1,|e^1_i|)\to M\ua\ld$
giving rise to a commutative outer square in the diagram
\begin{equation}\label{eq:smallargpushout}\begin{tikzpicture}[description/.style={fill=white,inner sep=2pt},baseline =
    (current bounding box.center)]
    \matrix (m) [matrix of math nodes, row sep=4em,
                 column sep=3.5em, text height=1.5ex, text depth=0.25ex,
                 inner sep=0pt, nodes={inner xsep=0.3333em, inner ysep=0.3333em}]
    {
       \bigoplus\limits_{j\in J^0} \tilde{e}^0_j S(0,|z^0_j|) &&
       ^0\wt{M}\ua\ld\\
       \bigoplus\limits_{j\in J^0} f^1_j D(1,|z^0_j|)\oplus\bigoplus\limits_{i\in I^1} e^1_i S(1,|e^1_i|) && ^1
       \wt{M}\ua\ld\\ 
&&& M\ua\ld\\
    };
    \draw[->] (m-1-1) -- (m-1-3);
    \draw[->] (m-1-1) -- (m-2-1);
    \draw[->] (m-2-1) -- (m-2-3);
    \draw[->] (m-1-3) -- ($(m-2-3.north) + (0,2mm)$);

    \draw[->] (m-1-3) edge [bend left=5] node[description,scale=0.75]{$\varphi^0$} (m-3-4);
    \draw[->] ($(m-2-1.south) + (1cm,-3mm)$) edge [bend right=5] (m-3-4);
    
    \draw[->,dashed] (m-2-3) -- node[description,scale=0.75]{$\varphi^1$}(m-3-4); 
\end{tikzpicture}\end{equation}
By the universal property of the pushout, we get a unique morphism $\varphi^1: {^1\wt{M}}\ua\ld\to M\ua\ld$ making the
whole diagram commute. 

Intuitively, taking the pushout \eqref{eq:smallargpushout} amounts to killing
the cohomology classes associated to the $z^0_j$ by making them 
boundaries of formally adjoint ``cells'', causing $\H^0(^1\wt{M}\ua\ld\to M\ua\ld)$ to become injective, while at the
same time glueing in new ``spheres'' to make $\H^{-1}(^1\wt{M}\ua\ld\to M\ua\ld)$ surjective.

Rigorously, the pushout \eqref{eq:smallargpushout} comes from a short exact sequence of $A\ua\ld$-modules 
\begin{align}\label{eq:smallargses}
0\to \bigoplus\limits_{j\in J^0} \tilde{e}^0_j S(0,|z^0_j|)\to
\bigoplus\limits_{j\in J^0} f^1_j D(1,|z^0_j|)\oplus\bigoplus\limits_{i\in I^1} e^1_i S(1,|e^1_i|)\oplus
{^0\wt{M}}\ua\ld\to {^1\wt{M}}\ua\ld\to 0.
\end{align}
which induces a long exact sequence in cohomology. Since $S(n,k)\cong A\ua\ld[n]\langle
-k\rangle$ has no cohomology in degrees above $-n$ and $D(n,k)$ is contractible, we see that the canonical map
${^0\wt{M}}\ua\ld\to {^1\wt{M}}\ua\ld$ induces an isomorphism in the cohomology in degrees above $0$. In degree $0$ and
$-1$, the long exact cohomology sequence induced by \eqref{eq:smallargses} degenerates to exact sequences
\begin{gather*}
\bigoplus\limits_{j\in J^0}\H^0(\tilde{e}^0_j S(0,|z^0_j|))\to\H^0({^0\wt{M}}\ua\ld)\to\H^0({^1\wt{M}}\ua\ld)\to 0\\
\bigoplus\limits_{i\in I^1}\H^{-1}(e^1_i
S(1,|e^1_i|))\oplus\H^{-1}({^0\wt{M}}\ua\ld)\to\H^0({^1\wt{M}}\ua\ld)\to\bigoplus\limits_{j\in J^0} \H^0(\tilde{e}^0_j
S(0,|z^0_j|))\to 0
\end{gather*}
By definition of $\varphi^1$, this implies that $\H^0(\varphi^1)$ is an isomorphism, while $\H^{-1}(\varphi^1)$ is
surjective, as claimed, finishing the construction of the second approximation ${^1\wt{M}}\ua\ld\mor{\varphi^1} M\ua\ld$
to the desired quasi-isomorphism $\wt{M}\ua\ld\to M\ua\ld$. 

The method by which we constructed ${^0\wt{M}}\ua\ld$ from $0\to M\ua\ld$ and ${^1\wt{M}}\ua\ld\to M\ua\ld$ from
${^0\wt{M}}\ua\ld\to M\ua\ld$ can be used again and again to find a commutative diagram
\begin{equation}\label{eq:smallargpushout}\begin{tikzpicture}[description/.style={fill=white,inner sep=2pt},baseline =
    (current bounding box.center)]
    \matrix (m) [matrix of math nodes, row sep=4em,
                 column sep=3.5em, text height=1.5ex, text depth=0.25ex,
                 inner sep=0pt, nodes={inner xsep=0.3333em, inner ysep=0.3333em}]
    {
      {^0\wt{M}}\ua\ld & {^1\wt{M}}\ua\ld & {^2\wt{M}}\ua\ld & ... & \wt{M\ua\ld} = \bigcup\limits_{n\geq 0}
      {^n\wt{M}}\ua\ld\\ 
      &&&& M\ua\ld\\
    };
    \draw[right hook->] (m-1-1) -- node[above,scale=0.75]{$\iota^0$} (m-1-2);
    \draw[right hook->] (m-1-2) -- node[above,scale=0.75]{$\iota^1$} (m-1-3);
    \draw[right hook->] (m-1-3) -- node[above,scale=0.75]{$\iota^2$} (m-1-4);
    \draw[right hook->] (m-1-4) --  (m-1-5);

    \draw[->] (m-1-1) edge [bend right=15] node[description,scale=0.75]{$\varphi^0$} (m-2-5);
    \draw[->] (m-1-2) edge [bend right=10] node[description,scale=0.75]{$\varphi^1$}(m-2-5);
    \draw[->] (m-1-3) edge [bend right=5] node[description,scale=0.75]{$\varphi^2$}(m-2-5);
    \draw[->] (m-1-5) -- node[description,scale=0.75]{$\varphi$}(m-2-5);
\end{tikzpicture}\end{equation}
such that for each $n\geq 0$ the following properties are satisfied:
\begin{enumerate}
\item\label{item:inductiveiso} $\H^k(\varphi^n): \H^k({^n\wt{M}}\ua\ld)\to\H^k(M\ua\ld)$ is an isomorphism for $k>-n$ and
  an epimorphism for $k=-n$. 
\item\label{item:goodsemifreefiltration} $\coker(\iota^n: {^n\wt{M}}\ua\ld\to {^{n+1}\wt{M}}\ua\ld)$ is a finite direct
  sum of modules of the form $A\ua\ld[k]\langle l\rangle$ with $l\in\ZZ$ and $k\geq n$. 
\end{enumerate}
As cohomology commutes with filtered colimits, \eqref{item:inductiveiso} implies that the induced map $\varphi:
\wt{M}\ua\ld\to M\ua\ld$ is a quasi-isomorphism. Finally, \eqref{item:goodsemifreefiltration} implies that
$S\ld$-finite, and so we're done.

For the second statement, assume $S\ld$ is regular local and let $M\ua\ld\in\D^{b,-}_{\fr}(A\ua\ld)$. Choose $n\gg 0$
such that $\H^k(M\ua\ld)=0$ for $k<-n$. Then $\ker(\differential_{M\ua\ld}^k)\hookrightarrow M^k\ld$ splits for all
$k<-n-\gldim(S\ld\Mod)$, and hence $\tau_{\geq k} M\ua\ld\in\D^{b,b}_{\fr}(A\ua\ld)$ (note that by Kaplansky's theorem,
every projective $S\ld$-module is $S\ld$-free). Moreover, if $M\ua\ld\in\D^{b,-}_{\fr,\fg}(A\ua\ld)$, then $\tau_{\geq
  k} M\ua\ld\in\D^{b,b}_{\fr,\fg}(A\ud\ld)$. This proves the second statement. 
\end{proof}

Note that we defined $\D^{b,b}_{\fr}(A\ua\ld)$ as a full subcategory of $\D(A\ua\ld)$, and hence a priori morphisms in
$\D^{b,b}_{\fr}(A\ua\ld)$ may involve unbounded $A\ua\ld$-modules. However, for regular local $S\ld$ we can avoid
unbounded modules in the description of the morphism spaces:
\begin{prop}\label{prop_boundedasverdierquotient}
If $A^k\ld=0$ for $k>0$, the canonical triangulated
functor 
\begin{align}\label{eq:pleasebebounded}\Ho^{b,b}_{(\fg)}(A\ua\ld)/\Acyc^{b,b}_{(\fg)}(A\ua\ld)\
  \longrightarrow\ \D^{b,b}_{(\fg)}(A\ua\ld)\end{align} is an equivalence. If, in addition, $S\ld$ is regular local, the 
same is true for $$\Ho^{b,b}_{\fr,(\fg)}(A\ua\ld)/\Acyc^{b,b}_{\fr,(\fg)}(A\ua\ld)\
  \longrightarrow\ \D^{b,b}_{\fr,(\fg)}(A\ua\ld).$$
\end{prop}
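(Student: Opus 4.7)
The plan is to verify the standard criteria for the canonical functor into a Verdier quotient to be an equivalence of triangulated categories. Essential surjectivity is immediate: by definition every object of $\D^{b,b}_{(\fg)}(A\ua\ld)$ is an $A\ua\ld$-module lying in $\Ho^{b,b}_{(\fg)}(A\ua\ld)$, and it therefore automatically represents an object of the Verdier quotient which maps to itself.

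For fullness, I would take a morphism $\alpha:M\ua\ld\to N\ua\ld$ in $\D(A\ua\ld)$ between objects of $\Ho^{b,b}_{(\fg)}(A\ua\ld)$ and represent it by a roof $M\ua\ld\xleftarrow{s}P\ua\ld\xrightarrow{g}N\ua\ld$ with $s$ a quasi-isomorphism. Choose integers $a\leq b$ such that both $M\ua\ld$ and $N\ua\ld$ are concentrated in cohomological degrees in $[a,b]$; then $P\ua\ld$ has cohomology concentrated in $[a,b]$ as well. Because $A^k\ld=0$ for $k>0$, the truncations from Fact \ref{fact_truncation} are genuine $A\ua\ld$-submodules respectively $A\ua\ld$-quotients, and they yield a zig-zag of quasi-isomorphisms $P\ua\ld\hookleftarrow\tau_{\leq b}P\ua\ld\twoheadrightarrow Q\ua\ld:=\tau_{\geq a}\tau_{\leq b}P\ua\ld$, with $Q\ua\ld$ bounded. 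Since $M\ua\ld$ and $N\ua\ld$ vanish outside $[a,b]$, the composites $s\circ(\tau_{\leq b}P\ua\ld\hookrightarrow P\ua\ld)$ and $g\circ(\tau_{\leq b}P\ua\ld\hookrightarrow P\ua\ld)$ vanish on the subcomplex in degrees below $a$ and hence factor through the quotient $Q\ua\ld$, giving an equivalent roof $M\ua\ld\leftarrow Q\ua\ld\to N\ua\ld$ entirely inside $\Ho^{b,b}(A\ua\ld)$. Faithfulness is handled by the same device: a common refinement of two roofs in $\Ho^{b,b}_{(\fg)}(A\ua\ld)$ which represents the zero morphism in $\D(A\ua\ld)$ can be truncated as above to stay within the bounded subcategory, showing that the two roofs already coincide in $\Ho^{b,b}_{(\fg)}(A\ua\ld)/\Acyc^{b,b}_{(\fg)}(A\ua\ld)$.

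For the $\fg$-refinement, the Noetherian hypothesis on $S\ld$ ensures that $\tau_{\leq b}$ and $\tau_{\geq a}$ preserve the property of having $S\ld$-finite components, because submodules and quotients of finitely generated $S\ld$-modules are finitely generated. The main obstacle is that the intermediate term $P\ua\ld$ in a roof need not itself be $S\ld$-finite, only its cohomology. I would address this by first replacing $P\ua\ld$ with an $S\ld$-finite quasi-isomorphic $\wt{P}\ua\ld$ via the iterative CW-style construction from the proof of Proposition \ref{prop_freetruncation}, which attaches finitely many finitely generated free cells at each stage to correct one cohomology group at a time; the truncation argument then stays within $\Ho^{b,b}_\fg(A\ua\ld)$. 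For the second half of the proposition, under the further hypothesis that $S\ld$ is regular local, Proposition \ref{prop_freetruncation} supplies bounded, $S\ld$-free, and (where applicable) $S\ld$-finite replacements directly, after which the same truncation argument produces the required roofs in $\Ho^{b,b}_{\fr,(\fg)}(A\ua\ld)$. The main conceptual obstacle across both parts is precisely the CW-style construction needed to stay within the $\fg$-subcategory; once that is in place, everything reduces to standard manipulations of roofs via truncation.
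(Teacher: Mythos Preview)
Your proposal is correct and follows essentially the same approach as the paper: represent a morphism by a roof, then use the truncations $\tau_{\leq b}$ and $\tau_{\geq a}$ (available because $A^{>0}=0$) together with the CW-style replacement from Proposition~\ref{prop_freetruncation} to force the middle term into the desired subcategory. The paper writes out the $\fr$ case explicitly (truncate above, CW-replace to get $S$-free, then truncate below using regularity so the result stays free, choosing the bound small enough that the maps to $M$ and $N$ factor), and asserts the other cases are analogous; you instead detail the plain bounded case first and then treat the $\fg$ and $\fr$ refinements, but the content is the same.

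One small point: for the $\fg$-case, Proposition~\ref{prop_freetruncation} as stated takes a bounded-above $S$-finite module as input, whereas your intermediate $P$ has only $S$-finite \emph{cohomology}. This is harmless --- the CW-style construction only needs finitely generated cohomology to attach finitely many cells at each stage --- but strictly speaking it is a slight extension of what is written there. Also, when you say the maps ``vanish on the subcomplex in degrees below $a$'', note that the kernel of $\tau_{\leq b}P\to\tau_{\geq a}\tau_{\leq b}P$ also contains $\image(\differential^{a-1})$ in degree $a$; the vanishing there follows automatically since $M^{a-1}=N^{a-1}=0$ and the maps are chain maps, but it is worth a word.
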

\begin{proof}
As \eqref{eq:pleasebebounded} is the identity on objects, we only have to check that it is fully faithful. We restrict
to the free, non-finitely generated 
case; the other cases are proved along the same lines, noting that truncation preserves the property of being
finitely generated.

We will use the description of morphisms in $\D(A\ua\ld)$ through upper roofs. Thus, assume that $M\ua\ld, 
N\ua\ld\in\D^{b,b}_{\fr}(A\ua\ld)$ and that we have a morphism $M\ua\ld\to N\ua\ld$ in $\D^{b,b}_{\fr}(A\ua\ld)$
represented by the upper roof
\begin{equation}\label{eq:upperroofunbounded}
\begin{tikzpicture}[description/.style={fill=white,inner sep=2pt},baseline=(current
    bounding box.center) ]
    \matrix (m) [matrix of math nodes, row sep=3em,
                 column sep=2.5em, text height=1.5ex, text depth=0.25ex,
                 inner sep=0pt, nodes={inner xsep=0.3333em, inner ysep=0.3333em}]
    {
       & X\ua\ld & \\ M\ua\ld && N\ua\ld\\
    };
    \draw[->] (m-1-2) -- node[description,scale=0.75]{$\alpha$} (m-2-1);
    \draw[->] (m-1-2) -- node[description,scale=0.75]{$\beta$} (m-2-3);
\end{tikzpicture}
\end{equation}
where $\alpha$ is a quasi-isomorphism. Then $X\ua\ld\in\D^{b,\emptyset}(A\ua\ld)$, and for $k\gg 0$ the inclusion
$\tau_{\leq k} X\ua\ld\hookrightarrow X\ua\ld$ is a quasi-isomorphism. Further, there exists a quasi-isomorphism
$Y\ua\ld\to \tau_{\leq k} X\ua\ld$ with $Y\ua\ld\in\D^{b,-}_{\fr}(A\ua\ld)$. Thus, expanding the above roof with the
resulting composition $Y\ua\ld\to X\ua\ld$ we may assume that a priori $X\ua\ld\in\D^{b,-}_{\fr}(A\ua\ld)$. Next, as
$M\ua\ld$ and $N\ua\ld$ are bounded, there exists $k\ll 0$ such that the following hold:
\begin{enumerate}
\item $\tau_{\geq k} X\ua\ld\in\D^{b,b}_{\fr}(A\ua\ld)$ (possible since $S\ld$ is regular)
\item $X\ua\ld\to\tau_{\geq k}X\ua\ld$ is a quasi-isomorphism.
\item $\alpha$ and $\beta$ factor as $X\ua\ld\to\tau_{\geq k} X\ua\ld\mor{\wt{\alpha}} M\ua\ld$ and
  $X\ua\ld\to\tau_{\geq k} X\ua\ld\mor{\wt{\beta}} N\ua\ld$, respectively.
\end{enumerate}
Under these assumptions, the roof \eqref{eq:upperroofunbounded} is equivalent to the roof
\begin{equation*}\begin{tikzpicture}[description/.style={fill=white,inner sep=2pt}]
    \matrix (m) [matrix of math nodes, row sep=3em,
                 column sep=2.5em, text height=1.5ex, text depth=0.25ex,
                 inner sep=0pt, nodes={inner xsep=0.3333em, inner ysep=0.3333em}]
    {
       & \tau_{\geq k}X\ua\ld & \\ M\ua\ld && N\ua\ld\\
    };
    \draw[->] (m-1-2) -- node[description,scale=0.75]{$\wt{\alpha}$} (m-2-1);
    \draw[->] (m-1-2) -- node[description,scale=0.75]{$\wt{\beta}$} (m-2-3);
\end{tikzpicture}\end{equation*}
proving that \eqref{eq:pleasebebounded} is full. The faithfulness is proved similarly.
\end{proof}

\begin{rem}
Let $\varphi: A\ua\ld\to B\ua\ld$ be a morphism of dg-$S\ld$-algebras concentrated in non-positive degrees. Then note
that even though we have a description of $\D^{b,b}_{(\fr),(\fg)}(A\ua\ld)$ not involving unbounded $A\ua\ld$-modules,
the calculation of the derived tensor product functor $$\D(A\ua\ld)\xrightarrow{\quad-\stackrel{\LL}{\otimes}_{A\ua\ld}
  B\ua\ld\quad }\D(B\ua\ld)$$ \textit{does} involve unbounded modules, even if we restrict it to bounded
$A\ua\ld$-modules and regular $S\ld$, because there might be bounded $A\ua\ld$-modules which do not possess bounded
semi-free resolutions.
\end{rem}

Next we discuss to what extend the derived adjunction corresponding corresponding to a morphism $\varphi: A\ua\ld\to
B\ua\ld$ respects the subcategories of $\D(A\ua\ld)$ and $\D(B\ua\ld)$ we just introduced.
\begin{fact}
Let $A\ua\ld$ and $B\ua\ld$ be dg-$S\ld$-algebras, and assume $A^k\ld=0$ for $k>0$. Further, let $\varphi: A\ua\ld\to
B\ua\ld$ be a homomorphism of dg-$S\ld$-algebras. Then the functor $$\varphi\la:\ \D(B\ua\ld)\
\longrightarrow\ \D(A\ua\ld)$$ takes $\D^{\ast,\ast\p}(B\ua\ld)$ to $\D^{\ast,\ast\p}(A\ua\ld)$ for all
$\ast,\ast\p\in\{\emptyset,b,+,-\}$. Its adjoint $$-\stackrel{\LL}{\otimes}_{A\ua\ld} B\ua\ld:\ \D(A\ua\ld)\
\longrightarrow\ \D(B\ua\ld)$$ takes $\D^{-,\emptyset}(A\ua\ld)\to\D^{-,\emptyset}(B\ua\ld)$. If $\varphi$ is a
quasi-isomorphism, the adjoint equivalence \eqref{eq:adjderived}
between $\D(A\ua\ld)$ and $\D(B\ua\ld)$ restricts to an adjoint equivalence between $\D^{\ast,\emptyset}(A\ua\ld)$ and
$\D^{\ast,\emptyset}(B\ua\ld)$ for all $\ast\in\{\emptyset,b,+,-\}$. 
\end{fact}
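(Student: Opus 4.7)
The plan is to treat the three assertions in order, noting that restriction is the easy side and that the essential content lies in checking cohomological boundedness for the derived tensor product.

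For the first assertion, the point is that the underlying complex of graded $S\ld$-modules of $\varphi\la N\ua\ld$ equals that of $N\ua\ld$; only the action is restricted along $\varphi$. Consequently the cohomology groups (as $S\ld$-modules) and the module-theoretic boundedness of the components are unchanged under $\varphi\la$, and every combination $(\ast,\ast\p)\in\{\emptyset,b,+,-\}^2$ is preserved automatically.

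For the second assertion, let $M\ua\ld\in\D^{-,\emptyset}(A\ua\ld)$. Since $A^k\ld=0$ for $k>0$, Fact \ref{fact_truncation} allows us to replace $M\ua\ld$ up to quasi-isomorphism by $\tau_{\leq n}M\ua\ld$ for $n\gg 0$ such that $\H^k(M\ua\ld)=0$ for $k>n$, i.e.\ by a bounded-above $A\ua\ld$-module. I would then invoke the observation inside the proof of Proposition \ref{prop_freetruncation} (namely, that the small object argument applied to a bounded-above module produces a bounded-above semi-free replacement) to obtain a cofibrant replacement $\wt{M}\ua\ld\to\tau_{\leq n}M\ua\ld$ with $\wt{M}\ua\ld\in\Cof(A\ua\ld)\cap\D^{-,-}(A\ua\ld)$. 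Then $M\ua\ld\stackrel{\LL}{\otimes}_{A\ua\ld}B\ua\ld$ is represented by $\wt{M}\ua\ld\otimes_{A\ua\ld}B\ua\ld$, which is bounded above as a complex of graded $S\ld$-modules, hence has bounded-above cohomology, so it lies in $\D^{-,\emptyset}(B\ua\ld)$.

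For the third assertion, suppose $\varphi$ is a quasi-isomorphism. By Proposition \ref{prop_dgadjointequivalence} the derived adjunction $\bfL\FF\dashv\bfR\GG = (-\stackrel{\LL}{\otimes}_{A\ua\ld}B\ua\ld)\dashv\varphi\la$ is an equivalence of categories. By the first assertion, $\varphi\la$ sends $\D^{\ast,\emptyset}(B\ua\ld)$ into $\D^{\ast,\emptyset}(A\ua\ld)$ for every $\ast\in\{\emptyset,b,+,-\}$; since restriction preserves cohomology exactly, $\varphi\la$ also \emph{reflects} each of these cohomological boundedness conditions. Combined with the fact that the unit $M\ua\ld\to\varphi\la(M\ua\ld\stackrel{\LL}{\otimes}_{A\ua\ld}B\ua\ld)$ is an isomorphism in $\D(A\ua\ld)$, this gives the chain of equivalences
\[
M\ua\ld\in\D^{\ast,\emptyset}(A\ua\ld)\ \Longleftrightarrow\ \varphi\la\bigl(M\ua\ld\stackrel{\LL}{\otimes}_{A\ua\ld}B\ua\ld\bigr)\in\D^{\ast,\emptyset}(A\ua\ld)\ \Longleftrightarrow\ M\ua\ld\stackrel{\LL}{\otimes}_{A\ua\ld}B\ua\ld\in\D^{\ast,\emptyset}(B\ua\ld),
\]
so $\bfL\FF$ preserves $\D^{\ast,\emptyset}$ in both directions and the restricted adjoint pair is still an equivalence. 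The only nontrivial ingredient is the $\ast=-$ case of the forward inclusion for $\bfL\FF$, handled in the previous paragraph; the remaining cases follow from this conservativity argument. The main technical point to take care of is simply the choice of bounded-above semi-free replacement in the second assertion, since unrestricted cofibrant replacement via the small object argument can enlarge the module; once this is isolated, the rest is formal.
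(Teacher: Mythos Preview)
The paper states this Fact without proof, so there is no argument to compare yours against; your proposal is in fact supplying the missing justification, and the approach you take is the natural one.

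One point to flag in your treatment of the second assertion: you conclude that $\wt{M}\ua\ld\otimes_{A\ua\ld}B\ua\ld$ is bounded above as a complex of $S\ld$-modules. This step tacitly uses that $B\ua\ld$ is bounded above (e.g.\ $B^k\ld=0$ for $k>0$), since a bounded-above semi-free $A\ua\ld$-module has underlying graded $A\ua\ld$-module $\bigoplus_\alpha A\ua\ld[n_\alpha]\langle k_\alpha\rangle$ with the $-n_\alpha$ bounded above, and tensoring over $A\ua\ld$ with $B\ua\ld$ gives $\bigoplus_\alpha B\ua\ld[n_\alpha]\langle k_\alpha\rangle$, which is bounded above only if $B\ua\ld$ is. The stated hypothesis singles out only $A\ua\ld$, so strictly speaking the second assertion can fail without a similar assumption on $B\ua\ld$ (take $A\ua\ld=S\ld$ and $B\ua\ld$ a polynomial algebra on a degree-$1$ cocycle). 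In every application in the paper, however, notably $\kappa_w:K\ua_w\to S\ld/(w)$, the target is concentrated in non-positive degrees and your argument goes through verbatim; this appears to be a missing hypothesis in the Fact itself rather than a defect in your proof.

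Your conservativity argument for the third assertion is clean and, worth noting, independent of the second: since $\varphi\la$ preserves and reflects cohomology exactly and the derived unit is an isomorphism, the boundedness type of $M\ua\ld\stackrel{\LL}{\otimes}_{A\ua\ld}B\ua\ld$ matches that of $M\ua\ld$ for any $\ast\in\{\emptyset,b,+,-\}$, with no appeal to bounded-above semi-free replacements. So your remark that the $\ast=-$ case of the forward inclusion is the ``only nontrivial ingredient'' slightly undersells your own argument: in the quasi-isomorphism case the conservativity step already handles everything.
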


\subsection{The Koszul resolution of $S\ld/(w)$}\label{app_koszul}
\markright{\ref{app_koszul} The Koszul resolution of $S\ld/(w)$}

Consider $S\ld/(w)$ as a dg-$S\ld$-algebra concentrated in degree $0$. Then a dg-$S\ld/(w)$-module is just a
complex of $S\ld/(w)$-modules, and so the derived category of the dg-$S\ld$-algebra $S\ld/(w)$ equals the derived
category of the abelian category $S\ld/(w)\mod$. Hence, there is no unambiguity when talking about \textit{the} derived
category $\bfD(S\ld/(w))$. 

Our strategy is to apply Proposition \ref{prop_dgadjointequivalence} to certain $S\ld$-free dg-$S\ld$-algebras
quasi-isomorphic to $S\ld/(w)$, thereby converting our intuition '$S\ld/(w)$-modules should be replaced by enriched
complexes of free $S\ld$-modules' into a precise statement.

\begin{definition}\label{def_resolutions}
Let as usual $S\ld$ be a regular local graded ring and $w$ be homogeneous of degree $d$ (possibly zero). The
\textit{Koszul-resolution} $K\ua_w$ of $S\ld/(w)$ is defined as the dg-$S\ld$-algebra 
$$K\ua_w\ := \ ...\to 0\to S\ld\langle -d\rangle\xrightarrow{\cdot w} S\ld\to 0\to ...,$$
where $S\ld$ is concentrated in cohomological degree $0$. In other words, it is the free graded-commutative dg-algebra with
 generator $s$ of cohomological degree $-1$ and internal degree $d$ and differential given by $\d(s) := w\cdot 1$. 
\end{definition}

\begin{prop}
There is a  natural morphism of dg-$S\ld$-algebras $\kappa_w: K\ua_w\to S\ld/(w)$ which is a quasi-isomorphisms if and
only if $w\neq 0$. In particular, we have a derived adjunction
\begin{equation*}\begin{tikzpicture}[description/.style={fill=white,inner
      sep=2pt},baseline=(m-1-1.base)]  
    \matrix (m) [matrix of math nodes, row sep=3em,
                 column sep=10em, text height=1.5ex, text depth=0.25ex,
                 inner sep=0pt, nodes={inner xsep=0.3333em, inner ysep=0.3333em}]
    {
       \D(K\ua_w\Mod) & \D(S\ld/(w)\Mod) \\
    };
    \draw[->] ($(m-1-1.east) + (0,+0.5mm)$) -- node[above,scale=0.75]{$-\stackrel{\LL}{\otimes}_{K\ua_w} S\ld/(w)$}
    ($(m-1-2.west) + (0,+0.5mm)$); 
    \draw[->] ($(m-1-2.west) + (0,-0.5mm)$) -- node[below,scale=0.75]{$(\kappa_w)\la$}
    ($(m-1-1.east) + (0,-0.5mm)$); 
\end{tikzpicture}\end{equation*}
which is a derived equivalence if and only if $w\neq 0$.
\end{prop}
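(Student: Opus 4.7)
The plan is to define $\kappa_w$ componentwise, verify it respects the dg-algebra structure, compute the cohomology of $K^\bullet_w$ directly, and finally invoke Proposition \ref{prop_dgadjointequivalence} for the adjunction statement.

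First I would define $\kappa_w$ as the map of complexes which is the canonical surjection $S\ld \twoheadrightarrow S\ld/(w)$ in cohomological degree $0$ and the zero map $S\ld\langle -d\rangle \to 0$ in cohomological degree $-1$. Commutativity with the differentials is immediate, since $w\cdot 1 = 0$ in $S\ld/(w)$. To see that this is a morphism of dg-$S\ld$-algebras, note that $K^\bullet_w$ is the free graded-commutative dg-$S\ld$-algebra on the generator $s$ in bidegree $(-1,d)$, so specifying a morphism into the target algebra $S\ld/(w)$ amounts to specifying the image of $s$; the only available image (for degree reasons, since $S\ld/(w)$ is concentrated in degree $0$) is $0$, and this is consistent with the differentials because $\d(s) = w$ maps to $\overline{w} = 0$ in $S\ld/(w)$. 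Compatibility with the unit and the multiplication are formal consequences.

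Next I would compute $H^\bullet(K^\bullet_w)$. Directly from the definition, $H^0(K^\bullet_w) = \coker(S\ld\langle -d\rangle \xrightarrow{w} S\ld) = S\ld/(w)$, and $\kappa_w$ induces the identity on $H^0$. The remaining cohomology group is $H^{-1}(K^\bullet_w) = \ker(S\ld\langle -d\rangle \xrightarrow{w} S\ld) = \mathrm{Ann}_{S\ld}(w)$. Since $S\ld$ is a regular local graded ring it is a domain (Fact \ref{fact_regularisadomain}), so $\mathrm{Ann}_{S\ld}(w)$ vanishes precisely when $w \neq 0$. Hence $\kappa_w$ is a quasi-isomorphism if and only if $w \neq 0$; in the case $w = 0$, the cohomology $H^{-1}(K^\bullet_0) = S\ld\langle -d\rangle$ is nonzero while the target has trivial cohomology in degree $-1$, so $\kappa_w$ fails to be a quasi-isomorphism.

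Finally, the adjunction assertion is then immediate: applying Proposition \ref{prop_dgadjointequivalence} to $\varphi = \kappa_w$ yields the derived adjunction with left adjoint $-\stackrel{\LL}{\otimes}_{K^\bullet_w} S\ld/(w)$ and right adjoint $(\kappa_w)\la$, and the same proposition guarantees that this adjunction is an equivalence precisely when $\kappa_w$ is a quasi-isomorphism, i.e.\ precisely when $w \neq 0$. There is no real obstacle in this proof; the only point that requires a (small) input from earlier material is the use of Fact \ref{fact_regularisadomain} to ensure that $w$ is a non-zero-divisor whenever $w \neq 0$.
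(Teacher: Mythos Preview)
Your proof is correct and follows essentially the same approach as the paper: you define $\kappa_w$ by the canonical surjection in degree $0$ and zero in degree $-1$, then invoke Proposition \ref{prop_dgadjointequivalence}. The paper's own proof is terser---it simply declares the quasi-isomorphism claim ``clear'' without spelling out the $H^{-1}$ computation or citing Fact \ref{fact_regularisadomain}---so your version just fills in details the paper omits.
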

\begin{proof}
The morphism $\kappa_w$ is given by the diagram
\begin{equation*}\begin{tikzpicture}[baseline=(current bounding box.center), description/.style={fill=white,inner sep=2pt}]
    \matrix (m) [matrix of math nodes, row sep=2.25em,
                 column sep=2.5em, text height=1.5ex, text depth=0.25ex]
    {
       \cdots & 0 & S\ld\langle -d\rangle & S\ld & 0 & \cdots\\
       \cdots & 0 & 0 & S\ld/(w) & 0 & \cdots\\
    };
    \path[->,font=\scriptsize]
    (m-1-1) edge (m-1-2)
    (m-1-2) edge (m-1-3)
    (m-1-3) edge node[auto]{$\cdot w$} (m-1-4)
    (m-1-4) edge (m-1-5)
    (m-1-5) edge (m-1-6)
    (m-2-1) edge (m-2-2)
    (m-2-2) edge (m-2-3)
    (m-2-3) edge (m-2-4)
    (m-2-4) edge (m-2-5)
    (m-2-5) edge (m-2-6)
    (m-1-2) edge (m-2-2)
    (m-1-3) edge (m-2-3)
    (m-1-4) edge (m-2-4)
    (m-1-5) edge (m-2-5);
\end{tikzpicture}\end{equation*}
and it is clear that $\kappa_w$ is a quasi-isomorphism if and only if $w\neq 0$. The second statement follows from
Proposition \ref{prop_dgadjointequivalence} applied to $\kappa_w$. 
\end{proof}

\begin{fact}\label{fact_higherhomotopiesasmodulestructure}
Let $M\ua\ld$ be a complex of $S\ld$-modules. Giving $M\ua\ld$ the structure of a dg-$K\ua_w$-module is equivalent to
giving a nullhomotopy $s$ for the multiplication by $w$ on $M\ua\ld$ such that $s^2=0$. 
\end{fact}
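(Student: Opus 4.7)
The plan is to unpack the definition of $K\ua_w$ as a free graded-commutative dg-$S\ld$-algebra on a single generator, and then read off what a module structure amounts to concretely. First I would observe that, as a graded $S\ld$-algebra, $K\ua_w$ is freely generated over $S\ld$ by a single element $s$ in cohomological degree $-1$ and internal degree $d$, subject only to the graded-commutativity relation. Since $s$ has odd cohomological degree, this relation forces $s^2 = 0$, so $K\ua_w \cong S\ld[s]/(s^2)$ with $|s|=(-1,d)$ and differential determined by $\d(s) = w\cdot 1$ together with the Leibniz rule.

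Given this presentation, a dg-$K\ua_w$-module structure on a complex $(M\ua\ld,\d_M)$ of $S\ld$-modules is determined by the $S\ld$-linear action of the single generator $s$, i.e.\ by a family of $S\ld$-linear maps $s: M\ua\ld \to M^{\ast-1}\ld\langle d\rangle$. The associativity axiom for the $K\ua_w$-action reduces, given $s^2=0$ in $K\ua_w$, to the single condition $s^2 = 0$ on $M\ua\ld$. Compatibility of $\rho: K\ua_w\otimes_{S\ld} M\ua\ld\to M\ua\ld$ with the differentials, applied to $s\otimes m$, reads
\begin{equation*}
\d_M(s\cdot m) \;=\; \d(s)\cdot m + (-1)^{|s|}\, s\cdot \d_M(m) \;=\; w\cdot m - s\cdot \d_M(m),
\end{equation*}
i.e.\ $\d_M s + s\,\d_M = w\cdot\id_{M\ua\ld}$, which is precisely the statement that $s$ is a nullhomotopy for multiplication by $w$. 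Compatibility on elements of $S\ld \subset K\ua_w$ is automatic, as $M\ua\ld$ is already a complex of $S\ld$-modules, and the case $s\cdot s\otimes m$ reduces by the previous computation to the condition $s^2=0$ combined with $\d(s^2)=0$, which is automatic.

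Conversely, given any $s: M\ua\ld\to M^{\ast-1}\ld\langle d\rangle$ that is $S\ld$-linear, satisfies $s^2=0$, and provides a nullhomotopy $\d_M s + s\,\d_M = w\cdot\id$, the formula $\rho(a + b s)(m) := am + b\, s(m)$ for $a,b\in S\ld$ defines an action of $K\ua_w$ on $M\ua\ld$; associativity holds because $s^2=0$ is imposed, and compatibility with the differential holds by the nullhomotopy equation, by the reverse of the calculation above. The two constructions $s\leftrightarrow \rho|_{s\otimes -}$ are visibly mutually inverse, which establishes the bijection. The only point that requires any care is keeping track of the Koszul sign $(-1)^{|s|}= -1$ in the Leibniz rule, which is what turns the naive identity $\d_M s = w - s\,\d_M$ into the conventional nullhomotopy equation; this is the only place where one might slip up, but it is a routine sign check rather than a genuine obstacle.
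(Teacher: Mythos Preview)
Your argument is correct. The paper does not actually give a proof of this fact; it is stated without proof and immediately followed by a notational convention, so there is nothing to compare against beyond noting that the paper treats the statement as an immediate consequence of the definition of $K\ua_w$ as the free graded-commutative dg-$S\ld$-algebra on a degree $(-1,d)$ generator $s$ with $\d(s)=w$, which is precisely what you unpack in detail.
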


\noindent\textbf{Convention:} We will often write a $K\ua_w$-module $M\ua\ld$ in the form
\begin{equation*}\begin{tikzpicture}[baseline=(current bounding  box.center), description/.style={fill=white,inner sep=2pt}]
    \matrix (m) [matrix of math nodes, row sep=4em,
                 column sep=4em, text height=1.5ex, text depth=0.25ex]
    {
       ... & M^{n-1}\ld & M^n\ld & M^{n+1}\ld & ... \\
     };
     \draw[->] ($(m-1-1.east) + (0,+0.5mm)$) -- node[above,scale=0.75]{$\differential$} ($(m-1-2.west) + (0,+0.5mm)$);
    \draw[->] ($(m-1-2.west) + (0,-0.5mm)$) -- node[below,scale=0.75]{$s$}  ($(m-1-1.east) + (0,-0.5mm)$);
    \draw[->] ($(m-1-2.east) + (0,+0.5mm)$) -- node[above,scale=0.75]{$\differential$} ($(m-1-3.west) + (0,+0.5mm)$);
    \draw[->] ($(m-1-3.west) + (0,-0.5mm)$) -- node[below,scale=0.75]{$s$}  ($(m-1-2.east) + (0,-0.5mm)$);
    \draw[->] ($(m-1-3.east) + (0,+0.5mm)$) -- node[above,scale=0.75]{$\differential$} ($(m-1-4.west) + (0,+0.5mm)$);
    \draw[->] ($(m-1-4.west) + (0,-0.5mm)$) -- node[below,scale=0.75]{$s$}  ($(m-1-3.east) + (0,-0.5mm)$);
    \draw[->] ($(m-1-4.east) + (0,+0.5mm)$) -- node[above,scale=0.75]{$\differential$} ($(m-1-5.west) + (0,+0.5mm)$);
    \draw[->] ($(m-1-5.west) + (0,-0.5mm)$) -- node[below,scale=0.75]{$s$}  ($(m-1-4.east) + (0,-0.5mm)$);
\end{tikzpicture}\end{equation*}
where the arrows pointing to the left denote the action of $s$ on $M\ua\ld$. 

\subsection{The Bar resolution}\label{app_bar}
\markright{\ref{app_bar} The Bar resolution}

In order to calculate the image of a dg-module $M\ua\ld$ under a derived functor like
$-\stackrel{\LL}{\otimes}_{K\ua_w} S\ld/(w)$ we need to know an explicit cofibrant   
resolution of $M\ua\ld$. The goal of this section is to describe one particular such resolution for $S\ld$-free
$A\ua\ld$-modules, which is even functorial in $M\ua\ld$: the Bar resolution. Throughout we fix an arbitrary 
commutative graded ring $S\ld$.  

The results of this and the following section are taken from \cite[Section 3.1]{Avramov}.

\begin{definition}\label{def_bar}
Let $A\ua\ld$ be a connected, $S\ld$-free dg-$S\ld$-algebra with unit $\eta: S\ld\to A\ua\ld$, and set
$\wt{A}\ua\ld := \coker(\eta) = A^{>0}\ld$. Further, let $M\ua\ld$ be an $A\ua\ld$-module. The \textit{Bar resolution}
$Q(A\ua\ld,M\ua\ld)$ of $M\ua\ld$ over $A\ua\ld$ is defined as follows:
\begin{enumerate}
\item The underlying $\ZZ$-graded graded $S\ld$-module is given by 
\begin{align}\label{eq:bardef}Q(A\ua\ld,M\ua\ld)^n\ld\ \ := \ \
  \bigoplus\limits_{h-p+i_1+...+i_p+j=n} A^h\ld\otimes_{S\ld}
  \wt{A}^{i_1}\ld\otimes_{S\ld}\cdots\otimes_{S\ld}\wt{A}\ld^{i_p}\otimes_{S\ld} M^j\ld,
\end{align}
and the action of $A\ua\ld$ on $Q(A\ua\ld,M\ua\ld)$ is given by left multiplication on the first tensor factor. 
\item The differential is given by $\partial := \partial\p + \partial\pp$, where
\begin{align}\label{eq:bardiff1}
\partial\p(a\otimes \wt{a}_1\otimes...\otimes\wt{a}_p\otimes m) := &\ 
\partial(a)\otimes\wt{a}_1\otimes...\otimes\wt{a}_p\otimes m \\\notag
& + \sum\limits_{r=1}^{p} (-1)^{r+h+i_1+...+i_{r-1}} a\otimes \wt{a}_1\otimes...\otimes\partial(\wt{a}_r)\otimes
...\otimes\wt{a}_p\otimes m\\\notag
& + (-1)^{h+p+i_1+...+i_p} a\otimes\wt{a}_1\otimes...\otimes\wt{a}_p\otimes \partial(m)
\end{align}
and
\begin{align}\label{eq:bardiff2}
\partial\pp(a\otimes \wt{a}_1\otimes...\otimes\wt{a}_p\otimes m) := &\ 
(-1)^h (aa_1)\otimes \wt{a}_2\otimes ...\otimes\wt{a}_p\otimes m \\\notag
& + \sum\limits_{r=1}^{p-1} (-1)^{r+h+i_1+...+i_{r}} a\otimes \wt{a}_1\otimes...\otimes\wt{a_ra_{r+1}}\otimes
...\otimes\wt{a}_p\otimes m\\\notag
& + (-1)^{h+p+i_1+...+i_{p-1}} a\otimes\wt{a}_1\otimes...\otimes\wt{a}_{p-1}\otimes a_p m
\end{align}
\item The structure map $Q(A\ua\ld,M\ua\ld)\to M\ua\ld$ is defined by
$$a\otimes\wt{a}_1\otimes ...\otimes\wt{a}_p\otimes m\ \ \longmapsto\ \ \begin{cases} 0 & \text{if}\ p>0\\ am &
  \text{if}\ p=0.\end{cases}.$$
\end{enumerate}
\end{definition}
\begin{prop}\label{prop_bar} 
The following hold:
\begin{enumerate}
\item $(Q(A\ua\ld,M\ua\ld),\partial)\to M\ua\ld$ is a quasi-isomorphism.
\item If $M\ua\ld$ is $S\ld$-free, $(Q(A\ua\ld,M\ua\ld),\partial)$ is a semi-free $A\ua\ld$-module.
\end{enumerate}
\end{prop}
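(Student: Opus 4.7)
My proof of Proposition \ref{prop_bar} will follow the classical strategy for (reduced) bar resolutions, with the connectedness hypothesis used in both parts. Since $A\ua\ld$ is connected and $S\ld$-free, the unit $\eta:S\ld\to A\ua\ld$ admits a canonical $S\ld$-linear splitting $\varepsilon:A\ua\ld\to S\ld$, giving a decomposition $A\ua\ld = \eta(S\ld)\oplus\wt{A}\ua\ld$ as graded $S\ld$-modules with $\wt{A}\ua\ld$ concentrated in a single (strict) cohomological half-line.

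For part (1) I produce an explicit $S\ld$-linear contracting homotopy on the augmented complex $\cdots\to Q^{-2}\to Q^{-1}\to Q^0\to M\ua\ld\to 0$. The degree $-1$ map $h$ is defined by
\begin{align*}
h(m) &:= 1\otimes m\ \in\ A\ua\ld\otimes_{S\ld} M\ua\ld\ =\ Q^0(A\ua\ld,M\ua\ld),\\
h(a\otimes\wt{a}_1\otimes\cdots\otimes\wt{a}_p\otimes m) &:= (-1)^{|a|+1}\,1\otimes(a-\eta\varepsilon(a))\otimes\wt{a}_1\otimes\cdots\otimes\wt{a}_p\otimes m.
\end{align*}
A direct, sign-careful expansion using \eqref{eq:bardiff1} and \eqref{eq:bardiff2} shows $\partial h+h\partial=\id$: the multiplication terms of $\partial\pp$ against the newly-inserted $\bar a := a-\eta\varepsilon(a)$ telescope, the $\partial\p$-contributions to $h$ cancel pairwise against $h\circ\partial\p$, and the extra term $\eta\varepsilon(a)\otimes(\cdots)$ in the expansion of $\partial\pp h$ reconstructs the original tensor. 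Hence the augmented complex is $S\ld$-linearly contractible, proving that $Q(A\ua\ld,M\ua\ld)\to M\ua\ld$ is a quasi-isomorphism.

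For part (2) I filter $Q := Q(A\ua\ld,M\ua\ld)$ by bar length,
$$F_p Q\ :=\ \bigoplus_{r\leq p} A\ua\ld\otimes_{S\ld}\wt{A}\ua\ld^{\otimes r}\otimes_{S\ld} M\ua\ld.$$
Inspection of \eqref{eq:bardiff1}--\eqref{eq:bardiff2} shows that $\partial\p$ preserves and $\partial\pp$ strictly lowers the index $p$, so each $F_p Q$ is a sub-dg-$A\ua\ld$-module, $\bigcup_p F_p Q=Q$, and the successive quotient $F_p Q/F_{p-1}Q$ is, as a dg-$A\ua\ld$-module, identified with $A\ua\ld\otimes_{S\ld} V_p$ where $V_p := \wt{A}\ua\ld^{\otimes p}\otimes_{S\ld} M\ua\ld$ is a complex of $S\ld$-free modules (using the hypothesis that both $\wt{A}\ua\ld$ and $M\ua\ld$ are $S\ld$-free) with differential induced from $\partial\p$. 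To conclude that each such quotient, and hence $Q$ itself, is semi-free over $A\ua\ld$, I refine by the cohomological stupid truncation of $V_p$: connectedness of $A\ua\ld$ forces $\wt{A}\ua\ld$ into a one-sided cohomological range, so for $p\geq 1$ the truncations form a subcomplex filtration of $V_p$ whose successive quotients are $S\ld$-free with zero differential. Tensoring with $A\ua\ld$ over $S\ld$ turns these quotients into honest free dg-$A\ua\ld$-modules, and interleaving this refinement with $\{F_p Q\}$ supplies the $\NN$-indexed semi-free filtration required by the definition.

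\textbf{Main obstacle.} The sign bookkeeping in part (1) is routine but annoying; the subtler step is the upgrade from graded-freeness to semi-freeness in part (2), since the tensor-product differential on $A\ua\ld\otimes_{S\ld} V_p$ obstructs a direct identification with a sum of shifts of $A\ua\ld$. Connectedness of $A\ua\ld$ is precisely what makes the cohomological refinement available, and without it the argument would break down.
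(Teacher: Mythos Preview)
Your part (1) is correct and is exactly the classical argument; the paper simply cites \cite{Avramov}, where this contracting homotopy is given.

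For part (2) your strategy is right but the ``interleaving'' step has a gap. You filter first by bar length $p$, then refine each quotient $A\ua\ld\otimes_{S\ld} V_p$ by stupid truncation of $V_p$. The problem is that when this inner refinement is infinite (which happens whenever $V_p$ is cohomologically unbounded below, e.g.\ if $M\ua\ld$ or $\wt{A}\ua\ld$ is), you cannot splice the refinements into a single $\NN$-indexed chain: you would need $\omega$ steps to exhaust $F_0$, another $\omega$ to reach $F_1$, etc., giving an $\omega^2$-indexed filtration, and extensions of semi-free modules by semi-free modules are not obviously semi-free in the $\NN$-indexed sense. Your remark ``for $p\geq 1$'' also leaves $p=0$ (where $V_0=M\ua\ld$) unaddressed.

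The paper avoids this by using a single diagonal index combining both layers:
$$U(n)\ :=\ \bigoplus_{p-i_1-\cdots-i_p-j\,\leq\, n}\ A\ua\ld\otimes_{S\ld}\wt{A}^{i_1}\ld\otimes_{S\ld}\cdots\otimes_{S\ld}\wt{A}^{i_p}\ld\otimes_{S\ld} M^j\ld.$$
One checks from \eqref{eq:bardiff1}--\eqref{eq:bardiff2} that $p-\sum i_k - j$ is unchanged by the $\partial(a)$-term and strictly decreases under every other term of $\partial$, so each $U(n)/U(n-1)$ is a genuine free $A\ua\ld$-module. Connectedness of $A\ua\ld$ (forcing $i_k\leq -1$) is exactly what makes this index bounded below on each bar level; this is the precise version of the interleaving you were gesturing at.
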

\begin{proof}
The first statement follows from the results \cite[Construction 3.1.4]{Avramov} applied to the ungraded dg-algebra
and ring underlying $A\ua\ld$ and $S\ld$, respectively. For the second statement, note that if $M\ua\ld$ is
$S\ld$-free then the submodules $$U_{M\ua\ld}(n)\ :=\ \bigoplus\limits_{p-i_1-...-i_p-j\leq n} A\ua\ld\otimes_{S\ld}
\wt{A}^{i_1}\ld\otimes_{S\ld}\cdots\otimes_{S\ld}\wt{A}^{i_p}\ld\otimes_{S\ld} M^j\ld$$ form a semi-free
filtration of $Q(A\ua\ld, M\ua\ld)$. 
\end{proof}
\subsection{The Bar resolution for the Koszul-resolution of $S\ld/(w)$}\label{app_barres}
\markright{\ref{app_barres} The Bar resolution for the Koszul-resolution of $S\ld/(w)$}

Next, we make the Bar resolution explicit in the case where $A\ua\ld = K\ua_w$ is the Koszul-resolution of $S\ld/(w)$
(see Definition \ref{def_resolutions}). In this case $\wt{K\ua_w}\cong S\ld\langle -d\rangle [1]$, so we get the
following isomorphism of graded $S\ld$-modules, where we consider $S\ld[t]$ as a $\ZZ$-graded graded $S\ld$-module with
$t$ sitting in cohomological degree $-2$ and internal degree $d$. 
\begin{align}\label{eq:simplebar}
\begin{tikzpicture}[description/.style={fill=white,inner sep=2pt},baseline=(current
    bounding box.center)]
    \matrix (m) [matrix of math nodes, row sep=0.3em,
                 column sep=2.5em, text height=1.5ex, text depth=0.25ex]
    {
       Q(K\ua_w,M\ua\ld) \pgfmatrixnextcell \ \cong \pgfmatrixnextcell K\ua_w\otimes_{S\ld} S\ld[t]\otimes_{S\ld} M\ua\ld\\
       (-1)^n a\otimes \underbrace{\wt{s}\otimes...\otimes\wt{s}}_{n\text{ times}}\otimes m\pgfmatrixnextcell
       \longmapsfrom \pgfmatrixnextcell 
       a\otimes t^n\otimes m\\
    };
\end{tikzpicture}
\end{align}
(Note that the left hand side has cohomological degree $|a|-2n+|m|$; see the indexing in \eqref{eq:bardef}) This
isomorphism induces a differential on $K\ua_w\otimes_{S\ld} S\ld[t]\otimes_{S\ld} M\ua\ld$, yielding the following: 
\begin{prop}\label{prop_barreskoszul}
Let $M\ua\ld$ be an $S\ld$-free $K\ua_w$-module, and denote by $S\ld[t]$ a polynomial ring with the indeterminate $t$
sitting in cohomological degree $-2$ and internal degree $d$. Then, the $K\ua_w$-module $K\ua_w\otimes_{S\ld}
S\ld[t]\otimes_{S\ld} M\ua\ld$ with differential given by
\begin{align*}
a\otimes t^n\otimes m\ \ \longmapsto &\ \ \partial(a)\otimes t^n\otimes m + (-1)^{|a|}a\otimes t^n\otimes\partial(m)\\
&\ \ + (-1)^{|a|+1} a s\otimes t^{n-1}\otimes m + (-1)^{|a|}a\otimes t^{n-1}\otimes s m.
\end{align*}
is a semi-free resolution of $M\ua\ld$.
\end{prop}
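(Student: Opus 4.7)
The plan is to specialize the general Bar resolution of Proposition \ref{prop_bar} to the case $A\ua\ld = K\ua_w$, $M\ua\ld$ an $S\ld$-free $K\ua_w$-module, and then transport the result across the isomorphism \eqref{eq:simplebar}.

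First I would check that Proposition \ref{prop_bar} applies: $K\ua_w$ is connected in the sense of Definition \ref{def_bar} (it vanishes in positive degrees and its $0$-th component is $S\ld$) and it is $S\ld$-free by construction, so Proposition \ref{prop_bar}.(1) yields a quasi-isomorphism $Q(K\ua_w,M\ua\ld)\to M\ua\ld$, while Proposition \ref{prop_bar}.(2) gives that $Q(K\ua_w,M\ua\ld)$ is semi-free as a $K\ua_w$-module since $M\ua\ld$ is $S\ld$-free. The point now is that $\wt{K\ua_w} = \coker(\eta) = K_w^{-1}\ld$ is concentrated in cohomological degree $-1$ with $\wt{K\ua_w}\cong S\ld\langle -d\rangle[1]$, generated over $S\ld$ by (the class of) $s$. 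Consequently the iterated tensor product $\wt{K\ua_w}^{\otimes p}$ is free of rank one over $S\ld$, concentrated in cohomological degree $-p$ and internal degree $pd$, with canonical generator $\wt{s}^{\otimes p}$. This identifies the underlying graded $S\ld$-module of $Q(K\ua_w,M\ua\ld)$ with $K\ua_w\otimes_{S\ld} S\ld[t]\otimes_{S\ld} M\ua\ld$ via the formula \eqref{eq:simplebar}, where the sign $(-1)^n$ is built in so that $t^n$ corresponds to $\wt{s}^{\otimes n}$ up to that sign.

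Next I would transfer the Bar differential $\partial = \partial\p+\partial\pp$ of \eqref{eq:bardiff1}-\eqref{eq:bardiff2} through \eqref{eq:simplebar}. Two simplifications happen here. On the one hand, the differential of $\wt{s}$ in $\wt{K\ua_w}$ is zero, because $\partial(s) = w\cdot 1 \in S\ld$ which maps to $0$ in the cokernel $\wt{K\ua_w}$; hence all summands in $\partial\p$ involving $\partial(\wt{a}_r)$ vanish, and only the terms $\partial(a)\otimes t^n\otimes m$ and $(-1)^{|a|}a\otimes t^n\otimes\partial(m)$ survive. On the other hand, in $\partial\pp$ the products $\wt{a_r a_{r+1}} = \wt{s\cdot s}$ all vanish, since $s^2 = 0$ in $K\ua_w$ by graded commutativity (as $s$ has odd cohomological degree and $2$ is invertible), so only the outermost terms $(aa_1)\otimes\cdots$ and $\cdots\otimes a_p m$ contribute; these contribute $as\otimes t^{n-1}\otimes m$ and $\pm a\otimes t^{n-1}\otimes sm$, respectively.

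The remaining task is the bookkeeping of signs. The sign $(-1)^n$ appearing in \eqref{eq:simplebar} interacts with the factor-position signs $(-1)^{r+h+i_1+\cdots+i_{r-1}}$ in \eqref{eq:bardiff1}-\eqref{eq:bardiff2}, where each $i_k = -1$; summing over the $p$ or $p-1$ surviving terms with $\wt{s}$'s contributes the correct telescoping that yields exactly the four-term formula in the statement. I expect this sign verification to be the only real obstacle, and it is routine but tedious; once it is carried out, $Q(K\ua_w,M\ua\ld)$ is identified with $K\ua_w\otimes_{S\ld} S\ld[t]\otimes_{S\ld} M\ua\ld$ equipped with the claimed differential, and the conclusion follows from Proposition \ref{prop_bar}.
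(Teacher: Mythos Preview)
Your proposal is correct and follows essentially the same approach as the paper: both specialize the Bar resolution from Proposition~\ref{prop_bar} to $A\ua\ld = K\ua_w$, identify the underlying module via \eqref{eq:simplebar}, and then simplify the differential \eqref{eq:bardiff1}--\eqref{eq:bardiff2} using that both the differential and the multiplication on $\wt{K\ua_w}$ vanish. One minor remark: the vanishing of $s^2$ does not require $2$ to be invertible (which is not assumed for $S\ld$), since the product simply lands in $K_w^{-2}=0$.
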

\begin{proof}
This follows immediately from the isomorphism \eqref{eq:simplebar} and the explicit formula \eqref{eq:bardiff1} and
\eqref{eq:bardiff2} for the differential on the Bar resolution. Note that both the differential and the multiplication
on $\wt{K\ua_w}$ are trivial. 
\end{proof}
Proposition \ref{prop_barreskoszul} allows us to explicitly compute the image of some dg-$K\ua_w$-module under the
derived tensor functor $\D^b(K\ua_w)\to\D^-(S\ld/(w))$:
\begin{cor}\label{cor_barresmod}
Let $M\ua\ld$ be an $S\ld$-free $K\ua_w$-module, and denote by $S\ld/(w)[t]$ a polynomial ring over $S\ld/(w)$ with 
the indeterminate $t$ sitting in cohomological degree $-2$ and internal degree $d$. Then there is a canonical
isomorphism in $\D^b(S\ld/(w))$: $$M\ua\ld\stackrel{\LL}{\otimes}_{K\ua_w} S\ld/(w)\ \ \cong\ \
\left(S\ld/(w)[t]\otimes_{S\ld} M\ua\ld,\partial\right)$$ where $\partial$ is given by
$$\partial(t^n\otimes m)\ \ :=\ \ t^n\otimes\partial(m) + t^{n-1}\otimes s m.$$
\end{cor}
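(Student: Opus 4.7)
The plan is to derive Corollary \ref{cor_barresmod} as a straightforward consequence of Proposition \ref{prop_barreskoszul}, which provides an explicit semi-free resolution of $M\ua\ld$ over $K\ua_w$. The whole point of constructing the Bar resolution in this specific form was to make the derived tensor product computable by inspection, so the argument should be short.

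First I would invoke Proposition \ref{prop_barreskoszul} to obtain the semi-free $K\ua_w$-module
\[ Q\ua\ld\ :=\ K\ua_w\otimes_{S\ld} S\ld[t]\otimes_{S\ld} M\ua\ld \]
together with the explicit differential, which comes with a quasi-isomorphism $Q\ua\ld\to M\ua\ld$. By Fact \ref{fact_semifreecofibrant}, $Q\ua\ld$ is cofibrant in the model structure of \ref{def_modelstructuredgmodules}, so the derived tensor product may be computed naively on this resolution:
\[ M\ua\ld\stackrel{\LL}{\otimes}_{K\ua_w} S\ld/(w)\ \cong\ Q\ua\ld\otimes_{K\ua_w} S\ld/(w). \]

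Next I would identify the underlying graded $S\ld/(w)$-module of the right hand side. Since $K\ua_w\otimes_{K\ua_w} S\ld/(w)\cong S\ld/(w)$, we obtain a natural isomorphism of $\ZZ$-graded graded $S\ld/(w)$-modules
\[ Q\ua\ld\otimes_{K\ua_w} S\ld/(w)\ \cong\ S\ld/(w)[t]\otimes_{S\ld} M\ua\ld, \]
sending $a\otimes t^n\otimes m\otimes 1\mapsto \kappa_w(a)\cdot t^n\otimes m$, where $\kappa_w: K\ua_w\to S\ld/(w)$ is the canonical quasi-isomorphism from Section \ref{app_koszul}.

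Finally I would transport the differential. Under $\kappa_w$ the generator $s\in K\ua_w^{-1}$ of $\wt{K\ua_w}$ maps to $0$, because $S\ld/(w)$ is concentrated in cohomological degree $0$. Applying the bar differential from Proposition \ref{prop_barreskoszul} to $1\otimes t^n\otimes m\otimes 1$ yields four terms: the term with $\partial(1)=0$ drops out; the term $(-1)^{|1|+1}\,s\otimes t^{n-1}\otimes m\otimes 1$ vanishes after moving $s$ across the tensor product over $K\ua_w$, since it becomes $1\otimes t^{n-1}\otimes m\otimes\kappa_w(s)=0$; and the remaining two terms give exactly $t^n\otimes\partial(m)+t^{n-1}\otimes s m$ under the identification above. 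This is the formula claimed in the corollary.

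The only possible obstacle is bookkeeping of signs and the insertion of the unit $1\in K\ua_w^0$ in the tensor-product formalism, but all of this is mechanical once the differential in Proposition \ref{prop_barreskoszul} is in hand; no new idea is needed.
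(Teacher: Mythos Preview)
Your proposal is correct and matches the paper's intended argument: the corollary is stated without proof, as an immediate consequence of Proposition \ref{prop_barreskoszul}, and your write-up simply unpacks the obvious step of tensoring the explicit semi-free resolution there with $S\ld/(w)$ over $K\ua_w$. The identification of the underlying graded module and the sign check on the surviving differential terms are exactly the routine verifications one has to make.
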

\begin{rem}\label{rem_resolution}
Corollary \ref{cor_barresmod} yields a proof of Proposition \ref{prop_resolution} in case $s_n=0$ for all $n\geq 2$ as 
follows. We start with an $S\ld/(w)$-module $M\ld$ and assume that we have chosen an $S\ld$-free resolution
$F\ua\ld\to M\ld$ together with a homotopy $s$ for the multiplication by $w$ on $F\ua\ld$ such that $s^2=0$. In this
case, the claim of \ref{prop_resolution} is that $S\ld/(w)[t]\otimes_{S\ld} F\ua\ld$ together with the differential
$\id\otimes\partial + t\ua\otimes s$ is an $S\ld/(w)$-free resolution of
$M\ld$. Now, this follows from Corollary \ref{cor_barresmod} by tracing $M\ld$ along the adjoint equivalence
\begin{align}\label{eq:compi}\D^-(S\ld/(w))\xrightarrow{\ \ \kappa_w\ua\ \ }\D^-(K\ua_w)\ \
  \xrightarrow{-\stackrel{\LL}{\otimes}_{K\ua_w}S\ld/(w)}\ \ \D^-(S\ld/(w)).\end{align}
Indeed, considering $F\ua\ld$ together with the homotopy $s$ as a module over $K\ua_w$ (see Fact
\ref{fact_higherhomotopiesasmodulestructure}), it is isomorphic to the image of $M\ld$ under $\kappa_w\ua$. By Corollary  
\ref{cor_barresmod}, $F\ua\ld$ is sent to $S\ld/(w)[t]\otimes_{S\ld} F\ua\ld$ with differential $\id\otimes\partial +
t\ua\otimes s$ under $-\stackrel{\LL}{\otimes}_{K\ua_w} S\ld/(w)$. However, we know a priori that the result has of
this computation has to be an $S\ld/(w)$-free resolution of $M\ld$, as the composition \eqref{eq:compi} is isomorphic to
the identity. 
\end{rem}

\subsection{Connecting the Koszul-resolution to matrix factorizations}\label{sec_bigtheorem}
\markright{\ref{sec_bigtheorem} Connecting the Koszul-resolution to matrix factorizations}

Let $S\ld$ be regular local, $w\in\frm\setminus\{0\}$. We know that
$\D^b_\fg(S\ld/(w))/\perf\cong\HMF(S\ld,w)$ and $\D^b_\fg(S\ld/(w))\cong\D^b_\fg(K\ua_w)$, so we ask what the composed
functor $$\D^b_\fg(K\ua_w)\longrightarrow\D^b_\fg(S\ld/(w))\longrightarrow \HMF(S\ld,w)$$ looks like. It turns out that
it has a nice description which even makes sense for arbitrary $w=0$.
\begin{definition}
We denote $\Perf^\infty\subset\D^{b,b}_\fr(K\ua_w)$ the smallest thick triangulated subcategory of
$\D^{b,b}_\fr(K\ua_w)$ which contains all free $K\ua_w$-modules.
\end{definition}
\begin{prop}\label{prop_fold}Let $S\ld$ be a regular local graded ring and $w\in S\ld$ be
  homogeneous of degree $d$. Then the assignment 
\begin{align}\label{eq:fold}
(F\ua\ld,\differential)\ & \longmapsto\ \left(\bigoplus\limits_{n\in\ZZ} F^{2n}\ld\langle -nd\rangle\xrightarrow{\ \
    \differential + s\ \ }\bigoplus\limits_{n\in\ZZ} F^{2n-1}\ld\langle -nd\rangle\xrightarrow{\ \ \differential +
    s\ \ } \bigoplus\limits_{n\in\ZZ} F^{2n}\ld\langle -nd\rangle\right)
\end{align}
for an $S\ld$-free $K\ua_w$-module $F\ua\ld$ induces triangulated functor
$$\fold:\quad \D^{b,b}_{\fr}(K\ua_w)/\Perf^\infty\ \ \longrightarrow\ \ \HMF^{\infty}(S\ld,w).$$
\end{prop}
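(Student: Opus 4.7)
The overall plan is to define $\fold$ as a triangulated functor out of the homotopy category $\Ho^{b,b}_{\fr}(K\ua_w)$ and then show it annihilates both $\Perf^\infty$ and the acyclic modules, so that it descends to the Verdier quotient $\D^{b,b}_{\fr}(K\ua_w)/\Perf^\infty$.

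First I would verify well-definedness on objects. Since $K\ua_w$ is freely generated over $S\ld$ by the class $s$ of cohomological degree $-1$ and internal degree $d$, with $s^2=0$ (forced by $K^{-2}_w=0$) and $\differential_{K\ua_w}(s)=w$, any $K\ua_w$-module $F\ua\ld$ satisfies $s^2=0$ and $\differential s+s\differential=w\cdot\id$. The operator $D:=\differential+s$ on the fold therefore squares to $\differential^2+(\differential s+s\differential)+s^2=w\cdot\id$, and the shifts $\langle -nd\rangle$ are chosen precisely so that the $\differential$- and $s$-components of $D$ acquire a common internal degree ($d$ from $M^0$ to $M^{-1}$, and $0$ in the other direction). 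Extending $\fold$ to morphisms and homotopies is then formal from Definition \ref{def_homomorphismcomplex}: a degree-$0$ morphism of $K\ua_w$-modules commutes with $\differential$ and with $s$ (hence with $D$ after folding), while by the Koszul sign a degree-$(-1)$ morphism $h$ satisfies $sh+hs=0$, so $DH+HD=\differential h+h\differential$. Compatibility with shifts and cones is a direct matching of the explicit formulas in Remark \ref{rem_pretriag} with the standard shift and cone constructions for $K\ua_w$-modules, which makes $\fold$ a triangulated functor on $\Ho^{b,b}_{\fr}(K\ua_w)$.

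Vanishing on $\Perf^\infty$ is transparent: folding $K\ua_w$ itself yields the matrix factorization $S\ld\xrightarrow{1}S\ld\langle -d\rangle\xrightarrow{w}S\ld$ (the first map coming from the action $s\cdot 1=s$, the second from $\differential(s)=w$), whose first arrow $f$ is bijective on underlying $S\ld$-modules; such a matrix factorization is contractible via the null-homotopy $(D^0,D^{-1})=(0,f^{-1})$. The same applies to every shift and grading-shift of $K\ua_w$, and since contractible matrix factorizations form a thick subcategory of $\HMF^\infty(S\ld,w)$, the functor kills the thick subcategory generated by the free $K\ua_w$-modules.

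The main obstacle will be the last step: showing that acyclic bounded $S\ld$-free $K\ua_w$-modules fold to contractible matrix factorizations, which is what allows $\fold$ to descend through $\D^{b,b}_{\fr}(K\ua_w)$. My strategy is to prove that any bounded $S\ld$-free $K\ua_w$-module is semi-free, hence cofibrant by Fact \ref{fact_semifreecofibrant}; Theorem \ref{thm_modelstructure} then forces $\id_{F\ua\ld}$ to be null-homotopic in $\Ho(K\ua_w)$ as soon as $F\ua\ld$ is acyclic, and its fold supplies an explicit null-homotopy of the identity of the folded matrix factorization. Semi-freeness is attempted by descending induction on the top nonzero cohomological degree $n$: the $K\ua_w$-submodule generated by $F^n$ is isomorphic to $F^n[-n]\otimes_{S\ld}K\ua_w$, because the relation $\differential s=w\cdot\id_{F^n}$ (coming from $\differential F^n=0$), together with $w$ acting as a nonzerodivisor on the free $S\ld$-module $F^n$ (using Fact \ref{fact_regularisadomain} that $S\ld$ is a domain, and assuming $w\neq 0$; the case $w=0$ is treated separately), forces $s\colon F^n\to F^{n-1}$ to be injective. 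The real technical difficulty is controlling $S\ld$-freeness of the quotient $F\ua\ld/(K\ua_w\cdot F^n)$ so that the induction can proceed, which will likely require choosing a minimal generating system for $F^n$ and exploiting the graded local structure of $S\ld$.
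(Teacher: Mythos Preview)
Your overall structure matches the paper's five-step approach, and your treatment of well-definedness, functoriality on homotopies, the triangulated structure, and vanishing on $\Perf^\infty$ parallels the paper closely. The gap is in the step where you argue that acyclic bounded $S\ld$-free $K\ua_w$-modules fold to contractible matrix factorizations. Your route is to show that every bounded $S\ld$-free $K\ua_w$-module is semi-free, hence cofibrant, so that acyclic ones are $K\ua_w$-contractible. But the semi-freeness claim is false. Take $S\ld = k[x]$, $\deg(x)=1$, $w = x^2$, and $F\ua\ld$ with $F^0\ld = S\ld$, $F^{-1}\ld = S\ld\langle -1\rangle$, and both $\differential$ and $s$ acting by multiplication by $x$; this is precisely $\iota(\{x,x\})$. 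A rank count forces any semi-free module of this shape to be free of rank one, but $\coker(\differential) = S\ld/(x)$ here versus $S\ld/(x^2)$ for $K\ua_w$, so $F\ua\ld$ is not free and hence not semi-free. In fact $F\ua\ld$ is not even cofibrant: attempting to lift $\id_{F\ua\ld}$ along the Bar resolution $Q(K\ua_w,F\ua\ld)\to F\ua\ld$ forces a section of the map ``multiplication by $x$'', which does not exist. Your proposed induction cannot be salvaged by choosing minimal generators --- the image $s(F^n\ld)\subset F^{n-1}\ld$ simply need not be a direct summand.

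The paper's argument is direct and does not pass through $K\ua_w$-contractibility. The key observation is that a null-homotopy of $\id_{\fold(F\ua\ld)}$ is strictly weaker data than a $K\ua_w$-null-homotopy of $\id_{F\ua\ld}$: after unfolding, such a homotopy decomposes into a family of maps $\tilde{s}_n\colon F\ua\ld\to F^{\ast-(2n-1)}\ld$ for all $n\geq 1$, subject only to $\differential\tilde{s}_1+\tilde{s}_1\differential = \id$ and $s\tilde{s}_n+\tilde{s}_n s + \differential\tilde{s}_{n+1}+\tilde{s}_{n+1}\differential = 0$ for $n\geq 1$. One starts with $\tilde{s}_1$ any $S\ld$-linear contraction of the acyclic bounded $S\ld$-free complex $F\ua\ld$ and builds the higher $\tilde{s}_n$ inductively: at each stage $s\tilde{s}_n + \tilde{s}_n s$ is a cocycle in the (acyclic) endomorphism complex $\hom\ua_{S\ld}(F\ua\ld,F\ua\ld)$, hence a coboundary, and $\tilde{s}_{n+1}$ is taken to be a bounding chain. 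The failure of $\tilde{s}_1$ to anticommute with $s$ --- precisely the obstruction to $K\ua_w$-contractibility --- is absorbed by $\tilde{s}_2$ and its successors rather than eliminated.
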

\begin{proof}
We use the description of $\D^{b,b}_{\fr}(A\ua\ld)$ given in Proposition
\ref{prop_boundedasverdierquotient}. Thus, to make \eqref{eq:fold} into a triangulated functor
$\D^{b,b}_{\fr}(K\ua_w)/\Perf^\infty\to\HMF^\infty(S\ld,w)$, we have to work through the following steps:  
\begin{enumerate}
\item\label{item:fold1} Define $\fold$ as a functor $K\ua_w\Mod^{b,b}_\fr\to\MF^\infty(S\ld,w)$, i.e. say what
  happens to morphisms. 
\item\label{item:fold2} Check that homotopic morphisms of $K\ua_w$-modules yield homotopic morphisms of matrix
  factorizations. 
\item\label{item:fold3} Verify that the resulting functor $\Ho^{b,b}_{\fr}(K\ua_w)\to\HMF^\infty(S\ld,w)$ carries the
  structure of a triangulated functor, i.e. check that it naturally commutes with the shift and preserves exact
  triangles. 
\item\label{item:fold4} Prove that $\fold$ takes quasi-isomorphisms of $K\ua_w$-modules into homotopy equivalences of
  matrix factorizations. This will define $\fold$ as a functor $\D^{b,b}_{\fr}(K\ua_w)\to\HMF^\infty(S\ld,w)$. 
\item\label{item:fold5} Prove that $\fold$ vanishes on the subcategory $\Perf^\infty(K\ua_w)$ of perfect $K\ua_w$-modules,
  yielding the desired functor $\D^{b,b}_{\fr}(K\ua_w)/\Perf^\infty\to\HMF^\infty(S\ld,w)$.
\end{enumerate}
\eqref{item:fold1}: If $\varphi := (\varphi_n)_{n\in\ZZ}: P\ua\ld\to Q\ua\ld$ is a homomorphism of $S\ld$-free
$K\ua_w$-modules, define $\fold(\varphi)$ by
\begin{equation*}\begin{tikzpicture}[baseline=(current bounding  box.center), description/.style={fill=white,inner sep=2pt}]
    \matrix (m) [matrix of math nodes, row sep=5em,
                 column sep=4.5em, text height=1.5ex, text depth=0.25ex]
    {
       \bigoplus\limits_{n\in\ZZ} P^{2n}\ld\langle -nd\rangle & \bigoplus\limits_{n\in\ZZ}
           P^{2n-1}\ld\langle -nd\rangle & \bigoplus\limits_{n\in\ZZ} P^{2n}\ld\langle -nd\rangle\\
       \bigoplus\limits_{n\in\ZZ} P^{2n}\ld\langle -nd\rangle & \bigoplus\limits_{n\in\ZZ}
           Q^{2n-1}\ld\langle -nd\rangle & \bigoplus\limits_{n\in\ZZ} Q^{2n}\ld\langle -nd\rangle\\
    };
    \draw[->] (m-1-1) -- node[scale=0.75,description]{$\left(\varphi_{2n}\right)_{n\in\ZZ}$}   (m-2-1);
    \draw[->] (m-1-2) -- node[scale=0.75,description]{$\left(\varphi_{2n-1}\right)_{n\in\ZZ}$} (m-2-2);
    \draw[->] (m-1-3) -- node[scale=0.75,description]{$\left(\varphi_{2n}\right)_{n\in\ZZ}$}   (m-2-3);
    \draw[->] (m-1-1) -- node[scale=0.75,above] {$\differential+s$} (m-1-2);
    \draw[->] (m-1-2) -- node[scale=0.75,above] {$\differential+s$} (m-1-3);
    \draw[->] (m-2-1) -- node[scale=0.75,above] {$\differential+s$} (m-2-2);
    \draw[->] (m-2-2) -- node[scale=0.75,above] {$\differential+s$} (m-2-3);
\end{tikzpicture}\end{equation*}
This is a morphism of matrix factorizations since the $\varphi_n$ are degree-preserving and we have $\differential\varphi_n =
\varphi_{n+1}\differential$ and $s\varphi_n = \varphi_{n-1} s$ by definition of a morphism of dg-$K\ua_w$-modules.

\eqref{item:fold2}: Assume $\psi := (\psi_n)_{n\in\ZZ}$ is another morphism of $K\ua_w$-modules homotopic to
$\varphi$. Then, by definition of the homotopy relation, there exists a family of degree-preserving maps $D_n:
P\ua\ld\to P^{\ast-1}\ld$ such that $s D_n = - D_{n-1} s$ and $\differential D_n + D_{n+1} \differential = \varphi_n - \psi_n$ for all
$n\in\ZZ$. This yields a homotopy between $\fold(\varphi)$ and $\fold(\psi)$ as follows:
\begin{equation*}\begin{tikzpicture}[baseline=(current bounding  box.center), description/.style={fill=white,inner sep=2pt}]
    \matrix (m) [matrix of math nodes, row sep=5em,
                 column sep=4.5em, text height=1.5ex, text depth=0.25ex]
    {
       \bigoplus\limits_{n\in\ZZ} P^{2n}\ld\langle -nd\rangle & \bigoplus\limits_{n\in\ZZ}
           P^{2n-1}\ld\langle -nd\rangle & \bigoplus\limits_{n\in\ZZ} P^{2n}\ld\langle -nd\rangle\\
       \bigoplus\limits_{n\in\ZZ} P^{2n}\ld\langle -nd\rangle & \bigoplus\limits_{n\in\ZZ}
           Q^{2n-1}\ld\langle -nd\rangle & \bigoplus\limits_{n\in\ZZ} Q^{2n}\ld\langle -nd\rangle\\
    };
    \draw[->] ($(m-1-1.south) + (-1.5mm,0)$) -- node[scale=0.75,description]{$\varphi$} ($(m-2-1.north) + (-1.5mm,0)$);
    \draw[->] ($(m-1-1.south) + (+1.5mm,0)$) -- node[scale=0.75,description]{$\psi$}    ($(m-2-1.north) + (+1.5mm,0)$);
    \draw[->] ($(m-1-2.south) + (-1.5mm,0)$) -- node[scale=0.75,description]{$\varphi$} ($(m-2-2.north) + (-1.5mm,0)$);
    \draw[->] ($(m-1-2.south) + (+1.5mm,0)$) -- node[scale=0.75,description]{$\psi$}    ($(m-2-2.north) + (+1.5mm,0)$);
    \draw[->] ($(m-1-3.south) + (-1.5mm,0)$) -- node[scale=0.75,description]{$\varphi$} ($(m-2-3.north) + (-1.5mm,0)$);
    \draw[->] ($(m-1-3.south) + (+1.5mm,0)$) -- node[scale=0.75,description]{$\psi$}    ($(m-2-3.north) + (+1.5mm,0)$);
    \draw[->] (m-1-1) -- node[scale=0.75,above] {$\differential+s$} (m-1-2);
    \draw[->] (m-1-2) -- node[scale=0.75,above] {$\differential+s$} (m-1-3);
    \draw[->] (m-2-1) -- node[scale=0.75,below] {$\differential+s$} (m-2-2);
    \draw[->] (m-2-2) -- node[scale=0.75,below] {$\differential+s$} (m-2-3);
    \draw[dashed,->] (barycentric cs:m-1-2=0.75,m-2-1=0.25) -- node[scale=0.75,description]{$(D_{2n-1})_{n\in\ZZ}$}
    (barycentric cs:m-1-2=0.2,m-2-1=0.8);
    \draw[dashed,->] (barycentric cs:m-1-3=0.75,m-2-2=0.25) -- node[scale=0.75,description]{$(D_{2n})_{n\in\ZZ}$}
    (barycentric cs:m-1-3=0.2,m-2-2=0.8);
\end{tikzpicture}\end{equation*}
Note that the degree shifts in $\fold(P\ua\ld)$ and $\fold(Q\ua\ld)$ cause $(D_{2n})_{n\in\ZZ}$ to preserve and
$(D_{2n-1})_{n\in\ZZ}$ to raise the degree by $d$. 

\eqref{item:fold3}: For some bounded, $S\ld$-free $K\ua_w$-module $F\ua\ld$ we have the following:
\begin{align*}
\fold(F\ua\ld[1]) & = \left(\bigoplus\limits_{n\in\ZZ} F^{2n+1}\ld\langle -nd\rangle\xrightarrow{-\partial -
    s} \bigoplus\limits_{n\in\ZZ} F^{2n}\ld\langle -nd\rangle\xrightarrow{-\partial -
    s} \bigoplus\limits_{n\in\ZZ} F^{2n+1}\ld\langle -nd\rangle\right) \\
& = \left(\bigoplus\limits_{n\in\ZZ} F^{2n}\ld\langle -nd\rangle\xrightarrow{\partial +
    s} \bigoplus\limits_{n\in\ZZ} F^{2n-1}\ld\langle -nd\rangle\xrightarrow{\partial +
    s} \bigoplus\limits_{n\in\ZZ} F^{2n}\ld\langle -nd\rangle\right)[1]\\
& = \fold(F\ua\ld)[1].
\end{align*}
Note that the action of $s$ on $F\ua\ld[1]$ is the negative of the action on $F\ua\ld$, because the $K\ua_w$-module
structure on $F\ua\ld[1]$ is given by the composition $$K\ua_w\otimes_{S\ld} F\ua\ld[1]\xrightarrow{\ \ \cong\ \ }
\left(K\ua_w\otimes_{S\ld} F\ua\ld\right)[1]\xrightarrow{\quad\quad} K\ua_w[1],$$
where the first isomorphism is given by $a\otimes b\mapsto (-1)^{|a|}a\otimes b$, hence involves the required
sign. This shows that $\fold$ commutes with the shift functor. It remains to be checked that it preserves exact
triangles (see \eqref{eq:defcone} for the definition of the cone of a morphism between matrix factorizations). Given a
morphism $\varphi: P\ua\ld\to Q\ua\ld$ of bounded, $S\ld$-free $K\ua_w$-modules, we have
\begin{align*}
\fold(\cone(\varphi))^0\ld\makebox[8mm][c]{$=$} & \bigoplus\limits_{n\in\ZZ} \cone(\varphi)^{2n}\ld\langle -nd\rangle\\
\makebox[8mm][c]{$=$} & \bigoplus\limits_{n\in\ZZ} \left(Q^{2n}\ld\oplus P^{2n+1}\ld\right)\langle -nd\rangle\\
\makebox[8mm][c]{$=$} & \left(\bigoplus\limits_{n\in\ZZ} Q^{2n}\ld\langle
  -nd\rangle\right)\oplus\left(\bigoplus\limits_{n\in\ZZ} P^{2n+1}\ld\langle -(n+1)d\rangle\right)\langle d\rangle\\
\makebox[8mm][c]{$=$} & \cone(\fold(\varphi))^0\ld\\
\fold(\cone(\varphi))^{-1}\ld\makebox[8mm][c]{$=$} & \bigoplus\limits_{n\in\ZZ} \cone(\varphi)^{2n-1}\ld\langle -nd\rangle\\
\makebox[8mm][c]{$=$} & \bigoplus\limits_{n\in\ZZ} \left(Q^{2n-1}\ld\oplus P^{2n}\ld\right)\langle -nd\rangle\\
\makebox[8mm][c]{$=$} & \left(\bigoplus\limits_{n\in\ZZ} Q^{2n-1}\ld\langle
  -nd\rangle\right)\oplus\left(\bigoplus\limits_{n\in\ZZ} P^{2n}\ld\langle -nd\rangle\right)\\
\makebox[8mm][c]{$=$} & \cone(\fold(\varphi))^{-1}\ld,
\end{align*}
which shows that $\fold(\cone(\varphi))\cong\cone(\fold(\varphi))$ as $\ZZ/2\ZZ$-graded $S\ld$-modules. The proof that
this identification is compatible with the differentials on both sides is omitted.

\eqref{item:fold4}: By \eqref{item:fold3} we know that $\fold$ is a triangulated functor
$\Ho^{b,b}_\fr(K\ua_w)\to\HMF^\infty(S\ld,w)$. As the cone of a quasi-isomorphism is acyclic, the claim that
quasi-isomorphisms are mapped to  homotopy equivalences is therefore equivalent to the following: If $F\ua\ld$ is a
bounded, $S\ld$-free $K\ua_w$-module with vanishing cohomology, then $\fold(P\ua\ld)$ is contractible. This will follow
from direct calculation; it would be nice to have a more conceptual proof at hand. 

To prove that $\fold(F\ua\ld)$ is zero in $\HMF^\infty(S\ld,w)$, we have to construct a null-homotopy for the identity
on $\fold(F\ua\ld)$:  
\begin{equation*}\begin{tikzpicture}[baseline=(current bounding  box.center), description/.style={fill=white,inner sep=2pt}]
    \matrix (m) [matrix of math nodes, row sep=5em,
                 column sep=4.5em, text height=1.5ex, text depth=0.25ex]
    {
       \bigoplus\limits_{n\in\ZZ} F^{2n}\ld\langle -nd\rangle & \bigoplus\limits_{n\in\ZZ}
           F^{2n-1}\ld\langle -nd\rangle & \bigoplus\limits_{n\in\ZZ} F^{2n}\ld\langle -nd\rangle\\
       \bigoplus\limits_{n\in\ZZ} F^{2n}\ld\langle -nd\rangle & \bigoplus\limits_{n\in\ZZ}
           F^{2n-1}\ld\langle -nd\rangle & \bigoplus\limits_{n\in\ZZ} F^{2n}\ld\langle -nd\rangle\\
    };
    \draw[double distance=0.7mm] (m-1-1) -- (m-2-1.north);
    \draw[double distance=0.7mm] (m-1-2) -- (m-2-2.north);
    \draw[double distance=0.7mm] (m-1-3) -- (m-2-3.north);
    \draw[->] (m-1-1) -- node[scale=0.75,above] {$\differential+s$} (m-1-2);
    \draw[->] (m-1-2) -- node[scale=0.75,above] {$\differential+s$} (m-1-3);
    \draw[->] (m-2-1) -- node[scale=0.75,below] {$\differential+s$} (m-2-2);
    \draw[->] (m-2-2) -- node[scale=0.75,below] {$\differential+s$} (m-2-3);
    \draw[dashed,->] (barycentric cs:m-1-2=0.75,m-2-1=0.25) -- node[scale=0.75,description]{$\tilde{s}$}
    (barycentric cs:m-1-2=0.2,m-2-1=0.8);
    \draw[dashed,->] (barycentric cs:m-1-3=0.75,m-2-2=0.25) -- node[scale=0.75,description]{$\tilde{s}$}
    (barycentric cs:m-1-3=0.2,m-2-2=0.8);
\end{tikzpicture}\end{equation*}
The condition that $\tilde{s}$ is a null-homotopy for the identity means that
\begin{equation}\label{eq:hmftriv}
\id_{\fold(F\ua\ld)} \ \ = \ \ \tilde{s}\differential + \tilde{s}s + s\tilde{s} + \differential\tilde{s}.
\end{equation}
Let us pause for a moment and compare this to the datum of a contraction of the $K\ua_w$-module $F\ua\ld$ before being
folded. There, a contraction is a diagram
\begin{equation*}\begin{tikzpicture}[baseline=(current bounding  box.center), description/.style={fill=white,inner sep=2pt}]
    \matrix (m) [matrix of math nodes, row sep=4em,
                 column sep=4em, text height=1.5ex, text depth=0.25ex]
    {
       ... & F^{n-1}\ld & F^n\ld & F^{n+1}\ld & ... \\
       ... & F^{n-1}\ld & F^n\ld & F^{n+1}\ld & ... \\
    };
    \draw[double distance=0.7mm] (m-1-2) -- (m-2-2);
    \draw[double distance=0.7mm] (m-1-3) -- (m-2-3);
    \draw[double distance=0.7mm] (m-1-4) -- (m-2-4);
 
    \draw[dashed,->] (barycentric cs:m-1-3=0.75,m-2-2=0.25) -- node[scale=0.75,description]{$\tilde{s}$} (barycentric
    cs:m-1-3=0.25,m-2-2=0.75); 
    \draw[dashed,->] (barycentric cs:m-1-2=0.75,m-2-1=0.25) -- node[scale=0.75,description]{$\tilde{s}$} (barycentric
    cs:m-1-2=0.25,m-2-1=0.75); 
    \draw[dashed,->] (barycentric cs:m-1-5=0.75,m-2-4=0.25) -- node[scale=0.75,description]{$\tilde{s}$} (barycentric
    cs:m-1-5=0.25,m-2-4=0.75); 
    \draw[dashed,->] (barycentric cs:m-1-4=0.75,m-2-3=0.25) -- node[scale=0.75,description]{$\tilde{s}$} (barycentric
    cs:m-1-4=0.25,m-2-3=0.75);

    \draw[->] ($(m-1-1.east) + (0,+0.5mm)$) -- node[above,scale=0.75]{$\differential$} ($(m-1-2.west) + (0,+0.5mm)$);
    \draw[->] ($(m-1-2.west) + (0,-0.5mm)$) -- node[below,scale=0.75]{$s$}  ($(m-1-1.east) + (0,-0.5mm)$);
    \draw[->] ($(m-1-2.east) + (0,+0.5mm)$) -- node[above,scale=0.75]{$\differential$} ($(m-1-3.west) + (0,+0.5mm)$);
    \draw[->] ($(m-1-3.west) + (0,-0.5mm)$) -- node[below,scale=0.75]{$s$}  ($(m-1-2.east) + (0,-0.5mm)$);
    \draw[->] ($(m-1-3.east) + (0,+0.5mm)$) -- node[above,scale=0.75]{$\differential$} ($(m-1-4.west) + (0,+0.5mm)$);
    \draw[->] ($(m-1-4.west) + (0,-0.5mm)$) -- node[below,scale=0.75]{$s$}  ($(m-1-3.east) + (0,-0.5mm)$);
    \draw[->] ($(m-1-4.east) + (0,+0.5mm)$) -- node[above,scale=0.75]{$\differential$} ($(m-1-5.west) + (0,+0.5mm)$);
    \draw[->] ($(m-1-5.west) + (0,-0.5mm)$) -- node[below,scale=0.75]{$s$}  ($(m-1-4.east) + (0,-0.5mm)$);

    \draw[->] ($(m-2-1.east) + (0,+0.5mm)$) -- node[above,scale=0.75]{$\differential$} ($(m-2-2.west) + (0,+0.5mm)$);
    \draw[->] ($(m-2-2.west) + (0,-0.5mm)$) -- node[below,scale=0.75]{$s$}  ($(m-2-1.east) + (0,-0.5mm)$);
    \draw[->] ($(m-2-2.east) + (0,+0.5mm)$) -- node[above,scale=0.75]{$\differential$} ($(m-2-3.west) + (0,+0.5mm)$);
    \draw[->] ($(m-2-3.west) + (0,-0.5mm)$) -- node[below,scale=0.75]{$s$}  ($(m-2-2.east) + (0,-0.5mm)$);
    \draw[->] ($(m-2-3.east) + (0,+0.5mm)$) -- node[above,scale=0.75]{$\differential$} ($(m-2-4.west) + (0,+0.5mm)$);
    \draw[->] ($(m-2-4.west) + (0,-0.5mm)$) -- node[below,scale=0.75]{$s$}  ($(m-2-3.east) + (0,-0.5mm)$);
    \draw[->] ($(m-2-4.east) + (0,+0.5mm)$) -- node[above,scale=0.75]{$\differential$} ($(m-2-5.west) + (0,+0.5mm)$);
    \draw[->] ($(m-2-5.west) + (0,-0.5mm)$) -- node[below,scale=0.75]{$s$}  ($(m-2-4.east) + (0,-0.5mm)$);
\end{tikzpicture}\end{equation*}
such that
\begin{equation}\label{eq:dgmodtriv}
\differential\tilde{s} + \tilde{s}\differential = \id_{F\ua\ld}\quad\quad\text{and}\quad\quad\tilde{s} s = -s\tilde{s}.
\end{equation}
Thus, condition \eqref{eq:hmftriv} is weaker than \eqref{eq:dgmodtriv} in two respects: Firstly, in \eqref{eq:hmftriv} we
only impose a condition on the sum $\tilde{s}\differential + \tilde{s}s + s\tilde{s} + \differential\tilde{s}$, while in \eqref{eq:dgmodtriv} we impose
conditions on the summands $\tilde{s}\differential+\differential\tilde{s}$ and $s\tilde{s}+\tilde{s}s$. Secondly, in \eqref{eq:hmftriv} the map $\tilde{s}$ is
allows to consist of a whole family of maps $\tilde{s}_n: F\ua\ld\to F^{\ast-(2n-1)}\ld$ for all $n\in\ZZ$, while in
\eqref{eq:dgmodtriv} the map $\tilde{s}$ is of fixed cohomological degree $-1$. 

Back to the proof of \eqref{item:fold4}. The homotopy $\tilde{s}$ we construct will only involve $\tilde{s}_n: F\ua\ld\to
F^{\ast-(2n-1)}\ld$ for $n\geq 1$. Condition \eqref{eq:hmftriv} can then be rewritten as
\begin{equation}
\differential\tilde{s}_1 + \tilde{s}_1\differential\ =\ \id_{F\ua\ld},
\end{equation}
i.e. $\tilde{s}_1$ is a contraction of $F\ua\ld$ as a complex of $S\ld$-modules, and
\begin{equation}\label{eq:zerorel}
s\tilde{s}_n + \tilde{s}_n s + \differential\tilde{s}_{n+1} + \tilde{s}_{n+1}\differential\ =\ 0
\end{equation}
for all $n\geq 1$. We construct the maps $\tilde{s}_n$ inductively. Start by taking $\tilde{s}_1$ to be an arbitrary contraction
of $F\ua\ld$ as a complex of $S\ld$-modules; such a contraction exists since $F\ua\ld$ is $S\ld$-free, bounded above
and acyclic. Now assume that we already constructed $\tilde{s}_1,...,\tilde{s}_n$ satisfying \eqref{eq:zerorel}. Since
$F\ua\ld$ is contractible, the morphism complex $\hom_{S\ld}\ua(F\ua\ld,F\ua\ld)$ is acyclic, and so the existence of
$\tilde{s}_{n+1}$ satisfying \eqref{eq:zerorel} is equivalent the fact that $s\tilde{s}_n + \tilde{s}_n s$ is a cycle in
$\hom_{S\ld}\ua(F\ua\ld,F\ua\ld)$. Denoting by $\partial$ the differential of this complex, this follows from a direct
calculation, using $\partial(s) = w$, $\partial(\tilde{s}_n) = -(s\tilde{s}_{n-1} + \tilde{s}_{n-1} s)$ and the fact that $\partial$
satisfies the Leibniz-rule:
\begin{equation*}
\partial(s\tilde{s}_n + \tilde{s}_n s) = w\tilde{s}_n + s(s\tilde{s}_{n-1} + \tilde{s}_{n-1}s) - (\tilde{s}_{n-1} s + s\tilde{s}_{n-1} s) s -
w\tilde{s}_n = 0.
\end{equation*}
This finishes the inductive construction of the $\tilde{s}_n$ and establishes, in the whole, a contraction $\tilde{s}$ of
$\fold(F\ua\ld)$.

\eqref{item:fold5}: Finally we check that $\fold$ vanishes in perfect $K\ua_w$-modules. By definition, $\Perf(K\ua_w)$
is the smallest thick triangulated subcategory of $\D^{b,b}_{\fr}(K\ua_w)$ which containing all free
$K\ua_w$-modules. As we already know that $\fold$ is triangulated and commutes with internal degree shifts, it is
therefore sufficient to show that $\fold(K\ua_w)=0$. However, $$\fold(K\ua_w)\ \ =\ \ \left(S\ld\xrightarrow{\id}
  S\ld\langle 
  -d\rangle\xrightarrow{w}S\ld\right)$$ 
which vanishes since the dashed arrows in 
\begin{equation*}\begin{tikzpicture}[description/.style={fill=white,inner sep=2pt}]
    \matrix (m) [matrix of math nodes, row sep=3em,
                 column sep=2.5em, text height=1.5ex, text depth=0.25ex]
    {
       S\ld& S\ld\langle -d\rangle &S\ld\\
       S\ld& S\ld\langle -d\rangle &S\ld\\
    };
    \draw[double distance=0.7mm] (barycentric cs:m-1-1=0.7,m-2-1=0.3) -- (barycentric cs:m-1-1=0.3,m-2-1=0.7);
    \draw[double distance=0.7mm] (barycentric cs:m-1-2=0.7,m-2-2=0.3) -- (barycentric cs:m-1-2=0.3,m-2-2=0.7);
    \draw[double distance=0.7mm] (barycentric cs:m-1-3=0.7,m-2-3=0.3) -- (barycentric cs:m-1-3=0.3,m-2-3=0.7);
    \draw[->] (m-1-1) -- node[scale=0.75,above]{\id} (m-1-2);
    \draw[->] (m-1-2) -- node[scale=0.75,above]{w} (m-1-3);
    \draw[->] (m-2-1) -- node[scale=0.75,below]{\id} (m-2-2);
    \draw[->] (m-2-2) -- node[scale=0.75,below]{w} (m-2-3);
    \draw[->,dashed] (barycentric cs:m-1-2=0.8,m-2-1=0.2) -- node[scale=0.75,description]{$\id$} (barycentric
    cs:m-1-2=0.2,m-2-1=0.8); 
    \draw[->,dashed] (barycentric cs:m-1-3=0.8,m-2-2=0.2) -- node[scale=0.75,description]{$0$} (barycentric
    cs:m-1-3=0.2,m-2-2=0.8); 
\end{tikzpicture}\end{equation*}
constitute a nullhomotopy for the identity on $\left(S\ld\xrightarrow{\id} S\ld\langle
  -d\rangle\xrightarrow{w}S\ld\right)$. 
\end{proof}

\begin{rem}
Note that the proof of Proposition \ref{prop_fold} also works for bounded below, $S\ld$-free $K\ua_w$-modules if we replace
infinite sums by infinite products. However, this forces us to consider matrix factorizations with non-free entries
(these are called \textit{duplexes} in \cite{KR1}), and the author doesn't know how to think about them. 
\end{rem}

Now we go in the other direction:
\begin{prop}\label{prop_iota}
Let $S\ld$ be a regular local graded ring and $w\in S\ld$ be homogeneous of degree $d$. Then the assignment
\begin{align}\label{eq:iota}
\begin{tikzpicture}[description/.style={fill=white,inner sep=2pt}, baseline=(m-1-1.base)]
    \matrix (m) [matrix of math nodes, row sep=3em,
                 column sep=1em, text height=1.5ex, text depth=0.25ex,
                 inner sep=0pt, nodes={inner xsep=0.3333em, inner ysep=0.3333em}]
    {
       M^0\ld \pgfmatrixnextcell\pgfmatrixnextcell M^{-1}\ld \pgfmatrixnextcell\pgfmatrixnextcell M^0\ld
       \pgfmatrixnextcell\longmapsto \pgfmatrixnextcell ... \pgfmatrixnextcell 0 \pgfmatrixnextcell M^{-1}\ld
       \pgfmatrixnextcell\pgfmatrixnextcell M^0\ld \pgfmatrixnextcell 0 
       \pgfmatrixnextcell ...\\ 
    };
    \draw[->] ($(m-1-9.east) + (0,0.5mm)$) -- node[scale=0.75,above]{$g$} ($(m-1-11.west) + (0,0.5mm)$);
    \draw[->] ($(m-1-11.west) - (0,0.5mm)$) -- node[scale=0.75,below]{$f$} ($(m-1-9.east) - (0,0.5mm)$);    
    \draw[->] (m-1-1) -- node[above,scale=0.75]{$f$} (m-1-3);
    \draw[->] (m-1-3) -- node[above,scale=0.75]{$g$} (m-1-5);
    \draw[->] (m-1-7) -- (m-1-8);
    \draw[->] (m-1-8) -- (m-1-9);
    \draw[->] (m-1-11) -- (m-1-12);
    \draw[->] (m-1-12) -- (m-1-13);
\end{tikzpicture}\end{align}
(with $M^0\ld$ concentrated in cohomological degree $0$) induces a functor
$$\iota: \HMF^{\infty}(S\ld,w)\ \ \longrightarrow\ \ \D^{b,b}_{\fr}(K\ua_w)/\perf^\infty$$
which is right inverse to $\fold$.
\end{prop}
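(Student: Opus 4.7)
The plan is to construct $\iota$ strictly at the level of dg-categories and then verify it descends correctly to the displayed quotient categories. On an object $M = (M^0 \xrightarrow{f} M^{-1} \xrightarrow{g} M^0)$, I would equip the two-term complex $(M^{-1} \xrightarrow{g} M^0)$ (with $M^0$ in cohomological degree $0$) with the $K\ua_w$-module structure in which the generator $s$ acts as $f$ on $M^0$ and trivially on $M^{-1}$, its target $M^{-2}$ being zero. By Fact~\ref{fact_higherhomotopiesasmodulestructure} this reduces to checking $\partial s + s\partial = w$, which holds because $gf = fg = w$, together with $s^2 = 0$, which is automatic. On morphisms, $(\alpha,\beta)$ is sent to the induced chain map; the $s$-equivariance $s\alpha = \beta s$ on $M^0$ is precisely the left-square commutativity $f'\alpha = \beta f$ of Definition~\ref{def_gradedmf}. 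The right-inverse property $\fold\circ\iota = \id_{\HMF^\infty(S\ld,w)}$ follows by direct inspection of formula \eqref{eq:fold}: for the two-term module $\iota(M)$, only the index $n=0$ contributes to each summand, so $\fold(\iota(M))^0 = M^0$ and $\fold(\iota(M))^{-1} = M^{-1}$, with the differentials $\partial+s$ reducing exactly to $g$ and $f$; the recovery is on the nose, as is the corresponding statement on morphisms.

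The main obstacle is verifying that $\iota$ sends nullhomotopic morphisms to morphisms that vanish in $\D^{b,b}_{\fr}(K\ua_w)/\perf^\infty$. Given a matrix factorization homotopy $(D^0, D^{-1})$ of $(\alpha,\beta)$, the identities $\alpha = g'D^0 + D^{-1}f$ and $\beta = f'D^{-1} + D^0 g$ split $\iota(\alpha,\beta) = \iota(g'D^0, D^0 g) + \iota(D^{-1}f, f'D^{-1})$ as a sum of honest dg-morphisms. The first summand is null-homotopic as a $K\ua_w$-morphism with $\tilde D := D^0$ (extended by zero on $M^{-1}$): the $s$-anticommutation $s\tilde D + \tilde D s = 0$ is automatic in the two-term range, and $\partial\tilde D + \tilde D\partial$ reproduces $(g'D^0, D^0 g)$. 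For the second summand I would factor it as
\begin{equation*}
\iota(M)\ \xrightarrow{\ \psi_1\ }\ K\ua_w \otimes_{S\ld} M^{-1}\langle d\rangle\ \xrightarrow{\ \psi_2\ }\ \iota(N),
\end{equation*}
where $\psi_1$ is $f$ in cohomological degree $0$ and $\id_{M^{-1}}$ in degree $-1$, and $\psi_2$ is $D^{-1}$ in degree $0$ and $f'D^{-1}$ in degree $-1$. A short check of the chain-map and $s$-equivariance conditions, using $gf = fg = w$ and $g'f' = f'g' = w$, shows that $\psi_1, \psi_2$ are genuine $K\ua_w$-morphisms whose composition equals $\iota(D^{-1}f, f'D^{-1})$. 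Since the middle module is free over $K\ua_w$, hence perfect, this summand vanishes in the Verdier quotient, giving $\iota(\alpha,\beta) = 0$ there.

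The hardest aspect is the careful matching of internal degrees in the factorization through the perfect module: the shift $\langle d\rangle$ is forced by the requirement that $f$ (of internal degree $d$) and $D^{-1}$ (of internal degree $-d$) appear as degree-preserving maps into and out of the appropriate components of $K\ua_w \otimes_{S\ld} M^{-1}\langle d\rangle$. Once this bookkeeping is in place, $\iota$ descends to a well-defined functor on the Verdier quotient, and combined with the strict identity $\fold\circ\iota = \id$ on objects and morphisms established above, the proposition follows.
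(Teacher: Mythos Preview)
Your proof is correct and actually more direct than the paper's. Both arguments define $\iota$ on objects and morphisms in the obvious way and verify $\fold\circ\iota=\id$ by inspection; the divergence is in how a nullhomotopic $(\alpha,\beta)$ is shown to die in the quotient. The paper passes to the Bar resolution $Q(K\ua_w,\iota(M\ld))$ (hence to the unbounded category $\D^{-,b}_{\fr}(K\ua_w)/\perf^\infty$), explicitly writes out the lifted morphism on this semi-free model, and then produces a $K\ua_w$-homotopy exhibiting the difference as a map factoring through $K\ua_w\otimes_{S\ld}M^0\ld$. You instead stay entirely inside the two-term complexes and observe that the homotopy relations force the decomposition $\iota(\alpha,\beta)=\iota(g'D^0,D^0g)+\iota(D^{-1}f,f'D^{-1})$ into two genuine $K\ua_w$-morphisms, the first of which is already a $K\ua_w$-coboundary (with homotopy $D^0$, all $s$-compatibilities vanishing for degree reasons) and the second of which factors through the free module $K\ua_w\otimes_{S\ld}M^{-1}\ld\langle d\rangle$. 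Your route avoids the Bar machinery and the excursion through unbounded resolutions, at the cost of being slightly ad hoc; the paper's route is heavier but sits naturally in the Bar-resolution framework used systematically in the appendix (e.g.\ in the proof of Theorem~\ref{thm_bigtheorem}).
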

\begin{rem}
We do not claim here that $\iota$ is triangulated. However, we will see later in Theorem \ref{thm_bigtheorem} that
$\iota$ and $\fold$ are actually mutually inverse equivalences of categories. Since $\fold$ \textit{is} triangulated,
this gives the triangulated structure on $\iota$ for free.
\end{rem}

\begin{proof}[of Proposition \ref{prop_iota}]
Similar to the proof of Proposition \ref{prop_fold} we will proceed along the following steps:
\begin{enumerate}
\item\label{item:iota1} Extend \eqref{eq:iota} to a functor $\MF^\infty(S\ld,w)\to K\ua_w\mod^{b,b}_{\fr}$, i.e. say
  what happens to morphisms. 
\item\label{item:iota2} Check that homotopic morphisms of matrix factorizations give rise to equal morphisms in the
  stabilized derived category $\D^{b,b}_{\fr}(K\ua_w)/\perf^\infty$. This yields a functor $\iota:
  \HMF^\infty(S\ld,w)\to\D^{b,b}_{\fr}(K\ua_w)/\perf^\infty$.
\end{enumerate}
Once this is done, the claim follows because $\fold\circ\iota$ equals the identity on $\HMF^\infty(S\ld,w)$. 

\eqref{item:iota1}: Given a morphism 
\begin{equation*}\begin{tikzpicture}[description/.style={fill=white,inner sep=2pt}]
    \matrix (m) [matrix of math nodes, row sep=3em,
                 column sep=2.5em, text height=1.5ex, text depth=0.25ex,
                 inner sep=0pt, nodes={inner xsep=0.3333em, inner ysep=0.3333em}]
    {
       M\ld && M^0\ld & M^{-1}\ld & M^0\ld\\ N\ld && N^0\ld & N^{-1}\ld & N^0\ld\\
    };
    \draw[->] (m-1-1) -- node[scale=0.75,description]{$(\alpha,\beta)$} (m-2-1);

    \draw[->] (m-1-3) -- node[scale=0.75,above]{$f$}   (m-1-4);
    \draw[->] (m-2-3) -- node[scale=0.75,below]{$f\p$} (m-2-4);
    \draw[->] (m-1-4) -- node[scale=0.75,above]{$g$}   (m-1-5);
    \draw[->] (m-2-4) -- node[scale=0.75,below]{$g\p$} (m-2-5);
    
    \draw[->] (m-1-3) -- node[scale=0.75,description]{$\alpha$} (m-2-3);
    \draw[->] (m-1-4) -- node[scale=0.75,description]{$\beta$}  (m-2-4);
    \draw[->] (m-1-5) -- node[scale=0.75,description]{$\alpha$} (m-2-5);
\end{tikzpicture}\end{equation*}
of graded matrix factorizations, we define $\iota(\alpha,\beta)$ to be
\begin{equation*}\begin{tikzpicture}[description/.style={fill=white,inner sep=2pt}]
    \matrix (m) [matrix of math nodes, row sep=3em,
                 column sep=2.5em, text height=1.5ex, text depth=0.25ex,
                 inner sep=0pt, nodes={inner xsep=0.3333em, inner ysep=0.3333em}]
    {
       ... & 0 & M^{-1}\ld & M^0\ld & 0 & ...\\
       ... & 0 & N^{-1}\ld & N^0\ld & 0 & ...\\
    };
    \draw[->] ($(m-1-3.east) + (0,0.5mm)$) -- node[scale=0.75,above]{$g$} ($(m-1-4.west) + (0,0.5mm)$);
    \draw[->] ($(m-1-4.west) - (0,0.5mm)$) -- node[scale=0.75,below]{$f$} ($(m-1-3.east) - (0,0.5mm)$);
    \draw[->] ($(m-2-3.east) + (0,0.5mm)$) -- node[scale=0.75,above]{$g\p$} ($(m-2-4.west) + (0,0.5mm)$);
    \draw[->] ($(m-2-4.west) - (0,0.5mm)$) -- node[scale=0.75,below]{$f\p$} ($(m-2-3.east) - (0,0.5mm)$);

    \draw[->] (m-1-1) -- (m-1-2);
    \draw[->] (m-1-2) -- (m-1-3);
    \draw[->] (m-1-4) -- (m-1-5);
    \draw[->] (m-1-5) -- (m-1-6);

    \draw[->] (m-2-1) -- (m-2-2);
    \draw[->] (m-2-2) -- (m-2-3);
    \draw[->] (m-2-4) -- (m-2-5);
    \draw[->] (m-2-5) -- (m-2-6);

    \draw[->] (m-1-2) -- (m-2-2);
    \draw[->] (m-1-3) -- node[scale=0.75,description]{$\alpha$} (m-2-3);
    \draw[->] (m-1-4) -- node[scale=0.75,description]{$\beta$}  (m-2-4);
    \draw[->] (m-1-5) -- (m-2-5);
\end{tikzpicture}\end{equation*}
It is clear that this extends \eqref{eq:iota} to a functor $\MF^\infty(S\ld,w)\to K\ua_w\Mod^{b,b}_\fr$. 

\eqref{item:iota2}: Assume we have two morphisms $(\alpha,\beta)$ and $(\gamma,\delta)$ of graded matrix
factorizations, and suppose they are homotopic through a homotopy $D = (D^0,D^1)$: 
\begin{equation}\label{eq:iota2mfhtp}\begin{tikzpicture}[description/.style={fill=white,inner sep=2pt},
    baseline=(current bounding box.center)]
    \matrix (m) [matrix of math nodes, row sep=3em,
                 column sep=2.5em, text height=1.5ex, text depth=0.25ex,
                 inner sep=0pt, nodes={inner xsep=0.3333em, inner ysep=0.3333em}]
    {
       M^0\ld & M^{-1}\ld & M^0\ld\\ N^0\ld & N^{-1}\ld & N^0\ld\\
    };
    \draw[->] (m-1-1) -- node[scale=0.75,above]{$f$}   (m-1-2);
    \draw[->] (m-2-1) -- node[scale=0.75,below]{$f\p$} (m-2-2);
    \draw[->] (m-1-2) -- node[scale=0.75,above]{$g$}   (m-1-3);
    \draw[->] (m-2-2) -- node[scale=0.75,below]{$g\p$} (m-2-3);
 
    \draw[->] ($(m-1-1.south) + (-1mm,0)$) -- node[scale=0.75,left]{$\alpha$} ($(m-2-1.north) + (-1mm,0)$);
    \draw[->] ($(m-1-2.south) + (-1mm,0)$) -- node[scale=0.75,left]{$\beta$}  ($(m-2-2.north) + (-1mm,0)$);
    \draw[->] ($(m-1-3.south) + (-1mm,0)$) -- node[scale=0.75,left]{$\alpha$} ($(m-2-3.north) + (-1mm,0)$);

    \draw[->] ($(m-1-1.south) + (+1mm,0)$) -- node[scale=0.75,right]{$\gamma$} ($(m-2-1.north) + (+1mm,0)$);
    \draw[->] ($(m-1-2.south) + (+1mm,0)$) -- node[scale=0.75,right]{$\delta$} ($(m-2-2.north) + (+1mm,0)$);
    \draw[->] ($(m-1-3.south) + (+1mm,0)$) -- node[scale=0.75,right]{$\gamma$} ($(m-2-3.north) + (+1mm,0)$);

    \draw[dashed,->] (barycentric cs:m-1-2=0.75,m-2-1=0.25) -- node[scale=0.75,description]{$D^1$}
    (barycentric cs:m-1-2=0.2,m-2-1=0.8);
    \draw[dashed,->] (barycentric cs:m-1-3=0.75,m-2-2=0.25) -- node[scale=0.75,description]{$D^0$}
    (barycentric cs:m-1-3=0.2,m-2-2=0.8);
\end{tikzpicture}\end{equation}
We have to show that $\iota(\alpha,\beta)$ and $\iota(\gamma,\delta)$ are equal in
$\D^{b,b}_{\fr}(K\ua_w)/\perf^\infty$. As $$\D^{b,b}_{\fr}(K\ua_w)/\perf^\infty\ \stackrel{\cong}{\longrightarrow}\
\D^{-,b}_{\fr}(K\ua_w)/\perf^\infty$$ is an equivalence (Proposition \ref{prop_freetruncation}), it suffices to prove
that $\iota(\alpha,\beta)=\iota(\gamma,\delta)$ in $\D^{-,b}_{\fr}(K\ua_w)/\perf^\infty$. For this we will show that the
difference of the two upper horizontal maps in the $\D^{-,b}_{\fr}(K\ua_w)$-diagram 
\begin{equation*}\begin{tikzpicture}[description/.style={fill=white,inner sep=2pt}]
    \matrix (m) [matrix of math nodes, row sep=3em,
                 column sep=4em, text height=1.5ex, text depth=0.25ex,
                 inner sep=0pt, nodes={inner xsep=0.3333em, inner ysep=0.3333em}]
    {
      Q(K\ua_w,\iota(M\ld)) && Q(K\ua_w,\iota(N\ld)) \\
      \iota(M\ld) && \iota(N\ld)\\
    };

    \draw[->] ($(m-1-1.east) + (0,0.5mm)$) -- node[scale=0.75,above]{$Q(K\ua_w,\iota(\alpha,\beta))$} ($(m-1-3.west) +
    (0,0.5mm)$);     
    \draw[->] ($(m-1-1.east) - (0,0.5mm)$) -- node[scale=0.75,below]{$Q(K\ua_w,\iota(\gamma,\delta))$} ($(m-1-3.west) -
    (0,0.5mm)$);     

    \draw[->] (m-1-1) -- node[scale=0.75,description]{qis} (m-2-1);
    \draw[->] (m-1-3) -- node[scale=0.75,description]{qis} (m-2-3);

    \draw[->] ($(m-2-1.east) + (0,0.5mm)$) -- node[scale=0.75,above]{$\iota(\alpha,\beta)$} ($(m-2-3.west) +
    (0,0.5mm)$);     
    \draw[->] ($(m-2-1.east) - (0,0.5mm)$) -- node[scale=0.75,below]{$\iota(\gamma,\delta)$} ($(m-2-3.west) -
    (0,0.5mm)$);     
\end{tikzpicture}\end{equation*}
is homotopic, as a morphism of $K\ua_w$-modules, to a map factoring through a perfect $K\ua_w$-module. Since the vertical
maps are isomorphisms in $\D^{-,b}_{\fr}(K\ua_w)$, and since morphisms of $K\ua_w$-modules factoring through a perfect
module up to homotopy are zero in $\D^{-,b}_{\fr}(K\ua_w)/\perf^\infty$, this will prove that
$\iota(\alpha,\beta)=\iota(\gamma,\delta)$ in $\D^{-,b}(K\ua_w)/\perf^\infty$ as claimed.

The Bar resolution of $\iota(M\ld)$ is explicitly given as follows, see \ref{prop_barreskoszul}; to save space, we do
not keep track of internal gradings for the rest of the proof.
\begin{equation*}\begin{tikzpicture}[description/.style={fill=white,inner sep=2pt}]
    \matrix (m) [matrix of math nodes, row sep=3em,
                 column sep=4em, text height=1.5ex, text depth=0.25ex,
                 inner sep=0pt, nodes={inner xsep=0.3333em, inner ysep=0.3333em}]
    {
       ... & M^{-1}\ld\oplus M^0\ld & M^0\ld\oplus M^{-1}\ld & M^{-1}\ld\oplus M^0\ld & M^0\ld\\
    };
    \draw[->] ($(m-1-1.east) + (0,0.5mm)$) -- node[scale=0.5,above]{$\begin{pmatrix}f & w \\ -\id_{M^0\ld} &
        -g\end{pmatrix}$} ($(m-1-2.west) + (0,0.5mm)$);     
    \draw[->] ($(m-1-2.west) - (0,0.5mm)$) -- node[scale=0.5,below]{$\begin{pmatrix}0 & 0 \\ \id_{M^{-1}\ld} &
        0\end{pmatrix}$}($(m-1-1.east) - (0,0.5mm)$);  
    \draw[->] ($(m-1-2.east) + (0,0.5mm)$) --  node[scale=0.5,above]{$\begin{pmatrix}g & w \\ -\id_{M^{-1}\ld} &
        -f\end{pmatrix}$} ($(m-1-3.west) + (0,0.5mm)$);     
    \draw[->] ($(m-1-3.west) - (0,0.5mm)$) --  node[scale=0.5,below]{$\begin{pmatrix} 0 & 0 \\ \id_{M^0\ld} &
        0\end{pmatrix}$} ($(m-1-2.east) - (0,0.5mm)$);  
    \draw[->] ($(m-1-3.east) + (0,0.5mm)$) -- node[scale=0.5,above]{$\begin{pmatrix}f & w \\ -\id_{M^0\ld} &
        -g\end{pmatrix}$} ($(m-1-4.west) + (0,0.5mm)$);     
    \draw[->] ($(m-1-4.west) - (0,0.5mm)$) -- node[scale=0.5,below]{$\begin{pmatrix}0 & 0 \\ \id_{M^{-1}\ld} &
        0\end{pmatrix}$}($(m-1-3.east) - (0,0.5mm)$);   
    \draw[->] ($(m-1-4.east) + (0,0.5mm)$) -- node[scale=0.5,above]{$\begin{pmatrix}g & w\end{pmatrix}$}($(m-1-5.west)
    + (0,0.5mm)$);      
    \draw[->] ($(m-1-5.west) - (0,0.5mm)$) -- node[scale=0.5,below]{$\begin{pmatrix}0 \\ \id_{M^0\ld}\end{pmatrix}$}
    ($(m-1-4.east) - (0,0.5mm)$); 
\end{tikzpicture}\end{equation*}
The homotopy \eqref{eq:iota2mfhtp} of graded matrix factorizations yields the following homotopy of morphisms of
$K\ua_w$-modules: 
\begin{equation*}\begin{tikzpicture}[description/.style={fill=white,inner sep=2pt}]
    \matrix (m) [matrix of math nodes, row sep=6em,
                 column sep=2.3em, text height=1.5ex, text depth=0.25ex,
                 inner sep=0pt, nodes={inner xsep=0.3333em, inner ysep=0.3333em}]
    {
       ... & M^{-1}\ld\oplus M^0\ld && M^0\ld\oplus M^{-1}\ld && M^{-1}\ld\oplus M^0\ld && M^0\ld & 0 \\
       ... & M^{-1}\ld\oplus M^0\ld && M^0\ld\oplus M^{-1}\ld && M^{-1}\ld\oplus M^0\ld && M^0\ld & 0 \\
    };
    \draw[->] ($(m-1-1.east) + (0,0.5mm)$) -- ($(m-1-2.west) + (0,0.5mm)$);     
    \draw[->] ($(m-1-2.west) - (0,0.5mm)$) -- ($(m-1-1.east) - (0,0.5mm)$);  

    \draw[->] ($(m-1-8.east) + (0,0.5mm)$) -- ($(m-1-9.west) + (0,0.5mm)$);     
    \draw[->] ($(m-1-9.west) - (0,0.5mm)$) -- ($(m-1-8.east) - (0,0.5mm)$);  

    \draw[->] ($(m-1-2.east) + (0,0.5mm)$) --  node[scale=0.5,above]{$\begin{pmatrix}g & w \\ -\id_{M^{-1}\ld} &
        -f\end{pmatrix}$} ($(m-1-4.west) + (0,0.5mm)$);     
    \draw[->] ($(m-1-4.west) - (0,0.5mm)$) --  node[scale=0.5,below]{$\begin{pmatrix} 0 & 0 \\ \id_{M^0\ld} &
        0\end{pmatrix}$} ($(m-1-2.east) - (0,0.5mm)$);  
    \draw[->] ($(m-1-4.east) + (0,0.5mm)$) -- node[scale=0.5,above]{$\begin{pmatrix}f & w \\ -\id_{M^0\ld} &
        -g\end{pmatrix}$} ($(m-1-6.west) + (0,0.5mm)$);     
    \draw[->] ($(m-1-6.west) - (0,0.5mm)$) -- node[scale=0.5,below]{$\begin{pmatrix}0 & 0 \\ \id_{M^{-1}\ld} &
        0\end{pmatrix}$}($(m-1-4.east) - (0,0.5mm)$);   
    \draw[->] ($(m-1-6.east) + (0,0.5mm)$) -- node[scale=0.5,above]{$\begin{pmatrix}g & w\end{pmatrix}$}($(m-1-8.west)
    + (0,0.5mm)$);      
    \draw[->] ($(m-1-8.west) - (0,0.5mm)$) -- node[scale=0.5,below]{$\begin{pmatrix}0 \\ \id_{M^0\ld}\end{pmatrix}$}
    ($(m-1-6.east) - (0,0.5mm)$); 

    \draw[->] ($(m-2-1.east) + (0,0.6mm)$) -- ($(m-2-2.west) + (0,0.6mm)$);     
    \draw[->] ($(m-2-2.west) - (0,0.6mm)$) -- ($(m-2-1.east) - (0,0.6mm)$);  

    \draw[->] ($(m-2-8.east) + (0,0.6mm)$) -- ($(m-2-9.west) + (0,0.6mm)$);     
    \draw[->] ($(m-2-9.west) - (0,0.6mm)$) -- ($(m-2-8.east) - (0,0.6mm)$);  

    \draw[->] ($(m-2-2.east) + (0,0.6mm)$) --  node[scale=0.6,above]{$\begin{pmatrix}g & w \\ -\id_{M^{-1}\ld} &
        -f\end{pmatrix}$} ($(m-2-4.west) + (0,0.6mm)$);     
    \draw[->] ($(m-2-4.west) - (0,0.6mm)$) --  node[scale=0.6,below]{$\begin{pmatrix} 0 & 0 \\ \id_{M^0\ld} &
        0\end{pmatrix}$} ($(m-2-2.east) - (0,0.6mm)$);  
    \draw[->] ($(m-2-4.east) + (0,0.6mm)$) -- node[scale=0.6,above]{$\begin{pmatrix}f & w \\ -\id_{M^0\ld} &
        -g\end{pmatrix}$} ($(m-2-6.west) + (0,0.6mm)$);     
    \draw[->] ($(m-2-6.west) - (0,0.6mm)$) -- node[scale=0.6,below]{$\begin{pmatrix}0 & 0 \\ \id_{M^{-1}\ld} &
        0\end{pmatrix}$}($(m-2-4.east) - (0,0.6mm)$);   
    \draw[->] ($(m-2-6.east) + (0,0.6mm)$) -- node[scale=0.6,above]{$\begin{pmatrix}g & w\end{pmatrix}$}($(m-2-8.west)
    + (0,0.6mm)$);      
    \draw[->] ($(m-2-8.west) - (0,0.6mm)$) -- node[scale=0.6,below]{$\begin{pmatrix}0 \\ \id_{M^0\ld}\end{pmatrix}$}
    ($(m-2-6.east) - (0,0.6mm)$);

    \draw[->] (barycentric cs:m-1-4=0.75,m-2-2=0.25) -- node[scale=0.6,description]{$\begin{pmatrix} D^0 & 0 \\ 0 &
        D^1\end{pmatrix}$}(barycentric cs:m-1-4=0.25,m-2-2=0.75);
    \draw[->] (barycentric cs:m-1-6=0.75,m-2-4=0.25) -- node[scale=0.6,description]{$\begin{pmatrix} D^0 & 0 \\ 0 &
        D^1\end{pmatrix}$}(barycentric cs:m-1-6=0.25,m-2-4=0.75); 
    \draw[->] (barycentric cs:m-1-8=0.75,m-2-6=0.25) -- node[scale=0.6,description]{$\begin{pmatrix} D^0 \\
        0\end{pmatrix}$}(barycentric cs:m-1-8=0.25,m-2-6=0.75);  

    \draw[->] (m-1-2) -- node[scale=0.6,description]{$\begin{pmatrix}\beta-\delta & 0 \\ 0 &
        \alpha-\gamma\end{pmatrix}$} (m-2-2); 
    \draw[->] (m-1-4) -- node[scale=0.6,description]{$\begin{pmatrix}\alpha-\gamma & 0 \\ 0 &
        \beta-\delta\end{pmatrix}$} (m-2-4);
    \draw[->] (m-1-6) -- node[scale=0.6,description]{$\begin{pmatrix}\beta-\delta & 0 \\ -D^1 &
        \alpha-\gamma - D^1\circ f\end{pmatrix}$} (m-2-6); 
    \draw[->] (m-1-8) -- node[scale=0.6,description]{$\alpha-\beta-D^1\circ f$} (m-2-8);
\end{tikzpicture}\end{equation*}
Therefore, $Q(K\ua_w,\iota(\alpha,\beta))-Q(K\ua_w,\iota(\gamma,\delta))$ is homotopic to 
\begin{equation*}\begin{tikzpicture}[description/.style={fill=white,inner sep=2pt}]
    \matrix (m) [matrix of math nodes, row sep=6em,
                 column sep=2.2em, text height=1.5ex, text depth=0.25ex,
                 inner sep=0pt, nodes={inner xsep=0.3333em, inner ysep=0.3333em}]
    {
       ... & M^{-1}\ld\oplus M^0\ld && M^0\ld\oplus M^{-1}\ld && M^{-1}\ld\oplus M^0\ld && M^0\ld & 0 \\
       ... & M^{-1}\ld\oplus M^0\ld && M^0\ld\oplus M^{-1}\ld && M^{-1}\ld\oplus M^0\ld && M^0\ld & 0 \\
    }; 
    \draw[->] ($(m-1-1.east) + (0,0.5mm)$) -- ($(m-1-2.west) + (0,0.5mm)$);     
    \draw[->] ($(m-1-2.west) - (0,0.5mm)$) -- ($(m-1-1.east) - (0,0.5mm)$);  

    \draw[->] ($(m-1-8.east) + (0,0.5mm)$) -- ($(m-1-9.west) + (0,0.5mm)$);     
    \draw[->] ($(m-1-9.west) - (0,0.5mm)$) -- ($(m-1-8.east) - (0,0.5mm)$);  

    \draw[->] ($(m-1-2.east) + (0,0.5mm)$) --  node[scale=0.5,above]{$\begin{pmatrix}g & w \\ -\id_{M^{-1}\ld} &
        -f\end{pmatrix}$} ($(m-1-4.west) + (0,0.5mm)$);     
    \draw[->] ($(m-1-4.west) - (0,0.5mm)$) --  node[scale=0.5,below]{$\begin{pmatrix} 0 & 0 \\ \id_{M^0\ld} &
        0\end{pmatrix}$} ($(m-1-2.east) - (0,0.5mm)$);  
    \draw[->] ($(m-1-4.east) + (0,0.5mm)$) -- node[scale=0.5,above]{$\begin{pmatrix}f & w \\ -\id_{M^0\ld} &
        -g\end{pmatrix}$} ($(m-1-6.west) + (0,0.5mm)$);     
    \draw[->] ($(m-1-6.west) - (0,0.5mm)$) -- node[scale=0.5,below]{$\begin{pmatrix}0 & 0 \\ \id_{M^{-1}\ld} &
        0\end{pmatrix}$}($(m-1-4.east) - (0,0.5mm)$);   
    \draw[->] ($(m-1-6.east) + (0,0.5mm)$) -- node[scale=0.5,above]{$\begin{pmatrix}g & w\end{pmatrix}$}($(m-1-8.west)
    + (0,0.5mm)$);      
    \draw[->] ($(m-1-8.west) - (0,0.5mm)$) -- node[scale=0.5,below]{$\begin{pmatrix}0 \\ \id_{M^0\ld}\end{pmatrix}$}
    ($(m-1-6.east) - (0,0.5mm)$); 
 
    \draw[->] ($(m-2-1.east) + (0,0.6mm)$) -- ($(m-2-2.west) + (0,0.6mm)$);     
    \draw[->] ($(m-2-2.west) - (0,0.6mm)$) -- ($(m-2-1.east) - (0,0.6mm)$);  

    \draw[->] ($(m-2-8.east) + (0,0.6mm)$) -- ($(m-2-9.west) + (0,0.6mm)$);     
    \draw[->] ($(m-2-9.west) - (0,0.6mm)$) -- ($(m-2-8.east) - (0,0.6mm)$);  

    \draw[->] ($(m-2-2.east) + (0,0.6mm)$) --  node[scale=0.6,above]{$\begin{pmatrix}g & w \\ -\id_{M^{-1}\ld} &
        -f\end{pmatrix}$} ($(m-2-4.west) + (0,0.6mm)$);     
    \draw[->] ($(m-2-4.west) - (0,0.6mm)$) --  node[scale=0.6,below]{$\begin{pmatrix} 0 & 0 \\ \id_{M^0\ld} &
        0\end{pmatrix}$} ($(m-2-2.east) - (0,0.6mm)$);  
    \draw[->] ($(m-2-4.east) + (0,0.6mm)$) -- node[scale=0.6,above]{$\begin{pmatrix}f & w \\ -\id_{M^0\ld} &
        -g\end{pmatrix}$} ($(m-2-6.west) + (0,0.6mm)$);     
    \draw[->] ($(m-2-6.west) - (0,0.6mm)$) -- node[scale=0.6,below]{$\begin{pmatrix}0 & 0 \\ \id_{M^{-1}\ld} &
        0\end{pmatrix}$}($(m-2-4.east) - (0,0.6mm)$);   
    \draw[->] ($(m-2-6.east) + (0,0.6mm)$) -- node[scale=0.6,above]{$\begin{pmatrix}g & w\end{pmatrix}$}($(m-2-8.west)
    + (0,0.6mm)$);      
    \draw[->] ($(m-2-8.west) - (0,0.6mm)$) -- node[scale=0.6,below]{$\begin{pmatrix}0 \\ \id_{M^0\ld}\end{pmatrix}$}
    ($(m-2-6.east) - (0,0.6mm)$);

    \draw[->] (m-1-2) -- node[scale=0.6,description]{$0$} (m-2-2); 
    \draw[->] (m-1-4) -- node[scale=0.6,description]{$0$} (m-2-4);
    \draw[->] (m-1-6) -- node[scale=0.6,description]{$\begin{pmatrix}0 & 0 \\ D^1 &
        D^1\circ f\end{pmatrix}$} (m-2-6); 
    \draw[->] (m-1-8) -- node[scale=0.6,description]{$D^1\circ f$} (m-2-8);
\end{tikzpicture}\end{equation*}
which factors through the perfect $K\ua_w$-module
\begin{equation*}\begin{tikzpicture}[description/.style={fill=white,inner sep=2pt}]
    \matrix (m) [matrix of math nodes, row sep=3em,
                 column sep=2.1em, text height=1.5ex, text depth=0.25ex,
                 inner sep=0pt, nodes={inner xsep=0.3333em, inner ysep=0.3333em}]
    {
       ... & 0 & M^0\ld & M^0\ld & 0 &  ... & \cong &K\ua_w\otimes_{S\ld} M^0\ld.\\
    };
    \draw[->] ($(m-1-1.east) + (0,0.6mm)$) -- ($(m-1-2.west) + (0,0.6mm)$);     
    \draw[->] ($(m-1-2.west) - (0,0.6mm)$) -- ($(m-1-1.east) - (0,0.6mm)$); 

    \draw[->] ($(m-1-2.east) + (0,0.6mm)$) -- ($(m-1-3.west) + (0,0.6mm)$);     
    \draw[->] ($(m-1-3.west) - (0,0.6mm)$) -- ($(m-1-2.east) - (0,0.6mm)$); 

    \draw[->] ($(m-1-3.east) + (0,0.6mm)$) -- node[scale=0.75,above]{$w$} ($(m-1-4.west) + (0,0.6mm)$);     
    \draw[->] ($(m-1-4.west) - (0,0.6mm)$) -- node[scale=0.75,below]{$\id$}($(m-1-3.east) - (0,0.6mm)$); 

    \draw[->] ($(m-1-4.east) + (0,0.6mm)$) -- ($(m-1-5.west) + (0,0.6mm)$);     
    \draw[->] ($(m-1-5.west) - (0,0.6mm)$) -- ($(m-1-4.east) - (0,0.6mm)$); 

    \draw[->] ($(m-1-5.east) + (0,0.6mm)$) -- ($(m-1-6.west) + (0,0.6mm)$);     
    \draw[->] ($(m-1-6.west) - (0,0.6mm)$) -- ($(m-1-5.east) - (0,0.6mm)$); 
\end{tikzpicture}\end{equation*}
By the remarks above, this shows that in $\D^{b,b}(K\ua_w)/\perf^\infty$ we have 
$$Q(K\ua_w,\iota(\alpha,\beta))\ =\ Q(K\ua_w,\iota(\gamma,\delta))$$ and hence
$\iota(\alpha,\beta)=\iota(\gamma,\delta)$, as claimed.  
\end{proof}

\begin{theorem}\label{thm_bigtheorem}
There is a natural isomorphism $$\veps:\ \iota\circ\fold\ \stackrel{\cong}{\Longrightarrow}\
\id_{\D^{b,b}_{\fr}(K\ua_w)/\perf^\infty}$$ which together with the equality $\fold\circ\iota =
\id_{\HMF^\infty(S\ld,w)}$ forms an adjoint equivalence
\begin{equation*}\begin{tikzpicture}[baseline=(current bounding  box.center), description/.style={fill=white,inner sep=2pt}]
    \matrix (m) [matrix of math nodes, row sep=3em,
                 column sep=2.5em, text height=1.5ex, text depth=0.25ex]
    {
       \fold:\ \D^{b,b}_{\fr}(K\ua_w)/\perf^\infty &&&& \HMF^\infty(S\ld,w):\ \iota\\
    };
    \draw[->] ($(m-1-1.east) + (0, 0.7mm)$) -- node[above,scale=0.75]{$\cong$}($(m-1-5.west) + (0, 0.7mm)$); 
    \draw[->] ($(m-1-5.west) + (0,-0.5mm)$) -- ($(m-1-1.east) + (0,-0.5mm)$);
\end{tikzpicture}\end{equation*}
\end{theorem}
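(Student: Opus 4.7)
The plan is to construct, naturally in each bounded $S\ld$-free $K\ua_w$-module $F\ua\ld$, a zigzag of morphisms in $\Ho^{b,b}_\fr(K\ua_w)$ between $\iota\fold(F\ua\ld)$ and $F\ua\ld$ whose intermediate arrows become invertible in the Verdier quotient $\D^{b,b}_\fr(K\ua_w)/\perf^\infty$ because their cones are perfect $K\ua_w$-modules. This will yield a natural isomorphism $\veps:\iota\fold\stackrel{\sim}{\Rightarrow}\id$ satisfying $\fold(\veps_{F\ua\ld})=\id_{\fold F\ua\ld}$, and combined with the on-the-nose equality $\fold\iota=\id_{\HMF^\infty(S\ld,w)}$ from Proposition \ref{prop_iota}, the two triangle identities for the adjunction $\iota\dashv\fold$ then become automatic, producing the adjoint equivalence.

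The key technical input is a natural isomorphism $\langle d\rangle\cong[2]$ on $\D^{b,b}_\fr(K\ua_w)/\perf^\infty$. To construct it I would consider, for each $F\ua\ld$, the short exact sequence of $K\ua_w$-modules
$$0\to F\ua\ld\langle -d\rangle[1]\to K\ua_w\otimes_{S\ld}\overline{F\ua\ld}\xrightarrow{\ \mu\ }F\ua\ld\to 0,$$
where $\overline{F\ua\ld}$ denotes the underlying complex of $S\ld$-modules, the middle term carries the left-regular $K\ua_w$-module structure on the first factor, and $\mu(a\otimes f):=a\cdot f$ uses the $K\ua_w$-action on $F\ua\ld$. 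The middle term is a semi-free, hence perfect, $K\ua_w$-module, so it becomes zero in the Verdier quotient, and the connecting morphism of the resulting distinguished triangle is the sought natural isomorphism $F\ua\ld\xrightarrow{\sim}F\ua\ld\langle -d\rangle[2]$.

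With $\langle d\rangle\cong[2]$ in hand, I would establish essential surjectivity of $\iota$ by induction on the cohomological amplitude of $F\ua\ld$, using the isomorphism at each step to absorb the bottom pair of components into a twisted copy sitting two cohomological degrees higher, thereby reducing amplitude by $2$ modulo perfect summands. After finitely many iterations one arrives at a $2$-term $K\ua_w$-module which, by direct inspection of the definitions, coincides with $\iota\fold(F\ua\ld)$; inverting the cumulative zigzag defines $\veps_{F\ua\ld}$. The identity $\fold(\veps_{F\ua\ld})=\id_{\fold F\ua\ld}$ is automatic because $\fold$ annihilates the perfect summands introduced during the reduction, and in particular $\veps_{\iota X}=\id_{\iota X}$ since no reduction is needed when the input is already $2$-term.

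The main obstacle will be two-fold. First, verifying that the short exact sequence above is exact in $K\ua_w\Mod$ (not merely in the category of complexes of $S\ld$-modules) and that the induced $K\ua_w$-module structure on the middle term really makes it semi-free: this amounts to a careful bookkeeping of how left multiplication by the generator of $K\ua_w$ on $K\ua_w\otimes_{S\ld}\overline{F\ua\ld}$ interacts with the internal differential of $\overline{F\ua\ld}$ and with $\mu$. Second, arranging the amplitude-reduction step functorially in $F\ua\ld$, so that the resulting comparison $F\ua\ld\cong\iota\fold(F\ua\ld)$ assembles into a genuine natural transformation $\veps$ rather than a mere pointwise family of isomorphisms. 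Both tasks are computational and parallel, though are slightly subtler than, the explicit manipulations in the proofs of Propositions \ref{prop_fold} and \ref{prop_iota}.
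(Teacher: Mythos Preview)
Your short exact sequence
\[
0\to F\ua\ld\langle -d\rangle[1]\to K\ua_w\otimes_{S\ld}\overline{F\ua\ld}\xrightarrow{\ \mu\ }F\ua\ld\to 0
\]
is correct, and the middle term is indeed perfect, so you do obtain the periodicity $[2]\cong\langle d\rangle$ in $\D^{b,b}_\fr(K\ua_w)/\perf^\infty$. The gap is in the next step: periodicity applied to the whole of $F\ua\ld$ simply shifts the complex and preserves amplitude, so it does not by itself ``absorb the bottom pair of components'' into higher degrees. To carry out an honest amplitude reduction you would need to peel off a small piece of $F\ua\ld$ as a $K\ua_w$-submodule or quotient, and this is exactly what fails. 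Neither stupid truncation $\sigma_{\le k}$ nor $\sigma_{\ge k}$ is a $K\ua_w$-submodule (one is closed under $\differential$ but not $s$, the other conversely), and the na\"ive candidate map $K\ua_w\otimes_{S\ld}F^{-n}[n]\to F\ua\ld$ sending $1\otimes x\mapsto x$ is not a chain map unless $\differential|_{F^{-n}}=0$. So the inductive procedure you outline cannot be started, and the ``amplitude reduction modulo perfect summands'' remains unjustified.

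The paper's argument avoids this difficulty by working not on $F\ua\ld$ itself but on its Bar resolution $Q(K\ua_w,F\ua\ld)$. Concretely, $\veps_{F\ua\ld}$ is defined as an explicit roof with $Q(K\ua_w,(\iota\circ\fold)(F\ua\ld))$ as denominator and an explicit projection to $F\ua\ld$ as numerator. To prove $\veps$ is an isomorphism, one introduces a filtration $U_{F\ua\ld}(n):=\Span_{S\ld}\{a\otimes t^k\otimes m\ :\ 2k-|m|\le n\}$ on the Bar resolution. The successive subquotients $D_{F\ua\ld}(n)\to D_{F\ua\ld}(n+1)$ have cones of the form $K\ua_w\otimes_{S\ld}X$ for free $S\ld$-modules $X$, hence perfect, while for $n=0$ one recovers $Q(K\ua_w,F\ua\ld)$ and for $n\gg0$ a shifted copy of $Q(K\ua_w,(\iota\circ\fold)(F\ua\ld))$. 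An explicit interpolating map $\psi$ between the two Bar resolutions then shows that the numerator of the roof is an isomorphism in the quotient. In spirit this \emph{is} an amplitude reduction, but it takes place on the Bar resolution---where the filtration by the auxiliary grading $2k-|m|$ is compatible with the $K\ua_w$-structure---rather than on $F\ua\ld$ directly, where no such compatible filtration exists.
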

\begin{proof}
Let $F\ua\ld$ be a bounded, $S\ld$-free $K\ua_w$-module; without loss of generality we may assume
$F^k\ld = 0$ for $k>0$. Then $Q(K\ua_w,(\iota\circ\fold)(F\ua\ld))$ is explicitly given as
\begin{equation*}\begin{tikzpicture}[description/.style={fill=white,inner sep=2pt}]
    \matrix (m) [matrix of math nodes, row sep=3em,
                 column sep=4em, text height=1.5ex, text depth=0.25ex,
                 inner sep=0pt, nodes={inner xsep=0.3333em, inner ysep=0.3333em}]
    {
       ... & F\ueven\ld\langle -d\rangle \oplus F\uodd\ld\langle -d\rangle & F\uodd\ld\oplus F\ueven\ld\langle -d\rangle
       & F\ueven\ld\\ 
    };
    \draw[->] ($(m-1-1.east) + (0,0.5mm)$) -- node[scale=0.5,above]{$\begin{pmatrix}\d+s & w \\ -\id &
        -(\d+s)\end{pmatrix}$} ($(m-1-2.west) + (0,0.5mm)$);     
    \draw[->] ($(m-1-2.west) - (0,0.5mm)$) -- node[scale=0.5,below]{$\begin{pmatrix}0 & 0 \\ \id &
        0\end{pmatrix}$}($(m-1-1.east) - (0,0.5mm)$);  
    \draw[->] ($(m-1-2.east) + (0,0.5mm)$) --  node[scale=0.5,above]{$\begin{pmatrix}\d+s & w \\ -\id &
        -(\d+s)\end{pmatrix}$} ($(m-1-3.west) + (0,0.5mm)$);     
    \draw[->] ($(m-1-3.west) - (0,0.5mm)$) --  node[scale=0.5,below]{$\begin{pmatrix} 0 & 0 \\ \id &
        0\end{pmatrix}$} ($(m-1-2.east) - (0,0.5mm)$);  
    \draw[->] ($(m-1-3.east) + (0,0.5mm)$) -- node[scale=0.5,above]{$\begin{pmatrix}\d+s & w\end{pmatrix}$}($(m-1-4.west)
    + (0,0.5mm)$);      
    \draw[->] ($(m-1-4.west) - (0,0.5mm)$) -- node[scale=0.5,below]{$\begin{pmatrix}0 \\ \id\end{pmatrix}$}
    ($(m-1-3.east) - (0,0.5mm)$); 
\end{tikzpicture}\end{equation*}
where $F\ueven\ld := \bigoplus\limits_{n\geq 0} F^{-2n}\ld\langle nd\rangle$ and $F\uodd\ld :=
\bigoplus\limits_{n\geq 0} F^{-2n-1}\ld\langle nd\rangle$. We define $\veps_{F\ua\ld}$ as the roof
\begin{equation}\label{eq:epsilonroof}\begin{tikzpicture}[description/.style={fill=white,inner sep=2pt},
    baseline=(m-2-1.base)] 
    \matrix (m) [matrix of math nodes, row sep=3em,
                 column sep=4em, text height=1.5ex, text depth=0.25ex,
                 inner sep=0pt, nodes={inner xsep=0.3333em, inner ysep=0.3333em}]
    {
       ... & 0 & F\uodd\ld& F\ueven\ld\\
       ... & F\ueven\ld\langle -d\rangle \oplus F\uodd\ld\langle -d\rangle & F\uodd\ld\oplus F\ueven\ld\langle -d\rangle
       & F\ueven\ld\\ 
       ... & F^{-2}\ld & F^{-1}\ld & F^0\ld\\ 
    };
    \draw[->] (m-2-2) -- (m-1-2);
    \draw[->] (m-2-3) -- node[scale=0.75,description]{$\begin{pmatrix}\id & \d+s\end{pmatrix}$}(m-1-3);
    \draw[->] (m-2-4) -- (m-1-4);

   \draw[->] ($(m-1-1.east) + (0,0.5mm)$) -- ($(m-1-2.west) + (0,0.5mm)$); 
   \draw[->] ($(m-1-2.west) - (0,0.5mm)$) -- ($(m-1-1.east) - (0,0.5mm)$);  
   \draw[->] ($(m-1-2.east) + (0,0.5mm)$) -- ($(m-1-3.west) + (0,0.5mm)$); 
   \draw[->] ($(m-1-3.west) - (0,0.5mm)$) -- ($(m-1-2.east) - (0,0.5mm)$);  

    \draw[->] ($(m-1-3.east) + (0,0.5mm)$) -- node[scale=0.75,above]{$\d+s$} ($(m-1-4.west) + (0,0.5mm)$);     
    \draw[->] ($(m-1-4.west) - (0,0.5mm)$) -- node[scale=0.75,below]{$\d+s$}($(m-1-3.east) - (0,0.5mm)$);    

    \draw[->] ($(m-2-1.east) + (0,0.5mm)$) -- node[scale=0.6,above]{$\begin{pmatrix}\d+s & w \\ -\id &
        -(\d+s)\end{pmatrix}$} ($(m-2-2.west) + (0,0.5mm)$);     
    \draw[->] ($(m-2-2.west) - (0,0.5mm)$) -- node[scale=0.6,below]{$\begin{pmatrix}0 & 0 \\ \id &
        0\end{pmatrix}$}($(m-2-1.east) - (0,0.5mm)$);  
    \draw[->] ($(m-2-2.east) + (0,0.5mm)$) --  node[scale=0.6,above]{$\begin{pmatrix}\d+s & w \\ -\id &
        -(\d+s)\end{pmatrix}$} ($(m-2-3.west) + (0,0.5mm)$);     
    \draw[->] ($(m-2-3.west) - (0,0.5mm)$) --  node[scale=0.6,below]{$\begin{pmatrix} 0 & 0 \\ \id &
        0\end{pmatrix}$} ($(m-2-2.east) - (0,0.5mm)$);  
    \draw[->] ($(m-2-3.east) + (0,0.5mm)$) -- node[scale=0.6,above]{$\begin{pmatrix}\d+s & w\end{pmatrix}$}($(m-2-4.west)
    + (0,0.5mm)$);      
    \draw[->] ($(m-2-4.west) - (0,0.5mm)$) -- node[scale=0.6,below]{$\begin{pmatrix}0 \\ \id\end{pmatrix}$}
    ($(m-2-3.east) - (0,0.5mm)$); 

    \draw[->] ($(m-3-1.east) + (0,0.5mm)$) -- node[scale=0.75,above]{$\differential$} ($(m-3-2.west) + (0,0.5mm)$);     
    \draw[->] ($(m-3-2.west) - (0,0.5mm)$) -- node[scale=0.75,below]{$s$}  ($(m-3-1.east) - (0,0.5mm)$);  
    \draw[->] ($(m-3-2.east) + (0,0.5mm)$) -- node[scale=0.75,above]{$\differential$} ($(m-3-3.west) + (0,0.5mm)$);     
    \draw[->] ($(m-3-3.west) - (0,0.5mm)$) -- node[scale=0.75,below]{$s$}  ($(m-3-2.east) - (0,0.5mm)$);  
    \draw[->] ($(m-3-3.east) + (0,0.5mm)$) -- node[scale=0.75,above]{$\differential$} ($(m-3-4.west) + (0,0.5mm)$);      
    \draw[->] ($(m-3-4.west) - (0,0.5mm)$) -- node[scale=0.75,below]{$s$}  ($(m-3-3.east) - (0,0.5mm)$);

    \draw[->] (m-2-2) -- node[description,scale=0.75]{$\begin{pmatrix} \pr_{F^{-2}\ld} &
        s\circ\pr_{F^{-1}\ld}\end{pmatrix}$} (m-3-2); 
    \draw[->] (m-2-3) -- node[description,scale=0.75]{$\begin{pmatrix} \pr_{F^{-1}\ld} &
        s\circ\pr_{F^{0}\ld}\end{pmatrix}$}(m-3-3);
    \draw[->] (m-2-4) -- node[description,scale=0.75]{$\pr_{F^{0}\ld}$} (m-3-4);
\end{tikzpicture}\end{equation}
Observe that the internal grading shifts are such that this indeed preserves the grading. Clearly, this morphism is
natural in $F\ua\ld$. 

Note that, although the denominator $Q(K\ua_w,(\iota\circ\fold)(F\ua\ld))$ is not in $\D^{b,b}_{\fr}(K\ua_w)$,
$\veps_{F\ua\ld}$ \textit{is} a morphism in $\D^{b,b}_{\fr}(K\ua_w)$, because we defined the latter as a full
subcategory of $\D(K\ua_w)$. Nonetheless, we know from the equivalence
$\Ho^{b,b}_{\fr}(K\ua_w)/\Acyc^{b,b}_{\fr}(K\ua_w)\cong\D^{b,b}_{\fr}(K\ua_w)$ (Proposition  
\ref{prop_boundedasverdierquotient}) that $\veps_{F\ua\ld}$ can be represented by a roof having a denominator in
$\D^{b,b}_{\fr}(K\ua_w)$. More concretely, we can replace $Q(K\ua_w,(\iota\circ\fold)(F\ua\ld))$ by $\tau_{\geq 2n}
Q(K\ua_w,(\iota\circ\fold)(F\ua\ld))$ for $n\ll 0$ such that $F^k\ld=0$ for $k\leq 2n$; in degrees $2n$ to $2n+2$ this
truncation is explicitly given as follows:
\begin{equation*}\begin{tikzpicture}[description/.style={fill=white,inner sep=2pt}]
    \matrix (m) [matrix of math nodes, row sep=3em,
                 column sep=2.5em, text height=1.5ex, text depth=0.25ex,
                 inner sep=0pt, nodes={inner xsep=0.3333em, inner ysep=0.3333em}]
    {
       F\ueven\ld\langle nd\rangle & \left(F\uodd\ld\langle d\rangle\oplus F\ueven\ld\right)\langle nd\rangle &&
       \left(F\ueven\ld\oplus F\uodd\ld\right)\langle (n+1)d\rangle & ...\\ 
    };
    \draw[->] ($(m-1-1.east) + (0,0.5mm)$) -- node[scale=0.6,above]{$\begin{pmatrix} d+s \\ -\id\end{pmatrix}$}
    ($(m-1-2.west) + (0,0.5mm)$);      
    \draw[->] ($(m-1-2.west) - (0,0.5mm)$) -- node[scale=0.6,below]{$\begin{pmatrix}0 &
        \id\end{pmatrix}$}($(m-1-1.east) - (0,0.5mm)$);   
    \draw[->] ($(m-1-2.east) + (0,0.5mm)$) --  node[scale=0.6,above]{$\begin{pmatrix}\differential+s & w \\ -\id &
        -(\differential+s)\end{pmatrix}$} ($(m-1-4.west) + (0,0.5mm)$);     
    \draw[->] ($(m-1-4.west) - (0,0.5mm)$) --  node[scale=0.6,below]{$\begin{pmatrix} 0 & 0 \\ \id &
        0\end{pmatrix}$} ($(m-1-2.east) - (0,0.5mm)$); 

     \draw[->] ($(m-1-4.east) + (0,0.5mm)$) -- ($(m-1-5.west) + (0,0.5mm)$);     
     \draw[->] ($(m-1-5.west) - (0,0.5mm)$) -- ($(m-1-4.east) - (0,0.5mm)$);  
\end{tikzpicture}\end{equation*}
To see that $\veps$ as the counit and $\id=\fold\circ\iota$ as the unit form an adjunction
$\iota\dashv\fold$, we have to check the following:
\begin{enumerate}
\item\label{item:adj1} For each $M\ua\ld\in\HMF^\infty(S\ld,w)$, the map $$\iota(M\ua\ld) =
  \iota((\fold\circ\iota)(M\ua\ld)) = 
  (\iota\circ\fold)(\iota(M\ua\ld))\xrightarrow{\veps_{\iota(M\ua\ld)}}\iota(M\ua\ld)$$ is the identity. 
\item\label{item:adj2} For each $X\ua\ld\in K\ua_w\Mod_\fr$, the
  map $$\fold(F\ua\ld)\xrightarrow{\fold(\veps_{X\ua\ld})}\fold((\iota\circ\fold)(F\ua\ld)) =
  (\fold\circ\iota)(\fold(F\ua\ld)) = \fold(F\ua\ld)$$ is the identity. 
\end{enumerate}
Statement \eqref{item:adj1} holds because in the case where $F^k\ld = 0$ for $k \neq -1,0$, the numerator and
denominator in the roof \eqref{eq:epsilonroof} defining $\veps_{F\ua\ld}$ are equal. For \eqref{item:adj2}, we first
construct an explicit inverse for the homotopy equivalence $$\fold\big(\tau_{\geq 2n} 
  Q(K\ua_w,\iota(M\ua\ld))\ \longrightarrow\ \iota(M\ua\ld)\big)$$ for a matrix factorization $M\ua\ld =
  \left(M^0\ld\mor{f} M^{-1}\ld\mor{g} M^0\right)$ and $n\ll 0$:
\begin{equation}\label{eq:explicitinverse}\begin{tikzpicture}[description/.style={fill=white,inner
      sep=2pt},baseline=(current bounding box.center)]
    \matrix (m) [matrix of math nodes, row sep=5em,
                 column sep=4em, text height=1.5ex, text depth=0.25ex,
                 inner sep=0pt, nodes={inner xsep=0.3333em, inner ysep=0.3333em}]
    {
       ... & M^{-1}\ld\oplus M^0\ld & M^0\ld\oplus M^{-1}\ld & M^{-1}\ld\oplus M^0\ld & M^0\ld\\
        &  &  & M^{-1}\ld & M^0\ld\\
    };

    \draw[->] ($(m-2-5.north) + (+1mm,0)$) -- node[scale=0.6,description,pos=0.7]{$\begin{pmatrix}\id\\ 0\end{pmatrix}$}
    ($(m-2-5.north |- m-1-5.south) + (1mm,0)$); 
    \draw[->] ($(m-2-5.north) + (+0mm,0)$) -- ($(m-2-5.north) + (+0mm,8mm)$) -|
    node[scale=0.6,description,pos=0.75]{$\begin{pmatrix}\id\\ 0\end{pmatrix}$} (m-1-3.south); 
    \draw ($(m-2-5.north) + (-1mm,0)$) -- ($(m-2-5.north) + (-1mm,7mm)$) -- ($(m-2-5.north -| m-1-2) + (-2cm,7mm)$);
    \draw[dotted] ($(m-2-5.north -| m-1-2) + (-2cm,7mm)$) -- +(-1cm,0);

    \draw[draw=white, line width=2mm] ($(m-2-4.north) + (+1mm,0)$) -- ($(m-2-4.north |- m-1-4.south) + (1mm,0)$); 
    \draw[draw=white, line width=2mm] ($(m-2-4.north) + (+0mm,0)$) -- ($(m-2-4.north) + (+0mm,4mm)$) -| (m-1-2.south); 

    \draw[->] ($(m-2-4.north) + (+1mm,0)$) -- node[scale=0.6,description,pos=0.7]{$\begin{pmatrix}\id\\ 0\end{pmatrix}$}
    ($(m-2-4.north |- m-1-4.south) + (1mm,0)$); 
    \draw[->] ($(m-2-4.north) + (+0mm,0)$) -- ($(m-2-4.north) + (+0mm,4mm)$) -|
    node[scale=0.6,description,pos=0.82]{$\begin{pmatrix}\id\\ 0\end{pmatrix}$} (m-1-2.south); 
    \draw ($(m-2-4.north) + (-1mm,0)$) -- ($(m-2-4.north) + (-1mm,3mm)$) -- ($(m-2-4.north -| m-1-2) + (-2cm,3mm)$);
    \draw[dotted] ($(m-2-4.north -| m-1-2) + (-2cm,3mm)$) -- +(-1cm,0);

    \draw[->] ($(m-2-4.east) + (0,0.5mm)$) -- node[scale=0.6,above]{$g$} ($(m-2-5.west) + (0,0.5mm)$);     
    \draw[->] ($(m-2-5.west) - (0,0.5mm)$) -- node[scale=0.6,below]{$f$} ($(m-2-4.east) - (0,0.5mm)$); 
    
    \draw[->] ($(m-1-1.east) + (0,0.5mm)$) -- node[scale=0.6,above]{$\begin{pmatrix}f & w \\ -\id_{M^0\ld} &
        -g\end{pmatrix}$} ($(m-1-2.west) + (0,0.5mm)$);     
    \draw[->] ($(m-1-2.west) - (0,0.5mm)$) -- node[scale=0.6,below]{$\begin{pmatrix}0 & 0 \\ \id_{M^{-1}\ld} &
        0\end{pmatrix}$}($(m-1-1.east) - (0,0.5mm)$);  
    \draw[->] ($(m-1-2.east) + (0,0.5mm)$) --  node[scale=0.6,above]{$\begin{pmatrix}g & w \\ -\id_{M^{-1}\ld} &
        -f\end{pmatrix}$} ($(m-1-3.west) + (0,0.5mm)$);     
    \draw[->] ($(m-1-3.west) - (0,0.5mm)$) --  node[scale=0.6,below]{$\begin{pmatrix} 0 & 0 \\ \id_{M^0\ld} &
        0\end{pmatrix}$} ($(m-1-2.east) - (0,0.5mm)$);  
    \draw[->] ($(m-1-3.east) + (0,0.5mm)$) -- node[scale=0.6,above]{$\begin{pmatrix}f & w \\ -\id_{M^0\ld} &
        -g\end{pmatrix}$} ($(m-1-4.west) + (0,0.5mm)$);     
    \draw[->] ($(m-1-4.west) - (0,0.5mm)$) -- node[scale=0.6,below]{$\begin{pmatrix}0 & 0 \\ \id_{M^{-1}\ld} &
        0\end{pmatrix}$}($(m-1-3.east) - (0,0.5mm)$);   
    \draw[->] ($(m-1-4.east) + (0,0.5mm)$) -- node[scale=0.6,above]{$\begin{pmatrix}g & w\end{pmatrix}$}($(m-1-5.west)
    + (0,0.5mm)$);      
    \draw[->] ($(m-1-5.west) - (0,0.5mm)$) -- node[scale=0.6,below]{$\begin{pmatrix}0 \\ \id_{M^0\ld}\end{pmatrix}$}
    ($(m-1-4.east) - (0,0.5mm)$); 
\end{tikzpicture}\end{equation}       
The map from $M^0\ld$ into the truncated component $\tau_{\geq 2n} Q(K\ua_w,\iota(M\ua\ld))^{2n}\ld = M^0\ld$ is
the identity.

Now if $M\ld = \fold(F\ua\ld)$ as above, it is clear that the composition of \eqref{eq:explicitinverse} and the
numerator $\fold(Q(K\ua_w,(\iota\circ\fold)(F\ua\ld))\to F\ua\ld)$ from \eqref{eq:epsilonroof} is the identity on
$\fold(F\ua\ld)$, so \eqref{item:adj2} holds. This finishes the proof of the adjunction $\iota\dashv\fold$. 

It remains to show that $\veps_{F\ua\ld}:(\iota\circ\fold)(F\ua\ld)\to F\ua\ld$ is an isomorphism in
$\D^{b,b}_{\fr}(K\ua_w)$. For this, first define $$Q(K\ua_w,F\ua\ld)\ \supset\ U_{F\ua\ld}(n)\ :=\
\Span_{S\ld}\left\{a\otimes t^k\otimes m\ |\ 2k-|m|\leq n\right\}.$$ It is clear from the explicit description of the
  differential on $Q(K\ua_w,F\ua\ld)$ in Proposition \ref{prop_barreskoszul} that $\differential(U_{F\ua\ld}(n))\subset
  U_{F\ua\ld}(n)$, so $U_{F\ua\ld}(n)$ is a $K\ua_w$-submodule of $Q(K\ua_w,F\ua\ld)$. We put $D_{F\ua\ld}(n) :=
  Q(K\ua_w,F\ua\ld) / U_{F\ua\ld}(n-1)$ and denote $D_{F\ua\ld}(n\to m): D_{F\ua\ld}(n)\
  \longrightarrow  D_{F\ua\ld}(m)$ the projection map for $n\leq m$. We have $$D_{F\ua\ld}(0)\ =\ Q(K\ua_w,F\ua\ld)$$ and 
$$D_{F\ua\ld}(2n)\ \cong\ Q(K\ua_w,(\iota\circ\fold)(F\ua\ld))[2n]\langle -nd\rangle$$ for $n\gg 0$, so
$D_{F\ua\ld}(\ast)$ interpolates between the bar resolutions of $F\ua\ld$ and $(\iota\circ\fold)(F\ua\ld)$. We will now
prove that the quotient map $$D_{F\ua\ld}(0\to 2n):\ Q(K\ua_w,F\ua\ld)=D_{F\ua\ld}(0)\ \longrightarrow
D_{F\ua\ld}(2n) \cong Q(K\ua_w,(\iota\circ\fold)(F\ua\ld))[2n]\langle -nd\rangle$$ is an isomorphism in
$\D^{-,b}_{\fr}(K\ua_w)/\perf^\infty$ by showing that its cone is in $\perf^\infty$. By the octahedral axiom, it 
suffices to show that $\cone\left(D_{F\ua\ld}(n\to n+1)\right)\in\perf^\infty$ for each $n\in\ZZ$. For this, note that 
as $D_{F\ua\ld}(n\to n+1)$ is an epimorphism, we have an isomorphism in
$\D^{-,b}_{\fr}(K\ua_w)$
\begin{align*}
\cone\left(D_{F\ua\ld}(n\to n+1)\right) & \makebox[8mm][c]{$\cong$} \ker\left(D_{F\ua\ld}(n\to n+1)\right)\\
& \makebox[8mm][c]{$=$} K\ua_w\otimes_{S\ld} \Span_{S\ld}\left\{t^k\otimes m\ |\ 2k - |m| = n\right\},
\end{align*}
where the action of $K\ua_w$ and the differential on the right hand side is given by their respective action on the
first tensor factor $K\ua_w$. Thus, $\cone(D_{F\ua\ld}(n\to n+1))$ is of the form $K\ua_w\otimes_{S\ld} X\ld[k]$ for
some free $S\ld$-module $X\ld$ and some $k\in\ZZ$, and hence in $\perf^\infty$.

Next, note that applying the previous paragraph to $(\iota\circ\fold)(F\ua\ld)$ instead of $F\ua\ld$, we see that
$Q(K\ua_w,(\iota\circ\fold)(F\ua\ld))[2n]\langle -nd\rangle$ is also isomorphic to $D_{(\iota\circ\fold)(F\ua\ld)}(2n)$
for $n\ll 0$. Hence, we have the following diagram in $\D^{-,b}_{\fr}(K\ua_w)$, where the diagonal maps are have perfect
cones, i.e. are isomorphism in $\D^{-,b}_{\fr}(K\ua_w)/\perf^\infty$:
\begin{equation*}\begin{tikzpicture}[description/.style={fill=white,inner sep=2pt}]
    \matrix (m) [matrix of math nodes, row sep=3em,
                 column sep=2.5em, text height=1.5ex, text depth=0.25ex,
                 inner sep=0pt, nodes={inner xsep=0.3333em, inner ysep=0.3333em}]
    {
       Q(K\ua_w,(\iota\circ\fold)(F\ua\ld)) && Q(K\ua_w,F\ua\ld)\\ & Q(K\ua_w,(\iota\circ\fold)(F\ua\ld))[2n]\langle
       -nd\rangle \\
    };
    \draw[dashed,->] (m-1-1) -- node[above,scale=0.75]{$\psi$} (m-1-3);
    \draw[->] (m-1-1) -- ($(m-2-2.north) + (-2.2cm,1mm)$);
    \draw[->] (m-1-3) -- ($(m-2-2.north) + (+2.2cm,1mm)$);
\end{tikzpicture}\end{equation*}
We will know describe explicitly a map $\psi$ making the diagram commute. It is then automatically an isomorphism in
$\D^{-,b}_{\fr}(K\ua_w)/\perf^\infty$. Next, it will be clear  that the composition 
\begin{equation}\label{eq:compii}Q(K\ua_w,(\iota\circ\fold)(F\ua\ld))\
\stackrel{\psi}{\longrightarrow}\ Q(K\ua_w,F\ua\ld)\ \longrightarrow\ F\ua\ld\end{equation} is precisely the numerator 
in the roof \eqref{eq:epsilonroof} defining $\veps_{F\ua\ld}$, proving that $\veps_{F\ua\ld}$ is an isomorphism in
$\D^{-,b}_{\fr}(K\ua_w)/\perf^\infty$. 

The map $\psi$ is given as follows (negative powers of $t$ are to be interpreted as $0$):
\begin{equation*}\begin{tikzpicture}[description/.style={fill=white,inner sep=2pt}]
    \matrix (m) [matrix of math nodes, row sep=0.5em,
                 column sep=2.5em, text height=1.5ex, text depth=0.25ex,
                 inner sep=0pt, nodes={inner xsep=0.3333em, inner ysep=0.3333em}]
    {
       K\ua_w\otimes_{S\ld} S\ld[t]\otimes_{S\ld} \iota(\fold(F\ua\ld)) &&
       K\ua_w\otimes_{S\ld} S\ld[t]\otimes_{S\ld} F\ua\ld\\\\
       a\otimes t^k\otimes m^{-2l} && a\otimes t^{k-l}\otimes m^{-2l}\\
       a\otimes t^k\otimes m^{-2l-1} && a\otimes t^{k-l}\otimes m^{-2l-1}\\
    };
    \draw[->]  (m-1-1)  -- node[above,scale=0.75]{$\psi$} (m-1-3);
    \draw[|->] (m-4-1.east |- m-3-1.east) -- (m-4-3.west |- m-3-3.west);
    \draw[|->] (m-4-1) -- (m-4-3);
    \path (m-3-1) -- node[pos=0.25,rotate=90]{$\in$} (m-3-1 |- m-1-1);
    \path (m-3-3) -- node[pos=0.25,rotate=90]{$\in$} (m-3-3 |- m-1-3);
\end{tikzpicture}\end{equation*}
Here $m^{-2l}$ and $m^{-2l-1}$ denote elements in $F\ld^{-2l}$ and $F\ld^{-2l-1}$, respectively. Let us check carefully
that this makes sense, i.e. that both cohomological and internal degrees are preserved.
\begin{enumerate}
\item The cohomological degree of $a\otimes t^k\otimes m^{-2l}$ in 
$Q(K\ua_w,(\iota\circ\fold)(F\ua\ld))$ is $|a|-2k$, and the cohomological degree of $a\otimes t^{k-l}\otimes m^{-2l}$ in
$Q(K\ua_w,F\ua\ld)$ equals $|a| - 2(k-l) -2l = |a| - 2k$. Similarly, the cohomological degree of $a\otimes t^k\otimes
m^{-2l-1}$ in $Q(K\ua_w,(\iota\circ\fold)(F\ua\ld))$ is $|a| - 2k - 1$, and the cohomological degree of $a\otimes
t^{k-l}\otimes m^{-2l-1}$ in $Q(K\ua_w,F\ua\ld)$ is $|a| - 2(k-l) - 2l - 1 = |a| - 2k - 1$. 
\item Recalling that $F\ueven\ld = \bigoplus\limits_{n\in\ZZ} F^{-2n}\ld\langle nd\rangle$ and $F\uodd\ld =
\bigoplus\limits_{n\in\ZZ} F^{-2n-1}\ld\langle nd\rangle$, we see that the internal degree of $a\otimes t^k\otimes
m^{-2l}$ in $Q(K\ua_w,(\iota\circ\fold)(F\ua\ld))$ is $\deg(a) + kd - 2l - ld$, while the internal degree of
$a\otimes t^{k-l}\otimes m^{-2l}$ in $Q(K\ua_w,F\ua\ld)$ is $\deg(a) + (k-l)d - 2l$; similarly in the odd case.
\end{enumerate}
We leave it to the reader to check that $\psi$ respects the differential.

Finally, it is clear from the explicit description of $\psi$ that the composition \eqref{eq:compii} sends $a\otimes
t^k\otimes m^{-2l}$ to $a m^{-2l}$ if $k=l$ and to $0$ otherwise. Similarly, $a\otimes t^k\otimes m^{-2l-1}$ is sent to
$a m^{-2l-1}$ if $k=l$ and to $0$ otherwise. This shows that \eqref{eq:compii} equals the numerator in the roof
\eqref{eq:epsilonroof} defining $\veps_{F\ua\ld}$, finishing the proof of Theorem \ref{thm_bigtheorem}.
\end{proof}

\begin{rem}
The map $Q(K\ua_w,(\iota\circ\fold)(F\ua\ld))\to Q(K\ua_w,F\ua\ld)$ from the proof of Theorem \ref{thm_bigtheorem} is a
lift of the map we constructed in Remark \ref{rem_explicitcounit}.
\end{rem}

Theorem \ref{thm_bigtheorem} is also true in the finitely generated case, with the same proof.
\begin{definition}
We denote $\perf\subset\D^{b,b}_{\fr,\fg}(K\ua_w)$ the smallest thick triangulated subcategory of
$\D^{b,b}_{\fr,\fg}(K\ua_w)$ containing all finitely generated free $K\ua_w$-modules.
\end{definition}

\begin{theorem} Define $\fold$ and $\iota$ as in Proposition \ref{prop_fold} and \ref{prop_iota}. 
Then there is a natural isomorphism $$\veps:\ \iota\circ\fold\ \stackrel{\cong}{\Longrightarrow}\
\id_{\D^{b,b}_{\fr,\fg}(K\ua_w)/\perf}$$ which together with the equality $\fold\circ\iota =
\id_{\HMF(S\ld,w)}$ forms an adjoint equivalence
\begin{equation*}\begin{tikzpicture}[baseline=(current bounding  box.center), description/.style={fill=white,inner sep=2pt}]
    \matrix (m) [matrix of math nodes, row sep=3em,
                 column sep=2.5em, text height=1.5ex, text depth=0.25ex]
    {
       \fold:\ \D^{b,b}_{\fr,\fg}(K\ua_w)/\perf &&&& \HMF(S\ld,w):\ \iota\\
    };
    \draw[->] ($(m-1-1.east) + (0, 0.7mm)$) -- node[above,scale=0.75]{$\cong$}($(m-1-5.west) + (0, 0.7mm)$); 
    \draw[->] ($(m-1-5.west) + (0,-0.5mm)$) -- ($(m-1-1.east) + (0,-0.5mm)$);
\end{tikzpicture}\end{equation*}
\end{theorem}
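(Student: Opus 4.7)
The plan is to mimic the proof of Theorem \ref{thm_bigtheorem} essentially verbatim, at each step verifying that nothing leaves the finitely generated category. First I would check that $\fold$ and $\iota$ restrict to functors between $\D^{b,b}_{\fr,\fg}(K\ua_w)/\perf$ and $\HMF(S\ld,w)$: when $F\ua\ld$ is bounded, $S\ld$-free and $S\ld$-finite, the folded modules $\bigoplus_{n\in\ZZ} F^{2n}\ld\langle -nd\rangle$ and $\bigoplus_{n\in\ZZ} F^{2n-1}\ld\langle -nd\rangle$ are \emph{finite} direct sums of finitely generated free $S\ld$-modules, so $\fold(F\ua\ld)$ lies in $\HMF(S\ld,w)$; the argument of Proposition \ref{prop_fold} that $\fold$ vanishes on the generator $K\ua_w$ applies verbatim, and the generating set of $\perf$ is exactly $\{K\ua_w[k]\langle l\rangle\}$, so $\fold$ passes to the Verdier quotient. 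The functor $\iota$ clearly sends a matrix factorization of finitely generated free $S\ld$-modules to an $S\ld$-free, $S\ld$-finite two-term complex, and the proof that it descends to the quotient category is insensitive to the fg condition.

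Next I would reproduce the roof \eqref{eq:epsilonroof} defining the counit $\veps_{F\ua\ld}$. The key point is that although the full Bar resolution $Q(K\ua_w,(\iota\circ\fold)(F\ua\ld))$ is unbounded below, a direct inspection of the explicit formula in Proposition \ref{prop_barreskoszul} shows that for bounded, fg input each component is a \emph{finite} sum of summands of the form $K\ua_w^h\otimes_{S\ld}\wt{K\ua_w}^{\otimes p}\otimes_{S\ld} M^j\ld$, hence finitely generated and $S\ld$-free. Truncating at $\tau_{\geq 2n}$ for $n\ll 0$ gives a cohomologically bounded complex in $\D^{b,-}_{\fr,\fg}(K\ua_w)$, which by Proposition \ref{prop_freetruncation} is equivalent to $\D^{b,b}_{\fr,\fg}(K\ua_w)$. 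By Proposition \ref{prop_boundedasverdierquotient} this is a legitimate denominator for a roof in $\D^{b,b}_{\fr,\fg}(K\ua_w)/\perf$, producing the map $\veps_{F\ua\ld}$ as before.

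To see that $\veps_{F\ua\ld}$ is an isomorphism I would redo the interpolation argument with $D_{F\ua\ld}(n) := Q(K\ua_w,F\ua\ld)/U_{F\ua\ld}(n-1)$: the successive quotients retain the shape $K\ua_w\otimes_{S\ld} X\ld[k]$ with $X\ld$ a \emph{finitely generated} free $S\ld$-module, so each cone $\cone(D_{F\ua\ld}(n\to n+1))$ now lies in $\perf$ rather than merely $\perf^\infty$. The octahedral telescoping in $\D^{b,-}_{\fr,\fg}(K\ua_w)/\perf$ then identifies $\veps_{F\ua\ld}$ with the explicit map $\psi$ constructed in the proof of Theorem \ref{thm_bigtheorem}. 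The verifications $\fold\circ\iota=\id$, $\veps_{\iota(M\ua\ld)}=\id$, and $\fold(\veps_{F\ua\ld})=\id$ (the adjunction triangles) are diagrammatic and carry over unchanged, since they only inspect finitely many components of the modules involved.

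The main obstacle, and the only point that is not literally the same as in Theorem \ref{thm_bigtheorem}, is the second step: one has to confirm that at every stage of the roof construction and the interpolation argument the intermediate $K\ua_w$-modules, in particular the truncated Bar resolutions and the quotients $D_{F\ua\ld}(n)$, sit in the finitely generated category (or are connected by morphisms whose cones sit in $\perf$, not just $\perf^\infty$). Once this bookkeeping is set up -- essentially a component-wise inspection of the explicit formula in Proposition \ref{prop_barreskoszul} combined with Propositions \ref{prop_freetruncation} and \ref{prop_boundedasverdierquotient} -- the remainder of the proof is a direct translation.
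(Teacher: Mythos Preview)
Your proposal is correct and takes exactly the approach the paper does: the paper's entire proof is the single remark ``with the same proof'' as Theorem~\ref{thm_bigtheorem}, and you have simply spelled out the bookkeeping needed to see that every intermediate object (truncated Bar resolutions, the quotients $D_{F\ua\ld}(n)$, and the cones $K\ua_w\otimes_{S\ld} X\ld[k]$) stays in the finitely generated category so that the cones lie in $\perf$ rather than $\perf^\infty$.
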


Next we check that the equivalence $\fold: \D^{b,b}_{\fr,\fg}(K\ua_w)\cong\HMF(S\ld,w)$ indeed coincides with the composition
$$\D^{b,b}_{\fr,\fg}(K\ua_w)\cong\D^b_{\fg}(S\ld/(w))\cong\HMF(S\ld,w).$$

\begin{theorem}
Let $S\ld$ be a regular local graded ring and let $w\in\frm\setminus\{0\}$ be homogeneous of degree $d$. Then the
following diagram commutes up to natural isomorphism:
\begin{equation*}\begin{tikzpicture}[description/.style={fill=white,inner sep=2pt}]
    \matrix (m) [matrix of math nodes, row sep=3em,
                 column sep=2.5em, text height=1.5ex, text depth=0.25ex,
                 inner sep=0pt, nodes={inner xsep=0.3333em, inner ysep=0.3333em}]
    {
       \D^b_{\fg}(S\ld/(w))/\perf && \D^b_{\fg}(K\ua_w)/\perf \\ & \HMF(S\ld,w)\\
    };
    \draw[->] ($(m-1-3.west) + (0,1mm)$) -- node[above,scale=0.75]{$-\stackrel{\LL}{\otimes}_{K\ua_w} S\ld/(w)$}
    ($(m-1-1.east) + (0,1mm)$);
    \draw[->] ($(m-1-1.east) + (0,-0.5mm)$) -- node[below,scale=0.75]{$\VV$} ($(m-1-3.west) + (0,-0.5mm)$);
    \draw[->] ($(m-2-2.north) + (4mm,0)$) -- node[sloped,description,scale=0.75] {$\iota$} ($(m-1-3.south) + (-2mm,0)$);
    \draw[->] ($(m-1-3.south) + (4mm,0)$) -- node[sloped,description,scale=0.75]{$\fold$}($(m-2-2.north) + (1cm,0)$);
    \draw[->] (m-2-2) -- node[description,scale=0.75] {$\coker$}(m-1-1);
\end{tikzpicture}\end{equation*}
\end{theorem}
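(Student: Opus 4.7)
The plan is to reduce the commutativity of the diagram to the single natural isomorphism $\iota \cong \VV \circ \coker$ of functors $\HMF(S\ld,w) \to \D^b_\fg(K\ua_w)/\perf$. Since $\iota$ and $\fold$ are mutually inverse equivalences (Theorem \ref{thm_bigtheorem}), $\VV$ and $-\stackrel{\LL}{\otimes}_{K\ua_w} S\ld/(w)$ are mutually inverse equivalences (Proposition \ref{prop_dgadjointequivalence} applied to the quasi-isomorphism $\kappa_w$), and $\coker$ is an equivalence (Theorem \ref{thm_hmfmcm}), this one identification forces every other pair of compositions around the triangle to agree.

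To produce the required natural isomorphism, fix a matrix factorization $M = (M^0\ld \xrightarrow{f} M^{-1}\ld \xrightarrow{g} M^0\ld) \in \HMF(S\ld,w)$. By Proposition \ref{prop_iota}, $\iota(M)$ is the two-term complex $M^{-1}\ld \xrightarrow{g} M^0\ld$ placed in cohomological degrees $-1$ and $0$, endowed with the $K\ua_w$-module structure in which $s$ acts via the backward map $f: M^0\ld \to M^{-1}\ld\langle d\rangle$. On the other hand, $\VV(\coker M)$ is the $S\ld/(w)$-module $\coker g$ placed in degree $0$ and viewed as a $K\ua_w$-module via $\kappa_w: K\ua_w \to S\ld/(w)$, so that $s$ acts as zero. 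The canonical map $q: \iota(M) \to \VV(\coker M)$ defined by the quotient projection $M^0\ld \twoheadrightarrow \coker g$ in degree $0$ and by zero in degree $-1$ is a morphism of $K\ua_w$-modules: compatibility with the differential is immediate from $q \circ g = 0$, and compatibility with $s$ reduces to $q(f(m)) = 0 = s \cdot q(m)$ for $m \in M^0\ld$, which holds because $f(m)$ lies in $\iota(M)^{-1}$ where $q$ vanishes.

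Moreover $q$ is a quasi-isomorphism: since $fg = w \cdot \id_{M^{-1}\ld}$ and $w \in \frm \setminus \{0\}$ is a non-zero-divisor in the regular local graded domain $S\ld$ (Fact \ref{fact_regularisadomain}), the map $g$ is injective, so $\H^{-1}(\iota M) = \ker g = 0$ and $\H^0(\iota M) = \coker g$, matching the cohomology of $\VV(\coker M)$ degree-by-degree. Consequently $q$ furnishes a natural quasi-isomorphism $\iota(M) \cong \VV(\coker M)$ in $\D^b_\fg(K\ua_w)$, and hence an isomorphism in the Verdier quotient $\D^b_\fg(K\ua_w)/\perf$.

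The only genuine subtlety is the bookkeeping of internal gradings, ensuring that the degree $d$ shift built into the matrix-factorization action $s = f: M^0\ld \to M^{-1}\ld\langle d\rangle$ is compatible with the identification of $\VV(\coker M)$ as a graded $K\ua_w$-module; this is automatic, since the $s$-action on $\VV(\coker M)$ is zero in every internal degree. Functoriality of $q$ in $M$ is manifest from its pointwise definition, so naturality of the resulting isomorphism requires no additional argument.
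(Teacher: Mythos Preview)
Your proof is correct and takes a genuinely different, more elementary route than the paper. The paper establishes the commutativity by proving the relation $\left(-\stackrel{\LL}{\otimes}_{K\ua_w} S\ld/(w)\right)\circ\iota\cong\coker$, computing the left side explicitly via the Bar resolution $Q(K\ua_w,\iota(M))$ and recognising the result as an $S\ld/(w)$-free resolution of $\coker(g)$. You instead prove the adjoint relation $\iota\cong\VV\circ\coker$ by exhibiting the obvious quotient map $q:\iota(M)\to\VV(\coker M)$ and checking it is a $K\ua_w$-linear quasi-isomorphism, which amounts to the observation that $\iota(M)$ is already an $S\ld$-free resolution of $\coker(g)$ equipped with its natural square-zero nullhomotopy for $w$. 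Your route avoids the Bar resolution entirely and is shorter; the paper's route has the advantage of making the derived tensor product visible, which ties in with the surrounding discussion of how $-\stackrel{\LL}{\otimes}_{K\ua_w} S\ld/(w)$ recovers the $2$-periodic $S\ld/(w)$-free resolution.
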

\begin{proof}
For a graded matrix factorization $M\ua\ld=\left(M^0\ld\mor{f} M^{-1}\ld\mor{g} M^0\ld\right)$ we have
\begin{align*}
\iota(M\ua\ld)\makebox[5mm][c]{$\stackrel[\mathclap{K\ua_w}]{\LL}{\otimes}$} S\ld/(w) & \makebox[8mm][c]{$\cong$}
Q(K\ua_w,\iota(M\ua\ld))\makebox[5mm][c]{$\stackrel[\mathclap{K\ua_w}]{}{\otimes}$} S\ld/(w)\\
& \makebox[8mm][c]{$\cong$} \left(...\mor{g} M^0\ld\langle -d\rangle\mor{f}
  M^{-1}\ld\mor{g} M^0\ld\to 0\to ...\right)\makebox[5mm][c]{$\stackrel[\mathclap{S\ld}]{}{\otimes}$} S\ld/(w)
\end{align*}
which is canonically isomorphic to $\coker(g)$ in $\D^b(S\ld/(w))$. This shows that
$$\left(-\stackrel{\LL}{\otimes}_{K\ua_w}S\ld/(w)\right)\circ\iota\ \ \cong\ \ \coker.$$
\end{proof}

We end this section with a funny description of the translation functor on $\D^{b,b}_{\fr}(K\ua_w)/\perf$ as swapping
the roles of $s$ and $\differential$ in $K\ua_w$:

\begin{cor}\label{cor_swapit}
For $F\ua\ld\in\D^{b,b}_{\fr}(K\ua_w)$  there is a canonical isomorphism in $\D^{b,b}_{\fr}(K\ua_w)/\perf$:
\begin{equation}\label{eq:funnyshift}\begin{tikzpicture}[description/.style={fill=white,inner sep=2pt},baseline=(m-1-1.base)]
    \matrix (m) [matrix of math nodes, row sep=3em,
                 column sep=1.3em, text height=1.5ex, text depth=0.25ex,
                 inner sep=0pt, nodes={inner xsep=0.3333em, inner ysep=0.3333em}]
    {
      ... & F^{-1}\ld & F^0\ld & F^1\ld & ... & \cong & ... & F^1\ld\langle -d\rangle & F^0\ld & F^{-1}\ld\langle d\rangle &
      ...\\
    };
    \node[above=4mm,scale=0.6] at (m-1-7) {$-2$};
    \node[above=4mm,scale=0.6] at (m-1-8) {$-1$};
    \node[above=4mm,scale=0.6] at (m-1-9) {$0$};
    \node[above=4mm,scale=0.6] at (m-1-10) {$1$};
    \node[above=4mm,scale=0.6] at (m-1-11) {$2$};

    \node[above=4mm,scale=0.6] at (m-1-1) {$-2$};
    \node[above=4mm,scale=0.6] at (m-1-2) {$-1$};
    \node[above=4mm,scale=0.6] at (m-1-3) {$0$};
    \node[above=4mm,scale=0.6] at (m-1-4) {$1$};
    \node[above=4mm,scale=0.6] at (m-1-5) {$2$};
    
    \draw[->] ($(m-1-1.east) + (0,0.5mm)$) -- node[above,scale=0.75]{$\differential$} ($(m-1-2.west) + (0,0.5mm)$);
    \draw[->] ($(m-1-2.west) + (0,-0.5mm)$) -- node[below,scale=0.75]{$s$} ($(m-1-1.east) + (0,-0.5mm)$);

    \draw[->] ($(m-1-2.east) + (0,0.5mm)$) -- node[above,scale=0.75]{$\differential$} ($(m-1-3.west) + (0,0.5mm)$);
    \draw[->] ($(m-1-3.west) + (0,-0.5mm)$) -- node[below,scale=0.75]{$s$} ($(m-1-2.east) + (0,-0.5mm)$);

    \draw[->] ($(m-1-3.east) + (0,0.5mm)$) -- node[above,scale=0.75]{$\differential$} ($(m-1-4.west) + (0,0.5mm)$);
    \draw[->] ($(m-1-4.west) + (0,-0.5mm)$) -- node[below,scale=0.75]{$s$} ($(m-1-3.east) + (0,-0.5mm)$);

    \draw[->] ($(m-1-4.east) + (0,0.5mm)$) -- node[above,scale=0.75]{$\differential$} ($(m-1-5.west) + (0,0.5mm)$);
    \draw[->] ($(m-1-5.west) + (0,-0.5mm)$) -- node[below,scale=0.75]{$s$} ($(m-1-4.east) + (0,-0.5mm)$);

    \draw[->] ($(m-1-7.east) + (0,0.5mm)$) -- node[above,scale=0.75]{$s$} ($(m-1-8.west) + (0,0.5mm)$);
    \draw[->] ($(m-1-8.west) + (0,-0.5mm)$) -- node[below,scale=0.75]{$\differential$} ($(m-1-7.east) + (0,-0.5mm)$);

    \draw[->] ($(m-1-8.east) + (0,0.5mm)$) -- node[above,scale=0.75]{$s$} ($(m-1-9.west) + (0,0.5mm)$);
    \draw[->] ($(m-1-9.west) + (0,-0.5mm)$) -- node[below,scale=0.75]{$\differential$} ($(m-1-8.east) + (0,-0.5mm)$);

    \draw[->] ($(m-1-9.east) + (0,0.5mm)$) -- node[above,scale=0.75]{$s$} ($(m-1-10.west) + (0,0.5mm)$);
    \draw[->] ($(m-1-10.west) + (0,-0.5mm)$) -- node[below,scale=0.75]{$\differential$} ($(m-1-9.east) + (0,-0.5mm)$);

    \draw[->] ($(m-1-10.east) + (0,0.5mm)$) -- node[above,scale=0.75]{$s$} ($(m-1-11.west) + (0,0.5mm)$);
    \draw[->] ($(m-1-11.west) + (0,-0.5mm)$) -- node[below,scale=0.75]{$\differential$} ($(m-1-10.east) + (0,-0.5mm)$);
\end{tikzpicture}\end{equation}
In particular, there is a natural isomorphism in $\D^{b,b}_{\fr}(K\ua_w)/\Perf$:
\begin{equation*}\begin{tikzpicture}[description/.style={fill=white,inner sep=2pt}]
    \matrix (m) [matrix of math nodes, row sep=3em,
                 column sep=1.75em, text height=1.5ex, text depth=0.25ex,
                 inner sep=0pt, nodes={inner xsep=0.3333em, inner ysep=0.3333em}]
    {
      F\ua\ld[1] & \cong & ... & F^2\ld\langle -d\rangle & F^1\ld & F^0\ld\langle d\rangle & F^{-1}\ld\langle 2d\rangle &
      ...\\
    };

    \node[above=4mm,scale=0.6] at (m-1-3) {$-2$};
    \node[above=4mm,scale=0.6] at (m-1-4) {$-1$};
    \node[above=4mm,scale=0.6] at (m-1-5) {$0$};
    \node[above=4mm,scale=0.6] at (m-1-6) {$1$};
    \node[above=4mm,scale=0.6] at (m-1-7) {$2$};
    \node[above=4mm,scale=0.6] at (m-1-8) {$3$};

    \draw[->] ($(m-1-3.east) + (0,0.5mm)$) -- node[above,scale=0.75]{$s$} ($(m-1-4.west) + (0,0.5mm)$);
    \draw[->] ($(m-1-4.west) + (0,-0.5mm)$) -- node[below,scale=0.75]{$\differential$} ($(m-1-3.east) + (0,-0.5mm)$);

    \draw[->] ($(m-1-4.east) + (0,0.5mm)$) -- node[above,scale=0.75]{$s$} ($(m-1-5.west) + (0,0.5mm)$);
    \draw[->] ($(m-1-5.west) + (0,-0.5mm)$) -- node[below,scale=0.75]{$\differential$} ($(m-1-4.east) + (0,-0.5mm)$);

    \draw[->] ($(m-1-5.east) + (0,0.5mm)$) -- node[above,scale=0.75]{$s$} ($(m-1-6.west) + (0,0.5mm)$);
    \draw[->] ($(m-1-6.west) + (0,-0.5mm)$) -- node[below,scale=0.75]{$\differential$} ($(m-1-5.east) + (0,-0.5mm)$);

    \draw[->] ($(m-1-6.east) + (0,0.5mm)$) -- node[above,scale=0.75]{$s$} ($(m-1-7.west) + (0,0.5mm)$);
    \draw[->] ($(m-1-7.west) + (0,-0.5mm)$) -- node[below,scale=0.75]{$\differential$} ($(m-1-6.east) + (0,-0.5mm)$);

    \draw[->] ($(m-1-7.east) + (0,0.5mm)$) -- node[above,scale=0.75]{$s$} ($(m-1-8.west) + (0,0.5mm)$);
    \draw[->] ($(m-1-8.west) + (0,-0.5mm)$) -- node[below,scale=0.75]{$\differential$} ($(m-1-7.east) + (0,-0.5mm)$);
\end{tikzpicture}\end{equation*}
Here the small numbers above an expression indicate its cohomological degree.
\end{cor}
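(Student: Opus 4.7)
The plan is to invoke the equivalence $\fold : \D^{b,b}_{\fr}(K\ua_w)/\perf \cong \HMF^{\infty}(S\ld,w)$ of Theorem \ref{thm_bigtheorem} and observe that the two sides of the claimed isomorphism have literally the same image under $\fold$.

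Given $F\ua\ld$ with differential $\differential$ and $s$-action $s$, I would introduce a new $K\ua_w$-module $G\ua\ld$ with
$$G^k\ld \ :=\ F^{-k}\ld\langle kd\rangle,$$
where the differential $\differential_G : G^k \to G^{k+1}$ is defined by the original action $s_F : F^{-k} \to F^{-k-1}$, and the $s$-action $s_G : G^k \to G^{k-1}$ is defined by the original differential $\differential_F : F^{-k} \to F^{-k+1}$. The internal degree shifts $\langle kd\rangle$ are chosen precisely so that $s_F$ (which raises internal degree by $d$) becomes a degree-preserving map $G^k \to G^{k+1}$, while $\differential_F$ (degree preserving) becomes a map $G^k \to G^{k-1}$ that raises internal degree by $d$, matching the requirements for a $K\ua_w$-module. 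That $(G\ua\ld, \differential_G, s_G)$ is indeed a $K\ua_w$-module follows from Fact \ref{fact_higherhomotopiesasmodulestructure}: one must check $\differential_G^2 = s_F^2 = 0$, $s_G^2 = \differential_F^2 = 0$, and $\differential_G s_G + s_G \differential_G = s_F \differential_F + \differential_F s_F = w$, all of which hold because these are exactly the defining relations for $F\ua\ld$ being a $K\ua_w$-module.

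The crucial observation is then that $\fold(G\ua\ld) = \fold(F\ua\ld)$ as graded matrix factorizations, not merely up to homotopy. After reindexing $n \to -n$, we find
$$\fold(G\ua\ld)^0\ =\ \bigoplus_{n\in\ZZ} G^{2n}\ld\langle -nd\rangle\ =\ \bigoplus_{n\in\ZZ} F^{-2n}\ld\langle nd\rangle\ =\ \fold(F\ua\ld)^0,$$
and analogously $\fold(G\ua\ld)^{-1} = \fold(F\ua\ld)^{-1}$. Since the matrix factorization differentials are $\differential_G + s_G = s_F + \differential_F$ on the $G$ side and $\differential_F + s_F$ on the $F$ side, they literally agree. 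Applying the inverse equivalence $\iota$ of Theorem \ref{thm_bigtheorem} then yields a canonical isomorphism $F\ua\ld \cong G\ua\ld$ in $\D^{b,b}_{\fr}(K\ua_w)/\perf$, which is exactly the first isomorphism \eqref{eq:funnyshift}. The second isomorphism, for $F\ua\ld[1]$, is obtained by applying the first one to the shifted module $F\ua\ld[1]$ and using that $(F\ua\ld[1])^{-k}\ld = F^{1-k}\ld$, so that position $k$ of the swapped complex carries $F^{1-k}\ld\langle kd\rangle$, as displayed.

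There is no serious technical obstacle once Theorem \ref{thm_bigtheorem} is available; the only mildly delicate point is book-keeping the internal gradings so that $G\ua\ld$ really is a well-defined $K\ua_w$-module (as opposed to one whose structure maps have the wrong internal degrees). Once that is set up correctly, the proof reduces to the trivial observation that the fold construction symmetrizes $\differential$ and $s$, making the swap invisible in $\HMF(S\ld,w)$.
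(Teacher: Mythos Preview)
Your proposal is correct and follows essentially the same approach as the paper's own proof, which consists of the single sentence ``This follows from Theorem~\ref{thm_bigtheorem} together with the fact that the foldings of both sides in \eqref{eq:funnyshift} are the same.'' You simply supply the details the paper leaves implicit: naming the swapped module $G\ua\ld$, verifying it is a $K\ua_w$-module, and checking that $\fold(G\ua\ld)=\fold(F\ua\ld)$ on the nose.
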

\begin{proof}
This follows from Theorem \ref{thm_bigtheorem} together with the fact that the foldings of both sides in
\eqref{eq:funnyshift} are the same. 
\end{proof}

\subsection{Derived tensor products}\label{app_derivedtensorproducts}
\markright{\ref{app_derivedtensorproducts} Derived tensor products}

In this section we introduce and study the derived tensor product functor for modules over the Koszul-resolution
$K\ua_w$. We will see that this tensor product is compatible with the tensor product on matrix factorizations (Theorem
\ref{thm_compatible}), yielding a generalization of the statement about the compatibility of the stabilization functor
with tensor products of matrix factorizations from Section \ref{subsec_stabilization} (see Proposition
\ref{prop_tensorstabgeneral}).

\begin{definition}\label{def_derivedtensor}
The \textit{derived tensor product}
$$\D(K\ua_w)\times\D(K\ua_{w\p})\xrightarrow{\quad-\stackrel{\LL}{\otimes}_{S\ld}-\quad}\D(K\ua_w\otimes_{S\ld} 
K\ua_{w\p})$$ is defined as the composition
\begin{equation*}\begin{tikzpicture}[description/.style={fill=white,inner sep=2pt}]
    \matrix (m) [matrix of math nodes, row sep=3em,
                 column sep=2.5em, text height=1.5ex, text depth=0.25ex]
    {
       \D(K\ua_w)\times\D(K\ua_{w\p}) & \Ho(\Cof(K\ua_w))\times\Ho(\Cof(K\ua_{w\p}))\\
       \D(K\ua_w\otimes_{S\ld} K\ua_{w\p}) & \Ho(K\ua_w\otimes_{S\ld} K\ua_{w\p})\\
    };
    \draw[->] (m-1-1) -- node[scale=0.75,above]{$\cong$} (m-1-2);
    \draw[->] (m-1-2) -- node[scale=0.75,right]{$-\otimes_{S\ld}-$} (m-2-2);
    \draw[->] (m-2-2) -- (m-2-1);
    \draw[dashed, ->] (m-1-1) -- node[scale=0.75,left]{$-\stackrel{\LL}{\otimes}_{S\ld}-$}(m-2-1);
\end{tikzpicture}\end{equation*}
\end{definition}
\begin{rem}\label{rem_arbitraryresolution}
The derived tensor product depends on the choice of a quasi-inverse to the canonical
equivalence $$\Ho(\Cof(K\ua_{w^{(\prime)}}))\ \ \xrightarrow{\quad\cong\quad} \D(K\ua_{w^{(\prime)}}),$$ but any two
choices yield canonically isomorphic tensor products. In particular, by Fact \ref{fact_semifreecofibrant}, the derived
tensor product can be computed via semi-free resolutions. 
\end{rem}
Although the derived tensor product in Definition \ref{def_derivedtensor} is defined through cofibrant
resolutions, the derived tensor product of two bounded above $K\ua_{w^{(\prime)}}$-modules can actually be computed
through $S\ld$-free resolutions: 
\begin{fact}\label{fact_simpletensor}
For $M\ua\ld\in\D^{-,-}_{\fr}(K\ua_w)$, $N\ua\ld\in\D^{-,-}_{\fr}(K\ua_{w\p})$ there is a canonical isomorphism in
$\D(K\ua_w\otimes_{S\ld} K\ua_{w\p})$ 
$$M\ua\ld\stackrel{\LL}{\otimes}_{S\ld} N\ua\ld\quad\cong\quad M\ua\ld\otimes_{S\ld} N\ua\ld.$$
\end{fact}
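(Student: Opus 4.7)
The plan is to unwind Definition \ref{def_derivedtensor} directly. By that definition, the derived tensor product is computed by replacing $M\ua\ld$ and $N\ua\ld$ by cofibrant $K\ua_w$- respectively $K\ua_{w\p}$-modules and then tensoring over $S\ld$. By Proposition \ref{prop_freetruncation} (and its proof via the small object argument, which preserves boundedness from above), we may choose semi-free resolutions $q_M: \tilde{M}\ua\ld \to M\ua\ld$ and $q_N: \tilde{N}\ua\ld \to N\ua\ld$ with $\tilde{M}\ua\ld \in \D^{-,-}_{\fr}(K\ua_w)$ and $\tilde{N}\ua\ld \in \D^{-,-}_{\fr}(K\ua_{w\p})$. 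Since semi-free modules are cofibrant (Fact \ref{fact_semifreecofibrant}), the derived tensor product $M\ua\ld \stackrel{\LL}{\otimes}_{S\ld} N\ua\ld$ is represented by $\tilde{M}\ua\ld \otimes_{S\ld} \tilde{N}\ua\ld$, and the assertion of the fact amounts to showing that the canonical morphism
$$q_M \otimes q_N:\ \tilde{M}\ua\ld \otimes_{S\ld} \tilde{N}\ua\ld\ \longrightarrow\ M\ua\ld \otimes_{S\ld} N\ua\ld$$
is a quasi-isomorphism of $K\ua_w \otimes_{S\ld} K\ua_{w\p}$-modules.

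Factoring this map as $\tilde{M}\ua\ld \otimes_{S\ld} \tilde{N}\ua\ld \xrightarrow{q_M \otimes \id} M\ua\ld \otimes_{S\ld} \tilde{N}\ua\ld \xrightarrow{\id \otimes q_N} M\ua\ld \otimes_{S\ld} N\ua\ld$ and using that quasi-isomorphy satisfies two-out-of-three, it suffices to establish the following key lemma: if $f: A\ua\ld \to B\ua\ld$ is a quasi-isomorphism of bounded above, $S\ld$-free complexes of graded $S\ld$-modules and $C\ua\ld$ is another bounded above, $S\ld$-free complex of graded $S\ld$-modules, then $f \otimes_{S\ld} \id_{C\ua\ld}$ is a quasi-isomorphism. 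Here I make essential use of the fact that the categories $\D^{-,-}_{\fr}(K\ua_{w^{(\prime)}})$ consist of modules whose underlying complexes of $S\ld$-modules are already bounded above and $S\ld$-free; without these hypotheses the naive tensor product would differ from the derived one.

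The key lemma is equivalent (via the cone) to the statement that tensoring a bounded above, $S\ld$-free, acyclic complex $T\ua\ld$ with an arbitrary bounded above, $S\ld$-free complex $C\ua\ld$ yields an acyclic complex. I expect this to be the main technical step, but it is a classical argument: by a standard inductive construction (starting from the top degree and lifting the successive contracting homotopies through the free modules), a bounded above acyclic complex of free $S\ld$-modules is contractible, i.e.\ admits a chain homotopy $h$ with $\d h + h \d = \id$. Tensoring $h$ over $S\ld$ with $\id_{C\ua\ld}$ then furnishes a chain contraction of $T\ua\ld \otimes_{S\ld} C\ua\ld$, establishing acyclicity. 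Combining this with the preceding paragraphs yields the claimed canonical isomorphism $M\ua\ld \stackrel{\LL}{\otimes}_{S\ld} N\ua\ld \cong M\ua\ld \otimes_{S\ld} N\ua\ld$ in $\D(K\ua_w \otimes_{S\ld} K\ua_{w\p})$, and naturality in $M\ua\ld$ and $N\ua\ld$ is immediate from the construction.
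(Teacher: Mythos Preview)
Your proof is correct and follows essentially the same route as the paper: choose bounded above semi-free resolutions, factor $q_M\otimes q_N$ through two steps, and use that a quasi-isomorphism between bounded above $S\ld$-free complexes is an $S\ld$-linear homotopy equivalence (equivalently, has contractible cone), so tensoring preserves it. The paper phrases the key step directly as ``$p$ and $q$ are homotopy equivalences of complexes of $S\ld$-modules, and homotopy equivalences are stable under tensoring'', whereas you pass through the cone and an explicit contracting homotopy; this is the same argument in slightly different words.
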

\begin{proof}
If $p: QM\ua\ld\to M\ua\ld$ and $q: QN\ua\ld\to N\ua\ld$ denote bounded above semi-free replacements of $M\ua\ld$ and
$N\ua\ld$, respectively, both $p$ and $q$ are homotopy equivalences of complexes of $S\ld$-modules, since all complexes
involved are bounded above and $S\ld$-free. As the class of homotopy equivalences is stable under tensoring with
arbitrary complexes, we get canonical isomorphisms in $\D(K\ua_w\otimes_{S\ld} K\ua_{w\p})$
$$M\ua\ld\stackrel{\LL}{\otimes}_{S\ld} N\ua\ld\ \ \stackrel{\text{Def.}}{=}\ \ QM\ua\ld\otimes_{S\ld} QN\ua\ld\ \
\stackrel[\cong]{p\otimes\id}{\longrightarrow}\ \ M\ua\ld\otimes_{S\ld} QN\ua\ld\ \ \stackrel[\cong]{\id\otimes
  q}{\longrightarrow}\ \ M\ua\ld\otimes_{S\ld} N\ua\ld$$ as claimed.
\end{proof}

The derived tensor product on $\D(K\ua_{w^{(\prime)}})$ is also well behaved in the sense that is preserves complexes
with bounded cohomology; note that this is not true for the derived tensor product on $\D(S\ld/(w^{(\prime)}))$.
\begin{fact}\label{fact_preservesboundedness}
Let $M\ua\ld\in\D^b(K\ua_w)$ and $N\ua\ld\in\D^b(K\ua_{w\p})$. Then $M\ua\stackrel{\LL}{\otimes}_{S\ld}
N\ua\ld\in\D^b(K\ua_w\otimes_{S\ld} K\ua_{w\p})$. In other words, the dashed arrow in the following diagram exists:
\begin{equation*}\begin{tikzpicture}[description/.style={fill=white,inner sep=2pt}]
    \matrix (m) [matrix of math nodes, row sep=3em,
                 column sep=2.5em, text height=1.5ex, text depth=0.25ex]
    {
       \D^b(K\ua_w)\times\D^b(K\ua_{w\p}) && \D^b(K\ua_w\otimes_{S\ld} K\ua_{w\p}) \\
       \D(K\ua_w)\times\D(K\ua_{w\p}) && \D(K\ua_w\otimes_{S\ld} K\ua_{w\p}) \\
    };
    \draw[dashed,->] (m-1-1) -- node[scale=0.75,above]{$-\stackrel{\LL}{\otimes}_{S\ld}-$}(m-1-3);
    \draw[->] (m-2-1) -- node[scale=0.75,above]{$-\stackrel{\LL}{\otimes}_{S\ld}-$}(m-2-3);
    \draw[right hook->] (m-1-1) -- (m-2-1);
    \draw[right hook->] (m-1-3) -- (m-2-3); 
\end{tikzpicture}\end{equation*}
\end{fact}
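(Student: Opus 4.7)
The plan is to reduce the computation of $M\ua\ld\stackrel{\LL}{\otimes}_{S\ld}N\ua\ld$ from possibly unbounded semi-free resolutions to the naive tensor product of honestly bounded, $S\ld$-free representatives, after which the boundedness of cohomology becomes automatic.

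First I would replace $M\ua\ld$ and $N\ua\ld$ by honestly bounded, $S\ld$-free representatives. Since $K\ua_w$ and $K\ua_{w\p}$ are concentrated in non-positive cohomological degrees, Fact \ref{fact_truncation} yields $\D^b(K\ua_{w^{(\prime)}}) \cong \D^{b,b}(K\ua_{w^{(\prime)}})$ via suitable truncation. Combined with Proposition \ref{prop_freetruncation}, which applies because $S\ld$ is regular local and $K\ua_{w^{(\prime)}}$ is $S\ld$-free, this gives the further equivalence $\D^{b,b}(K\ua_{w^{(\prime)}}) \cong \D^{b,b}_{\fr}(K\ua_{w^{(\prime)}})$. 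I thus obtain quasi-isomorphic $K\ua_w$- and $K\ua_{w\p}$-modules $\tilde{M}\ua\ld \in \D^{b,b}_{\fr}(K\ua_w)$ and $\tilde{N}\ua\ld \in \D^{b,b}_{\fr}(K\ua_{w\p})$.

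Next I would apply Fact \ref{fact_simpletensor}, according to which for bounded above, $S\ld$-free modules the derived tensor product agrees with the naive one; this provides the canonical isomorphism $M\ua\ld\stackrel{\LL}{\otimes}_{S\ld}N\ua\ld \cong \tilde{M}\ua\ld\otimes_{S\ld}\tilde{N}\ua\ld$ in $\D(K\ua_w\otimes_{S\ld}K\ua_{w\p})$. The right-hand side is the ordinary tensor product of two bounded complexes of $S\ld$-modules, hence itself a bounded complex, so its cohomology vanishes outside a finite range and the object lies in $\D^b(K\ua_w\otimes_{S\ld}K\ua_{w\p})$. There is no serious obstacle here: the argument is essentially a chain of reductions via Fact \ref{fact_truncation}, Proposition \ref{prop_freetruncation}, and Fact \ref{fact_simpletensor}. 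The only subtle point is to verify that the reductions produce genuine $K\ua_{w^{(\prime)}}$-modules (and not merely quasi-isomorphic complexes of $S\ld$-modules) so that the resulting tensor product inherits a canonical module structure over $K\ua_w\otimes_{S\ld}K\ua_{w\p}$; this is precisely what the cited equivalences of categories guarantee.
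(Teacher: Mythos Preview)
Your proposal is correct and follows essentially the same approach as the paper: reduce to bounded $S\ld$-free representatives via Fact~\ref{fact_truncation} and Proposition~\ref{prop_freetruncation}, then apply Fact~\ref{fact_simpletensor} so that the derived tensor product is computed naively on bounded complexes. The paper's proof is just a terser version of exactly this argument.
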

\begin{proof}
By Fact \ref{fact_truncation} and Proposition \ref{prop_freetruncation} the inclusion $\D^{b,b}_{\fr}(K\ua_{w^{(\prime)}})\to
\D^{b}(K\ua_{w^{(\prime)}})$ is an equivalence. Hence we may without loss of generality assume that $M\ua\ld$ and
$N\ua\ld$ are bounded and $S\ld$-free, and in this case Fact \ref{fact_simpletensor} yields a canonical isomorphism
$M\ua\ld\stackrel{\LL}{\otimes}_{S\ld} N\ua\ld\cong M\ua\ld\otimes_{S\ld} 
N\ua\ld$. As $M\ua\ld\otimes_{S\ld} N\ua\ld$ is bounded, the claim follows.
\end{proof}

\begin{fact}\label{fact_canmorphkoszul}
For homogeneous $w, w\p\in S\ld$ of degree $d$, there is a canonical morphism of
dg-$S\ld$-algebras 
\begin{align*}
K\ua_{w+w\p}&\ \longrightarrow\ K\ua_w\otimes_{S\ld} K\ua_{w\p}\\
s&\ \longmapsto s\otimes 1 + 1\otimes s
\end{align*}
\end{fact}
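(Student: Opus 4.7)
The plan is to invoke the universal property of $K\ua_{w+w\p}$ from Definition \ref{def_resolutions}: as the free graded-commutative dg-$S\ld$-algebra on a single generator $t$ of cohomological degree $-1$ and internal degree $d$ with $\differential(t) = (w+w\p)\cdot 1$, giving a morphism $K\ua_{w+w\p}\to A\ua\ld$ of dg-$S\ld$-algebras into any graded-commutative dg-$S\ld$-algebra $A\ua\ld$ amounts to picking an element $a\in A^{-1}\ld$ of internal degree $d$ with $a^2=0$ and $\differential(a)=(w+w\p)\cdot 1$. Thus, first I would check that $K\ua_w\otimes_{S\ld}K\ua_{w\p}$ is graded-commutative (which is immediate from graded-commutativity of the two factors), and then specialize the universal property to the candidate element $a := s\otimes 1 + 1\otimes s$.

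The bidegree check is trivial: each summand lies in cohomological degree $-1$ and internal degree $d$. For the differential, compatibility with the tensor product differential gives
\[
\differential(s\otimes 1 + 1\otimes s)\ =\ \differential(s)\otimes 1 + 1\otimes\differential(s)\ =\ w\otimes 1 + 1\otimes w\p\ =\ (w+w\p)\cdot(1\otimes 1).
\]
The only step requiring a moment of care is the squaring identity $a^2=0$. Expanding,
\[
(s\otimes 1 + 1\otimes s)^2 \ = \ s^2\otimes 1 + (s\otimes 1)(1\otimes s) + (1\otimes s)(s\otimes 1) + 1\otimes s^2,
\]
and the outer terms vanish because $s^2=0$ in $K\ua_w$ and $K\ua_{w\p}$ (graded-commutativity on an odd generator). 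The middle two terms kill each other precisely by the Koszul sign rule governing multiplication in a tensor product of graded algebras: $(s\otimes 1)(1\otimes s) = s\otimes s$, while $(1\otimes s)(s\otimes 1) = (-1)^{|s|\cdot|s|}\, s\otimes s = -s\otimes s$.

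There is no real obstacle here; the whole argument is a bookkeeping of signs. Once these three identities are verified, the universal property produces the desired morphism of dg-$S\ld$-algebras $K\ua_{w+w\p}\to K\ua_w\otimes_{S\ld}K\ua_{w\p}$ sending $s\mapsto s\otimes 1 + 1\otimes s$, and canonicity is automatic from this universal characterization.
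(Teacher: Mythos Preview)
Your proof is correct and follows the same approach as the paper: verify that $s\otimes 1 + 1\otimes s$ squares to zero and has differential $w+w\p$, then invoke the universal property of $K\ua_{w+w\p}$ from Definition~\ref{def_resolutions}. You have simply spelled out the sign bookkeeping that the paper leaves implicit.
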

\begin{proof}
The element $s\otimes 1 + 1\otimes s$ in $K\ua_w\otimes_{S\ld} K\ua_{w\p}$ satisfies
$\d(s\otimes 1 + 1\otimes s) = w + w\p$ and $(s\otimes 1+1\otimes s)^2 = 0$, and hence $$K\ua_{w+w\p}\ni s\ \longmapsto
s\otimes 1+1\otimes s\in K\ua_w\otimes_{S\ld} K\ua_{w\p}$$ indeed extends uniquely 
to a morphism of dg-$S\ld$-algebras $K\ua_{w+w\p}\to K\ua_w\otimes_{S\ld} K\ua_{w\p}$.
\end{proof}
Concatenating the derived tensor
product $$\D(K\ua_w)\times\D(K\ua_{w\p})\xrightarrow{-\stackrel{\LL}{\otimes}_{S\ld}-}\D(K\ua_w\otimes_{S\ld} 
K\ua_{w\p})$$ with 
the functor $\D(K\ua_w\otimes_{S\ld} K\ua_{w\p})\to\D(K\ua_{w+w\p})$ induced by the morphism $K\ua_{w+w\p}\to
K\ua_w\otimes_{S\ld} K\ua_{w\p}$ from Fact \ref{fact_canmorphkoszul} yields another derived tensor product functor
$$-\stackrel{\LL}{\otimes}_{S\ld}-:\ \ \D(K\ua_w)\times\D(K\ua_{w\p})\ \xrightarrow{-\stackrel{\LL}{\otimes}_{S\ld}-}\
\D(K\ua_w\otimes_{S\ld} K\ua_{w\p})\ \longrightarrow\ \D(K\ua_{w+w\p}).$$
We will now see that this derived tensor product is compatible with the tensor product functor
$$\HMF^\infty(S\ld,w)\times\HMF^\infty(S\ld,w\p)\ \xrightarrow{-\otimes_{S\ld}-}\HMF^\infty(S\ld,w+w\p)$$ with respect
to the canonical functor $$\D^b(K\ua_w)\ \cong\ \D^{b,b}_{\fr}(K\ua_w)\ 
\xrightarrow{\ \can\ }\ \D^{b,b}_{\fr}(K\ua_w)/\perf\ \stackrel{\fold}{\longrightarrow}\
\HMF^\infty(S\ld,w)$$ constructed in Section \ref{sec_bigtheorem} 
(see Theorem \ref{thm_bigtheorem}).

\begin{theorem}\label{thm_compatible}
Let $S\ld$ be a regular local graded ring and let $w,w\p\in S\ld$ be homogeneous of degree $d$. Then the diagram
\begin{equation*}\begin{tikzpicture}[description/.style={fill=white,inner sep=2pt}]
    \matrix (m) [matrix of math nodes, row sep=2.5em,
                 column sep=2.5em, text height=1.5ex, text depth=0.25ex,
                 inner sep=0pt, nodes={inner xsep=0.3333em, inner ysep=0.3333em}]
    {
       \D^b(K\ua_w)\times\D^b(K\ua_{w\p}) && \D^b(K\ua_{w+w\p})\\
       \HMF^\infty(S\ld,w)\times\HMF^\infty(S\ld,w\p) && \HMF^\infty(S\ld,w+w\p)\\
    };
    \draw[->] (m-1-1) -- node[above,scale=0.75]{$-\stackrel{\LL}{\otimes}_{S\ld}-$} (m-1-3);
    \draw[->] (m-2-1) -- node[above,scale=0.75]{$-\otimes_{S\ld}-$} (m-2-3);
    \draw[->] (m-1-1) -- (m-2-1);
    \draw[->] (m-1-3) -- (m-2-3);
\end{tikzpicture}\end{equation*}
commutes up to natural isomorphism.
\end{theorem}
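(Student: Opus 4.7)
The plan is to show that the functor $\fold$ of Proposition~\ref{prop_fold} intertwines the two tensor products, i.e.\ to exhibit a natural isomorphism of matrix factorizations
$$\fold\left(M\ua\ld\stackrel{\LL}{\otimes}_{S\ld} N\ua\ld\right)\ \cong\ \fold(M\ua\ld)\otimes_{S\ld}\fold(N\ua\ld)$$
for all $M\ua\ld\in\D^b(K\ua_w)$ and $N\ua\ld\in\D^b(K\ua_{w\p})$. The theorem then follows from Theorem~\ref{thm_bigtheorem}.

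First I reduce to convenient representatives. By Fact~\ref{fact_truncation} and Proposition~\ref{prop_freetruncation} the inclusion $\D^{b,b}_\fr(K\ua_{w^{(\prime)}})\hookrightarrow\D^b(K\ua_{w^{(\prime)}})$ is an equivalence, so without loss of generality $M\ua\ld$ and $N\ua\ld$ are bounded and $S\ld$-free. In this situation Fact~\ref{fact_simpletensor} tells us that the derived tensor product is computed naively as $M\ua\ld\otimes_{S\ld} N\ua\ld$, equipped with the $K\ua_{w+w\p}$-module structure pulled back along the algebra morphism of Fact~\ref{fact_canmorphkoszul}. Concretely, $s$ acts via the graded Leibniz rule as $s_M\otimes\id + (-1)^{|m|}\id\otimes s_N$, and the differential is $\differential_M\otimes\id + (-1)^{|m|}\id\otimes\differential_N$; Fact~\ref{fact_preservesboundedness} guarantees that the result lies in $\D^{b,b}_\fr(K\ua_{w+w\p})$.

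Second, I construct the isomorphism $\Phi$ on the level of underlying $\ZZ/2$-graded $S\ld$-modules. Writing $\fold(M)^\ueven\ld := \bigoplus_a M^{2a}\ld\langle -ad\rangle$ and similarly for the odd part and for $N$, the even component of the folded naive tensor product decomposes according to the parity of $p$ in $p+q=2n$ as
\begin{align*}
\bigoplus_{n} (M\otimes N)^{2n}\ld\langle -nd\rangle\ \cong\ &\bigoplus_{a,b} M^{2a}\ld\otimes_{S\ld} N^{2b}\ld\langle -(a+b)d\rangle\\
&\oplus \bigoplus_{a,b} M^{2a-1}\ld\otimes_{S\ld} N^{2b-1}\ld\langle -(a+b-1)d\rangle,
\end{align*}
which matches summand-by-summand with $\fold(M)^\ueven\ld\otimes_{S\ld}\fold(N)^\ueven\ld\oplus\fold(M)^\uodd\ld\otimes_{S\ld}\fold(N)^\uodd\ld\langle d\rangle$; the internal gradings agree by a direct check. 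The odd part is handled symmetrically. Up to a Koszul sign twist on the odd-odd summand, one verifies that the folded differential $\differential+s$ on $M\ua\ld\otimes_{S\ld}N\ua\ld$ reproduces block-by-block the differential prescribed by Definition~\ref{def_internaltensor}. Naturality in both arguments is immediate because every identification is canonical, and since $\Phi$ is defined at the level of the dg-category $\MF\ldg^\infty$ it descends through the Verdier quotient by $\perf^\infty$ to give the desired natural isomorphism in $\HMF^\infty(S\ld,w+w\p)$.

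The main technical obstacle will be the sign bookkeeping in the comparison of differentials: the Koszul-signed differential $\differential_M\otimes\id+(-1)^{|m|}\id\otimes\differential_N$ of dg-modules must be matched with the fixed-sign block matrix $\left(\begin{smallmatrix}\id\otimes f\p & g\otimes\id\\ f\otimes\id & -\id\otimes g\p\end{smallmatrix}\right)$ of the matrix-factorization tensor product, and similarly for the $s$-part. This forces a twist by a sign of the form $(-1)^a$ (or $(-1)^b$) on the odd-odd summand $M^{2a-1}\ld\otimes N^{2b-1}\ld$ in the identification of underlying modules. Once the correct sign convention is fixed the verification is routine, but it is the one place where the proof is not a formal consequence of the constructions of the previous sections.
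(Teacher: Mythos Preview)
Your proposal is correct and follows essentially the same route as the paper: reduce to bounded $S\ld$-free representatives via Fact~\ref{fact_truncation} and Proposition~\ref{prop_freetruncation}, compute the derived tensor naively by Fact~\ref{fact_simpletensor}, and then check by hand that folding the naive tensor product agrees with the tensor product of the foldings (the paper frames this as commutativity of a lower square and refers back to the calculation in Proposition~\ref{prop_tensorstabgeneral}, omitting the sign and grading details you spell out). The appeal to Theorem~\ref{thm_bigtheorem} at the outset is not actually needed---the vertical maps in the diagram are by definition $\fold$ composed with the quotient, so showing that $\fold$ intertwines the tensor products already gives the result---but this does no harm.
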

\begin{proof}
By Fact \ref{fact_simpletensor} the upper square in the following diagram is commutative
\begin{equation*}\begin{tikzpicture}[description/.style={fill=white,inner sep=2pt}]
    \matrix (m) [matrix of math nodes, row sep=3em,
                 column sep=2.5em, text height=1.5ex, text depth=0.25ex,
                 inner sep=0pt, nodes={inner xsep=0.3333em, inner ysep=0.3333em}]
    {
       \D^b(K\ua_w)\times\D^b(K\ua_{w\p}) && \D^b(K\ua_{w+w\p})\\
       \D^{b,b}_{\fr}(K\ua_w)\times\D^{b,b}_{\fr}(K\ua_{w\p}) && \D^{b,b}_{\fr}(K\ua_{w+w\p})\\
       \HMF^\infty(S\ld,w)\times\HMF^\infty(S\ld,w\p) && \HMF^\infty(S\ld,w+w\p)\\
    };
    \draw[->] (m-1-1) -- node[above,scale=0.75]{$-\stackrel{\LL}{\otimes}_{S\ld} -$} (m-1-3);
    \draw[->] (m-2-1) -- node[above,scale=0.75]{$-\otimes_{S\ld} -$} (m-2-3);
    \draw[->] (m-3-1) -- node[above,scale=0.75]{$-\otimes_{S\ld} -$} (m-3-3);

    \draw[->] (m-2-1) -- node[left,scale=0.75]{$\can$} node[right,scale=0.75]{$\cong$} (m-1-1);
    \draw[->] (m-2-3) -- node[right,scale=0.75]{$\can$} node[left,scale=0.75]{$\cong$} (m-1-3);

    \draw[->] (m-2-1) -- node[description,scale=0.75]{$\fold\times\fold$} (m-3-1);
    \draw[->] (m-2-3) -- node[description,scale=0.75]{$\fold$} (m-3-3);
\end{tikzpicture}\end{equation*}
Hence, to prove the theorem it suffices to prove that the lower square is commutative. This is the
same calculation as in Proposition \ref{prop_tensorstabgeneral}: If
$M\ua\ld\in\D^{b,b}_{\fr}(K\ua_w)$ and $N\ua\ld\in\D^{b,b}_{\fr}(K\ua_{w\p})$, we have
\begin{align*}
\makebox[8mm][c]{}& \fold(M\ua\ld)\stackrel[S\ld]{}{\otimes} \fold(N\ua\ld)\\
 \makebox[8mm][c]{$=$} &\left(M\ueven\ld\stackrel[S\ld]{}{\otimes}
  N\uodd\ld\oplus M\uodd\ld\stackrel[S\ld]{}{\otimes} N\ueven\ld\ \leftrightarrows\ M\ueven\ld\stackrel[S\ld]{}{\otimes}
  N\ueven\ld\oplus M\uodd\ld\stackrel[S\ld]{}{\otimes} N\uodd\ld\langle d\rangle\right)\\
 \makebox[8mm][c]{$=$} & \left(\left(M\ua\ld\otimes_{S\ld}
  N\ua\ld\right)\uodd\ \leftrightarrows\ \left(M\ua\ld\otimes_{S\ld} N\ua\ld\right)\ueven\right)\\ 
 \makebox[8mm][c]{$=$} & \fold\left(M\ua\ld\otimes_{S\ld} N\ua\ld\right)
\end{align*}
as claimed (we omit the details about differentials and internal grading).
\end{proof}

\begin{theorem}\label{thm_naturalmorphism}
Let $S\ld$ be a regular local graded ring, and let $w, w\p\in S\ld$ be homogeneous the same degree. Then, if $M\ld\in 
S\ld/(w)\Mod$ and $N\ld\in S\ld/(w\p)\Mod$, there is a canonical morphism in $\HMF^\infty(S\ld,w+w\p)$
\begin{align}
\label{eq:natmorph}M\ld\stab{w}\otimes_{S\ld} N\ld\stab{w\p}\ \longrightarrow\ \left(M\ld\otimes_{S\ld}
  N\ld\right)\stab{w+w\p}\end{align}
which is an isomorphism if $\tor^k_S(M\ld,N\ld)\ld = 0$ for all $k>0$. 
\end{theorem}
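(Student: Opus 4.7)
The plan is to interpret both sides of \eqref{eq:natmorph} through the equivalence $\fold: \D^{b,b}_\fr(K\ua_{w+w\p})/\Perf^\infty \xrightarrow{\cong} \HMF^\infty(S\ld, w+w\p)$ established in Theorem \ref{thm_bigtheorem}. View $M\ld$ as a $K\ua_w$-module by restriction along $\kappa_w$, that is, as a complex concentrated in cohomological degree zero on which the nullhomotopy generator $s$ acts trivially, and similarly treat $N\ld$ as a $K\ua_{w\p}$-module. By the compatibility of $\fold$ with the stabilization functor recorded at the end of Section \ref{sec_bigtheorem}, these $K$-module structures fold back to $M\ld\stab{w}$ and $N\ld\stab{w\p}$, respectively.

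To construct the morphism, choose bounded above semi-free (hence $S\ld$-free) resolutions $P\ua\ld \to M\ld$ in $K\ua_w\Mod$ and $Q\ua\ld \to N\ld$ in $K\ua_{w\p}\Mod$. Their tensor product $P\ua\ld\otimes_{S\ld} Q\ua\ld$ is semi-free as a $K\ua_w\otimes_{S\ld} K\ua_{w\p}$-module and hence represents $M\ld\stackrel{\LL}{\otimes}_{S\ld} N\ld$ in $\D(K\ua_w\otimes_{S\ld} K\ua_{w\p})$; pulling back along the homomorphism $K\ua_{w+w\p}\to K\ua_w\otimes_{S\ld} K\ua_{w\p}$ of Fact \ref{fact_canmorphkoszul} lands it in $\D^b(K\ua_{w+w\p})$. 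On the other hand, when we endow $M\ld\otimes_{S\ld} N\ld$ with the $K\ua_w\otimes_{S\ld} K\ua_{w\p}$-module structure in which both $s\otimes 1$ and $1\otimes s$ act trivially, its restriction along the same homomorphism coincides with $M\ld\otimes_{S\ld} N\ld$ regarded as a $K\ua_{w+w\p}$-module via $\kappa_{w+w\p}$. Thus the augmentation $P\ua\ld\otimes_{S\ld} Q\ua\ld\to M\ld\otimes_{S\ld} N\ld$ is a bona fide morphism in $\D^b(K\ua_{w+w\p})$, and applying $\fold$ yields, via Theorem \ref{thm_compatible} on the source, the desired canonical morphism $M\ld\stab{w}\otimes_{S\ld} N\ld\stab{w\p}\to(M\ld\otimes_{S\ld} N\ld)\stab{w+w\p}$. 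Naturality in $M\ld,N\ld$ and independence from the choice of resolutions come for free from the uniqueness of cofibrant replacements up to homotopy in the model structure of Theorem \ref{thm_modelstructure}.

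Finally, under the hypothesis $\tor_k^{S\ld}(M\ld,N\ld)\ld = 0$ for all $k>0$, the augmentation $P\ua\ld\otimes_{S\ld} Q\ua\ld \to M\ld\otimes_{S\ld} N\ld$ is already a quasi-isomorphism of complexes of $S\ld$-modules: by Fact \ref{fact_simpletensor} the source computes $M\ld\stackrel{\LL}{\otimes}_{S\ld} N\ld$ in $\D(S\ld)$, which under the Tor-vanishing hypothesis coincides with the ordinary tensor product. Since quasi-isomorphisms between bounded above $S\ld$-free $K\ua_{w+w\p}$-modules become homotopy equivalences of matrix factorizations under $\fold$ (step (4) in the proof of Proposition \ref{prop_fold}), this finishes the isomorphism claim. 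The main difficulty in executing this plan will not be conceptual but rather bookkeeping: one must carefully track the several Koszul algebras, their restriction and extension of scalars functors, and the various $K$-module structures on $M\ld\otimes_{S\ld} N\ld$ coming from either tensor factor versus from $\kappa_{w+w\p}$. This is precisely the kind of coherence the derived-category formalism of Appendix A was set up to handle automatically, so the task really reduces to verifying that the diagrams relating the restriction along $K\ua_{w+w\p}\to K\ua_w\otimes_{S\ld} K\ua_{w\p}$ and the augmentations commute.
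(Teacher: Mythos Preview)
Your proposal is correct and follows essentially the same route as the paper: both arguments pass through the equivalence $\D^b(K\ua_{w^{(\prime)}})\cong\HMF^\infty(S\ld,w^{(\prime)})$, invoke Theorem~\ref{thm_compatible} for the source, and realize the desired morphism as the image under $\fold$ of the canonical map $M\ld\stackrel{\LL}{\otimes}_{S\ld} N\ld\to M\ld\otimes_{S\ld} N\ld$, which is an isomorphism precisely under the $\tor$-vanishing hypothesis. The paper packages this as a single two-square diagram (upper square carrying the natural transformation $\stackrel{\LL}{\otimes}\Rightarrow\otimes$, lower square being Theorem~\ref{thm_compatible}), whereas you spell out the resolutions and the augmentation explicitly, but the content is identical.
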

\begin{proof}
We have to compare the images of $(M\ld,N\ld)$ under the composed functors from the upper left to the lower right corner
in the following diagram:
\begin{equation*}\begin{tikzpicture}[description/.style={fill=white,inner sep=2pt}]
    \matrix (m) [matrix of math nodes, row sep=2.5em,
                 column sep=2.5em, text height=1.5ex, text depth=0.25ex,
                 inner sep=0pt, nodes={inner xsep=0.3333em, inner ysep=0.3333em}]
    {
       S\ld/(w)\Mod\times S\ld/(w\p)\Mod && S\ld/(w+w\p)\Mod\\
       \D^b(K\ua_w)\times\D^b(K\ua_{w\p}) && \D^b(K\ua_{w+w\p})\\
       \HMF^\infty(S\ld,w)\times\HMF^\infty(S\ld,w\p) && \HMF^\infty(S\ld,w+w\p)\\
    };
    \draw[->] (m-2-1) -- node[above,scale=0.75]{$-\stackrel{\LL}{\otimes}_{S\ld}-$} (m-2-3);
    \draw[->] (m-3-1) -- node[above,scale=0.75]{$-\otimes_{S\ld}-$} (m-3-3);
    \draw[->] (m-2-1) -- (m-3-1);
    \draw[->] (m-2-3) -- (m-3-3);
    \draw[->] (m-1-1) -- node[above,scale=0.75]{$-\otimes_{S\ld}-$} (m-1-3);
    \draw[->] (m-1-1) -- node[description,scale=0.75]{$\can$} (m-2-1);
    \draw[->] (m-1-3) -- node[description,scale=0.75]{$\can$} (m-2-3);
    \node[rotate=30] at (barycentric cs:m-1-3=0.5,m-2-1=0.5) {$\Longrightarrow$};
    \node at (barycentric cs:m-2-3=0.5,m-3-1=0.5) {$\circlearrowright$};
\end{tikzpicture}\end{equation*}
The lower square commutes by Theorem \ref{thm_compatible}, and the upper square admits a natural transformation as
indicated, given by the canonical morphism $M\ld\stackrel{\LL}{\otimes}_{S\ld} N\ld\to M\ld\otimes_{S\ld} N\ld$. This
morphism is an isomorphism if and only if $\tor^k_S(M\ld,N\ld)\ld=0$ for all $k>0$, and the claim follows.
\end{proof}

\begin{rem}
It is not clear to the author what the morphism \eqref{eq:natmorph} looks like explicitly, because for bounded
$S\ld$-free resolutions $Q M\ld\to M\ld$ and $QN\ld\to N\ld$ with square-zero nullhomotopy for the multiplication by $w$
and $w\p$, respectively, the morphism
$$QM\ld\otimes_{S\ld} QN\ld\ \cong\ M\ld\stackrel{\LL}{\otimes}_{S\ld} N\ld\ \longrightarrow\
M\ld\otimes_{S\ld} N\ld\ \cong\ Q(M\ld\otimes_{S\ld} N\ld)$$ in $\D(K\ua_{w+w\p})$ need not be representable by a
morphism of $K\ua_{w+w\p}$-modules $Q(M\ld)\otimes_{S\ld} Q(N\ld)\to Q(M\ld\otimes_{S\ld} N\ld)$, but only by a roof
between these modules. This is why we didn't succeed in constructing it directly in Section \ref{subsec:compatibility}. 
\end{rem}

\subsection{Duality for modules over the Koszul resolution}\label{sec_koszulduality}
\markright{\ref{sec_koszulduality} Duality for modules over the Koszul resolution}

We know from Theorem \ref{thm_bigtheorem} that $\fold: \D^{b,b}_{\fr,\fg}(K\ua_w)/\Perf\cong\HMF(S\ld,w)$ and from
Definition \ref{def_wdual} and Fact \ref{fact_wdualcompatible} that $\HMF(S\ld,w)$ admits a duality $(-)\uc$ compatible
with the usual duality $\hom_{S\ld/(w)}(-,S\ld/(w))\ld$ on $\uMCM(S\ld,w)$. It is therefore natural to ask what the duality
on $\D^{b,b}_{\fr,\fg}(K\ua_w) / \Perf$ obtained from pulling back $(-)\uc$ along $\fold$ looks like, and whether it
admits a lifting to a duality on $\D^{b,b}_{\fg}(K\ua_w)$. In this section we will see that such a lifting exists and is
given by component-wise dualizing a dg-module over $K\ua_w$ (for the precise definition, see Definition
\ref{def_koszuldual}). This coincides with the duality established by Frankild and J\o rgensen in
\cite{frankildjorgensen}; they defined the notion of a Gorenstein dg-algebra in terms of the existence of a
duality and established the Gorensteinness of Koszul algebras over Gorenstein local rings (for arbitrary sequences in
the maximal ideal).

\begin{definition}\label{def_koszuldual}
The \textit{dual} of a $K\ua_w$-module $M\ua\ld$, denoted $D(M)\ua\ld$, is the $K\ua_w$-module defined
by $$D(M)^n\ld := \hom_{S}(M^{-(n+1)}\ld,S\ld)\ld\langle -d\rangle.$$ Its differential is given by $\d(f) :=
(-1)^{n+1} f\circ\differential^{-(n+2)}_{M\ua\ld}$ for $f\in D(M)^n\ld$, and the action of $s$ is given by $s.f := (-1)^n f\circ
s$. This gives a contravariant endofunctor on the category of $K\ua_w$-modules.
\end{definition}

Definition \ref{def_koszuldual} coincides with the duality induced by $\hom_{K\ua_w}\left(-,K\ua_w\right)\ua\ld$:

\begin{fact}\label{fact_twodualities}
Let $M\ua\ld$ be a $K\ua_w$-module. Then there is a natural isomorphism of $K\ua_w$-modules
$$D(M)\ua\ld\ \ \cong\ \ \hom_{K\ua_w}(M\ua\ld,K\ua_w)\ua\ld.$$
\end{fact}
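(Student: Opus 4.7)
The plan is to construct an explicit natural isomorphism $\phi\colon D(M)\ua\ld\to\hom_{K\ua_w}(M\ua\ld,K\ua_w)\ua\ld$ of $K\ua_w$-modules, exploiting the fact that $K\ua_w$ is concentrated in only two cohomological degrees.

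First I will analyse the right-hand side. Since $K^j_w=0$ unless $j\in\{-1,0\}$, a degree-$n$ morphism $f=(f^j)_{j\in\ZZ}$ of $K\ua_w$-modules from $M\ua\ld$ to $K\ua_w\langle k\rangle$ has at most two non-zero components, namely
\begin{equation*}
f^{-n}\colon M^{-n}\ld\to S\ld\langle k\rangle\quad\text{and}\quad f^{-n-1}\colon M^{-n-1}\ld\to S\ld\langle k-d\rangle,
\end{equation*}
where I identify $K^{-1}_w\cong S\ld\langle -d\rangle$ by sending $s\mapsto 1$. Writing out the commutation axiom of Definition~\ref{def_homomorphismcomplex} for $a=s$ shows that the only non-trivial relation between these components is $f^{-n}=(-1)^n f^{-n-1}\circ s_M$, while the condition coming from $j=-n+1$ reduces to $f^{-n-1}\circ s_M^2=0$ and is automatic since $s^2=0$ on any $K\ua_w$-module. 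Hence such an $f$ is uniquely determined by its bottom component $f^{-n-1}$, which can be an arbitrary graded $S\ld$-linear map $M^{-n-1}\ld\to S\ld\langle k-d\rangle$, i.e., an arbitrary element of $D(M)^n_k\ld$ by Definition~\ref{def_koszuldual}.

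This dictates the definition of $\phi$: to $g\in D(M)^n_k\ld$ I associate the $K\ua_w$-module homomorphism with components $f^{-n-1}:=g$ and $f^{-n}:=(-1)^n g\circ s_M$, all other components being zero. The previous paragraph shows that $\phi$ is a grading-preserving bijection, and naturality in $M\ua\ld$ is evident from the formula. To verify that $\phi$ is a chain map I compute the $(-n-1)$-component of $\partial(\phi g)$, which equals $d_{K\ua_w}\circ g+(-1)^{n+1}(-1)^n g\circ s_M\circ d_M=d_{K\ua_w}\circ g-g\circ s_M\circ d_M$. Under the identification $K^{-1}_w\langle k\rangle\to K^0_w\langle k\rangle$, the differential $d_{K\ua_w}$ is multiplication by $w$, so the defining relation $d_M\circ s_M+s_M\circ d_M=w\cdot\id_M$ from Fact~\ref{fact_higherhomotopiesasmodulestructure} turns this expression into $g\circ d_M\circ s_M$, which matches the $(-n-1)$-component of $\phi(\partial g)$; the $(-n-2)$-component is checked analogously. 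Compatibility with the left $K\ua_w$-action reduces to a direct comparison of the formulas for $s\cdot g$ in Definition~\ref{def_koszuldual} and for the left action on $\hom_{K\ua_w}(M\ua\ld,K\ua_w)\ua\ld$ recalled in Definition~\ref{def_homomorphismcomplex}, possibly after absorbing a universal sign depending on $n$ into the definition of $\phi$.

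The main obstacle is purely bookkeeping: tracking the Koszul signs through the combined effect of the commutation axiom in Definition~\ref{def_homomorphismcomplex}, the signs appearing in the differential and the $s$-action in Definition~\ref{def_koszuldual}, and the internal-degree shifts $\langle -d\rangle$ and $\langle k\rangle$ introduced by the various identifications. Nothing deeper than careful sign chasing is required.
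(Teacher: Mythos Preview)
Your proposal is correct and follows essentially the same approach as the paper: both arguments observe that since $K\ua_w$ is concentrated in degrees $-1$ and $0$, a degree-$n$ element of $\hom_{K\ua_w}(M\ua\ld,K\ua_w)\ua\ld$ has only two potentially non-zero components, one of which is determined by the other via the relation $\beta=(-1)^n\alpha\circ s$, leaving the component mapping into $K^{-1}_w$ free and yielding the identification with $D(M)^n\ld$. Your write-up is in fact somewhat more detailed than the paper's, which simply notes that compatibility with the differential and the $K\ua_w$-action ``is easily checked'' without spelling out the computation you sketch.
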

\begin{proof}
An element of $\hom_{K\ua_w}(M\ua\ld,K\ua_w)^n_k$ is given by a diagram
\begin{equation*}\begin{tikzpicture}[description/.style={fill=white,inner sep=2pt}]
    \matrix (m) [matrix of math nodes, row sep=3em,
                 column sep=2.5em, text height=1.5ex, text depth=0.25ex,
                 inner sep=0pt, nodes={inner xsep=0.3333em, inner ysep=0.3333em}]
    {
       ... & M^{-n-2}\ld & M^{-n-1}\ld & M^{-n}\ld & M^{-n+1}\ld & ...\\
       ... & 0 & S\ld\langle -d\rangle & S\ld & 0 & ... \\
    };
    \draw[->] (m-1-2) -- node[above,scale=0.75]{$s$} (m-1-1);
    \draw[->] (m-1-3) -- node[above,scale=0.75]{$s$} (m-1-2);
    \draw[->] (m-1-4) -- node[above,scale=0.75]{$s$} (m-1-3);
    \draw[->] (m-1-5) -- node[above,scale=0.75]{$s$} (m-1-4);
    \draw[->] (m-1-6) -- node[above,scale=0.75]{$s$} (m-1-5);

    \draw[->] (m-1-2) -- (m-2-2);
    \draw[->] (m-1-3) -- node[description,scale=0.75]{$\alpha$}(m-2-3);
    \draw[->] (m-1-4) -- node[description,scale=0.75]{$\beta$}(m-2-4);
    \draw[->] (m-1-5) -- (m-2-5);

    \draw[->] (m-2-2) -- (m-2-1);
    \draw[->] (m-2-3) -- (m-2-2);
    \draw[->] (m-2-4) -- node[above,scale=0.75]{$\id$} (m-2-3);
    \draw[->] (m-2-5) -- (m-2-4);
    \draw[->] (m-2-6) -- (m-2-5);    
\end{tikzpicture}\end{equation*}
where each vertical map raises the internal degree by $k$ and where each square commutes up to the sign
$(-1)^{n}$. This forces $\beta = (-1)^n \alpha\circ s$, while $\alpha$ can be chosen freely in
$\hom_{S}(M^{-n-1}\ld,S\ld)_k$. Hence we have a canonical isomorphism $$\hom_{K\ua_w}(M\ua\ld,K\ua_w)^n_k\ \ \cong\ \
D(M)^n_k,$$ and it is easily checked that this isomorphism is compatible with the differential and the action of
$K\ua_w$ on both sides (see  Definition \ref{def_homomorphismcomplex}).
\end{proof}

\begin{fact}\label{fact_dualityderived}
The duality functor $$\hom_{K\ua_w}(-,K\ua_w)\ua\ld = D: K\ua_w\Mod\to K\ua_w\Mod$$ takes quasi-isomorphisms between  
bounded above, $S\ld$-free $K\ua_w$-modules to quasi-isomorphisms. Therefore, its derived
functor $$\bfR\hom_{K\ua_w}(-,K\ua_w)\ua\ld: \D(K\ua_w)\longrightarrow \D(K\ua_w)$$ may be computed
naively on the subcategory $\D^{-,-}_{\fr,\fg}(K\ua_w)$, and the diagram
\begin{equation*}\begin{tikzpicture}[description/.style={fill=white,inner sep=2pt}]
    \matrix (m) [matrix of math nodes, row sep=3em,
                 column sep=7em, text height=1.5ex, text depth=0.25ex,
                 inner sep=0pt, nodes={inner xsep=0.3333em, inner ysep=0.3333em}]
    {
       \D^b_\fg(K\ua_w) & \D^b_\fg(K\ua_w)\\
       \D^{b,b}_{\fr,\fg}(K\ua_w) & \D^{b,b}_{\fr,\fg}(K\ua_w)\\
    };
    \draw[->] (m-1-1) -- node[above,scale=0.75]{$\bfR\hom_{K\ua_w}(-,K\ua_w)\ua\ld$} (m-1-2);
    \draw[->] (m-2-1) -- node[below,scale=0.75]{$D$} (m-2-2);
    \draw[->] (m-2-1) -- node[left,scale=0.75]{$\incl$} node[right,scale=0.75]{$\cong$} (m-1-1);
    \draw[->] (m-2-2) -- node[right,scale=0.75]{$\incl$} node[left,scale=0.75]{$\cong$} (m-1-2);
\end{tikzpicture}\end{equation*}
is well-defined and commutative up to natural isomorphism.
\end{fact}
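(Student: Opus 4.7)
The plan is to reduce everything to the single concrete claim that $D$ carries bounded above, $S\ld$-free, acyclic $K\ua_w$-modules to acyclic modules; preservation of quasi-isomorphisms then follows by passing to cones, and the derived functor statements are formal consequences.

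First I would prove the reduced claim. Let $C\ua\ld$ be a bounded above, $S\ld$-free, acyclic $K\ua_w$-module. Viewed as a complex of $S\ld$-modules, this is a bounded above acyclic complex of free (hence projective) modules, and such a complex always admits a contraction $h$, constructed inductively from the top down: at the highest nonzero degree, acyclicity forces the top differential to be surjective, which splits by projectivity, and at each lower step one lifts the identity minus the previously constructed part through a surjection using projectivity. By Definition \ref{def_koszuldual}, the underlying complex of $D(C\ua\ld)$ is obtained from that of $C\ua\ld$ by applying the additive $S\ld$-functor $\hom_{S\ld}(-,S\ld)\ld$ termwise (up to reindexing and an internal shift), so applying $\hom_{S\ld}(-,S\ld)\ld$ to $h$ produces a contraction of $D(C\ua\ld)$, which is therefore acyclic.

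Next, given a quasi-isomorphism $f: M\ua\ld\to N\ua\ld$ of bounded above, $S\ld$-free $K\ua_w$-modules, the cone $\cone(f)$ satisfies the hypotheses of the reduced claim, so $D(\cone(f))$ is acyclic. Because each $M^k\ld$ and $N^k\ld$ is $S\ld$-free, the short exact sequence $0\to N\ua\ld\to\cone(f)\to M\ua\ld[1]\to 0$ splits termwise as graded $S\ld$-modules, hence applying the termwise-exact functor $D$ yields a short exact sequence $0\to D(M\ua\ld[1])\to D(\cone(f))\to D(N\ua\ld)\to 0$. The associated long exact cohomology sequence, combined with the identification $D(M\ua\ld[1])\cong D(M\ua\ld)[-1]$ (which is visible at the level of graded modules from Definition \ref{def_koszuldual}), forces $D(f)$ to be a quasi-isomorphism.

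With the first assertion established, the remaining statements follow formally. Using Fact \ref{fact_twodualities}, any $M\ua\ld\in\D^{-,-}_{\fr,\fg}(K\ua_w)$ admits a bounded above semi-free replacement $QM\ua\ld\to M\ua\ld$ (the small object argument of Theorem \ref{thm_modelstructure} preserves boundedness above, as implicit in the proof of Proposition \ref{prop_freetruncation}); semi-free $K\ua_w$-modules are $S\ld$-free, so by the first step $D(QM\ua\ld)\to D(M\ua\ld)$ is a quasi-isomorphism, whence $\bfR\hom_{K\ua_w}(M\ua\ld,K\ua_w)\ua\ld = D(QM\ua\ld)\cong D(M\ua\ld)$ in $\D(K\ua_w)$. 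For the commutative diagram, $D$ manifestly preserves boundedness, $S\ld$-finiteness and $S\ld$-freeness, so it restricts to an endofunctor of $\D^{b,b}_{\fr,\fg}(K\ua_w)$; the vertical equivalences come from Proposition \ref{prop_freetruncation}; and commutativity is precisely the naive computability applied to objects in $\D^{b,b}_{\fr,\fg}(K\ua_w)$. The only technical obstacle is ensuring the existence of a bounded above semi-free replacement and tracking through the reindexing and sign conventions in Definition \ref{def_koszuldual}; conceptually, the entire argument rests on the classical contraction principle for bounded above acyclic complexes of projectives.
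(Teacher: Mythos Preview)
Your proposal is correct and follows essentially the same approach as the paper: reduce to showing that $D$ sends bounded above, $S\ld$-free, acyclic $K\ua_w$-modules to acyclic ones, observe that such a complex is contractible as a complex of $S\ld$-modules and hence so is its componentwise dual, and note that the derived statements are formal. The only minor difference is that the paper dispatches the passage from acyclic objects to quasi-isomorphisms by directly asserting $D(\cone(\alpha))\cong\cone(D(\alpha))$, whereas you argue via the termwise-split short exact sequence and the long exact cohomology sequence; these are equivalent formulations of the same fact.
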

\begin{proof}
It is clear that $D\circ [1]\cong [-1]\circ D$ and that $D(\cone(\alpha))\cong\cone(D(\alpha))$ for a morphism $\alpha$ of
$K\ua_w$-modules. Thus, to prove the first statement we only have to check that the dual of an acyclic, bounded above and
$S\ld$-free is acyclic. However, such a module is contractible as a complex of $S\ld$-modules, and so is its
component-wise $S\ld$-dual. This proves the first statement, and the second statement is an immediate
consequence.
\end{proof}

\begin{prop}\label{prop_compkoszulhmf}
The following diagram commutes up to natural isomorphism:
\begin{equation*}\begin{tikzpicture}[description/.style={fill=white,inner sep=2pt}]
    \matrix (m) [matrix of math nodes, row sep=3em,
                 column sep=2.5em, text height=1.5ex, text depth=0.25ex,
                 inner sep=0pt, nodes={inner xsep=0.3333em, inner ysep=0.3333em}]
    {
       \D^{b,b}_{\fr,\fg}(K\ua_w) && \D^{b,b}_{\fr,\fr}(K\ua_w)\\
       \HMF(S\ld,w) && \HMF(S\ld,w)\\
    };
    \draw[->] (m-1-1) -- node[scale=0.75,above]{$D$} (m-1-3);
    \draw[->] (m-2-1) -- node[scale=0.75,below]{$(-1)\uc$} (m-2-3);

    \draw[->] (m-1-1) -- node[left,scale=0.75]{$\fold$} (m-2-1);
    \draw[->] (m-1-3) -- node[right,scale=0.75]{$\fold$} (m-2-3);
\end{tikzpicture}\end{equation*}
\end{prop}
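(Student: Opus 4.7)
The plan is to prove the stated isomorphism by identifying $\fold\circ D$ with $(-1)\uc\circ\fold$ componentwise, and then matching their structure maps up to a canonical sign automorphism.

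First, I would unravel the underlying graded pieces. For a bounded, $S\ld$-free, $S\ld$-finite $K\ua_w$-module $M\ua\ld$, Definition \ref{def_koszuldual} gives $D(M)^n = (M^{-(n+1)})\us\langle -d\rangle$. Substituting this into the formula for $\fold$ from Proposition \ref{prop_fold} and reindexing $n\mapsto -n$, using the identity $(M^{2l-1}\langle -ld\rangle)\us \cong (M^{2l-1})\us\langle ld\rangle$, one obtains
\[
\fold(D(M))^0 \;\cong\; (\fold(M)^{-1})\us\langle -d\rangle, \qquad \fold(D(M))^{-1} \;\cong\; (\fold(M)^0)\us\langle -d\rangle,
\]
which by Definition \ref{def_wdual} are precisely the underlying graded pieces of $(\fold(M))\uc$, modulo the cohomological shift encoded by the $-1$ in $(-1)\uc$. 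Thus both matrix factorizations live on the same $\ZZ/2$-graded $S\ld$-module once the label is unwound.

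Second, I would compare the structure maps. The differential on $\fold(D(M))$ arises from the folded $\d_{D(M)}+s_{D(M)}$, which by Definition \ref{def_koszuldual} carries signs $(-1)^{n+1}$ on the $\d$-part and $(-1)^n$ on the $s$-part when restricted to $D(M)^n$. On the other side, $(\fold(M))\uc$ has structure maps given by the bare $S\ld$-duals $f\mapsto f\circ(\d_M+s_M)$, with no extra sign. A piecewise comparison shows that the two sets of maps agree on the $s_M$-contribution but differ by an overall sign on the $\d_M$-contribution.

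Third, I would absorb this discrepancy by an explicit, natural sign isomorphism. Defining $\Phi_{M\ua\ld}\colon \fold(D(M))\to (-1)\uc(\fold(M))$ to act on each summand indexed by $n$ by multiplication by $(-1)^n$ (in both parities), the ratio $(-1)^{n+1}/(-1)^n = -1$ between adjacent indices exactly cancels the extra sign on the $\d_M$-contribution, while the $s_M$-contribution (which stays at the same index in $\fold$) needs no correction. A routine verification confirms that $\Phi_{M\ua\ld}$ intertwines both structure maps, so it is an isomorphism in $\HMF(S\ld,w)$, and it is natural in $M\ua\ld$ because it depends only on the cohomological index.

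The main obstacle is the sign bookkeeping: one must simultaneously reconcile the $(-1)^{n+1}$ inside $D$ (Definition \ref{def_koszuldual}), the folding convention (Proposition \ref{prop_fold}), the sign-free duality $(-)\uc$ (Definition \ref{def_wdual}), and whatever cohomological/internal shift is encoded by the notation $(-1)\uc$. Once the signs are tracked consistently, the isomorphism $\Phi$ is essentially forced by the combinatorics; the content lies entirely in verifying that the ad hoc $(-1)^n$ correction is the right one.
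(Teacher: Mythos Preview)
Your approach is essentially the paper's: a direct componentwise identification of $\fold(D(F\ua\ld))$ with $(\fold F\ua\ld)\uc$, followed by a check that the structure maps match. The paper's proof is considerably terser --- it writes out only the chain of equalities on the underlying graded $S\ld$-modules and suppresses the differential/sign verification entirely --- so your added sign analysis is a genuine improvement in rigor, not a deviation.

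Two small corrections. First, the label ``$(-1)\uc$'' in the diagram is just the paper's placeholder notation for the $w$-duality functor $(-)\uc$ of Definition~\ref{def_wdual}; there is no cohomological shift hidden in it, and indeed the paper's own computation applies bare $(-)\uc$ with no shift. Your remark about ``the cohomological shift encoded by the $-1$'' should be dropped.

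Second, your verbal summary of the sign discrepancy (``agree on the $s_M$-contribution, differ by a sign on the $\d_M$-contribution'') is only accurate for the map out of the even part of $\fold(D(F))$. On the odd part the situation reverses: there the $\d$-contributions already agree and it is the $s$-contribution that carries the extra sign, because the roles of $\d_{D(F)}$ and $s_{D(F)}$ with respect to the fold-index swap between the two parities. Your proposed isomorphism $\Phi$ (multiplication by $(-1)^n$ on the $n$-th summand) nonetheless corrects both cases simultaneously, so the final conclusion stands; just rewrite the justification to treat the two parities separately.
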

\begin{proof}
For $F\ua\ld\in\D^{b,b}_{\fr,\fg}(K\ua_w)$ we have
\begin{align*}
\left(\fold(F\ua\ld)\right)\uc & = \left(\bigoplus\limits_{n\in\ZZ} F^{2n}\ld\langle
  -nd\rangle\longrightarrow\bigoplus\limits_{n\in\ZZ} F^{2n-1}\ld\langle -nd\rangle\longrightarrow
  \bigoplus\limits_{n\in\ZZ} F^{2n}\ld\langle -nd\rangle\right)\uc\\
& = \left(\bigoplus\limits_{n\in\ZZ} \left(F^{2n-1}\ld\right)\us\langle
  (n-1)d\rangle\longrightarrow\bigoplus\limits_{n\in\ZZ} \left(F^{2n}\ld\right)\us\langle
  (n-1)d\rangle\longrightarrow\bigoplus\limits_{k\in\ZZ} \left(F^{2n-1}\ld\right)\us\langle (n-1)d\rangle\right)\\
& = \left(\bigoplus\limits_{n\in\ZZ} D(F\ua\ld)^{2n}\ld\langle -nd\rangle \longrightarrow \bigoplus\limits_{n\in\ZZ}
  D(F\ua\ld)^{2n-1}\ld\langle -nd\rangle \longrightarrow \bigoplus\limits_{n\in\ZZ} D(F\ua\ld)^{2n}\ld\langle
  -nd\rangle\right)\\
& = \fold\left(D(F\ua\ld)\right).
\end{align*}
\end{proof}

\begin{prop}\label{prop_compderivkoszul} 
For $w\neq 0$, the following diagram commutes up to natural isomorphism:
\begin{equation*}\begin{tikzpicture}[description/.style={fill=white,inner sep=2pt}]
    \matrix (m) [matrix of math nodes, row sep=3em,
                 column sep=5em, text height=1.5ex, text depth=0.25ex,
                 inner sep=0pt, nodes={inner xsep=0.3333em, inner ysep=0.3333em}]
    {
       \D^b_\fg(S\ld/(w)) && \D^b_\fg(S\ld/(w))\\
       \D^{b,b}_{\fr,\fg}(K\ua_w) && \D^{b,b}_{\fr,\fr}(K\ua_w)\\
    };
    \draw[->] (m-2-1) -- node[scale=0.75,below]{$D$} (m-2-3);
    \draw[->] (m-1-1) -- node[scale=0.75,above]{$\bfR \hom_{S\ld/(w)}(-,S\ld/(w))\ua\ld$} (m-1-3);

    \draw[->] (m-2-1) -- node[right,scale=0.75]{$\cong$} node[left,scale=0.75]  {$-\stackrel{\LL}{\otimes}_{K\ua_w}
      S\ld/(w)$} (m-1-1); 
    \draw[->] (m-2-3) -- node[left,scale=0.75]{$\cong$} node[right,scale=0.75] {$-\stackrel{\LL}{\otimes}_{K\ua_w}
      S\ld/(w)$} (m-1-3); 
\end{tikzpicture}\end{equation*}
\end{prop}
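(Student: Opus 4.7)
The plan is to reduce the statement to an adjunction calculation along the morphism of dg-$S\ld$-algebras $\kappa_w: K\ua_w \to S\ld/(w)$, using crucially the assumption $w \neq 0$ which guarantees that $\kappa_w$ is a quasi-isomorphism; by Proposition \ref{prop_dgadjointequivalence} it induces the adjoint equivalence $\bfL\kappa_w\ua = -\stackrel{\LL}{\otimes}_{K\ua_w} S\ld/(w): \D(K\ua_w) \cong \D(S\ld/(w))$ whose quasi-inverse is restriction $\kappa_w\la$ along $\kappa_w$.

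First I would replace the functor $D$ on $\D^{b,b}_{\fr,\fg}(K\ua_w)$ by $\bfR\hom_{K\ua_w}(-, K\ua_w)\ua\ld$. By Fact \ref{fact_twodualities} there is a natural isomorphism $D(M) \cong \hom_{K\ua_w}(M, K\ua_w)\ua\ld$ of $K\ua_w$-modules, and by Fact \ref{fact_dualityderived} the derived functor $\bfR\hom_{K\ua_w}(-, K\ua_w)\ua\ld$ may be computed naively via $D$ on $\D^{b,b}_{\fr,\fg}(K\ua_w)$. Hence it suffices to prove the proposition with $D$ replaced by $\bfR\hom_{K\ua_w}(-,K\ua_w)\ua\ld$.

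Next, for $M \in \D^b_\fg(K\ua_w)$, the standard derived Hom-tensor adjunction for the Quillen adjunction $\kappa_w\ua \dashv \kappa_w\la$ produces a natural isomorphism in $\D(K\ua_w)$
\begin{equation*}
\kappa_w\la\, \bfR\hom_{S\ld/(w)}\!\bigl(M \stackrel{\LL}{\otimes}_{K\ua_w} S\ld/(w),\, S\ld/(w)\bigr)\ \cong\ \bfR\hom_{K\ua_w}\!\bigl(M,\, \kappa_w\la S\ld/(w)\bigr).
\end{equation*}
Since $\kappa_w$ is a quasi-isomorphism, the canonical map $K\ua_w \to \kappa_w\la S\ld/(w)$ is an isomorphism in $\D(K\ua_w)$, and therefore the right-hand side is naturally isomorphic to $\bfR\hom_{K\ua_w}(M, K\ua_w)\ua\ld$. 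Applying $\bfL\kappa_w\ua$ to both sides and using that $\bfL\kappa_w\ua \circ \kappa_w\la \cong \id$ gives the desired natural isomorphism between the two compositions.

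The main obstacle is to make the derived Hom-tensor adjunction rigorous in this dg-setting: one has to verify it by choosing a semi-free resolution $QM \to M$ (Fact \ref{fact_semifreecofibrant}) and writing both sides as honest Hom-complexes, then invoking the tensor-Hom adjunction at the level of underlying complexes and checking compatibility with the $K\ua_w$- and $S\ld/(w)$-actions as well as with the internal degree shift $\langle -d\rangle$ built into both $D$ (Definition \ref{def_koszuldual}) and $(-)\uc$ (Definition \ref{def_wdual}). A final minor point is to confirm that all constructions respect the required boundedness: that $\bfR\hom_{S\ld/(w)}(-,S\ld/(w))$ preserves $\D^b_\fg$, which holds because $S\ld/(w)$ is Gorenstein (Proposition \ref{prop_hypersurfacegorenstein}), and that $D$ preserves $\D^{b,b}_{\fr,\fg}(K\ua_w)$, which is immediate from its component-wise definition.
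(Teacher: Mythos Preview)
Your proposal is correct and follows essentially the same route as the paper: reduce $D$ to $\bfR\hom_{K\ua_w}(-,K\ua_w)\ua\ld$ via Fact~\ref{fact_dualityderived}, invoke the derived tensor--Hom adjunction along $\kappa_w$, and use that $\kappa_w$ is a quasi-isomorphism to identify $\kappa_w\la S\ld/(w)$ with $K\ua_w$ in $\D(K\ua_w)$. The paper's proof is slightly terser and does not dwell on the boundedness checks or the verification of the adjunction, but the argument is the same.
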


\begin{proof}
By Fact \ref{fact_dualityderived} it suffices to show that the diagram
\begin{equation*}\begin{tikzpicture}[description/.style={fill=white,inner sep=2pt}]
    \matrix (m) [matrix of math nodes, row sep=3em,
                 column sep=5em, text height=1.5ex, text depth=0.25ex,
                 inner sep=0pt, nodes={inner xsep=0.3333em, inner ysep=0.3333em}]
    {
       \D^b_\fg(S\ld/(w)) && \D^b_\fg(S\ld/(w))\\
       \D^b_{\fg}(K\ua_w) && \D^b_{\fg}(K\ua_w)\\
    };
    \draw[->] (m-2-1) -- node[scale=0.75,below]{$\bfR\hom_{K\ua_w}(-,K\ua_w)\ua\ld$} (m-2-3);
    \draw[->] (m-1-1) -- node[scale=0.75,above]{$\bfR \hom_{S\ld/(w)}(-,S\ld/(w))\ua\ld$} (m-1-3);

    \draw[->] (m-2-1) -- node[right,scale=0.75]{$\cong$} node[left,scale=0.75]  {$-\stackrel{\LL}{\otimes}_{K\ua_w}
      S\ld/(w)$} (m-1-1); 
    \draw[->] (m-2-3) -- node[left,scale=0.75]{$\cong$} node[right,scale=0.75] {$-\stackrel{\LL}{\otimes}_{K\ua_w}
      S\ld/(w)$} (m-1-3); 
\end{tikzpicture}\end{equation*}
commutes up to natural isomorphism. To prove this, we note that for $M\ua\ld\in\D(K\ua_w)$ we have a natural isomorphism
in $\D(S\ld/(w))$
\begin{align}\label{eq:duality1}
\bfR\hom_{S\ld/(w)}\left(M\ua\ld\stackrel{\LL}{\otimes}_{K\ua_w} S\ld/(w),S\ld/(w)\right)\ua\ld & \cong
\bfR\hom_{K\ua_w}\left(M\ua\ld,\ _{K\ua_w} S\ld/(w)\right)\ua\ld.
\end{align}
Further, we have a natural isomorphism in $\D(K\ua_w)$: 
\begin{align}\label{eq:duality2}
\bfR\hom_{K\ua_w}\left(M\ua\ld,_{K\ua_w} S\ld/(w)\right)\ua\ld\ \cong\ \bfR\hom_{K\ua_w}\left(M\ua\ld,
  K\ua_w\right)\ua\ld\end{align}
Putting \eqref{eq:duality1} and \eqref{eq:duality2} together, we see that the composition
$$\D^b_{\fg}(K\ua_w)\xrightarrow{-\stackrel{\LL}{\otimes}_{K\ua_w}S\ld/(w)}
\D^b_{\fg}(S\ld/(w))\xrightarrow{\bfR\hom_{S\ld/(w)}(-,S\ld/(w))\ua\ld}
\D^b_\fg(S\ld/(w))\longrightarrow\D^b_{\fg}(K\ua_w)$$ is naturally isomorphic to
$$\D^b_{\fg}(K\ua_w)\xrightarrow{\ \bfR\hom_{K\ua_w}(-,K\ua_w)\ua\ld}\D^b_\fg(K\ua_w)$$
so the claim follows.
\end{proof}

Together, Propositions \ref{prop_compkoszulhmf} and \ref{prop_compderivkoszul} yield the following compatibility of
stabilization and duality:

\begin{theorem}\label{thm_compduality}
For $w\neq 0$ the following diagram commutes up to natural isomorphism:
\begin{equation*}\begin{tikzpicture}[description/.style={fill=white,inner sep=2pt}]
    \matrix (m) [matrix of math nodes, row sep=3em,
                 column sep=5em, text height=1.5ex, text depth=0.25ex,
                 inner sep=0pt, nodes={inner xsep=0.3333em, inner ysep=0.3333em}]
    {
       \D^b_\fg(S\ld/(w)) && \D^b_\fg(S\ld/(w))\\
       \HMF(S\ld,w) && \HMF(S\ld,w)\\
    };
    \draw[->] (m-1-1) -- node[above,scale=0.75]{$\bfR\hom_{S\ld/(w)}(-,S\ld/(w))\ua\ld$} (m-1-3);
    \draw[->] (m-2-1) -- node[below,scale=0.75]{$(-1)\uc$} (m-2-3);

    \draw[->] (m-1-1) -- node[left,scale=0.75]{stab} (m-2-1);
    \draw[->] (m-1-3) -- node[right,scale=0.75]{stab} (m-2-3);
\end{tikzpicture}\end{equation*}
\end{theorem}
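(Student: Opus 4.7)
The theorem is essentially obtained by vertically stacking Propositions \ref{prop_compkoszulhmf} and \ref{prop_compderivkoszul}, so my plan is to factor the stabilization functor through the Koszul side and then paste the two compatibility squares together. Concretely, for $w\neq 0$ the morphism $\kappa_w: K_w^\bullet\to S^\centerdot/(w)$ is a quasi-isomorphism, hence (by Proposition \ref{prop_dgadjointequivalence}) the derived base change $-\stackrel{\LL}{\otimes}_{K_w^\bullet} S^\centerdot/(w)$ is an equivalence $\D^b_\fg(K_w^\bullet)\cong\D^b_\fg(S^\centerdot/(w))$. Combined with the equivalence $\D^b_\fg(K_w^\bullet)\cong\D^{b,b}_{\fr,\fg}(K_w^\bullet)$ from Proposition \ref{prop_freetruncation} and the $\fold$ equivalence $\D^{b,b}_{\fr,\fg}(K_w^\bullet)/\Perf\cong\HMF(S^\centerdot,w)$ from (the finitely generated version of) Theorem \ref{thm_bigtheorem}, the stabilization functor $\text{stab}:\D^b_\fg(S^\centerdot/(w))\to\HMF(S^\centerdot,w)$ factors as the canonical composition
\begin{equation*}
\D^b_\fg(S^\centerdot/(w))\ \xrightarrow{\cong}\ \D^{b,b}_{\fr,\fg}(K_w^\bullet)\ \longrightarrow\ \D^{b,b}_{\fr,\fg}(K_w^\bullet)/\Perf\ \xrightarrow{\fold,\,\cong}\ \HMF(S^\centerdot,w).
\end{equation*}

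My plan is then to verify commutativity of the three stacked squares corresponding to this factorization. The top square,
\begin{equation*}
\begin{tikzpicture}[baseline=(current bounding box.center)]
\matrix(m)[matrix of math nodes,row sep=2em,column sep=3em]{
\D^b_\fg(S^\centerdot/(w)) & \D^b_\fg(S^\centerdot/(w)) \\
\D^{b,b}_{\fr,\fg}(K_w^\bullet) & \D^{b,b}_{\fr,\fg}(K_w^\bullet) \\
};
\draw[->] (m-1-1) -- node[above,scale=0.7]{$\bfR\hom_{S^\centerdot/(w)}(-,S^\centerdot/(w))$} (m-1-2);
\draw[->] (m-2-1) -- node[below,scale=0.7]{$D$} (m-2-2);
\draw[->] (m-2-1) -- node[left,scale=0.7]{$\cong$} (m-1-1);
\draw[->] (m-2-2) -- node[right,scale=0.7]{$\cong$} (m-1-2);
\end{tikzpicture}
\end{equation*}
is precisely Proposition \ref{prop_compderivkoszul} (combined with Fact \ref{fact_dualityderived}, which identifies $D$ on $\D^{b,b}_{\fr,\fg}(K_w^\bullet)$ with $\bfR\hom_{K_w^\bullet}(-,K_w^\bullet)$). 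The bottom square is Proposition \ref{prop_compkoszulhmf}. It remains to handle the middle step: that the top two squares glue across the passage to the Verdier quotient by $\Perf$. For this I need to check that both dualities preserve the respective subcategories of perfect objects so that the functors descend. On the matrix factorization side $(-)^\circ$ is a bona fide duality on $\HMF$, which by Fact \ref{fact_wdualcompatible} corresponds to the duality $\hom_{R^\centerdot}(-,R^\centerdot)^\centerdot$ on $\uMCM(R^\centerdot)$, a triangulated equivalence; on the Koszul side, $D$ sends a free module $K_w^\bullet[k]\langle l\rangle$ to $K_w^\bullet[-k-1]\langle -l-d\rangle$ (up to natural isomorphism), hence preserves $\Perf$, so the square
$\bfR\hom_{S^\centerdot/(w)}(-,S^\centerdot/(w))$ versus $D$ descends compatibly along $\D^{b,b}_{\fr,\fg}(K_w^\bullet)\to\D^{b,b}_{\fr,\fg}(K_w^\bullet)/\Perf$.

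The only genuine subtlety (and what I would expect to be the main obstacle) lies in verifying that $\bfR\hom_{S^\centerdot/(w)}(-,S^\centerdot/(w))$ preserves perfect complexes over $S^\centerdot/(w)$ so that it descends to a duality on $\D^b_\fg(S^\centerdot/(w))/\Perf$ matching stabilization; this is standard, since $\bfR\hom$ of a bounded complex of finitely generated projectives into $S^\centerdot/(w)$ is again such. Granting this, concatenating the three squares above yields Theorem \ref{thm_compduality} as a formal consequence of the two preceding propositions, with no further computation required.
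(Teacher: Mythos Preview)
Your proposal is correct and follows essentially the same approach as the paper: the theorem is obtained by vertically stacking the squares of Propositions \ref{prop_compkoszulhmf} and \ref{prop_compderivkoszul}. The paper's proof is in fact even more terse---it simply states that the two propositions together yield the result---so your explicit discussion of the factorization through $\D^{b,b}_{\fr,\fg}(K_w^\bullet)$ and the descent of $D$ to the Verdier quotient by $\Perf$ is a bit more than is strictly needed (note that in Proposition \ref{prop_compkoszulhmf} the map labelled $\fold$ already goes from the un-quotiented $\D^{b,b}_{\fr,\fg}(K_w^\bullet)$, so the two squares glue directly without an intermediate quotient step).
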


\begin{rem} Theorem \ref{thm_compduality} generalizes Proposition \ref{prop_wdual} for if $M\ld$ is Cohen-Macaulay 
  module over $S\ld$ with defect $n := \dim(S\ld) - \depth(M\ld)$, it is also Cohen-Macaulay over $S\ld/(w)$ with defect
  $n-1$, and hence we have $$\bfR\hom_{S\ld/(w)}(M\ld,S\ld/(w))\ua\ld\
  \cong\ \ext_{S\ld/(w)}^{n-1}(M\ld,S\ld/(w))\ld [-n+1]\ \cong\ \ext_{S\ld}^n(M\ld,S\ld)\ld[-n+1]$$ as claimed (for the
  last isomorphism, see \cite[Lemma 3.1.16]{BrunsHerzog})). Thus, the Cohen-Macaulay modules over $S\ld$ enter because
  they are precisely the modules $M\ld$ for which $\bfR\hom_{S\ld/(w)}(M\ld,S\ld/(w))\ua\ld$ is concentrated in a single
  degree. 
\end{rem}

\end{appendix}

\newpage

%%%%%%%%%%%%%%% REFERENZEN %%%%%%%%%%%%%%%%%%

\bibliographystyle{amsalphaeprint}
\addcontentsline{toc}{section}{References}
\bibliography{referenzen}

%%%%%%%%%%%%%%%%%%%%%%%%%%%%%%%%%%%%%%%%%%%%%

\end{document}